\begin{document}

\setkomafont{sectioning}{\normalcolor\bfseries}
\theoremstyle{definition}
\newtheorem{definition}{Definition}[chapter]
\newtheorem{remark}[definition]{Remark}
\newtheorem{observation}[definition]{Observation}
\newtheorem{example}[definition]{Example}
\theoremstyle{plain}
\newtheorem{theorem}[definition]{Theorem}
\newtheorem{lemma}[definition]{Lemma} 
\newtheorem{corollary}[definition]{Corollary}
\newtheorem{conjecture}[definition]{Conjecture}
\newtheorem{proposition}[definition]{Proposition}

\setkomafont{chapter}{\centering\LARGE\scshape}

\newcommand{\Hrule}{\rule{\linewidth}{0.7mm}}

\newcommand{\R}{\mathbb R}
\newcommand{\N}{\mathbb N}
\newcommand{\Z}{\mathbb Z}
\newcommand{\Q}{\mathbb Q}
\newcommand{\C}{\mathbb C}
\newcommand{\set}[1]{\left\{ #1 \right\}}
\newcommand{\surj}{\textrm{Surj}}
\newcommand{\condset}[2]{\left\{ #1 \left|~ #2 \right.\right\}}

\newcommand{\B}{\cal{B}}
\newcommand{{\D}}{\cal D}
\newcommand{\F}{{\cal F}}
\newcommand{\K}{{\cal K}}
\newcommand{\G}{{\cal G}}
\newcommand{\A}{\mathfrak{A}}

\newcommand{\define}{\colon\!\!\!=}
\newcommand{\bi}[2]{{ #1 \choose #2}}	
\newcommand{\cupdot}{\mathbin{\dot{\cup}}}
\newcommand{\dcup}{\mathop{\dot{\cup}}}	
\newcommand{\bigdcup}{\mathop{\dot{\bigcup}}}	
\newcommand{\bigcupdot}{\mathop{\ooalign{$\bigcup$\cr\kern+3pt $\cdot$}}}
\newcommand{\lb}{\left(}
\newcommand{\rb}{\right)}
\newcommand{\vektor}[1]{ \begin{bmatrix} #1 \end{bmatrix} }
\newcommand{\comp}[1]{\overline{#1}}
\newcommand{\Prob}[1]{P\!\lb #1\rb}
\newcommand{\Varprob}[1]{\tilde{P}\!\lb #1\rb}
\newcommand{\aufr}[1]{\left\lceil #1\right\rceil} 
\newcommand{\abr}[1]{\left\lfloor #1\right\rfloor}
\newcommand{\fkt}[2]{#1\!\lb #2\rb}
\newcommand{\fkti}[2]{#1\lb #2\rb}
\newcommand{\opfkt}[2]{\operatorname{#1}\!\lb #2\rb}
\newcommand{\fkte}[3]{#1^{#2}\!\lb #3\rb}
\newcommand{\stueckfkt}[4]{\left\{\begin{array}{ll} #1, & #2 \\#3, & #4\end{array}\right.} 
\newcommand{\stueckfktd}[6]{\left\{\begin{array}{ll} #1, & #2 \\#3, & #4\\#5, &#6\end{array}\right.} 
\newcommand{\ggT}[1]{\operatorname{ggT}\!\lb #1\rb}
\newcommand{\erw}[1]{\operatorname{E}\!\lb #1\rb} 
\newcommand{\varerw}[1]{\operatorname{E}\!\left[ #1\right]}
\newcommand{\abs}[1]{\left|#1\right|} 
\newcommand{\varabs}[1]{|#1|} 
\newcommand{\ord}[1]{\operatorname{o}\lb #1\rb} 
\newcommand{\Ord}[1]{\mathcal{O}\!\lb #1\rb} 
\newcommand{\sprod}[2]{<\!#1,#2\!>}

\newcommand{\ffac}[2]{#1^{\underline{#2}}}		
\newcommand{\sfac}[2]{#1^{\overline{#2}}}		
\newcommand{\bell}{\textrm{Bell}}		
\newcommand{\pbin}[2]{\left[{#1 \atop #2}\right]}	
\newcommand{\ramsey}[2]{R\!\lb #1,#2\rb} 
\newcommand{\rang}[1]{\operatorname{Rang}\!\lb #1\rb}

\newcommand{\V}[1]{V\!\lb #1\rb}
\newcommand{\E}[1]{E\!\lb #1\rb}
\newcommand{\nb}[1] {N\!\lb #1\rb}
\newcommand{\knb}[2]{N_{#1}\!\lb #2\rb} 
\newcommand{\nbclosed}[1] {N\!\left[ #1\right]}
\newcommand{\cng}[2] {N_{\overline{#1}}\!\lb #2\rb}
\newcommand{\induz}[2]{#1 \!\left[ #2 \right]}
\newcommand{\dist}[2]{\operatorname{dist}\!\lb #1,#2\rb}
\newcommand{\distg}[3]{\fkt{\operatorname{dist}_{#1}}{#2, #3}}
\newcommand{\distlg}[3]{\fkt{\operatorname{dist}_{\operatorname{L}\lb #1\rb}}{#2, #3}}
\newcommand{\gir}{\textrm{girth}}	
\newcommand{\X}[1]{{\chi}\!\lb #1\rb}
\newcommand{\Xq}[1]{\overline{\chi}\!\lb #1\rb}
\newcommand{\Xqr}[2]{\overline{\chi_{#2}}\!\lb #1\rb}
\newcommand{\strong}[1]{\chi_{_s}\!\left( #1 \right)}
\newcommand{\strongi}[1]{\chi_{_s}'\!\left( #1 \right)}
\newcommand{\stronk}[2]{\chi_{#1}\!\left( #2 \right) }
\newcommand{\stronki}[2]{\chi_{#1}'\!\left( #2 \right) }
\newcommand{\stern}[1]{\induz{\star}{#1}}
\newcommand{\lineg}[1]{\fkt{\operatorname{L}}{ #1 }}
\newcommand{\linesq}[1]{\fkt{\operatorname{L}}{ #1 }^2}
\newcommand{\im}[1]{\fkt{\operatorname{im}}{#1}}
\newcommand{\am}[1]{\operatorname{am}\!\lb #1 \rb}
\newcommand{\amc}[1]{\fkt{\operatorname{amc}}{#1}}
\newcommand{\kcov}[2]{\fkt{\overline{\chi_{#1}}}{#2}}
\newcommand{\ams}{\operatorname{am}}
\newcommand{\ims}{\operatorname{im}}
\newcommand{\amcs}{\operatorname{amc}}
\newcommand{\iso}[1]{\mathring{I}\!\!\lb #1\rb}
\newcommand{\kcl}[2]{\fkt{\omega_{#1}}{#2}}
\newcommand{\kst}[2]{\fkt{\alpha_{#1}}{#2}}
\newcommand{\kam}[2]{\operatorname{am_{#1}}\!\lb #2 \rb}
\newcommand{\kamc}[2]{\fkt{\operatorname{amc_{#1}}}{#2}}
\newcommand{\kmat}[2]{\fkt{\nu_{#1}}{#2}}
\newcommand{\tw}[1]{\fkt{\operatorname{tw}}{#1}}
\newcommand{\vc}[1]{\fkt{\theta}{#1}}

\newcommand{\ksec}[2]{\left[ #1 \right]_{ #2 }}
\newcommand{\dual}[1]{#1^{^*}}
\newcommand{\rank}[1]{\fkt{\operatorname{rang}}{#1}}
\newcommand{\chord}[1]{\fkt{\operatorname{chord}}{#1}}
\newcommand{\chordi}[1]{\fkt{\operatorname{chord}'}{#1}}

\newcommand{\DTIME}[1]{\fkt{\operatorname{DTIME}}{#1}}
\newcommand{\NTIME}[1]{\fkt{\operatorname{NTIME}}{#1}}
\newcommand{\npcomp}{\mathcal{NP}\text{-complete}}
\newcommand{\nphard}{\mathcal{NP}\text{-hard}}
\newcommand{\FPT}{\mathcal{FPT}}
\newcommand{\XP}{\mathcal{XP}}

\tikzset{
    position/.style args={#1:#2 from #3}{
        at=(#3.#1), anchor=#1+180, shift=(#1:#2)
    }
}

\pagenumbering{roman}


\thispagestyle{empty}

\begin{center}
\large \selectfont \mdseries \textsc{Fakultät für Mathematik, Informatik und Naturwissenschaften der Rheinisch-Westfälischen Technischen Hochschule Aachen}\\[9ex]

\Large
Master Thesis\\[8ex]

\huge{\textbf{The Strong Colors of Flowers}}\\
\Large{\textbf{The Structure of Graphs with chordal Squares}}\\[6ex]

\Large{Sebastian Wiederrecht}\\[7ex]
at Lehrstuhl II für Mathematik\\[2ex]
11.03.2016\\[9ex]
\begin{tabular}{r l}
	Referee: &Prof.\ Dr.\ Eberhard Triesch \\
	Second Referee: &Dr.\ Robert Scheidweiler
\end{tabular}

\end{center}

\newpage

\newpage
\thispagestyle{empty}
\mbox{}

\newpage
\setcounter{page}{1}
	\thispagestyle{plain}
\storeareas\stdSettings
\areaset{12cm}{\textheight}
\setlength{\topmargin}{0cm}
\setlength{\footskip}{0cm}

\section*{Abstract}
A proper {\em vertex coloring} of a graph is a mapping of its vertices on a set of colors, such that two adjacent vertices are not mapped to the same color. This constraint may be interpreted in terms of the distance between to vertices and so a more general coloring concept can be defined: The strong coloring of a graph. So a {\em $k$-strong coloring} is a coloring where two vertices may not have the same color if their distance to each other is at most $k$. The $2$-strong coloring of the line graph is known as the {\em strong edge coloring}.\\
Coloring the $k$th {\em power} $G^k$ of a graph $G$ is the same as finding a $k$-strong coloring of $G$ itself. In order to obtain a graph class on which the $2$-strong coloring becomes efficiently solvable we are looking for a structure that produces induced cycles in the square of $G$, so that by excluding this structure we obtain a graph class with chordal squares, where a {\em chordal graph} is a graph without any induced cycles of length $\geq 4$. Such a structure is called a {\em flower.}\\
Another structure will be found and explained, which is responsible for flowers to appear in the line graph of $G$: The {\em sprouts}. With this graphs with chordal line graph squares are described as well.\\
Some attempts in generalizing those structures to obtain perfect graph squares are being made and the general concept of chordal graph powers, i.e. the existence of a smallest power for which a graph becomes chordal, the {\em power of chordality} is introduced in order to solve some coloring related $\mathcal{NP}$-hard problems on graphs with parameterized algorithms. Some connections to the famous parameter {\em treewidth} arise alongside with some deeper connections between edge and vertex coloring.
	
\clearpage\stdSettings

\newpage
\thispagestyle{empty}
\mbox{}

	\newpage
	\thispagestyle{plain}

\setlength{\topmargin}{0cm}
\setlength{\footskip}{0cm}

\section*{Acknowledgements}
This thesis was the greatest challenge I had to face in my academical life so far and its completion would not have been possible without the help and support of several people on whom I could and have heavily relied.\\
Before everyone else Robert Scheidweiler has to be mentioned not only as the second referee of this thesis but as my mentor in these early steps into the world of research. As I move on I definitely will miss our sometimes hour-long conversations not only about mathematics but about life and chance. Thank you Robert, for your support, your ideas and even your criticism. Without you this thesis would not exist.\\
In various forms I received help and support from friends and especially one stands out before everyone else, Robert Löw. There are times in which I actually question if I were able to achieve my bachelor's degree in the first place if it was not for Robert. He is a dear and wise friend who always seemed to have time to listen to the problems I had with my research and he patiently listened going on about various topics concerning this thesis even if I could not come up with a specific question to ask. Together with Leon Eikelmann, Nora Lüpkes and Pascal Vallet, he endured the tiring act of proof reading. I wish to thank all four of them for dedicating their time on this task.\\
At last my parents deserve some thankful words for both their moral and financial support during my whole time at the RWTH. They tried to listen and understand what I am doing and always believed I myself knew what that was, even if I myself was not entirely sure of it.  

\newpage
\thispagestyle{empty}
\mbox{}

\newpage
\thispagestyle{plain}

\setlength{\topmargin}{0cm}
\setlength{\footskip}{0cm}
~\\
~\\
~\\
~\\
~\\
~\\
~\\
~\\
~\\
~\\
\begin{quotation}
	\noindent Complexity is the prodigy of the world. Simplicity is the sensation of the universe. Behind complexity, there is always simplicity to be revealed. Inside simplicity, there is always complexity to be discovered.
\end{quotation}
\vspace{-4mm}
\begin{flushright}
\em	Yu Gang, Chinese University of Hong Kong
\end{flushright}

\clearpage\stdSettings

\newpage
\thispagestyle{empty}
\mbox{}

\tableofcontents

\pagenumbering{arabic}

\chapter{Introduction}


Dividing a given set into subsets is a fundamental procedure in mathematics and often the subsets are required to satisfy some prescribed requirements. The arguably most popular division of a set into subsets in graph theory is the coloring of the vertices of a graph such that no two adjacent vertices have the same color.\\
With its roots in the numerous attempts to solve the famous Four Color Problem the theory of vertex coloring has become one of the most studied field of graph theory. And due to the sheer number of unsolved problems, open questions and known - and unknown - applications, it still is an exciting part of modern science.\\
A number of graph coloring problems have their roots in a practical problem in communications known as the {\em Channel Assignment Problem}. In this problem there are transmitters like antennas or satellite dishes located in some geographic region. Those transmitters may interfere with each other which is a very common problem. There is a sheer number ob different possible reasons for this interference such as their distance to each other, the time of day, month or year, the terrain on which the transmitter is built, its power or the existence of power lines in the area.\\
Such a problem can be modeled by a graph whose vertices represent the transmitters and two transmitters are joined by an edge if and only if they interfere with each other. The goal is to assign frequencies or channels to the transmitters in a manner that prevents the signal of being disrupted by the interference. Obviously, as this is a real world problem, there are some restricted resources like a limited number of available channels or a limited budget to buy such frequencies. This gives some natural optimality criteria in which the channels should be assigned.\\
This problem, in numerous variations, has been studied in economical and military backgrounds, sometimes just for the sake of its beauty, and, since the interpretation of channels as colors gives rise to graph coloring problems, it clearly stands in the tradition of the Four Color Problem, which itself rose from a practical problem with restricted resources.\\
{\em Distance Coloring}, the assignment of colors to the vertices of a graph in such a manner that vertices within a certain distance to each other receive different colors is one of the many variations of the Channel Assignment Problem and has some very exciting, and sometimes even surprising, implications in the theory of graphs that may go far beyond the original coloring problem.\\
In this thesis we will investigate some of these implications with a strong focus on the algorithmic approachability. In terms of coloring problems this means the concepts of chordality and perfection which are important parts of the foundation of algorithmic graph theory. This will lead us, on various occasions, away from the original coloring problem and deeper into the structural theory of graphs, but eventually those detours will prove fruitful.

\chapter{Definitions and Graph Coloring}


In this chapter we state some basic definitions in graph theory that are involved in this thesis. The concepts of coloring the vertices and edges of a graph are introduced alongside with their generalizations and concepts of perfection in this context. Furthermore we give illustrating examples and prove some basic results.

\section{Basic Terminology and Definitions}

\begin{definition}[Graph]A {\em graph} $G=\lb V,E\rb$ consists of two sets: the {\em vertex set} and the {\em edge set}, denoted by $\V{G}$ and $\E{G}$, or $V$ and $E$ when no ambiguities arise, the elements of $V$ are called {\em vertices} and the elements of $E$, which are $2$-element subsets of $V$, are called {\em edges} respectively. Each edge consists of two different vertices called {\em endpoints}. An edge $e$ with endpoints $u$ and $v$ is denoted by $e=uv$.\\
The cardinality of the vertex set $V$ is called the {\em order} of $G$, denoted by $\abs{V}$; the cardinality of the edge set $E$ is denoted by $\abs{E}$ and called the {\em size} of $G$.\\
The graph which has only one vertex and no edges is called the {\em trivial graph}. The graph $G$ is finite if and only if $\abs{V}$ is a finite number.
\end{definition}

\begin{definition}[Simple Graph]
An {\em undirected graph} is one in which edges have no orientation. A {\em loop} is an edge connected at both ends to the same vertex. Multiple edges are two, three or more edges connecting the same pair of vertices. A {\em simple graph} is an undirected graph that has no loops and no multiple edges.
\end{definition}

In this thesis, we only consider finite and simple graphs.

\begin{definition}[Neighborhood]
Two vertices $u$ and $v$ are said to be {\em adjacent} in $G$ if $uv\in\E{G}$. Adjacent vertices are {\em neighbors} of each other. The set of all neighbors of any vertex $w$ in some Graph $G$ is denoted by $\fkt{N_G}{w}$, or $\fkt{N}{w}$ when no ambiguities arise.\\
The {\em closed neighborhood} of $w$ is given by $\induz{N}{w}=\fkt{N}{w}\cup\set{w}$.\\
Two edges $e$ and $e'$ are said to be adjacent if they share a common endpoint and the {\em edge neighborhood} of $e$ is denoted by $\fkt{N}{e}=\condset{e'\in\E{G}}{e\cap e'\neq\emptyset}$.\\
A vertex $v$ is incident to an edge $e$ - and vice versa - if $v$ is an endpoint of $e$.
\end{definition}

\begin{definition}[Degree]
The {\em degree} of a vertex $v$ in a graph $G$ is the number of edges incident to $v$, denoted by $\fkt{\deg_G}{v}$, or $\fkt{\deg}{v}$ when no ambiguities arise. If $G$ is a simple graph it holds $\fkt{\deg_G}{v}=\abs{\fkt{N}{v}}$. The {\em maximum degree} of a graph G is given by $\fkt{\Delta}{G}=\max_{v\in\V{G}} \fkt{\deg_G}{v}$ and the {\em minimum degree} of $G$ is given by $\fkt{\delta}{G}=\min_{v\in\V{G}} \fkt{deg_G}{v}$.
\end{definition}

\begin{definition}[Paths and Cycles]
Let $G$ be a graph. For any $u, v \in\V{G}$, a {\em $u$-$v$ walk} of $G$ is a finite alternating sequence of vertices and edges of $G$ which begins with vertex $u$ and ends with vertex $v$, such that each edge’s endpoints are the preceeding and following vertices in the sequence. A walk is said to be {\em closed} if the sequence starts and ends at the same vertex.\\
A {\em $u$-$v$ path} is a $u$-$v$ walk with no repeated edges and vertices, denoted by $P_{u,v}$.\\
The {\em length} of the path, denoted by $\abs{P_{u,v}}$, is defined to be the number of edges in the sequence. A path is called an {\em empty path} if its length is zero, i.e., the sequence has one vertex and no edges.\\
A {\em cycle} is a closed walk with no repetitions of vertices and edges allowed, other than the repetition of the starting and ending vertex. A cycle of order, or {\em length}, $n$ is denoted by $C_n$, $n$ is a positive integer and $n\geq3$ for any cycle.\\
The {\em girth} $\fkt{\operatorname{girth}}{G}$ of a graph is the length of a longest cycle in $G$.
\end{definition}

\begin{definition}[Distance]
The {\em distance} between vertices $u$ and $v$ in graph $G$, denoted by $\distg{G}{u}{v}$, is the length of a shortest path between $u$ and $v$. If the graph $G$ is clear from the context, we denote the distance between vertices $u$ and $v$ by $\dist{u}{v}$.
\end{definition} 

Since only simple graphs are considered in this thesis an edge is uniquely determined by its end vertices. Therefore we can use a sequence of vertices to represent a walk, path or cycle.\\
In Figure \ref{fig2.1} we find that the paths $P_1=a,b,c$, $P_2=a,b,e,d,c$, $P_3=a,f,e,d,c$ and $P_4=a,f,e,b,c$ are all possible $a$-$c$ paths. Then $\dist{a}{c}=\min\set{ P_1,P_2,P_3,P_4 }=2$. 

\begin{figure}[!h]\label{fig2.1}
\begin{center}
\begin{tikzpicture}

\node (a) [draw,circle,fill,inner sep=1.5pt] {};
\node (b) [draw,circle,fill,inner sep=1.5pt,position=0:1.3cm from a] {};
\node (c) [draw,circle,fill,inner sep=1.5pt,position=0:2.6cm from b] {};
\node (d) [draw,circle,fill,inner sep=1.5pt,position=90:1.3cm from c] {};
\node (e) [draw,circle,fill,inner sep=1.5pt,position=180:1.3cm from d] {};
\node (f) [draw,circle,fill,inner sep=1.5pt,position=180:2.6cm from e] {};

\node (l_a) [position=225:0.05cm from a] {$a$};
\node (l_b) [position=270:0.05cm from b] {$b$};
\node (l_c) [position=315:0.05cm from c] {$c$};
\node (l_d) [position=45:0.05cm from d] {$d$};
\node (l_e) [position=90:0.05cm from e] {$e$};
\node (l_f) [position=135:0.05cm from f] {$f$};

\path
(a) edge (b)
(b) edge (c)
(c) edge (d)
(d) edge (e)
(e) edge (f)
(f) edge (a)
(b) edge (e)
;

\end{tikzpicture}
\end{center}
\caption{An example for paths and distance}
\end{figure}
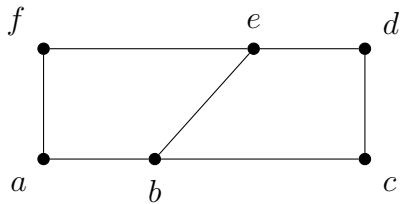

\begin{definition}[$k$-Neighborhood]
For some vertex $v\in\V{G}$ the {\em $k$-Neighborhood} of $v$ is the set $\fkt{N_k}{v}=\condset{w\in\V{G}\setminus{v}}{\distg{G}{v}{w}\leq k}$. The {\em closed $k$-Neighborhood} is given by $\induz{N_k}{v}=\set{v}\cup\fkt{N_k}{v}$.
\end{definition}

\begin{definition}[Connectivity]
In a graph $G$, two vertices $u$ and $v$ are called {\em connected} if $G$ contains a path from $u$ to $v$. Otherwise, they are called {\em disconnected}. A graph is said to be {\em connected} if every pair of vertices in the graph is connected.\\
Connectivity defines an equivalence relation on the set of vertices $V$, the equivalence classes are called {\em connectivity components} or just {\em components} of $G$.
\end{definition}

\begin{definition}[Complete Graph]
A {\em complete graph} $K_n$, where $n$ is a positive integer, is a graph of order $n$ such that every pair of distinct vertices in $K_n$ are connected by a unique edge.\\
The {\em trivial} graph is $K_1$ .
\end{definition} 

\begin{definition}[Subgraph]
A {\em subgraph} of a graph $G=\lb V,E\rb$ is a graph $H=\lb V_H,E_H\rb$ with $V_H\subseteq V$ and $E_H\subseteq E$, denoted by $H\subseteq G$.\\
If $H$ is a subgraph of $G$, then $G$ is a {\em supergraph} of $H$. Furthermore, the subgraph $H$ is called an {\em induced subgraph} of $G$ on $V_H$ if $uv\in E$ implies $uv\in E_H$ for any $u,v\in V_H$.\\
Let $U\subseteq V$, the induced subgraph of $G$ on the vertex set $U$ is denoted by $\induz{G}{U}$.\\
For some set $W\subseteq\E{G}$ of edges the {\em edge induced} subgraph $\induz{G}{W}$ is the graph induced by the vertex set $\bigcup_{e\in W}w$.
\end{definition}

\begin{definition}[$\mathcal{F}$-free]
We say that $G$ {\em contains} a graph $F$, if $F$ is isomorphic to an induced subgraph of $G$. A graph is {\em $F$-free} if it does not contain $F$, and for a family of graphs $\mathcal{F}$, $G$ is {\em $\mathcal{F}$-free} if $G$ is $F$-free for every $F\in\mathcal{F}$.
\end{definition}

\begin{definition}[Clique and Stable Set]
The set $C\subseteq V$ is called a {\em clique} if and only if the induced subgraph $\induz{G}{C}$ is a complete graph. We denote the maximum cardinality of such a subset $C$ of $V$ by $\fkt{\omega}{G}$.\\
The set $I\subseteq V$ is called {\em independent} or {\em stable} if and only if the induced subgraph $\induz{G}{I}$ does not contain any edges. We denote the maximum cardinality of such a subset $I$ of $V$ by $\fkt{\alpha}{G}$. 
\end{definition}

\begin{figure}[!h]
\begin{center}
\begin{tikzpicture}

\node (anchor) [] {};

\node (center) [inner sep=1.5pt] {};

\node (label) [inner sep=1.5pt,position=135:2.5cm from center] {$G$};

\node (u1) [inner sep=1.5pt,position=90:1.8cm from center,draw,circle,fill] {};
\node (u2) [inner sep=1.5pt,position=180:1.8cm from center,draw,circle,fill] {};
\node (u3) [inner sep=1.5pt,position=270:1.8cm from center,draw,circle,fill] {};
\node (u4) [inner sep=1.5pt,position=0:1.8cm from center,draw,circle,fill] {};

\node (v1) [inner sep=1.5pt,position=135:0.9cm from center,draw,circle,fill] {};
\node (v2) [inner sep=1.5pt,position=225:0.9cm from center,draw,circle,fill] {};
\node (v3) [inner sep=1.5pt,position=315:0.9cm from center,draw,circle,fill] {};
\node (v4) [inner sep=1.5pt,position=45:0.9cm from center,draw,circle,fill] {};

\node (lu1) [position=90:0.07cm from u1] {$u_1$};
\node (lu2) [position=180:0.07cm from u2] {$u_2$};
\node (lu3) [position=270:0.07cm from u3] {$u_3$};
\node (lu4) [position=0:0.07cm from u4] {$u_4$};

\node (lv1) [position=135:0.07cm from v1] {$v_1$};
\node (lv2) [position=225:0.07cm from v2] {$v_2$};
\node (lv3) [position=315:0.07cm from v3] {$v_3$};
\node (lv4) [position=45:0.07cm from v4] {$v_4$};

\path
(u1) edge (v1)
	 edge (v4)
(u2) edge (v1)
	 edge (v2)
(u3) edge (v2)
	 edge (v3)
(u4) edge (v3)
	 edge (v4)
;

\path 
(v1) edge (v2)
(v2) edge (v3)
(v3) edge (v4)
(v4) edge (v1)
;


\node (center) [inner sep=1.5pt,position=0:5cm from anchor] {};

\node (label) [inner sep=1.5pt,position=135:2cm from center] {$H_1$};

\node (u1) [inner sep=1.5pt,position=90:1.8cm from center,draw,circle,fill] {};

\node (v1) [inner sep=1.5pt,position=135:0.9cm from center,draw,circle,fill] {};
\node (v2) [inner sep=1.5pt,position=225:0.9cm from center,draw,circle,fill] {};
\node (v3) [inner sep=1.5pt,position=315:0.9cm from center,draw,circle,fill] {};
\node (v4) [inner sep=1.5pt,position=45:0.9cm from center,draw,circle,fill] {};

\node (lu1) [position=90:0.07cm from u1] {$u_1$};

\node (lv1) [position=135:0.07cm from v1] {$v_1$};
\node (lv2) [position=225:0.07cm from v2] {$v_2$};
\node (lv3) [position=315:0.07cm from v3] {$v_3$};
\node (lv4) [position=45:0.07cm from v4] {$v_4$};

\path
(u1) edge (v1)
	 edge (v4)
;

\path 
(v1) edge (v2)
(v2) edge (v3)
(v3) edge (v4)
(v4) edge (v1)
;


\node (center) [inner sep=1.5pt,position=0:10cm from anchor] {};

\node (label) [inner sep=1.5pt,position=135:2cm from center] {$H_2$};

\node (u1) [inner sep=1.5pt,position=90:1.8cm from center,draw,circle,fill] {};

\node (v1) [inner sep=1.5pt,position=135:0.9cm from center,draw,circle,fill] {};
\node (v2) [inner sep=1.5pt,position=225:0.9cm from center,draw,circle,fill] {};
\node (v3) [inner sep=1.5pt,position=315:0.9cm from center,draw,circle,fill] {};
\node (v4) [inner sep=1.5pt,position=45:0.9cm from center,draw,circle,fill] {};

\node (lu1) [position=90:0.07cm from u1] {$u_1$};

\node (lv1) [position=135:0.07cm from v1] {$v_1$};
\node (lv2) [position=225:0.07cm from v2] {$v_2$};
\node (lv3) [position=315:0.07cm from v3] {$v_3$};
\node (lv4) [position=45:0.07cm from v4] {$v_4$};

\path
(u1) edge (v1)
	 edge (v4)
;

\path 
(v1) edge (v2)
(v2) edge (v3)
(v3) edge (v4)
;


\node (anchor2) [position=280:4.5cm from anchor] {};

\node (center) [inner sep=1.5pt,position=0:0cm from anchor2] {};

\node (label) [inner sep=1.5pt,position=90:1cm from center] {$C$};

\node (u4) [inner sep=1.5pt,position=0:1.8cm from center,draw,circle,fill] {};

\node (v3) [inner sep=1.5pt,position=315:0.9cm from center,draw,circle,fill] {};
\node (v4) [inner sep=1.5pt,position=45:0.9cm from center,draw,circle,fill] {};

\node (lu4) [position=0:0.07cm from u4] {$u_4$};

\node (lv3) [position=315:0.07cm from v3] {$v_3$};
\node (lv4) [position=45:0.07cm from v4] {$v_4$};

\path
(u4) edge (v3)
	 edge (v4)
;

\path 
(v3) edge (v4)
;


\node (center) [inner sep=1.5pt,position=0:7cm from anchor2] {};

\node (label) [inner sep=1.5pt,position=135:2.5cm from center] {$I$};

\node (u1) [inner sep=1.5pt,position=90:1.8cm from center,draw,circle,fill] {};
\node (u2) [inner sep=1.5pt,position=180:1.8cm from center,draw,circle,fill] {};

\node (v3) [inner sep=1.5pt,position=315:0.9cm from center,draw,circle,fill] {};

\node (lu1) [position=90:0.07cm from u1] {$u_1$};
\node (lu2) [position=180:0.07cm from u2] {$u_2$};
\node (lv3) [position=315:0.07cm from v3] {$v_3$};

\end{tikzpicture}
\end{center}
\caption{An example for induced subgraphs, cliques and stable sets.}
\label{fig2.2}
\end{figure}
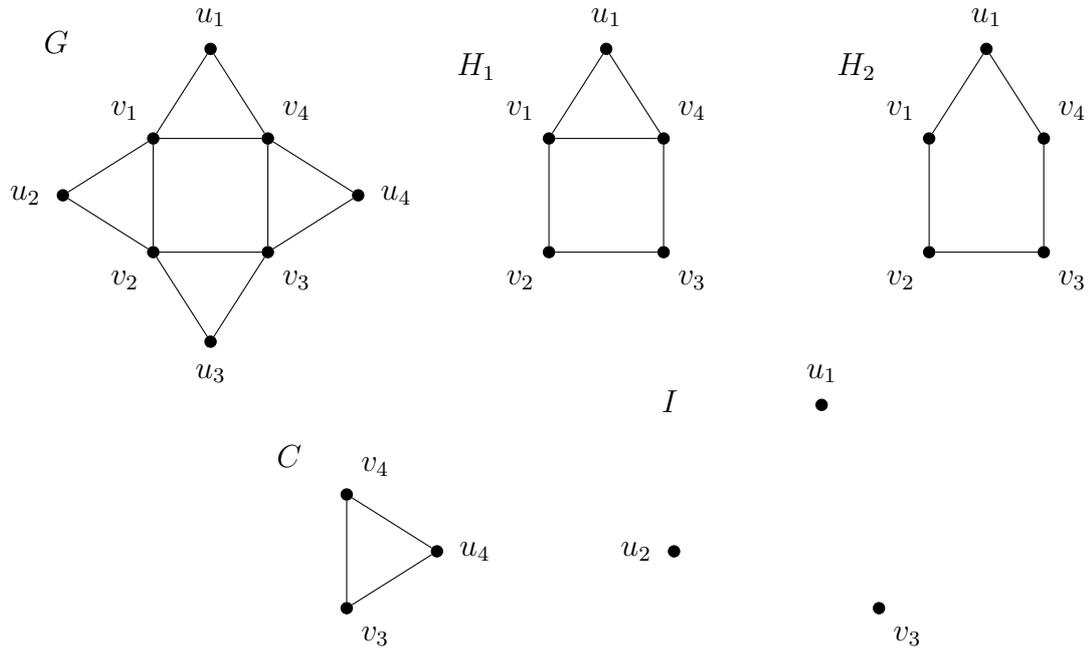

Consider the graph $G$ in Figure \ref{fig2.2}, the graphs $H_1$ and $H_2$ are both subgraphs of $G$. $H_1$ contains all edges that exist in $G$ between the vertices $U=\set{u_1,v_1,v_2,v_3,v_4}$, therefore it holds $\induz{G}{U}\cong H_1$ and $H_1$ is an induced subgraph of $G$. The graph $H_2$ on the other hand lacks the edge $v_1v_4$ and therefore is not an induced subgraph of $G$.\\
The graph $C$ too is an induced subgraph of $G$ and in addition $C\cong K_3$, so $C$ is a clique in $G$, i.e. $C$ is a maximum clique of $G$ and $\fkt{\omega}{G}=3$.\\
At last $I$ is an induced subgraph of $G$, in particular one that does not contain edges which makes it a stable set of vertices. Although $I$ is maximal in the manner that no other vertex of $G$ could be included without producing an edge, it is no maximum stable set. We have $\fkt{\alpha}{G}=4$ and the only maximum stable set of $G$ is given by $\set{u_1,u_2,u_3,u_4}$.  


\begin{definition}[Basic Operations]
For some subset of vertices $S\subset\V{G}$ of $G$. $G-S$ denotes the induced subgraph $\induz{G}{\V{G}\setminus S}$ of $G$, which results from $G$ by deleting $S$. If $\abs{S}=1$ with $S=\set{v}$ we write $G-v$.\\
For deleting edges we have to differentiate between two possibilities, deletion and subtraction. Let $W\subset\E{G}$, deleting the edges means the removal of $W$ from $G$, denoted by $G\setminus W$, which results in the graph $\lb \V{G},\E{G}\setminus W \rb$, subtraction, denoted by $G-W$, means the induced subgraph obtained by the deletion of the endpoints of all edges in $W$, which results in the graph $G-\lb\bigcup_{e\in W}e\rb$.
\end{definition}

\begin{definition}[Line Graph]
The {\em line graph} $\lineg{G}$ of a graph $G$ is the graph defined on the edge set
$\E{G}$ (each vertex in $\lineg{G}$ represents a unique edge in $G$) such that two vertices in
$\lineg{G}$ are defined to be adjacent if their corresponding edges in $G$ share a common
endpoint. The distance between two edges in G is defined to be the distance between
their corresponding vertices in $\lineg{G}$. For two edges $e, e'\in\E{G}$ we write $\distg{G}{e}{e'}=\distlg{G}{e}{e'}$.
\end{definition}

\begin{definition}[Complement]
The {\em complement} of a graph $G=\lb V,E\rb$ is given by $\overline{G}=\lb V,\overline{E}\rb$, where $\overline{E}$ denotes the edges $\binom{V}{2}\setminus E$ and $\binom{V}{2}$ is the set of all subsets of $V$ with cardinality $2$. 
\end{definition}

\begin{definition}[Tree]
A {\em tree} $T$ is a simple connected graph that has no cycles. Any vertex in a tree with degree one is called a {\em leaf}.
\end{definition}

\begin{definition}[Star]
A {\em star} is a tree with exactly $n-1$ leaves, where $n$ is the order of the tree.\\
A {\em doublestar} is a tree with exactly $n-2$ leaves, where $n$ is the order of the tree. If the degrees of the two non-leaf vertices of a double star are $x$ and $y$, where $\fkt{\deg}{x}\leq\fkt{\deg}{y}$, we denote such a double star by $D_{x,y}$.
\end{definition}

\begin{definition}[Chordal Graph]
Let $C=\lb v_1,\dots,v_n\rb$ be a cycle, the graph induced by the vertices of $C$ is denoted by $\induz{G}{C}$. If $\induz{G}{C}\cong C$ the cycle is called {\em chordless}, otherwise there exists some edge $v_iv_j$ in $G$ with $j\neq i\pm1\lb\!\!\!\!\mod n\rb$, this edge is called a {\em chord}.\\
If $G$ does not contain a chordless cycle of length $\geq 3$ it is called {\em chordal}. 
\end{definition}

\begin{definition}[Planar Graph]
A {\em planar graph} is a graph that can be embedded in the plane, i.e., it can be drawn on the plane in such a way that its edges intersect only at their endpoints.\\
A {\em plane graph} is a drawing of a planar graph on the plane such that no edges cross each other. A plane graph divides the plane into areas surrounded by edges, called {\em faces} (or {\em regions}). The area outside the plane graph is the {\em unbounded face} of the graph.
\end{definition}

\section{Vertex Coloring}

\begin{definition}[Proper Vertex Coloring]
For a graph $G$, a {\em vertex coloring} is a mapping $F$ of the vertices of $G$ to some set of colors, which usually are represented by integers: $F\colon\V{G}\rightarrow\set{1,\dots,c}$. If $F$ uses exactly $c$ different colors, we call $F$ a {\em $c$-coloring} of $G$.\\
The {\em color classes} $\fkt{F^{-1}}{i}=F_i$ give a partition of the vertex set of $G$.\\
A {\em proper vertex coloring} is a vertex coloring $F$ with $xy\in\E{G}~\Rightarrow\fkt{F}{x}\neq\fkt{F}{y}$. In this thesis a proper vertex coloring well simply be referred to as a {\em coloring}.\\
The {\em chromatic number} $\X{G}$ of $G$ is the smallest integer $c$ for which a proper $c$-coloring of $G$ exists.
\end{definition}

The neighborhood constraint of a proper vertex coloring can be rephrased in terms of the distance between vertices.
\begin{align*}
\distg{G}{x}{y}\leq 1~\Rightarrow \fkt{F}{x}\neq\fkt{F}{y}
\end{align*}
A natural generalization of proper vertex coloring seems to be the extension of the distance in which two vertices are not allowed to have the same color.\\
The idea of coloring objects with some constraint to their distance was first introduced by Eggleton, Erd\H{o}s and Skilton in their work on the chromatic number of distance graphs (\cite{eggleton1985realline}) and then was extended to general graphs by Sharp (\cite{Sharp2007distance}), who calls colorings of this type {\em distance colorings}. For better compatibility in terms of terminology with the main chapters of this thesis we will refer to such colorings as {\em strong} colorings.

\begin{definition}[$k$-strong Vertex Coloring]
For a graph $G$ and some positive integer $k\in\N$, a {\em $k$-strong vertex coloring} or simply {\em $k$-strong coloring} is a vertex coloring $F\colon\V{G}\rightarrow\set{1,\dots,c}$ with $\distg{G}{x}{y}\leq k~\Rightarrow \fkt{F}{x}\neq\fkt{F}{y}$ for all $x,y\in\V{G}$, $x\neq y$.\\
The {\em $k$-strong chromatic number} $\stronk{k}{G}$ of $G$ is the smallest integer $c$ for which a $k$-strong $c$-coloring of $G$ exists.
\end{definition}
 
While for all graphs $G$ the simple connection between the clique number of $G$ and its chromatic number $\fkt{\omega}{G}\leq \X{G}$ holds, we can derive a similar relationship for all $k$ by generalizing the concept of cliques in the same manner as we did with colorings.

\begin{definition}[$k$-strong Clique]
The set $C\subseteq \V{G}$ is called a {\em $k$-strong clique} for some positive integer $k\in\N$ if and only if for each pair $x,y\in C$ it holds $\distg{G}{x}{y}\leq k$.\\
The {\em $k$-strong clique number} $\kcl{k}{G}$ is the maximum cardinality of such a subset $C$ of $\V{G}$.
\end{definition}

\begin{definition}[$k$-strong Neighborhood]~\\
The set $\knb{k}{v}=\condset{w\in\V{G}\setminus\set{v}}{\distg{G}{v}{w}\leq k}$ is called the {\em $k$-strong neighborhood} or {\em $k$-neighborhood} of $v$.
\end{definition}

There are some basic observations hat can be made for those parameters and all $k\in\N$.
\begin{enumerate}[i)]
\item $\kcl{k-1}{G}\leq\kcl{k}{G}$

\item $\stronk{k-1}{G}\leq\stronk{k}{G}$

\item $\kcl{k}{G}\leq\stronk{k}{G}$
\end{enumerate}

The case $k=1$ clearly resembles the standard $k$-coloring problem. 

A well known upper bound to the chromatic number is given in dependency of the maximum degree $\fkt{\Delta}{G}$. A generalized version for $k$-strong colorings will be investigated in the next chapter.

\begin{theorem}\label{thm2.1}
For any graph $G$ it holds
\begin{align*}
\stronk{1}{G}\leq\fkt{\Delta}{G}+1.
\end{align*}
\end{theorem}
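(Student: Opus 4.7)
The plan is to prove this via a greedy coloring argument, which is the standard approach for the classical bound $\chi(G) \leq \Delta(G) + 1$. Observe first that a $1$-strong coloring is precisely a proper vertex coloring, since $\distg{G}{x}{y} \leq 1$ means exactly that $x$ and $y$ are adjacent (they are distinct by the definition of $k$-strong coloring). So $\stronk{1}{G} = \X{G}$, and the task reduces to showing $\X{G} \leq \fkt{\Delta}{G} + 1$.

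First I would fix an arbitrary enumeration $v_1, v_2, \dots, v_n$ of $\V{G}$ and consider the palette $\set{1, 2, \dots, \fkt{\Delta}{G}+1}$. I would then define a coloring $F$ inductively: for each $i$ from $1$ to $n$, set $\fkt{F}{v_i}$ to be the smallest color in the palette that does not appear among the already colored neighbors $\condset{v_j}{j < i,~v_jv_i \in \E{G}}$.

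The key step is to verify that this greedy choice is always well-defined, i.e.\ that at each stage some color remains available. For this, note that the number of neighbors of $v_i$ in $G$ is at most $\fkt{\deg_G}{v_i} \leq \fkt{\Delta}{G}$, so the number of colors forbidden at step $i$ is at most $\fkt{\Delta}{G}$. Since the palette contains $\fkt{\Delta}{G}+1$ colors, at least one color is free, and the procedure succeeds. By construction, no two adjacent vertices receive the same color, so $F$ is a proper coloring of $G$ using at most $\fkt{\Delta}{G}+1$ colors, which yields $\stronk{1}{G} \leq \fkt{\Delta}{G}+1$.

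There is no real obstacle here: the only subtlety worth pointing out is the reduction from $\stronk{1}{G}$ to $\X{G}$ via the definition of distance, together with the observation that the greedy bound uses only the degree of the current vertex, not the full $\fkt{\Delta}{G}$. The proof therefore is short and purely combinatorial, and will serve as the base case for the generalized bound on $\stronk{k}{G}$ announced for the next chapter.
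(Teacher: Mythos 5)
Your proof is correct: the reduction $\stronk{1}{G}=\X{G}$ is immediate from the definition of distance, and the greedy coloring argument is the standard (and essentially the only) way to establish $\X{G}\leq\fkt{\Delta}{G}+1$. The paper states this theorem without proof as a well-known fact, so your argument simply supplies the canonical justification and matches what the paper implicitly relies on.
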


A special and very important class of graphs is the class of so called perfect graphs which was introduced first by Claude Berge in 1963. Not only are those graphs very important due to their structural properties, but also due to some computational results to which we will come back later.\\
The definition of perfection which we give in this thesis is not the same as the one that was introduced by Berge, although it is equivalent due to the so called weak perfect graph theorem.

\begin{definition}[Perfection]
A graph $G$ is called {\em perfect} if for all induced subgraphs $H\subseteq G$ it holds
\begin{align*}
\fkt{\omega}{H}=\X{H}.
\end{align*}
\end{definition}

In terms of $k$-strong colorings we get the generalized definition.

\begin{definition}[$k$-strong Perfection]
A graph $G$ is called {\em $k$-strong perfect} or {\em $k$-perfect} if for all induced subgraphs $H\subseteq G$ holds
\begin{align*}
\kcl{k}{H}=\stronk{k}{H}.
\end{align*}
\end{definition}

For the case $k=1$, the general perfection, several very important characterizations are known, especially the so called weak and the strong perfect graph theorem.

\begin{theorem}[Weak Perfect Graph Theorem, Lovász. 1972 \cite{Lovasz1987perfect}]\label{thm2.2}~\\
A graph $G$ is $1$-perfect if and only if $\overline{G}$ is $1$-perfect.
\end{theorem}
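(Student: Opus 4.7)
The plan is to derive Theorem \ref{thm2.2} from a self-complementary characterization of perfection due to Lovász, namely that $G$ is perfect if and only if every induced subgraph $H \subseteq G$ satisfies $\abs{\V{H}} \leq \fkt{\alpha}{H}\cdot\fkt{\omega}{H}$. Since taking complements swaps $\alpha$ and $\omega$ while preserving both $\abs{\V{H}}$ and the property of being an induced subgraph, this characterization is preserved under complementation, so Theorem \ref{thm2.2} follows immediately once it is established.

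The first building block I would prove is the \emph{replication lemma}: if $G$ is perfect and $v \in \V{G}$, then the graph $G'$ obtained from $G$ by inserting a twin $v'$ with $\nb{v'} = \nbclosed{v}$ is again perfect. One argues by induction on the order, examining an arbitrary induced subgraph $H' \subseteq G'$; the only nontrivial case is $\{v,v'\} \subseteq \V{H'}$. Here, perfection of $H' - v'$ (isomorphic to an induced subgraph of $G$) yields a proper coloring with $\fkt{\omega}{H'-v'}$ colors, and one distinguishes whether $v$ lies in some maximum clique of $H'$: if yes, $\fkt{\omega}{H'} = \fkt{\omega}{H'-v'}+1$ and the coloring extends to $v'$ by a fresh color; if no, $\fkt{\omega}{H'} = \fkt{\omega}{H'-v'}$ and the color of $v$ can be reused for $v'$.

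Second, the easy direction of the characterization is immediate: if $G$ is perfect and $H \subseteq G$ is induced, then a proper coloring with $\X{H} = \fkt{\omega}{H}$ colors partitions $\V{H}$ into $\fkt{\omega}{H}$ stable sets of cardinality at most $\fkt{\alpha}{H}$, giving $\abs{\V{H}} \leq \fkt{\alpha}{H}\cdot\fkt{\omega}{H}$.

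The main obstacle is the converse. I would assume for contradiction the existence of a smallest graph $G$ on which the inequality $\abs{\V{H}}\leq\fkt{\alpha}{H}\cdot\fkt{\omega}{H}$ holds on every induced subgraph $H \subseteq G$ but which is itself imperfect; minimality forces every proper induced subgraph of $G$ to be perfect, while $\X{G} > \fkt{\omega}{G}$. The classical Lovász trick is to replicate each vertex $v \in \V{G}$ a carefully chosen nonnegative number of times — multiplicities dictated by the incidence of $v$ with the maximum stable sets of $G$ — to obtain a larger graph $G^*$. Iterated application of the replication lemma together with the analysis above controls $\fkt{\omega}{G^*}$ and $\fkt{\alpha}{G^*}$ in terms of the combinatorics of maximum cliques and stable sets of $G$; a double counting argument, expressed most cleanly as a rank inequality on the clique-stable-set incidence matrix of $G$, then produces $\abs{\V{G^*}} > \fkt{\alpha}{G^*}\cdot\fkt{\omega}{G^*}$, contradicting the inequality inherited by $G^*$ from $G$. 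Choosing the multiplicities so that the counting becomes strict — and verifying that perfection, and hence the inequality, propagates correctly to $G^*$ — is the technically demanding heart of the argument.
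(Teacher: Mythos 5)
The paper states Theorem \ref{thm2.2} without proof, citing Lov\'asz, so there is no in-paper argument to compare against; your proposal has to stand on its own. The overall strategy --- establish the replication lemma, prove the self-complementary characterization that $G$ is perfect if and only if $\abs{\V{H}}\leq\fkt{\alpha}{H}\cdot\fkt{\omega}{H}$ for every induced subgraph $H\subseteq G$, and read the theorem off by complementation --- is exactly Lov\'asz's original route and is sound in outline.

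There are, however, two genuine gaps. First, Case B of your replication lemma fails as written: since $\nb{v'}=\nbclosed{v}$, the twin $v'$ is \emph{adjacent} to $v$, so ``the color of $v$ can be reused for $v'$'' is impossible in a proper coloring. The correct argument is more delicate: let $S$ be the color class of $v$ in an $\fkt{\omega}{H'-v'}$-coloring of $H'-v'$; because $v$ lies in no maximum clique of $H'-v'$, every maximum clique meets $S\setminus\set{v}$, hence $\fkt{\omega}{\lb H'-v'\rb-\lb S\setminus\set{v}\rb}\leq\fkt{\omega}{H'-v'}-1$; recolor that subgraph (which still contains $v$) with $\fkt{\omega}{H'-v'}-1$ colors using its perfection, and adjoin $\lb S\setminus\set{v}\rb\cup\set{v'}$ as one additional stable color class. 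Second, you explicitly defer the converse of the characterization --- the choice of replication multiplicities, the control of $\fkt{\alpha}{G^*}$ and $\fkt{\omega}{G^*}$, and the final counting contradiction --- calling it ``the technically demanding heart of the argument.'' That is precisely the part that carries the theorem: without executing it, the proposal is a correct roadmap of Lov\'asz's proof rather than a proof.
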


\begin{theorem}[Strong Perfect Graph Theorem, Chudnovsky, Robertson, Seymour, Thomas 2002 \cite{Chudnovsky2006strongperfect}]\label{thm2.3}~\\
A graph $G$ is $1$-perfect if and only if neither $G$ nor $\overline{G}$ contains an induced circle of odd length $\geq 5$.
\end{theorem}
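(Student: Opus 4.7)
The plan is to establish both directions of the biconditional, noting at the outset that this theorem lies deep enough that a full argument is well beyond any brief sketch—the original proof of Chudnovsky, Robertson, Seymour and Thomas runs to roughly $150$ pages. The easy direction is that odd holes $C_{2k+1}$ of length at least $5$ are not $1$-perfect (directly, since $\fkt{\omega}{C_{2k+1}}=2$ but $\X{C_{2k+1}}=3$), and by Theorem \ref{thm2.2} their complements fail to be $1$-perfect as well. As the definition of $1$-perfection quantifies over all induced subgraphs, any $G$ containing such an induced hole or antihole inherits the failure and cannot be $1$-perfect.

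For the hard direction, suppose $G$ is a \emph{Berge graph}, i.e.\ neither $G$ nor $\overline{G}$ contains an induced cycle of odd length at least $5$; we must show $G$ is $1$-perfect. The approach I would follow is the decomposition strategy of Chudnovsky, Robertson, Seymour and Thomas: first establish a structure theorem stating that every Berge graph either belongs to one of five ``basic'' classes—bipartite graphs, line graphs of bipartite graphs, the complements of these two classes, and the so-called double split graphs—or admits a nontrivial decomposition into smaller Berge graphs, namely a $2$-join, a complement $2$-join, or a balanced skew partition. Each basic class is quickly verified to be $1$-perfect: bipartite graphs trivially; line graphs of bipartite graphs via König's edge coloring theorem, which for bipartite $H$ gives $\X{\lineg{H}}=\fkt{\Delta}{H}=\fkt{\omega}{\lineg{H}}$ and is inherited by induced subgraphs; the two complement classes by Theorem \ref{thm2.2}; and the double split graphs by direct construction of a clique and a matching coloring of equal size.

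The remaining task is to show that $1$-perfection is preserved by each of the three decomposition operations. Induction on $\abs{\V{G}}$ then closes the argument: a minimum counterexample would be a minimally imperfect Berge graph, the structure theorem supplies a decomposition (no minimally imperfect graph can be basic, since each basic class is already perfect), and the preservation lemmas contradict minimality. The main obstacle—and indeed the dominant difficulty of the entire proof—is the structure theorem itself, whose verification requires an intricate case analysis on the configurations of ``prisms'', ``wheels'', and related forbidden substructures a Berge graph may or may not contain. A secondary but substantial obstacle is the preservation lemma for the balanced skew partition, a long-standing conjecture of Chvátal that remained open until the work of CRST. Given the depth of these obstacles, any serious attempt within a thesis of this size would necessarily reduce to citing the full argument of \cite{Chudnovsky2006strongperfect}.
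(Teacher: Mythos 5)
The paper offers no proof of this theorem at all; it is stated as a known result with a citation to \cite{Chudnovsky2006strongperfect}, which is exactly where your proposal ends up as well. Your easy direction (odd holes satisfy $\fkt{\omega}{C_{2k+1}}=2<3=\X{C_{2k+1}}$, antiholes via Theorem \ref{thm2.2}, and heredity of the failure) is complete and correct, and your outline of the hard direction accurately describes the decomposition strategy of the published argument, so both you and the paper ultimately defer to the original source.
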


What exactly does a proper vertex coloring of $G$ correspond to in the complement? For some proper coloring $F$ a color class $F_i$ induces a stable set in $G$, therefore its induced graph in $\overline{G}$ is complete. We cover the vertices of $\overline{G}$ with cliques and get the following.

\begin{definition}[Clique Cover]
For a graph $G$, a {\em clique cover} is a vertex $c$-coloring $C$ with $x,y\in C_i~\Rightarrow xy\in\E{G}$, where $C_i$ is a color class of $C$.\\
The {\em clique cover number} $\Xq{G}$ of $G$ is the smallest integer $c$ for which a $c$-clique cover of $G$ exists.
\end{definition}

Again we can generalize the concept of clique covers in terms of the distance between vertices and get a strong version.

\begin{definition}[$k$-strong Clique Cover]
For a graph $G$ and some positive integer $k\in\N$, a {\em $k$-strong clique cover} is a vertex $c$-coloring $C$ with $x,y\in C_i~\Rightarrow \distg{G}{x}{y}\leq k$\\
The {\em clique cover number} $\kcov{k}{G}$ of $G$ is the smallest integer $c$ for which a $k$-strong $c$-clique cover of $G$ exists.
\end{definition}

\begin{definition}[$k$-strong Stable Set]
The set $I\subseteq\V{G}$ is called a {\em $k$-strong stable set} for some positive integer $k\in\N$ if and only if for each pair $x,y\in I$, $x\neq y,$ it holds $\distg{G}{x}{y}>k$.\\
The {$k$-strong stability number} $\kst{k}{G}$ is the maximum cardinality of such a subset $I$ of $\V{G}$. 
\end{definition}

Some basic observations similar to those regarding cliques and colorings can be made for all $k\in\N$. The difference between the two concepts is that, while a $k$-strong clique gets bigger and bigger with increasing $k$ and therefore more colors are needed, a $k$-strong stable set gets smaller with increasing $k$ and therefore fewer $k$-strong cliques are required to cover the whole graph.
\begin{enumerate}[i)]
\item $\kst{k}{G}\leq\kst{k-1}{G}$

\item $\kcov{k}{G}\leq\kcov{k-1}{G}$

\item $\kst{k}{G} \leq\kcov{k}{G}$
\end{enumerate}

A well known result regarding a family of perfect graphs which will be studied very intense in this thesis is the following.

\begin{theorem}\label{thm2.4}
Every chordal graph is perfect.
\end{theorem}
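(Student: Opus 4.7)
The plan is to prove the theorem via the classical route through simplicial vertices and perfect elimination orderings. The starting point will be the following key lemma, attributed to Dirac: every chordal graph $G$ that is not complete contains at least two non-adjacent \emph{simplicial} vertices, i.e., vertices whose neighborhood is a clique.

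The proof of this lemma I would carry out by induction on $\abs{\V{G}}$. If $G$ is complete, every vertex is simplicial. Otherwise, pick two non-adjacent vertices $a,b$ and let $S$ be a minimal $a$-$b$ separator. Denote by $A$ and $B$ the vertex sets of the connected components of $G-S$ containing $a$ and $b$, respectively. A short argument using the minimality of $S$ together with chordality shows that $S$ must be a clique: otherwise, picking non-adjacent $s,s'\in S$ and using paths through $A$ and through $B$ (which exist by minimality) one constructs an induced cycle of length $\geq 4$. Now $\induz{G}{A\cup S}$ and $\induz{G}{B\cup S}$ are both proper induced chordal subgraphs of $G$, so by induction each contains a simplicial vertex. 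Since $S$ is a clique, any simplicial vertex of $\induz{G}{A\cup S}$ lying in $S$ would contradict its simpliciality in $A\cup S$ unless $A=\emptyset$; so there exists a simplicial vertex of $\induz{G}{A\cup S}$ lying in $A$, and its neighborhood in $G$ is still contained in $A\cup S$, hence a clique in $G$. The symmetric argument produces a simplicial vertex in $B$, giving the two desired non-adjacent simplicial vertices.

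Next, iterating the lemma yields a \emph{perfect elimination ordering} $v_1,\dots,v_n$ of $\V{G}$: choose $v_1$ to be a simplicial vertex of $G$, then $v_2$ a simplicial vertex of $G-v_1$ (which is chordal, since any induced subgraph of a chordal graph is chordal), and so on. By construction, for every $i$ the set $\fkt{N}{v_i}\cap\set{v_{i+1},\dots,v_n}$ is a clique in $G$. Coloring the vertices greedily in the reverse order $v_n,v_{n-1},\dots,v_1$ using the smallest available color, the already-colored neighbors of $v_i$ form a clique of size at most $\fkt{\omega}{G}-1$, so $v_i$ can always be colored using one of the colors $\set{1,\dots,\fkt{\omega}{G}}$. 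This gives $\X{G}\leq\fkt{\omega}{G}$, and since $\fkt{\omega}{G}\leq\X{G}$ trivially, we obtain $\X{G}=\fkt{\omega}{G}$.

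Finally, because every induced subgraph $H\subseteq G$ of a chordal graph is itself chordal, the whole argument applies verbatim to $H$, yielding $\X{H}=\fkt{\omega}{H}$ for every induced subgraph, which is exactly the definition of perfection. The technical heart of the proof, and the step I expect to require the most care, is the separator argument establishing Dirac's lemma; once simplicial vertices are in hand the reduction to greedy coloring is essentially bookkeeping.
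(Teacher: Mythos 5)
The paper does not actually prove Theorem \ref{thm2.4}; it is quoted as a well-known fact, so your proposal can only be judged on its own merits. The overall route — Dirac's simplicial-vertex lemma via minimal separators, then a perfect elimination ordering and reverse greedy coloring, then heredity of chordality — is the standard and correct way to prove this, and the separator-is-a-clique argument and the greedy-coloring bookkeeping are both fine as sketched.

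There is, however, one step whose justification is wrong as written. You claim that ``any simplicial vertex of $\induz{G}{A\cup S}$ lying in $S$ would contradict its simpliciality in $A\cup S$ unless $A=\emptyset$.'' This is false: take $A=\set{a}$, $S=\set{s,s'}$ with $a$ adjacent to both and $ss'$ an edge, so that $\induz{G}{A\cup S}$ is a triangle; then $s\in S$ is simplicial there although $A\neq\emptyset$. A vertex of $S$ can perfectly well be simplicial in $\induz{G}{A\cup S}$, so the existence of a simplicial vertex inside $A$ does not follow from the weakened statement ``each contains a simplicial vertex'' that you actually invoke. The repair is immediate and uses the full strength of the lemma you stated: if $\induz{G}{A\cup S}$ is complete, every vertex of $A$ is simplicial in it; if not, the induction hypothesis supplies \emph{two non-adjacent} simplicial vertices, and since $S$ is a clique they cannot both lie in $S$, so at least one lies in $A$. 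Because $S$ separates $A$ from $B$, the neighborhood in $G$ of such a vertex is contained in $A\cup S$, so it is simplicial in $G$ as well, and the symmetric argument in $B$ completes the lemma. With that correction the rest of your argument goes through.
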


\section{Edge Coloring}

The coloring of edges is somewhat of a special case of vertex coloring. The standard definition of a proper edge coloring induces a coloring of the vertices of the line graph.

\begin{definition}[Proper Edge Coloring]
For a graph $G$, an {\em edge coloring} is a mapping $F$ of the edges of $G$ to some set of colors: $F\colon\E{G}\rightarrow\set{1,\dots,c}$.\\
A {\em proper edge coloring} is an edge coloring $F$ with $e\cap e'\neq\emptyset~\Rightarrow\fkt{F}{e}\neq\fkt{F}{e'}$ for all $e,e'\in\E{G}$.\\
The {\em chromatic index} $\fkt{\chi'}{G}$ is the smallest integer $c$ for which a proper $c$-edge coloring of $G$ exists.
\end{definition}

A pair of edges $e$ and $e'$ that share a common endpoint and therefore fulfill $e\cap e'\neq\emptyset$ correspond to two adjacent vertices in the line graph. Therefore a proper edge coloring of some graph $G$ corresponds to a proper vertex coloring of $\lineg{G}$,
\begin{align*}
\fkt{\chi'}{G}=\X{\lineg{G}}.
\end{align*}
With that we can give a definition of the generalized $k$-strong edge coloring by using the induced distance in the line graph.

\begin{definition}[$k$-strong Edge Coloring]
For a graph $G$ and some positive integer $k\in\N$, a {\em $k$-strong edge coloring} is an edge coloring $F\colon\E{G}\rightarrow\set{1,\dots,c}$ with $\distlg{G}{e}{e'}\leq k~\Rightarrow\fkt{F}{e}\neq\fkt{F}{e'}$ for all $e,e'\in\E{G}$, $e\neq e'$.\\
The {\em $k$-strong chromatic index} $\stronki{k}{G}$ of $G$ is the smallest number $c$ for which a $k$-strong edge $c$-coloring of $G$ exists.
\end{definition}

A proper edge coloring partitions the set of edges into subsets where no two edges share a common endpoint. Such a set of edges is called a matching.

\begin{definition}[Matching]
The set $M\subseteq \E{G}$ is called a {\em matching} if and only if for all pairs of edges $e,e'\in M$ it holds $e\cap e'\neq\emptyset$.\\
We denote the maximum cardinality of such a subset $M$ of $E$ by $\fkt{\nu}{G}$.
\end{definition}

\begin{definition}[$k$-strong Matching]
The set $M\subseteq \E{G}$ is called a {\em $k$-strong matching} for some positive integer $k\in\N$ if and only if for all pairs of edges $e,e'\in M$ it holds $\distlg{G}{e}{e'}>k$.\\
We denote the maximum cardinality of such a subset $M$ of $E$ by $\kmat{k}{G}$.
\end{definition}

Obviously a star represents a structure which is somewhat similar to a clique in the way that no more than one edge of a star can be part of the same matching. It is due to this property that those structures are called anti-matchings.

\begin{definition}[$k$-strong Anti-Matching]
The set $A\subseteq\E{G}$ is called a {\em $k$-strong anti-matching} for some positive integer $k\in\N$ if and only if for all pairs of edges $e,e'\in A$ it holds $\distlg{G}{e}{e'}\leq k$.\\
We denote the maximum cardinality of such a subset $A$ of $E$ by $\kam{k}{G}$.
\end{definition}

The relation between $k$-strong anti-matchings and $k$-strong cliques becomes obvious by translating the definition of $k$-strong anti-matchings to the language of line graphs. We get the following relations.
\begin{enumerate}[i)]
\item $\kam{k}{G}=\kcl{k}{\lineg{G}}$

\item $\kam{k}{G}\leq\stronki{k}{G}$
\end{enumerate}
With another translation we obtain the last remaining dual concept in terms of taking the line graph of a graph: The anti-matching cover of a graph.

\begin{definition}[$k$-strong Anti-Matching Cover]
For a graph $G$ and some positive integer $k\in\N$, a {\em $k$-strong anti-matching cover} is an edge coloring $C$ with $e,e'\in C_i~\Rightarrow \distlg{G}{e}{e'}\leq k$.\\
The {\em $k$-strong anti-matching cover number} $\kamc{k}{G}$ is the smallest integer $c$ for which a $k$-strong $c$-anti-matching cover of $G$ exists. 
\end{definition}

Again we can derive the known inequality
\begin{align*}
\kmat{k}{G}=\kst{k}{\lineg{G}}\leq\kcov{k}{\lineg{G}}=\kamc{k}{G}.
\end{align*}

In addition it is possible to show some basic relation between $k$-strong anti-matchings and cliques in the same graph.

\begin{lemma}\label{lemma2.1}
Let $k$ be a positive integer and $A$ a $k$-strong anti-matching of a graph $G$. Then $C=\bigcup_{e\in A}e$ is a $\lb k+1\rb$-strong clique in $G$.
\end{lemma}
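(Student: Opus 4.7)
The plan is to unpack what $k$-strong anti-matching means in terms of the line graph and then translate the line-graph distance bound back into a walk in $G$ connecting arbitrary endpoints. Fix two vertices $x, y \in C$, so there exist edges $e, e' \in A$ with $x \in e$ and $y \in e'$. The goal is to show $\distg{G}{x}{y} \leq k+1$.

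If $e = e'$, then $x$ and $y$ are both endpoints of the same edge, hence $\distg{G}{x}{y} \leq 1 \leq k+1$ since $k \geq 1$. Otherwise, the anti-matching property gives $\distlg{G}{e}{e'} \leq k$, so there is a sequence of edges $e = e_0, e_1, \dots, e_d = e'$ in $G$ with $d \leq k$ and $e_{i-1} \cap e_i \neq \emptyset$ for each $1 \leq i \leq d$ (this is exactly a path in $\lineg{G}$).

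From this sequence I would build an $x$-$y$ walk in $G$ as follows. For each $1 \leq i \leq d$ choose a shared endpoint $z_i \in e_{i-1} \cap e_i$, and set $z_0 \define x$, $z_{d+1} \define y$. Then $z_{i-1}$ and $z_i$ either coincide or are joined by the edge $e_{i-1}$ for each $1 \leq i \leq d+1$, which yields a walk from $x$ to $y$ of length at most $d+1$. Therefore
\begin{align*}
\distg{G}{x}{y} \leq d+1 \leq k+1.
\end{align*}
Since $x, y \in C$ were arbitrary, $C$ is a $(k+1)$-strong clique in $G$.

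The only subtle point is keeping track of the degenerate cases in which consecutive $z_i$ coincide, or in which $x$ already equals $z_1$ (or $y$ equals $z_d$); these only make the walk shorter, so the bound $d+1$ still holds. I do not expect a genuine obstacle here: the argument is essentially the standard correspondence between adjacency of vertices in $\lineg{G}$ and incidence of edges in $G$, combined with the fact that a path of length $d$ in $\lineg{G}$ traverses $d+1$ edges in $G$ and hence spans a walk on at most $d+2$ vertices of $G$.
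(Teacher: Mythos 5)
Your proof is correct and follows essentially the same route as the paper: both translate the line-graph path between $e$ and $e'$ into a walk in $G$ of length one more, the paper by observing that the closest endpoints of $e$ and $e'$ are at distance at most $k-1$ and then adding one edge at each end, you by explicitly choosing shared endpoints $z_i$ of consecutive edges. Your version is a bit more explicit about the degenerate cases and handles all endpoint pairs uniformly, but there is no substantive difference.
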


\begin{proof}
Let $A\subseteq\E{G}$ be a $k$-strong anti-matching in $G$, then it suffices to show $\distg{G}{v}{w}\leq k+1$ for all $v,w\in C=\bigcup_{e\in A}e$.\\
For each pair of edges $e,e'\in A$ it holds $\distlg{G}{e}{e'}\leq k$ and with that a shortest $e$-$e'$-path in $\lineg{G}$ contains at most $k$ edges and $k-1$ other vertices, corresponding to another $k-1$ edges in $G$.\\
In general, any path of length $l$ in the line graph, connecting two vertices $a$ and $b$ of the line graph corresponds to a path in the original graph connecting the two edges corresponding to the vertices $a$ and $b$. This path, in $G$, connects the two endpoints of $a$ and $b$ that have the smallest distance to each other and thus is by $1$ shorter than the path in the line graph.\\ 
Therefore there exists a pair of vertices $x,x'$ with $x\in e$ and $x'\in e'$ satisfying $\distg{G}{x}{x'}\leq k-1$. Furthermore for $e=xy$ and $e'=x'y'$ we get $\distg{G}{y}{y'}\leq k+1$.
\end{proof}

\begin{lemma}\label{lemma2.2}
Let $k$ be a positive integer and $C$ a $k$-strong clique of a graph $G$. Then $\E{\induz{G}{C}}$ is a $\lb k+1\rb$-strong anti-matching in $G$.
\end{lemma}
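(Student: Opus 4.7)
I will invert the reasoning used in the proof of Lemma~\ref{lemma2.1}: there, a walk of length $\ell$ in $L(G)$ is converted into a $G$-path of length $\ell - 1$ between suitable endpoints. Here I do the reverse, lifting a short $G$-path between endpoints of two edges of $G[C]$ back into $L(G)$, adding exactly one extra step in the translation.

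Fix two distinct edges $e = uv$ and $e' = u'v'$ in $\E{\induz{G}{C}}$. If $e \cap e' \neq \emptyset$, then $\distlg{G}{e}{e'} = 1 \leq k+1$ and we are done. Otherwise $u, v, u', v'$ are four distinct vertices, all lying in $C$. Since $C$ is a $k$-strong clique, among the four pairs $(x,x') \in \{u,v\} \times \{u',v'\}$ I select one minimising $d := \distg{G}{x}{x'}$, and by hypothesis $d \leq k$.

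The heart of the proof is the construction. Fix a shortest $x$–$x'$ path $x = w_0, w_1, \dots, w_d = x'$ in $G$ and form the edge sequence
\[
  e,\ xw_1,\ w_1 w_2,\ \dots,\ w_{d-1}x',\ e'
\]
in $G$. Any two consecutive entries share a vertex, so this is a walk in $L(G)$ from $e$ to $e'$ of length $d + 1 \leq k + 1$, giving $\distlg{G}{e}{e'} \leq k+1$, which is what the definition of a $(k+1)$-strong anti-matching demands.

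The only place requiring any care, and therefore the sole real obstacle, is checking that the first and last entries of this walk are genuinely distinct from their neighbours, i.e.\ that $xw_1 \neq e$ and $w_{d-1}x' \neq e'$. This is where the minimal choice of the pair $(x,x')$ pays off: if for instance $xw_1 = e$, then $w_1$ would have to be the other endpoint of $e$, and $\distg{G}{w_1}{x'} = d - 1$ would contradict the minimality of $d$. An analogous argument handles the other end, so the walk is well-defined of length exactly $d+1$. Everything else is merely a bookkeeping translation between $G$-distance and $L(G)$-distance.
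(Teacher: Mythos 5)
Your proof is correct and follows essentially the same route as the paper's: pick a closest pair of endpoints of the two edges (at distance at most $k$ since all endpoints lie in the $k$-strong clique $C$), lift a shortest path between them to a walk in $\lineg{G}$, and observe that the translation adds one to the length, giving $\distlg{G}{e}{e'}\leq k+1$. Your extra care about degenerate coincidences ($e\cap e'\neq\emptyset$, or $xw_1=e$) is harmless but not needed for the upper bound, since a shorter walk only improves the estimate.
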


\begin{proof}
Let $e,e'\in\E{\induz{G}{C}}$ with $e=xy$ and $e'=x'y'$, then it holds w.l.o.g. $\distg{G}{x}{x'}\leq k$ and therefore there exists a path from $x$ to $x'$ in $G$ using at most $k$ edges. This path corresponds to a path from $e$ to $e'$ in $\lineg{G}$ using at most $k$ vertices and therefore at most $k+1$ edges. Hence $\distlg{G}{e}{e'}\leq k+1$.
\end{proof}

One could conjecture the existence of some tighter bound, i.e. the correspondence of a $k$-strong clique to a $k$-strong anti-matching and the other way around. The following examples will show that generally no such thing exists.

\begin{example}~

\begin{figure}[!h]
\begin{center}
\begin{tikzpicture}

\node (ascenter) [inner sep=1.5pt] {};

\node (aslabel) [inner sep=1.5pt,position=135:1.7cm from ascenter] {$C_6$};

\node (as1) [inner sep=1.5pt,position=0:1cm from ascenter,draw,circle,fill] {};
\node (as2) [inner sep=1.5pt,position=60:1cm from ascenter,draw,circle,fill] {};
\node (as3) [inner sep=1.5pt,position=120:1cm from ascenter,draw,circle,fill] {};
\node (as4) [inner sep=1.5pt,position=180:1cm from ascenter,draw,circle,fill] {};
\node (as5) [inner sep=1.5pt,position=240:1cm from ascenter,draw,circle,fill] {};
\node (as6) [inner sep=1.5pt,position=300:1cm from ascenter,draw,circle,fill] {};

\node (label12) [inner sep=1.5pt,position=0:1.2cm from ascenter] {$v_1$};
\node (label23) [inner sep=1.5pt,position=60:1.2cm from ascenter] {$v_2$};
\node (label34) [inner sep=1.5pt,position=120:1.2cm from ascenter] {$v_3$};
\node (label45) [inner sep=1.5pt,position=180:1.2cm from ascenter] {$v_4$};
\node (label56) [inner sep=1.5pt,position=240:1.2cm from ascenter] {$v_5$};
\node (label61) [inner sep=1.5pt,position=300:1.2cm from ascenter] {$v_6$};

\path
(as1) edge (as2)
(as2) edge (as3)
(as3) edge (as4)
(as4) edge (as5)
(as5) edge (as6)
(as1) edge (as6);

\end{tikzpicture}
\end{center}
\end{figure} 

The edges in $A=\set{\set{v_1,v_2}, \set{v_3,v_4}, \set{v_5,v_6}}$ form a maximum $2$-strong anti-matching in $C_6$, simultaneously $\V{C_6}=\bigcup_{e\in A}e$ forms a $3$-strong clique. This is the exact result which follows from Lemma \ref{lemma2.1} and in fact $\E{C_6}$ forms a $3$-strong anti-matching.

\begin{figure}[!h]
\begin{center}
\begin{tikzpicture}

\node (ascenter) [inner sep=1.5pt] {};

\node (aslabel) [inner sep=1.5pt,position=135:1.2cm from ascenter] {$P_4$};

\node (as1) [inner sep=1.5pt,position=0:0cm from ascenter,draw,circle,fill] {};
\node (as2) [inner sep=1.5pt,position=0:1cm from ascenter,draw,circle,fill] {};
\node (as3) [inner sep=1.5pt,position=0:2cm from ascenter,draw,circle,fill] {};
\node (as4) [inner sep=1.5pt,position=0:3cm from ascenter,draw,circle,fill] {};
\node (as5) [inner sep=1.5pt,position=0:4cm from ascenter,draw,circle,fill] {};

\node (label1) [inner sep=1.5pt,position=90:0.07cm from as1] {$v_1$};
\node (label2) [inner sep=1.5pt,position=90:0.07cm from as2] {$v_2$};
\node (label3) [inner sep=1.5pt,position=90:0.07cm from as3] {$v_3$};
\node (label4) [inner sep=1.5pt,position=90:0.07cm from as4] {$v_4$};
\node (label5) [inner sep=1.5pt,position=90:0.07cm from as5] {$v_5$};

\node (shadow) [inner sep=1.5pt,position=45:1.2cm from as5] {};
\node (shadoww) [inner sep=1.5pt,position=0:0.5cm from shadow] {};

\path
(as1) edge (as2)
(as2) edge (as3)
(as3) edge (as4)
(as4) edge (as5);

\end{tikzpicture}
\end{center}
\end{figure} 

In a path of length $4$ the two outer vertices are in distance exactly $4$, hence $P_4$ is a $4$-strong clique. The line graph again is a path, but shortened by $1$, therefore $P_4$ is a $3$-strong anti-matching.\\
The graph $P_4$ shows that we cannot improve Lemma \ref{lemma2.1}.\\
The other way around, a $k$-strong clique which also is a $\lb k+1\rb$-strong anti-matching is hardly that obvious.

\begin{figure}[!h]
\begin{center}
\begin{tikzpicture}

\node (ascenter) [inner sep=1.5pt] {};

\node (aslabel) [inner sep=1.5pt,position=135:3.4cm from ascenter] {$W$};

\node (as1) [inner sep=1.5pt,position=345:2.5cm from ascenter,draw,circle,fill] {};
\node (as2) [inner sep=1.5pt,position=15:2.5cm from ascenter,draw,circle,fill] {};
\node (as3) [inner sep=1.5pt,position=45:2.5cm from ascenter,draw,circle,fill] {};
\node (as4) [inner sep=1.5pt,position=75:2.5cm from ascenter,draw,circle,fill] {};
\node (as5) [inner sep=1.5pt,position=105:2.5cm from ascenter,draw,circle,fill] {};
\node (as6) [inner sep=1.5pt,position=135:2.5cm from ascenter,draw,circle,fill] {};
\node (as7) [inner sep=1.5pt,position=165:2.5cm from ascenter,draw,circle,fill] {};
\node (as8) [inner sep=1.5pt,position=195:2.5cm from ascenter,draw,circle,fill] {};
\node (as9) [inner sep=1.5pt,position=225:2.5cm from ascenter,draw,circle,fill] {};
\node (as10) [inner sep=1.5pt,position=255:2.5cm from ascenter,draw,circle,fill] {};
\node (as11) [inner sep=1.5pt,position=285:2.5cm from ascenter,draw,circle,fill] {};
\node (as12) [inner sep=1.5pt,position=315:2.5cm from ascenter,draw,circle,fill] {};

\node (label1) [inner sep=1.5pt,position=15:0.07cm from as1] {$v_1$};
\node (label2) [inner sep=1.5pt,position=45:0.07cm from as2] {$v_2$};
\node (label3) [inner sep=1.5pt,position=75:0.07cm from as3] {$v_3$};
\node (label4) [inner sep=1.5pt,position=105:0.07cm from as4] {$v_4$};
\node (label5) [inner sep=1.5pt,position=135:0.07cm from as5] {$v_5$};
\node (label6) [inner sep=1.5pt,position=165:0.07cm from as6] {$v_6$};
\node (label7) [inner sep=1.5pt,position=195:0.07cm from as7] {$v_7$};
\node (label8) [inner sep=1.5pt,position=225:0.07cm from as8] {$v_8$};
\node (label9) [inner sep=1.5pt,position=255:0.07cm from as9] {$v_9$};
\node (label10) [inner sep=1.5pt,position=285:0.07cm from as10] {$v_{10}$};
\node (label11) [inner sep=1.5pt,position=315:0.07cm from as11] {$v_{11}$};
\node (label12) [inner sep=1.5pt,position=345:0.07cm from as12] {$v_{12}$};

\path
(as1) edge (as2)
(as2) edge (as3)
(as3) edge (as4)
(as4) edge (as5)
(as5) edge (as6)
(as6) edge (as7)
(as7) edge (as8)
(as8) edge (as9)
(as9) edge (as10)
(as10) edge (as11)
(as11) edge (as12)
(as12) edge (as1)
;

\path
(as1) edge (as5)
(as2) edge (as10)
(as3) edge (as6)
(as4) edge (as8)
(as7) edge (as11)
(as9) edge (as12)
;

\end{tikzpicture}
\end{center}
\end{figure} 

In the graph $W$ all pairs of vertices $x,y\in\V{W_4}$ satisfy $\distg{W_4}{x}{y}\leq 3$, hence $W$ forms a $3$-strong clique. On the other hand the edges $v_1v_2$ and $v_7v_8$ are within a distance of $4$, therefore $W$ forms a $4$-strong anti-matching and hence Lemma \ref{lemma2.2} too cannot be improved.
\end{example}
Several other extensions of the ordinary chromatic number have been proposed and studied in the past and some of them, or better, the perspective proposed by them has some interesting impact on the relation between strong vertex and strong edge coloring.\\
One of those concepts is the {\em acyclic coloring} (see \cite{borodin1979acyclic}, \cite{alon1996acyclic} or \cite{hou2015acyclic} for some details) which is the minimum number of colors in a proper vertex coloring such that the vertices of any two colors induce an acyclic graph. This concept has been transported to the realm of edge colorings, where an {\em acyclic edge coloring} is a proper edge coloring for which the edges of any cycle use at least three different colors (see \cite{alon2001acyclic}, \cite{alon2002algorithmic} and \cite{venkateswarlu2016acyclic} for details). This particular parameter seems to even have some important application in chemistry and might be interesting simply due to this fact (see \cite{rajasingh2015new}).\\
If we investigate the $2$-strong vertex coloring or the $2$-strong edge coloring, we soon stumble upon some very basic principals in terms of lower bounds. For a $2$-strong vertex coloring the vertices of a path of length $2$ may not have the same color, thus any cycle with at least three vertices has at least three different colors assigned to its vertices. Similar observations can be made for $2$-strong edge colorings and so it becomes clear that $\stronk{2}{G}$ and $\stronki{2}{G}$ pose upper bounds on the {\em acyclic chromatic number} and the {\em acyclic chromatic index}.\\
Another interesting concept of vertex colorings is the {\em star coloring}, which is a stronger version of the acyclic coloring (see \cite{vince1988star}, \cite{abbott1993star} and \cite{venkatesan2015star} for details). Here now it is required that the vertices of any two color classes of a star coloring induce a so called {\em star forest}.\\
Again we can draw some relations to the $2$-strong coloring, which poses as an upper bound on the {\em star chromatic number} as well, since, as we will see next, any two color classes of a proper $2$-strong coloring induce a $2$-strong matching, which is a special case of a star forest.
\begin{lemma}\label{lemma2.3}
Let $G$ be a graph and $F$ a $2$-strong coloring of $G$, then the vertices of any two color classes $F_i$ and $F_j$ of $F$ induce a $2$-strong matching of $G$.	
\end{lemma}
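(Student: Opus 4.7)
The plan is to unpack what ``2-strong matching'' means in terms of the original graph, translate the coloring hypothesis into a distance statement, and then run through the handful of case-distinctions.

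First I would observe that the edges of $\induz{G}{F_i\cup F_j}$ are necessarily ``bichromatic'': an edge with both endpoints in the same class $F_i$ (resp.\ $F_j$) would give two vertices of identical color at distance $1\leq 2$, violating the $2$-strong coloring condition. So every edge in the induced subgraph has one endpoint in $F_i$ and one endpoint in $F_j$.

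Next I would translate $\distlg{G}{e}{e'}>2$ into a pure $G$-statement: this is equivalent to saying that $e$ and $e'$ are vertex-disjoint \emph{and} no endpoint of $e$ is adjacent in $G$ to any endpoint of $e'$ (a path $e\!-\!e''\!-\!e'$ of length $2$ in $\lineg{G}$ corresponds exactly to an intermediate edge $e''$ joining an endpoint of $e$ to an endpoint of $e'$). So given two distinct edges $e=u_iv_j$ and $e'=u_i'v_j'$ with $u_i,u_i'\in F_i$ and $v_j,v_j'\in F_j$ I would have to rule out (a) shared endpoints and (b) an edge connecting their endpoints.

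For (a), if say $u_i=u_i'$ then $v_j,v_j'$ are both neighbors of $u_i$, so $\distg{G}{v_j}{v_j'}\leq 2$; but both carry color $j$, contradicting the $2$-strong hypothesis (and symmetrically if $v_j=v_j'$). For (b), the monochromatic cases $u_iu_i'\in\E{G}$ or $v_jv_j'\in\E{G}$ are immediate contradictions. The bichromatic cases, say $u_iv_j'\in\E{G}$, force $\distg{G}{u_i}{u_i'}\leq 2$ via the path $u_i\!-\!v_j'\!-\!u_i'$, again contradicting that $u_i,u_i'$ share the color $i$; the remaining case $v_ju_i'\in\E{G}$ is symmetric.

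No real obstacle is expected; the only mildly subtle point is getting the correspondence between distance in $\lineg{G}$ and adjacency of endpoints in $G$ right so that I know exactly which configurations must be excluded. Once that translation is in hand, the argument is just four symmetric applications of the $2$-strong coloring property.
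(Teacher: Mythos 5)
Your proof is correct and rests on the same idea as the paper's: any violation of the $2$-strong matching condition produces two vertices of the same color at distance at most $2$ in $G$, contradicting the $2$-strong coloring. The paper merely packages your case analysis more compactly, observing that every such violation yields a path of length $2$ inside $\induz{G}{F_i\cup F_j}$, whose three pairwise-close vertices would require three colors while only two are available.
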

\begin{proof}
Suppose $F_i$ and $F_j$ do not induce a $2$-strong matching of $G$, then there must exist a path of length $2$ in $\induz{G}{F_i\cup F_j}$, say $P$. Let $P=xyz$, then no two of the three vertices may have the same color, thus we need at least three colors for a proper $2$-strong coloring of $\induz{G}{F_i\cup F_j}$, a contradiction.	
\end{proof}
So, if we assign the tuple of the two different colors under the given $2$-strong coloring $F$ as a color to every edge, we obtain a proper $2$-strong edge coloring of the same graph.
\begin{corollary}\label{cor2.1}
Let $G$ be a graph, the following inequality holds:
\begin{align*}
\stronki{2}{G}\leq\binom{\stronk{2}{G}}{2}.
\end{align*}	
\end{corollary}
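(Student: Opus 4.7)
The plan is to construct an explicit proper $2$-strong edge coloring of $G$ whose palette has size at most $\binom{\stronk{2}{G}}{2}$, using an optimal $2$-strong vertex coloring of $G$ together with Lemma \ref{lemma2.3}. Set $c\define\stronk{2}{G}$ and fix a $2$-strong vertex coloring $F\colon\V{G}\rightarrow\set{1,\dots,c}$. For every edge $e=uv\in\E{G}$ define
\begin{align*}
\fkt{F'}{e}\define\set{\fkt{F}{u},\fkt{F}{v}}.
\end{align*}
Because $u$ and $v$ are adjacent we have $\distg{G}{u}{v}=1\leq 2$, hence $\fkt{F}{u}\neq\fkt{F}{v}$, so $\fkt{F'}{e}$ really is a $2$-element subset of $\set{1,\dots,c}$. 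The palette therefore has size $\binom{c}{2}$.

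Next I would verify that $F'$ is a proper $2$-strong edge coloring of $G$. Take two distinct edges $e=uv$ and $e'=u'v'$ with $\fkt{F'}{e}=\fkt{F'}{e'}=\set{i,j}$. Then $u,v,u',v'$ all lie in the union of the two color classes $F_i\cup F_j$. Applying Lemma \ref{lemma2.3} to the color classes $F_i$ and $F_j$ of the $2$-strong coloring $F$, the subgraph $\induz{G}{F_i\cup F_j}$ is a $2$-strong matching of $G$. Since $e$ and $e'$ both live in this induced subgraph, they belong to the same $2$-strong matching of $G$, and by the definition of a $2$-strong matching we conclude $\distlg{G}{e}{e'}>2$. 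Hence $F'$ assigns distinct colors to any two edges at line-graph distance at most $2$, so $F'$ is a proper $2$-strong edge coloring.

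This immediately yields $\stronki{2}{G}\leq\binom{c}{2}=\binom{\stronk{2}{G}}{2}$. The only delicate step is the second one, and its whole content is absorbed by Lemma \ref{lemma2.3}: once one knows that any pair of color classes of a $2$-strong vertex coloring induces a $2$-strong matching, the encoding $e\mapsto\set{\fkt{F}{u},\fkt{F}{v}}$ essentially writes down a valid $2$-strong edge coloring by construction. I therefore expect no real obstacle; the proof is a clean application of Lemma \ref{lemma2.3} combined with a counting argument on unordered pairs of colors.
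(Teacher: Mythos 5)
Your proof is correct and follows exactly the route the paper takes: the paper's own (implicit) argument is the sentence preceding the corollary, namely assigning to each edge the unordered pair of endpoint colors and invoking Lemma \ref{lemma2.3} to see that edges sharing a color pair lie in a common $2$-strong matching. Nothing to add.
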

A somewhat similar, but a bit weaker, result is true if we swap vertex and edge coloring and assign any pair of two colors of edges incident to a vertex in $G$ as its color. Note that in this case the coloring of the vertex is not clear, since a vertex may be incident to more than two edges. But since the following holds for any two color classes, any pair of such colors may be chosen.
\begin{lemma}\label{lemma2.4}
Let $G$ be a graph and $F$ a $2$ strong edge coloring of $G$, then the vertices of degree $2$ in the induced subgraph of the vertices belonging to the edges of any two color classes $F_i$ and $F_j$ of $F$ form a stable set.	
\end{lemma}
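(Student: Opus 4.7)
The plan is to argue by contradiction: assume there are two vertices $v_1, v_2$ of degree $2$ in the subgraph carrying the edges of $F_i \cup F_j$ that happen to be adjacent in $G$, so $v_1 v_2 \in \E{G}$. Since $F_i$ and $F_j$ are matchings (each being in particular a $2$-strong matching), the degree condition forces each $v_l$ to be incident to exactly one edge of color $i$, which I will call $e_i^l$, and exactly one edge of color $j$, called $e_j^l$.

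Next, I observe that the edge $v_1 v_2$ receives only one color, hence cannot belong to both $F_i$ and $F_j$. Without loss of generality $v_1 v_2 \notin F_j$. Then neither $e_j^1$ nor $e_j^2$ equals $v_1 v_2$. I further claim $e_j^1 \neq e_j^2$: if they were the same edge it would contain both $v_1$ and $v_2$ and therefore coincide with $v_1 v_2$, contradicting $v_1 v_2 \notin F_j$.

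With $e_j^1, e_j^2 \in F_j$ distinct, the edge $v_1 v_2$ acts as a bridge between them in $\lineg{G}$: it shares the endpoint $v_1$ with $e_j^1$ and the endpoint $v_2$ with $e_j^2$. This yields $\distlg{G}{e_j^1}{e_j^2} \le 2$, which contradicts the $2$-strong hypothesis applied to the color class $F_j$.

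The only subtle point I foresee is excluding the degenerate coincidence $e_j^1 = e_j^2$; this is handled cleanly by choosing $j$ so that $v_1 v_2 \notin F_j$, after which the $2$-strong property gives the contradiction directly via the single bridging edge $v_1 v_2$. The argument is entirely symmetric in $i$ and $j$, so the choice of which color to exclude is immaterial.
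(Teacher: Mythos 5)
Your proof is correct and follows essentially the same route as the paper's: both derive a contradiction from the path formed by the edge $v_1v_2$ together with one further colored edge at each endpoint, the paper phrasing this as ``the three edges of a path of length $3$ need three colors'' while you isolate the two $F_j$-edges lying at distance at most $2$ in $\lineg{G}$ via the bridge $v_1v_2$. Your version is in fact slightly more careful, since you explicitly rule out the degenerate coincidence $e_j^1=e_j^2$ and pin down exactly which color class is violated, details the paper leaves implicit.
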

\begin{proof}
Suppose in the graph induced by the vertices of $F_i$ and $F_j$, call it $G_{ij}$, are two vertices $x$ and $y$ of degree $2$ with $\distg{G}{x}{y}=1$. If they are adjacent, they are adjacent in $G_{ij}$ too and with both of them having a degree of two we can find a path of length $3$ in $G_{ij}$. Now there are at least three colors necessary to color the edges of a path of length $3$ in a $2$-strong edge coloring, thus this is a contradiction to $F$ being a proper $2$-strong edge coloring. 
\end{proof}
\begin{corollary}\label{cor2.2}
Let $G$ be a graph, the following inequality holds:
\begin{align*}
\fkt{\chi}{G}\leq\binom{\stronki{2}{G}}{2}.
\end{align*}
\end{corollary}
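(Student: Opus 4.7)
The plan is to mimic Corollary \ref{cor2.1} in reverse: convert a minimum $2$-strong edge coloring of $G$ into a proper vertex coloring by taking unordered pairs of edge colors as vertex labels.

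Fix an optimal $2$-strong edge coloring $F\colon\E{G}\to\set{1,\ldots,c}$ with $c=\stronki{2}{G}$. For every vertex $v$ of degree at least $2$, I would select any two edges $e_1,e_2$ incident to $v$; since $F$ is in particular a proper edge coloring, $\fkt{F}{e_1}\neq\fkt{F}{e_2}$, so the unordered pair $\fkt{\tilde F}{v}:=\set{\fkt{F}{e_1},\fkt{F}{e_2}}$ lies in $\binom{\set{1,\ldots,c}}{2}$, a set of cardinality $\binom{c}{2}$.

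The central step is to show that $\tilde F$ is a proper vertex coloring, and this is precisely where Lemma \ref{lemma2.4} enters. Suppose toward a contradiction that $u,v$ are adjacent in $G$ with $\fkt{\tilde F}{u}=\fkt{\tilde F}{v}=\set{i,j}$. Then each of $u,v$ is incident to an edge of color $i$ and an edge of color $j$; because the color classes $F_i,F_j$ are matchings, each such incidence is unique. Consequently $u$ and $v$ both belong to $\bigcup_{e\in F_i\cup F_j}e$ and both appear as degree-$2$ vertices of the two-color subgraph featured in Lemma \ref{lemma2.4}, which then forbids $uv\in\E{G}$ — contradiction. Counting labels yields $\fkt{\chi}{G}\leq\binom{c}{2}=\binom{\stronki{2}{G}}{2}$.

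The only point requiring separate treatment — and the main obstacle I anticipate — is the handling of degree-$0$ and degree-$1$ vertices, for which the two-edge construction is not directly defined. Isolated vertices impose no coloring constraint and may receive any available label; a pendant vertex whose unique incident edge has color $i$ can be labelled $\set{i,j}$ for some $j$ chosen to differ from the label already assigned to its one neighbor, which is always possible when $\binom{c}{2}\geq 2$. Outside of tiny degenerate situations this greedy patch introduces no new colors, so the bound persists throughout.
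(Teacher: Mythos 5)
Your proof follows the paper's own route exactly: Corollary \ref{cor2.2} is obtained there by assigning to each vertex the unordered pair of colors of two incident edges under an optimal $2$-strong edge coloring and invoking Lemma \ref{lemma2.4} for properness, which is precisely your construction (and, as in the paper, the degree-$2$ hypothesis of that lemma is really being used in the form ``two same-colored edges meeting adjacent vertices lie at line-graph distance at most $2$''). Your additional care with isolated and pendant vertices goes beyond what the paper records; the only places your patch cannot work are degenerate graphs with $\stronki{2}{G}\leq 2$ such as $K_2$ or a path on three vertices, where the stated inequality itself already fails, so that is a defect of the corollary as printed rather than of your argument.
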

\vspace{-1.5mm}
The following example shows that in general the first bound is tight. For the second bound we give an example with a big gap, that shows that the actual error made by the procedure proposed above can be much smaller. We are confident that those bounds can easily be extended to general $k$-strong colorings.
\begin{example}~
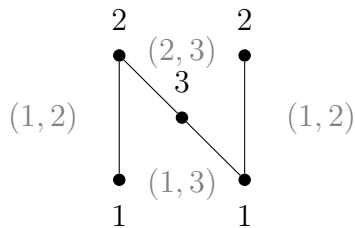
\begin{figure}[H]
	\begin{center}
		\begin{tikzpicture}
		
		\node (ascenter) [inner sep=1.5pt] {};
		
		\node (v1) [inner sep=1.5pt,position=225:1cm from ascenter,draw,circle,fill] {};
		\node (v2) [inner sep=1.5pt,position=135:1cm from ascenter,draw,circle,fill] {};
		\node (v3) [inner sep=1.5pt,draw,circle,fill] {};
		\node (v4) [inner sep=1.5pt,position=315:1cm from ascenter,draw,circle,fill] {};
		\node (v5) [inner sep=1.5pt,position=45:1cm from ascenter,draw,circle,fill] {};
		
		\node (e1) [inner sep=0pt,position=180:1cm from ascenter] {};
		\node (e2) [inner sep=0pt,position=135:5mm from ascenter] {};
		\node (e3) [inner sep=0pt,position=315:5mm from ascenter] {};
		\node (e4) [inner sep=0pt,position=0:1cm from ascenter] {};
				
		\node (lv1) [inner sep=1.5pt,position=270:2mm from v1] {$1$};
		\node (lv2) [inner sep=1.5pt,position=90:2mm from v2] {$2$};
		\node (lv3) [inner sep=1.5pt,position=90:2mm from v3] {$3$};
		\node (lv4) [inner sep=1.5pt,position=270:2mm from v4] {$1$};
		\node (lv5) [inner sep=1.5pt,position=90:2mm from v5] {$2$};

		\node (le1) [inner sep=1.5pt,position=180:2mm from e1] {\color{gray} $\lb 1,2\rb$};
		\node (le2) [inner sep=1.5pt,position=45:2mm from e2] {\color{gray} $\lb 2,3\rb$};
		\node (le3) [inner sep=1.5pt,position=225:2mm from e3] {\color{gray} $\lb 1,3\rb$};
		\node (le4) [inner sep=1.5pt,position=0:2mm from e4] {\color{gray} $\lb 1,2\rb$};

		\path
		(v1) edge (v2)
		(v2) edge (v3)
		(v3) edge (v4)
		(v4) edge (v5);
		
		\end{tikzpicture}
	\end{center}
	\caption{A $2$-strong edge coloring constructed from a $2$-strong coloring.}
	\label{fig2.3}
\end{figure}
\vspace{-3mm}
For this graph $G$ we have $\stronk{2}{G}=3$ and $\stronki{2}{G}=3=\binom{3}{2}$ and thus the bound of Corollary \autoref{cor2.2} is tight for $G$.	
\end{example}
\begin{example}~
	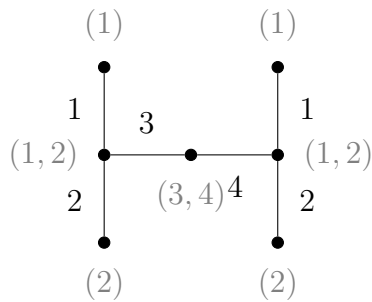
\begin{figure}[H]
		\begin{center}
			\begin{tikzpicture}
			
			\node (ascenter) [inner sep=1.5pt] {};
			
			\node (v2) [inner sep=1.5pt,position=180:1cm from ascenter,draw,circle,fill] {};
			\node (v1) [inner sep=1.5pt,position=90:1cm from v2,draw,circle,fill] {};
			\node (v3) [inner sep=1.5pt,position=270:1cm from v2,draw,circle,fill] {};
			\node (v4) [inner sep=1.5pt,draw,circle,fill] {};
			\node (v6) [inner sep=1.5pt,position=0:1cm from ascenter,draw,circle,fill] {};
			\node (v5) [inner sep=1.5pt,position=90:1cm from v6,draw,circle,fill] {};
			\node (v7) [inner sep=1.5pt,position=270:1cm from v6,draw,circle,fill] {};
			
			\node (e1) [inner sep=0pt,position=90:5mm from v2] {};
			\node (e2) [inner sep=0pt,position=270:5mm from v2] {};
			\node (e3) [inner sep=0pt,position=180:5mm from ascenter] {};
			\node (e4) [inner sep=0pt,position=0:5mm from ascenter] {};
			\node (e5) [inner sep=0pt,position=90:5mm from v6] {};
			\node (e6) [inner sep=0pt,position=270:5mm from v6] {};
			
			\node (lv1) [inner sep=1.5pt,position=90:2mm from v1] {\color{gray} $\lb 1\rb$};
			\node (lv2) [inner sep=1.5pt,position=180:2mm from v2] {\color{gray} $\lb 1,2\rb$};
			\node (lv3) [inner sep=1.5pt,position=270:2mm from v3] {\color{gray} $\lb 2\rb$};
			\node (lv4) [inner sep=1.5pt,position=270:2mm from v4] {\color{gray} $\lb 3,4\rb$};
			\node (lv5) [inner sep=1.5pt,position=90:2mm from v5] {\color{gray} $\lb 1\rb$};
			\node (lv6) [inner sep=1.5pt,position=0:2mm from v6] {\color{gray} $\lb 1,2\rb$};
			\node (lv7) [inner sep=1.5pt,position=270:2mm from v7] {\color{gray} $\lb 2\rb$};
						
			\node (le1) [inner sep=1.5pt,position=180:2mm from e1] {$1$};
			\node (le2) [inner sep=1.5pt,position=180:2mm from e2] {$2$};
			\node (le3) [inner sep=1.5pt,position=90:2mm from e3] {$3$};
			\node (le4) [inner sep=1.5pt,position=270:2mm from e4] {$4$};
			\node (le5) [inner sep=1.5pt,position=0:2mm from e5] {$1$};
			\node (le6) [inner sep=1.5pt,position=0:2mm from e6] {$2$};
						
			\path
			(v1) edge (v2)
			(v2) edge (v4)
			(v3) edge (v2)
			(v4) edge (v6)
			(v5) edge (v6)
			(v7) edge (v6);
			
			\end{tikzpicture}
		\end{center}
		\caption{A ordinary vertex coloring constructed from a $2$-strong edge coloring.}
		\label{fig2.4}
	\end{figure}
	\vspace{-3mm}
For this graph $G$ we have $\stronki{2}{G}=4$ and $\fkt{\chi}{G}=2$, but our procedure assigns $4$ different colors, while the bound by Corollary \autoref{cor2.2} gives us a value of $\binom{4}{2}=6$.\\
This example also shows that the procedure used in the proof of Lemma \autoref{lemma2.4} by far is not optimal and why we did not obtain a bound on the $2$-strong chromatic number. But it suggests that we could obtain such a bound from the same structure, if we changed it a little, since changing one of the two $\lb 1,2\rb$ labels to one of its alternatives (resp. $\lb 1,3\rb$ or $\lb 1,4\rb$) would result in a proper $2$-strong vertex coloring in $5$ colors where just $4$ are needed.	
\end{example}
As we were able to transfer the concepts of coloring the vertices of a graph to the edges while preserving clique-like structures and stable sets, we can give an analog definition of perfection in terms of edge coloring. The concept of perfection of graphs regarding strong edge colorings was first introduced by Chen and Chang (\cite{ChenChang2014perfectionstrongedge}).
\begin{definition}[$k$-strong am-Perfection]
A graph $G$ is called {\em $k$-strong am-perfect} if for all edge induced subgraphs $H\subseteq G$ holds
\begin{align*}
\kam{k}{H}=\stronki{k}{H}.
\end{align*} 
\end{definition}

\chapter{Powers of Graphs}


In this chapter we will give a brief summary of results regarding upper bounds on the chromatic number of graph powers, we then will have a quick look into some special graph classes which are closed under the taking of powers and some share the attribute of being chordal, which leads us to the characterization of chordal graphs closed under the taking of powers by Laskar and Shier.\\
We then proceed to general powers of chordal graphs, where we will see some very exciting results in terms of creating cycles through taking the power of graphs. This leads to a characterization of graphs whose squares become chordal and even a first attempt to find graphs whose squares are perfect.\\
\\
Distance colorings, or in our notation strong colorings, were also studied under another name. Instead of just defining some specific coloring with restrictions in the distance between vertices one could add more edges to the graph connecting those vertices within a given distance to each other. This produces a new graph whose regular chromatic number is the same as the $k$-strong chromatic number of the original graph.\\
This is known as taking the $k$-th power of a graph and has been studied very intense not only in terms of colorings.

\begin{definition}[Power of Graphs]\label{def3.1}
The {\em $k$-th power} $G^k$ of a graph $G=\lb V,E\rb$ is given by the vertex set $\V{G^k}=V$ and the edge set $\E{G^k}=\condset{vw}{v,w\in V,~\distg{G}{v}{w}\leq k}$. 
\end{definition}

As mentioned above the following relations hold for the $k$-th power of a graph $G$:
\begin{enumerate}[i)]
\item $\kcl{k}{G}=\fkt{\omega}{G^k}$

\item $\kst{k}{G}=\fkt{\alpha}{G^k}$

\item $\stronk{k}{G}=\fkt{\chi}{G^k}$

\item $\kcov{k}{G}=\fkt{\overline{\chi}}{G^k}$ 
\end{enumerate}

There have been some attempts on general bounds of the chromatic number of graph powers. Especially the case $k=2$, the so called square of a graph, has experienced a lot of attention. 

\section{Upper Bounds on the Chromatic Number of Graph Powers}

We will begin here with a very basic observation in terms of the maximum degree $\Delta$ and then explore more sophisticated bounds that make use of the girth, or of properties of special graph classes. A generalized version of Theorem \autoref{thm2.1} can be obtained via a simple induction.

\begin{lemma}\label{lemma3.1a}
For any graph $G$ and positive integer $k\in\N$ it holds
\begin{align*}
\stronk{k}{G}\leq \fkt{\Delta}{G}^k+1.
\end{align*}
\end{lemma}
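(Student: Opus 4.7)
The plan is to apply Theorem \ref{thm2.1} to the $k$-th power $G^k$ rather than to $G$ itself. By the reformulation listed just after Definition \ref{def3.1}, $\stronk{k}{G}=\fkt{\chi}{G^k}$, so Theorem \ref{thm2.1} already gives $\stronk{k}{G}\leq\fkt{\Delta}{G^k}+1$. The whole claim then reduces to the purely structural estimate $\fkt{\Delta}{G^k}\leq\fkt{\Delta}{G}^k$, i.e.\ $|\knb{k}{v}|\leq\fkt{\Delta}{G}^k$ for every $v\in\V{G}$.

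I would establish this by induction on $k$. The base case $k=1$ is immediate, since $|\nb{v}|=\fkt{\deg_G}{v}\leq\fkt{\Delta}{G}$ is the very definition of the maximum degree. For the inductive step, I would fix a vertex $v$ and observe that every $w\in\knb{k}{v}$ is reached from $v$ by a shortest path whose second vertex $u$ lies in $\nb{v}$ and satisfies $\distg{G}{u}{w}\leq k-1$; consequently $w$ belongs to the closed $(k-1)$-neighborhood $N_{k-1}[u]$. This gives the set-theoretic inclusion
\[
\knb{k}{v}\;\subseteq\;\bigcup_{u\in\nb{v}}\bigl(N_{k-1}[u]\setminus\{v\}\bigr).
\]

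The one accounting subtlety is that once $k\geq 2$ the vertex $v$ itself lies in $N_{k-1}[u]$ for every $u\in\nb{v}$, and so must be excluded on the right-hand side (while being automatically absent on the left). After that removal, the inductive hypothesis $|\knb{k-1}{u}|\leq\fkt{\Delta}{G}^{k-1}$ yields $|N_{k-1}[u]\setminus\{v\}|\leq\fkt{\Delta}{G}^{k-1}$ uniformly for all $k\geq 1$, and summing over the at most $\fkt{\Delta}{G}$ neighbors $u$ of $v$ closes the induction:
\[
|\knb{k}{v}|\;\leq\;\sum_{u\in\nb{v}}\bigl|N_{k-1}[u]\setminus\{v\}\bigr|\;\leq\;\fkt{\deg_G}{v}\cdot\fkt{\Delta}{G}^{k-1}\;\leq\;\fkt{\Delta}{G}^k.
\]

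There is no real obstacle; the only care required is to strip $v$ out of each closed $(k-1)$-neighborhood before summing, since otherwise the naive bound $\sum_{u}|N_{k-1}[u]|\leq\fkt{\Delta}{G}(\fkt{\Delta}{G}^{k-1}+1)$ is off by an additive $\fkt{\Delta}{G}$. A perfectly equivalent alternative would be to analyse the BFS layers $L_i=\{w\in\V{G}:\distg{G}{v}{w}=i\}$, verify $|L_1|\leq\fkt{\Delta}{G}$ and $|L_i|\leq|L_{i-1}|(\fkt{\Delta}{G}-1)$ for $i\geq 2$, and then bound the resulting geometric sum by $\fkt{\Delta}{G}^k$ via a short side induction.
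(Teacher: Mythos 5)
Your proposal is correct and follows essentially the same route as the paper: both reduce via $\stronk{k}{G}=\fkt{\chi}{G^k}$ and Theorem \ref{thm2.1} to the structural bound $\fkt{\Delta}{G^k}\leq\fkt{\Delta}{G}^k$, and both establish that bound by an inductive count of the $k$-neighborhood. The only cosmetic difference is that the paper organizes the count by distance layers, obtaining the slightly sharper intermediate bound $\fkt{\Delta}{G}\sum_{i=1}^{k}\lb\fkt{\Delta}{G}-1\rb^{i-1}$ before relaxing it to $\fkt{\Delta}{G}^k$, whereas you run a direct union-bound induction (the layer version being exactly the alternative you sketch at the end).
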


\begin{proof}
With $\stronk{k}{G}=\fkt{\chi}{G^k}$ we can make use of Theorem \autoref{thm2.1} and reduce the proof to showing $\fkt{\Delta}{G^k}\leq \fkt{\Delta}{G}^k$. Now let $G$ be a graph with maximum degree $\Delta$ and $v\in\V{G}$ with $\fkt{\deg_G}{v}=\Delta$. By definition we have $\fkt{\deg_G}{x}\leq \Delta$ for all $x\in\knb{i}{v}$ with $i\in\set{1,\dots,k}$. We get
\begin{align*}
\fkt{\deg_{G^k}}{v}\leq& \fkt{\Delta}{G}+\fkt{\Delta}{G}\lb \fkt{\Delta}{G}-1\rb+\fkt{\Delta}{G}\lb \fkt{\Delta}{G}-1\rb^2+\dots+\fkt{\Delta}{G}\lb \fkt{\Delta}{G}-1\rb^{k-1}\\
\leq& \fkt{\Delta}{G}^k,
\end{align*}
by proving the second inequality by induction over $k$.
\end{proof}

The natural upper bound is even better if not compressed into $\fkt{\Delta}{G}^k$. For graphs with $\fkt{\Delta}{G}>1$ the following transformation can be applied.
\begin{align*}
\fkt{\Delta}{G}\sum_{i=1}^{k}\lb \fkt{\Delta}{G}-1\rb^{i-1}=\frac{\fkt{\Delta}{G}-\fkt{\Delta}{G}^{k+1}}{1-\fkt{\Delta}{G}}
\end{align*}
 
\begin{theorem}\label{thm3.1}
For any graph $G$ with $\fkt{\Delta}{G}>1$ and any positive integer $k\in\N$ it holds
\begin{align*}
\stronk{k}{G}\leq\frac{\fkt{\Delta}{G}-\fkt{\Delta}{G}^{k+1}}{1-\fkt{\Delta}{G}}+1.
\end{align*}
\end{theorem}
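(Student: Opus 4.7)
The plan is to refine the estimate in Lemma~\ref{lemma3.1a} by preserving its geometric-sum shape rather than collapsing it into $\fkt{\Delta}{G}^k$.

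First I would reduce the theorem to a degree bound on the $k$-th power: since $\stronk{k}{G}=\fkt{\chi}{G^k}$, Theorem~\ref{thm2.1} gives $\stronk{k}{G}\leq \fkt{\Delta}{G^k}+1$, so it suffices to establish
\begin{align*}
\fkt{\Delta}{G^k}\leq \frac{\fkt{\Delta}{G}-\fkt{\Delta}{G}^{k+1}}{1-\fkt{\Delta}{G}}.
\end{align*}

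Next, for an arbitrary vertex $v\in\V{G}$, I would count $\abs{\knb{k}{v}}$ layer by layer using a breadth-first argument. A short induction on $i$ shows that the number of vertices at distance exactly $i\geq 1$ from $v$ is at most $\fkt{\Delta}{G}\lb\fkt{\Delta}{G}-1\rb^{i-1}$: each such vertex has some neighbor at distance $i-1$ from $v$, and each vertex at distance $i-1$ contributes at most $\fkt{\Delta}{G}-1$ \emph{new} neighbors, since one of its incident edges is already spent to reach back toward $v$. Summing for $i=1,\dots,k$ yields $\abs{\knb{k}{v}}\leq \fkt{\Delta}{G}\sum_{i=1}^{k}\lb\fkt{\Delta}{G}-1\rb^{i-1}$, which is exactly the intermediate estimate appearing inside the proof of Lemma~\ref{lemma3.1a}.

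Finally, I would relax $\fkt{\Delta}{G}-1$ to $\fkt{\Delta}{G}$ (both quantities are positive under the hypothesis $\fkt{\Delta}{G}>1$) to obtain the simpler geometric series $\sum_{i=1}^{k}\fkt{\Delta}{G}^{i}$, and evaluate it by the standard formula. The hypothesis $\fkt{\Delta}{G}>1$ guarantees that $1-\fkt{\Delta}{G}\neq 0$, so the resulting closed form $\frac{\fkt{\Delta}{G}-\fkt{\Delta}{G}^{k+1}}{1-\fkt{\Delta}{G}}$ is well defined and dominates the sum, yielding the stated bound. No step poses a real obstacle: the only ingredient with mathematical content is the layer-counting induction already implicit in Lemma~\ref{lemma3.1a}, and the remaining work is purely algebraic manipulation of a geometric series.
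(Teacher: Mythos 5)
Your proof is correct and follows essentially the same route as the paper: reduce to $\fkt{\Delta}{G^k}+1$ via Theorem \ref{thm2.1}, bound the $k$-neighborhood by the layered sum $\fkt{\Delta}{G}\sum_{i=1}^{k}\lb\fkt{\Delta}{G}-1\rb^{i-1}$ exactly as in the proof of Lemma \ref{lemma3.1a}, and then pass to the closed form. Your version is in fact slightly more careful than the paper's, which displays the passage from that layered sum to $\frac{\fkt{\Delta}{G}-\fkt{\Delta}{G}^{k+1}}{1-\fkt{\Delta}{G}}=\sum_{i=1}^{k}\fkt{\Delta}{G}^{i}$ as an equality, whereas (as you correctly observe) it is only an inequality, obtained by relaxing $\fkt{\Delta}{G}-1$ to $\fkt{\Delta}{G}$ in each summand.
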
 

Alon and Mohar were able to improve this for $k=2$ on graphs with bounded girth. The cases with $3\leq \fkt{\operatorname{girth}}{G}\leq 6$ and $\fkt{\operatorname{girth}}{G}\geq 7$ are considered separate.

\begin{theorem}[Alon, Mohar 2002 \cite{AlonMohar2002graphpowers}]\label{thm3.2}
Let $G$ be a graph with $3\geq \fkt{\operatorname{girth}}{G}\leq 6$, then it holds
\begin{align*}
\stronk{2}{G}\leq\lb 1+\ord{1}\rb\fkt{\Delta}{G}^2.
\end{align*}
\end{theorem}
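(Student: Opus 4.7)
The plan is to derive the bound directly from the general estimate in Theorem~\ref{thm3.1}. Specialising that theorem to $k = 2$, the closed-form simplification
\begin{align*}
\frac{\fkt{\Delta}{G} - \fkt{\Delta}{G}^{3}}{1 - \fkt{\Delta}{G}} \;=\; \frac{\fkt{\Delta}{G}\bigl(1-\fkt{\Delta}{G}\bigr)\bigl(1+\fkt{\Delta}{G}\bigr)}{1-\fkt{\Delta}{G}} \;=\; \fkt{\Delta}{G}^{2} + \fkt{\Delta}{G}
\end{align*}
yields $\stronk{2}{G} \leq \fkt{\Delta}{G}^{2} + \fkt{\Delta}{G} + 1$. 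Factoring out $\fkt{\Delta}{G}^{2}$ rewrites this as
\begin{align*}
\stronk{2}{G} \;\leq\; \fkt{\Delta}{G}^{2}\,\bigl(1 + 1/\fkt{\Delta}{G} + 1/\fkt{\Delta}{G}^{2}\bigr),
\end{align*}
and the bracketed correction tends to $1$ as $\fkt{\Delta}{G} \to \infty$, which is precisely the content of the claimed $\lb 1 + \ord{1}\rb \fkt{\Delta}{G}^{2}$ bound.

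Observe that the girth hypothesis plays no role in securing this upper bound; it appears purely to delimit the regime of the statement. For $\fkt{\operatorname{girth}}{G} \geq 7$ Alon and Mohar establish the strictly stronger estimate $O\bigl(\fkt{\Delta}{G}^{2}/\log\fkt{\Delta}{G}\bigr)$, whereas for $3 \leq \fkt{\operatorname{girth}}{G} \leq 6$ tight lower-bound constructions rule out any asymptotic improvement past $\lb 1+\ord{1}\rb \fkt{\Delta}{G}^{2}$. Consequently the degree-based estimate from Theorem~\ref{thm3.1} already captures the truth in this range, and no structural exploitation of the girth is needed on the upper-bound side.

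The only delicate point — really bookkeeping rather than a genuine obstacle — is pinning down the regime in which $\ord{1}$ vanishes. The natural and intended convention is $\fkt{\Delta}{G} \to \infty$, so that graphs of bounded maximum degree are absorbed trivially into the constant implicit in $\ord{1}$. With that convention fixed, the algebraic identity above supplies the entire proof: expand the expression from Theorem~\ref{thm3.1}, cancel the factor $1 - \fkt{\Delta}{G}$, and read off the asymptotic.
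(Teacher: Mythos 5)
Your derivation is correct: specialising Theorem \ref{thm3.1} to $k=2$ and cancelling the factor $1-\fkt{\Delta}{G}$ (legitimate since that theorem assumes $\fkt{\Delta}{G}>1$) gives $\stronk{2}{G}\leq\fkt{\Delta}{G}^2+\fkt{\Delta}{G}+1$, which is $\lb 1+\ord{1}\rb\fkt{\Delta}{G}^2$ as $\fkt{\Delta}{G}\to\infty$; in fact Lemma \ref{lemma3.1a} already gives the slightly sharper $\fkt{\Delta}{G}^2+1$ directly. The paper itself offers no proof of Theorem \ref{thm3.2} --- it is imported as a black-box citation of Alon and Mohar --- so there is no internal argument to compare against; what your route makes explicit is that the upper bound in this girth range is nothing more than the greedy degree bound on $G^2$, and that the girth hypothesis is inert on the upper-bound side. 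That observation is consistent with how the paper frames the Alon--Mohar result in Theorem \ref{thm3.4}: the genuine content for $g\leq 6$ is the matching lower bound $\lb 1-\fkt{\epsilon}{\Delta}\rb\Delta^2\leq\fkt{f_2}{\Delta,g}$, obtained by probabilistic constructions, while the upper bound $\Delta^2+1$ is the trivial one you reproduce. One small caution: the phrase ``absorbed trivially into the constant implicit in $\ord{1}$'' is not quite the right way to dispose of bounded-degree graphs, since an $\ord{1}$ term carries no implicit constant; the clean statement is simply that the inequality $\stronk{2}{G}\leq\fkt{\Delta}{G}^2+\fkt{\Delta}{G}+1$ holds for every $\fkt{\Delta}{G}>1$ and implies the asymptotic form by definition of $\ord{1}$ as $\fkt{\Delta}{G}\to\infty$.
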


\begin{theorem}[Alon, Mohar 2002 \cite{AlonMohar2002graphpowers}]\label{thm3.3}
Let $G$ be a graph with $\fkt{\operatorname{girth}}{G}\geq 7$, then it holds
\begin{align*}
\stronk{2}{G}\leq\fkt{\Theta}{\frac{\fkt{\Delta}{G}^2}{\log \fkt{\Delta}{G}}}.
\end{align*}
\end{theorem}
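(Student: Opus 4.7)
The plan is to derive the bound from a general theorem on the chromatic number of \emph{locally sparse} graphs, applied to $G^2$. The trivial bound from Lemma~\ref{lemma3.1a} gives $\fkt{\Delta}{G^2}\leq\fkt{\Delta}{G}^2$, which immediately yields $\stronk{2}{G}=\Ord{\fkt{\Delta}{G}^2}$. The entire task is to save a logarithmic factor in $\fkt{\Delta}{G}$, and the girth hypothesis will do exactly that work.

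The key step is to show that $G^2$ is locally sparse: for every vertex $v$, the induced subgraph $\induz{G^2}{\fkt{N_2}{v}}$ spans only $\Ord{\fkt{\Delta}{G}^3}$ edges, which is a factor of $\fkt{\Delta}{G}$ smaller than the maximum conceivable count $\binom{\fkt{\Delta}{G}^2}{2}$. To see this, fix $v$ and suppose $u,w\in\fkt{N_2}{v}$ with $uw\in\E{G^2}$; then $u,v,w$ form a triangle in $G^2$, meaning each of the pairwise $G$-distances $\distg{G}{u}{v}$, $\distg{G}{v}{w}$, $\distg{G}{u}{w}$ lies in $\{1,2\}$. A case analysis on these eight combinations shows that, unless $u,v,w$ share a common $G$-neighbour $p$, any such triangle closes a cycle in $G$ of length at most $6$. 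Since $\fkt{\operatorname{girth}}{G}\geq 7$, no such short cycle exists, so every $G^2$-triangle at $v$ is realised through some common $G$-neighbour $p$; summing over the at most $\fkt{\Delta}{G}$ such $p$ and bounding the number of relevant pairs through each then yields the claimed sparsity.

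With this sparsity in hand, I invoke the theorem of Johansson, refined by Alon, Krivelevich and Sudakov, stating that any graph $H$ of maximum degree $d$ in which every neighbourhood spans at most $d^2/f$ edges satisfies $\fkt{\chi}{H}=\Ord{d/\log f}$. Applying this with $H=G^2$, $d=\fkt{\Delta}{G}^2$, and $f$ of order $\fkt{\Delta}{G}$ (as furnished by the sparsity bound just established) gives
\begin{align*}
\stronk{2}{G}=\fkt{\chi}{G^2}=\Ord{\frac{\fkt{\Delta}{G}^2}{\log\fkt{\Delta}{G}}},
\end{align*}
as required.

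The main obstacle is the sparsity step. The girth bound $7$ was chosen precisely so that every ``independent'' triangle configuration in $G^2$ around a fixed vertex produces a $G$-cycle of length at most $6$ and is therefore forbidden; but verifying this uniformly over every pattern of pairwise distances in $\{1,2\}$, and bookkeeping the counts to land exactly at the $d^2/f$ shape demanded by the locally-sparse chromatic number theorem, is the delicate quantitative core of the argument. Everything afterwards is an off-the-shelf application of a known extremal result.
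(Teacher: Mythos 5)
The paper does not prove this statement; it is quoted from Alon and Mohar with a citation only, so there is no in-text proof to compare against. Your proposal reconstructs essentially the argument of the cited source: girth at least $7$ forces every triangle of $G^2$ centred at $v$ to be realised through a common $G$-neighbour (up to the degenerate subcases where a distance equals $1$ and the witness is $v$ or $u$ itself, which contribute only $\Ord{\fkt{\Delta}{G}^2}$ pairs), so each $2$-neighbourhood spans $\Ord{\fkt{\Delta}{G}^3}$ edges of $G^2$, and the Johansson/Alon--Krivelevich--Sudakov theorem on colouring locally sparse graphs then yields $\stronk{2}{G}=\Ord{\fkt{\Delta}{G}^2/\log\fkt{\Delta}{G}}$. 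The outline is correct and matches the standard proof.
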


To be precise they were able to show a much more interesting fact for this upper bound of the chromatic number of squared graphs. With $\fkt{f_2}{\Delta,g}$ they defined some function for the maximum possible value for $\stronk{2}{G}$ for all graphs with maximum degree $\Delta$ and girth $g$ and proved the following result by using the probabilistic method. 

\begin{theorem}[Alon, Mohar 2002 \cite{AlonMohar2002graphpowers}]\label{thm3.4}
\begin{enumerate}[i)]
\item There exists a function $\fkt{\epsilon}{\Delta}$ that tends to $0$ as $\Delta$ tends to infinity such that for all $g\leq 6$
\begin{align*}
\lb 1-\fkt{\epsilon}{\Delta}\rb\Delta^2\leq\fkt{f_2}{\Delta,g}\leq\Delta^2+1.
\end{align*}

\item There are absolute positive constants $c_1$, $c_2$ such that for every $\Delta\geq 2$ and every $g\geq7$
\begin{align*}
c_1\frac{\Delta^2}{\log \Delta}\leq\fkt{f_2}{\Delta,g}\leq c_2\frac{\Delta^2}{\log \Delta}.
\end{align*}
\end{enumerate}
\end{theorem}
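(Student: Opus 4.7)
The four claims split into two upper and two lower bounds. The upper bound in part (i) is immediate from Theorem \ref{thm3.1} applied with $k=2$: for every graph $G$ with $\fkt{\Delta}{G} > 1$ one has $\stronk{2}{G} \leq \fkt{\Delta}{G} + \fkt{\Delta}{G}\lb\fkt{\Delta}{G} - 1\rb + 1 = \fkt{\Delta}{G}^2 + 1$, regardless of the girth; the cases $\fkt{\Delta}{G} \leq 1$ are trivial.

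For the lower bound in (i) I would exhibit, for each sufficiently large $\Delta$, a graph with girth $\geq 6$ (hence $\geq g$ for every $g \leq 6$) and maximum degree $\Delta$ whose square needs almost $\Delta^2$ colours. The cleanest choice is the incidence graph $H_q$ of a projective plane of order $q$ (a prime power): $H_q$ is bipartite, $\lb q+1\rb$-regular, of girth exactly $6$, and any two vertices on the same side of the bipartition lie at distance $2$. Hence one side of the bipartition forms a clique of size $q^2 + q + 1$ in $H_q^2$, so $\fkt{\chi}{H_q^2} \geq q^2 + q + 1 \geq \lb\Delta - 1\rb^2$ with $\Delta = q+1$. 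By Bertrand-type density results for prime powers, picking the nearest admissible $q+1 \leq \Delta$ loses only a factor $1 - \fkt{\epsilon}{\Delta}$ with $\fkt{\epsilon}{\Delta}\to 0$ as $\Delta\to\infty$. Any such $H_q$ has girth $\geq g$ for every $g \in \{3,4,5,6\}$, so the same construction handles the entire range; alternatively one could use complete graphs, Moore-type graphs, and generalized quadrangles for the smaller girth values.

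The upper bound in (ii) is the delicate step and would proceed by a local-sparsity argument. The key structural observation is that $\fkt{\operatorname{girth}}{G} \geq 7$ forces $G^2$ to be locally sparse: two vertices $u, w \in \knb{2}{v}$ can be adjacent in $G^2$ only via a short $u$--$w$ path of $G$, and any such path together with the short paths from $u$ and $w$ to $v$ would close up into a cycle of length $\leq 6$; girth $\geq 7$ rules this out for most pairs, forcing the number of edges of $G^2$ inside $\knb{2}{v}$ to be $\ord{\Delta^4}$. I would then invoke Johansson's theorem (or the Alon--Krivelevich--Sudakov refinement) on the chromatic number of graphs whose neighborhoods span few edges, which yields $\fkt{\chi}{G^2} = \Ord{\Delta^2/\log\Delta}$. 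Equivalently one may run a direct Lov\'asz Local Lemma colouring: sample each vertex's colour uniformly and independently from a palette of size $c_2\Delta^2/\log\Delta$, take the bad events to be ``two vertices at distance $\leq 2$ in $G$ receive the same colour,'' and use the girth bound on the dependency degree to close the LLL inequality.

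For the lower bound in (ii) I would follow Erd\H{o}s' classical high-girth/high-chromatic recipe: start from a random $\Delta$-regular (or suitable binomial) graph on $n$ vertices, delete a vertex from each short cycle to enforce girth $\geq 7$, and show that with positive probability the resulting graph $G$ satisfies $\fkt{\alpha}{G^2} \leq n\log\Delta/\lb c_1\Delta^2\rb$, so that $\fkt{\chi}{G^2} \geq n/\fkt{\alpha}{G^2} \geq c_1\Delta^2/\log\Delta$. The hardest part of the whole programme is the upper bound in (ii): the local-sparsity estimate inside $\knb{2}{v}$ has to be sharp enough that the Johansson/LLL machinery extracts the full $1/\log\Delta$ saving, not merely a constant improvement on the trivial $\Delta^2$ bound; the other three pieces are either elementary or well-trodden probabilistic territory.
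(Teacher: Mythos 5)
The paper itself offers no proof of this theorem: it is quoted verbatim from Alon and Mohar \cite{AlonMohar2002graphpowers}, so there is no internal argument to compare yours against. Judged on its own terms, your outline does track the actual Alon--Mohar strategy: the greedy bound $\fkt{f_2}{\Delta,g}\leq\Delta^2+1$ via $\fkt{\Delta}{G^2}\leq\Delta^2$, incidence graphs of projective planes for the lower bound in (i), a locally-sparse-square argument for the upper bound in (ii), and a random construction with short-cycle deletion for the lower bound in (ii). Two points need tightening, both in the upper bound of (ii). First, the sparsity estimate you state, that the edges of $G^2$ inside $\knb{2}{v}$ number $\ord{\Delta^4}$, is too weak to extract the $1/\log\Delta$ saving: the Alon--Krivelevich--Sudakov/Johansson machinery gives $\fkt{\chi}{H}=\Ord{d/\log f}$ for a graph $H$ of maximum degree $d$ whose neighbourhoods span at most $d^2/f$ edges, so with $d=\Delta^2$ you need each $2$-neighbourhood to span $\Ord{\Delta^4/\Delta^{c}}$ edges for some fixed $c>0$ (girth $\geq 7$ in fact yields $\Ord{\Delta^3}$); a bare $\ord{\Delta^4}$ would only give $\ord{\Delta^2}$ colours. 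You flag this as the delicate step, but the quantitative form is essential, not optional. Second, the parenthetical alternative of a one-shot Lov\'{a}sz Local Lemma colouring with a uniform palette does not work: with $c_2\Delta^2/\log\Delta$ colours the probability that a fixed pair at distance $\leq 2$ collides is $\fkt{\Theta}{\log\Delta/\Delta^2}$ while each bad event depends on $\fkt{\Theta}{\Delta^2}$ others, so the LLL condition fails by a $\log\Delta$ factor; the semi-random (nibble) method or the locally-sparse colouring theorem is genuinely required there. The remaining three pieces of your plan are sound.
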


They were able to generalize those results to higher powers by extending their random constructions used in the proof of Theorem \autoref{thm3.4}, for $k\geq 3$ they obtained the following bounds for the generalized upper bound function $\fkt{f_k}{\Delta,g}$.

\begin{theorem}[Alon, Mohar 2002 \cite{AlonMohar2002graphpowers}]
There exists an absolute constant $c>0$ such that for all integers $k\geq 1$, $\Delta\geq 2$ and $g\geq 3k+1$
\begin{align*}
\fkt{f_k}{\Delta,g}\leq \frac{c}{k}\frac{\Delta^k}{\log \Delta}.
\end{align*}
For every integer $k\geq 1$ there exists a positive number $b_k$ such that for every $\Delta\geq 2$ and $g\geq 3$
\begin{align*}
\fkt{f_k}{\Delta,g}\geq b_k\frac{\Delta^k}{\log \Delta}.
\end{align*}
\end{theorem}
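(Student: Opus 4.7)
The plan is to establish both bounds via probabilistic arguments that extend the $k=2$ case of Theorem \autoref{thm3.4} to general $k$.

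For the upper bound I would first exploit the girth condition $g \geq 3k+1$ to show that $G^k$ is \emph{locally sparse}: although $\fkt{\Delta}{G^k}$ can be as large as $\fkt{\Delta}{G}^k$, the subgraph that $G^k$ induces on the open neighborhood of any vertex contains relatively few edges. Since $g\geq 2k+1$, the ball of radius $k$ around any vertex $v$ in $G$ is a tree, and two vertices $u,w\in\knb{k}{v}$ can be adjacent in $G^k$ only if their tree-distance is at most $k$. A careful combinatorial count inside this tree should yield that the number of edges inside $\fkt{N_{G^k}}{v}$ is of order $\fkt{\Delta}{G}^{2k-2}$, i.e.\ a factor of about $\fkt{\Delta}{G}^2$ below the trivial bound $\binom{\fkt{\Delta}{G}^k}{2}$. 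The main step is then to invoke a Johansson-type coloring result, stating that a graph $H$ of maximum degree $d$ whose neighborhoods each span at most $d^2/f$ edges satisfies $\fkt{\chi}{H}=\Ord{d/\log f}$. Applied with $d=\fkt{\Delta}{G}^k$ and $f$ of order $\fkt{\Delta}{G}^2$, this yields $\fkt{\chi}{G^k}=\Ord{\fkt{\Delta}{G}^k/\log\fkt{\Delta}{G}}$; the extra $1/k$ factor then comes from the comparison $\log f = 2\log\fkt{\Delta}{G}$ versus $\log d = k\log\fkt{\Delta}{G}$ inside the implicit constant.

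For the lower bound I would rely on a random construction: start with the Erdős--Rényi graph $\fkt{G}{n,p}$ with $p$ chosen so that the typical degree is of order $\fkt{\Delta}{G}$, and then delete a vertex from each cycle of length less than $g$ to enforce the girth requirement. With high probability the expected number of short cycles is much smaller than $n$, so that the resulting graph still has $n'=\fkt{\Omega}{n}$ vertices and its maximum degree stays within a constant factor of $\fkt{\Delta}{G}$. Now $\fkt{\chi}{G^k}\geq n'/\fkt{\alpha}{G^k}=n'/\kst{k}{G}$, so it suffices to bound $\kst{k}{G}$ from above. A standard first-moment computation combined with a union bound shows that, with high probability, no set of pairwise $k$-distant vertices has size exceeding an order of $n\log\fkt{\Delta}{G}/\fkt{\Delta}{G}^k$, and after tuning the constants this yields the claimed lower bound $b_k\,\fkt{\Delta}{G}^k/\log\fkt{\Delta}{G}$.

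The hardest part of both halves is the sharp parameter tuning. For the upper bound, extracting exactly the factor $1/k$ hinges on the delicate edge count inside the $G^k$-neighborhoods and on a strong enough variant of the Johansson-style theorem, whose proof is itself highly nontrivial and uses the Lovász Local Lemma together with an iterative pseudo-random colouring scheme. For the lower bound, the subtlety lies in simultaneously controlling the maximum degree, the girth, and $\kst{k}{G}$ after the deletion step, so that the concentration estimates used in the union bound remain valid once we condition on the event that the short cycles have been removed.
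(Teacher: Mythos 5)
First, a point of reference: the paper does not prove this theorem at all — it is quoted verbatim from Alon and Mohar's 2002 paper as a cited result, so there is no in-text proof to compare against. Your sketch does follow the genuine Alon--Mohar strategy in outline (local sparsity of $G^k$ plus a coloring theorem for locally sparse graphs for the upper bound; a random construction with short cycles destroyed for the lower bound), but as a proof proposal it has two concrete gaps.

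For the upper bound, the quantitative heart of the argument is wrong as stated. With girth at least $3k+1$, the union of the three geodesics joining $v$, $u$, $w$ is a tree with a median vertex $m$; writing $j=\dist{v}{m}$, $s=\dist{m}{u}$, $t=\dist{m}{w}$, the constraints $j+s\leq k$, $j+t\leq k$, $s+t\leq k$ force $j+s+t\leq 3k/2$, so the number of edges of $G^k$ inside a neighborhood is of order $\fkt{\Delta}{G}^{3k/2}$ (up to polynomial factors in $k$), not $\fkt{\Delta}{G}^{2k-2}$ — already for $k=2$ the correct count is $\fkt{\Delta}{G}^3$, not $\fkt{\Delta}{G}^2$. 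This matters because the Alon--Krivelevich--Sudakov bound $\fkt{\chi}{H}=\Ord{d/\log f}$ produces the factor $1/k$ only if the sparsity parameter satisfies $\log f=\fkt{\Theta}{k\log\fkt{\Delta}{G}}$, i.e.\ $f=\fkt{\Delta}{G}^{\Theta\lb k\rb}$, which is exactly what $f=\fkt{\Delta}{G}^{2k}/\fkt{\Delta}{G}^{3k/2}=\fkt{\Delta}{G}^{k/2}$ delivers. Your value $f=\fkt{\Delta}{G}^2$ gives only $\Ord{\fkt{\Delta}{G}^k/\log\fkt{\Delta}{G}}$ with no $1/k$, and the remark that the $1/k$ ``comes from the comparison $\log f$ versus $\log d$ inside the implicit constant'' is not a mechanism the AKS theorem provides. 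For the lower bound, the phrase ``a standard first-moment computation'' conceals the actual difficulty: the events ``$\dist{u}{w}\leq k$'' for the $\binom{s}{2}$ pairs of a candidate $k$-strong stable set in $\fkt{G}{n,p}$ are heavily dependent, and establishing that $\kst{k}{G}=\Ord{kn\log\fkt{\Delta}{G}/\fkt{\Delta}{G}^k}$ with high probability — and that this survives the deletion of vertices on short cycles, which also perturbs degrees and $k$-neighborhoods — is the technical core of the Alon--Mohar argument, not a routine union bound. Both halves need these steps carried out before the sketch becomes a proof.
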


Coloring the vertices of planar graphs has a long tradition in chromatic graph theory. In 1977 Wegner (see \cite{Wegner1977diameterandcoloring}) first investigated the chromatic number of squared planar graphs. He was able to prove the simple bound $\stronk{2}{G}\leq 8$ for every planar graph $G$ with maximum degree $\fkt{\Delta}{G}=3$. He also conjectured that this bound could be improved to $7$, which was confirmed by Thomassen (see \cite{Thomassen2001tuttecycles}) in 2001. Wegner gave a conjecture for bounds of the chromatic number of squared planar graphs with higher maximum degrees too, along with examples that prove these bounds to be tight if his conjecture is correct.
\begin{conjecture}[Wegner. 1977 \cite{Wegner1977diameterandcoloring}]\label{conj3.1}
Let $G$ be a planar graph. Then
\begin{align*}
\stronk{2}{G}\leq\stueckfkt{\fkt{\Delta}{G}+5}{\text{if}~ 4\leq\fkt{\Delta}{G}\leq 7,}{\abr{\frac{3\fkt{\Delta}{G}}{2}}+1}{\text{if}~\fkt{\Delta}{G}\geq 8.}
\end{align*}
\end{conjecture}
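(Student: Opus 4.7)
The plan is to split the problem based on the value of $\fkt{\Delta}{G}$, since the two regimes in Wegner's conjecture have very different flavors. The common starting point in both cases is to rephrase the inequality as a bound on $\fkt{\chi}{G^2}$ and to attack a minimum counterexample by combining planarity (via Euler's formula) with reducibility of local configurations. That is, assume $G$ is a planar graph with $\fkt{\Delta}{G}=\Delta$ minimizing $\abs{\V{G}}$ among graphs that violate the claimed bound, so that every proper induced subgraph $H\subsetneq G$ admits a $2$-strong coloring meeting the bound for its own maximum degree.

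\textbf{Small-degree range $4\leq\Delta\leq 7$.} Here I would mimic the strategy Thomassen used for $\Delta=3$: a discharging argument. Assign initial charges $\fkt{\deg_G}{v}-4$ to each vertex and $\abs{f}-4$ to each face; by Euler's formula the total charge is $-8$. Design redistribution rules so that after discharging, every vertex and every face has nonnegative charge unless $G$ contains one of a carefully chosen list of unavoidable local configurations. Then for each such configuration, show reducibility: remove or contract it, use the minimality of $G$ to obtain a $2$-strong coloring of the smaller graph, and extend it by counting that the $2$-neighborhood of the removed vertices forbids fewer than $\Delta+5$ colors. The hard part of this step is identifying the right reducible configurations; the constraint $\Delta+5$ is tight enough that naïve extension lemmas (leave one vertex uncolored, greedily finish) will typically fail by one color.

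\textbf{Large-degree range $\Delta\geq 8$.} The conjectured bound $\abr{3\Delta/2}+1$ is much sharper than the trivial $\Delta^2+1$ from Lemma \ref{lemma3.1a}, so planarity must be exploited heavily. The natural route is to show that $G^2$ is $\abr{3\Delta/2}$-degenerate, i.e.\ every planar $G$ with $\fkt{\Delta}{G}\geq 8$ has some vertex $v$ with $\abs{\knb{2}{v}}\leq\abr{3\Delta/2}$. The hope is that around any vertex of planar $G$, Euler's formula applied to $\induz{G}{\nbclosed{v}}$ forces many pairs of neighbors of $v$ to be adjacent in $G$, merging their further neighborhoods. Given such a degeneracy ordering, the $2$-strong coloring is obtained by greedy coloring along the reverse order.

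\textbf{Main obstacle.} Wegner's extremal examples show that any argument must allow cliques in $G^2$ of size essentially $\lceil 3\Delta/2\rceil$, produced by a high-degree center together with vertices sharing two common neighbors with it. The difficulty is that the discharging rules controlling these tight configurations interact delicately with the reducibility step: shaving off a single vertex can create a configuration whose $2$-neighborhood still forbids exactly the conjectured number of colors, so the extension fails. It is precisely this tightness that keeps the conjecture open beyond $\Delta=3$; the best one can realistically hope to prove in finite time is an asymptotic statement of the form $\stronk{2}{G}\leq(\nicefrac{3}{2}+o(1))\Delta$, and pushing the constant in front of $\Delta$ down to $3/2$ exactly, with the additive constant $1$, is what makes Wegner's conjecture so delicate.
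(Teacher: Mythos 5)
This statement is a \emph{conjecture}, not a theorem: the paper presents it without proof and explicitly notes a few lines later that ``while there were many attempts to prove Wegner's conjecture it remains open,'' with the best known bounds being the Molloy--Salavatipour results $\stronk{2}{G}\leq\aufr{\frac{5}{3}\fkt{\Delta}{G}}+78$. So there is no proof in the paper to compare against, and your proposal does not supply one either --- it is a survey of a plausible line of attack, and by your own admission in the final paragraph the attack does not close.

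Concretely, both branches stop exactly where the mathematical content would have to begin. In the range $4\leq\fkt{\Delta}{G}\leq 7$ you invoke a discharging argument but exhibit neither the unavoidable set of configurations nor a single reducibility proof; as you yourself note, the bound $\fkt{\Delta}{G}+5$ is tight enough that a generic ``color the smaller graph and extend greedily'' step loses a color, and no mechanism for recovering it is given. In the range $\fkt{\Delta}{G}\geq 8$ the entire argument rests on the unproven claim that $G^2$ is $\abr{\frac{3\fkt{\Delta}{G}}{2}}$-degenerate for planar $G$; this is not a known consequence of Euler's formula, and the extremal examples accompanying Wegner's conjecture (in which $G^2$ contains cliques of size roughly $\frac{3}{2}\fkt{\Delta}{G}$) show that any such degeneracy statement would itself be at the exact threshold, so it cannot be waved through. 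A correct submission here would simply have recorded the statement as an open conjecture, perhaps citing Thomassen's resolution of the $\fkt{\Delta}{G}=3$ case and the Molloy--Salavatipour bounds as the current state of the art, rather than presenting an incomplete proof sketch.
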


While there were many attempts to prove Wegner's conjecture it remains open. So far the best bounds regarding this specific problem were obtained by Molloy and Salavatipour (see \cite{MolloySalavatipour2005squareplanar}) in 2005, they gave both a general and an asymptotic bound.

\begin{theorem}[Molloy, Salavatipour. 2005 \cite{MolloySalavatipour2005squareplanar}]
Let $G$ be a planar graph. Then
\begin{align*}
\stronk{2}{G}\leq \aufr{\frac{5}{3}\fkt{\Delta}{G}}+78.
\end{align*}
\end{theorem}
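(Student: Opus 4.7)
The plan is to follow the standard template for hard planar graph coloring theorems: assume a minimum counterexample $G$ (with respect to $\abs{\V{G}}$) to the inequality and derive a contradiction via the interplay of reducible configurations and discharging. The result is really an improvement over the naive planar bound of $\Theta(\fkt{\Delta}{G})$ that one gets simply from $\fkt{\Delta}{G^2}\leq 3\fkt{\Delta}{G}+\Ord{1}$ together with Theorem~\ref{thm2.1}, so the whole challenge is to squeeze the leading constant down to $5/3$.

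First, I would compile a catalogue of \emph{reducible configurations} -- small local substructures such that, whenever one of them occurs in $G$, we can delete a few vertices, invoke minimality to colour what remains, and then extend to a proper $\lb\aufr{5\fkt{\Delta}{G}/3}+78\rb$-strong coloring of the full graph. The natural framework is list colouring: for each uncoloured vertex $v$ we keep a list of colours that are still admissible, of size at least $\aufr{5\fkt{\Delta}{G}/3}+78 - \varabs{\knb{2}{v}\cap C}$, where $C$ is the set of already coloured vertices, and we must show this list is long enough to greedily finish. Typical reducible configurations involve small-degree vertices, pairs of adjacent low-degree vertices, and vertices whose $G^2$-neighbourhood contains many short cycles so that colour lists overlap substantially.

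Second, using Euler's formula together with a discharging argument I would show that no planar graph can avoid every configuration on this list. The standard setup assigns each vertex $v$ an initial charge $\fkt{\deg}{v}-4$ and each face $f$ a charge $\abs{f}-4$, so that the total charge equals $-8$ in any plane embedding. Carefully designed redistribution rules then show that, in the absence of any reducible configuration, every vertex and face ends up with nonnegative final charge, contradicting the global charge sum. Getting the \emph{specific} constant $5/3$ requires exploiting triangles in $G^2$: whenever two neighbours $u_1,u_2$ of a vertex $v$ in $G$ share another common neighbour, this forces a triangle in $G^2$, and planarity forces many such coincidences around any high-degree vertex, so the ``effective degree'' of a high-degree vertex in $G^2$ is closer to $(5/3)\fkt{\Delta}{G}$ than to $3\fkt{\Delta}{G}$.

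The hard part will be calibrating the catalogue of reducible configurations and the discharging rules so that the extracted constant is exactly $5/3$; any weaker catalogue will give only $2\fkt{\Delta}{G}$ or worse, and a stronger catalogue runs into genuine obstructions coming from near-extremal planar configurations (essentially the examples Wegner used to support Conjecture~\ref{conj3.1}). The additive $78$ is not an artifact but a deliberate slack that absorbs (i) small-$\fkt{\Delta}{G}$ base cases where the multiplicative bound is vacuous, (ii) loss in the list-extension arguments at the boundary of a reducible configuration, and (iii) rounding losses in the discharging. Closing the remaining gap to Wegner's conjectured $\abr{3\fkt{\Delta}{G}/2}+1$ would require either substantially sharper reducible configurations or a technique beyond pure discharging, and is the genuinely open part of the problem.
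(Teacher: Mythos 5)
This theorem is stated in the thesis as a cited result of Molloy and Salavatipour; the thesis itself contains no proof of it, so there is no internal argument to compare yours against. Judged on its own terms, what you have written is a proof \emph{plan} rather than a proof: every load-bearing step is deferred. You never exhibit a single reducible configuration, never write down a discharging rule, and never verify that the rules leave all vertices and faces with nonnegative charge. Those are precisely the places where the constant $\frac{5}{3}$ and the additive $78$ have to be earned, and they constitute essentially all of the content of the actual Molloy--Salavatipour argument, which runs to dozens of pages. A reader cannot check anything here, because nothing checkable is asserted.

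Beyond the incompleteness, the one concrete quantitative claim you do make is wrong: planarity does \emph{not} force the degree of a high-degree vertex in $G^2$ down to roughly $\frac{5}{3}\fkt{\Delta}{G}$, and the constant in the theorem is not obtained by bounding $\fkt{\Delta}{G^2}$. One can have planar graphs in which a vertex has close to $3\fkt{\Delta}{G}$ vertices within distance two, so a pure ``effective degree'' argument cannot get below the trivial bound. Molloy and Salavatipour instead prove a structural lemma about planar graphs (roughly: one can always find a vertex whose second neighbourhood is covered by a bounded number of cliques of $G^2$ together with at most $\aufr{\frac{5}{3}\fkt{\Delta}{G}}+\Ord{1}$ further vertices) and then colour greedily in a suitable order; the discharging happens inside the proof of that structural lemma, not in the naive vertex-and-face form with charges $\fkt{\deg}{v}-4$ and $\abs{f}-4$ that you describe. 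So the gap is twofold: the plan omits all of the actual mathematics, and the heuristic it offers for where $\frac{5}{3}$ comes from does not survive scrutiny.
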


\begin{theorem}[Molloy, Salavatipour. 2005 \cite{MolloySalavatipour2005squareplanar}]
Let $G$ be a planar graph with $\fkt{\Delta}{G}\geq 241$. Then
\begin{align*}
\stronk{2}{G}\leq \aufr{\frac{5}{3}\fkt{\Delta}{G}}+25.
\end{align*}
\end{theorem}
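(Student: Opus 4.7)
The plan is to combine a structural discharging analysis of planar graphs with a probabilistic coloring argument. The naive bound $\stronk{2}{G}\leq \fkt{\Delta}{G}^2+1$ from Lemma \ref{lemma3.1a} is far from the target, so the central task is to extract a linear-in-$\fkt{\Delta}{G}$ bound with coefficient as close to $\frac{3}{2}$ (the Wegner bound) as possible, exploiting the fact that $G^2$ inherits significant sparsity from the planarity of $G$.

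First, I would establish structural lemmas for planar graphs with large maximum degree via Euler's formula and discharging. The goal is to show that around any vertex $v$, the graph $\induz{G}{\nb{v}\cup\fkt{N_2}{v}}$ is still planar and hence very sparse, and that the induced subgraph of $G^2$ on this set has effective density closer to $\frac{5}{3}\fkt{\Delta}{G}$ rather than $\fkt{\Delta}{G}^2$. In particular, one wants a local bound on the number of pairs at distance exactly $2$ through $v$, reflecting that two high-degree neighbors of $v$ cannot share too many further neighbors without forcing forbidden planar configurations.

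Second, I would partition the vertex set according to degree classes and handle the heavy vertices separately. For each heavy vertex $v$, its roughly $\fkt{\Delta}{G}$ neighbors must receive distinct colors together with $v$; after reserving a palette for these, the remaining vertices (whose degree is substantially below $\fkt{\Delta}{G}$) are colored by the probabilistic method. Concretely, assign each remaining vertex a uniform random color from a palette of size $\aufr{\frac{5}{3}\fkt{\Delta}{G}}$, define bad events for pairs at distance $\leq 2$ receiving the same color, and apply the Lovász Local Lemma (or Talagrand's inequality for concentration of the number of available colors). The sparsity from step one ensures that the dependency graph of bad events is controlled well enough for LLL to succeed. The remaining small set of vertices that are still uncolored is then handled greedily using the reserve of $25$ extra colors, relying on the fact that this residual graph has very low maximum degree in $G^2$.

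The main obstacle is the coefficient $\frac{5}{3}$: getting below the trivial $2\fkt{\Delta}{G}$ requires a discharging argument that genuinely uses planarity of the second neighborhood, not merely degeneracy. The threshold $\fkt{\Delta}{G}\geq 241$ emerges from two sources: the concentration-inequality error terms need $\fkt{\Delta}{G}$ large enough to be negligible relative to the $\frac{1}{3}\fkt{\Delta}{G}$ slack between the naive $2\fkt{\Delta}{G}$ bound and the target, and the discharging step needs enough low-degree vertices to absorb the excess charge, which requires $\fkt{\Delta}{G}$ above a constant determined by the worst-case face and vertex configurations. Fine-tuning the additive constant $25$ would be a secondary technical point, essentially counting the number of colors needed for the clean-up phase.
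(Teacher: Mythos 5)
The paper does not prove this statement at all: it is quoted as an external result of Molloy and Salavatipour, so there is no in-paper argument to compare yours against. Judged on its own terms, your proposal is a plan rather than a proof, and two of its load-bearing steps have concrete problems. First, the structural claim in your step one --- that two high-degree neighbors of $v$ ``cannot share too many further neighbors without forcing forbidden planar configurations'' --- is false: $K_{2,m}$ is planar for every $m$, so two vertices of a planar graph may have arbitrarily many common neighbors. Indeed, it is exactly such configurations that produce Wegner's lower-bound examples showing $\stronk{2}{G}\geq\frac{3}{2}\fkt{\Delta}{G}$; any correct proof must accommodate these configurations rather than exclude them, which is why the genuine argument is so delicate.

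Second, the probabilistic step does not go through as described. Even for planar graphs (in fact already for trees), a vertex can have $\fkt{\Delta}{G}^2$ vertices within distance $2$, so $\fkt{\Delta}{G^2}$ can be quadratic in $\fkt{\Delta}{G}$. With a palette of size $C=\aufr{\frac{5}{3}\fkt{\Delta}{G}}$ and uniform random colors, each bad event has probability $\frac{1}{C}\approx\frac{3}{5\fkt{\Delta}{G}}$ but depends on up to order $\fkt{\Delta}{G}^2$ other events, so the Lov\'asz Local Lemma condition $e\,p\,(d+1)\leq 1$ fails by a factor of roughly $\fkt{\Delta}{G}$. The probabilistic treatments that do work (Havet, van den Heuvel, McDiarmid and Reed) require a much subtler iterated ``naive coloring'' analysis exploiting local clique-cover structure of $G^2$, and even then they yield only an asymptotic bound $\lb\frac{3}{2}+\ord{1}\rb\fkt{\Delta}{G}$ with no explicit additive constant --- nothing like an explicit $+25$ valid for all $\fkt{\Delta}{G}\geq 241$. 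The actual Molloy--Salavatipour proof is deterministic: a lengthy discharging argument producing reducible configurations combined with an inductive greedy coloring along a carefully chosen vertex ordering. So the coefficient $\frac{5}{3}$ and the constant $25$ cannot be recovered from the route you sketch, and the proposal as it stands has an essential gap.
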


By forbidding the complete graph on $4$ vertices as a minor - note that the $K_5$ already is forbidden in planar graphs and with $K_{3,3}$ also containing the $K_4$ as a minor those graphs are a true subset of planar graphs - Lih, Wang and Zhu (see \cite{LihWangZhu2003squareminorfree}) were able to further improve these bounds for this subclass.

\begin{theorem}[Lih, Wang, Zhu. 2003 \cite{LihWangZhu2003squareminorfree}]\label{thm3.5}
Let $G$ be a $K_4$-minor free graph. Then
\begin{align*}
\stronk{2}{G}=\stueckfkt{\fkt{\Delta}{G}+3}{\text{if}~2\leq\fkt{\Delta}{G}\leq 3,}{\abr{\frac{3\fkt{\Delta}{G}}{2}}+1}{\text{if}~\fkt{\Delta}{G}\geq 4.}
\end{align*}
\end{theorem}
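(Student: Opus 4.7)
The plan is to prove the two inequalities separately: the lower bound via explicit extremal examples, and the upper bound via induction on $\abs{\V{G}}$.

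For the lower bound, I would exhibit, for each admissible $\Delta$, a $K_4$-minor free graph $G$ with $\fkt{\Delta}{G}=\Delta$ realizing the claimed value as $\fkt{\omega}{G^2}$, which in turn lower-bounds $\stronk{2}{G}$. The cases $\Delta\in\set{2,3}$ admit small ad-hoc examples; for $\Delta\geq 4$, the canonical construction is a series-parallel graph built from two adjacent vertices $x,y$ of degree $\Delta$, equipped with a carefully chosen combination of shared neighbors and private pendants so that $\induz{N_2}{x}$ has exactly $\abr{3\Delta/2}+1$ vertices and forms a $2$-clique in $G^2$.

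For the upper bound I would induct on $\abs{\V{G}}$. The engine is the classical structural fact that $K_4$-minor free graphs are precisely the partial $2$-trees and therefore have treewidth at most $2$; consequently every such $G$ with $\abs{\V{G}}\geq 2$ contains a vertex $v$ of degree at most $2$. Applying the inductive hypothesis to $G-v$ yields a $2$-strong coloring of $G-v$ with $f(\Delta)$ colors, where $f(\Delta)$ denotes the right-hand side of the theorem, which we then try to extend to $v$. If $\fkt{\deg}{v}\leq 1$ the extension is immediate, because $\knb{2}{v}$ has at most $\fkt{\Delta}{G}$ elements and this is well below the budget. If $\fkt{\deg}{v}=2$ with neighbors $x$ and $y$, we must show that the colors appearing on $\knb{2}{v}$ leave at least one of the $f(\Delta)$ colors free for $v$.

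The main obstacle is precisely this last step. The naive count yields $\abs{\knb{2}{v}}\leq\fkt{\deg}{x}+\fkt{\deg}{y}-2\leq 2\Delta-2$, which exceeds the budget $\abr{3\Delta/2}$ whenever $\Delta\geq 5$. Closing the gap uses $K_4$-minor freeness in two ways. First, structurally: $\induz{G}{\fkt{N}{w}}$ is a forest for every vertex $w$, since a triangle in $\fkt{N}{w}$ would combine with $w$ to produce a $K_4$. This forces the neighborhoods $\fkt{N}{x}$ and $\fkt{N}{y}$ to overlap substantially, causing many of the $2\Delta-2$ candidate vertices to coincide. Second, dynamically: in the residual configurations where counting still falls short --- e.g.\ $xy\in\E{G}$, or a pair of degree-$2$ vertices sharing the same two neighbors $x,y$ --- one performs a local Kempe-style recoloring of the $2$-strong coloring of $G-v$ to free a color for $v$. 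Carrying out this configuration-by-configuration case analysis is the technical heart of \cite{LihWangZhu2003squareminorfree} and the hardest step of the plan to make precise.
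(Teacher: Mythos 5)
First, note that the thesis does not prove Theorem \ref{thm3.5} at all: it is quoted from Lih, Wang and Zhu \cite{LihWangZhu2003squareminorfree} without proof, so there is no in-paper argument to compare yours against. Judged on its own, your outline does follow the general strategy of the cited paper (induction on the order using the $2$-degeneracy of $K_4$-minor-free graphs, an unavoidable-configurations lemma, and local recolorings), but as written it is a plan rather than a proof. The entire difficulty is concentrated in the step you defer: when $\fkt{\deg}{v}=2$ the set $\knb{2}{v}$ can have as many as $2\fkt{\Delta}{G}-2$ elements --- already in a tree where the two neighbours of $v$ have disjoint neighbourhoods --- so deleting \emph{some} vertex of degree at most $2$ does not suffice; one needs the refined lemma that every $K_4$-minor-free graph contains one of a short list of reducible configurations (a vertex of degree at most $1$, a degree-$2$ vertex whose neighbours are adjacent, two degree-$2$ vertices sharing both neighbours, and so on), together with a separate counting or recoloring argument for each configuration. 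Your appeal to the fact that $\induz{G}{\fkt{N}{w}}$ is a forest does not by itself force $\fkt{N}{x}$ and $\fkt{N}{y}$ to overlap (they need not overlap at all), so the claimed saving of roughly $\fkt{\Delta}{G}/2$ colours is not justified by anything you wrote. Until that case analysis is actually carried out, the upper bound is unproved.

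A second, more basic problem is the equality sign. Your lower-bound step exhibits, for each $\Delta$, \emph{one} extremal graph attaining the value, which shows the upper bound is sharp for the class but cannot show that $\stronk{2}{G}$ \emph{equals} the stated quantity for every $K_4$-minor-free $G$ with that maximum degree --- and indeed it does not: a long path has $\fkt{\Delta}{G}=2$ and $\stronk{2}{G}=3\neq 5$. The statement should be read, as in \cite{LihWangZhu2003squareminorfree}, as a tight upper bound; as literally written it is not provable, and your proposal at best establishes the ``upper bound plus sharpness'' version.
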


\section{Graph Classes Closed Under Taking Powers}

Another approach to the problem of chromatic numbers of graph powers is the question which families of graphs are closed under the taking of powers. 

\begin{definition}
Let $\mathscr{F}$ be a family of graphs, $\mathscr{F}$ is called {\em closed under taking powers} if $G^{k}\in\mathscr{F}$ implies $G^{k+1}\in\mathscr{F}$ for all $k\in\N$. 
\end{definition}

The connection between such a family and the chromatic number of graph powers lies in the the fact that a great number of such families, which are known to be closed under taking powers, that are subclasses of chordal graphs and therefore perfect.\\
The first known result in this type of studies in graph powers was obtained by Lubiw (see \cite{lubiw1982stronglychordal}). His family is one of very important chordal graphs, the so called strongly chordal graphs which can be characterized in terms of induced subgraphs.\\
Those subgraphs are a very important special case of a graph class which we will study in the next section.

\begin{definition}[Sun]
A {\em sun} is a graph $S_n=\lb U\cup W,E\rb$ with $U=\set{u_1,\dots,u_n}$ and $W=\set{w_1,\dots,w_n}$ such that $W$ induces a complete graph and $U$ is a stable set and furthermore $u_iv_j\in E$ if and only if $j=i$ or $j=i+1\lb\!\!\!\mod n \rb$.
\end{definition} 

\begin{figure}[H]
\begin{center}
\begin{tikzpicture}

\node (center) [inner sep=1.5pt] {};

\node (label) [inner sep=1.5pt,position=135:1.7cm from center] {};

\node (u1) [inner sep=1.5pt,position=90:1.6cm from center,draw,circle,fill] {};
\node (u2) [inner sep=1.5pt,position=162:1.6cm from center,draw,circle,fill] {};
\node (u3) [inner sep=1.5pt,position=234:1.6cm from center,draw,circle,fill] {};
\node (u4) [inner sep=1.5pt,position=306:1.6cm from center,draw,circle,fill] {};
\node (u5) [inner sep=1.5pt,position=18:1.6cm from center,draw,circle,fill] {};

\node (v1) [inner sep=1.5pt,position=126:0.8cm from center,draw,circle] {};
\node (v2) [inner sep=1.5pt,position=198:0.8cm from center,draw,circle] {};
\node (v3) [inner sep=1.5pt,position=270:0.8cm from center,draw,circle] {};
\node (v4) [inner sep=1.5pt,position=342:0.8cm from center,draw,circle] {};
\node (v5) [inner sep=1.5pt,position=54:0.8cm from center,draw,circle] {};

\node (lu1) [position=90:0.07cm from u1] {$u_1$};
\node (lu2) [position=162:0.07cm from u2] {$u_2$};
\node (lu3) [position=234:0.07cm from u3] {$u_3$};
\node (lu4) [position=306:0.07cm from u4] {$u_4$};
\node (lu5) [position=18:0.07cm from u5] {$u_5$};

\node (lv1) [position=126:0.07cm from v1] {$w_1$};
\node (lv2) [position=198:0.07cm from v2] {$w_2$};
\node (lv3) [position=270:0.07cm from v3] {$w_3$};
\node (lv4) [position=342:0.07cm from v4] {$w_4$};
\node (lv5) [position=54:0.07cm from v5] {$w_5$};

\path
(u1) edge (v1)
	 edge (v5)
(u2) edge (v1)
	 edge (v2)
(u3) edge (v2)
	 edge (v3)
(u4) edge (v3)
	 edge (v4)
(u5) edge (v4)
	 edge (v5)
;

\path[bend right] 
(v1) edge (v2)
(v2) edge (v3)
(v3) edge (v4)
(v4) edge (v5)
(v5) edge (v1)
;

\path
(v1) edge (v3)
(v3) edge (v5)
(v5) edge (v2)
(v2) edge (v4)
(v4) edge (v1)
;

\end{tikzpicture}
\caption{The sun with $n=5$.}
\label{fig3.1}
\end{center}
\end{figure}
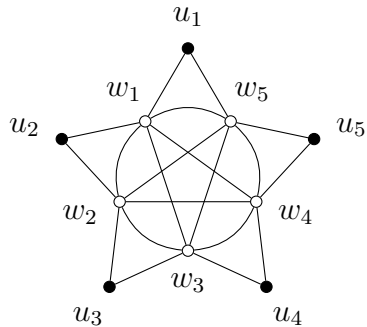 

\begin{theorem}[Farber. 1983 \cite{Farber1983stronglychordal}]\label{thm3.6}
A chordal graph is strongly chordal if and only if it does not contain a sun.
\end{theorem}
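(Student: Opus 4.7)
The plan is to use the well-known characterization of strong chordality via \emph{simple elimination orderings}: a vertex $v$ of a graph $H$ is called \emph{simple} if the family $\set{N_H[u] \mid u \in N_H[v]}$ is linearly ordered by set inclusion, and $H$ is strongly chordal if and only if $H$ admits a simple elimination ordering $v_1,\dots,v_n$ (meaning $v_i$ is simple in $\induz{H}{\set{v_i,\dots,v_n}}$), which in turn is equivalent to every induced subgraph of $H$ containing at least one simple vertex. Granting this characterization, the theorem reduces to two separate statements about simple vertices in suns and sun-free chordal graphs.

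For the necessity direction, suppose $G$ is strongly chordal but contains an induced sun $S_n = (U \cup W, E)$. Then $S_n$ itself must contain a simple vertex, which I would rule out by a direct case check. A rim vertex $u_i$ has closed neighborhood $\set{u_i, w_i, w_{i+1}}$, and one verifies that $u_{i-1} \in N[w_i] \setminus N[w_{i+1}]$ while $u_{i+1} \in N[w_{i+1}] \setminus N[w_i]$, so $N[w_i]$ and $N[w_{i+1}]$ are incomparable and $u_i$ is not simple. A hub vertex $w_i$ has both $u_{i-1}$ and $u_i$ in its closed neighborhood, and $N[u_{i-1}] = \set{u_{i-1}, w_{i-1}, w_i}$ and $N[u_i] = \set{u_i, w_i, w_{i+1}}$ are incomparable, so $w_i$ is not simple either.

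For the sufficiency direction I would induct on $\abs{\V{G}}$; the inductive step is to exhibit a simple vertex $v$ in every nontrivial chordal sun-free graph, since then $G - v$ remains chordal and sun-free (both properties being preserved under induced subgraphs) and the induction hypothesis gives a simple elimination ordering of $G - v$, which extends to $G$ by prepending $v$. To find $v$, start from a simplicial vertex (guaranteed by chordality) and among all simplicial vertices choose one whose closed neighborhood is inclusion-minimal. If this $v$ is not simple, there are $a, b \in N[v]$ with $N[a]$ and $N[b]$ incomparable, witnessed by $x \in N[a] \setminus N[b]$ and $y \in N[b] \setminus N[a]$ lying outside $N[v]$. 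By iterating this incomparability phenomenon around the clique $N[v]$, one produces an alternating sequence of pairs $(w_i, u_i)$ where the $w_i$ form a clique inside $N[v]$ (or a close relative) and the $u_i$ are pairwise non-adjacent rim vertices, each attached to exactly two consecutive $w_i$'s; chordality is then invoked at every step to kill unwanted long induced cycles and to prevent $u_i$ from being adjacent to any non-consecutive $w_j$.

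The main obstacle is exactly this last construction, i.e.\ extracting a genuine induced sun from the mere local obstruction that a simplicial minimal-neighborhood vertex fails to be simple. The incomparability gives only a local discrepancy; turning it into a cyclic alternating structure requires a careful, iterative use of chordality to close the rim into $n$ distinct non-adjacent vertices $u_i$ and to verify that each $u_i$ sees \emph{only} the two designated hubs $w_i, w_{i+1}$. This is the technical heart of Farber's argument and is where sun-freeness genuinely enters; once this induced $S_n$ is produced, we obtain the desired contradiction to the assumption that $G$ is sun-free, completing the proof.
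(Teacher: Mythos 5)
The paper does not prove this statement at all: it is quoted as Farber's theorem with a citation to \cite{Farber1983stronglychordal}, so there is no in-paper argument to compare yours against. Judged on its own terms, your proposal is the standard framing (reduce to the existence of simple vertices) and your necessity direction is correct and complete: granting that strong chordality is hereditary and equivalent to every induced subgraph having a simple vertex, your case check that no rim vertex $u_i$ and no hub vertex $w_i$ of $S_n$ is simple is accurate (the witnesses $u_{i-1}\in N[w_i]\setminus N[w_{i+1}]$ and $u_{i+1}\in N[w_{i+1}]\setminus N[w_i]$ work for all $n\geq 3$). Note, however, that the characterization you are ``granting'' --- equivalence of strong chordality with the existence of a simple elimination ordering --- is itself one of the nontrivial equivalences established in Farber's paper, so it cannot be treated as entirely free.

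The genuine gap is in the sufficiency direction, and you name it yourself: everything rests on showing that a chordal, sun-free graph with at least two vertices contains a simple vertex, and your argument for this reduces to ``by iterating this incomparability phenomenon \ldots one produces an alternating sequence of pairs $(w_i,u_i)$'' that closes up into an induced sun. That iteration is the entire content of the hard direction. Concretely, what is missing is: (a) a justification that your extremal choice (a simplicial vertex with inclusion-minimal closed neighborhood) is the right starting point --- a single incomparable pair $N[a]$, $N[b]$ gives only two hubs and two rim candidates, and it is not explained how chordality forces the sequence to wrap around into a \emph{cycle} of length $n\geq 3$ rather than terminating or degenerating; (b) a proof that the rim vertices produced are pairwise distinct, pairwise nonadjacent, and adjacent to \emph{exactly} their two designated hubs, since any stray adjacency destroys the induced sun; and (c) a proof that the hub set induces a complete graph, as required by the paper's definition of a sun. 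Until the construction in (a)--(c) is carried out, the proof establishes only the easy direction of the theorem.
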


Note that Farber called those forbidden subgraphs not suns but trampolines, later on the term sun became more popular.

\begin{theorem}[Lubiw. 1982 \cite{lubiw1982stronglychordal}]\label{thm3.7}
If $G^k$ is a strongly chordal graph, so is $G^{k+1}$ for all $k\in\N$.
\end{theorem}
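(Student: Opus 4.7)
The plan is to apply Farber's characterization (Theorem~\ref{thm3.6}): a graph is strongly chordal precisely when it is chordal and contains no induced sun. Under the hypothesis that $G^k$ is strongly chordal, it therefore suffices to show that $G^{k+1}$ contains no induced cycle of length $\geq 4$ and no induced sun. In both subtasks the basic device is the same: replace each edge of a hypothetical forbidden configuration in $G^{k+1}$ by a shortest ``witness'' path in $G$, then project a well-chosen vertex of each such path into $G^k$ to obtain a forbidden configuration there.

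Concretely, for the chordality step, suppose toward a contradiction that $G^{k+1}$ contains an induced cycle $C=v_1v_2\cdots v_n$ with $n\geq 4$. Then $\distg{G}{v_i}{v_{i+1}}\leq k+1$ for all $i$ (indices taken modulo $n$), while $\distg{G}{v_i}{v_j}\geq k+2$ whenever $v_iv_j$ is a non-edge of $C$; in particular every such non-edge of $C$ is also a non-edge of $G^k$. For each $i$ fix a shortest $v_i$-$v_{i+1}$ path $P_i$ in $G$ and let $w_i$ be the neighbour of $v_{i+1}$ on $P_i$; then $\distg{G}{v_i}{w_i}\leq k$ and $\distg{G}{v_{i+1}}{w_i}=1$, so both $v_iw_i$ and $v_{i+1}w_i$ belong to $\E{G^k}$. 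The aim is to extract from the configuration on $\set{v_1,w_1,\dots,v_n,w_n}$ either a chordless cycle of length $\geq 4$ or an induced sun in $G^k$, contradicting the strong chordality of $G^k$.

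For the sun-freeness step, I would run the analogous witness construction starting from a hypothetical induced sun of $G^{k+1}$ with parts $U,W$ as in Definition~\ref{def3.1}'s companion (sun). The combinatorial rigidity of the sun---$U$ independent, $W$ a clique, and each $u_i$ adjacent to precisely $w_i$ and $w_{i+1}$---should force the witness vertices, together with parts of $U\cup W$, to reassemble into either a sun or a chordless cycle of length $\geq 4$ in $G^k$, again contradicting strong chordality.

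The main obstacle is bookkeeping the chords of the witness graph inside $G^k$. A priori, a witness $w_i$ could be close in $G$ to some non-consecutive $v_j$ or to another $w_\ell$, producing shortcuts that destroy the intended induced structure; moreover the behaviour depends on whether each distance $\distg{G}{v_i}{v_{i+1}}$ equals $k+1$, so that the edge $v_iv_{i+1}$ actually disappears in $G^k$, or is strictly smaller, in which case it persists. In the former case one may have to move the witness deeper into $P_i$, for instance to a midpoint, to avoid its collapsing onto a common neighbour of several $v_j$. I expect the heart of Lubiw's argument to lie precisely in this case analysis, exploiting the absence of a sun in $G^k$ to exclude all unwanted shortcut configurations and thereby ensure that the witness construction produces a genuine induced cycle or sun in $G^k$.
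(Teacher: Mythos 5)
The paper states this theorem as a cited result of Lubiw and gives no proof of its own, so your proposal stands or falls on its own merits. As written it has a genuine gap: everything after the witness construction is deferred. The sentences ``the aim is to extract \dots either a chordless cycle of length $\geq 4$ or an induced sun in $G^k$'' and ``should force the witness vertices \dots to reassemble into'' such a configuration are precisely the content of the theorem, and you give no argument for them. That this extraction is hard, not routine, is demonstrated by the paper itself: Lemma \autoref{lemma3.4} needs a full page of case analysis just to show that a chordal $G$ with non-chordal $G^2$ contains an unsuspended \emph{sunflower} (whose inner cycle is only chordal, not a clique), and upgrading that to an actual sun, for an arbitrary step from $G^k$ to $G^{k+1}$, and additionally handling the case where the forbidden configuration in $G^{k+1}$ is itself a sun rather than a cycle, is substantially more work. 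There are also unaddressed degeneracies: if $\distg{G}{v_i}{v_{i+1}}=1$ your witness $w_i$ collapses onto $v_i$; distinct witnesses $w_i,w_j$ may coincide or be mutually close in $G$; and in the sun case the clique $W$ of $G^{k+1}$ need not restrict to any comparable clique in $G^k$, so it is unclear that the forbidden configuration you seek even lives on the vertex set $\set{v_i}\cup\set{w_i}$. Finally, note that you cannot isolate a ``chordality step'' that uses only chordality of $G^k$: chordal graphs are not closed under taking powers (the sun $S_n$ of Figure \autoref{fig3.1} is chordal with non-chordal square, and Duchet's Theorem \autoref{thm3.12} only gives steps of two), so the absence of suns must enter already there, which you acknowledge only in passing.

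A cleaner and standard route, entirely parallel to the two ordering proofs the paper does give in full (Theorem \autoref{thm3.11} for interval graphs via interval orderings, and Flotow's theorem via cocomparability orderings), uses the characterization of strongly chordal graphs by \emph{strong elimination orderings}: an ordering $\left[v_1,\dots,v_n\right]$ such that for $i<j$ and $k<l$ with $v_k,v_l\in\nbclosed{v_i}$ and $v_k\in\nbclosed{v_j}$ one has $v_l\in\nbclosed{v_j}$. One takes such an ordering $\sigma$ of $G^k$ and verifies, by the same short distance case analysis used in the interval graph proof (splitting on whether the relevant distances are $\leq k$ or exactly $k+1$ and inserting a penultimate vertex of a shortest path), that $\sigma$ is a strong elimination ordering of $G^{k+1}$. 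This avoids the forbidden subgraph bookkeeping altogether and is the form in which Chen and Chang recast these closure results. If you wish to salvage the sun based approach, you must supply the full extraction argument; at present the proposal is a plan rather than a proof.
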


There are many known characterizations of strongly chordal graphs and two of them are in terms of so called totally balanced matrices.
Totally-balanced matrices were used by Lovász (see \cite{lovasz2014combinatorial}) and are those matrices not containing the incidence matrix of a cycle of length at least three. Thus a hypergraph is totally balanced if and only if its incidence matrix is totally balanced. Hence a graph $G$ is totally balanced if and only if $G$ is a forest.\\
Although strongly chordal graphs in general may contain cycles there is a strong relation between them and totally balanced matrices.

\begin{definition}[Neighborhood Matrix]
The {\em neighborhood matrix} $\fkt{M}{G}$ of a graph $G$ on $n$ vertices $v_1,\dots,v_n$ is the $n\times n$ matrix with entry $\lb i,j\rb$ equals $1$ if and only if $v_i\in\nbclosed{v_j}$ and $0$ otherwise.
\end{definition}

\begin{theorem}[Lubiw. 1982 \cite{lubiw1982stronglychordal}]\label{thm3.8}
A graph $G$ is strongly chordal if and only if $\fkt{M}{G}$ is totally balanced.
\end{theorem}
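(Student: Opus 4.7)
The plan is to prove both directions by contraposition, leveraging Farber's characterization (Theorem~\ref{thm3.6}): $G$ is strongly chordal iff $G$ is chordal and contains no sun.

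For the implication ``$G$ not strongly chordal $\Rightarrow$ $M(G)$ not totally balanced'', I would split into two cases. If $G$ contains a sun $S_n$ with stable set $u_1,\dots,u_n$ and clique $w_1,\dots,w_n$, I would select the $n \times n$ submatrix of $M(G)$ on rows $u_1,\dots,u_n$ and columns indexed by the closed neighborhoods $N[w_1],\dots,N[w_n]$. By the definition of a sun, $u_i \in N[w_j]$ holds iff $w_j \in \{w_i, w_{i+1 \bmod n}\}$, so this submatrix is exactly the vertex-edge incidence matrix of $C_n$, certifying that $M(G)$ is not totally balanced. If instead $G$ has an induced chordless cycle on $v_1,\dots,v_k$ with $k \ge 4$, then $M(G)$ contains $M(C_k)$ as a principal submatrix, so it suffices to exhibit inside $M(C_k)$ a submatrix equal to the incidence matrix of some $C_m$, $m \ge 3$. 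For even $k = 2m \ge 6$ a clean uniform construction works: take rows $\{v_{2i}\}_{i=1}^m$ and columns $\{N[v_{2j-1}]\}_{j=1}^m$, so that $v_{2i} \in N[v_{2j-1}]$ holds iff $j \equiv i$ or $j \equiv i+1 \pmod m$, which is precisely the incidence pattern of $C_m$. The boundary case $k=4$ and the odd $k \ge 5$ are handled by ad hoc $3 \times 3$ selections that exploit the chordlessness to force the needed zeros; e.g.\ for $k=4$ rows $\{v_1, v_3, v_4\}$ and columns $\{N[v_1], N[v_2], N[v_3]\}$ yield the incidence matrix of $C_3$.

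For the converse, ``$M(G)$ not totally balanced $\Rightarrow$ $G$ not strongly chordal'', I would begin with a submatrix of $M(G)$ equal to the incidence matrix of some $C_k$, giving rows indexed by vertices $x_1,\dots,x_k$ and columns by vertices $y_1,\dots,y_k$ satisfying $x_i \in N[y_i] \cap N[y_{i+1 \bmod k}]$ and $x_i \notin N[y_j]$ for every other $j$. The plan is to analyse the closed alternating walk $W = y_1 x_1 y_2 x_2 \cdots y_k x_k y_1$ in $G$. Any chord of $W$ in the induced subgraph on $\{x_1,\dots,x_k\} \cup \{y_1,\dots,y_k\}$ must be an edge of the form $y_i y_j$ or $x_i x_j$, because every cross-edge $x_i y_j$ with $j \notin \{i, i+1\}$ is ruled out by the submatrix pattern. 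If $W$ induces a chordless cycle of length $\ge 4$, then $G$ is not chordal and we are done. Otherwise, I would use chordality iteratively on ever shorter subcycles of $W$ to force two structural conclusions: every pair $y_iy_j$ must be an edge, so the $y_i$ form a clique, and no pair $x_ix_j$ can be an edge, for otherwise $x_i y_{i+1} y_{j+1} x_j x_i$ is an induced $C_4$ (its would-be chords $x_i y_{j+1}$ and $x_j y_{i+1}$ are among the forbidden non-adjacencies). These two structural conclusions together with the prescribed pattern $x_i \sim y_i, y_{i+1}$ only realize an induced sun $S_k$ in $G$, contradicting strong chordality via Theorem~\ref{thm3.6}.

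The main obstacle will be the forcing argument in the second direction. The submatrix hypothesis is purely algebraic and records only memberships $x_i \in N[y_j]$; it tells us nothing a priori about the edges $y_iy_j$ or $x_ix_j$ that are needed to build a sun. Translating this algebraic datum into the much richer combinatorial sun structure requires a careful case analysis, in particular handling the degenerate overlaps where some $x_i$ coincides with $y_i$ or $y_{i+1}$ (which typically collapse $W$ and reduce to the chordless-cycle subcase) and iteratively peeling off chords of $W$ using the absence of induced $\ge 4$-cycles whenever $G$ is chordal. The conceptual hinge of the whole proof is Farber's Theorem~\ref{thm3.6}: a failure of strong chordality always manifests either as a chordless cycle or as a sun, and these are precisely the two structures whose closed-neighborhood matrices most transparently embed a cycle incidence matrix.
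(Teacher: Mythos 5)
The paper itself offers no proof of Theorem~\ref{thm3.8}; it is quoted from Lubiw, so I can only judge your argument on its own terms. Your architecture --- running both directions through Farber's characterization (Theorem~\ref{thm3.6}) and matching suns and chordless cycles with cycle--incidence submatrices of $\fkt{M}{G}$ --- is the standard and correct skeleton, but two of the concrete steps fail as written. In the forward direction, the claim that an induced odd cycle handled "by ad hoc $3\times3$ selections" works for all odd $k\ge 5$ is false already for $k=7$: a $3\times 3$ incidence matrix of $C_3$ inside $\fkt{M}{C_k}$ requires three closed neighborhoods (arcs of three consecutive cycle vertices) that pairwise intersect, and for $k\ge 7$ the only such triples have consecutive centers $v_i,v_{i+1},v_{i+2}$, for which $N[v_i]\cap N[v_{i+2}]=\set{v_{i+1}}\subseteq N[v_{i+1}]$, so the required zero can never be placed. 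The same obstruction occurs for every odd $k\ge 7$ (only $k=5$ escapes, because there every pair of arcs intersects). The gap is repairable --- for $k=2m+1$ the rows $v_2,v_4,\dots,v_{2m},v_1$ against the columns $N[v_1],N[v_3],\dots,N[v_{2m+1}]$ give the incidence matrix of $C_{m+1}$ --- but your stated plan does not deliver it.

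In the converse direction the decisive exclusion is broken at exactly the indices you need. The quadrilateral $x_i\,y_{i+1}\,y_{j+1}\,x_j$ is only chordless when $j\notin\set{i-1,i,i+1}\pmod{k}$: for $j=i+1$ the would-be diagonal $x_jy_{i+1}=x_{i+1}y_{i+1}$ is one of the adjacencies prescribed by the submatrix, so the $4$-cycle has a chord and chordality yields no contradiction. Consecutive $x$'s have to be killed by a different cycle (for instance $x_i\,x_{i+1}\,y_{i+2}\,y_i$, whose diagonals $x_iy_{i+2}$ and $x_{i+1}y_i$ genuinely are forbidden entries), and that cycle already presupposes the edge $y_iy_{i+2}$, i.e.\ that the clique structure on the $y$'s has been established first. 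That forcing --- ``iteratively peeling chords of $W$'' --- is the real content of this direction and is only gestured at: at each stage the available chords are an unknown mixture of $y$--$y$ and $x$--$x$ edges, and one must prove the process cannot terminate in any configuration other than an induced sun (possibly after discarding the degenerate overlaps $x_i\in\set{y_i,y_{i+1}}$, which you acknowledge but do not resolve). As it stands the converse is a plausible plan resting on an unproved forcing lemma rather than a proof.
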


\begin{definition}[Clique Matrix] 
The {\em clique matrix} $\fkt{C}{G}$ of a graph $G$ on $n$ vertices $v_1,\dots,v_n$ with maximal Cliques $C_1,\dots,C_q$ is the $q\times n$ matrix with entry $\lb i,j\rb$ equals $1$ if and only if $v_j\in C_i$ and $0$ otherwise.
\end{definition}

\begin{theorem}[Lubiw. 1982 \cite{lubiw1982stronglychordal}]\label{thm3.9}
A graph $G$ is strongly chordal if and only if $\fkt{C}{G}$ is totally balanced.
\end{theorem}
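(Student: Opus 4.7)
The plan is to reduce to Theorem~\ref{thm3.8}. Since that theorem already establishes that $G$ is strongly chordal if and only if $M(G)$ is totally balanced, it suffices to prove the intermediate equivalence: $M(G)$ is totally balanced if and only if $C(G)$ is totally balanced. The two matrices encode the same adjacency data of $G$, via the identity (valid in any graph) that $\nbclosed{v}$ is the union of all maximal cliques containing $v$, while each maximal clique $K$ equals $\bigcap_{v\in K}\nbclosed{v}$.

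Both directions would be proved by contrapositive, by translating a cyclic incidence submatrix $M_n$ for some $n\ge 3$ from one matrix into the other. For ``$C(G)$ totally balanced $\Rightarrow M(G)$ totally balanced'', assume $M(G)$ contains $M_n$ witnessed by row-vertices $u_1,\dots,u_n$ and column-vertices $w_1,\dots,w_n$ with $w_i\in\nbclosed{u_i}\cap\nbclosed{u_{i+1}}$ and $w_i\notin\nbclosed{u_j}$ for $j\notin\{i,i+1\}$ (indices modulo $n$). Each $w_i$ is adjacent to or equal to both $u_i$ and $u_{i+1}$, and after resolving the possible coincidences the set $\{w_i,u_i,u_{i+1}\}$ forms a clique which I would extend to a maximal clique $K_i$. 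Then rows $K_1,\dots,K_n$ and columns $u_1,\dots,u_n$ should reproduce $M_n$ in $C(G)$: the memberships $u_i\in K_{i-1}\cap K_i$ are immediate, while any spurious $u_i\in K_j$ for $j\notin\{i-1,i\}$ would force $u_i\sim w_j$, contradicting $w_j\notin\nbclosed{u_i}$.

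For the reverse direction, start from an $M_n$ in $C(G)$ with maximal cliques $K_1,\dots,K_n$ and vertices $v_1,\dots,v_n$ satisfying $v_i\in K_i\cap K_{i+1}$ and $v_i\notin K_j$ otherwise. Consecutive $v_{i-1}$ and $v_i$ both lie in $K_i$ and are therefore adjacent in $G$, and the maximality of $K_i$ together with the non-memberships provides, for each $j\notin\{i-1,i\}$, a witness vertex $p_i^{(j)}\in K_i$ that is non-adjacent to $v_j$. I would combine these witnesses to build a cyclic incidence submatrix inside $M(G)$, taking rows from the $v_i$'s and columns from suitable common ``private'' vertices of each $K_i$.

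The main obstacle in both directions is ensuring that the transferred submatrix is precisely $M_n$, with no additional $1$'s. In the first direction one has to confirm that $u_i\sim u_{i+1}$, which is not directly encoded in the $M_n$ pattern and must be inferred through the structure of chordal graphs, potentially with a case analysis for degeneracies $w_i\in\{u_i,u_{i+1}\}$. In the second direction, the crux is selecting private vertices whose closed neighborhoods do not accidentally pick up extra $v_j$'s, and this is precisely where the sun-freeness provided by Theorem~\ref{thm3.6} and the clique-tree structure of chordal graphs must be invoked to control the bookkeeping.
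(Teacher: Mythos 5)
The paper states this theorem without proof (it is quoted from Lubiw), so there is no internal argument to compare against; judged on its own terms, your proposal has a genuine gap in each direction. The concrete failure in the first direction is the claim that $\set{w_i,u_i,u_{i+1}}$ is a clique: the cyclic pattern in $\fkt{M}{G}$ only records $w_i\in\nbclosed{u_i}\cap\nbclosed{u_{i+1}}$ and carries no information about the edge $u_iu_{i+1}$. The sun $S_3$ is a counterexample: the rows $\nbclosed{u_1},\nbclosed{u_2},\nbclosed{u_3}$ and columns $w_1,w_2,w_3$ form a cyclic submatrix of $\fkt{M}{S_3}$, yet the $u_i$ are pairwise nonadjacent, no maximal clique contains two of them, and each $u_i$ is simplicial and hence lies in exactly one maximal clique, so each of your proposed columns $u_1,\dots,u_n$ of the target submatrix of $\fkt{C}{G}$ would contain a single $1$. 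You cannot rescue this by ``inferring $u_i\sim u_{i+1}$ from chordality'': in this direction you are given only a cyclic submatrix of $\fkt{M}{G}$, so chordality is not available, and the sun shows the adjacency is false even when $G$ is chordal. A workable route is different: use Theorem \ref{thm3.8} and Theorem \ref{thm3.6} to convert ``$\fkt{M}{G}$ not totally balanced'' into an induced cycle of length $\geq 4$ or an induced sun, and read the cyclic submatrix of $\fkt{C}{G}$ off that subgraph --- for a sun the correct rows are the maximal cliques containing $\set{u_i,w_i,w_{i+1}}$ and the correct columns are the $w_i$, not the $u_i$.

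In the reverse direction the entire content of the theorem sits in the step you defer to ``bookkeeping.'' Maximality of $K_i$ gives, for each individual $j\notin\set{i-1,i}$, some vertex of $K_i$ outside $\nbclosed{v_j}$, but your construction needs a single vertex of $K_i$ that avoids $\nbclosed{v_j}$ for all such $j$ simultaneously, and nothing in the proposal produces one; moreover, you cannot freely ``invoke sun-freeness'' at that point without first knowing which half of the equivalence you are in, which makes the appeal circular as written. This is not a technicality: ``every strongly chordal graph has a totally balanced clique matrix'' is the substantive half of Lubiw's result, and the known proofs go through strong elimination orderings or $\Gamma$-free (doubly lexical) standard forms of the matrices rather than through a local witness-combination argument. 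As it stands, the proposal establishes neither implication.
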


The structure of strongly chordal graphs allows a lot of problems, that remain hard to solve on general chordal graphs, to be solved efficiently. In chapter 5 we will revisit this class of graphs.

\begin{definition}[Intersection Graph]
The {\em intersection graph} of a family $\mathscr{I}$ of sets is the graph who has a vertex for every set in $\mathscr{I}$ and two distinct vertices are adjacent if and only if their corresponding sets intersect.
\end{definition}

The concept of intersection graphs plays an important role in graph theory, especially in the study of different classes of chordal graphs. Given certain structures to the sets in $\mathscr{I}$ the corresponding intersection graph often holds very strong structural properties.\\
One of the most excessively studied family of intersection graphs are the interval graphs i.e. the intersection graphs of intervals on the real line. The closure of interval graphs under the taking of powers was first shown by Raychaudhuri (see \cite{raychaudhuri1987intervallpowers}) but Chen and Chang (see \cite{ChenChang2001families}) gave a much simpler proof in terms of a characterization by Ramalingam and Rangan .

\begin{theorem}[Ramalingam, Rangan. 1988 \cite{ramalingam1988unified}]\label{thm3.10}
A graph $G$ is an interval graph if and only if it has an {\em interval ordering}, which is an ordering of $\V{G}$ into $\left[ v_1,\dots,v_n \right]$ such that
\begin{align*}
i<l<j~\text{and}~v_iv_j\in\E{G}~\Rightarrow~ v_lv_j\in\E{G}.
\end{align*}
\end{theorem}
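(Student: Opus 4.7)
The plan is to prove both directions by giving explicit constructions and verifying the required intersection/ordering properties. For the forward direction ($G$ interval $\Rightarrow$ interval ordering exists), first I would fix a family $\{I_v\}_{v \in V(G)}$ of intervals $I_v = [\ell_v, r_v]$ realizing $G$, breaking ties to ensure all right endpoints are distinct (a generic perturbation that does not change the intersection pattern). Then I would order the vertices $v_1, \ldots, v_n$ by increasing right endpoint, so that $r_{v_1} < r_{v_2} < \cdots < r_{v_n}$, and claim that this is an interval ordering.

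To verify the ordering condition, suppose $i < l < j$ with $v_i v_j \in E(G)$. From $I_{v_i} \cap I_{v_j} \neq \emptyset$ and $r_{v_i} < r_{v_j}$ it follows that $\ell_{v_j} \leq r_{v_i}$. Since $i < l$ gives $r_{v_i} < r_{v_l}$, we obtain $\ell_{v_j} \leq r_{v_i} < r_{v_l}$, while $l < j$ gives $r_{v_l} < r_{v_j}$. Hence $r_{v_l} \in [\ell_{v_j}, r_{v_j}] = I_{v_j}$, so $I_{v_l} \cap I_{v_j} \neq \emptyset$ and thus $v_l v_j \in E(G)$.

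For the backward direction, assume we are given an interval ordering $[v_1, \ldots, v_n]$ and construct intervals as follows. For each $j$, set
\begin{align*}
f(j) = \min \left\{ i \leq j \;\middle|\; v_i \in \nbclosed{v_j} \right\}, \qquad I_{v_j} = [f(j), j].
\end{align*}
Clearly $f(j) \leq j$ so each $I_{v_j}$ is a nonempty interval. I would then check that two vertices $v_j, v_k$ with $j < k$ are adjacent in $G$ exactly when $I_{v_j} \cap I_{v_k} \neq \emptyset$. If $v_j v_k \in E(G)$, then $v_j \in \nbclosed{v_k}$ so $f(k) \leq j$, and thus $j \in I_{v_j} \cap I_{v_k}$. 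Conversely, if the two intervals intersect then $f(k) \leq j$, so there is some $i \leq j$ with $v_i v_k \in E(G)$; if $i = j$ we are done, and if $i < j < k$ the interval ordering property applied to the edge $v_i v_k$ forces $v_j v_k \in E(G)$.

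The only genuinely delicate step is the tie-breaking in the forward direction, since coinciding endpoints could force some $v_l$ with $r_{v_l} = \ell_{v_j}$ to only touch $I_{v_j}$ at a single point and still produce an edge by the closed-interval convention; however, this is handled uniformly by a standard generic perturbation that preserves the intersection graph. Apart from that, both directions are direct verifications and I expect no major obstacle.
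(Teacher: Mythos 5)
Your proof is correct. Note that the paper itself does not prove Theorem \ref{thm3.10} at all: it is stated as a cited result of Ramalingam and Rangan and then used as a black box in the proof of Theorem \ref{thm3.11}, so there is no in-paper argument to compare against. Your two directions are the standard ones and both check out: ordering by right endpoints gives $\ell_{v_j}\leq r_{v_i}\leq r_{v_l}\leq r_{v_j}$ and hence $r_{v_l}\in I_{v_j}$ (in fact this chain works even with non-strict inequalities, so the tie-breaking perturbation you flag as the delicate step is not actually needed), and the construction $I_{v_j}=[f(j),j]$ in the converse direction correctly recovers adjacency via the ordering property applied to the edge $v_{f(k)}v_k$. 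The only cosmetic slip is the line ``thus $j\in I_{v_j}\cap I_{v_k}$'' followed by ``if the two intervals intersect then $f(k)\leq j$'' — the latter deserves the one-line justification that, since $f(j)\leq j<k$, the intervals $[f(j),j]$ and $[f(k),k]$ meet if and only if $f(k)\leq j$; but this is immediate.
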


\begin{theorem}[Raychaudhuri. 1987 \cite{raychaudhuri1987intervallpowers}]\label{thm3.11}
If $G^k$ is an interval graph, so is $G^{k+1}$ for all $k\in\N$.
\end{theorem}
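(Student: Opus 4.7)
The plan is to apply the Ramalingam--Rangan characterization of \autoref{thm3.10}: I will show that any interval ordering of $G^k$ is also an interval ordering of $G^{k+1}$. Since $\V{G^k}=\V{G^{k+1}}=\V{G}$ this is well defined, so Theorem~\ref{thm3.10} then yields the conclusion at once.

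So let $\left[v_1,\dots,v_n\right]$ be an interval ordering of $G^k$, and pick $i<l<j$ with $v_iv_j\in\E{G^{k+1}}$, i.e.\ $\distg{G}{v_i}{v_j}\leq k+1$. I need to show $\distg{G}{v_l}{v_j}\leq k+1$. If $\distg{G}{v_i}{v_j}\leq k$, this is immediate from the interval ordering property of $G^k$, so assume $\distg{G}{v_i}{v_j}=k+1$, fix a shortest $v_i$-$v_j$ path $v_i=u_0,u_1,\dots,u_{k+1}=v_j$ in $G$, and let $u_k$ denote its penultimate vertex. Then $\distg{G}{v_i}{u_k}=k$, so $v_iu_k\in\E{G^k}$, and $u_kv_j\in\E{G}\subseteq\E{G^k}$. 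Let $q$ be the position of $u_k$ in the ordering; since the shortest path is simple one has $q\neq j$. I now split on $q$. If $q<l$, then $q<l<j$ and the edge $v_qv_j=u_kv_j\in\E{G^k}$ forces, by the interval ordering property, $v_lv_j\in\E{G^k}$, so $\distg{G}{v_l}{v_j}\leq k$. If $q=l$, then $v_l=u_k$ is already adjacent to $v_j$ in $G$. If $q>l$ (which includes the subcase $q>j$), then $i<l<q$, and applying the interval ordering to $v_iv_q=v_iu_k\in\E{G^k}$ yields $v_lu_k\in\E{G^k}$, i.e.\ $\distg{G}{v_l}{u_k}\leq k$, so that $\distg{G}{v_l}{v_j}\leq\distg{G}{v_l}{u_k}+1\leq k+1$ by the triangle inequality. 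In every case $\distg{G}{v_l}{v_j}\leq k+1$, as required.

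The only genuine obstacle is spotting that one must use the penultimate vertex $u_k$ rather than the symmetric natural choice $u_1$. The ordering property of \autoref{thm3.10} is asymmetric: it propagates an edge $v_av_c$ leftward from its larger endpoint $v_c$ but not rightward from $v_a$. Attempting to run the above argument with $u_1$ produces only $\distg{G}{v_l}{u_1}\leq k$ and hence the useless bound $2k$ on $\distg{G}{v_l}{v_j}$. Picking $u_k$ instead places the newly created $G^k$-edge $v_iu_k$ with its ``large'' endpoint $u_k$ on the side where the interval property actually applies, which is precisely what collapses the distance estimate down to the desired $k+1$.
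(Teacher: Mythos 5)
Your proof is correct and follows essentially the same route as the paper's: both take an interval ordering of $G^k$, invoke Theorem~\ref{thm3.10}, reduce to the case $\distg{G}{v_i}{v_j}=k+1$, pick the penultimate vertex of a shortest $v_i$-$v_j$ path, and split on where that vertex sits in the ordering relative to $l$. Your case analysis is in fact marginally more careful (you explicitly dispose of the case $q=l$, which the paper's two cases technically omit), but the argument is the same.
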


\begin{proof}
Let $G^k$ be an interval graph and $\sigma=\left[v_1,\dots,v_n\right]$ an interval ordering of $G^k$. We want to show that $\sigma$ is an interval ordering for $G^{k+1}$ as well.\\
Now suppose $i<l<j$ and $v_iv_j\in\E{G^{k+1}}$, which implies $\distg{G}{v_i}{v_j}\leq k+1$. If $\distg{G}{v_i}{v_j}\leq k$, then we get $v_iv_j\in\E{G^k}$ and with $\sigma$ being an interval ordering of $G^k$ $v_lv_j\in\E{G^k}\subseteq\E{G^{k+1}}$.\\
Now suppose $\distg{G}{v_i}{v_j}=k+1$. Let $P$ be a shortest $v_i,v_j$-path in $G$ and let $v_a$ be the vertex adjacent to $v_j$ on $P$. Then, $\distg{G}{v_i}{v_a}=k$ and $\distg{G}{v_a}{v_j}=1$ hence $v_iv_a$ and $v_a,v_j$ are edges in $G^k$. If $i<l<a$, then $v_lv_a$ exists in $G^k$ by $\sigma$ and so $\distg{G}{v_j}{v_j}\leq\distg{G}{v_l}{v_a}+\distg{G}{v_a}{v_j}\leq k+1$ holds and $v_j$ is adjacent to $v_l$ in $G^{k+1}$. If $a<l<j$, then $v_lv_j$ exists in $G^k$ and therefore in $G^{k+1}$, in any case $v_l$ and $v_j$ are adjacent and $\sigma$ is an interval ordering of $G^{k+1}$, which, by Theorem \autoref{thm3.11}, makes $G^{k+1}$ an interval graph. 
\end{proof}

\begin{remark}\label{rem3.1}
Strongly chordal graphs and interval graphs are chordal.
\end{remark}

An even larger class of graphs closed under the taking of powers and including interval graphs is given by the so called cocomparability graphs. Just like interval graphs we can give a special ordering $\sigma$ of the vertices of such a graph in order to not only characterize them, but to prove the closure.

\begin{definition}[Comparability Graph] 
A {\em comparability graph} is the underlying graph of an acyclic digraph, which can be viewed as a partially ordered set (poset). I.e. a graph $G$ is a comparability graph if and only if it has a {\em transitive ordering} $\sigma=\left[ v_1,\dots,v_n\right]$ of $\V{G}$ such that
\begin{align*}
i<l<j~\text{and}~v_iv_j\in\E{G}~\Rightarrow~ v_lv_j\in\E{G}.
\end{align*}
\end{definition}

\begin{definition}[Cocomparability Graph]
A {\em cocomparability graph} is the complement of a comparability graph, i.e. it has a {\em cocomparability ordering} $\sigma=\left[ v_1,\dots,v_n\right]$ of $\V{G}$ such that
\begin{align*}
i<l<j~\text{and}~v_iv_j\in\E{G}~\Rightarrow~v_iv_l\in\E{G}~\text{or}~v_lv_j\in\E{G}.
\end{align*}
\end{definition}

Again, while originally proven by Flotow in 1995 (see \cite{flotow1995trapezoid}) Chen and Chang (see \cite{ChenChang2001families} gave a much simpler and more elegant proof.

\begin{theorem}[Flotow. 1995 \cite{flotow1995trapezoid}]
If $G^k$ is a cocomparability graph, so is $G^{k+1}$ for all $k\in\N$.
\end{theorem}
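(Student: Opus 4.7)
The plan is to show that any cocomparability ordering $\sigma=\left[v_1,\dots,v_n\right]$ of $G^k$ is already a cocomparability ordering of $G^{k+1}$, which by definition proves that $G^{k+1}$ is a cocomparability graph. The overall strategy mirrors the proof of Theorem \ref{thm3.11} for interval graphs, and the weakening from the interval-ordering condition (which forces $v_lv_j\in\E{G}$) to the disjunctive cocomparability condition (which only forces $v_iv_l\in\E{G}$ \emph{or} $v_lv_j\in\E{G}$) is precisely what lets the argument succeed at every possible position of an auxiliary vertex.

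First I would fix indices $i<l<j$ with $v_iv_j\in\E{G^{k+1}}$, i.e.\ $\distg{G}{v_i}{v_j}\leq k+1$. If $\distg{G}{v_i}{v_j}\leq k$ then $v_iv_j\in\E{G^k}$ and applying the cocomparability property of $\sigma$ in $G^k$ immediately delivers $v_iv_l$ or $v_lv_j$ in $\E{G^k}\subseteq\E{G^{k+1}}$. In the remaining case $\distg{G}{v_i}{v_j}=k+1$ I would pick a shortest $v_i,v_j$-path $P$ and let $v_a$ be the neighbor of $v_j$ on $P$, so that $\distg{G}{v_i}{v_a}=k$ and $v_av_j\in\E{G}$; in particular both $v_iv_a$ and $v_av_j$ are edges of $G^k$ and $a\notin\set{i,j}$.

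The bulk of the proof is then a short case distinction on the position of $a$ in $\sigma$. If $a=l$ we are done because $v_iv_l=v_iv_a\in\E{G^k}\subseteq\E{G^{k+1}}$. Otherwise either $a<l$, which forces $a<l<j$, so the cocomparability property applied to $v_av_j$ yields $v_av_l\in\E{G^k}$ or $v_lv_j\in\E{G^k}$; or $a>l$, which forces $i<l<a$, so the property applied to $v_iv_a$ yields $v_iv_l\in\E{G^k}$ or $v_lv_a\in\E{G^k}$. In either subcase the disjunct involving $v_iv_l$ or $v_lv_j$ is already an edge of $\E{G^k}\subseteq\E{G^{k+1}}$ and we are done, while the disjunct involving $v_lv_a$ (or $v_av_l$) gives $\distg{G}{v_l}{v_a}\leq k$, which together with $\distg{G}{v_a}{v_j}=1$ yields $\distg{G}{v_l}{v_j}\leq k+1$ and hence $v_lv_j\in\E{G^{k+1}}$.

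The main obstacle I anticipate is the possibility that $a$ lies outside the interval spanned by $i$ and $j$ in $\sigma$, which would genuinely derail the interval-ordering-style argument used in Theorem \ref{thm3.11} (there only $v_lv_j$ can be concluded, and only when the relevant edge straddles $l$ on a specific side). Here, however, the disjunctive form of the cocomparability condition turns this into a non-issue: for any $a\neq l$ the hypothesis $i<l<j$ forces $l$ to be strictly between the indices of one of the pairs $(a,j)$ or $(i,a)$, so the cocomparability property is always applicable to one of the two known $G^k$-edges incident to $v_a$, regardless of whether $a<i$, $a>j$, or $i<a<j$. The case analysis above is therefore exhaustive and no further subcases are required.
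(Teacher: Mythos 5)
Your proposal is correct and follows essentially the same route as the paper's proof: fix a cocomparability ordering of $G^k$, reduce to the case $\distg{G}{v_i}{v_j}=k+1$, introduce the penultimate vertex $v_a$ of a shortest path, and apply the disjunctive ordering condition to whichever of the $G^k$-edges $v_iv_a$, $v_av_j$ has $l$ strictly between its endpoints' indices. Your version is in fact marginally more careful, since you explicitly dispose of the case $a=l$ and observe that $a$ lying outside the interval $[i,j]$ causes no harm, points the paper's write-up glosses over.
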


\begin{proof}
Similar to the proof for interval graphs we take some cocomparability graph $G^k$ for some $k\in\N$ and a corresponding cocomparability ordering $\sigma=\left[ v_1,\dots,v_n\right]$ and show that $\sigma$ is a cocomparability ordering of $G^{k+1}$ as well.\\
Suppose $i<l<j$ and $v_iv_j\in\E{G^{k+1}}$, hence $\distg{G}{v_i}{v_j}\leq k+1$. If $\distg{G}{v_i}{v_j}\leq k$ holds either $v_iv_l$ or $v_lv_j$ exist in $G^k$ with $\sigma$ being a cocomparability ordering and therefore at least one of those two edges exists in $G^{k+1}$.\\
So we suppose $\distg{G}{v_i}{v_j}=k+1$ and chose some vertex $v_a$ on a shortest $v_i,v_j$-path in $G$ with $\distg{G}{v_i}{v_a}=k$ and $\distg{G}{v_a}{v_j}=1$. With that $v_a$ is adjacent to $v_i$ and $v_j$ in $G^k$. If $i<l<a$, then either $\distg{G}{v_i}{v_l}\leq k$ or $\distg{G}{v_l}{v_j}\leq k$ is implied by the existence of either one of the corresponding edges in $G^k$. If $v_iv_l$ exists in $G^k$ it does so in $G^{k+1}$ too and we are done, so suppose $v_l,v_j\in\E{G^k}$. We get $\distg{G}{v_l}{v_j}\leq\distg{G}{v_l}{v_a}+\distg{G}{v_a}{v_j}\leq k+1$ and so the edge $v_lv_j$ exists in $G^{k+1}$. Thus we reach the case $a<l<j$ and either the edge $v_av_l$ or $v_lv_j$ exists in $G^k$, which implies the distance conditions $\distg{G}{v_a}{v_l}\leq k$ or $\distg{G}{v_l}{v_j}\leq k$. With $\distg{G}{v_l}{v_j}\leq k$ the edge $v_lv_j$ exists in both $G^k$ and $G^{k+1}$. And for $\distg{G}{v_a}{v_l}\leq k$ we get $\distg{G}{v_l}{v_j}\leq \distg{G}{v_a}{v_l}+\distg{G}{v_a}{v_j}\leq k+1$ and therefore the existence of $v_lv_j$ in $G^{k+1}$. Hence $\sigma$ is a cocomparability order of $G^{k+1}$ and therefore $G^{k+1}$ is a cocomparability graph.  
\end{proof}

\begin{remark}\label{rem3.2}
Cocomparability graphs are perfect.
\end{remark}

\subsection{Chordal Graphs}

In general chordal graphs are not closed under the taking of powers. Nevertheless it is possible to show a very similar property. In 1980 Laskar and Shier (see \cite{LaskarShier1980chordal}) showed that while $G^3$ and $G^5$ are chordal if $G$ itself is chordal, but in general the same does not hold for $G^2$.\\
We have already seen an example of such a graph. The sun $S_5$ in \autoref{fig3.1}. But such graphs do not necessarily need to be suns. Chordal graphs that look a lot like suns, but are none, have this property too. 

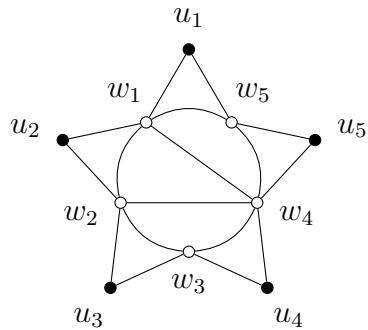
\begin{figure}[H]
\begin{center}
\begin{tikzpicture}

\node (center) [inner sep=1.5pt] {};

\node (label) [inner sep=1.5pt,position=135:1.7cm from center] {};

\node (u1) [inner sep=1.5pt,position=90:1.6cm from center,draw,circle,fill] {};
\node (u2) [inner sep=1.5pt,position=162:1.6cm from center,draw,circle,fill] {};
\node (u3) [inner sep=1.5pt,position=234:1.6cm from center,draw,circle,fill] {};
\node (u4) [inner sep=1.5pt,position=306:1.6cm from center,draw,circle,fill] {};
\node (u5) [inner sep=1.5pt,position=18:1.6cm from center,draw,circle,fill] {};

\node (v1) [inner sep=1.5pt,position=126:0.8cm from center,draw,circle] {};
\node (v2) [inner sep=1.5pt,position=198:0.8cm from center,draw,circle] {};
\node (v3) [inner sep=1.5pt,position=270:0.8cm from center,draw,circle] {};
\node (v4) [inner sep=1.5pt,position=342:0.8cm from center,draw,circle] {};
\node (v5) [inner sep=1.5pt,position=54:0.8cm from center,draw,circle] {};

\node (lu1) [position=90:0.07cm from u1] {$u_1$};
\node (lu2) [position=162:0.07cm from u2] {$u_2$};
\node (lu3) [position=234:0.07cm from u3] {$u_3$};
\node (lu4) [position=306:0.07cm from u4] {$u_4$};
\node (lu5) [position=18:0.07cm from u5] {$u_5$};

\node (lv1) [position=126:0.07cm from v1] {$w_1$};
\node (lv2) [position=198:0.07cm from v2] {$w_2$};
\node (lv3) [position=270:0.07cm from v3] {$w_3$};
\node (lv4) [position=342:0.07cm from v4] {$w_4$};
\node (lv5) [position=54:0.07cm from v5] {$w_5$};

\path
(u1) edge (v1)
	 edge (v5)
(u2) edge (v1)
	 edge (v2)
(u3) edge (v2)
	 edge (v3)
(u4) edge (v3)
	 edge (v4)
(u5) edge (v4)
	 edge (v5)
;

\path[bend right] 
(v1) edge (v2)
(v2) edge (v3)
(v3) edge (v4)
(v4) edge (v5)
(v5) edge (v1)
;

\path
(v2) edge (v4)
(v4) edge (v1)
;

\end{tikzpicture}
\caption{Another example of a chordal graph whose square is not chordal.}
\label{fig3.2}
\end{center}
\end{figure}

This led Laskar and Shier to the conjecture that every odd power of a chordal graph is chordal itself. Duchet (see \cite{Duchet1984classical}) proved an even stronger result which led to a number of so called Duchet-type results on graph powers.\\
Duchet's proof makes use of very basic tools such as walks. For chordal graphs the following lemma will be very useful.

\begin{lemma}[Duchet. 1984 \cite{Duchet1984classical}]\label{lemma3.1}
Every closed walk $v_1,\dots,v_n,v_1$ with $n\geq 2$ contains a chord $v_iv_{i+1}$ or a repetition $v_i=v_{i+1}$.
\end{lemma}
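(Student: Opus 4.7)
The plan is to proceed by induction on $n$, reading indices modulo $n$ so that the walk-edges are $v_1v_2,\,v_2v_3,\dots,v_{n-1}v_n,\,v_nv_1$ and a "chord" of the walk means an edge $v_iv_j$ of $G$ with $j \not\equiv i,\,i\pm 1 \pmod n$, while a "repetition" means two occurrences $v_i,v_j$ of the same vertex in the listing. The base cases $n=2$ and $n=3$ are handled by direct inspection: for $n=2$ the walk $v_1,v_2,v_1$ retraces its single edge, which under the stated convention supplies the required degeneracy; for $n=3$ the walk is either a triangle of $G$, in which every pair of listed vertices is already joined by a walk-edge of the cyclic listing, or has a repetition.

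For the inductive step, let $W=v_1,v_2,\dots,v_n,v_1$ be a closed walk with $n\geq 4$, and assume the lemma for all shorter closed walks in $G$. If two walk-vertices coincide, i.e.\ $v_i=v_j$ with $i\neq j$, we are immediately done by exhibiting the repetition. Otherwise $v_1,\dots,v_n$ are pairwise distinct and, together with the walk-edges $v_1v_2,\dots,v_nv_1$, form a cycle of length $n\geq 4$ sitting inside $G$. Invoking chordality of $G$ now yields an edge $v_iv_j$ of $G$ with $j\not\equiv i\pm 1\pmod n$, and this is precisely the chord of $W$ demanded by the conclusion.

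The main obstacle I anticipate is \emph{not} the chordality step itself, which is automatic once one has a cycle of length $\geq 4$, but rather pinning down the conventions for cyclic indices and for the meaning of ``chord $v_iv_{i+1}$'' vs.\ ``repetition $v_i=v_{i+1}$'' in a \emph{simple} graph (where truly consecutive walk-vertices cannot coincide). Once that bookkeeping is settled so that non-adjacent indices produce chords and equal indices produce repetitions, the induction closes cleanly, and no further combinatorial content is needed beyond the definition of chordality combined with the obvious dichotomy ``either a vertex is repeated or the walk is a cycle.''
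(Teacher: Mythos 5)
Your proof establishes a different statement from the one the paper actually needs, and the difference is not cosmetic. The printed indices are a typo ($i+1$ should read $i+2$), but the intended content is pinned down by how the lemma is used in the proof of Lemma \ref{lemma3.2}: one must produce two vertices at walk-distance \emph{exactly two}, $v_t$ and $v_{t+2}$, that are equal or adjacent, so that the intermediate vertex $v_{t+1}$ can be cut out to shorten the walk. What you prove --- ``some vertex repeats somewhere, or some edge of $G$ joins two non-consecutive positions'' --- is strictly weaker and does not support that shortening step: a repetition $v_i=v_j$ with $j-i=5$, or a chord spanning three walk edges, gives no single vertex to excise. Note also that your induction hypothesis is never invoked, so the argument is really just a two-case split; and under your own conventions the claim fails already in your base case $n=3$, since a triangle traversed once has no repeated vertex and no pair of positions that is non-consecutive modulo $3$. (Under the intended reading the triangle is fine, because $v_1$ and $v_3=v_{1+2}$ are adjacent; the conclusion asks only for adjacency of $v_t$ and $v_{t+2}$, not for a non-walk edge.)

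The localization to distance two is the entire content of Duchet's lemma, and it needs one more ingredient than ``chordal cycles of length $\geq 4$ have chords.'' A correct argument runs as follows. Among all pairs $i<j$ with $v_i=v_j$ and $j-i\geq 2$ (such a pair exists because the walk is closed), pick one with $j-i$ minimal. If $j-i=2$ you have the repetition $v_t=v_{t+2}$. Otherwise $v_i,\dots,v_{j-1}$ are pairwise distinct by minimality and, together with the walk edges between them, form a cycle $C$ of length $j-i\geq 3$. If $j-i=3$, then $v_{i+1}$ and $v_{i+3}=v_j=v_i$ are adjacent via the walk edge $v_iv_{i+1}$, which is the desired adjacency at distance two. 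If $j-i\geq 4$, chordality alone gives only \emph{some} chord of $C$; to obtain one of the form $v_tv_{t+2}$ you must additionally argue that a chord of $C$ of minimal span has span exactly two (take a minimal-span chord; if its span were at least $3$, the shorter arc plus that chord would be a chordless cycle of length at least $4$ unless there were a chord of smaller span --- this is in substance the paper's Lemma \ref{lemma3.5}). That minimal-span step is precisely what your argument skips, and without it the conclusion you reach cannot be fed into the walk-shortening argument of Lemma \ref{lemma3.2}.
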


In order to prove Duchet's Theorem we need an additional definition.

\begin{definition}[Graph Modulo Subset]
Let $G$ be a graph and $V_1,\dots,V_m\subseteq\V{G}$ subsets of $\V{G}$. The graph $\fkt{G}{V_1,\dots,V_m}$ has the vertex set $\set{V_1,\dots,V_m}$ and $V_i$ and $V_j$ are adjacent if and only if $\lb\bigcup_{v\in V_i}\nb{v}\rb\cap V_j\neq\emptyset$.
\end{definition}

\begin{lemma}[Duchet. 1984 \cite{Duchet1984classical}]\label{lemma3.2}
Let $G$ be a chordal graph, if $V_1,\dots,V_m$ are connected subsets of $\V{G}$, $\fkt{G}{V_1,\dots,V_m}$ is also chordal.
\end{lemma}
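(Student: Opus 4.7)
The plan is a proof by contradiction. Suppose $H:=\fkt{G}{V_1,\dots,V_m}$ contains a chordless cycle of length $n\geq 4$; after relabeling we may take it to be $V_1,V_2,\dots,V_n,V_1$. By the definition of $\fkt{G}{\cdot}$, for each $j$ (indices taken modulo $n$) there exists an edge $a_jb_j\in\E{G}$ with $a_j\in V_j$ and $b_j\in V_{j+1}$. By connectedness of $V_j$, there exists a path $P_j$ in $\induz{G}{V_j}$ from $b_{j-1}$ to $a_j$. Concatenating all the $P_j$ and the bridging edges $a_jb_j$ in cyclic order yields a closed walk $W$ in $G$ whose vertex sequence passes through the sets $V_1,V_2,\dots,V_n$ in cyclic order. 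Among all such closed walks realizing this induced cycle (varying the choice of $a_j,b_j$ and $P_j$), I would select $W$ of minimum total length.

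Next I would apply Lemma \ref{lemma3.1} to $W$, which is a closed walk of length at least $n\geq 4$ in the chordal graph $G$. The lemma produces either a chord of $W$ (an edge of $G$ between two nearby but non-consecutive positions of $W$) or a repetition of vertices among nearby positions. To exploit this, first fix a labeling convention that assigns each vertex of $W$ to a specific $V_j$ (the one whose internal path $P_j$ or whose bridge endpoint it realizes), so that even if the sets $V_j$ overlap, each vertex on $W$ has a unique label. The two endpoints $u,v$ of the chord (or the repeated vertex) then lie in two labelled sets $V_j$ and $V_l$.

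The case analysis then proceeds on the relative positions of $j$ and $l$ in the cyclic order of the cycle. If $j=l$, the chord or repetition lies inside a single path $P_j$, so I can shortcut $P_j$ to obtain a strictly shorter walk realising the same cycle in $H$, contradicting minimality. If $j$ and $l$ are consecutive along the cycle, one can similarly splice $W$ at the two endpoints to produce a shorter admissible walk, again contradicting minimality. Finally, if $j$ and $l$ are non-consecutive, then the edge $uv\in\E{G}$ (or the identification $u=v$ together with the connectedness of the two sets) witnesses that $\lb\bigcup_{x\in V_j}\nb{x}\rb\cap V_l\neq\emptyset$; by definition this means $V_j$ and $V_l$ are adjacent in $H$, contradicting the assumption that the cycle $V_1,\dots,V_n,V_1$ is chordless in $H$. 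All three cases yield a contradiction, so $H$ must be chordal.

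The main obstacle I anticipate is the third case: I must match the chord produced by Lemma \ref{lemma3.1} precisely with the definition of adjacency in $\fkt{G}{\cdot}$, and in particular guarantee that the labels of the two endpoints truly are non-consecutive rather than coinciding or adjacent. The delicacy comes from the fact that the sets $V_j$ need not be pairwise disjoint, and from the fact that the short chord from Lemma \ref{lemma3.1} might sit on the boundary between a path $P_j$ and its adjoining bridge $a_jb_j$. The labelling convention fixed at the start, together with using the cyclic order of the bridges $a_jb_j$ along $W$ as the reference for \emph{consecutive} versus \emph{non-consecutive}, should allow each such boundary situation to be pushed into case one or two, thereby leaving case three as the only genuine source of new edges in $H$.
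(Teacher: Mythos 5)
Your proposal is correct and follows essentially the same route as the paper: both build a minimal closed walk in $G$ realizing the cycle of $\fkt{G}{V_1,\dots,V_m}$ (the paper's ``$C$-walk'' with its component decomposition is exactly your labelled concatenation of the $P_j$ and bridges), apply Lemma \ref{lemma3.1} to obtain a chord or repetition at distance two, and use minimality to force the two vertices into non-consecutive components, yielding a chord of the cycle. The only cosmetic difference is that you spell out the consecutive-component case as an explicit shortening argument, whereas the paper compresses it into the assertion $\abs{\alpha-\beta}\neq 1$.
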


\begin{proof}
Let $C=C_1\dots C_qC_1$ be a cycle with length $q\geq4$ in $\fkt{G}{V_1,\dots,V_m}$. We call a closed walk $W=w_1,\dots,w_p$ in $G$ a $C$-walk if and only if there is a decomposition of $W$ into $q$ subwalks $W_i$ where $W_i$ is a nonempty walk in $\induz{G}{C_i}$. $W_i$ is called the $i$-th component of the $C$-decomposition $W_1,\dots,W_q$.\\
Now we consider a $C$-walk $W$ with minimal length $p$, with vertices $v_1,\dots,v_p$ and $W_1,\dots,W_q$ the $C$-decomposition of $W$. Lemma \autoref{lemma3.1} implies the existence of two vertices $v_t$ and $v_{t+2}$ such that
\begin{align*}
v_t=v_{t+2}~\text{or}~v_t~\text{is adjacent to}~v_{t+2}.
\end{align*}
The minimality of $W$ implies furthermore that $v_t$ and $v_{t+2}$ are in different components, otherwise $W$ could be shortened by cutting out the vertex $v_{t+1}$ either by using the edge $v_tv_{t+2}$ or by just not using the edge $v_tv_{t+1}$ in $W$. Let $W_{\alpha}$ and $W_{\beta}$ be those different components with 
\begin{align*}
\abs{\alpha-\beta}\neq 1,~\lb \alpha,\beta\rb\neq\lb q,1\rb~\text{and}~\lb \alpha,\beta\rb\neq\lb 1,q\rb.
\end{align*}
Therefore $W_{\alpha}$ and $W_{\beta}$ are linked in $\fkt{G}{V_1,\dots,V_m}$, which results in a chord in $C$, thus $\fkt{G}{V_1,\dots,V_m}$ is chordal.
\end{proof}

\begin{theorem}[Duchet's Theorem, Duchet. 1984 \cite{Duchet1984classical}]\label{thm3.12}
Let $G$ be a graph and $k\in\N$. If $G^k$ is chordal, so is $G^{k+2}$.
\end{theorem}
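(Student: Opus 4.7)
The plan is to exhibit $G^{k+2}$ as a graph of the form $\fkt{G^k}{V_1,\dots,V_m}$ for a carefully chosen family of connected subsets and then invoke Lemma \autoref{lemma3.2}, which is precisely designed to transfer chordality along such a construction.

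First, I would pick for every vertex $v_i\in\V{G}$ the closed neighbourhood $V_i\define\nbclosed{v_i}$ taken in $G$. Each $V_i$ is connected in $G$, since every vertex in $V_i$ is adjacent to $v_i$, and since $G^k$ has the same vertex set as $G$ and more edges, each $V_i$ is still connected in $G^k$. As $G^k$ is chordal by hypothesis, Lemma \autoref{lemma3.2} applied to $G^k$ and to the family $V_1,\dots,V_n$ shows that $\fkt{G^k}{V_1,\dots,V_n}$ is chordal.

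The remaining step is to verify that the natural bijection $V_i\mapsto v_i$ is a graph isomorphism between $\fkt{G^k}{V_1,\dots,V_n}$ and $G^{k+2}$. By definition, $V_i$ and $V_j$ are adjacent precisely when there exist $u\in V_i$ and $w\in V_j$ with $uw\in\E{G^k}$, i.e.\ $u\neq w$ and $\distg{G}{u}{w}\leq k$. The triangle inequality then yields
\begin{align*}
\distg{G}{v_i}{v_j}\leq\distg{G}{v_i}{u}+\distg{G}{u}{w}+\distg{G}{w}{v_j}\leq 1+k+1=k+2,
\end{align*}
so $v_iv_j\in\E{G^{k+2}}$. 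For the converse, given $v_i\neq v_j$ with $\distg{G}{v_i}{v_j}=\ell\leq k+2$, I would fix a shortest $v_i$-$v_j$-path $v_i=x_0,x_1,\dots,x_\ell=v_j$: for $\ell\geq 3$ the pair $u\define x_1\in V_i$ and $w\define x_{\ell-1}\in V_j$ satisfies $u\neq w$ and $\distg{G}{u}{w}=\ell-2\leq k$, and for $\ell\in\set{1,2}$ the choice $u\define v_i\in V_i$ together with $w\define x_{\ell-1}\in V_j$ does the job.

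The only delicate point of this plan is the converse direction of the adjacency check, where I must ensure that the two witnesses $u\in V_i$ and $w\in V_j$ are distinct. This forces the separate treatment of the short cases $\ell\in\set{1,2}$, but once this case split is made, chordality of $\fkt{G^k}{V_1,\dots,V_n}$ from Lemma \autoref{lemma3.2} translates directly into chordality of $G^{k+2}$.
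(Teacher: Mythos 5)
Your proposal is correct in substance and follows the same overall strategy as the paper's proof: realize $G^{k+2}$ as $\fkt{G^k}{V_1,\dots,V_n}$ for a family of connected subsets and then invoke Lemma \autoref{lemma3.2}. The genuine difference lies in the choice of the sets $V_i$, and here your version is the one that actually works. The paper takes $V_i=\knb{k}{v_i}\cup\set{v_i}$, the closed $k$-ball in $G$; with that choice two classes $V_i$ and $V_j$ are adjacent in $\fkt{G^k}{V_1,\dots,V_n}$ precisely when $\distg{G}{v_i}{v_j}\leq 3k$, so the asserted isomorphism with $G^{k+2}$ holds only for $k=1$ and the paper's argument does not establish the general case as written. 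Your choice $V_i=\nbclosed{v_i}$ (radius one) gives exactly the bound $1+k+1=k+2$, is the choice for which the isomorphism is true, and you moreover verify that isomorphism in both directions, which the paper asserts without proof. The only blemish is in your treatment of the short cases of the converse: for $\ell=1$ your witnesses are $u=v_i$ and $w=x_{\ell-1}=x_0=v_i$, which coincide, so $uw$ is not an edge of $G^k$; take $w\define v_j$ there instead (it lies in $V_j$ and satisfies $\distg{G}{v_i}{v_j}=1\leq k$ with $u\neq w$). With that one-line repair your argument is complete, and it is in fact a corrected and fully detailed version of the proof the paper sketches.
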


\begin{proof}
Let $\V{G}=\set{v_1,\dots,v_n}$, we define $V_i\define\knb{k}{v_i}\cup\set{v_i}$ for all $i\in\set{1,\dots,n}$. Then $\fkt{G^k}{V_1,\dots,V_n}$ is isomorphic to $G^{k+2}$ and with Lemma \autoref{lemma3.2} we obtain the chordality of $G^{k+2}$.
\end{proof}

Although Laskar and Shier were not able to prove their own conjecture about odd powers of chordal graphs, they were able to use Duchet's Theorem to successfully characterize all chordal graphs whose squares are chordal and therefore described the family of chordal graphs which is closed under taking powers.\\
We begin by expanding the concept of suns, which enabled us to describe strongly chordal graphs in terms of forbidden subgraphs.

\begin{definition}[Sunflower]
A {\em sunflower} of size $n$ is a graph $S=\lb U\cup W, E \rb$ with $U=\set{u_1,\dots,u_n}$ and $W=\set{w_1,\dots,w_n}$ such that $W$ induces a chordal graph and $U$ is a stable set and furthermore $u_iw_j\in E$ if and only if $j=i$ or $j=i+1\lb \!\!\!\mod n\rb$.\\
The family of all sunflowers of size $n$ is denoted by $\mathcal{S}_n$. 
\end{definition}

\begin{definition}[Suspended Sunflower]
A sunflower $S\in\mathcal{S}_n$ contained in some graph $G$ is called a {\em suspended sunflower} in $G$ if there exists a vertex $v\notin\V{S}$, such that $v$ is adjacent to at least one pair of vertices $u_i$ and $u_j$ with $j\neq i\pm1\lb \!\!\!\mod n\rb$.
\end{definition}

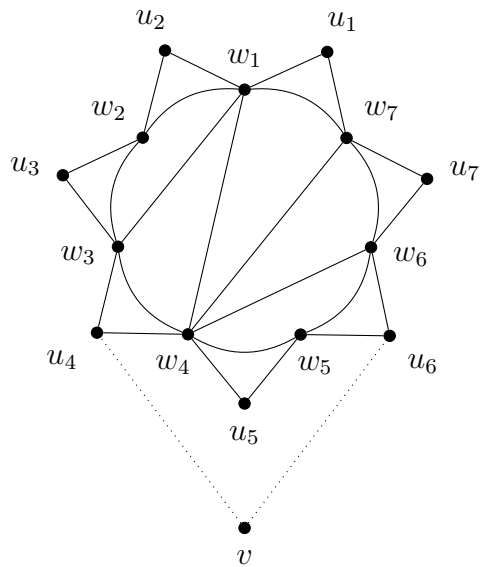
\begin{figure}[H]
\begin{center}
\begin{tikzpicture}

\node (center) [inner sep=0pt] {};

\node (1) [draw,circle,inner sep=1.5pt,fill,position=90:1.6cm from center] {};
\node (l1) [position=90:0.07cm from 1] {$w_1$};

\node (2) [draw,circle,inner sep=1.5pt,fill,position=141.4:1.6cm from center] {};
\node (l2) [position=141.4:0.07cm from 2] {$w_2$};

\node (3) [draw,circle,inner sep=1.5pt,fill,position=192.8:1.6cm from center] {};
\node (l3) [position=192:0.07cm from 3] {$w_3$};

\node (4) [draw,circle,inner sep=1.5pt,fill,position=244.2:1.6cm from center] {};
\node (l4) [position=244.2:0.07cm from 4] {$w_4$};

\node (5) [draw,circle,inner sep=1.5pt,fill,position=295.6:1.6cm from center] {};
\node (l5) [position=295.6:0.07cm from 5] {$w_5$};

\node (6) [draw,circle,inner sep=1.5pt,fill,position=347:1.6cm from center] {};
\node (l6) [position=347:0.07cm from 6] {$w_6$};

\node (7) [draw,circle,inner sep=1.5pt,fill,position=38.5:1.6cm from center] {};
\node (l7) [position=38.5:0.07cm from 7] {$w_7$};

\node (u1) [draw,circle,inner sep=1.5pt,fill,position=63.8:2.35cm from center] {};
\node (lu1) [position=63.8:0.07cm from u1] {$u_1$};

\node (u2) [draw,circle,inner sep=1.5pt,fill,position=115.2:2.35cm from center] {};
\node (lu2) [position=115.2:0.07cm from u2] {$u_2$};

\node (u3) [draw,circle,inner sep=1.5pt,fill,position=166.6:2.35cm from center] {};
\node (lu3) [position=166.6:0.07cm from u3] {$u_3$};

\node (u4) [draw,circle,inner sep=1.5pt,fill,position=218:2.35cm from center] {};
\node (lu4) [position=218:0.07cm from u4] {$u_4$};

\node (u5) [draw,circle,inner sep=1.5pt,fill,position=270:2.35cm from center] {};
\node (lu5) [position=270:0.07cm from u5] {$u_5$};

\node (u6) [draw,circle,inner sep=1.5pt,fill,position=320.8:2.35cm from center] {};
\node (lu6) [position=320.8:0.07cm from u6] {$u_6$};

\node (u7) [draw,circle,inner sep=1.5pt,fill,position=12.3:2.35cm from center] {};
\node (lu7) [position=12.3:0.07cm from u7] {$u_7$};

\path
(1) edge [bend right,bend angle=5] (2)
(2) edge [bend right,bend angle=5] (3)
(3) edge [bend right,bend angle=5] (4)
(4) edge [bend right,bend angle=5] (5)
(5) edge [bend right,bend angle=5] (6)
(6) edge [bend right,bend angle=5] (7)
(7) edge [bend right,bend angle=5] (1)
(u1) edge (1)
	 edge (7)
(u2) edge (1)
	 edge (2)
(u3) edge (2)
	 edge (3)
(u4) edge (3)
	 edge (4)
(u5) edge (4)
	 edge (5)
(u6) edge (5)
	 edge (6)
(u7) edge (6)
	 edge (7)
(3) edge (1)
(4) edge (1)
(4) edge (7)
(4) edge (6)	 
;

\node (w) [draw,circle,inner sep=1.5pt,fill,position=270:4cm from center] {};
\node (lw) [position=270:0.07 from w] {$v$};

\path
(w) edge [dotted] (u4)
(w) edge [dotted] (u6);

\end{tikzpicture}
\caption{A suspended sunflower $S\in\mathcal{S}_7$.}\label{fig3.3}
\end{center}
\end{figure}

Another very basic lemma on chordal graphs will simplify the proof.

\begin{lemma}\label{lemma3.5}
Let $C$ be a cycle of a chordal graph $G$. Then for each edge $uv\in C$, there exists a $w\in C$, such that $wuv$ is a triangle, that is the edges $uv$, $vw$ and $uw$ exist in $G$.
\end{lemma}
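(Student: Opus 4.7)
My plan is to localize the argument to the induced subgraph $H = \induz{G}{\V{C}}$, which is chordal because chordality is hereditary, and then to show that a shortest cycle of $H$ through the edge $uv$ must be a triangle.

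First I would dispose of the trivial case $\abs{C} = 3$: then $C$ itself is a triangle $uvw$ with $w$ the unique third vertex of $C$, and nothing is to prove. Hence I may assume $\abs{C} \geq 4$.

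Next I would consider the family of all cycles of $H$ that contain the edge $uv$; this family is nonempty because $C$ itself belongs to it. Let $C^*$ be any cycle in this family of minimum length. The key claim is that $\abs{C^*} = 3$. Assuming $\abs{C^*} \geq 4$ for contradiction, chordality of $H$ produces a chord $e$ of $C^*$, i.e.\ an edge of $H$ whose endpoints are non-consecutive on $C^*$. The endpoints of $e$ split the vertex sequence of $C^*$ into two arcs, and adjoining $e$ to each arc yields two cycles both strictly shorter than $C^*$; the one containing $uv$ contradicts the minimality of $C^*$. Hence $\abs{C^*} = 3$, and writing $C^* = uvw$ gives a vertex $w \in \V{H} = \V{C}$ with $uw, vw \in \E{G}$, as required.

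The part I expect to require the most care is the clean verification that $uv$ really lands on one of the two arcs produced by the chord $e$, regardless of where $e$ sits. This reduces to a short case distinction: if $e$ shares an endpoint with $\set{u,v}$, then one of the two resulting sub-cycles immediately retains the edge $uv$; otherwise, the endpoints of $e$ split the path $C^*$ minus the edge $uv$ into two sub-paths, and the sub-path containing $uv$ closes with $e$ to the required shorter cycle through $uv$. Once this case distinction is dispatched, the contradiction — and hence the lemma — follows at once.
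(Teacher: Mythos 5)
Your proof is correct. The paper states this lemma without proof (it is introduced only as "another very basic lemma on chordal graphs"), so there is no argument in the text to compare against; your write-up supplies the standard one. The two moves that matter are both present and handled properly: localizing to $H=\induz{G}{\V{C}}$ (which is what forces the third vertex $w$ to lie on $C$, since chordality alone applied in $G$ would only give you some $w\in\V{G}$), and the shortening argument, where you correctly observe that a chord $e$ of a minimum-length cycle $C^*$ through $uv$ cannot equal $uv$ (its endpoints are non-consecutive) and that $uv$ lies on exactly one of the two arcs determined by the endpoints of $e$, so that arc plus $e$ is a strictly shorter cycle through $uv$. The closing case distinction you flag is in fact unnecessary in the form you give it — the uniform statement that the arcs partition the edge set of $C^*$ already covers both cases — but including it does no harm.
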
 

\begin{lemma}[Laskar, Shier. 1980 \cite{LaskarShier1980chordal}]\label{lemma3.3}
Let $G$ be a chordal graph with $G^2$ not being chordal and let $C$ be an induced cycle of length $\geq 4$ in $G^2$. Then no edge of $C$ is an edge of $G$.
\end{lemma}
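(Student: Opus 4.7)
The plan is to argue by contradiction: assume that some edge of $C$ lies in $E(G)$; after relabelling we may take this edge to be $v_1v_2$. I will then construct an induced cycle of length $\geq 4$ inside $G$, contradicting chordality of $G$.

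First I establish basic non-edges. If $v_2v_3 \in E(G)$, then $v_1v_2v_3$ is a path of length $2$ in $G$, giving $v_1v_3 \in E(G^2)$; since $n \geq 4$ this is a chord of $C$ in $G^2$, contradicting $C$ being induced. Hence $v_2v_3 \notin E(G)$ and, symmetrically, $v_nv_1 \notin E(G)$. For each ``long'' edge $v_iv_{i+1}$ of $C$ (one not in $E(G)$), fix a common $G$-neighbor $u_i$ of $v_i$ and $v_{i+1}$; in particular both $u_2$ and $u_n$ exist, so there are at least two long edges. A brief check shows the chosen $u_i$'s are pairwise distinct and distinct from every $v_j$: any coincidence would force a common $G$-neighbor of two cyclically non-adjacent vertices of $C$, hence a chord of $C$ in $G^2$.

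Next I splice to form a closed walk $W$ in $G$ by traversing $C$ cyclically, replacing each long edge $v_iv_{i+1}$ by the path $v_i,u_i,v_{i+1}$ and keeping each short edge intact. All vertices of $W$ are distinct by the previous paragraph, so $W$ is a simple cycle in $G$ of length $\geq n+2 \geq 6$ and it still contains the edge $v_1v_2$. A routine case check classifies the possible chords of $W$: a chord of the form $v_iv_j$ is ruled out because it would immediately be a chord of $C$ in $G^2$, and a chord $v_iu_j$ with $i \notin \{j,j+1\}$ would place $v_i$ within $G$-distance $2$ of $v_j$, again giving a chord of $C$. Hence every chord of $W$ has the form $u_iu_j$ with $i \neq j$.

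Finally I iteratively shorten $W$. Since $G$ is chordal and $W$ has length $\geq 4$, $W$ has a chord $u_iu_j$; as neither endpoint is $v_1$ or $v_2$, exactly one of the two arcs of $W$ from $u_i$ to $u_j$ contains the edge $v_1v_2$, and replacing the other arc by the chord yields a strictly shorter simple cycle $W'$ through $v_1v_2$ whose chords are again only of the form $u_au_b$. The process terminates at a chordless cycle $W^*$ in $G$ that still contains the edge $v_1v_2$. The main obstacle is ruling out that $W^*$ is a triangle: a triangle through $v_1v_2$ would require a common $G$-neighbor $x$ of $v_1$ and $v_2$ lying on $W$. Running through the candidates $x=v_i$ with $i\geq 3$, $x=u_2$, $x=u_n$, and $x=u_j$ with $3\leq j\leq n-1$, each case forces two cyclically non-adjacent vertices of $C$ to lie within $G$-distance $2$, again contradicting that $C$ is induced in $G^2$. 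Hence $W^*$ is an induced cycle of length $\geq 4$ in $G$, the desired contradiction.
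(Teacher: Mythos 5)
The paper itself gives no proof of Lemma~3.3 --- it is stated with a citation to Laskar and Shier and then used as a black box in the proof of Lemma~3.4 --- so there is no in-paper argument to compare against line by line. Your proof is correct. I checked the delicate points: the distinctness of the $u_i$ from each other and from the $v_j$ does reduce, as you claim, to producing two cyclically non-adjacent vertices of $C$ at $G$-distance at most $2$ (using $n\geq 4$ to rule out the degenerate coincidences $i-1\equiv i+2$); the classification of chords of $W$ as edges $u_iu_j$ is sound; the shortening step preserves the edge $v_1v_2$ and the chord classification because the vertex set only shrinks; and your final case analysis correctly excludes a triangle through $v_1v_2$, the only subtlety being that the ``dangerous'' candidate $x=u_1$ does not exist precisely because $v_1v_2$ was assumed short. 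One remark: your iterative chord-shortening is essentially an in-line proof of the paper's Lemma~\ref{lemma3.5} (every edge of a cycle in a chordal graph lies in a triangle with a third cycle vertex). Invoking that lemma directly on the spliced cycle $W$ and the edge $v_1v_2$ would hand you the common neighbour $x\in\V{W}$ immediately and let you jump straight to your final case analysis; this is exactly how the paper's proof of Lemma~\ref{lemma3.4} handles the analogous step, so your argument is very much in the spirit of the surrounding text, just more self-contained.
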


\begin{lemma}[Laskar, Shier. 1983 \cite{LaskarShier1983powercenterchordal}]\label{lemma3.4}
Let $G$ be a chordal graph with $G^2$ not being chordal, then $G$ has at least one sunflower $S\in\mathcal{S}_n$, $n\geq4$, which is not suspended.
\end{lemma}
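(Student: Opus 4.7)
The plan is to extract the sunflower directly from an induced cycle witnessing the non-chordality of $G^2$. Since $G^2$ is not chordal, I would first fix an induced cycle $C=c_1c_2\dots c_nc_1$ in $G^2$ of length $n\geq 4$. By Lemma \ref{lemma3.3}, none of the edges $c_{i-1}c_i$ of $C$ (indices taken modulo $n$) lies in $\E{G}$, so $\distg{G}{c_{i-1}}{c_i}=2$ and one can select, for each $i$, a common $G$-neighbor $w_i$ of $c_{i-1}$ and $c_i$. Setting $u_i\define c_i$, the target is to show that $U\define\set{u_1,\dots,u_n}$ together with $W\define\set{w_1,\dots,w_n}$ forms a non-suspended sunflower $S\in\mathcal{S}_n$ in $G$.

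The bulk of the verification consists in establishing vertex distinctness and the correct adjacency pattern, all driven by the induced-ness of $C$ in $G^2$ together with Lemma \ref{lemma3.3}. Since $\E{G}\subseteq\E{G^2}$ and $C$ is induced in $G^2$, two distinct vertices of $U$ can be adjacent in $G$ only if they are consecutive on $C$, and Lemma \ref{lemma3.3} forbids even that; hence $U$ is stable. A coincidence $w_i=c_j$ would place both $c_{i-1}c_j$ and $c_jc_i$ into $\E{G}\subseteq\E{G^2}$, forcing $j\in\set{i-2,i}\cap\set{i-1,i+1}$, which is empty for $n\geq 4$. A coincidence $w_i=w_k$ with $i\neq k$ would make this vertex a common $G$-neighbor of three or four distinct outer vertices; in particular $w_i=w_{i+1}$ would give a common $G$-neighbor of $c_{i-1},c_i,c_{i+1}$ and hence force the forbidden chord $c_{i-1}c_{i+1}\in\E{G^2}$, while more distant coincidences force even worse chords. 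Finally, if $u_i=c_i$ were adjacent in $G$ to some $w_j$ with $j\notin\set{i,i+1}$, the paths $c_iw_jc_{j-1}$ and $c_iw_jc_j$ would force $c_ic_{j-1}$ and $c_ic_j$ into $\E{G^2}$, so that $j\in\set{i,i+2}\cap\set{i-1,i+1}=\emptyset$ for $n\geq 4$, a contradiction. Chordality of $\induz{G}{W}$ is immediate since $G$ itself is chordal.

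The non-suspended condition is then essentially automatic: if some $v\notin\V{S}$ were adjacent in $G$ to two outer vertices $u_i$ and $u_j$ with $j\neq i\pm 1\lb\!\!\!\mod n\rb$, the length-$2$ path $u_ivu_j$ in $G$ would create the edge $u_iu_j$ in $G^2$, contradicting that $C$ is induced in $G^2$. The main obstacle I expect is the careful bookkeeping of the distinctness analysis for the $w_i$'s, where the hypothesis $n\geq 4$ has to be used repeatedly to close off the residual modulo-$n$ coincidences and to rule out short wrap-around identifications; once distinctness is in hand, every sunflower axiom and the non-suspended property fall out of the same two ingredients, namely Lemma \ref{lemma3.3} and the induced-cycle structure of $C$ in $G^2$.
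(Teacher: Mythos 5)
Your construction is the same as the paper's up to and including the bookkeeping: extract the induced cycle $C$ in $G^2$, invoke Lemma \ref{lemma3.3} to get the midpoints $w_i$, and use the induced-ness of $C$ to force $U$ stable, the $w_i$ pairwise distinct and disjoint from $U$, each $w_i$ privately attached to exactly two consecutive outer vertices, and the non-suspended condition. All of that is correct. But there is one step of the paper's proof you have skipped, and it is precisely the step where the chordality of $G$ does real work: showing that $w_{i-1}w_i\in\E{G}$ for every $i$, so that $W$ actually carries the cycle $Z=\lb w_1\dots w_n\rb$ in the given order. The paper gets this by observing that $U\cup W$ forms a cycle of length $2n$ in $G$ and applying Lemma \ref{lemma3.5} to the edge $u_iw_i$ of that cycle: some cycle vertex must complete a triangle with $u_i$ and $w_i$, and every candidate other than $w_{i-1}$ would create a chord of $C$ in $G^2$; hence $w_{i-1}w_i\in\E{G}$, $Z$ is a cycle, and $\induz{G}{Z}$ is chordal because $G$ is.

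You dismiss the structure of $\induz{G}{W}$ with ``chordality is hereditary,'' which is true but only delivers a chordal graph on $W$ with no guaranteed edges at all. Read literally, the paper's sunflower definition only demands that $W$ induce a chordal graph, so under that literal reading your proof closes; but the ordering $w_1,\dots,w_n$, every figure of a sun or sunflower, the later flower definition (which explicitly requires a cycle through $W$), and the paper's own proof all treat the Hamiltonian cycle on $W$ as part of the sunflower. Without it your argument never uses the hypothesis that $G$ is chordal in any essential way (only hereditarily), which should be a warning sign: the same argument applied to a non-chordal $G$ would produce the same object, yet the lemma is specifically about chordal $G$ and the extra cycle structure on $W$ is exactly what distinguishes a sunflower from the weaker ``general sunflower''/flower obstructions introduced later. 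Add the Lemma \ref{lemma3.5} step and the proof matches the paper's.
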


\begin{proof}
Let $G$ be a chordal graph, $G^2$ not chordal and $C_n=\lb u_1\dots u_n\rb$, $n\geq4$, a chordless cycle in $G^2$. By Lemma \autoref{lemma3.3} no edge of $C_n$ is an edge of $G$, but every edge of the form $u_iu_{i+1\lb\!\!\! \mod n\rb}$ exists in $G^2$, thus $\distg{G}{u_i}{u_{i+1\lb \!\!\!\mod n\rb}}=2$ and there must exist some vertex $w_i$ extending the edge to a path $u_iw_iu_{i+1\lb \!\!\!\mod n\rb}$ of length $2$ in $G$. This results in a cycle $C_{2n}''=\lb u_1w_1u_2\dots w_{n-1}u_nv_n\rb$ in $G$. All the $w_i$ are distinct and disjoint from the $u_i$, otherwise $C_n$ would have a chord in $G^2$.\\
Obviously no $u_i$ is adjacent to another $u_j$, hence the $u_i$ form a stable set in $G$. Also $w_i$ is adjacent in $G$ to $u_i$ and $u_{i+1\lb \!\!\!\mod n\rb}$ and to no other $u$, otherwise $C$ would not be chordless in $G^2$.\\
Now consider the edge $u_iw_i$, by Lemma \autoref{lemma3.5} there must exist a vertex in $C_{2n}''$ adjacent in $G$ to $u_i$ and $w_i$. The only vertex that can be adjacent to both other vertices without producing a chord in $C_n$ is $w_{i-1\lb \!\!\!\mod n\rb}$. Hence $w_iw_j\in\E{G}$ for all $i\in{1,\dots,n}$ and $j=i\pm1\lb \!\!\!\mod n\rb$ and with that $Z=\lb w_1\dots w_n\rb$ is a cycle in $G$. Since $G$ is chordal $Z$ must have chords, i.e. $\induz{G}{Z}$ is chordal. Thus $W=\set{w_1,\dots,w_n}$ and $U=\set{u_1,\dots,u_n}$ together form a sunflower $S\in\mathcal{S}_n$, $n\geq4$, in $G$.\\
In addition, since $C_n$ is an induced cycle in $G^2$, no pair $u_j,u_k$ of vertices with $j\neq k\pm1\lb \!\!\!\mod n\rb$ can be adjacent in $G^2$. Hence $\distg{G}{u_j}{u_k}\geq3$. Thus the sunflower $S$ is not suspended.
\end{proof}

\begin{lemma}[Laskar, Shier. 1983 \cite{LaskarShier1983powercenterchordal}]\label{lemma3.6}
If $G$ is a chordal graph and $G^2$ is chordal too, $G$ does not contain a sunflower $S\in\mathcal{S}_n$, $n\geq4$, which is not suspended.
\end{lemma}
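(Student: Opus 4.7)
The plan is to prove the contrapositive: assuming the chordal graph $G$ contains a non-suspended sunflower $S=\lb U\cup W,E_S\rb\in\mathcal{S}_n$ with $n\geq 4$, I exhibit an induced chordless cycle of length $n$ in $G^2$, thereby contradicting the chordality of $G^2$. The natural candidate is the cycle $C\define u_1u_2\cdots u_nu_1$ on the stable ``outer'' vertices of the sunflower.

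First I would check that $C$ is actually a cycle in $G^2$. For each index $i$, the vertices $u_i$ and $u_{i+1}$ share the common neighbor $w_{i+1}$ in $G$ (since $u_iw_{i+1},u_{i+1}w_{i+1}\in\E{G}$ by the defining adjacencies of a sunflower), hence $\distg{G}{u_i}{u_{i+1}}\leq 2$ and $u_iu_{i+1}\in\E{G^2}$.

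The main step is to verify that $C$ has no chord in $G^2$, that is, $\distg{G}{u_i}{u_j}\geq 3$ whenever $j\not\equiv i,i\pm 1\pmod n$. Since $U$ is stable, $u_iu_j\notin\E{G}$, so it suffices to rule out any common neighbor $v$ of $u_i$ and $u_j$ in $G$. I would split into cases according to the location of $v$. If $v\in U$, stability of $U$ is violated. If $v\in W$, say $v=w_k$, then the sunflower adjacencies force $k\in\set{i,i+1}\cap\set{j,j+1}$, but these two length-two cyclic intervals are disjoint modulo $n$ precisely because $j\not\equiv i,i\pm 1\pmod n$ (with $n\geq 4$ ensuring no wrap-around overlaps). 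Finally, if $v\in\V{G}\setminus\V{S}$, then $v$ is a vertex outside the sunflower adjacent to two outer vertices $u_i,u_j$ with $j\not\equiv i\pm 1\pmod n$, making $S$ suspended---contrary to hypothesis. So no common neighbor exists and $u_iu_j\notin\E{G^2}$.

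Combining the two steps, $C$ is an induced cycle of length $n\geq 4$ in $G^2$, contradicting the chordality of $G^2$. The argument is essentially definitional case-checking; the main (mild) obstacle is the cyclic index arithmetic in the $v\in W$ case, where one must verify that $\set{i,i+1}$ and $\set{j,j+1}$ really are disjoint modulo $n$ under the stated index restriction. Notably, the chordality of $G$ itself does not appear to be invoked in this direction---the chordality of $\induz{G}{W}$ is already built into the definition of a sunflower, and the non-suspended condition takes care of all external vertices.
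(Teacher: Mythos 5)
Your proposal is correct and follows the same route as the paper: both arguments observe that the consecutive $u_i$ are joined by paths of length $2$ through the $w$-vertices, so $U$ induces a cycle in $G^2$, and that the non-suspended condition (together with the stability of $U$ and the adjacency pattern of the $w_k$) forces $\distg{G}{u_i}{u_j}\geq 3$ for non-consecutive indices, yielding an induced $C_n$ in $G^2$. Your write-up merely fills in the case analysis that the paper compresses into the word ``clearly,'' and your observation that the chordality of $G$ is never used is accurate.
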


\begin{proof}
Suppose $G$ and $G^2$ are chordal and $G$ contains a not suspended sunflower $S\in\mathcal{S}_n$, $n\geq4$. Then clearly $U$ forms a cycle in $G^2$ and with $S$ being unsuspended $\distg{G}{u_i}{u_j}\geq3$ holds for all $i\in\set{1,\dots,n}$ and $j\neq i\pm1\lb \!\!\!\mod n\rb$. Therefore $\induz{G^2}{U}\cong C_n$ and $G^2$ is not chordal.
\end{proof}

By combining Lemma \autoref{lemma3.4} and Lemma \autoref{lemma3.6} we obtain the following result.

\begin{theorem}[Laskar, Shier. 1983 \cite{LaskarShier1983powercenterchordal}]\label{thm3.13}
Let $G$ be a chordal graph. Then $G^2$ is chordal if and only if $G$ does not contain an unsuspended sunflower $S\in\mathcal{S}_n$ with $n\geq4$.
\end{theorem}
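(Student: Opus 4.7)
The plan is essentially bookkeeping: the theorem is a biconditional and both directions are already packaged in the two preceding lemmas, so the proof reduces to citing them correctly under the standing hypothesis that $G$ is chordal.

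For the ``only if'' direction I would argue by contraposition. Suppose $G$ contains an unsuspended sunflower $S \in \mathcal{S}_n$ with $n \geq 4$. Since $G$ is chordal by hypothesis, the hypotheses of Lemma \ref{lemma3.6} would force $G$ to contain no such sunflower, a contradiction unless $G^2$ is not chordal. Hence $G^2$ is not chordal, giving the desired contrapositive statement.

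For the ``if'' direction I would again argue by contraposition. Suppose $G^2$ is not chordal. Then $G$ is a chordal graph whose square is not chordal, so Lemma \ref{lemma3.4} immediately produces an unsuspended sunflower $S \in \mathcal{S}_n$ with $n \geq 4$ inside $G$. This is exactly the structure whose absence the theorem claims is equivalent to $G^2$ being chordal.

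There is essentially no obstacle here; the substance of the result lives inside Lemma \ref{lemma3.4} (the combinatorial extraction of the sunflower from an induced cycle in $G^2$, via Duchet-type closed-walk arguments) and Lemma \ref{lemma3.6} (the very short observation that an unsuspended sunflower's outer vertices induce a $C_n$ in $G^2$). The only care needed is to keep track of the standing assumption that $G$ is chordal, which is what allows the contrapositive of Lemma \ref{lemma3.6} to conclude $G^2$ non-chordal rather than the disjunction ``$G$ or $G^2$ non-chordal''.
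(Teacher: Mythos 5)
Your proof is correct and matches the paper exactly: the paper derives Theorem \ref{thm3.13} with the single sentence that it follows ``by combining Lemma \ref{lemma3.4} and Lemma \ref{lemma3.6}'', which is precisely the two-directional citation you carry out (Lemma \ref{lemma3.4} for the contrapositive of the ``if'' direction, Lemma \ref{lemma3.6} for the ``only if'' direction). Your remark about needing the standing chordality of $G$ to invoke the contrapositive of Lemma \ref{lemma3.6} cleanly is accurate but adds nothing beyond the paper's intent.
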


And by taking Dutchet's Theorem into account we finally reach our family of chordal graphs that is closed under the taking of powers.

\begin{theorem}[Laskar, Shier. 1983 \cite{LaskarShier1983powercenterchordal}]\label{thm3.14}
Let $G$ be a chordal graph without unsuspended sunflowers $S\in\mathcal{S}_n$ with $n\geq4$, then $G^k$ is chordal and does not contain an unsuspended sunflower $S\in\mathcal{S}_n$ with $n\geq4$ for all $k\in\N$.
\end{theorem}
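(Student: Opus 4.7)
The plan is to bootstrap the previous two results of this section (\autoref{thm3.13} and Duchet's \autoref{thm3.12}), together with \autoref{lemma3.6}, in order to control every power of $G$ simultaneously. Since both a chordality statement and a structural statement are needed, I would prove the chordality part first and then cash it in to rule out unsuspended sunflowers.

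First I would establish that $G^k$ is chordal for all $k\in\N$. Since $G$ is chordal and, by hypothesis, contains no unsuspended sunflower of size $n\geq 4$, \autoref{thm3.13} immediately yields that $G^2$ is chordal. Applying Duchet's \autoref{thm3.12} in two parallel chains then covers every power: starting from the chordality of $G=G^1$ one obtains chordality of $G^3,G^5,G^7,\dots$, while starting from the chordality of $G^2$ one obtains chordality of $G^4,G^6,G^8,\dots$. Together these chains give chordality of $G^k$ for every $k\in\N$.

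The absence of unsuspended sunflowers in each $G^k$ then follows almost for free. Fix some $k\in\N$. By the previous step both $G^k$ and its square $\lb G^k\rb^2 = G^{2k}$ are chordal, so \autoref{lemma3.6}, applied to $G^k$ in place of $G$, forces $G^k$ to contain no unsuspended sunflower of size $n\geq 4$. The only point that requires attention is that the sunflower notion is defined purely in terms of adjacency and non-adjacency in the ambient graph, so an \emph{unsuspended sunflower in $G^k$} is exactly the kind of configuration ruled out by \autoref{lemma3.6} applied to $G^k$; no translation between different powers is needed.

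Consequently there is no real technical obstacle here, and I do not expect a hidden difficulty: the heavy lifting has been absorbed into \autoref{thm3.13}, Duchet's theorem and \autoref{lemma3.6}, and this concluding statement reduces to the clean bookkeeping argument sketched above.
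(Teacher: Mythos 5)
Your proof is correct and is essentially the argument the paper intends (the paper gives no written proof, only the remark that the result follows by combining Theorem \autoref{thm3.13} with Duchet's Theorem): Theorem \autoref{thm3.13} gives chordality of $G^2$, the two Duchet chains from $G^1$ and $G^2$ give chordality of every $G^k$, and Lemma \autoref{lemma3.6} applied to $G^k$ together with the identity $\lb G^k\rb^2=G^{2k}$ rules out unsuspended sunflowers in every power.
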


Some corollaries for more graph classes, that have separately been shown to stay chordal under any power directly arise from this result.

\begin{corollary}\label{cor3.1}
If $G$ is a tree, then $G^k$ is chordal for all $k\in\N$.
\end{corollary}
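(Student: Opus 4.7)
The plan is to obtain Corollary~\ref{cor3.1} as an immediate application of Theorem~\ref{thm3.14} by verifying its two hypotheses for any tree $G$.

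First I would check that $G$ is chordal. Since a tree contains no cycles whatsoever, it certainly contains no chordless cycle of length $\geq 4$, so the chordality hypothesis of Theorem~\ref{thm3.14} holds trivially.

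Second, I would verify that $G$ contains no unsuspended sunflower $S \in \mathcal{S}_n$ with $n \geq 4$. In fact, a tree cannot contain a sunflower of any size $n \geq 2$ at all: by the definition of a sunflower, the vertices $u_1, w_1, u_2, w_2, \dots, u_n, w_n, u_1$ form a closed walk of length $2n$ using only the prescribed edges $u_i w_i$ and $u_i w_{i+1\lb\!\!\!\mod n\rb}$, and these $2n$ vertices are pairwise distinct, so this walk is a cycle. Since a tree is acyclic, no such structure can exist inside $G$.

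With both hypotheses of Theorem~\ref{thm3.14} verified, the conclusion gives that $G^k$ is chordal for every $k \in \N$. The only potentially delicate point in this plan is confirming that the $2n$ sunflower vertices really are distinct (so that we genuinely obtain a cycle rather than a closed walk), but this is immediate from the definition, which stipulates $U$ and $W$ as disjoint vertex sets indexed by $\set{1,\dots,n}$; hence no obstacle of substance arises.
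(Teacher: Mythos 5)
Your proposal is correct and is exactly the route the paper intends: Corollary~\ref{cor3.1} is presented as an immediate consequence of Theorem~\ref{thm3.14}, and checking that a tree is chordal and contains no (unsuspended) sunflower is all that is required. One small indexing slip: under the paper's convention $u_iw_j\in E$ iff $j=i$ or $j=i+1\lb\!\!\!\mod n\rb$, the vertex $u_i$ joins $w_i$ to $w_{i+1}$, so the spanning cycle is $w_1u_1w_2u_2\cdots w_nu_nw_1$ rather than $u_1w_1u_2w_2\cdots u_nw_nu_1$ (the edge $w_1u_2$ your sequence uses does not exist); the substance of the argument --- every sunflower contains a cycle through all $2n$ vertices and hence cannot occur in a tree --- is unaffected.
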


\begin{definition}[Block Graph]
A graph $G$ is called a {\em block graph} if each $2$-connected component ({\em block}) of $G$ induces a complete subgraph (i.e. a block graph is chordal).
\end{definition}

\begin{corollary}[Jamison. \cite{jamisonpowers}]\label{cor3.2}
If $G$ is a block graph, then $G^k$ is chordal for all $k\in\N$.
\end{corollary}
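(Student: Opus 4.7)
The plan is to invoke Theorem~\ref{thm3.14} directly: since block graphs are chordal (each cycle lies within a single block, which by definition is a clique and hence supplies any required chord), it suffices to show that a block graph cannot contain \emph{any} sunflower $S\in\mathcal{S}_n$ with $n\geq 4$, let alone an unsuspended one.

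The key structural fact I will exploit is that in a block graph every cycle is contained in a single block, and every block is a clique. So my first step is to extract a long cycle from a hypothetical sunflower. Suppose for contradiction that $G$ is a block graph containing a sunflower $S\in\mathcal{S}_n$ with vertex partition $U=\{u_1,\dots,u_n\}$, $W=\{w_1,\dots,w_n\}$ and $n\geq 4$. Using only the edges $u_iw_i$ and $u_iw_{i+1\lb\!\!\!\mod n\rb}$ guaranteed by the sunflower definition, I form the alternating closed walk
\begin{align*}
C\colon u_1,w_2,u_2,w_3,u_3,w_4,\dots,u_{n-1},w_n,u_n,w_1,u_1.
\end{align*}
Since the $u_i$ are pairwise distinct, the $w_i$ are pairwise distinct, and $U\cap W=\emptyset$, this is a genuine cycle of length $2n\geq 8$ in $G$.

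Next I apply the standard block-decomposition fact: every cycle of $G$ is contained in some block $B$ of $G$. Because $G$ is a block graph, $B$ induces a complete subgraph, so every two vertices appearing in $C$ are adjacent in $G$. In particular all $u_i$ are pairwise adjacent, contradicting that $U$ is a stable set in the sunflower. Hence no sunflower $S\in\mathcal{S}_n$ with $n\geq 4$ can exist in $G$, so in particular $G$ contains no unsuspended such sunflower, and Theorem~\ref{thm3.14} yields that $G^k$ is chordal for every $k\in\N$.

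I do not expect a serious obstacle here; the only point to be slightly careful about is verifying that the alternating sequence genuinely is a cycle (distinctness of vertices and existence of each consecutive edge follow immediately from the sunflower definition), and quoting the fact that every cycle of a graph is contained in a single block, which together with the defining property of block graphs forces the clique structure that contradicts the stability of $U$.
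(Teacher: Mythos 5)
Your proof is correct and follows exactly the route the paper intends: the corollary is stated as a direct consequence of Theorem~\ref{thm3.14}, and you supply the missing verification that a block graph is chordal and contains no sunflower of size $n\geq 4$ (via the alternating $2n$-cycle lying in a single block, which is a clique, contradicting the stability of $U$). The argument is sound as written.
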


\section{Chordal Powers and Powers of Chordal Graphs}

In this section we will investigate the relation of the chordality of a graph and its powers even further. The section is divided into two subsections, in the first we give an answer to the question which graphs become chordal when squared and in the second we state some fundamental properties of powers of chordal graphs.\\
An important tool for obtaining those results are several additional expansions of the suns, or sunflowers, we have seen before. Those seem to be the key when it comes to circles obtained by taking the square of a graph.

\subsection{Chordal Squares}

We begin with a small lemma that is a generalization of Lemma \autoref{lemma3.3}.

\begin{lemma}\label{lemma3.7}
Let $G$ be a graph and $k\geq2$ an integer. If $C$ is an induced cycle in $G^k$, then $G^r$ cannot contain two consecutive edges of $C$ for all $r\leq\abr{\frac{k}{2}}$.
\end{lemma}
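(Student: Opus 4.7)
The plan is a short argument by contradiction using the triangle inequality for the graph distance. Suppose for contradiction that for some $r\leq\abr{\frac{k}{2}}$ there exist two consecutive edges of $C$, say $v_{i-1}v_i$ and $v_iv_{i+1}$, that both belong to $\E{G^r}$. By definition of graph powers (Definition \ref{def3.1}) this means
\begin{align*}
\distg{G}{v_{i-1}}{v_i}\leq r \quad\text{and}\quad \distg{G}{v_i}{v_{i+1}}\leq r.
\end{align*}

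The next step is to bound the distance between the outer two vertices. Concatenating shortest $v_{i-1}$-$v_i$ and $v_i$-$v_{i+1}$ paths in $G$ yields a $v_{i-1}$-$v_{i+1}$ walk of length at most $2r\leq 2\abr{\frac{k}{2}}\leq k$. Hence $\distg{G}{v_{i-1}}{v_{i+1}}\leq k$, i.e.\ $v_{i-1}v_{i+1}\in\E{G^k}$. Because $C$ is an induced cycle in $G^k$ of length at least four (for length three there are no non-consecutive pairs, so the statement is vacuous) the vertices $v_{i-1}$ and $v_{i+1}$ are not adjacent on $C$. Therefore $v_{i-1}v_{i+1}$ is a chord of $C$ in $G^k$, contradicting the assumption that $C$ is induced.

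There is no real obstacle here: the entire argument is the triangle inequality combined with the bound $2\abr{k/2}\leq k$. The only point worth being explicit about is the reduction of the general statement to the single-vertex-in-the-middle case, which is immediate since "two consecutive edges of $C$" always share exactly one vertex by the cycle structure. The lemma is a direct generalization of Lemma \ref{lemma3.3} (the case $k=2$, $r=1$) and the proof simply exploits that the threshold $\abr{k/2}$ is precisely what the triangle inequality permits before a chord is forced.
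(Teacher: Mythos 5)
Your proof is correct and follows essentially the same route as the paper's: assume two consecutive edges of $C$ lie in $G^r$, apply the triangle inequality to get $\distg{G}{v_{i-1}}{v_{i+1}}\leq 2\abr{\frac{k}{2}}\leq k$, and conclude that the resulting edge in $G^k$ is a chord of $C$, a contradiction. Your explicit remark about short cycles is a small additional care the paper omits, but the argument is the same.
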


\begin{proof}
Let $C_n=\lb v_1e_1\dots v_ne_n\rb$ be an induced cycle in $G^k$. Suppose there is an $1\leq r\leq\abr{\frac{k}{2}}$ and an $i\in\set{1,\dots,n}$ with $\set{e_i,e_{i+1\lb \!\!\!\mod n\rb}}\subseteq\E{G^r}$.\\
Then $\distg{G}{v_{i+1\lb \!\!\!\mod n\rb}}{v_{i+2\lb \!\!\!\mod n\rb}}\leq r\leq\abr{\frac{k}{2}}$ and $\distg{G}{v_{i}}{v_{i+1\lb \!\!\!\mod n\rb}}\leq r\leq\abr{\frac{k}{2}}$ and therefore $\distg{G}{v_i}{v_{i+2\lb \!\!\!\mod n\rb}}\leq k$. Hence the edge $v_iv_{i+2\lb \!\!\!\mod n\rb}$ exists in $G^k$, which is a chord in $C_n$ and a contradiction.   
\end{proof}

\begin{corollary}\label{cor3.3}
Let $G$ be a graph and $k\geq 2$ an integer. If $C_n$ is an induced cycle in $G^k$, $G^r$ contains at most $\abr{\frac{n}{2}}$ edges of $C_n$ for all $r\leq\abr{\frac{k}{2}}$.
\end{corollary}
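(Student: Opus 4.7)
The plan is to reduce Corollary \ref{cor3.3} to a purely combinatorial statement about the cycle $C_n$ via Lemma \ref{lemma3.7}. First I would fix an induced cycle $C_n = \lb v_1 e_1 \dots v_n e_n \rb$ in $G^k$ and consider the subset $S \subseteq \E{C_n}$ of those edges that also appear in $G^r$, for an arbitrary $r \leq \lfloor k/2 \rfloor$. The goal is to bound $|S|$.

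Next I would invoke Lemma \ref{lemma3.7}: no two consecutive edges $e_i, e_{i+1 (\!\!\mod n)}$ of $C_n$ can simultaneously belong to $G^r$. Thus $S$ is an independent set in the cyclic adjacency structure on $\set{e_1,\dots,e_n}$, i.e.\ an independent set in the cycle graph $C_n$ itself (viewed on the edge indices).

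Finally, I would use the standard fact that the independence number of the cycle $C_n$ equals $\lfloor n/2 \rfloor$: any selection of elements from a cyclic sequence of length $n$ with no two consecutive chosen has cardinality at most $\lfloor n/2 \rfloor$. This yields $|S| \leq \lfloor n/2 \rfloor$, which is exactly the claim.

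I do not expect any serious obstacle here; the result is essentially a direct corollary of Lemma \ref{lemma3.7} together with the elementary bound on independent sets in a cycle, and the only minor point worth spelling out is that the ``consecutive'' relation on the edges of $C_n$ is itself the cyclic adjacency relation, so Lemma \ref{lemma3.7} translates literally into the independent-set condition.
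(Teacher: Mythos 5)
Your argument is correct and is exactly the intended deduction: the paper states this as an immediate corollary of Lemma \ref{lemma3.7}, relying on precisely the observation that edges of $C_n$ lying in $G^r$ form a set with no two cyclically consecutive elements, hence at most $\lfloor n/2\rfloor$ of them. Nothing is missing.
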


The first generalization of sunflowers we will attempt is a mild one. In order to properly describe the growth of cycles in powers of general graphs we need a concept of general sunflowers, which means that the flower does not necessarily have to be a chordal graph.

\begin{definition}[General Sunflower]
A {\em general sunflower} of size $n$ is a graph $S=\lb U\cup W, E\rb$ with $U=\set{u_1,\dots,u_n}$ and $W=\set{w_1,\dots,w_n}$ such that the following properties hold.
\begin{enumerate}[i)]

\item  $u_iu_j\notin E$ for $j\neq i\pm1\lb \!\!\!\mod n\rb$

\item $u_iu_{i\pm 1\lb \!\!\!\mod n\rb}\in E\Rightarrow u_iu_{i\mp1\lb \!\!\!\mod n\rb}\in E$

\item $u_iw_j\in E$ if and only if $j=i$ or $j=i+1\lb \!\!\!\mod n\rb$

\end{enumerate}
The family of all general sunflowers of size $n$ is denoted by $\mathcal{F}_n$.\\
If $V$ does induce a chordless cycle we call $S$ a {\em non-chordal sunflower}. And as before $S$ is called {\em suspended} if $S$ is contained in some graph $G$ and there exists some vertex $v$ adjacent to $u_i$ and $u_j$ with $j\neq i\pm1\lb \!\!\!\mod n\rb$.  	
\end{definition}

\begin{figure}[H]
	\begin{center}
		\begin{tikzpicture}
		
		\node (center) [inner sep=1.5pt] {};
		
		\node (label) [inner sep=1.5pt,position=135:1.7cm from center] {};
		
		\node (u1) [inner sep=1.5pt,position=90:1.8cm from center,draw,circle,fill] {};
		\node (u2) [inner sep=1.5pt,position=180:1.8cm from center,draw,circle,fill] {};
		\node (u3) [inner sep=1.5pt,position=270:1.8cm from center,draw,circle,fill] {};
		\node (u4) [inner sep=1.5pt,position=0:1.8cm from center,draw,circle,fill] {};
		
		\node (v1) [inner sep=1.5pt,position=135:0.9cm from center,draw,circle] {};
		\node (v2) [inner sep=1.5pt,position=225:0.9cm from center,draw,circle] {};
		\node (v3) [inner sep=1.5pt,position=315:0.9cm from center,draw,circle] {};
		\node (v4) [inner sep=1.5pt,position=45:0.9cm from center,draw,circle] {};

		\node (lu1) [position=90:0.07cm from u1] {$u_1$};
		\node (lu2) [position=180:0.07cm from u2] {$u_2$};
		\node (lu3) [position=270:0.07cm from u3] {$u_3$};
		\node (lu4) [position=0:0.07cm from u4] {$u_4$};
		
		\node (lv1) [position=135:0.07cm from v1] {$w_1$};
		\node (lv2) [position=225:0.07cm from v2] {$w_2$};
		\node (lv3) [position=315:0.07cm from v3] {$w_3$};
		\node (lv4) [position=45:0.07cm from v4] {$w_4$};

		\path
		(u1) edge (v1)
		edge (v4)
		(u2) edge (v1)
		edge (v2)
		(u3) edge (v2)
		edge (v3)
		(u4) edge (v3)
		edge (v4)
		;
		
		\path [bend right]
		(v1) edge (v2)
		(v2) edge (v3)
		(v3) edge (v4)
		(v4) edge (v1)
		;

		\end{tikzpicture}
		\caption{A general sunflower $S\in\mathscr{S}_4$.}
		\label{fig3.4}
	\end{center}
\end{figure}
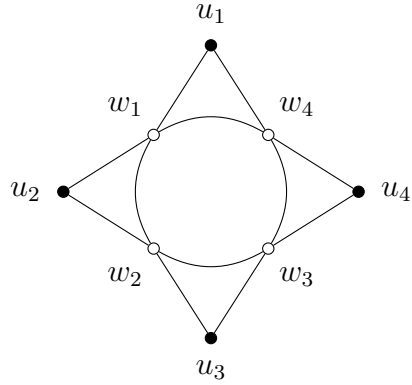

The first attempt in finding induced subgraphs that are responsible for $G^2$ not being chordal was done by Balakrishnan and Paulraja (see \cite{balakrishnan1981graphsa} and \cite{balakrishnan1981graphsb}). They found parts of the general sunflowers, the path $P_5+a$, which is a path on five vertices $v_1,\dots,v_5$ where the edge $v_2v_4$ is added. An example for such a path would be the path $u_1w_1u_2w_2u_3$ in the general sunflower in \autoref{fig3.4}, which also contains the edge $w_1w_2$.

\begin{theorem}[Balakrishnan, Paulraja. 1981 \cite{balakrishnan1981graphsa} and \cite{balakrishnan1981graphsb}]\label{thm3.15}
Let $G$ be a graph. If $G$ does not contain a $K_{1,3}$, $P_5+a$ nor a $C_n$ with $n\geq6$, then $G^2$ is chordal.	
\end{theorem}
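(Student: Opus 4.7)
The plan is to argue by contradiction. Assume $G^{2}$ contains an induced cycle $C=v_{1}v_{2}\cdots v_{n}v_{1}$ with $n\geq 4$; I aim to produce $K_{1,3}$, $P_{5}+a$, or an induced $C_{m}$ with $m\geq 6$ inside $G$. Since $C$ is induced in $G^{2}$, consecutive pairs satisfy $\distg{G}{v_{i}}{v_{i+1}}\leq 2$ while $\distg{G}{v_{i}}{v_{j}}\geq 3$ whenever $v_{i}v_{j}$ is not an edge of $C$. For every index $i$ with $v_{i}v_{i+1}\notin\E{G}$ I fix a common $G$-neighbor $w_{i}$ of $v_{i}$ and $v_{i+1}$, and let $B$ denote the set of such bridged indices. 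Lemma \autoref{lemma3.7} applied with $k=2$ and $r=1$ forbids two consecutive cycle-edges from lying in $G$ simultaneously, so $|B|\geq\aufr{n/2}$; and the distance bounds force all $v_{i}$ and $w_{i}$ to be pairwise distinct, since any coincidence would compress some non-consecutive pair of $v$'s into $G$-distance $\leq 2$. Concatenating the cycle-edges lying in $G$ with the $2$-paths $v_{i}w_{i}v_{i+1}$ for $i\in B$ yields a cycle $D$ in $G$ of length $n+|B|\geq n+\aufr{n/2}\geq 6$.

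The decisive step is a classification of the possible chords of $D$ inside $G$. A chord $v_{i}v_{j}$ with $|i-j|\geq 2$ would immediately produce an unwanted edge of $C$ in $G^{2}$, so no such chord exists. A chord $v_{i}w_{j}$ with $i\notin\{j,j+1\}$ would force $\distg{G}{v_{i}}{v_{j}}\leq 2$ and $\distg{G}{v_{i}}{v_{j+1}}\leq 2$ simultaneously, which is incompatible with $C$ being induced in $G^{2}$ unless $i\in\{j-1,j,j+1\}\cap\{j,j+1,j+2\}=\{j,j+1\}$. Hence every chord of $D$ is of the form $w_{i}w_{j}$. I split according to whether the bridged edges $v_{i}v_{i+1}$ and $v_{j}v_{j+1}$ share an endpoint. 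If $j\equiv i+1\pmod n$, then the five vertices $\{v_{i},w_{i},v_{i+1},w_{i+1},v_{i+2}\}$ carry exactly the edges $v_{i}w_{i}$, $w_{i}v_{i+1}$, $v_{i+1}w_{i+1}$, $w_{i+1}v_{i+2}$ and $w_{i}w_{i+1}$ --- the "diagonals" $v_{i}v_{i+2}$, $v_{i}w_{i+1}$ and $w_{i}v_{i+2}$ are excluded by the distance bound $\distg{G}{v_{i}}{v_{i+2}}\geq 3$ and by the chord-type analysis above --- so the induced subgraph is precisely $P_{5}+a$. Otherwise the two bridged edges are vertex-disjoint, and $w_{i}$ is adjacent to the three pairwise non-adjacent vertices $v_{i}$, $v_{i+1}$, $w_{j}$ (non-adjacency of $v_{i}w_{j}$ and $v_{i+1}w_{j}$ following from the same bridge-adjacency restriction), yielding an induced $K_{1,3}$.

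If no chord of $D$ exists at all, then $D$ itself is an induced cycle of $G$ of length at least $6$, contradicting the absence of $C_{m}$ for $m\geq 6$; otherwise some chord furnishes an induced $K_{1,3}$ or $P_{5}+a$, again contradicting the hypothesis. The main bookkeeping obstacle I anticipate is the verification of the $P_{5}+a$ subcase: one must individually rule out each of the "diagonal" potential edges on the chosen five-vertex set, since any stray edge would destroy the $P_{5}+a$ pattern. These exclusions all rest on the same principle underlying the chord classification --- that a bridge $w_{i}$ is adjacent (among the $v$'s) to no vertices other than $v_{i}$ and $v_{i+1}$ --- so the argument stays uniform, but the case split is where the care is needed.
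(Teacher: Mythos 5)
Your proof is correct. The setup (bridge vertices $w_i$ for the non-$G$-edges of the induced cycle $C$ of $G^2$, distinctness of all $v_i$ and $w_i$, the lifted cycle $D$ of length at least $6$) is sound, and the chord classification is the right decisive step: chords $v_iv_j$ and $v_iw_j$ with $i\notin\{j,j+1\}$ are correctly ruled out by the inducedness of $C$ in $G^2$, so only $w_iw_j$ chords survive, and your two cases (adjacent bridges giving an induced $P_5+a$, disjoint bridges giving an induced $K_{1,3}$ centered at $w_i$) together with the chordless case (an induced $C_m$, $m\geq 6$) exhaust all possibilities. I verified the inducedness checks in both cases; in particular every excluded diagonal on the five-vertex set follows from either $\distg{G}{v_i}{v_{i+2}}\geq 3$ or your chord classification, as you anticipated.

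Note that the paper does not prove Theorem \autoref{thm3.15} at all --- it is quoted from Balakrishnan and Paulraja --- so there is no in-paper proof to match against. The nearest in-paper arguments are Lemma \autoref{lemma3.8} and Theorem \autoref{thm3.17}, which construct the same lifted cycle of length $n+\aufr{n/2}$ and then classify its permissible chords; but Lemma \autoref{lemma3.8} does this by an explicit case split on $\abs{C}\geq 5$ versus $\abs{C}=4$ and on how many edges of $C$ already lie in $G$, drawing the forbidden configurations picture by picture. Your version replaces that enumeration with a single uniform dichotomy on the index difference of a $w$--$w$ chord, which is what the extra forbidden subgraph $P_5+a$ buys you: it absorbs exactly the adjacent-bridge chords that in the paper's weaker hypotheses (Lemma \autoref{lemma3.8}, where only $C_n$ with $n\geq 5$ is forbidden) have to be tolerated and lead to the surviving non-chordal sunflower of size $4$.
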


Flotow (see \cite{flotow1997graphs}) stated that there cannot be a family of forbidden subgraphs of $G$ necessary for the chordality of $G^m$ for $m\geq 2$. He made this point by supposing the existence of the family $\lb G_i\rb_{i\in I}$ of such forbidden subgraphs and then constructing the graph $G$ simply by taking all those $G_i$ and joining all of their vertices to an additional vertex $v$. With this construction he ensured $G^m$ to be complete and therefore chordal for all $m\geq 2$ despite $G$ containing all forbidden subgraphs.\\
Hence we will not be able to find such a family, but the concept of suspended sunflowers might still work for us. In Flotow's construction all sunflowers would be suspended due to the additional vertex $v$.\\
Flotow also generalized  Balakrishnan's and Paulraja's result to arbitrary powers. By doing so the following result came up.

\begin{theorem}[Flotow. 1997 \cite{flotow1997graphs}]\label{thm3.16}
Let $G$ be a graph. If $G$ does not contain a $K_{1,3}$ nor a $C_n$ with $n\geq 4$, then $G^2$ is chordal.	
\end{theorem}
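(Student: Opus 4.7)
The strategy is to argue by contradiction, invoking the sunflower characterisation (Theorem \ref{thm3.13}). Assume $G$ contains no $K_{1,3}$ and no induced $C_n$ with $n\geq 4$, and suppose for contradiction that $G^2$ is not chordal. Since $G$ itself is chordal by hypothesis, Theorem \ref{thm3.13} provides an unsuspended sunflower $S\in\mathcal{S}_n$ in $G$ with $n\geq 4$. Denote its parts by $U=\set{u_1,\dots,u_n}$ (the stable set) and $W=\set{w_1,\dots,w_n}$ (inducing a chordal graph), with $u_iw_j\in\E{G}$ iff $j\equiv i$ or $j\equiv i+1\pmod n$.

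The central step is the analysis of the neighborhood of each $w_i$ inside $\V{S}$. Its $U$-neighbors are exactly $u_i,u_{i+1}$, and these are non-adjacent in $G$. Suppose $w_i$ had a further neighbor $w_j\in W$. If $w_j$ were adjacent to neither $u_i$ nor $u_{i+1}$, then $\set{w_i,u_i,u_{i+1},w_j}$ would induce a $K_{1,3}$ in $G$, contradicting the hypothesis. But the sunflower's incidence rule gives $w_j\sim u_i\Leftrightarrow j\in\set{i-1,i}$ and $w_j\sim u_{i+1}\Leftrightarrow j\in\set{i,i+1}$, so the only admissible $w$-neighbors of $w_i$ are $w_{i-1}$ and $w_{i+1}$. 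Hence every edge of $W$ has the form $w_iw_{i+1}$, i.e.\ $W$ is a subgraph of the cycle $w_1w_2\cdots w_nw_1$. Since $W$ is chordal and $n\geq 4$, $W$ cannot be the full cycle, so at least one edge $w_kw_{k+1}$ is missing; write $I\subsetneq\set{1,\dots,n}$ for the index set of those edges that are present.

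To close the argument I would produce an induced cycle of length at least $5$ inside $\V{S}\subseteq\V{G}$, contradicting the hypothesis that $G$ contains no induced $C_n$ for $n\geq 4$. The candidate $R$ is obtained from the $2n$-cycle $D=u_1w_1u_2w_2\cdots u_nw_nu_1$ of $G$ by shortcutting each subpath $w_iu_{i+1}w_{i+1}$ with $i\in I$ by the chord $w_iw_{i+1}$; then $R$ has length $2n-\abs{I}\geq n+1\geq 5$. The main technical point, which I expect to be the most tedious step, is the verification that $R$ is induced in $G$: each retained $u$-vertex has only its two cycle-adjacent $w$'s as neighbors in $\V{R}$ (and $U$ is stable), while by the previous paragraph the only candidates for $w$-$w$ chords are $w_kw_{k+1}$ edges, and all such edges in $\E{G}$ are already used as cycle edges of $R$ by construction. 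Once this case check is carried out, the resulting long induced cycle in $G$ completes the contradiction.
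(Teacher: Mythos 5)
Your proof is correct. Note that the paper states Theorem \ref{thm3.16} without proof, merely citing Flotow; the closest argument it actually carries out is Lemma \ref{lemma3.8}, which treats the weaker hypothesis where induced $C_4$'s are still permitted. There the method is a direct case analysis: lift an induced cycle of $G^2$ to a cycle of length up to $2n$ in $G$ built from the distance-$2$ witnesses, then classify the admissible chords to force either an induced $K_{1,3}$ or a long induced cycle. You instead route the argument through the Laskar--Shier characterization (Theorem \ref{thm3.13}): chordality of $G$ (which follows from the absence of induced $C_n$, $n\geq 4$) hands you an unsuspended sunflower; $K_{1,3}$-freeness pins $\induz{G}{W}$ down to a subgraph of the cycle $w_1\cdots w_n$; the ban on induced $C_n$ with $n\geq 4$ forces at least one consecutive edge to be missing; and then the shortcut cycle $R$ of length $2n-\abs{I}\geq n+1\geq 5$ is induced by exactly the adjacency facts you have already established, so the step you flag as the tedious one is in fact immediate. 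This is cleaner and reuses machinery the paper has already proved; the trade-off is that it only works when $G$ is chordal from the outset, whereas the direct cycle-lifting method of Lemma \ref{lemma3.8} survives the presence of $C_4$'s. One cosmetic remark: your claim that the $U$-neighbours of $w_i$ are $u_i,u_{i+1}$ uses the indexing opposite to the paper's definition (which yields $u_{i-1},u_i$); this relabelling is harmless and does not affect the argument.
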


There seems to be some kind of gap between graphs containing induced cycles of length $4$, $5$ and $\geq6$. We will now show that to fill this gap there is one particular family of general sunflowers that need to be taken into account.

\begin{lemma}\label{lemma3.8}
Let $G$ be a graph that does not contain a $K_{1,3}$ nor a $C_n$ with $n\geq5$. If $G^2$ is not chordal $G$ contains a non-chordal sunflower $S\in\mathcal{F}_4$ which is not suspended.
\end{lemma}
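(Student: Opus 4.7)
The plan is to extract an induced cycle from $G^2$, realize each of its edges either as a direct edge of $G$ or as a length-$2$ witness path, and then argue that the two hypotheses on $G$ collapse the picture to a non-chordal unsuspended sunflower of size $4$.

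First I would pick an induced cycle $C=(u_1,\dots,u_n)$ in $G^2$ of minimum length $n\geq 4$. For each edge $u_iu_{i+1}$ of $C$, either $u_iu_{i+1}\in\E{G}$ or there exists a witness $w_i\in\V{G}$ with $u_iw_i,w_iu_{i+1}\in\E{G}$. By Lemma \autoref{lemma3.7} applied with $k=2$, $r=1$, no two consecutive cycle edges of $C$ can both lie in $\E{G}$, so at least $\aufr{n/2}$ witnesses exist. A routine case check shows that the $u_j$'s and chosen $w_j$'s are pairwise distinct and that the only additional edges possibly present among them in $G$ are of the form $w_iw_j$: any other potential edge (say $u_iu_j$ with $|i-j|>1$, or $w_iu_j$ with $j\notin\{i,i+1\}$) would create a chord of $C$ in $G^2$, contradicting the induced assumption.

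Second, I would argue that $n=4$ and that no direct edge $u_iu_{i+1}\in\E{G}$ occurs. If $n\geq 5$, traversing $C$ through the witnesses yields a simple cycle $Z$ in $G$ of length at least $\aufr{3n/2}\geq 8$, which by the no-long-cycle hypothesis must carry chords; these can only be $w_iw_j$-edges, and an iterative shortening using claw-freeness at the intermediate $u_j$'s then produces either a shorter induced cycle in $G^2$ (contradicting minimality of $n$) or a forbidden induced $C_m$ with $m\geq 5$ in $G$. For $n=4$, a hypothetical direct edge $u_iu_{i+1}\in\E{G}$ together with the remaining witness-realized edges of $C$ leaves an induced $C_5$ or $C_6$ in $G$ after checking whether the potential bridge $w_{i-1}w_{i+1}$ exists, again forbidden. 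Hence $n=4$ and all four cycle edges are realized through $w_i$'s.

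With this, $\{u_1,u_2,u_3,u_4\}$ is a stable set, $\{w_1,w_2,w_3,w_4\}$ are pairwise distinct, and $u_iw_j\in\E{G}\iff j\in\{i-1,i\}\pmod 4$, immediately yielding conditions (i)--(iii) of a general sunflower in $\mathcal{F}_4$. To establish that $W=\{w_1,\dots,w_4\}$ induces a chordless $4$-cycle, I would force each $w_iw_{i+1}\in\E{G}$ (otherwise the path $u_iw_iu_{i+1}w_{i+1}u_{i+2}$ extends through the rest of the picture into an induced $C_5$ or $C_6$ in $G$, violating the hypothesis) and rule out each diagonal $w_iw_{i+2}$ (which would otherwise create a claw $K_{1,3}$ at some $u_j$ or a chord of $C$ in $G^2$). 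Finally, no vertex $v\notin\V{S}$ can be adjacent in $G$ to two non-cyclically-consecutive $u_i,u_j$, since such a $v$ would witness $u_iu_j\in\E{G^2}$, contradicting that $C$ is induced; so the sunflower is not suspended.

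The main obstacle is the second step: simultaneously ruling out $n\geq 5$ and direct edges in the $n=4$ case requires an intricate interplay between claw-freeness and the absence of long induced cycles, and producing the chordless $W$-cycle subsequently demands careful handling of the several possible witness choices when the same edge $u_iu_{i+1}$ admits more than one candidate $w_i$.
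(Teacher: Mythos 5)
Your proposal is correct and follows essentially the same route as the paper's proof: realize the induced cycle of $G^2$ inside $G$ via witness vertices, use the absence of long induced cycles to force chords, observe that only $w$--$w$ chords are legal, and let claw-freeness eliminate every configuration except the length-$4$ stable case, which yields the unsuspended non-chordal sunflower. The only cosmetic differences are your appeal to minimality of the cycle length (which the paper does not need) and a small slip in locating the claw produced by a diagonal $w_iw_{i+2}$ (it is centered at $w_i$, with leaves $w_{i+2}$, $u_i$, $u_{i+1}$, not at some $u_j$); neither affects the argument.
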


\begin{proof}
With $G^2$ not being chordal there exists an induced cycle $C=\lb u_1\dots u_{\abs{C}}\rb$ with $\abs{C}\geq 4$ in $G^2$.\\
The proof is divided into two major cases, each of them again divided into subcases as follows.
\begin{enumerate}
\item[] \begin{enumerate}
	\item [Case 1] $\abs{C}\geq 5$
		\begin{enumerate}
		\item[] \begin{enumerate}
				
			\item [Subcase 1.1] No edge of $C$ exists in $G$.
			\item [Subcase 1.2] At least one edge of $C$ already exists in $G$.
		\end{enumerate}
		\end{enumerate}

	\item [Case 2] $C=\lb u_1u_2u_3u_4\rb$
		\begin{enumerate}
		\item[] \begin{enumerate}
				
			\item [Subcase 2.1] The edges $u_1u_2$ and $u_3u_4$ exist in $G$.
			\item [Subcase 2.2] Just the edge $u_1u_2$ is contained in $G$.
			\item [Subcase 2.3] $U=\set{u_1,u_2,u_3,u_4}$ is a stable set in $G$.
		\end{enumerate}
		\end{enumerate}
\end{enumerate}
\end{enumerate}
Subcase 1.1: Suppose $\abs{C}=c\geq 5$ and no edge of $C$ is contained in $G$.\\
Hence the set $U=\set{u_1,\dots,u_c}$ of the vertices forming $C$ in $G^2$ forms a stable set in $G$. With $u_i$ and $u_{i\pm1\lb \!\!\!\mod c\rb}$ being adjacent in $G^2$ there exists paths of length $2$ between them in $G$ and therefore a set $W=\set{w_1,\dots,w_c}$ of additional vertices exists, realizing those paths and being disjoint from $U$. Clearly the vertices in $W$ have to be disjoint and for all $i\in\set{1,\dots,c}$ $\knb{G}{w_i}\cap U=\set{u_i,u_{i+1\lb \!\!\!\mod c\rb}}$ holds, otherwise $C$ would have a chord.\\
This leads to a cycle $C'$ of length $2c$ in $G$, alternating between vertices from $U$ and $V$. With $c\geq 5$ this cycle must contain a chord, which cannot join two vertices in $U$ nor can it join a vertex in $u\in U$ to some vertex in $W$ which is not already a neighbor of $u$ on $C'$. Hence a chord in $C'$ must be of the form $w_iw_j$. If in addition $j= i\pm 1\lb \!\!\!\mod c\rb$ holds this chord shortens $C'$ just by $1$.

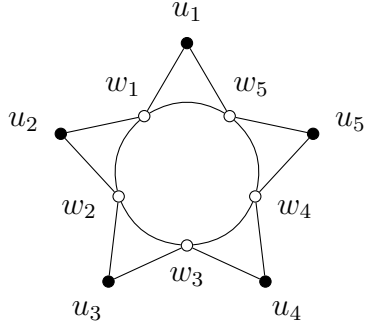
\begin{figure}[H]
	\begin{center}
		\begin{tikzpicture}
		
		\node (center) [inner sep=1.5pt] {};
		
		\node (label) [inner sep=1.5pt,position=135:1.7cm from center] {};
		
		\node (u1) [inner sep=1.5pt,position=90:1.6cm from center,draw,circle,fill] {};
		\node (u2) [inner sep=1.5pt,position=162:1.6cm from center,draw,circle,fill] {};
		\node (u3) [inner sep=1.5pt,position=234:1.6cm from center,draw,circle,fill] {};
		\node (u4) [inner sep=1.5pt,position=306:1.6cm from center,draw,circle,fill] {};
		\node (u5) [inner sep=1.5pt,position=18:1.6cm from center,draw,circle,fill] {};
		
		\node (v1) [inner sep=1.5pt,position=126:0.8cm from center,draw,circle] {};
		\node (v2) [inner sep=1.5pt,position=198:0.8cm from center,draw,circle] {};
		\node (v3) [inner sep=1.5pt,position=270:0.8cm from center,draw,circle] {};
		\node (v4) [inner sep=1.5pt,position=342:0.8cm from center,draw,circle] {};
		\node (v5) [inner sep=1.5pt,position=54:0.8cm from center,draw,circle] {};

		\node (lu1) [position=90:0.07cm from u1] {$u_1$};
		\node (lu2) [position=162:0.07cm from u2] {$u_2$};
		\node (lu3) [position=234:0.07cm from u3] {$u_3$};
		\node (lu4) [position=306:0.07cm from u4] {$u_4$};
		\node (lu5) [position=18:0.07cm from u5] {$u_5$};
		
		\node (lv1) [position=126:0.07cm from v1] {$w_1$};
		\node (lv2) [position=198:0.07cm from v2] {$w_2$};
		\node (lv3) [position=270:0.07cm from v3] {$w_3$};
		\node (lv4) [position=342:0.07cm from v4] {$w_4$};
		\node (lv5) [position=54:0.07cm from v5] {$w_5$};

		\path
		(u1) edge (v1)
		edge (v5)
		(u2) edge (v1)
		edge (v2)
		(u3) edge (v2)
		edge (v3)
		(u4) edge (v3)
		edge (v4)
		(u5) edge (v4)
		edge (v5)
		;
		
		\path[bend right] 
		(v1) edge (v2)
		(v2) edge (v3)
		(v3) edge (v4)
		(v4) edge (v5)
		(v5) edge (v1)
		;
		
		\end{tikzpicture}
		\caption{Shortened cycle in the case $c=5$.}
		\label{fig3.5}
	\end{center}
\end{figure} 
Even if all of those chords exist there is still a cycle of at least length $5$ remaining. Hence an edge of the form $w_iw_j$ with $j\neq i\pm1\lb \!\!\!\mod c\rb$ must exist and with that we get $\induz{G}{\set{w_i,w_j,u_i,u_{i+1\lb\!\!\!\mod\abs{C}\rb}}}\cong K_{1,3}$ which is a contradiction. With that subcase 1.1 is closed.\\

Subcase 1.2:  Suppose $\abs{C}=c\geq 5$ and at least one edge of $C$ is contained in $G$.\\
W.l.o.g. suppose the edge $u_1u_2$ exists in $G$, then by Lemma \autoref{lemma3.7} the edges $u_1u_c$ and $u_2u_3$ cannot exists, and again we need some additional vertices $W=\set{w_1,\dots,w_q}$ which form paths of length $2$ between those vertices in $U$ that are not already adjacent but have to be in $G^2$ in order to form $C$. By Corollary \autoref{cor3.3} $q\geq\aufr{\frac{c}{2}}$ and again a cycle $C'$ with $\abs{C'}\geq c+\aufr{\frac{c}{2}}$ exists in $G$. I.e. $C'$ consists of the vertices in $U$ and $V$ and does not contain three consecutive vertices from $U$.
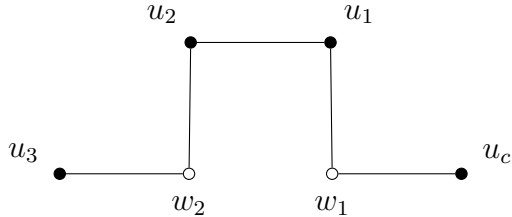
\begin{figure}[H]
	\begin{center}
		\begin{tikzpicture}
		
		\node (center) [inner sep=1.5pt] {};
		\node (anchor1) [inner sep=1.5pt,position=0:0.8cm from center] {};
		\node (anchor2) [inner sep=1.5pt,position=180:0.8cm from center] {};
		
		\node (label) [inner sep=1.5pt,position=135:1.7cm from center] {};
		
		\node (u1) [inner sep=1.5pt,position=90:1.6cm from anchor1,draw,circle,fill] {};
		\node (u2) [inner sep=1.5pt,position=90:1.6cm from anchor2,draw,circle,fill] {};
		\node (u3) [inner sep=1.5pt,position=180:2.5cm from center,draw,circle,fill] {};
		\node (u4) [inner sep=1.5pt,position=0:2.5cm from center,draw,circle,fill] {};
		
		\node (v1) [inner sep=1.5pt,position=0:0.8cm from center,draw,circle] {};
		\node (v2) [inner sep=1.5pt,position=180:0.8cm from center,draw,circle] {};

		\node (lu1) [position=45:0.07cm from u1] {$u_1$};
		\node (lu2) [position=135:0.07cm from u2] {$u_2$};
		\node (lu3) [position=150:0.07cm from u3] {$u_3$};
		\node (lu4) [position=30:0.07cm from u4] {$u_c$};
		
		\node (lv1) [position=270:0.07cm from v1] {$w_1$};
		\node (lv2) [position=270:0.07cm from v2] {$w_2$};

		\path
		(u1) edge (v1)
		(u2) edge (u1)
		edge (v2)
		(u3) edge (v2)
		(u4) edge (v1)
		;

		\end{tikzpicture}
		\caption{Direct surroundings of the edge $u_1u_2$ in $G$.}
		\label{fig3.6}
	\end{center}
\end{figure}
With $c\geq 5$ $C'$ has at least length $8$ and therefore must contain a chord. Again the only chords possible join vertices of $W$ and if we join $w_1$ and $w_2$ to the $w_j$ with $j\geq3$ the obtained cycles have at least length $5$. Hence the edge $w_1w_2$ must exist in $G$ and $\induz{G}{u_1,u_c,w_1,w_2}\cong K_{1,3}$ which again poses a contradiction and closes case 1.\\

Subcase 2.1: Suppose $C=\lb u_1u_2u_3u_4\rb$ and the edges $u_1u_2$ and $u_3u_4$ exist in $G$.\\
Then $G$ must contain two additional vertices $w_1$ and $w_2$ that form a cycle of length $6$ together with the vertices of $C$.
\begin{figure}[H]
	\begin{center}
		\begin{tikzpicture}
		
		\node (center) [inner sep=1.5pt] {};
		
		\node (label) [inner sep=1.5pt,position=135:1.7cm from center] {};
		
		\node (u1) [inner sep=1.5pt,position=60:1.4cm from center,draw,circle,fill] {};
		\node (u2) [inner sep=1.5pt,position=120:1.4cm from center,draw,circle,fill] {};
		\node (u3) [inner sep=1.5pt,position=240:1.4cm from center,draw,circle,fill] {};
		\node (u4) [inner sep=1.5pt,position=300:1.4cm from center,draw,circle,fill] {};
		
		\node (v1) [inner sep=1.5pt,position=180:1.4cm from center,draw,circle] {};
		\node (v2) [inner sep=1.5pt,position=0:1.4cm from center,draw,circle] {};

		\node (lu1) [position=60:0.07cm from u1] {$u_1$};
		\node (lu2) [position=120:0.07cm from u2] {$u_2$};
		\node (lu3) [position=240:0.07cm from u3] {$u_3$};
		\node (lu4) [position=300:0.07cm from u4] {$u_4$};
		
		\node (lv1) [position=180:0.07cm from v1] {$w_1$};
		\node (lv2) [position=0:0.07cm from v2] {$w_2$};

		\path
		(u1) edge (u2)
		edge (v2)
		(u2) edge (v1)
		(u3) edge (v1)
		edge (u4)
		(u4) edge (v2)
		;
		
		\path[dotted]
		(v1) edge (v2)
		;

		\end{tikzpicture}
		\caption{$C'$ with just one possible chord.}
		\label{fig3.7}
	\end{center}
\end{figure}
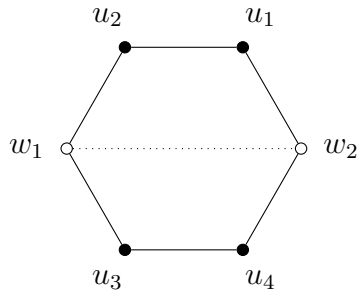
Still the only allowed chords would join vertices of $W$ and with just two of those vertices $w_1w_2$ is the sole possible chord. So again we find a $K_{1,3}$ in $G$ which contradicts our assumption and the subcase 2.1 is settled.\\

Subcase 2.2: Suppose $C=\lb u_1u_2u_3u_4\rb$ and just the edge $u_1u_2$ exists in $G$.\\
Now we get three additional vertices $w_1$, $w_2$ and $w_3$ that form a cycle $C'$ of length $7$ together with $\V{C}$ in $G$.
\begin{figure}[H]
	\begin{center}
		\begin{tikzpicture}
		
		\node (center) [inner sep=1.5pt] {};
		
		\node (label) [inner sep=1.5pt,position=135:1.7cm from center] {};
		
		\node (u1) [inner sep=1.5pt,position=60:1.4cm from center,draw,circle,fill] {};
		\node (u2) [inner sep=1.5pt,position=120:1.4cm from center,draw,circle,fill] {};
		\node (u3) [inner sep=1.5pt,position=220:1.4cm from center,draw,circle,fill] {};
		\node (u4) [inner sep=1.5pt,position=320:1.4cm from center,draw,circle,fill] {};
		
		\node (v1) [inner sep=1.5pt,position=170:1.4cm from center,draw,circle] {};
		\node (v2) [inner sep=1.5pt,position=270:1.4cm from center,draw,circle] {};
		\node (v3) [inner sep=1.5pt,position=10:1.4cm from center,draw,circle] {};

		\node (lu1) [position=60:0.07cm from u1] {$u_1$};
		\node (lu2) [position=120:0.07cm from u2] {$u_2$};
		\node (lu3) [position=220:0.07cm from u3] {$u_3$};
		\node (lu4) [position=320:0.07cm from u4] {$u_4$};
		
		\node (lv1) [position=170:0.07cm from v1] {$w_1$};
		\node (lv2) [position=270:0.07cm from v2] {$w_2$};
		\node (lv3) [position=10:0.07cm from v3] {$w_3$};

		\path
		(u1) edge (u2)
		(u2) edge (v1)
		(v1) edge (u3)
		(u3) edge (v2)
		(v2) edge (u4)
		(u4) edge (v3)
		(v3) edge (u1)
		;
		
		\path[dotted]
		(v1) edge (v3)
		(v1) edge (v2)
		(v2) edge (v3)
		;

		\end{tikzpicture}
		\caption{$C'$ with three possible chords.}
	\end{center}
\end{figure}
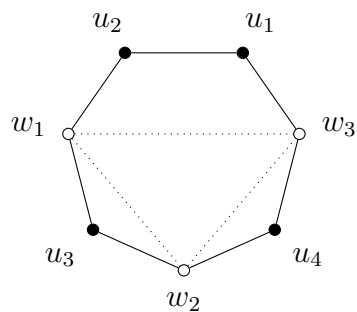
Here now we have three possible chords. But we have already seen in the former subcase that the chord $w_1w_3$ would result in an induced $K_{1,3}$, so $w_1w_3$ is forbidden too. But $w_1w_2$ and $w_2w_3$ each shorten the cycle just by $1$ and thus a cycle of length $5$ still remains in $G$. Hence this case is impossible too and we can close subcase 2.2 as well.\\

Subcase 2.3: Suppose $C=\lb u_1u_2u_3u_4\rb$ and $U=\set{u_1,u_2,u_3,u_4}$ is a stable set in $G$.\\
Now we finally need $4$ additional vertices $W=\set{w_1,w_2,w_3,w_4}$ in order for $U$ to induce a cycle in $G^2$. This results in another cycle $C'$, this time of length $8$ and two chords of $C'$ can be dismissed beforehand because we already saw the resulting in induced $K_{1,3}$'s.
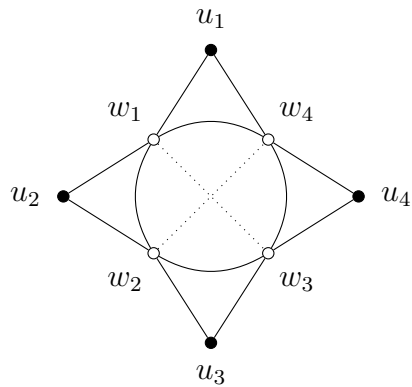
\begin{figure}[H]
	\begin{center}
		\begin{tikzpicture}
		
		\node (center) [inner sep=1.5pt] {};
		
		\node (label) [inner sep=1.5pt,position=135:1.7cm from center] {};
		
		\node (u1) [inner sep=1.5pt,position=90:1.8cm from center,draw,circle,fill] {};
		\node (u2) [inner sep=1.5pt,position=180:1.8cm from center,draw,circle,fill] {};
		\node (u3) [inner sep=1.5pt,position=270:1.8cm from center,draw,circle,fill] {};
		\node (u4) [inner sep=1.5pt,position=0:1.8cm from center,draw,circle,fill] {};
		
		\node (v1) [inner sep=1.5pt,position=135:0.9cm from center,draw,circle] {};
		\node (v2) [inner sep=1.5pt,position=225:0.9cm from center,draw,circle] {};
		\node (v3) [inner sep=1.5pt,position=315:0.9cm from center,draw,circle] {};
		\node (v4) [inner sep=1.5pt,position=45:0.9cm from center,draw,circle] {};

		\node (lu1) [position=90:0.07cm from u1] {$u_1$};
		\node (lu2) [position=180:0.07cm from u2] {$u_2$};
		\node (lu3) [position=270:0.07cm from u3] {$u_3$};
		\node (lu4) [position=0:0.07cm from u4] {$u_4$};
		
		\node (lv1) [position=135:0.07cm from v1] {$w_1$};
		\node (lv2) [position=225:0.07cm from v2] {$w_2$};
		\node (lv3) [position=315:0.07cm from v3] {$w_3$};
		\node (lv4) [position=45:0.07cm from v4] {$w_4$};

		\path
		(u1) edge (v1)
		edge (v4)
		(u2) edge (v1)
		edge (v2)
		(u3) edge (v2)
		edge (v3)
		(u4) edge (v3)
		edge (v4)
		;
		
		\path [bend right]
		(v1) edge (v2)
		(v2) edge (v3)
		(v3) edge (v4)
		(v4) edge (v1)
		;
		
		\path[dotted]
		(v1) edge (v3)
		(v2) edge (v4)
		;

\end{tikzpicture}
\caption{Nonchordal sunflower of size $4$ with two forbidden chords.}
\label{fig3.8}
\end{center}
\end{figure}
This leaves the edges $w_1w_2$, $w_2w_3$, $w_3w_4$ and $w_4w_1$ as the only possible chords in $C'$. Each of those edges shortens the cycle just by $1$ and thus all four of them must exist. The resulting graph, as seen in \autoref{fig3.8}, is a general sunflower of size $4$ which is non-chordal due to the two forbidden chords and cannot be suspended, otherwise $C$ would have a chord. This closes the second case and completes the proof.
\end{proof}

Note that there are just four general sunflowers of size $4$ and three of them are chordal or even a sun. This leaves the general sunflower in \autoref{fig3.4} as the only non-chordal sunflower of this size and we obtain the following corollary.

\begin{corollary}\label{cor3.4}
Let $G$ be a graph. If $G$ does not contain a $K_{1,3}$ nor a $C_n$ with $n\geq5$ and all its non-chordal sunflowers of size $4$ are suspended, then $G^2$ is chordal. 	
\end{corollary}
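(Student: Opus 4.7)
The plan is to derive the corollary as the contrapositive of Lemma \ref{lemma3.8} together with a brief enumeration argument on the family $\mathcal{F}_4$. Specifically, I would assume for contradiction that $G$ satisfies all three hypotheses (no induced $K_{1,3}$, no induced $C_n$ for $n\geq 5$, every non-chordal sunflower of size $4$ in $G$ is suspended) but that $G^2$ is not chordal. Since the first two hypotheses match exactly those of Lemma \ref{lemma3.8}, that lemma produces a non-chordal sunflower $S\in\mathcal{F}_4$ in $G$ which is \emph{not} suspended, contradicting the third hypothesis immediately.

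The only remaining point to justify is the implicit observation made just before the corollary: that the set of non-chordal sunflowers of size $4$ arising in the proof of Lemma \ref{lemma3.8} (in particular the one exhibited in Subcase 2.3 and depicted in Figure \ref{fig3.4}) is precisely what the corollary refers to as ``non-chordal sunflowers of size $4$.'' Here I would check case by case: a sunflower $S\in\mathcal{F}_4$ is determined by which of the optional edges $u_iu_{i+1\lb\!\!\!\!\mod 4\rb}$ exist in $U$ (none or all, by property (ii) of Definition of general sunflowers applied cyclically, up to symmetry) and by the induced structure on $W=\{w_1,w_2,w_3,w_4\}$. Listing the resulting graphs and discarding those in which $W$ induces a chordal graph (in particular the three cases that reduce to a $K_4$, a sun, or a sunflower in the sense of Definition \ref{def3.1}-style $\mathcal{S}_4$) leaves exactly the graph in Figure \ref{fig3.4} as the unique non-chordal member of $\mathcal{F}_4$ up to isomorphism.

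The ``hard part,'' if any, is really just bookkeeping in this enumeration; the analytic content has already been discharged in Lemma \ref{lemma3.8}. The structure of the argument is therefore: (1) quote Lemma \ref{lemma3.8}; (2) observe that what it produces is, by the enumeration above, the non-chordal sunflower of size $4$ referenced in the hypothesis; (3) note that the sunflower is unsuspended, giving the desired contradiction. Putting this together yields the chordality of $G^2$ and completes the proof.
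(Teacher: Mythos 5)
Your proposal is correct and matches the paper's (implicit) argument exactly: the corollary is the contrapositive of Lemma \ref{lemma3.8}, and the enumeration of $\mathcal{F}_4$ you sketch is precisely the remark the paper makes immediately before stating the corollary. Nothing further is needed.
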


This still is not a necessary condition for the square of a graph to be chordal. Especially excluding the $K_{1,3}$ seems to be a problem because by Corollary \autoref*{cor3.1} any power of a tree is chordal and obviously $K_{1,3}$ is a tree. A very small one at least.\\
But allowing the $K_{1,3}$ brings a lot of other possible graphs an the table that could produce cycles when squared. Sunflowers of bigger size than $4$ come to mind. But there are even more graphs that are seemingly similar to general sunflowers. Consider the following two graphs as examples.

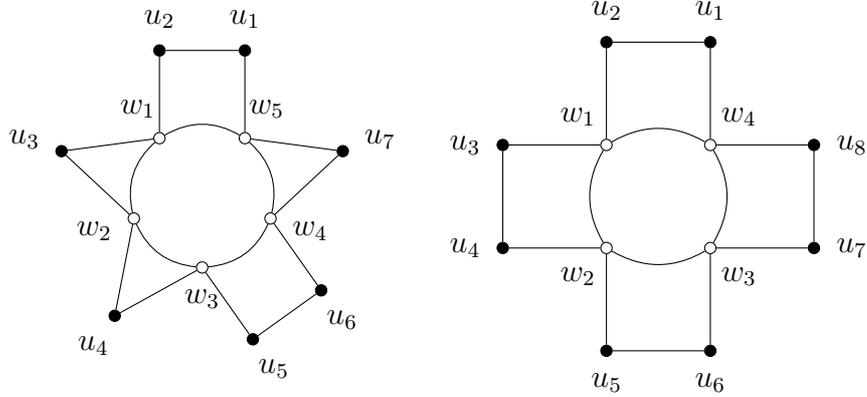
\begin{figure}[H]\label{fig3.11}
	\begin{center}
		\begin{tikzpicture}
		
		\node (center) [inner sep=1.5pt] {};
		
		\node (v1) [inner sep=1.5pt,position=135:0.8cm from center,draw,circle] {};
		\node (v2) [inner sep=1.5pt,position=225:0.8cm from center,draw,circle] {};
		\node (v3) [inner sep=1.5pt,position=315:0.8cm from center,draw,circle] {};
		\node (v4) [inner sep=1.5pt,position=45:0.8cm from center,draw,circle] {};

		\node (u1) [inner sep=1.5pt,position=90:1.2cm from v4,draw,circle,fill] {};
		\node (u2) [inner sep=1.5pt,position=90:1.2cm from v1,draw,circle,fill] {};
		\node (u3) [inner sep=1.5pt,position=180:1.2cm from v1,draw,circle,fill] {};
		\node (u4) [inner sep=1.5pt,position=180:1.2cm from v2,draw,circle,fill] {};
		\node (u5) [inner sep=1.5pt,position=270:1.2cm from v2,draw,circle,fill] {};
		\node (u6) [inner sep=1.5pt,position=270:1.2cm from v3,draw,circle,fill] {};
		\node (u7) [inner sep=1.5pt,position=0:1.2cm from v3,draw,circle,fill] {};
		\node (u8) [inner sep=1.5pt,position=0:1.2cm from v4,draw,circle,fill] {};

		\node (lu1) [position=90:0.07cm from u1] {$u_1$};
		\node (lu2) [position=90:0.07cm from u2] {$u_2$};
		\node (lu3) [position=180:0.07cm from u3] {$u_3$};
		\node (lu4) [position=180:0.07cm from u4] {$u_4$};
		\node (lu5) [position=270:0.07cm from u5] {$u_5$};
		\node (lu6) [position=270:0.07cm from u6] {$u_6$};
		\node (lu7) [position=0:0.07cm from u7] {$u_7$};
		\node (lu8) [position=0:0.07cm from u8] {$u_8$};
		
		\node (lv1) [position=135:0.07cm from v1] {$w_1$};
		\node (lv2) [position=225:0.07cm from v2] {$w_2$};
		\node (lv3) [position=315:0.07cm from v3] {$w_3$};
		\node (lv4) [position=45:0.07cm from v4] {$w_4$};

		\path
		(u1) edge (v4)
		(u2) edge (v1)
		(u3) edge (v1)
		(u4) edge (v2)
		(u5) edge (v2)
		(u6) edge (v3)
		(u7) edge (v3)
		(u8) edge (v4)
		;
		
		\path
		(u1) edge (u2)
		(u3) edge (u4)
		(u5) edge (u6)
		(u7) edge (u8)
		;
		
		\path [bend right]
		(v1) edge (v2)
		(v2) edge (v3)
		(v3) edge (v4)
		(v4) edge (v1)
		;

		\node (center2) [inner sep=1.5pt,left of=center,node distance=6cm] {};
		
		\node (label) [inner sep=1.5pt,position=135:1.7cm from center2] {};

		\node (v1) [inner sep=1.5pt,position=126:0.8cm from center2,draw,circle] {};
		\node (v2) [inner sep=1.5pt,position=198:0.8cm from center2,draw,circle] {};
		\node (v3) [inner sep=1.5pt,position=270:0.8cm from center2,draw,circle] {};
		\node (v4) [inner sep=1.5pt,position=342:0.8cm from center2,draw,circle] {};
		\node (v5) [inner sep=1.5pt,position=54:0.8cm from center2,draw,circle] {};

		\node (u1) [inner sep=1.5pt,position=90:1cm from v5,draw,circle,fill] {};
		\node (u2) [inner sep=1.5pt,position=90:1cm from v1,draw,circle,fill] {};
		\node (u3) [inner sep=1.5pt,position=162:1.8cm from center2,draw,circle,fill] {};
		\node (u4) [inner sep=1.5pt,position=234:1.8cm from center2,draw,circle,fill] {};
		\node (u5) [inner sep=1.5pt,position=305:1cm from v3,draw,circle,fill] {};
		\node (u6) [inner sep=1.5pt,position=305:1cm from v4,draw,circle,fill] {};
		\node (u7) [inner sep=1.5pt,position=18:1.8cm from center2,draw,circle,fill] {};

		\node (lu1) [position=90:0.07cm from u1] {$u_1$};
		\node (lu2) [position=90:0.07cm from u2] {$u_2$};
		\node (lu3) [position=162:0.07cm from u3] {$u_3$};
		\node (lu4) [position=234:0.07cm from u4] {$u_4$};
		\node (lu5) [position=305:0.07cm from u5] {$u_5$};
		\node (lu6) [position=305:0.07cm from u6] {$u_6$};
		\node (lu7) [position=18:0.07cm from u7] {$u_7$};
		
		\node (lv1) [position=126:0.07cm from v1] {$w_1$};
		\node (lv2) [position=198:0.07cm from v2] {$w_2$};
		\node (lv3) [position=270:0.07cm from v3] {$w_3$};
		\node (lv4) [position=342:0.07cm from v4] {$w_4$};
		\node (lv5) [position=54:0.07cm from v5] {$w_5$};

		\path
		(u1) edge (v5)
		(u2) edge (v1)
		(u3) edge (v1)
		edge (v2)
		(u4) edge (v2)
		edge (v3)
		(u5) edge (v3)
		(u6) edge (v4)
		(u7) edge (v4)
		edge (v5)
		;
		
		\path [bend right]
		(v1) edge (v2)
		(v2) edge (v3)
		(v3) edge (v4)
		(v4) edge (v5)
		(v5) edge (v1)
		;
		
		\path
		(u1) edge (u2)
		(u5) edge (u6)
		;

		\end{tikzpicture}
	\end{center}
	\caption{Examples of graphs with non-chordal squares.}
	\label{fig3.9}
\end{figure}

This points to a further generalization of our already general sunflowers. While even simple cycles may produce new but shorter induced cycles when squared it seems to be a very basic property of those graphs that they contain a Hamiltonian cycle consisting only of shortest paths between the vertices of $U$. Combining this observation with the results of Lemma \autoref{lemma3.7} and the corresponding Corollary \autoref{cor3.3} leads to the general concept of flowers.

\begin{definition}[Flower]
A {\em flower} of size $n$ is a graph $F=\lb U\cup W, E\rb$ with $U=\set{u_1,\dots,u_n}$ and $W=\set{w_1,\dots,w_q}$ with $\aufr{\frac{n}{2}}\leq q\leq n$ satisfying the following conditions:
\begin{enumerate}[i)]
	\item There is a cycle $C$ containing all vertices of $W$ in the order $w_1,\dots,w_q$.
	
	\item The set $U=\set{u_1,\dots,u_n}$ is sorted by the appearance order of its elements along $C$ with $u_1w_q, u_2w_1\in E$ and $u_iu_j\notin E$ for $j\neq i\pm1\lb\!\!\!\mod n\rb$.
	
	\item If $w_iw_{i+1}\in\E{C}$, then there is exactly one $u\in U\setminus\V{C}$ with $\fkt{N_F}{u}=\set{w_i,w_{i+1}}$, those vertices $u$ are called {\em pending}.
	
	\item If $w_iw_{i+1}\notin\E{C}$, then there either is one $u\in U\cap\V{C}$ adjacent to $w_i$ and $w_{i+1}$, or there are exactly two vertices $u,t\in u\cap\V{C}$, such that $w_iutw_{i+1}$ is part of $C$.
	
	\item The pending vertices are pairwise nonadjacent and all vertices $u\in U$ that are not pending are contained in $C$.
\end{enumerate}
The family of all flowers of size $n$ is denoted by $\mathfrak{F}_n$.\\
If $F$ is contained in some graph $G$ and there exists an additional vertex $v$ with $vu_i, vu_j\in\E{G}$ and $j\neq i\pm1\lb \!\!\!\mod n\rb$, $F$ is called a {\em withered flower} or just {\em withered}. 	
\end{definition}

Note that edges $w_iw_j$, even $w_iw_{i+1}$ might exist in a given flower $F$, but are not necessarily part of the cycle $C$ of $i)$ in the definition. The flowers in \autoref{fig3.10} are examples of this possibility.

\begin{figure}[H]
	\begin{center}
		\begin{tikzpicture}
		
		\node (anchor1) [] {};
		\node (anchor2) [position=0:5.2cm from anchor1] {};
		\node (anchor3) [position=0:5.2cm from anchor2] {};
		
		\node (u1) [draw,circle,fill,inner sep=1.5pt,position=90:1.6cm from anchor1] {};
		\node (u2) [draw,circle,fill,inner sep=1.5pt,position=180:1.6cm from anchor1] {};
		\node (u3) [draw,circle,fill,inner sep=1.5pt,position=270:1.6cm from anchor1] {};
		\node (u4) [draw,circle,fill,inner sep=1.5pt,position=0:1.6cm from anchor1] {};
		
		\node (w1) [draw,circle,fill,inner sep=1.5pt,position=135:0.8cm from anchor1] {};
		\node (w2) [draw,circle,fill,inner sep=1.5pt,position=225:0.8cm from anchor1] {};
		\node (w3) [draw,circle,fill,inner sep=1.5pt,position=315:0.8cm from anchor1] {};
		\node (w4) [draw,circle,fill,inner sep=1.5pt,position=45:0.8cm from anchor1] {};
		
		\node (lu1) [position=90:0.07 from u1] {$u_1$};
		\node (lu2) [position=180:0.07 from u2] {$u_2$};
		\node (lu3) [position=270:0.07 from u3] {$u_3$};
		\node (lu4) [position=0:0.07 from u4] {$u_4$};
		
		\node (lw1) [position=135:0.07cm from w1] {$w_1$};
		\node (lw2) [position=225:0.07cm from w2] {$w_2$};
		\node (lw3) [position=315:0.07cm from w3] {$w_3$};
		\node (lw4) [position=45:0.07cm from w4] {$w_4$};
		
		\node (n) [position=270:2.8cm from anchor1] {$q=4$};
		
		\path
		(u1) edge (w1)
		edge (w4)
		(u2) edge (w1)
		edge (w2)
		(u3) edge (w2)
		edge (w3)
		(u4) edge (w3)
		edge (w4)
		;
		
		\path [bend right]
		(w1) edge (w2)
		(w2) edge (w3)
		(w3) edge (w4)
		(w4) edge (w1)
		;
		
		\path
		(w2) edge (w4)
		;
		
		
		\node (u1) [draw,circle,fill,inner sep=1.5pt,position=61.42:1cm from anchor2] {};
		\node (u2) [draw,circle,fill,inner sep=1.5pt,position=112.84:1cm from anchor2] {};
		\node (u3) [draw,circle,fill,inner sep=1.5pt,position=215.68:1cm from anchor2] {};
		\node (u4) [draw,circle,fill,inner sep=1.5pt,position=318.52:1cm from anchor2] {};
		
		\node (w1) [draw,circle,fill,inner sep=1.5pt,position=164.26:1cm from anchor2] {};
		\node (w2) [draw,circle,fill,inner sep=1.5pt,position=267.1:1cm from anchor2] {};
		\node (w3) [draw,circle,fill,inner sep=1.5pt,position=10:1cm from anchor2] {};
		
		\node (lu1) [position=61.42:0.07 from u1] {$u_1$};
		\node (lu2) [position=112.84:0.07 from u2] {$u_2$};
		\node (lu3) [position=215.68:0.07 from u3] {$u_3$};
		\node (lu4) [position=318.52:0.07 from u4] {$u_4$};
		
		\node (lw1) [position=164.26:0.07cm from w1] {$w_1$};
		\node (lw2) [position=267.1:0.07cm from w2] {$w_2$};
		\node (lw3) [position=10:0.07cm from w3] {$w_3$};
		
		\node (n) [position=270:2.8cm from anchor2] {$q=3$};
		\path
		(u1) edge (u2)
		(u2) edge (w1)
		(w1) edge (u3)
		(u3) edge (w2)
		(w2) edge (u4)
		(u4) edge (w3)
		(w3) edge (u1)
		;
		
		\path [bend left]
		(w1) edge (w2)
		edge (w3)
		;
		
		
		\node (u1) [draw,circle,fill,inner sep=1.5pt,position=60:1cm from anchor3] {};
		\node (u2) [draw,circle,fill,inner sep=1.5pt,position=120:1cm from anchor3] {};
		\node (u3) [draw,circle,fill,inner sep=1.5pt,position=240:1cm from anchor3] {};
		\node (u4) [draw,circle,fill,inner sep=1.5pt,position=300:1cm from anchor3] {};
		
		\node (w1) [draw,circle,fill,inner sep=1.5pt,position=0:1cm from anchor3] {};
		\node (w2) [draw,circle,fill,inner sep=1.5pt,position=180:1cm from anchor3] {};
		
		\node (lu1) [position=60:0.07 from u1] {$u_1$};
		\node (lu2) [position=120:0.07 from u2] {$u_2$};
		\node (lu3) [position=240:0.07 from u3] {$u_3$};
		\node (lu4) [position=300:0.07 from u4] {$u_4$};
		
		\node (lw1) [position=0:0.07cm from w1] {$w_1$};
		\node (lw2) [position=180:0.07cm from w2] {$w_2$};
		
		\node (n) [position=270:2.8cm from anchor3] {$q=2$};
		
		\path
		(u1) edge (u2)
		(u2) edge (w2)
		(w1) edge (u1)
		(u3) edge (u4)
		(u4) edge (w1)
		(w2) edge (u3)
		;

		\end{tikzpicture}
	\end{center}
	\caption{Some additional examples of flowers of size $4$ with $q=2,3,4$.}
	\label{fig3.10}
\end{figure}
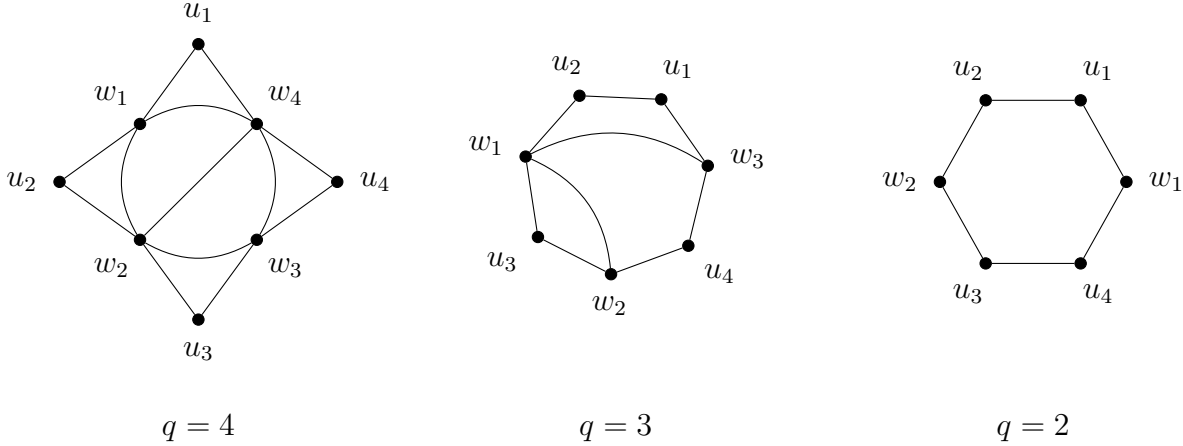

\begin{theorem}\label{thm3.17}
A graph $G$ contains a flower of size $n$, which is not withered, if and only if $G^2$ contains a $C_n$.
\end{theorem}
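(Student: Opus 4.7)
The plan is to prove the two implications separately. For the forward direction, suppose $G$ contains a non-withered flower $F=(U\cup W,E_F)$ of size $n$ with $U=\set{u_1,\dots,u_n}$, and show $\induz{G^2}{U}\cong C_n$. For cyclically consecutive $u_i,u_{i+1}$, the flower axioms guarantee $\distg{G}{u_i}{u_{i+1}}\leq 2$: either they are joined by an edge of $F$ (the two-$u$ variant of case (iv)), or they share a common neighbor in $W$ (from the pending case (iii) and the one-$u$ variant of case (iv)); hence $u_iu_{i+1}\in\E{G^2}$. For non-consecutive $u_i,u_j$, axiom (ii) excludes the edge itself, the non-withered hypothesis excludes common neighbors outside $\V{F}$, and the local flower structure excludes common neighbors in $W$, because the $U$-neighborhood of each $w_k$ consists of $u$'s that form a consecutive block in the cyclic $U$-ordering.

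For the reverse direction, let $C=u_1u_2\dots u_n$ be an induced cycle in $G^2$. For each consecutive pair we have $\distg{G}{u_i}{u_{i+1}}\leq 2$; call the pair \emph{type-E} if $u_iu_{i+1}\in\E{G}$ and \emph{type-W} otherwise. In the type-W case, fix a witness $w_{i,i+1}\in\V{G}$ with $u_iw_{i,i+1}u_{i+1}$ a path in $G$. Lemma~\ref{lemma3.7}, applied with $k=2$ and $r=1$, forbids two cyclically consecutive type-E pairs, so the number $q$ of type-W pairs satisfies $\aufr{n/2}\leq q\leq n$. Using the inducedness of $C$ in $G^2$, one verifies that the chosen witnesses are pairwise distinct, disjoint from $U$, and that no $w_{i,i+1}$ is adjacent to any $u_k$ outside $\set{u_i,u_{i+1}}$: any violation would produce a path of length at most two between two non-consecutive vertices of $C$, yielding a forbidden chord in $G^2$.

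To assemble the flower, set $W=\condset{w_{i,i+1}}{(u_i,u_{i+1})~\text{is type-W}}$ and let $C'$ be the cycle obtained by interleaving the $u$'s and $w$'s according to the pattern $\dots u_i\,w_{i,i+1}\,u_{i+1}\dots$ for type-W pairs and $\dots u_iu_{i+1}\dots$ for type-E pairs. Then $C'$ contains $W\cup U$, every $u$ lies on $C'$ (so no pending vertices appear), and between any two cyclically consecutive $w$'s on $C'$ one finds either exactly one $u$ (from two consecutive type-W pairs) or exactly two $u$'s joined by a $G$-edge (from a type-W, type-E, type-W pattern). Axioms (i)--(v) of the flower definition then follow with $q$ as computed above. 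Non-witheredness is forced by the inducedness of $C$ in $G^2$: any additional vertex $v\notin\V{F}$ adjacent to two non-consecutive $u_i,u_j$ would give $\distg{G}{u_i}{u_j}\leq 2$ and thus a chord in $C$, contradicting its inducedness.

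The main obstacle will be the bookkeeping in the forward direction, where one must track, through each sub-case of flower axioms (iii) and (iv), that the $U$-neighborhood of every $w_k\in W$ forms a consecutive block in the cyclic $U$-ordering; each sub-case is routine, but keeping the indexing between pending and non-pending positions consistent with the definition is the main source of friction.
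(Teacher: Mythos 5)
Your proof is correct and follows essentially the same route as the paper: in the forward direction the non-withered condition forces $\distg{G}{u_i}{u_j}\geq 3$ for non-consecutive pairs while the flower structure gives distance at most $2$ for consecutive ones, and in the reverse direction you select length-two witnesses for the non-adjacent consecutive pairs (with the lower bound $q\geq\aufr{n/2}$ coming from Lemma 3.7 / Corollary 3.3) and interleave them with the $u_i$ into a cycle whose chord-freeness is enforced by the inducedness of $C_n$ in $G^2$. The only, immaterial, difference is that you keep every $u$-vertex on the base cycle so that condition (iii) of the flower definition is vacuous, whereas the paper additionally converts each $u$ whose two $w$-neighbours are adjacent into a pending vertex; both constructions exhibit a valid non-withered flower.
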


\begin{proof}
Let $F\in\mathscr{F}_n$ be an unwithered flower of size $n$ in $G$. With $F$ not being withered it follows $\distg{G}{u_i}{u_j}\geq 3$ for all $j\neq i\pm1\lb \!\!\!\mod n\rb$ and $\distg{G}{u_i}{u_j}\leq2$ for $j= i\pm1\lb \!\!\!\mod n\rb$. Hence the $u_i$ form an induced cycle of length $n$ in $G^2$.\\
\\
Now let $C_n$ be an induced cycle in $G^2$. By Corollary \autoref{cor3.3} $G$ can contain at most $\abr{\frac{n}{2}}$ edges of $C_n$. Hence for at least every second edge $u_iu_{i+1\lb \!\!\!\mod n\rb}$ in $C_n$ there must exist some vertex $w_k$ with $u_iw_k, u_{i+1\lb \!\!\!\mod n\rb}\in\E{G}$ in $G$ additionally satisfying $w_ku_j\notin\E{G}$ for all $j\in\set{1,\dots n}\setminus\set{u_i,u_{i+1\lb \!\!\!\mod n\rb}}$. Such vertices $w_i$ exist necessarily for every pair of consecutive vertices of $C_n$ that are not already adjacent in $G$, i.e. only those necessary $w_i$ are taken into account and there are at least $\aufr{\frac{n}{2}}$ of them.\\
The $u_i$ together with the $w_k$ form a circle $C'$ of length $n+\aufr{\frac{n}{2}}\leq n+q\leq 2\,n$ in which no two $w_k$ are adjacent. Now for the pending $u$-vertices we have a look at those pairs $w_i$, $w_{i+1}$ with $w_iw_{i+1}\in\E{G}$ that are adjacent to a common $u$-vertex. For each such pair of $w$-vertices we shorten the cycle $C'$ by removing their common neighbor $u$, which thus becomes a pending vertex, from it and adding the edge $w_iw_{i+1}$. The result is the cycle $C$ of condition $i)$ in the definition. No other edge between two $w$-vertices is part of $C$ and no more than two consecutive $u$-vertices appear on it due to the application of Corollary \autoref{cor3.3}, thus $ii)$, $iii)$ and $iv)$ are satisfied as well.\\
In order to prevent $C_n$ from having a chord every $w_k$ may only be adjacent to those $u$ vertices adjacent to it in $C$ and besides the edges of $C_n$ already present in $G$ no other edges between the vertices in $U$ can exist satisfying condition $v)$ of the definition and therefore a flower of size $n$ that is not withered exists in $G$. If $F$ were withered $C_n$ would contain a chord in $G^2$ and therefore would not have been an induced cycle in the first place.
\end{proof}

The proof of Theorem \autoref{thm3.17} indicates that our basic cycle $C$, which is guaranteed by condition $i)$ of the flower definition can be extended, so that it contains all vertices of the flower $F$. A graph with such a cycle is called Hamiltonian and this property is another well studied aspect of graph theory.

\begin{definition}[Hamiltonian Cycle]
Let $G$ be a graph. A {\em hamiltonian cycle} $C$ is a, not necessarily induced, cycle in $G$ with $\V{C}=\V{G}$. If $G$ possesses at least one hamiltonian cycle, $G$ itself is called {\em hamiltonian}.
\end{definition}

\begin{lemma}\label{lemma3.14}
Let $F\in\mathscr{F}_n$ be a flower of size $n$, then $F$ is hamiltonian and the hamiltonian cycle of $F$ does not contain an edge of the form $w_iw_{i+1}$.	
\end{lemma}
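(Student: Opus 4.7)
The plan is to upgrade the cycle $C$ furnished by condition (i) of the flower definition into a Hamiltonian cycle by splicing the missing vertices into it. First I would note that $C$ already contains every $w$-vertex (by (i)) and every non-pending $u$-vertex (by (v)), so the only vertices of $F$ left to insert are the pending ones, which are precisely the elements of $U\setminus\V{C}$.

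The key structural observation is that condition (iii), together with condition (v), yields a bijection between the pending vertices and the edges of $C$ of the form $w_iw_{i+1}$: every such edge supports exactly one pending vertex $u$ with $\fkt{N_F}{u}=\set{w_i,w_{i+1}}$, and (v) assures us that no other pending vertices are lurking. For every edge $e=w_iw_{i+1}$ of $C$ I would then replace $e$ in $C$ by the two-edge detour $w_i\,u_e\,w_{i+1}$, where $u_e$ is the pending vertex associated to $e$. Both detour edges lie in $\E{F}$ since they are incident to $u_e$, and because the replaced edges are pairwise disjoint and each insertion brings in a previously unused vertex, the result $C^*$ is a cycle with vertex set $\V{C}\cup\lb U\setminus\V{C}\rb=\V{F}$, i.e., a Hamiltonian cycle of $F$.

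To confirm the second clause I would check that no edge of $C^*$ joins two $w$-vertices. Any $w$-$w$ edge of $C$ must be of the form $w_iw_{i+1}$ by condition (i), but every such edge was removed in the splicing step. The remaining untouched edges of $C$ are incident to at least one $u$-vertex by condition (iv), and each newly introduced splice-edge is incident to the pending $u$-vertex $u_e$. Hence no edge of $C^*$ has both endpoints in $W$.

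The main obstacle is ensuring that the correspondence between pending vertices and $w_iw_{i+1}$-edges of $C$ is truly a bijection: existence and uniqueness per edge come from (iii), while completeness — the fact that every pending vertex is hit — follows from (v). Once this matching is pinned down, the splicing construction and the $w_iw_{i+1}$-avoidance check are essentially mechanical.
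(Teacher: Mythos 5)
Your proposal is correct and follows essentially the same route as the paper's proof: both start from the cycle $C$ of condition $i)$, observe that only the pending $u$-vertices are missing, and splice each one in by replacing the corresponding edge $w_iw_{i+1}$ with the two-edge detour through that pending vertex, which simultaneously removes all $w$-$w$ edges from the cycle. The only difference is that you make the bijection between pending vertices and $w_iw_{i+1}$-edges explicit, which the paper leaves implicit.
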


\begin{proof}
In order to find this hamiltonian cycle we simply reverse a construction step in the proof of Theorem \autoref{thm3.17}. Let $C$ be the cycle of condition $i)$ in our flower. Then $C$ already contains all $w$-vertices and all $u$-vertices that are not pending.\\
Now, by definition, any $u$-vertex that is pending is adjacent to two $w$-vertices, say $w_i$ and $w_{i+1}$, such that $w_iw_{i+1}$ is an edge of $C$. In addition there is no edge joining two $w$-vertices contained in $C$ other than those excluding pending $u$-vertices from the cycle. By removing those edges $w_iw_{i+1}$ from $C$ and instead adding the corresponding pending vertex $u$ together with the two edges $uw_i$ and $uw_{i+1}$ to $C$ we obtain a cycle containing all vertices of $F$, hence a hamiltonian cycle, and not containing any edge joining two $w$-vertices.
\end{proof}

Theorem \autoref{thm3.17} gives a first, very easy necessary and sufficient condition for the square of some graph to be chordal. But the mere description given by the definition of flowers does not seem very on the point.

\begin{corollary}\label{cor3.5}
Let $G$ be a graph, then $G^2$ is chordal if and only if all flowers of size $n\geq4$ in $G$ are withered.
\end{corollary}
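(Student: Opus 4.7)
The plan is essentially a direct application of Theorem \ref{thm3.17} combined with the definition of chordality, so very little new work is required. Recall that by definition $G^2$ is chordal if and only if $G^2$ contains no induced cycle $C_n$ with $n\geq 4$. Hence the corollary will follow once we match this forbidden-cycle condition with the ``unwithered flower'' condition on $G$.

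First I would fix an arbitrary $n \geq 4$ and invoke Theorem \ref{thm3.17}, which states that $G$ contains an unwithered flower of size $n$ if and only if $G^2$ contains an induced $C_n$. Taking the contrapositive, $G^2$ contains no induced $C_n$ if and only if every flower of size $n$ in $G$ is withered. Quantifying this equivalence over all $n \geq 4$ and combining with the characterization of chordality in the previous paragraph, we obtain that $G^2$ is chordal if and only if for every $n \geq 4$ all flowers of size $n$ in $G$ are withered, which is exactly the statement of the corollary.

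The only point that warrants a brief comment, rather than a real obstacle, is the lower bound $n \geq 4$ on the flower size. A flower of size $n = 3$ would correspond via Theorem \ref{thm3.17} to an induced $C_3$ in $G^2$, and induced triangles are permitted in chordal graphs; thus flowers of size $3$ have no bearing on the chordality of $G^2$ and are correctly excluded from the condition. For $n\geq 4$ the biconditional of Theorem \ref{thm3.17} applies in both directions, so no further argument is needed and the proof reduces to this observation.
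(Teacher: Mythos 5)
Your proof is correct and is exactly the derivation the paper intends: the corollary follows immediately from Theorem \ref{thm3.17} applied to each $n\geq 4$ together with the definition of chordality as the absence of induced cycles of length at least $4$. The side remark on why $n=3$ is excluded is accurate but not needed.
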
 

While $2$-strong cliques and $2$-strong stable sets are the extension of cliques and stable sets from $G$ to $G^2$ flowers seem to be extensions of cycles in the same manner. In fact one could call a flower of size $n$ in accordance to nomenclature of this thesis a $2$-strong cycle of length $n$ and the term withered translates more or less to a $2$-strong cycle having a $2$-strong chord. In this terms Corollary \autoref{cor3.5} could be rephrased as follows:
\begin{center}
A graph $G$ is called {\em $2$-strong chordal} if every $2$-strong cycle in $G$ of length $\geq 4$ has a $2$-strong chord.
\end{center}
In the following we will see even stronger relations and analogies that will strengthen the idea of such a generalization of chordality.\\
Fulkerson and Gross (see \cite{fulkerson1965incidence}) gave a very nice characterization of chordal graphs in terms of separators that can be easily expanded to our generalization.

\begin{definition}[Separator]
Let $G=\lb V,E\rb$ be a connected graph and $S\subseteq V$. $S$ is called a {\em separator} of $G$ if $G-S$ is not connected.\\
$S$ is called an {\em $a,b$-separator} if $S$ is a separator and $G-S$ contains two distinct components $A$ and $B$ with $a\in A$ and $b\in B$.
\end{definition}

\begin{theorem}[Fulkerson, Gross. 1965 \cite{fulkerson1965incidence}]\label{thm3.18}
A graph $G$ is chordal if and only if every minimal separator of $G$ induces a clique.
\end{theorem}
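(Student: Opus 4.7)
The plan is to prove both directions by contradiction, trading induced cycles against shortest paths built inside the components of $G-S$.

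For the easier direction $(\Leftarrow)$, I would assume every minimal separator induces a clique while $G$ has an induced cycle $C = v_1 v_2 \ldots v_n v_1$ with $n \geq 4$, and derive a contradiction. Fix a minimal $v_1,v_3$-separator $S$. Since $v_1 v_2 v_3$ is a $v_1,v_3$-path, $v_2 \in S$. Since $v_3 v_4 \ldots v_n v_1$ is another $v_1,v_3$-path, some $v_i$ with $i \in \{4,\ldots,n\}$ lies in $S$. But $C$ being induced forces $v_2 v_i \notin \E{G}$, so $\induz{G}{S}$ is not a clique, a contradiction.

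For the harder direction $(\Rightarrow)$, assume $G$ is chordal and let $S$ be a minimal $a,b$-separator, with $A$ and $B$ the connected components of $G - S$ containing $a$ and $b$. Pick distinct $x,y \in S$. Minimality of $S$ forces each vertex of $S$ to have at least one neighbor in $A$ and one in $B$, since otherwise that vertex could be removed while $S \setminus \{x\}$ still separated $a$ from $b$. Hence I would choose a shortest $x,y$-path $P_A$ whose internal vertices lie in $A$, and analogously a shortest $x,y$-path $P_B$ through $B$. Concatenating them yields a cycle of length at least four. Chordality provides a chord, and I would exclude every possibility except the edge $xy$: no chord joins an $A$-vertex to a $B$-vertex, because $S$ separates them; no chord lies strictly inside $P_A$ or $P_B$, nor joins $x$ or $y$ to an internal vertex of its own side's path, because either would shortcut and contradict minimality of that path's length. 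The only remaining chord is $xy$, so $x$ and $y$ are adjacent, and $\induz{G}{S}$ is a clique.

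The main obstacle is the neighbor-existence step in the $(\Rightarrow)$ direction: converting the minimality of the separator into the geometric statement that every $s \in S$ reaches into both $A$ and $B$, which is what licenses the two internally disjoint shortest paths through the two components. Once that is in hand, the chord-exclusion is a routine case analysis on where a chord of $P_A \cup P_B$ could sit, and the $(\Leftarrow)$ direction is a short path-counting argument against any putative induced cycle of length at least four.
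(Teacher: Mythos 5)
Your proof is correct and is the classical argument (essentially Dirac's): the backward direction hits an induced cycle with a minimal separator of two antipodal vertices, and the forward direction builds a chordless-unless-$xy$ cycle from two shortest $x,y$-paths routed through the two components, using the standard observation that minimality forces every separator vertex to have neighbors in both $A$ and $B$. The paper states this theorem as a cited result of Fulkerson and Gross without giving a proof, so there is nothing to compare against; your argument fills that gap correctly and completely.
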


\begin{definition}[$k$-strong Separator]
Let $G=\lb V,E\rb$ be a connected graph and $S\subseteq V$. $S$ is called a {\em $k$-strong separator} of $G$ if $G-S$ is not connected and for all components $U$, $W$ of $G-S$ with $U\neq W$ every path from $U$ to $W$ in $G$ contains a subpath of length $k-1$ completely contained in $\induz{G}{S}$.\\
$S$ is called a {\em $k$-strong $a,b$-Separator} if $S$ is a $k$-strong separator of $G$ and $G-S$ contains two distinct components $A$ and $B$ with $a\in A$ and $b\in B$.
\end{definition}

\begin{lemma}\label{lemma3.9}
Let $G=\lb V,E\rb$ be a graph, $S\subsetneq V$ and $k\in\N$, $S$ is a minimal $k$-strong separator of $G$ if and only if $S$ is a minimal separator of $G^k$.
\end{lemma}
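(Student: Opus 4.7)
The plan is to relate $k$-strong separators in $G$ and separators in $G^k$ through two mutually inverse constructions that both preserve the property ``the path contains no $k$ consecutive vertices of $S$''. The first construction, \emph{shortest-path replacement}, turns a $G^k-S$ path $u=x_0,x_1,\dots,x_\ell=w$ into a $G$-walk by splicing in, between each pair $x_i,x_{i+1}$, a shortest $G$-path $Q_i$ of length at most $k$; inside each $Q_i$ at most $k-1$ vertices can lie in $S$ (since its endpoints $x_i,x_{i+1}$ do not), and these $S$-runs cannot merge across consecutive segments because the junction vertices $x_{i+1}$ are also outside $S$. The second construction, \emph{greedy hopping}, starts from a $G$-path $v_0,\dots,v_m$ containing no $k$ consecutive $S$-vertices and inductively jumps from the current non-$S$ vertex to the farthest non-$S$ vertex lying within the next $k$ positions; consecutive jumps span $G$-distance at most $k$ and therefore become edges of $G^k-S$, producing a $G^k-S$ path between the endpoints of the original.

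For the direction ``$S$ minimal $k$-strong separator of $G$ implies $S$ minimal separator of $G^k$'', I would first show that $S$ is a separator of $G^k$. Pick distinct components $U,W$ of $G-S$ with $u\in U$ and $w\in W$, and suppose for contradiction a $G^k-S$ path $u=x_0,\dots,x_\ell=w$ exists. Since the $x_i$'s trace a sequence of components of $G-S$ starting in $U$ and ending in $W$, some consecutive pair $x_i,x_{i+1}$ lies in distinct components; shortest-path replacement on the single edge $x_ix_{i+1}$ gives a $G$-path $Q$ of length at most $k$ between two distinct components of $G-S$ whose endpoints lie outside $S$, leaving at most $k-1$ vertices of $Q$ available to lie in $S$, which is too few to provide the $k$ consecutive $S$-vertices forced by the $k$-strong separator condition applied to $Q$. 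For inclusion-minimality, I would suppose a proper subset $S''\subsetneq S$ were still a separator of $G^k$ and derive a contradiction by showing $S''$ is itself a $k$-strong separator of $G$: $G-S''$ is already disconnected because $G\subseteq G^k$, and for any pair of vertices $u',w'$ lying in different components of $G-S''$ together with a $G$-path between them that avoids $k$ consecutive $S''$-vertices, greedy hopping would produce a $G^k-S''$ path reconnecting $u'$ and $w'$, contradicting $S''$ being a $G^k$-separator.

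The reverse direction ``$S$ minimal separator of $G^k$ implies $S$ minimal $k$-strong separator of $G$'' is run symmetrically, using greedy hopping to verify the $k$-strong separator property of $S$ in $G$ and shortest-path replacement to propagate minimality back to $G^k$. The hard part, which I expect to be the main obstacle, is a quantifier mismatch in the $k$-strong separator definition: it asks for the $(k-1)$-subpath-in-$G[S]$ condition on \emph{every} pair of distinct components of $G-S$, whereas $G^k$-separation only produces one pair of vertices in distinct components of $G^k-S$, and the components of $G^k-S$ can be strictly coarser than those of $G-S$. I would resolve this by fixing once and for all a pair $a,b$ realizing the minimality of $S$ as a separator of $G^k$ --- such a pair automatically lies in distinct components of $G-S$ because $G\subseteq G^k$ --- and channelling every invocation of greedy hopping through this specific pair, so that the produced $G^k-S$ path always contradicts this minimality witness directly rather than having to traverse arbitrary pairs of components.
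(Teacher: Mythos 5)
Your overall strategy --- splicing shortest $G$-paths into a $G^k-S$ path in one direction, and greedily hopping along a $G$-path in steps of $G$-distance at most $k$ in the other --- is the same as the paper's, and your first step (a $k$-strong separator of $G$ separates $G^k$, with the same components) is sound; both of your constructions are individually correct. You have also put your finger on exactly the point the paper's own proof passes over in silence: greedy hopping only yields a contradiction when the two components of $G-S$ being reconnected lie in \emph{different} components of $G^k-S$, whereas the $k$-strong condition quantifies over all pairs of components of $G-S$, which may merge in $G^k-S$.

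The genuine gap is that your proposed repair --- fixing one pair $a,b$ witnessing minimality in $G^k$ and routing every hopping argument through it --- cannot close this hole. What you need at that point is that a (minimal) separator of $G^k$ is a $k$-strong separator of $G$, a claim universally quantified over all pairs of components of $G-S$; a single witness pair certifies nothing about the remaining pairs. Worse, the needed implication is simply false. Take $k=2$ and let $G$ be the tree with $\V{G}=\set{u_1,u_2,s_1,s_2,b}$ and $\E{G}=\set{u_1s_1,\ u_2s_1,\ s_1s_2,\ s_2b}$, and put $S=\set{s_1,s_2}$. Then $G^2-S$ has components $\set{u_1,u_2}$ and $\set{b}$, while neither $\set{s_1}$ nor $\set{s_2}$ separates $G^2$, so $S$ is a minimal separator of $G^2$; yet the path $u_1,s_1,u_2$ joins the two distinct components $\set{u_1}$ and $\set{u_2}$ of $G-S$ and meets $S$ only in the single vertex $s_1$, so it contains no subpath of length $k-1=1$ inside $\induz{G}{S}$, and $S$ is not a $2$-strong separator of $G$ at all. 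The same graph breaks your forward minimality step: $\set{s_1,s_2,u_2}$ is a minimal $2$-strong separator of $G$, but its proper subset $\set{s_1,s_2}$ already separates $G^2$. So the defect is not in your two constructions but in the bridge between ``separates $G^k$'' and ``is a $k$-strong separator of $G$''; with the definitions as written that bridge does not exist, the lemma itself fails on the example above, and the paper's proof founders on the same rock by tacitly assuming that the offending pair of components of $G-S'$ remains separated in $G^k-S'$.
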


\begin{proof}
Let $S$ be a minimal $k$-strong separator of $G$. We begin the proof by showing that $S$ separates $G^k$ as well. Let $U$ and $W$ be different components of $G-S$, then every path from $U$ to $W$ contains a path of length $k-1$ that is completely contained in $\induz{G}{S}$, hence for all $u\in U$ and $w\in W$ it holds $\distg{G}{u}{w}\geq k+1$. And so $\distg{G^k}{u}{w}\geq 2$. With all $u,w$-paths using at least $k$ consecutive vertices of $S$ every $u,w$-path in $G^k$ has to contain at least one vertex of $S$, thus $S$ separates $u$ and $w$ in $G^k$. With that not only is $S$ a separator of $G^k$, the components of $G-S$ and $G^k-S$ are the same. Hence if $S$ is a minimal $k$-strong $a,b$-separator in $G$, $S$ is an $a,b$-separator in $G^k$.\\
Now suppose $S$ is no minimal separator of $G^k$, with $S$ being a separator of $G^k$ there exists some $S'\subsetneq S$ which is minimal. Let $U'$ and $W'$ be different components of $G^k-S'$, then every path from $U$ to $W$ in $G^k$ contains at least one vertex of $S'$ and with $u'\in U'$ and $w'\in W'$ being in different components of $G^k-S'$ they cannot be adjacent, thus $\distg{G}{u'}{w'}\geq k+1$. Suppose $S'$ is no $k$-strong separator of $G$, then there exists a pair of vertices $u'$, $w'$ from different components with minimal distance in $G$ such that an $u',w'$-path in $G$ does not contain a subpath of length $k-1$ consisting only of vertices from $S'$. Then there exists some vertex $x\in \V{G-S'-U'}$ with $\distg{G}{u'}{x}\leq k$ and $y\in\V{G-S'-W'}$ with $\distg{G}{w'}{y}\leq k$. If there is some component $K\subseteq\V{G^k-S'}$ containing both $x$ and $y$ there would be a path from $u'$ to $w'$ in $G^k-S'$ which yields a contradiction. So there must exist two different components $X$ and $Y$ of $G^k-S'$ with $x\in X$ and $y\in Y$.\\
Please not that $x$ and $u'$, and $y$ and $w'$ being adjacent in $G^k$ does not necessarily mean that they are contained in the same components of $G-S'$.\\
Now let $P$ be a shortest $u',w'$-path in $G$ contradicting $S'$ being a $k$-strong separator in $G$. As we have seen there must exist at least two components in $G$ containing vertices $x$ and $y$ as described above with $x,y\in\V{P}$. Let $X$ and $Y$ be the two components of $G-S'$ met by $P$ with the smallest distance to each other, so chose a subpath $P'\subseteq P$ connecting two components $X$ and $Y$ such that $P'$ does not contain any vertex of any component of $G-S'$ besides $x$ and $y$. Since $P$ contradicts $S'$ being $k$-strong $\V{P'-x-y}\subseteq S'$ contains at most $k-1$ vertices and therefore $\distg{G}{x}{y}\leq k$. Hence $x$ and $y$ are adjacent in $G^k$ and cannot belong to different components of $G^k-S'$.\\
By iterating the argument over the two subpaths $P_x$ and $P_y$ of $P$ connecting $x$ to $u'$ and $y$ to $w'$ we finally obtain a $u'$-$w'$-path in $G^k$, which is a contradiction to $U$ and $W$ being different components of $G^k-S'$.\\  
Thus our assumption was wrong and $S'\subsetneq S$ is a $k$-strong separator of $G$ contradicting the minimality of $S$ and so $S$ is a minimal separator of $G^k$.\\ 
Now, for the converse direction, let $S$ be a minimal separator of $G^k$. Obviously every separator of $G^k$ separates $G$ too. By repeating the arguments for the minimality of $S$ in $G^k$ from the first part of this proof we again obtain a contradiction to the assumption of $S$ not being a minimal $k$-strong separator in $G$. With that our proof is complete. 
\end{proof}

Combining this lemma with the theorem of Fulkerson and Gross we obtain a another criterion for the chordality of $G^2$, and even for higher powers.

\begin{corollary}\label{cor3.6}
Let $G$ be a graph and $k\in\N$. $G^k$ is chordal if and only if every minimal $k$-strong separator of $G$ is a $k$-strong clique in $G$.
\end{corollary}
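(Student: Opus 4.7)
The plan is to combine Theorem \ref{thm3.18} of Fulkerson and Gross, applied to the graph $G^k$, with the correspondence between minimal separators established in Lemma \ref{lemma3.9}. Since Corollary \ref{cor3.6} is essentially a translation of Fulkerson-Gross from the power $G^k$ back to $G$, no new combinatorial content should be needed; the proof is a short two-step equivalence.

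First I would invoke Theorem \ref{thm3.18} to rewrite the chordality of $G^k$ as the statement that every minimal separator of $G^k$ induces a clique in $G^k$. Next, by Lemma \ref{lemma3.9}, the minimal separators of $G^k$ are exactly the minimal $k$-strong separators of $G$, so I can replace the quantification "every minimal separator of $G^k$" with "every minimal $k$-strong separator of $G$" without losing or gaining any sets.

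It remains to observe that a set $S\subseteq\V{G}$ induces a clique in $G^k$ precisely when $S$ is a $k$-strong clique in $G$. This follows directly from the definition of the $k$-th power in Definition \ref{def3.1}: two vertices $x,y\in S$ are adjacent in $G^k$ iff $\distg{G}{x}{y}\leq k$, which is exactly the defining condition for a $k$-strong clique. Equivalently this is just the identity $\kcl{k}{G}=\fkt{\omega}{G^k}$ from the list following Definition \ref{def3.1}, applied to the induced subgraph $\induz{G}{S}$.

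Chaining these three equivalences in both directions gives the claim. The main (and only nontrivial) obstacle has already been handled in Lemma \ref{lemma3.9}, where one has to show that not only do separators of $G^k$ and $k$-strong separators of $G$ coincide, but so do the minimal ones; once that lemma is at our disposal, the proof of Corollary \ref{cor3.6} is a direct two-line chain of "if and only if" steps and requires no further argument.
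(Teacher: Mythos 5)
Your proof is correct and follows exactly the route the paper takes: the corollary is obtained by combining Theorem \ref{thm3.18} (Fulkerson--Gross) applied to $G^k$ with Lemma \ref{lemma3.9}, plus the immediate translation between cliques of $G^k$ and $k$-strong cliques of $G$. Nothing is missing.
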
	

Another characterization was given by Buneman in 1974. This one gives a very nice structure for the cliques of a chordal graph and can easily be extended to higher chordal powers.

\begin{theorem}[Buneman. 1974 \cite{buneman1974characterisation}]\label{thm3.19}
A graph $G$ is chordal if and only if there exists a tree $T=\lb K,L\rb$ where the vertex set $K$ corresponds to the set of all maximal cliques in $G$ and the set $K_v\define\condset{Q\in K}{v\in Q}$ induces a subtree of $T$ for all $v\in\V{G}$.
\end{theorem}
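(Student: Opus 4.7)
The plan is to prove the two directions separately, with the main work in the converse implication. Before doing anything else, I would record the key translation between the graph and the clique tree: two vertices $u,v$ of $G$ are adjacent if and only if some maximal clique contains both of them, which is the same as saying $K_u\cap K_v\neq\emptyset$. This dictionary will be used repeatedly.

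For the ``if'' direction, assume $T$ and the subtrees $K_v$ exist and suppose for contradiction that $G$ contains a chordless cycle $C=v_1v_2\cdots v_n$ with $n\geq 4$. Set $T_i:=K_{v_i}$; each consecutive pair $T_i,T_{i+1}$ intersects (edges of $C$), while every non-consecutive pair is disjoint (chordlessness). Consider the disjoint subtrees $T_1,T_3$; since $T$ is a tree there is a unique path $P$ realizing the distance between them, with endpoints $x\in T_1$ and $y\in T_3$. Because $T_2$ is a connected subtree meeting both $T_1$ and $T_3$, any walk in $T_2$ from $T_1$ to $T_3$ must contain $P$, so $P\subseteq T_2$. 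If $n=4$, the same argument applied to $T_4$ gives $P\subseteq T_4$, whence $T_2\cap T_4\supseteq P\neq\emptyset$, producing the forbidden chord $v_2v_4$.

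For $n\geq 5$, none of $T_4,\dots,T_n$ meet $T_2$ since no $v_k$ with $4\leq k\leq n$ is adjacent to $v_2$. Set $U:=T_1\cup T_3\cup T_4\cup\cdots\cup T_n$; this subgraph of $T$ is connected because the chain $T_3,T_4,\dots,T_n,T_1$ has pairwise consecutive intersections. Since $U$ is connected and contains $x,y$, the unique $x$-$y$ path $P$ in $T$ lies entirely in $U$. Every interior vertex of $P$ is in $T_2$ and, by minimality of $P$, outside $T_1\cup T_3$, so would have to belong to some $T_k$ with $k\in\{4,\dots,n\}$, contradicting $T_k\cap T_2=\emptyset$. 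The remaining possibility $|P|=1$ means the edge $xy$ of $T$ is contained in some single $T_k$ appearing in $U$, which would force $v_k$ to be adjacent to both $v_1$ and $v_3$; no such $k$ exists on a chordless cycle of length at least $5$, so we obtain the desired contradiction.

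For the ``only if'' direction I would proceed by induction on $|V(G)|$. The base case is trivial. For the step, a chordal graph always possesses a simplicial vertex $v$; apply the inductive hypothesis to $G-v$ to obtain a clique tree $T'$. Set $N:=N_G(v)$, a clique. Either $N$ is already a maximal clique of $G-v$, and I replace its node of $T'$ by the label $N\cup\{v\}$; or $N$ is properly contained in some maximal clique $Q$ of $G-v$ (which remains maximal in $G$), in which case I add a new node representing $N\cup\{v\}$ and attach it to $Q$. The subtree property for $v$ holds trivially (one node); for each $u\in V(G)-v$ the anticipated obstacle is that when $u\in N$ we must check that appending the new node to $K_u$ still produces a subtree, but this is immediate since the new node is attached to a clique containing $u$, which is already in $K_u$. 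A short case analysis confirms all maximal cliques of $G$ are accounted for exactly once.
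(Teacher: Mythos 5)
The paper does not actually prove this theorem; it is quoted from Buneman's 1974 paper and then used as a black box, so there is no in-paper argument to compare yours against. On its own terms your proof is correct and follows the standard lines of argument for this result. The converse direction is handled cleanly: the fact that for two disjoint subtrees of a tree every connecting path contains the unique minimal connecting path $P$ is exactly what forces $P\subseteq T_2$ (and $P\subseteq T_4$ when $n=4$), and your union $U=T_1\cup T_3\cup\dots\cup T_n$ correctly traps the interior of $P$ inside some $T_k$ with $k\geq 4$, while the degenerate case where $P$ is a single edge is disposed of by observing that this edge would have to lie in a single $T_k$, forcing $v_k$ to lie in a maximal clique with $v_1$ and in one with $v_3$ — impossible on a chordless cycle of length at least $5$. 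The forward direction by induction on a simplicial vertex $v$ is the classical construction; the one step you assert rather than prove, namely that the maximal cliques of $G$ are exactly those of $G-v$ with $N_G(v)$ replaced by (or augmented with) $N_G(v)\cup\set{v}$, does deserve a sentence: every maximal clique of $G$ containing $v$ equals $N_G(v)\cup\set{v}$ because $v$ is simplicial, and a maximal clique of $G-v$ loses its maximality in $G$ only if it is contained in $N_G(v)$, which by maximality forces it to equal $N_G(v)$. With that observation written out, the induction closes and the proof is complete.
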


We have already seen that every $k$-strong clique in a graph $G$ corresponds to an ordinary clique in $G^k$. So, if $G^k$ is chordal, there exists such a tree as described in Theorem \ref{thm3.19} whose vertices correspond to the maximal cliques in $G^k$, hence the maximal $k$-strong cliques in $G$. Hence a comparable tree-structure exists for $G$ and its maximal $k$-strong cliques. We obtain the following result.

\begin{lemma}
Let $G$ be a graph and $k\in\N$. $G^k$ is chordal if and only if there exists a tree $T_k=\lb K_k,L_k\rb$ where the vertex set $K_k$ corresponds to the set of all maximal $k$-strong cliques in $G$ and the set $K_v\define\condset{Q\in K_k}{v\in Q}$ induces a subtree of $T_k$ for all $v\in\V{G}$.
\end{lemma}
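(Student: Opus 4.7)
The plan is to reduce the statement directly to Buneman's characterization of chordal graphs (Theorem \ref{thm3.19}) applied to the graph $G^k$, using the observation that maximal $k$-strong cliques in $G$ are precisely the maximal cliques in $G^k$.

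First I would verify this correspondence explicitly. By Definition \ref{def3.1}, two vertices $u,v$ are adjacent in $G^k$ if and only if $\distg{G}{u}{v}\leq k$. Hence a set $C\subseteq\V{G}$ satisfies $\distg{G}{x}{y}\leq k$ for all $x,y\in C$ (i.e. $C$ is a $k$-strong clique in $G$) if and only if $C$ induces a complete subgraph in $G^k$ (i.e. $C$ is a clique in $G^k$). Since $\V{G}=\V{G^k}$, maximality is preserved by this equivalence, so the maximal $k$-strong cliques of $G$ are in bijective correspondence with the maximal cliques of $G^k$.

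For the forward direction, suppose $G^k$ is chordal. By Theorem \ref{thm3.19} applied to $G^k$, there exists a tree $T=\lb K,L\rb$ whose vertices correspond to the maximal cliques of $G^k$, with the property that for every $v\in\V{G^k}$, the set $\condset{Q\in K}{v\in Q}$ induces a subtree of $T$. Via the correspondence just established, we may interpret $K$ as the set of maximal $k$-strong cliques of $G$ and $\V{G^k}=\V{G}$, so this tree serves as the required $T_k$.

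For the converse, suppose a tree $T_k=\lb K_k,L_k\rb$ with the stated properties exists. Using the same correspondence, $K_k$ may be viewed as the set of maximal cliques of $G^k$, and the subtree condition holds for every $v\in\V{G^k}$. Then by the converse direction of Theorem \ref{thm3.19}, $G^k$ is chordal. The main work is essentially the translation between $k$-strong cliques in $G$ and cliques in $G^k$; once that is in place the lemma is immediate, and I do not expect any real obstacle beyond making the bijection explicit.
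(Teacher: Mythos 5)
Your proposal is correct and matches the paper's argument exactly: the paper likewise derives this lemma by applying Buneman's characterization (Theorem \ref{thm3.19}) to $G^k$ and identifying the maximal cliques of $G^k$ with the maximal $k$-strong cliques of $G$. Your explicit verification of that bijection is the only substantive step, and it is carried out correctly.
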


Now we are able to give a number of equivalent properties that are necessary and sufficient for a graph $G$ to have a chordal square. Just one analogue property of ordinary chordal graphs remains open and raises the question:
\begin{center}
If $G^2$ is chordal, can $G$ be described as an intersection graph of some family $\mathscr{F}$?
\end{center}

\begin{theorem}\label{thm3.20}
Let $G$ be a graph, the following properties are equivalent:
\begin{enumerate}[i)]
\item $G^2$ is chordal.

\item All flowers of size $n\geq 4$ in $G$ are withered.

\item Every minimal $2$-strong separator of $G$ is a $2$-strong clique in $G$.

\item There exists a tree $T_2=\lb K_2,L_2\rb$ where the vertex set $K_2$ corresponds to the set of all maximal $2$-strong cliques of $G$ and the set $K_v\define\condset{Q\in\K_2}{v\in Q}$ induces a subtree of $T_2$ for all $v\in\V{G}$.
\end{enumerate}
\end{theorem}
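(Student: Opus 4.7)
The plan is to observe that Theorem \autoref{thm3.20} is essentially a compilation theorem: each of the conditions \emph{ii)}, \emph{iii)} and \emph{iv)} has already been shown to be equivalent to \emph{i)} earlier in this section, so the proof reduces to invoking those results and chaining them by transitivity. First I would note that \emph{i)} $\Leftrightarrow$ \emph{ii)} is precisely Corollary \autoref{cor3.5}, which was derived from the flower characterization in Theorem \autoref{thm3.17}. Next, \emph{i)} $\Leftrightarrow$ \emph{iii)} is the specialization $k=2$ of Corollary \autoref{cor3.6}, itself a consequence of Lemma \autoref{lemma3.9} combined with Fulkerson and Gross' separator characterization (Theorem \autoref{thm3.18}). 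Finally, \emph{i)} $\Leftrightarrow$ \emph{iv)} is the specialization $k=2$ of the lemma immediately preceding Theorem \autoref{thm3.20}, which in turn was obtained by lifting Buneman's clique-tree characterization (Theorem \autoref{thm3.19}) through the correspondence between maximal $2$-strong cliques of $G$ and maximal cliques of $G^2$.

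With these three equivalences in hand, the proof is purely formal: since all three of \emph{ii)}, \emph{iii)}, \emph{iv)} are individually equivalent to \emph{i)}, they are pairwise equivalent to each other, which yields the four-way equivalence claimed in the theorem.

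There is no real obstacle here beyond bookkeeping; the substantive work was already done in the lemmata, corollaries and theorems cited above. If anything, the only point where I would take a moment to be careful is in \emph{iv)}, to make sure the bijection between maximal cliques of $G^2$ and maximal $2$-strong cliques of $G$ (which is immediate from $\kcl{2}{G}=\fkt{\omega}{G^2}$ applied to every induced subgraph containing the clique) does indeed transfer the subtree property $K_v$ verbatim, since a vertex $v$ lies in a $2$-strong clique $Q$ of $G$ exactly when it lies in the corresponding clique of $G^2$.
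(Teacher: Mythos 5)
Your proposal is correct and matches the paper exactly: Theorem \autoref{thm3.20} is stated there as a summary of Corollary \autoref{cor3.5}, Corollary \autoref{cor3.6} with $k=2$, and the unnumbered lemma preceding it (the $k=2$ case of the Buneman-type characterization), with no further argument needed beyond chaining these equivalences through \emph{i)}. Your extra remark about the correspondence between maximal $2$-strong cliques of $G$ and maximal cliques of $G^2$ is the same justification the paper gives in the paragraph before that lemma.
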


We will close this subsection by making a first attempt at finding the graphs, whose squares are perfect. According to the Strong Perfect Graph Theorem a graph is perfect if and only if it does not contain a cycle of odd length and no induced subgraph isomorphic to the complement of an odd cycle. Theorem \autoref{thm3.17} gives us a description of all structures in $G$ responsible for $G^2$ having an induced cycle of certain length. With that we obtain to following two corollaries.

\begin{corollary}\label{cor3.8}
A graph $G$ contains a flower of odd size, which is not withered, if and only if $G^2$ contains an odd cycle.
\end{corollary}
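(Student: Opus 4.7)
The plan is to derive this statement as an immediate consequence of Theorem \ref{thm3.17}, which already establishes the precise correspondence between unwithered flowers of size $n$ in $G$ and induced cycles $C_n$ in $G^2$. Restricting this equivalence to odd values of $n$ yields the corollary, provided one handles the mild subtlety that an ``odd cycle'' in $G^2$ need not a priori be induced.

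First I would dispatch the forward direction: assuming $G$ contains an unwithered flower $F\in\mathscr{F}_n$ of odd size $n$, Theorem \ref{thm3.17} directly produces an induced $C_n$ in $G^2$, which, since $n$ is odd, is in particular an odd cycle of $G^2$. For the reverse direction, suppose $G^2$ contains some odd cycle and let $C$ be a shortest odd cycle of $G^2$; I would argue briefly that $C$ is induced, since any chord of $C$ would partition $C$ into two shorter cycles whose lengths sum to $|C|+2$, forcing one of them to be of odd length and contradicting the minimality of $|C|$. Applying Theorem \ref{thm3.17} to this induced odd cycle then yields an unwithered flower in $G$ of the same, hence odd, size.

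The main obstacle is essentially nonexistent: the genuine structural work has been done in Theorem \ref{thm3.17}, and the only auxiliary observation needed is the standard fact that a shortest odd cycle in any graph is induced. The proof is therefore short and essentially a restatement of Theorem \ref{thm3.17} under the parity restriction, packaged so that it plugs cleanly into the forthcoming discussion of odd holes in the context of the Strong Perfect Graph Theorem.
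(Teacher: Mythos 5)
Your proof takes essentially the same route as the paper, which offers no separate argument for this corollary and simply reads Theorem \ref{thm3.17} off for odd $n$. The one place you go beyond the paper --- replacing an arbitrary odd cycle of $G^2$ by a shortest one and arguing that it is induced --- is where a genuine snag appears: a shortest odd cycle of $G^2$ will almost always be a triangle (any path of length two in $G$ already produces one), and Theorem \ref{thm3.17}, like the entire flower machinery behind it (Lemma \ref{lemma3.7}, Corollary \ref{cor3.3}, and the definition of a flower itself), only concerns induced cycles of length at least $4$; there is no flower of size $3$ to extract, and Corollary \ref{cor3.3} is in fact false for $n=3$. Under the literal reading of ``odd cycle'' that your reduction is designed to handle, the corollary itself would be false, so the intended reading --- consistent with the Strong Perfect Graph Theorem context invoked immediately afterwards --- is that ``odd cycle'' means an induced odd cycle (an odd hole, necessarily of length at least $5$). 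With that reading your auxiliary chord argument is unnecessary, and the corollary is exactly the restriction of Theorem \ref{thm3.17} to odd $n\geq 5$, i.e.\ your forward direction together with the equally immediate converse.
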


\begin{corollary}
Let $G$ be a graph. If $G^2$ is perfect, $G$ does not contain a flower of odd size that is not withered.
\end{corollary}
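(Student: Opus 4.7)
The plan is to prove the contrapositive and invoke the just-established Corollary~\ref{cor3.8} (equivalently, Theorem~\ref{thm3.17}) together with the Strong Perfect Graph Theorem (Theorem~\ref{thm2.3}). Concretely, I would assume that $G$ contains an unwithered flower $F \in \mathfrak{F}_n$ of odd size $n$ and deduce that $G^2$ is not perfect, which contradicts the hypothesis.

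First I would note that the preceding material considers only flowers of size $n \geq 4$ (compare Corollary~\ref{cor3.5}), so combined with the parity assumption this forces $n \geq 5$. Since $F$ is not withered, Theorem~\ref{thm3.17} furnishes an induced cycle $C_n$ in $G^2$, and hence $G^2$ contains an induced odd cycle of length at least $5$. The Strong Perfect Graph Theorem then rules out the perfection of $G^2$, completing the contrapositive.

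I expect no real obstacle here: the structural work has already been carried out in the proof of Theorem~\ref{thm3.17}, and the present statement reduces to a one-line application of the Strong Perfect Graph Theorem. The only step requiring a word of justification is the inequality $n \geq 5$, which is essentially a bookkeeping matter — the flower definition is designed precisely to capture induced cycles of length $\geq 4$ in $G^2$, and the parity assumption then excludes $n = 4$. Once this mild point is handled, the odd cycle produced by Theorem~\ref{thm3.17} is genuinely an induced odd hole of length at least $5$, so the hypotheses of Theorem~\ref{thm2.3} apply directly.
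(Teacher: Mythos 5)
Your proposal is correct and matches the paper's (implicit) argument exactly: the paper states this corollary without a written proof, as an immediate consequence of Theorem \ref{thm3.17} (equivalently Corollary \ref{cor3.8}) combined with the Strong Perfect Graph Theorem, which is precisely the chain you use. Your remark that parity together with the standing restriction to flowers of size $n\geq 4$ forces $n\geq 5$ is the one small point worth making explicit, and you handle it correctly; one could even avoid the full strength of Theorem \ref{thm2.3} here, since an induced odd cycle of length $\geq 5$ already violates $\fkt{\omega}{H}=\X{H}$ directly.
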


The question which structural properties of $G$ will result in $\overline{G^2}$ containing an odd cycle is raised.

\chapter{Strong Edge Coloring}


In this chapter we will move from vertex coloring to the coloring of edges. As stated in the title of the chapter we will mainly study the so called strong edge coloring, which is the $2$-strong edge coloring of a graph, or the $2$-strong coloring of its line graph.\\
We will give a brief introduction and state some known results in this field. Our main goal will be the application of results from Chapter 3 on graph powers to the line graph and especially the characterization of graphs whose squared line graphs are chordal. In contrast to general graph powers this can be done in terms of forbidden subgraphs.\\
Again we will make some attempts to expand this theory to find graphs whose squared line graph becomes perfect.
\vspace{-2mm}

\section{Introduction}

First introduced by Fouquet and Jolivet in 1983 (see \cite{fouquet1983strong}) and suggested by Erd\H{o}s and Ne\v{s}et\v{r}il, the strong edge coloring comes from the study of induced matchings in graphs. An induced matching in a graph $G$ is an induced subgraph of $G$ that forms a matching. So it is a set of pairwise disjoint edges of $G$ with no two of those edges being adjacent to the same edge in $G$. Clearly this is a $2$-strong matching. A strong edge coloring of a graph is a coloring of the edges, such that each color class forms an induced matching, which corresponds to a $2$-strong edge coloring.\\
First we will revisit some general and well known bounds in term of a strong edge coloring alongside with the introduction of some further notation which is heavily used in the literature.

\begin{definition}[Pair Degree]\label{def4.1}
Let $G$ be a graph. The {\em pair degree} or {\em edge degree} of an edge $xy=e\in\E{G}$ is given by $\fkt{\operatorname{s}}{e}=\fkt{\deg}{x}+\fkt{\deg}{y}-1$ and it holds $\fkt{\operatorname{s}}{e}=\fkt{\deg_{\lineg{G}}}{e}$. We denote the {\em maximum pair degree} of $G$ with $\fkt{\sigma}{G}=\max_{e\in E\lb G\rb}\fkt{\operatorname{s}}{e}=\fkt{\Delta}{\lineg{G}}+1$.
\end{definition}
\vspace{-2mm}
With that we obtain the following bounds on the $2$-strong chromatic index of a graph $G$.
\begin{enumerate}[i)]
\item $\fkt{\sigma}{G}\leq\kam{2}{G}\leq\stronki{2}{G}$

\item $\frac{\abs{\E{G}}}{\kmat{2}{G}}\leq\stronki{2}{G}$
\end{enumerate}
A very nice upper bound can be obtained by applying Theorem \autoref{thm3.1} to the line graph of $G$.
\begin{theorem}\label{thm4.1}
Let $G$ be a graph, then it holds
\begin{align*}
\stronki{2}{G}\leq \lb\fkt{\sigma}{G}-1\rb^2-2\fkt{\sigma}{G}-1\leq 2\fkt{\Delta}{G}^2-2\fkt{\Delta}{G}+1. 
\end{align*}
\end{theorem}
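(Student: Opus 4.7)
Since $\stronki{2}{G}=\stronk{2}{\lineg{G}}=\fkt{\chi}{\lineg{G}^2}$ by definition, the natural plan is to reuse the machinery of the previous chapter on powers of graphs. I would apply Theorem \autoref{thm3.1} (or, in its cruder form, Lemma \autoref{lemma3.1a}) to the line graph $\lineg{G}$ with $k=2$ and then translate the resulting estimate from $\fkt{\Delta}{\lineg{G}}$ into $\fkt{\sigma}{G}$ by means of the identity $\fkt{\Delta}{\lineg{G}}=\fkt{\sigma}{G}-1$ recorded in Definition \autoref{def4.1}. This yields the first inequality immediately, as a specialization of a result already in hand. The second inequality then follows by substituting the universal estimate $\fkt{\sigma}{G}\leq 2\fkt{\Delta}{G}-1$, which holds because any edge $xy$ satisfies $\fkt{\deg}{x}+\fkt{\deg}{y}-1\leq 2\fkt{\Delta}{G}-1$; what remains is only an elementary algebraic simplification.

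A perhaps more transparent route proceeds via a direct degree count in $\lineg{G}^2$ combined with Theorem \autoref{thm2.1}. Fix an edge $e=xy\in\E{G}$: at most $\fkt{\operatorname{s}}{e}-1$ edges are adjacent to $e$ in $\lineg{G}$, and every such neighbour $xz$ contributes at most $\fkt{\Delta}{G}-1$ further edges at distance two through its endpoint $z$. A routine union bound then gives $\fkt{\deg_{\lineg{G}^2}}{e}\leq \lb\fkt{\sigma}{G}-1\rb\fkt{\Delta}{G}$, after which Theorem \autoref{thm2.1} delivers
\begin{align*}
\fkt{\chi}{\lineg{G}^2}\leq \lb\fkt{\sigma}{G}-1\rb\fkt{\Delta}{G}+1\leq 2\fkt{\Delta}{G}^2-2\fkt{\Delta}{G}+1.
\end{align*}

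The only real point of care is the distance-two bookkeeping: one must avoid overcounting edges that lie on several short paths away from $e$, and one should verify that the estimate $\fkt{\Delta}{G}-1$ for the new edges through each intermediate vertex remains valid after excluding the edges already incident to $x$ or $y$. Beyond this purely combinatorial vigilance, I do not anticipate any substantive obstacle, since both inequalities ultimately reduce to specializations of estimates already proved in Chapter 3.
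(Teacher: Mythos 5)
Your first route is exactly the paper's (the paper offers no proof at all beyond the remark that Theorem \autoref{thm3.1} applied to $\lineg{G}$ does the job), but the arithmetic does not come out the way you claim. Theorem \autoref{thm3.1} with $k=2$ gives $\stronk{2}{H}\leq\fkt{\Delta}{H}^2+\fkt{\Delta}{H}+1$, so with $H=\lineg{G}$ and $\fkt{\Delta}{\lineg{G}}=\fkt{\sigma}{G}-1$ you obtain $\stronki{2}{G}\leq\lb\fkt{\sigma}{G}-1\rb^2+\lb\fkt{\sigma}{G}-1\rb+1=\fkt{\sigma}{G}^2-\fkt{\sigma}{G}+1$, which is strictly larger than the stated middle term $\lb\fkt{\sigma}{G}-1\rb^2-2\fkt{\sigma}{G}-1=\fkt{\sigma}{G}^2-4\fkt{\sigma}{G}$. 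So the first inequality does not ``follow immediately'' from Chapter 3; nothing proved there delivers it. Your plan for the second inequality is also not salvageable as stated: substituting $\fkt{\sigma}{G}=2\fkt{\Delta}{G}-1$ into the middle term gives $4\fkt{\Delta}{G}^2-12\fkt{\Delta}{G}+5$, which exceeds $2\fkt{\Delta}{G}^2-2\fkt{\Delta}{G}+1$ for every $\fkt{\Delta}{G}\geq 5$, so the ``elementary algebraic simplification'' you defer to would fail. (This points to a misprint in the theorem itself, but your proposal asserts the step goes through, which it does not.)

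Your second route is the one that actually works, and it is genuinely different from the paper's pointer to Theorem \autoref{thm3.1}. The count $\fkt{\deg_{\lineg{G}^2}}{e}\leq\lb\fkt{\sigma}{G}-1\rb+\lb\fkt{\sigma}{G}-1\rb\lb\fkt{\Delta}{G}-1\rb=\lb\fkt{\sigma}{G}-1\rb\fkt{\Delta}{G}$ is sound: every edge at distance two from $e=xy$ in $\lineg{G}$ is incident to some $z\in\lb\nb{x}\cup\nb{y}\rb\setminus\set{x,y}$, there are at most $\fkt{\operatorname{s}}{e}-1\leq\fkt{\sigma}{G}-1$ such vertices $z$, and each carries at most $\fkt{\Delta}{G}-1$ edges not already counted among the $\fkt{\operatorname{s}}{e}-1$ neighbours of $e$, since at least one edge at $z$ returns to $x$ or $y$. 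Theorem \autoref{thm2.1} then yields $\stronki{2}{G}\leq\lb\fkt{\sigma}{G}-1\rb\fkt{\Delta}{G}+1\leq 2\fkt{\Delta}{G}^2-2\fkt{\Delta}{G}+1$, which establishes the outer (and only meaningful) bound directly and with a sharper intermediate quantity than either expression displayed in the theorem. I would promote this second argument to be the proof and treat the displayed middle term as an error to be corrected.
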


Now consider the following constructions to motivate an even stronger conjecture by Erd\H{o}s and Ne\v{s}et\v{r}il. For an even number $\Delta$ take a cycle of length $5$ and replace each of its vertices by $\frac{\Delta}{2}$ new ones. For an odd number $\Delta$ take two consecutive vertices of a $C_5$ and replace them by $\frac{\Delta+1}{2}$, $\Delta\geq 3$, new vertices and the remaining three original ones are each replaced by $\frac{\Delta-1}{2}$ new vertices. The graphs obtained by this constructions are $2$-strong anti matchings with maximum degree $\Delta$ and $\frac{5}{4}\Delta^2$ edges in the even and $\frac{5}{4}\Delta^2-\frac{1}{2}\Delta+\frac{1}{4}$ edges in the odd case. The conjecture states that these are general upper bounds on the $2$-strong chromatic index of graphs.

\begin{conjecture}[Erd\H{o}s, Ne\v{s}et\v{r}il. 1985]\label{con4.1} 
Let $G$ be a graph. Then
\begin{align*}
\stronki{2}{G}\leq\stueckfkt{\frac{5}{4}\fkt{\Delta}{G}^2}{\text{if}~\fkt{\Delta}{G}~\text{is even}}{\frac{5}{4}\fkt{\Delta}{G}^2-\frac{1}{2}\fkt{\Delta}{G}+\frac{1}{4}}{\text{otherwise.}}
\end{align*}
\end{conjecture}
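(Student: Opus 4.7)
The statement is the famous Erd\H{o}s--Ne\v{s}et\v{r}il strong edge colouring conjecture, which is still open in general; it is known to hold for $\fkt{\Delta}{G}\leq 3$ (Andersen) and the best general upper bound in the literature is of the form $\lb 1+\ord{1}\rb c\,\fkt{\Delta}{G}^2$ for some constant $c$ strictly below $2$. I therefore do not expect to resolve the conjecture here and would split the attempt into an asymptotic probabilistic attack and a structural attack for small $\fkt{\Delta}{G}$.

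For the probabilistic part I would colour the edges of $G$ uniformly and independently with a palette of size $c\approx\tfrac{5}{4}\fkt{\Delta}{G}^2$ and apply the Lov\'{a}sz Local Lemma to the bad events ``$e$ and $e'$ receive the same colour and $\distlg{G}{e}{e'}\leq 2$''. Each such event has probability $1/c$ and depends only on the $\Ord{\fkt{\Delta}{G}^4}$ events supported in the $4$-neighbourhood of $e$ in $\lineg{G}$. The naive LLL yields only an $\Ord{\fkt{\Delta}{G}^2}$ bound, so to push the constant down toward $\tfrac{5}{4}$ one must iterate, using entropy compression and the fact that near the extremal $C_5$-blow-up many second-neighbour edges in $\lineg{G}$ are themselves mutually close, shrinking the effective dependency.

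For the structural part the plan is induction on $\abs{\V{G}}$: delete an edge $e=uv$ of minimum pair-degree, colour $G-e$ inductively, and extend. Theorem \ref{thm4.1} bounds the number of colours forbidden at $e$ by $\lb\fkt{\sigma}{G}-1\rb^2-2\fkt{\sigma}{G}-1$, which is far larger than $\tfrac{5}{4}\fkt{\Delta}{G}^2$ whenever $e$ is not sparse, so extension requires Kempe-type swaps between colour classes of $\lineg{G}^2$. Here I would invoke the flower language of Chapter~3: every remaining conflict at $e$ should correspond to a flower in $\lineg{G}$, and whenever that flower is withered (which I would try to force through the choice of $e$ and of $G-e$'s colouring), Corollary \ref{cor3.6} supplies a $2$-strong clique separator of $\lineg{G}$ along which the offending colour can be rerouted.

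The main obstacle, and the reason this conjecture has resisted for decades, is exactly this recolouring step. Swapping two colour classes of a proper $2$-strong edge colouring along a path in $\lineg{G}^2$ does \emph{not} preserve the induced-matching property, because the swap can create new monochromatic pairs at line-graph distance exactly $2$ outside the path. Any viable argument needs a combinatorial certificate guaranteeing which swaps are safe; my expectation is that the flower / withered-flower framework developed in this chapter can provide such a certificate only under additional sparsity (large girth, planarity, $K_4$-minor-freeness), while the general bound $\tfrac{5}{4}\fkt{\Delta}{G}^2$ would genuinely require a new idea.
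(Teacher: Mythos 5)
The statement you were asked to prove is a \emph{conjecture}: the paper does not prove it, and it remains open to this day. The only content the paper attaches to it is the extremal construction immediately preceding the statement (blowing up the vertices of a $C_5$ into independent sets of size roughly $\fkt{\Delta}{G}/2$), which shows the claimed bound would be tight if true. You correctly recognized this and refrained from manufacturing a proof; your factual framing is accurate (Andersen's resolution of the subcubic case, and the $1.998\fkt{\Delta}{G}^2$ bound of Molloy and Reed, which the paper itself records as Theorem \ref{thm4.8}). Your two sketched attacks --- a local-lemma/entropy-compression argument to beat the trivial $2\fkt{\Delta}{G}^2$ bound, and an inductive extension argument whose recolouring step fails because Kempe-type swaps do not preserve the induced-matching property in $\lineg{G}^2$ --- are a fair diagnosis of why the conjecture resists proof, though the suggestion that the flower machinery of Chapter~3 could certify safe swaps is speculative and not something the paper supports. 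In short: there is no gap to report because there is no proof to match; declaring the statement open is the correct answer here.
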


Two interesting special cases of the Erd\H{o}s-Ne\v{s}et\v{r}il Conjecture arise.

\begin{conjecture}\label{con4.2}
Let $G$ be a graph. Then
\begin{align*}
\kam{2}{G} \leq\stueckfkt{\frac{5}{4}\fkt{\Delta}{G}^2}{\text{if}~\fkt{\Delta}{G}~\text{is even}}{\frac{5}{4}\fkt{\Delta}{G}^2-\frac{1}{2}\fkt{\Delta}{G}+\frac{1}{4}}{\text{otherwise.}}
\end{align*}
\end{conjecture}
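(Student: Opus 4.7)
The plan is to reinterpret $\kam{2}{G}$ as the maximum clique $\fkt{\omega}{\lb\lineg{G}\rb^2}$ in the square of the line graph, and then to bound it by a careful local-to-global structural analysis around an arbitrary edge.

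First I would fix a reference edge $e_0 = xy$ inside a maximum $2$-strong anti-matching $A$. For every other $e' = uv \in A$, the condition $\distlg{G}{e_0}{e'} \leq 2$ translates back to $G$ as the existence of an edge joining some endpoint of $e_0$ to some endpoint of $e'$ (or of a shared endpoint), which forces at least one endpoint of $e'$ into $S \define \fkt{N_G}{x} \cup \fkt{N_G}{y} \cup \set{x,y}$, a set of size at most $2\fkt{\Delta}{G}$. A direct count of edges incident to $S$ then gives the crude bound $\abs{A} \leq 2\fkt{\Delta}{G}^2 + 1$, which already has the correct order of magnitude but the wrong leading constant.

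To sharpen the constant from $2$ down to $\tfrac{5}{4}$, I would exploit the stronger constraint that every pair of edges of $A$ must be mutually close in $\lineg{G}$, not only close to $e_0$. Guided by the extremal $C_5$-blowup construction described just before Conjecture \ref{con4.1}, my goal would be to build a map $\varphi$ from the endpoints appearing in $A$ into $\V{C_5}$ such that every edge of $A$ connects vertices whose $\varphi$-images are adjacent in $C_5$. Given such a map, the maximum-degree constraint forces each of the five cyclic positions to contain at most $\fkt{\Delta}{G}/2$ vertices, so the edges between any two adjacent positions number at most $\fkt{\Delta}{G}^2/4$, and summing over the five cyclic pairs gives the desired $\tfrac{5}{4}\fkt{\Delta}{G}^2$. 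The odd-$\fkt{\Delta}{G}$ correction term would then arise from the unavoidable imbalance between position sizes $\lb\fkt{\Delta}{G}+1\rb/2$ and $\lb\fkt{\Delta}{G}-1\rb/2$.

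The main obstacle is constructing this partition $\varphi$. Locally, around a single edge of $A$, one can already identify at most five ``roles'' an adjacent edge can occupy, corresponding to the five edges of $C_5$. The hard step is to show that these local assignments glue into a globally consistent cyclic partition, equivalently that no $2$-strong anti-matching can exhibit a longer odd cyclic structure. This is essentially what keeps the full Erd\H{o}s-Ne\v{s}et\v{r}il conjecture open, and the same obstruction persists in the $\kam{2}$-version, so I would expect a complete proof only via a delicate inductive exchange argument or by restricting to suitable graph classes (bipartite, planar, or large girth) where the cyclic structure is easier to pin down, analogous in spirit to how the flower and sunflower machinery of Chapter 3 was used to exclude obstructions to chordality.
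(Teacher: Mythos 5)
The statement you are trying to prove is stated in the paper as a \emph{conjecture} (a special case of the Erd\H{o}s--Ne\v{s}et\v{r}il conjecture), and the paper offers no proof of it; it only records the partial results that are known, namely Theorem \autoref{thm4.2} of Chung, Gy\'arf\'as, Tuza and Trotter, which settles the case where the anti-matching spans a graph of diameter at most $2$ (and which the paper explicitly notes is \emph{not} a solution to Conjecture \autoref{con4.2}, since a $2$-strong anti-matching need not be an induced subgraph), and the bipartite analogue of Faudree, Gy\'arf\'as, Schelp and Tuza. So there is no ``paper's own proof'' to compare against, and your proposal should be judged on whether it closes the problem on its own. It does not.

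The sound parts of your sketch are the translation $\kam{2}{G}=\fkt{\omega}{\lineg{G}^2}$, the observation that every edge of the anti-matching must have an endpoint in $S=\fkt{N_G}{x}\cup\fkt{N_G}{y}$ for a fixed reference edge $xy$, and the resulting crude bound of order $2\fkt{\Delta}{G}^2$, which is consistent with Theorem \autoref{thm4.1}. The genuine gap is exactly where you place it: the existence of a globally consistent map $\varphi$ onto $\V{C_5}$ such that every anti-matching edge joins $\varphi$-adjacent classes. Nothing in the local analysis forces the ``five roles'' seen around one edge to agree with the five roles seen around another, and no exchange or induction argument is supplied that would rule out longer odd cyclic structures; you concede this yourself in the final paragraph. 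Since that step is precisely the open content of the conjecture, the proposal is a plausible programme rather than a proof, and the improvement of the leading constant from $2$ to $\tfrac{5}{4}$ (together with the odd-$\fkt{\Delta}{G}$ correction term) remains unestablished. If you want provable statements along these lines, the routes actually available are the ones the paper cites: restrict to the diameter-$2$ case (Theorem \autoref{thm4.2}) or to bipartite graphs, where $\kam{2}{G}\leq\fkt{\Delta}{G}^2$ is known.
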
 

\begin{conjecture}\label{con4.3}
Let $G$ be a graph. Then
\begin{align*}
\abs{\E{G}} \leq\stueckfkt{\kmat{2}{G}\frac{5}{4}\fkt{\Delta}{G}^2}{\text{if}~\fkt{\Delta}{G}~\text{is even}}{\kmat{2}{G}\lb\frac{5}{4}\fkt{\Delta}{G}^2-\frac{1}{2}\fkt{\Delta}{G}+\frac{1}{4}\rb}{\text{otherwise.}}
\end{align*}
\end{conjecture}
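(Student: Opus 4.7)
The plan is to derive Conjecture~\autoref{con4.3} as an almost immediate consequence of the Erd\H{o}s--Ne\v{s}et\v{r}il Conjecture~\autoref{con4.1}, making it a conditional result. The key observation is that a proper $2$-strong edge coloring of $G$ partitions $\E{G}$ into $\stronki{2}{G}$ color classes, and by the very definition of such a coloring each of those classes is itself a $2$-strong matching of $G$; in particular each class contains at most $\kmat{2}{G}$ edges. Summing over all color classes yields
\begin{align*}
\abs{\E{G}}\leq\kmat{2}{G}\cdot\stronki{2}{G},
\end{align*}
which is precisely the rearrangement of the inequality $\abs{\E{G}}/\kmat{2}{G}\leq\stronki{2}{G}$ recorded in the introductory list of bounds right after Definition~\autoref{def4.1}.

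Once this inequality is in place the proof reduces to a single substitution: plugging in the upper bound on $\stronki{2}{G}$ predicted by Conjecture~\autoref{con4.1} gives, in the case that $\fkt{\Delta}{G}$ is even, $\abs{\E{G}}\leq\kmat{2}{G}\cdot\tfrac{5}{4}\fkt{\Delta}{G}^2$, and in the odd case $\abs{\E{G}}\leq\kmat{2}{G}\lb\tfrac{5}{4}\fkt{\Delta}{G}^2-\tfrac{1}{2}\fkt{\Delta}{G}+\tfrac{1}{4}\rb$, which are precisely the two branches in the statement of Conjecture~\autoref{con4.3}. The very same two-step argument, carried out with the neighbouring inequality $\kam{2}{G}\leq\stronki{2}{G}$ instead of $\abs{\E{G}}/\kmat{2}{G}\leq\stronki{2}{G}$, yields Conjecture~\autoref{con4.2}. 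This explains the excerpt's characterization of both \autoref{con4.2} and \autoref{con4.3} as \emph{special cases} of \autoref{con4.1} rather than independent statements: they are the two parallel specialisations obtained by combining \autoref{con4.1} with the two trivial lower bounds on $\stronki{2}{G}$ listed just before Theorem~\autoref{thm4.1}.

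The hard part is of course not the reduction, which is a one-line multiplication, but the parent statement itself: Conjecture~\autoref{con4.1} has been open since 1985 and is the central conjecture of the whole subject of strong edge colorings. If one wished to prove Conjecture~\autoref{con4.3} \emph{unconditionally} the most natural detour would be to bound the ratio $\abs{\E{G}}/\kmat{2}{G}$ directly, without routing through the chromatic index at all; unlike $\stronki{2}{G}$, this ratio has a fractional-LP flavour and may be amenable to a probabilistic treatment in the spirit of Alon and Mohar's arguments for the chromatic number of powers (Theorems~\autoref{thm3.2} and~\autoref{thm3.3}), or to an extremal-structure analysis based on the $C_5$-blow-up constructions described just before Conjecture~\autoref{con4.1}. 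Such approaches will typically reach an asymptotically correct bound of order $\fkt{\Delta}{G}^2$ but lose the sharp leading constant $\tfrac{5}{4}$, so attaining the exact form of \autoref{con4.3} unconditionally appears to require the same combinatorial insight that makes \autoref{con4.1} itself difficult.
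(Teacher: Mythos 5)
Your reduction is exactly the sense in which the paper presents Conjecture \autoref{con4.3}: it is stated without proof as a special case of the Erd\H{o}s--Ne\v{s}et\v{r}il Conjecture \autoref{con4.1}, obtained by combining the trivial bound $\abs{\E{G}}\leq\kmat{2}{G}\cdot\stronki{2}{G}$ (each color class of a $2$-strong edge coloring is a $2$-strong matching) with the conjectured bound on $\stronki{2}{G}$. Your conditional derivation is correct and makes the paper's implicit reasoning explicit; the statement itself of course remains open, as you note.
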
 

Conjecture \autoref{con4.3} reduced to graphs that are $2$-strong anti matchings, in other words the case $\kmat{2}{G}=1$, has been asked by Bermond et. al. in 1983 (see \cite{bermond1983surveys}) and was proven by Chung et. al. in 1990 (see \cite{chung1990maximum}). This results in the following theorem.

\begin{theorem}[Chung, Gy{\'a}rf{\'a}s, Tuza and Trotter. 1990 \cite{chung1990maximum}]\label{thm4.2}
Let $G$ be a graph with $\distg{G}{x}{y}\leq 2$ for all $x,y\in\V{G}$. Then
\begin{align*}
\abs{\E{G}} \leq\stueckfkt{\frac{5}{4}\fkt{\Delta}{G}^2}{\text{if}~\fkt{\Delta}{G}~\text{is even}}{\frac{5}{4}\fkt{\Delta}{G}^2-\frac{1}{2}\fkt{\Delta}{G}+\frac{1}{4}}{\text{otherwise.}}
\end{align*}
\end{theorem}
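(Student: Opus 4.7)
The plan is to fix a vertex $v$ of maximum degree $\Delta$ and dissect $G$ via the partition $V(G) = \{v\} \sqcup A \sqcup B$ with $A = N(v)$ (so $|A| = \Delta$) and $B = V(G) \setminus N[v]$. The diameter-$2$ hypothesis immediately yields two structural facts: every $b \in B$ lies at distance exactly $2$ from $v$ and hence has at least one neighbor in $A$, and every pair of vertices in $B$ is either adjacent or has a common neighbor in $A \cup B$. Accordingly the edges split as $E_1 \sqcup E_2 \sqcup E_3 \sqcup E_4$, where $E_1$ is at $v$, $E_2 \subseteq \binom{A}{2}$, $E_3 \subseteq A \times B$, and $E_4 \subseteq \binom{B}{2}$; we have $|E_1| = \Delta$ together with the degree-sum inequalities $\Delta + 2|E_2| + |E_3| \leq \Delta^2$ and $|E_3| + 2|E_4| \leq \Delta |B|$.

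To move beyond the cubic bound these two inequalities alone yield, I would add a third inequality by double counting pairs $(\{b_1,b_2\}, x)$ in which $\{b_1, b_2\} \subseteq B$ is non-adjacent and $x$ is a common neighbor:
\begin{align*}
\binom{|B|}{2} - |E_4| \;\leq\; \sum_{x \in A \cup B} \binom{\deg_B(x)}{2},
\end{align*}
where $\deg_B(x) = |N(x) \cap B|$ satisfies $\sum_{x \in A} \deg_B(x) = |E_3|$, $\sum_{x \in B} \deg_B(x) = 2|E_4|$, and $\deg_B(x) \leq \Delta$. Together with the max-degree constraint, this gives a constrained optimization in $(|B|, |E_2|, |E_3|, |E_4|)$ for the objective $|E(G)| = \Delta + |E_2| + |E_3| + |E_4|$ whose solution I expect to match the $C_5$-blow-up value $\tfrac{5}{4}\Delta^2$, attained at $|B| \approx \tfrac{3\Delta}{2}$.

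The main obstacle is closing the gap between the LP optimum and the actual graph-theoretic bound: the LP relaxation may be strictly greater than $\tfrac{5}{4}\Delta^2$, so an additional structural argument is needed showing that any extremizer must actually be a blow-up of $C_5$. I would carry this out via a local swap argument: if $G$ is edge-maximum but not such a blow-up, then some vertex fails to achieve degree $\Delta$ or has a neighborhood pattern that deviates from the $C_5$ symmetry, and a carefully chosen edge addition preserves both the diameter-$2$ property and the max-degree bound, contradicting maximality. Once only $C_5$-blow-ups remain, direct computation gives $5(\Delta/2)^2 = \tfrac{5}{4}\Delta^2$ for even $\Delta$. The odd case requires a parity correction accounting for the fact that a $C_5$-blow-up with alternating part sizes $\lceil \Delta/2 \rceil$ and $\lfloor \Delta/2 \rfloor$ (the second construction described before Conjecture~\ref{con4.1}) loses exactly $\tfrac{1}{2}\Delta - \tfrac{1}{4}$ edges relative to the even case, matching the stated bound; sharpness in both parities is witnessed by the constructions preceding Conjecture~\ref{con4.1}.
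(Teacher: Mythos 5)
The paper contains no proof of this theorem --- it is quoted from Chung, Gy\'arf\'as, Trotter and Tuza --- so there is nothing internal to compare your argument against; the comparison has to be with the statement itself, and that is where the difficulty lies. The hypothesis as transcribed, $\operatorname{dist}_G(x,y)\leq 2$ for all vertices $x,y$, is strictly weaker than the hypothesis under which the cited authors prove the bound, namely that $G$ is $2K_2$-free (equivalently, in the language of this thesis, that the whole edge set is a single $2$-strong anti-matching, i.e.\ $\nu_2(G)=1$, which is what the sentence introducing the theorem actually says). Under the diameter-$2$ hypothesis the bound is false: the Petersen graph has diameter $2$, maximum degree $\Delta=3$, and $15$ edges, while the claimed bound for odd $\Delta=3$ is $\frac{5}{4}\cdot 9-\frac{3}{2}+\frac{1}{4}=10$. (The Petersen graph does contain induced copies of $2K_2$ --- in the Kneser model $K(5,2)$ the edges $\{1,2\}\{3,4\}$ and $\{1,3\}\{2,4\}$ span no cross edge --- so it does not contradict the theorem Chung et al.\ actually prove.)

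This localizes the gap in your proposal exactly. Every inequality you write down uses only the fact that non-adjacent vertices have a common neighbour, and the Petersen graph satisfies all of them: taking $v$ of degree $3$ one gets $|B|=6$, $|E_2|=0$, $|E_3|=|E_4|=6$, hence $\Delta+2|E_2|+|E_3|=9=\Delta^2$, $|E_3|+2|E_4|=18=\Delta|B|$, and $\binom{6}{2}-|E_4|=9=\sum_{x}\binom{\deg_B(x)}{2}$, yet $|E(G)|=15>10$. So the ``additional structural argument showing that any extremizer must be a blow-up of $C_5$,'' which you correctly identify as the crux, cannot be supplied from your hypotheses at all: there is a diameter-$2$ graph beating the bound that is nowhere near a $C_5$-blow-up, and no edge-addition or swap will make it one. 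The $2K_2$-free condition is much stronger than what you use --- it forces, for every pair of disjoint edges, a cross edge between their endpoints, not merely a common neighbour for every pair of non-adjacent vertices --- and it is precisely this that the genuine proof exploits. To repair the proposal you would have to replace the hypothesis by $2K_2$-freeness and rebuild the counting around pairs of disjoint edges rather than pairs of vertices; the partition into $\{v\}\cup N(v)\cup B$ can be kept, but the local-swap step as described is a plan rather than an argument and, as it stands, the place where the proof fails.
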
 

Note that this is no solution to Conjecture \autoref{con4.2}. There exist graphs with anti matchings that are no induced subgraphs, which are not included in Theorem \autoref{thm4.2}.\\
By restricting ourselves to bipartite graphs, i.e. graphs with chromatic number $2$, some analogue conjectures can be stated and even proved to be correct.

\begin{conjecture}\label{con4.4}
If $G$ is a bipartite graph then $\stronki{2}{G}\leq\fkt{\Delta}{G}^2$.
\end{conjecture}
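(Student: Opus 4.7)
The natural starting point is König's edge coloring theorem: since $G$ is bipartite with maximum degree $\Delta\define\fkt{\Delta}{G}$, one may decompose $\E{G}$ into $\Delta$ matchings $M_1,\dots,M_\Delta$. The plan is to prove that each $M_i$ can itself be partitioned into at most $\Delta$ induced matchings of $G$; concatenating these refinements immediately yields a $2$-strong edge coloring in $\Delta\cdot\Delta=\Delta^2$ colors, since edges sharing a color then neither share an endpoint (by the matching property) nor have endpoints joined by an edge of $G$ (by the induced property).

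For a fixed matching $M=M_i$ in the bipartite graph $G=\lb A\cupdot B,\E{G}\rb$ I would introduce the auxiliary conflict graph $H$ on vertex set $M$ with $e\sim e'$ precisely when $\distlg{G}{e}{e'}=2$, i.e.\ when some endpoint of $e$ is adjacent in $G$ to some endpoint of $e'$. A proper coloring of $H$ with $\Delta$ colors is exactly a partition of $M$ into $\Delta$ induced matchings of $G$, so the entire conjecture reduces to establishing $\fkt{\chi}{H}\leq\Delta$ for every auxiliary graph arising in this way.

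The trivial degree bound gives $\fkt{\Delta}{H}\leq 2\lb\Delta-1\rb$: an edge $e=uv\in M$ with $u\in A$, $v\in B$ conflicts with some $e'\in M$ only through a bridging vertex $w\in\fkt{N}{u}\setminus\set{v}$ or $w\in\fkt{N}{v}\setminus\set{u}$, and each such $w$ contributes at most one $e'\in M$ by the matching property. Via Theorem \autoref{thm2.1} this yields only $\fkt{\chi}{H}\leq 2\Delta-1$, weaker by nearly a factor of two. The improvement I would chase is to split $H=H_A\cup H_B$ according to the side on which the bridging vertex lies: $H_A$ decomposes as an edge-disjoint union of cliques indexed by the vertices $a\in A$ (the clique at $a$ consisting of all matching edges whose $B$-endpoint lies in $\fkt{N}{a}$), with every such clique of size at most $\Delta$ and every vertex of $H_A$ contained in at most $\Delta$ cliques, and $H_B$ behaves symmetrically. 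The task is to combine this doubly-constrained intersection structure into a single proper $\Delta$-coloring of $H$, perhaps via a list coloring argument built on a König edge coloring of the membership bipartite graph, or via an entropy-compression/local-lemma argument in the spirit of the random constructions of Theorem \autoref{thm3.3}.

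The main obstacle is precisely this combination step: the clique structures of $H_A$ and $H_B$ are essentially uncorrelated, and sharpness of the target bound $\Delta^2$ is witnessed already by $K_{\Delta,\Delta}$, in which all $\Delta^2$ edges pairwise lie at distance exactly $2$ in the line graph and hence require pairwise distinct colors. This tightness leaves no slack for a purely asymptotic probabilistic argument and forces any successful proof to proceed by exact structural analysis. For this reason I would expect concrete progress only for small $\Delta$, where direct case analysis is feasible (notably $\Delta=3$, in which $\fkt{\Delta}{H}\leq 4$ and the conflict configurations can be enumerated), and I suspect that the general bound requires substantially new ideas beyond what has been developed so far.
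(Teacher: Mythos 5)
You have not proved this statement, and neither does the paper: it appears there as Conjecture \autoref{con4.4}, the open bipartite case of the Erd\H{o}s--Ne\v{s}et\v{r}il problem due to Faudree, Gy\'arf\'as, Schelp and Tuza; the paper only records the weaker anti-matching analogue $\kam{2}{G}\leq\fkt{\Delta}{G}^2$ and the edge-count analogue of Theorem \autoref{thm4.3}. Your framework (K\H{o}nig decomposition into $\fkt{\Delta}{G}$ matchings, then refinement of each matching into induced matchings) is a legitimate and well-known reduction, and your observation that $K_{\Delta,\Delta}$ makes the bound tight is correct. But the step you defer --- that the conflict graph $H$ of each matching satisfies $\fkt{\chi}{H}\leq\fkt{\Delta}{G}$ --- is essentially the whole conjecture in disguise, and you rightly do not claim to establish it. What your argument actually delivers is $\fkt{\Delta}{H}\leq 2\fkt{\Delta}{G}-2$ and hence $\fkt{\Delta}{G}\lb 2\fkt{\Delta}{G}-1\rb$ colors in total, which is no improvement on the unconditional bound of Theorem \autoref{thm4.1}.

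There is also a concrete error in the structural claim you hope to exploit. Two matching edges $e=uv$ and $e'=u'v'$ (with $u,u'$ on side $A$) satisfy $\distlg{G}{e}{e'}=2$ precisely when $uv'\in\E{G}$ or $u'v\in\E{G}$. Fixing $a\in A$, the set of matching edges whose $B$-endpoint lies in $\fkt{N}{a}$ is \emph{not} a clique of $H$: all of them conflict with the matching edge covering $a$ (if any), but two of them, say $u_1v_1$ and $u_2v_2$ with $v_1,v_2\in\fkt{N}{a}$, conflict with each other only if $u_1v_2$ or $u_2v_1$ happens to be an edge, which the bipartite structure does not force. So each vertex of $A$ contributes a star centered at its matching edge, not a clique, and the "edge-disjoint union of cliques" decomposition of $H_A$ on which your combination step rests does not exist. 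Any honest write-up should present this as a reduction and a heuristic, not as a proof.
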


\begin{theorem}[Faudree, Gy{\'a}rf{\'a}s, Schelp, and Tuza. 1990 \cite{GyarfasTuza1990strongedge}]\label{}
If $G$ is a bipartite graph then $\kam{2}{G}\leq\fkt{\Delta}{G}^2$.
\end{theorem}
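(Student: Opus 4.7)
The plan is to fix an arbitrary edge $e_0 = x_0y_0 \in A$ with $x_0 \in X$ and $y_0 \in Y$ (using the bipartition of $G$), and then exploit the $2$-strong condition against $e_0$ together with bipartiteness to squeeze every edge of $A$ into a small product set.

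First I would carry out a case analysis on an edge $e = xy \in A$ with $x \in X, y \in Y$. The condition $\distlg{G}{e}{e_0} \leq 2$ means that either $e \cap e_0 \neq \emptyset$, or there exists an edge $f = uv \in \E{G}$ (with $u \in X, v \in Y$ by bipartiteness) which meets both $e$ and $e_0$. The four possible placements of the endpoints of $f$ against $\set{x_0,x}$ and $\set{y_0,y}$ force $f \in \set{x_0y, xy_0}$, and consequently every $e = xy \in A$ satisfies
\begin{align*}
x \in \fkt{N}{y_0} \quad \text{or} \quad y \in \fkt{N}{x_0}.
\end{align*}

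Second I would construct an injection $\phi\colon A \to \fkt{N}{y_0} \times \fkt{N}{x_0}$. On edges $e = xy$ where both $x \in \fkt{N}{y_0}$ and $y \in \fkt{N}{x_0}$ the natural assignment $\phi(e) = (x,y)$ is clearly injective. On the remaining \emph{one-sided} edges (where only one of the two conditions holds), the missing coordinate has to be supplied canonically; the intention is to invoke the $2$-strong anti-matching condition \emph{between} one-sided edges of $A$ to distinguish them. Concretely, if $e = xy \in A$ has $x \in \fkt{N}{y_0}$ but $y \notin \fkt{N}{x_0}$, then $xy_0 \in \E{G}$, and comparing $e$ with any other edge $e' = x'y' \in A$ in the same one-sided class forces either a common coordinate with $e'$ or a crossing edge in $E$, which I would use to identify a unique $y' \in \fkt{N}{x_0}$ to serve as the second coordinate.

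Once the injection is in place, the bound
\begin{align*}
\kam{2}{G} = \abs{A} = \abs{\phi(A)} \leq \abs{\fkt{N}{y_0}} \cdot \abs{\fkt{N}{x_0}} \leq \fkt{\Delta}{G}^2
\end{align*}
follows immediately from $\abs{\fkt{N}{v}} \leq \fkt{\Delta}{G}$ for every vertex $v$.

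The main obstacle will be constructing the injection $\phi$ in the second step for the one-sided edges: a naive splitting of $A$ into the two ``sides'' yields only the weaker bound $2\fkt{\Delta}{G}^2$, so the crucial saving must come from cross-comparing one-sided edges via the $2$-strong anti-matching property itself, effectively using the pairwise interaction inside $A$ rather than just the interaction with $e_0$. If this canonical assignment turns out to be too delicate, a fallback plan is to choose $e_0$ extremally (e.g.\ $x_0$ maximizing the number of $A$-edges at $x_0$) and then split the analysis according to whether $\abs{X_A} \leq \fkt{\Delta}{G}$ or not, since in the former case the trivial bound $\abs{X_A} \cdot \fkt{\Delta}{G}$ already suffices.
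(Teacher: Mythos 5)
The thesis only cites this theorem from the literature and contains no proof of it, so your attempt has to stand on its own. Your first step is correct and is the right opening move: by bipartiteness, any edge of $G$ joining $e=xy$ to $e_0=x_0y_0$ must be $xy_0$ or $x_0y$, so every $e\in A$ satisfies $x\in\fkt{N}{y_0}$ or $y\in\fkt{N}{x_0}$. But this alone only gives $\kam{2}{G}\leq 2\fkt{\Delta}{G}^2$, and the whole content of the theorem is the removal of that factor $2$. That is precisely the step your proposal does not carry out: you never say what the ``canonical'' second coordinate of a one-sided edge is, nor why the resulting assignment would be injective simultaneously across the two one-sided classes and the two-sided class. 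The sentence about ``comparing $e$ with any other edge $e'$ \dots which I would use to identify a unique $y'$'' is a statement of intent, not an argument, and the fallback (choose $e_0$ extremally, split on whether $\abs{X_A}\leq\fkt{\Delta}{G}$) leaves exactly the hard case $\abs{X_A},\abs{Y_A}>\fkt{\Delta}{G}$ untreated. So there is a genuine gap at the decisive point.

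One way to close it is to play $A$ not against a single edge $e_0$ but against the whole star of $A$-edges at a vertex of maximum $A$-degree. Let $x^*$ attain $a=\max_{x\in X}d_A(x)$ and let $Y^*$ be its $a$ many $A$-neighbours. For any $x'y'\in A$ with $y'\notin\fkt{N}{x^*}$, each pair $\lb x^*y_i,\,x'y'\rb$ with $y_i\in Y^*$ is disjoint and its connecting edge cannot be $x^*y'$, so $x'$ is adjacent to all of $Y^*$; hence all such $x'$ lie in $\fkt{N}{y_1}\setminus\set{x^*}$ for a fixed $y_1\in Y^*$ (at most $\fkt{\Delta}{G}-1$ vertices), and each carries at most $\fkt{\Delta}{G}-a$ such edges. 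If $a=\fkt{\Delta}{G}$ this class is empty, every edge of $A$ has its $Y$-end in $\fkt{N}{x^*}$, and $\abs{A}\leq\fkt{\Delta}{G}^2$ follows at once. Otherwise $\abs{A}\leq\fkt{\Delta}{G}\,b+\lb\fkt{\Delta}{G}-1\rb\lb\fkt{\Delta}{G}-a\rb$, where $b$ is the maximum $A$-degree on $Y$; averaging with the symmetric bound $\abs{A}\leq\fkt{\Delta}{G}\,a+\lb\fkt{\Delta}{G}-1\rb\lb\fkt{\Delta}{G}-b\rb$ and using $a,b\leq\fkt{\Delta}{G}-1$ yields $\abs{A}\leq\fkt{\Delta}{G}^2-\fkt{\Delta}{G}+\tfrac{a+b}{2}\leq\fkt{\Delta}{G}^2-1$. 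The moral is that the missing factor $2$ is recovered by applying the pairwise condition inside $A$ to an entire star, not by more careful bookkeeping of the single comparison with $e_0$.
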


\begin{theorem}[Faudree, Gy{\'a}rf{\'a}s, Schelp and Tuza. 1989 \cite{faudree1989induced}]\label{thm4.3}
If $G$ is a bipartite graph then $\E{G}\leq\kmat{2}{G}\fkt{\Delta}{G}^2$.
\end{theorem}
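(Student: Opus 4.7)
The plan is to fix a maximum $2$-strong matching $M$ of $G$, so that $\abs{M}=\kmat{2}{G}$, and to distribute the edges of $G$ among the edges of $M$ via a canonical witness map $\phi\colon\E{G}\to M$ in such a way that each $f\in M$ receives at most $\fkt{\Delta}{G}^{2}$ edges. The desired inequality then follows from $\abs{\E{G}}=\sum_{f\in M}\abs{\phi^{-1}(f)}$.

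First I would exploit the maximality of $M$ to show that every $e\in\E{G}$ satisfies $\distlg{G}{e}{f}\le 2$ for some $f\in M$: otherwise $M\cup\set{e}$ would be a strictly larger $2$-strong matching, contradicting the maximality of $M$. Fixing a bipartition $\V{G}=A\dcup B$ and $f=xy\in M$ with $x\in A$, $y\in B$, bipartiteness pins down the distance-$2$ ball of $f$ in $\lineg{G}$ very precisely: the only candidates for the intermediate edge on a length-$2$ walk in $\lineg{G}$ from $f$ to $e=uv$ (with $u\in A$, $v\in B$) are $uy$ and $vx$, since $ux$ and $vy$ would have both endpoints inside one colour class. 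Hence
\begin{align*}
\condset{e'\in\E{G}}{\distlg{G}{e'}{f}\le 2}=\condset{uv\in\E{G}}{u\in\fkt{N}{y}~\text{or}~v\in\fkt{N}{x}}.
\end{align*}

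Next I would define $\phi$ by a priority rule using a fixed ordering of $M$: if $u\in\V{M}\cap A$ let $\phi(e)$ be the unique $M$-edge containing $u$; otherwise if $v$ has a neighbour in $\V{M}\cap A$ let $\phi(e)$ be the $M$-edge through the first such neighbour; otherwise the maximality of $M$ forces $u$ to have a neighbour in $\V{M}\cap B$ and $\phi(e)$ is the $M$-edge through the first such neighbour of $u$. By the previous paragraph every $e$ is then sent to an $f$ whose distance-$2$ ball it lies in.

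The main obstacle is to establish the uniform bound $\abs{\phi^{-1}(f)}\le \fkt{\Delta}{G}^{2}$ for every $f=xy\in M$. Bounding the two ``sides'' $\fkt{N}{y}\subseteq A$ and $\fkt{N}{x}\subseteq B$ independently only yields $2\fkt{\Delta}{G}^{2}$, so bipartiteness has to be invoked a second time to subtract by inclusion--exclusion the edges running between $\fkt{N}{y}$ and $\fkt{N}{x}$, which is precisely what closes the factor-$2$ gap. The extremal example $G=K_{\fkt{\Delta}{G},\fkt{\Delta}{G}}$, for which $\kmat{2}{G}=1$ and $\abs{\E{G}}=\fkt{\Delta}{G}^{2}$, shows that the correct parametrisation identifies each edge in the distance-$2$ ball of $f$ uniquely with a pair consisting of one endpoint in $\fkt{N}{y}\cup\set{x}$ and one in $\fkt{N}{x}\cup\set{y}$, both sets having size at most $\fkt{\Delta}{G}$. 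Tailoring the priority rule of $\phi$ so that it is consistent with this parametrisation — i.e.\ so that every ``diagonal'' edge between $\fkt{N}{y}$ and $\fkt{N}{x}$ is charged to $f$ at most once — is the combinatorial heart of the argument, after which the bound $\abs{\E{G}}\le\kmat{2}{G}\fkt{\Delta}{G}^{2}$ is immediate.
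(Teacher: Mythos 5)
The paper does not actually prove this statement -- it is quoted from Faudree, Gy\'arf\'as, Schelp and Tuza -- so I am judging your argument on its own merits. Your setup is fine as far as it goes: the description of the distance-$2$ ball of $f=xy$ in $\lineg{G}$ using bipartiteness is correct, and your three-case priority rule is well defined (every edge does land within distance $2$ of some $M$-edge, by maximality of $M$, and each case really does certify $\distlg{G}{e}{\fkt{\phi}{e}}\le 2$).

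The genuine gap is the fiber bound $\abs{\phi^{-1}(f)}\le\fkt{\Delta}{G}^{2}$, which is essentially the entire theorem, and the mechanism you sketch for it cannot work. Your proposed parametrisation of the ball of $f$ by pairs with one coordinate in $\fkt{N}{y}\cup\set{x}$ and one in $\fkt{N}{x}\cup\set{y}$ fails for any edge $uv$ with $u\in\fkt{N}{y}$ but $v\notin\fkt{N}{x}$ (it has no admissible second coordinate), and the inclusion--exclusion count $\abs{S_{1}}+\abs{S_{2}}-\abs{S_{1}\cap S_{2}}$ does not fall below $\fkt{\Delta}{G}^{2}$ in general: in $C_{8}$ (so $\fkt{\Delta}{G}=2$) the distance-$2$ ball of \emph{every} edge contains $4+4-3=5>4$ edges, and the theorem holds there only because $\kmat{2}{C_{8}}=2$ and the two balls overlap. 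So no argument that looks only at $f$ and its own ball can deliver the fiber bound; the maximality of $M$ has to enter the fiber estimate itself, whereas you invoke it only to show that $\phi$ is total. The natural repair is a swap argument: if the edges of the ball of $f$ that are far from every \emph{other} $M$-edge contained two edges at mutual distance $>2$, replacing $f$ by that pair would enlarge $M$; hence this private part is a $2$-strong anti-matching and has at most $\fkt{\Delta}{G}^{2}$ edges by the bipartite bound $\kam{2}{G}\le\fkt{\Delta}{G}^{2}$ stated just before this theorem. But your $\phi$ also routes to $f$ edges that are shared with other balls, and one can arrange examples (with $M$ maximum) in which the fiber attains $\fkt{\Delta}{G}^{2}$ while containing two edges at distance $3$, so the fiber is not itself an anti-matching and the shared edges are not controlled by this argument either. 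Accounting for those shared edges is exactly the combinatorial content that is missing from your proposal.
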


For the general chromatic index the maximum degree poses a natural lower bound. A similar bound can be obtained for the $2$-strong chromatic index in terms of the so called average degree.

\begin{definition}[Average Degree]
Let $G$ be a graph. The {\em average degree} of $G$ is given by $\fkt{\deg}{G}=\frac{1}{\abs{\V{G}}}\sum_{v\in\V{G}}\fkt{\deg}{v}$.
\end{definition}

\begin{theorem}[D{\k{e}}bski, Grytczuk and {\'S}leszy{\'n}ska-Nowak. 2015 \cite{DebskiGrytczuk2015strongedgesparse}]\label{thm4.7}
Let $G$ be a graph then $\stronki{2}{G}\geq 2\fkt{\deg}{G}-1$.
\end{theorem}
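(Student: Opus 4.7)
The plan is to count incidences in the line graph and combine the count with the Cauchy--Schwarz inequality. First I would fix a $2$-strong edge coloring of $G$ using $k=\stronki{2}{G}$ colors and consider its color classes $M_1,\dots,M_k$; each $M_i$ is a $2$-strong matching, so for any two distinct edges $e,e'\in M_i$ we have $\distlg{G}{e}{e'}\geq 3$. The decisive observation is that this forces the closed line-graph neighborhoods $\fkt{N_{\lineg{G}}}{e}\cup\set{e}$ with $e\in M_i$ to be pairwise disjoint subsets of $\E{G}=\V{\lineg{G}}$: any edge lying in two such neighborhoods would witness $\distlg{G}{e}{e'}\leq 2$, contradicting the induced-matching property.

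Using the pair degree of Definition \ref{def4.1}, the closed line-graph neighborhood of $e=uv$ has size $\fkt{\operatorname{s}}{e}=\fkt{\deg}{u}+\fkt{\deg}{v}-1$. Disjointness therefore yields, for each $i$,
\[
\sum_{uv\in M_i}\lb\fkt{\deg}{u}+\fkt{\deg}{v}-1\rb\leq\abs{\E{G}}.
\]
Summing over $i$ and exchanging the order of summation (every edge of $G$ lies in exactly one color class) rewrites the left-hand side as
\[
\sum_{uv\in\E{G}}\lb\fkt{\deg}{u}+\fkt{\deg}{v}-1\rb=\sum_{v\in\V{G}}\fkt{\deg}{v}^2-\abs{\E{G}},
\]
so that $\sum_{v\in\V{G}}\fkt{\deg}{v}^2\leq\lb k+1\rb\abs{\E{G}}$.

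Finally I would invoke Cauchy--Schwarz in the form
\[
\sum_{v\in\V{G}}\fkt{\deg}{v}^2\geq\frac{1}{\abs{\V{G}}}\lb\sum_{v\in\V{G}}\fkt{\deg}{v}\rb^2=\frac{4\abs{\E{G}}^2}{\abs{\V{G}}}=2\abs{\E{G}}\fkt{\deg}{G},
\]
using $\fkt{\deg}{G}=2\abs{\E{G}}/\abs{\V{G}}$. Combining with the previous inequality and dividing by $\abs{\E{G}}$ (the case $\E{G}=\emptyset$ being trivial) yields $2\fkt{\deg}{G}\leq k+1$, i.e.\ $\stronki{2}{G}\geq 2\fkt{\deg}{G}-1$. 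The only subtle point is the disjointness step in the first paragraph: edges from \emph{different} color classes can of course share line-graph neighbors, so the neighborhood count must be carried out class by class before being summed; once this bookkeeping is in place, the remainder is a routine convexity estimate.
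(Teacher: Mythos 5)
Your proof is correct. Note that the paper states this theorem only as a cited result of D\k{e}bski, Grytczuk and \'Sleszy\'nska-Nowak and gives no proof of its own, so there is nothing internal to compare against; your argument --- pairwise disjointness of the closed line-graph neighborhoods $\fkt{N_{\lineg{G}}}{e}\cup\set{e}$ within each induced-matching color class, the resulting bound $\sum_{v\in\V{G}}\fkt{\deg}{v}^2\leq\lb k+1\rb\abs{\E{G}}$ by double counting, and the Cauchy--Schwarz step --- is complete and is essentially the standard proof of this bound. The one point worth stating explicitly (which you do handle correctly in your final parenthetical) is the trivial case $\E{G}=\emptyset$, where the inequality reads $0\geq -1$.
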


Molloy and Reed (see \cite{molloy1997bound}) established another bound for arbitrary graphs with the probabilistic method.

\begin{theorem}[Molloy, Reed. 1997 \cite{molloy1997bound}]\label{thm4.8}
Let $G$ be a graph with $\fkt{\Delta}{G}$ sufficiently large, then $\stronki{2}{G}\leq1.998\fkt{\Delta}{G}^2$. 
\end{theorem}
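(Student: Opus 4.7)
The plan is to view $\chi_s'(G) = \chi(L(G)^2)$ and to apply a general result of Molloy and Reed on the chromatic number of graphs with \emph{sparse neighborhoods}: there exist constants $\epsilon, \epsilon' > 0$ such that if $H$ is any graph with $\Delta(H) = D$ large enough, and if for every $v \in V(H)$ the induced subgraph $H[N_H(v)]$ contains at most $(1-\epsilon)\binom{D}{2}$ edges, then $\chi(H) \le (1-\epsilon')D$. Setting $H = L(G)^2$ and using $\Delta(H) \le 2\Delta(G)^2 - 2\Delta(G)$ from Theorem \ref{thm4.1}, a factor-$(1-\epsilon')$ improvement would yield $\chi_s'(G) \le (1-\epsilon')\cdot 2\Delta(G)^2 \le 1.998\,\Delta(G)^2$ once $\epsilon' \ge 10^{-3}$, which we can arrange by calibrating the sparsity in the hypothesis.

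The heart of the argument is therefore a sparsity lemma: for every edge $e = uv \in E(G)$, the $2$-strong neighborhood of $e$ in $L(G)$ (i.e.\ the neighborhood of $e$ in $H$) contains a constant fraction, of order $\Theta(\Delta(G)^4)$, non-edges of $H$. To produce many non-edges I would look at pairs of conflicting edges of the form $e_1 = u'x$ and $e_2 = v'y$, where $u' \in N_G(u)\setminus\{v\}$, $v' \in N_G(v)\setminus\{u\}$, and $x,y$ are further neighbors. Such $e_1, e_2$ both conflict with $e$, but they conflict with each other only if $d_{L(G)}(e_1,e_2) \le 2$, which forces one of the endpoints of $e_1$ to be adjacent in $G$ to some endpoint of $e_2$. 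A direct double-counting, splitting according to which endpoint witnesses the adjacency, then shows that the number of such compatible pairs is at most
\begin{align*}
\bigl(\Delta(G)-1\bigr)^2 \cdot O\!\bigl(\Delta(G)\bigr) \cdot O\!\bigl(\Delta(G)\bigr),
\end{align*}
whereas the total number of pairs $(e_1,e_2)$ of the considered type is of order $\Delta(G)^4$. Choosing the parameters so that ``many'' endpoints of $e_1, e_2$ truly have no common neighbor gives the desired constant-fraction deficit in $H[N_H(e)]$.

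The main obstacle is that this naive counting fails if the local neighborhoods around $u$ or $v$ in $G$ are themselves very dense: if $N_G(u) \cup N_G(v)$ contains many edges, then too many pairs $(e_1,e_2)$ become conflicting via short paths through this dense cluster. I would therefore split the analysis into two regimes. In the \emph{locally sparse} regime, where the number of edges inside $N_G[\{u,v\}]$ is, say, at most $(1-\delta)\binom{2\Delta(G)}{2}$ for a small $\delta > 0$, the counting above goes through and produces the required density of non-edges in $H[N_H(e)]$. In the \emph{locally dense} regime one exploits the clique-like structure: when $N_G[\{u,v\}]$ is near-complete, the degree of $e$ in $H$ is itself significantly smaller than $2\Delta(G)^2$ because many edges that would otherwise be distinct at distance two coincide, so the ``trivial'' Theorem \ref{thm4.1} bound already beats $1.998\,\Delta(G)^2$ locally and can be combined with the global Molloy--Reed coloring via an iterated-deletion/greedy argument on the remaining low-density part.

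Modulo these two case analyses, the remaining work is to turn the neighborhood sparsity of $H$ into an actual coloring, which is exactly the content of the Molloy--Reed theorem invoked at the start; its proof uses an iterative probabilistic partial colouring combined with the Lovász Local Lemma, but I would invoke it as a black box. The principal technical challenge is calibrating the constants $\delta, \epsilon, \epsilon'$ so that the two regimes glue together into a single bound of $1.998\,\Delta(G)^2$ rather than merely $(2-o(1))\Delta(G)^2$; this is why the theorem is stated only for $\Delta(G)$ sufficiently large.
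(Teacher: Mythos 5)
The paper does not prove this statement at all: Theorem \ref{thm4.8} is imported verbatim from Molloy and Reed's 1997 paper, so there is no internal proof to compare against. Your plan is, in outline, the route the cited paper actually takes: bound $\Delta(L(G)^2)$ by $2\Delta(G)^2$ via Theorem \ref{thm4.1}, prove that every neighbourhood in $L(G)^2$ misses a constant fraction of its potential edges, and feed this into a ``sparse neighbourhoods force $\chi\le(1-\epsilon')\Delta$'' theorem whose proof is an iterated probabilistic colouring with the Local Lemma. So the architecture is sound and faithful to the source.

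Two points in your sketch are genuinely incomplete as written. First, the double-counting step does not yet prove anything: you bound the number of conflicting pairs by $(\Delta(G)-1)^2\cdot O(\Delta(G))\cdot O(\Delta(G))$ and compare it to a total of order $\Delta(G)^4$ -- both sides are $\Theta(\Delta(G)^4)$ with unspecified constants, so no constant-fraction deficit follows. The actual work in Molloy--Reed is to exhibit $\Omega(\Delta(G)^4)$ \emph{explicit} non-adjacent pairs in $N_H(e)$ under the assumption that $\deg_H(e)$ is close to $2\Delta(G)^2$ (which forces almost all vertices of $N_G(u)\cup N_G(v)$ to have degree close to $\Delta(G)$ and the second-level endpoints to be essentially distinct); your phrase ``choosing the parameters so that many endpoints truly have no common neighbor'' is precisely the step that needs a quantitative argument. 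Second, your locally dense regime is handled more elaborately than necessary: if $\deg_H(e)\le(1-\delta)\cdot 2\Delta(G)^2$ then $H[N_H(e)]$ spans at most $\binom{(1-\delta)D}{2}\le(1-\delta)\binom{D}{2}$ edges automatically, so such vertices already satisfy the sparsity hypothesis and no separate iterated-deletion or greedy phase, and no gluing of two colourings, is required. With those repairs the proposal becomes a correct reconstruction of the original proof, modulo the black-boxed probabilistic colouring theorem.
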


On special graph classes some stronger bounds for the $2$-strong chromatic index have been obtained or even given in terms of functions in the maximum pair degree or the maximum degree of those graphs.

\begin{lemma}[Lai, Lih, and Tsai. 2012 \cite{lai2012strong}]\label{lemma4.1}
\begin{align*}
\stronki{2}{C_n}=\stueckfktd{3}{\text{if}~n=0\lb\!\!\! \mod 3\rb}{5}{\text{if}~n=5}{4}{otherwise.}
\end{align*}
\end{lemma}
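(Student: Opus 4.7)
The plan rests on the identity $\lineg{C_n} \cong C_n$, which gives $\stronki{2}{C_n} = \stronk{2}{C_n} = \X{C_n^2}$ and reduces the task to computing the chromatic number of squared cycles, where two vertices are adjacent iff their cyclic distance is at most $2$. I would first dispose of the small cases $n\in\set{3,4,5}$: every pair of vertices of $C_n$ then has distance at most $\abr{n/2}\leq 2$, so $C_n^2\cong K_n$ and $\X{C_n^2}=n$. This settles $\stronki{2}{C_3}=3$ (covered by $3\mid n$), $\stronki{2}{C_4}=4$ (covered by ``otherwise''), and the exceptional $\stronki{2}{C_5}=5$.

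For $n\geq 6$ I would establish the lower bounds by a propagation argument on any candidate $3$-coloring $F$ of $C_n^2$. Three consecutive vertices form a triangle in $C_n^2$, forcing $\fkt{F}{v_1}, \fkt{F}{v_2}, \fkt{F}{v_3}$ to exhaust $\set{1,2,3}$; and for each $i\geq 4$ the vertex $v_i$ is adjacent in $C_n^2$ to both $v_{i-1}$ and $v_{i-2}$, which forces $\fkt{F}{v_i}=\fkt{F}{v_{i-3}}$. Thus $F$ is $3$-periodic, and this is compatible with the cyclic closure $v_{n+1}=v_1$ only when $3\mid n$. Hence $\X{C_n^2}\geq 4$ whenever $n\geq 6$ and $3\nmid n$, and of course $\X{C_n^2}\geq 3$ for all $n\geq 3$.

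For the matching upper bounds I would exhibit explicit colorings. If $3\mid n$, cyclically repeating the block $\lb 1,2,3\rb$ yields a proper $3$-coloring of $C_n^2$. Otherwise, for $n\geq 4$ with $n\neq 5$, I would write $n=3a+4b$ with nonnegative integers $a,b$ --- this is possible for every such $n$, while $n=5$ is the unique non-representable value and is precisely the exceptional case --- and tile $C_n$ cyclically by $a$ blocks $\lb 1,2,3\rb$ followed by $b$ blocks $\lb 1,2,3,4\rb$. The one technical obstacle, which I expect to be the only non-routine step, is verifying that no distance-$\leq 2$ color clash arises at the boundary between two consecutive blocks or at the cyclic wrap-around. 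This reduces to a finite check over the four possible block-to-block transitions $\lb 1,2,3\rb\to\lb 1,2,3\rb$, $\lb 1,2,3\rb\to\lb 1,2,3,4\rb$, $\lb 1,2,3,4\rb\to\lb 1,2,3\rb$, and $\lb 1,2,3,4\rb\to\lb 1,2,3,4\rb$; in each case the newly created distance-$1$ and distance-$2$ pairs lie in $\set{\set{3,1},\set{3,2},\set{2,1},\set{4,1},\set{4,2}}$ and are therefore all admissible. Combining the three steps yields the stated formula.
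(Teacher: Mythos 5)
The paper does not prove this lemma at all --- it is stated as a cited result of Lai, Lih and Tsai --- so there is no in-paper argument to compare against; your proposal must stand on its own, and it does. The reduction $\lineg{C_n}\cong C_n$, hence $\stronki{2}{C_n}=\X{C_n^2}$, is exactly right; the small cases $n\in\set{3,4,5}$ collapse to complete graphs as you say; the propagation argument (three consecutive vertices form a triangle in $C_n^2$, and then $\fkt{F}{v_i}=\fkt{F}{v_{i-3}}$ is forced) correctly shows a $3$-coloring exists only when $3\mid n$; and the representation $n=3a+4b$, which fails only at $n=5$ among admissible cycle lengths, together with the four block-transition checks, gives the matching upper bound of $4$ in the remaining cases. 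One point worth making explicit in a write-up: when $3\nmid n$ you automatically have $b\geq 1$, and the cyclic wrap-around (including the degenerate case $a=0$, $b=1$, i.e.\ $n=4$, where the single block wraps onto itself) is covered by the same four transition checks, since the offending pairs are again drawn from $\set{\set{3,1},\set{3,2},\set{2,1},\set{4,1},\set{4,2}}$. With that remark the proof is complete and correct.
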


\begin{theorem}[Faudree, Gy{\'a}rf{\'a}s, Schelp and Tuza. 1990 \cite{chung1990maximum}]\label{thm4.4}
If $T$ is a tree then $\stronki{2}{G}=\fkt{\sigma}{T}$.
\end{theorem}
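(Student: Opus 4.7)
The lower bound $\stronki{2}{T}\geq \fkt{\sigma}{T}$ follows from the general inequalities recorded just before Lemma \ref{lemma4.1}, since any edge $e=uv$ together with its $\fkt{\operatorname{s}}{e}-1$ line-graph neighbours forms a clique in $\lineg{T}$, all of whose vertices must receive pairwise distinct colors in a $2$-strong edge coloring. The real content is therefore the upper bound $\stronki{2}{T}\leq \fkt{\sigma}{T}$, which I would prove by induction on $\abs{\V{T}}$.

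The base case collects all stars $K_{1,n}$ (including the trivial cases $\abs{\V{T}}\leq 2$): here $\fkt{\sigma}{T}=\fkt{\Delta}{T}$, and assigning a distinct color to each of the $\fkt{\Delta}{T}$ edges is obviously a valid $2$-strong edge coloring. For the inductive step, assume $T$ is not a star, so that any longest path $v_0,v_1,\dots,v_k$ has $k\geq 3$. The standard longest-path argument shows that every neighbor of $v_1$ other than $v_2$ must be a leaf (otherwise the path could be extended). Let $L\define \fkt{N_T}{v_1}\setminus\set{v_2}$ be the set of leaf neighbors of $v_1$, write $p\define v_1$, $g\define v_2$, and consider $T'\define T-L$. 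Then $T'$ is a tree with fewer vertices, $\fkt{\deg_{T'}}{p}=1$, and every other degree is unchanged, so $\fkt{\sigma}{T'}\leq\fkt{\sigma}{T}$.

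By the induction hypothesis, $T'$ admits a strong edge coloring using at most $\fkt{\sigma}{T'}\leq\fkt{\sigma}{T}$ colors; in particular the edge $pg$ is already colored. It remains to extend this coloring to the edges $p\ell$ with $\ell\in L$, one after another. When processing the $i$-th such edge $p\ell$, the colors forbidden by the strong edge coloring constraint are precisely those used on edges at line-graph distance at most $2$ from $p\ell$. Because $\ell$ is a leaf and every element of $L\setminus\set{\ell}$ is a leaf as well, these forbidden edges are exactly: the edge $pg$, the $i-1$ previously colored edges $p\ell'$ with $\ell'\in L$, and the $\fkt{\deg}{g}-1$ edges incident to $g$ other than $pg$. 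This gives a total of $i+\fkt{\deg}{g}-1$ forbidden colors, and in the worst case $i=\abs{L}=\fkt{\deg}{p}-1$, so the bound becomes
\begin{align*}
\fkt{\deg}{p}+\fkt{\deg}{g}-2=\fkt{\operatorname{s}}{pg}-1\leq\fkt{\sigma}{T}-1,
\end{align*}
leaving at least one color available in our palette of $\fkt{\sigma}{T}$ colors.

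The main subtle point — and the reason the argument works so cleanly for trees rather than for general graphs — is the careful choice of the leaf via the longest-path argument: it guarantees that $p$ has only one non-leaf neighbor $g$, so none of the edges at line-graph distance $2$ from $p\ell$ (other than the already-colored edges at $g$) need to be taken into account, and in particular no edges ``hanging off'' a sibling of $\ell$ can contribute forbidden colors. Once this structural observation is in place the degree bookkeeping is routine. The expected obstacle is therefore not the inductive step itself but verifying that the reduction $T\mapsto T-L$ never increases $\sigma$ and that the base cases (stars, and the case in which $T'$ itself degenerates to a single edge or vertex) are handled uniformly, both of which follow from the fact that removing leaves only decreases the degree of $p$ while leaving all other degrees untouched.
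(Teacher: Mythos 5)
The paper states this theorem as a cited result of Faudree, Gy\'arf\'as, Schelp and Tuza and gives no proof of its own, so there is nothing to compare your argument against; judged on its own merits, your proof is correct and is essentially the standard inductive greedy argument for this fact. The lower bound is indeed already recorded in the paper as $\fkt{\sigma}{T}\leq\kam{2}{T}\leq\stronki{2}{T}$ (one small imprecision: the closed neighbourhood of an edge $e=uv$ in $\lineg{T}$ is not a clique of $\lineg{T}$ --- an edge meeting only $u$ and an edge meeting only $v$ are nonadjacent there --- but it is a $2$-strong anti-matching, i.e.\ a clique of $\lineg{T}^2$, which is what you actually need). The inductive step is sound: the longest-path argument correctly forces every neighbour of $v_1$ other than $v_2$ to be a leaf, deleting $L$ cannot increase $\sigma$ since only $\fkt{\deg}{p}$ drops, the edges incident to $g$ other than $pg$ all survive into $T'$ because a tree has no triangles, and your count of forbidden colours $i+\fkt{\deg}{g}-1\leq\fkt{\operatorname{s}}{pg}-1\leq\fkt{\sigma}{T}-1$ is exactly right, with the leaf-ness of the siblings $\ell'$ guaranteeing that no further distance-$2$ edges contribute. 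The base case of stars and the degenerate forms of $T'$ are handled as you say.
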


\begin{theorem}[Cameron. 1989 \cite{cameron1989induced}]\label{thm4.5}
If $G$ is chordal then $\stronki{2}{G}=\kam{2}{G}$.
\end{theorem}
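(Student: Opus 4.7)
The plan is to strengthen the statement and show that $\lineg{G}^2$ is chordal whenever $G$ is chordal; combining this with Theorem \autoref{thm2.4} (every chordal graph is perfect) gives
\begin{align*}
\stronki{2}{G}=\X{\lineg{G}^2}=\fkt{\omega}{\lineg{G}^2}=\kam{2}{G},
\end{align*}
while the opposite inequality $\kam{2}{G}\leq\stronki{2}{G}$ is already listed among the basic relations at the start of this chapter.

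To prove chordality of $\lineg{G}^2$, I would invoke Theorem \autoref{thm3.17}, which characterizes induced cycles in $\lineg{G}^2$ through unwithered flowers in $\lineg{G}$. Thus it suffices to show that if $G$ is chordal, then every flower $F\in\mathfrak{F}_n$ of size $n\geq 4$ in $\lineg{G}$ is withered. Suppose for contradiction that $F$ is unwithered, with distinguished vertices $e_1,\dots,e_n$ (playing the role of the $u_i$'s) and auxiliary vertices $f_1,\dots,f_q$ (the $w_j$'s), each being a genuine edge of $G$. Using Lemma \autoref{lemma3.14}, $F$ carries a hamiltonian cycle whose edges all correspond to adjacencies in $\lineg{G}$, i.e.\ pairs of edges of $G$ sharing a common endpoint. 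Tracing this hamiltonian cycle yields a closed walk $\mathcal{W}$ in $G$ that visits the endpoints of every $e_i$ and $f_j$, and from $\mathcal{W}$ I extract a simple cycle $\mathcal{C}$ of length at least $2n\geq 8$.

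Since $G$ is chordal, $\mathcal{C}$ must admit a chord. The core of the argument is to show that any such chord forces a path of length at most $2$ in $\lineg{G}$ between some pair $e_i,e_j$ with $j\not\equiv i\pm1\pmod n$, contradicting $\distlg{G}{e_i}{e_j}\geq 3$, which is enforced by $F$ being unwithered. If the chord does not directly provide such a shortcut, it nevertheless splits $\mathcal{C}$ into two smaller cycles; at least one of them still separates some pair of non-consecutive $e_i$'s, and chordality yields a chord of the smaller cycle as well. Iterating drives the analysis down to triangles, by which point the chord producing the required shortcut must have appeared.

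The main obstacle is the case analysis at the smallest flower size $n=4$, where $\mathcal{C}$ can have length as little as six or seven and the chord may connect various combinations of endpoints of $u$-type edges, $w$-type edges, or pending vertices permitted by the flower definition, precisely the configurations visible in Figure \autoref{fig3.10} and controlled by Corollary \autoref{cor3.4}. In this regime one must carefully track which chord-types in $G$ translate to which line-graph shortcuts, and verify that in every subcase the flower is forced to be withered. Once this base case is settled, the iterative reduction handles all $n\geq 5$, completing the chordality of $\lineg{G}^2$ and hence the identity $\stronki{2}{G}=\kam{2}{G}$.
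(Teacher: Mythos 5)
Your overall strategy --- reduce the identity to chordality of $\lineg{G}^2$ via Theorem \autoref{thm2.4} together with $\stronki{2}{G}=\fkt{\chi}{\lineg{G}^2}$ and $\kam{2}{G}=\fkt{\omega}{\lineg{G}^2}$, and then attack chordality of $\lineg{G}^2$ through flowers (Theorem \autoref{thm3.17}) and their hamiltonian cycles (Lemma \autoref{lemma3.14}) --- is exactly the route the paper takes: Theorem \autoref{thm4.5} follows there from Theorem \autoref{thm4.14}, which is the case $f=4$ of Lemma \autoref{lemma4.6}. However, your sketch has a genuine gap at the central step. The hamiltonian cycle of the flower is a cycle in $\lineg{G}$ that need not be \emph{induced}, so Lemma \autoref{lemma4.2} does not directly hand you a cycle in $G$; ``extracting a simple cycle from the closed walk $\mathcal{W}$'' can silently discard some of the $e_i$ or $f_j$, after which the intended contradiction is no longer available. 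The paper first restricts the chords of this cycle inside $\lineg{G}$ (only consecutive $w$-vertices may be joined, since $\lineg{G}$ is $K_{1,3}$-free and any other chord would wither the flower), concludes that an induced cycle of length at least $n$ containing all $w$-vertices survives in $\lineg{G}$, and only then passes to $G$. Also, the hamiltonian cycle has length $n+q$ with $\aufr{\frac{n}{2}}\leq q\leq n$, so ``at least $2n\geq 8$'' is wrong; for $n=4$ it can be as short as $6$, as you yourself note later.

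More importantly, it is not true that every chord of the resulting cycle in $G$ forces a path of length at most $2$ in $\lineg{G}$ between some non-consecutive $e_i,e_j$. The proof of Lemma \autoref{lemma4.6} exhibits two families of chords (the $u$-$w$ and $uwu$ types of \autoref{fig4.14}) that are fully compatible with the flower remaining unwithered. Your fallback --- split the cycle and iterate ``down to triangles'' --- therefore needs an invariant explaining why the iteration cannot complete using only such harmless chords, and that invariant is precisely the missing idea: each harmless chord skips exactly one ``leading'' $w$-edge, there are only $k=\tilde{l}-n$ of those, so the cycle can be shortened by at most $k$ and an induced cycle of length at least $n\geq 4$ necessarily survives in $G$. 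The contradiction is then with the chordality of $G$ itself, not with the flower being unwithered. Without this counting argument the claim that the withering shortcut ``must have appeared'' by the time you reach triangles is unsupported, and the deferred case analysis for $n=4$ would be aimed at establishing, chord by chord, something that individual chords do not in fact guarantee.
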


\begin{theorem}[Cameron, Sritharan and Tang. 2003 \cite{cameron2003finding}]\label{thm4.6}
If neither $G$ nor $\overline{G}$ contains an induced $C_n$ with $n\geq 5$, then $\stronki{2}{G}=\kam{2}{G}$.
\end{theorem}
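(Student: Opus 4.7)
The plan is to recognise that the hypothesis on $G$ — neither $G$ nor $\overline{G}$ containing an induced cycle of length at least $5$ — is precisely the definition of a weakly chordal (weakly triangulated) graph, a classical superclass of chordal graphs known to be perfect (Hayward, 1985). Combined with the translation $\stronki{2}{G}=\X{\lineg{G}^2}$ and $\kam{2}{G}=\kcl{}{\lineg{G}^2}$ from Section 2.3, the natural route is to strengthen the hypothesis one step further, namely to prove that $\lineg{G}^2$ is itself weakly chordal whenever $G$ is, which is strictly stronger than what is needed. Perfection of $\lineg{G}^2$ then gives $\X{\lineg{G}^2}=\kcl{}{\lineg{G}^2}$, which is exactly the claim.

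To show that $\lineg{G}^2$ is weakly chordal, I would proceed by contradiction and analyse the hole and antihole cases separately. For the hole case, suppose $\lineg{G}^2$ contains an induced cycle $e_1e_2\cdots e_n$ of length $n\geq 5$. This means the $e_i$ are edges of $G$ with $\distlg{G}{e_i}{e_{i+1}}\leq 2$ cyclically and $\distlg{G}{e_i}{e_j}\geq 3$ otherwise. Applying Theorem \ref{thm3.17} to the line graph, this cycle corresponds to an unwithered flower $F\in\mathfrak{F}_n$ in $\lineg{G}$. The central task is to lift the combinatorial data of this flower back to an edge-configuration in $G$: the $u_i$-vertices of $F$ are edges $e_i$ of $G$, the $w_j$-vertices of $F$ are further edges of $G$ that share endpoints consecutively with two of the $e_i$, and the non-adjacency conditions in $\lineg{G}$ correspond to disjoint edge-endpoint structure in $G$. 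By selecting, for each $e_i$, a carefully chosen endpoint in $G$ — the one that realises the short connections to both $e_{i-1}$ and $e_{i+1}$ while staying far from the other $e_j$ — one should be able to extract an induced $C_n$ in $G$ itself, contradicting weak chordality.

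For the antihole case, assume $\lineg{G}^2$ contains an induced $\overline{C_n}$ with $n\geq 5$, i.e.\ $n$ edges of $G$ that are pairwise at line-distance $\leq 2$ except for a specified non-edge pattern of line-distance $\geq 3$. Here I would pass to the complement: the non-edges of $\lineg{G}^2$ form an induced $C_n$ in $\overline{\lineg{G}^2}$, and one has to show that this forces an induced $\overline{C_n}$ in $\overline{G}$. Concretely, pairs of $G$-edges that are far apart in the line metric are edge-disjoint and their endpoints are pairwise non-adjacent in $G$, so choosing a representative vertex from each of the $n$ edges should yield $n$ vertices that are pairwise non-adjacent in $G$ in the cyclic pattern of a $C_n$ — that is, an induced $C_n$ in $\overline{G}$, again contradicting the hypothesis.

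The main obstacle is the antihole case: line graphs naturally carry many cliques coming from stars in $G$, so vertex choices for the translation have to be made coherently for all $n$ edges simultaneously, and the short-distance constraints for the complementary pairs must not sabotage the non-adjacency pattern needed in $\overline{G}$. The hole case should be more direct thanks to Theorem \ref{thm3.17}, which already encodes the required structural lemma on $2$-strong cycles; the real work is verifying that the endpoint-selection can always be carried out without creating accidental chords. A likely useful tool for both cases is Lemma \ref{lemma3.7}, which controls how many consecutive edges of an induced cycle in $\lineg{G}^2$ can already lie in $\lineg{G}$, keeping the bookkeeping of endpoints finite and case-analysable.
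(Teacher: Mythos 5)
The paper itself gives no proof of this statement—it is quoted from Cameron, Sritharan and Tang—so your proposal can only be measured against the known route, which it correctly identifies: the hypothesis says $G$ is weakly chordal, weakly chordal graphs are perfect, and the theorem follows from $\stronki{2}{G}=\X{\lineg{G}^2}$ and $\kam{2}{G}=\fkt{\omega}{\lineg{G}^2}$ once one shows $\lineg{G}^2$ is weakly chordal. Your hole case is essentially sound and is in fact already established by the paper's own machinery: the contrapositive of Lemma \ref{lemma4.6} (via Theorem \ref{thm4.16} and the sprout analysis) shows that an induced cycle of length $n\geq 5$ in $\lineg{G}^2$ forces an induced cycle of length at least $5$ in $G$. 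One small correction there: the lift produces an induced cycle of length \emph{at least} $n$, not necessarily exactly $n$, but that is all you need.

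The genuine gap is the antihole case, and it is not a bookkeeping issue. First, your target is misstated: an induced $C_n$ in $\overline{G}$ on representatives $v_1,\dots,v_n$ requires $v_iv_{i+1}\notin\E{G}$ for consecutive pairs but $v_iv_j\in\E{G}$ for all non-consecutive pairs; your sketch only extracts the non-adjacencies (from $\distlg{G}{e_i}{e_{i+1}}\geq 3$, which does indeed make all four endpoint pairs non-adjacent) and says nothing about how to force the roughly $\binom{n}{2}-n$ required adjacencies. Second, those adjacencies cannot in general be obtained by picking one endpoint per edge: $\distlg{G}{e_i}{e_j}\leq 2$ only guarantees a shared endpoint or a connecting edge $xy$ with $x\in e_i$, $y\in e_j$, and a single edge has only two endpoints to distribute among up to $n-3$ such constraints, so a coherent choice need not exist; worse, if $e_i$ and $e_j$ share an endpoint the "adjacent representatives" you want would have to coincide. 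So the claim that an antihole in $\lineg{G}^2$ translates into an antihole of the same length in $G$ is unsupported and is likely the wrong shape of argument; the actual proof of weak chordality of $\lineg{G}^2$ handles antiholes by a separate structural analysis rather than by a vertex-representative translation. Until that half is supplied, the proof is incomplete.
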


\section{Squares of the Line Graph}

With strong edge coloring corresponding to the coloring of the squared line graph of a graph it seems to be a natural approach to study structural properties of the line graph and its square.\\
In this section we will give a brief introduction to the structure of line graphs in general and then give some further investigation in their squares. Our goal is to find analogons to the main results of Chapter 3 to give necessary and sufficient conditions for the chordality of the squared line graph. While for general graph powers it is not possible to give a finite set of forbidden subgraphs to characterize the graphs with chordal squares, we will find that there is such a set of induced subgraphs that is responsible for the non-chordality of the squared line graph.\\
Furthermore another attempt of describing perfect squares, now those of linegraphs, is made.

\subsection{The Line Graph}

We start this section by recalling the definition of a line graph and stating some basic and well known facts. Furthermore we will give a list of common parameters used in the description of graphs that can be translated to the language of line graphs. We have already seen in Chapter 2 that especially concepts like cliques and stable sets can be translated from the line graph into matchings and anti matchings in the original graph. All those translations might seem rather obvious, but it will prove very convenient to have a good overview for we will often switch between a graph and its line graph in the upcoming proofs.

\begin{definition}[Line Graph]
The {\em line graph} $\lineg{G}$ of a graph $G$ is the graph defined on the edge set
$\E{G}$ (each vertex in $\lineg{G}$ represents a unique edge in $G$) such that two vertices in
$\lineg{G}$ are defined to be adjacent if their corresponding edges in $G$ share a common
endpoint. The distance between two edges in G is defined to be the distance between
their corresponding vertices in $\lineg{G}$. For two edges $e, e'\in\E{G}$ we write $\distg{G}{e}{e'}=\distlg{G}{e}{e'}$.
\end{definition}

\begin{remark}\label{rem4.1} Let $G=\lb V,E\rb$ be a graph and $\lineg{G}$ its line graph, then
\begin{enumerate}[i)]

\item $\V{\lineg{G}}=E$,

\item $\fkt{\deg_{\lineg{G}}}{e}=\fkt{\deg}{x}+\fkt{\deg}{y}-2=\fkt{\operatorname{s}}{e}-1$ for $xy=e\in E$,

\item $\fkt{\Delta}{\lineg{G}}=\fkt{\sigma}{G}-1$,

\item $\fkt{\omega}{\lineg{G}}=\fkt{\Delta}{G}$,

\item $\fkt{\alpha}{\lineg{G}}=\fkt{\nu}{G}$,

\item $\distg{\lineg{G}}{e_1}{e_2}\geq 2\Rightarrow e_1\cap e_2=\emptyset$,

\item $\abs{\E{\lineg{G}}}=\frac{1}{2}\sum_{v\in V}\fkt{\deg_G}{v}^2-\abs{E}$.

\end{enumerate}
\end{remark}

\begin{theorem}[see \cite{volkmann2006graphen}]\label{thm4.9}
A connected graph $G$ is isomorphic to its line graph if and only if $G$ is a cycle. 
\end{theorem}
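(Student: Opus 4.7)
The plan is to prove both directions separately, with the forward direction being an easy construction and the reverse direction relying on counting arguments using the identity $|\E{\lineg{G}}|=\frac{1}{2}\sum_{v\in V}\fkt{\deg_G}{v}^2-\abs{E}$ from Remark \autoref{rem4.1} vii).

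For the easy direction, if $G\cong C_n$ with edges labelled cyclically $e_1,\dots,e_n$ so that $e_i$ and $e_{i+1\,(\!\!\!\!\mod n)}$ share a vertex, then by the definition of the line graph these are precisely the adjacencies in $\lineg{G}$, so $\lineg{G}$ is isomorphic to $C_n$ via the map $e_i\mapsto v_i$. I would write this in one or two lines.

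For the nontrivial direction, assume $G$ is a connected graph with $G\cong\lineg{G}$, and set $n\define\abs{\V{G}}$. First I would extract two numerical consequences from the isomorphism: from the vertex counts, $\abs{\E{G}}=\abs{\V{\lineg{G}}}=\abs{\V{G}}=n$, and from the edge counts combined with Remark \autoref{rem4.1} vii),
\begin{align*}
n=\abs{\E{G}}=\abs{\E{\lineg{G}}}=\frac{1}{2}\sum_{v\in\V{G}}\fkt{\deg_G}{v}^2-n,
\end{align*}
so $\sum_v \fkt{\deg_G}{v}^2=4n$. Combined with the handshake identity $\sum_v\fkt{\deg_G}{v}=2\abs{\E{G}}=2n$, the Cauchy--Schwarz inequality gives
\begin{align*}
4n=\sum_v\fkt{\deg_G}{v}^2\geq\frac{\lb\sum_v\fkt{\deg_G}{v}\rb^2}{n}=\frac{4n^2}{n}=4n,
\end{align*}
so equality holds throughout. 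The equality case of Cauchy--Schwarz forces all vertex degrees to coincide, and their common value must be $\frac{2n}{n}=2$.

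Hence $G$ is a connected $2$-regular graph, which is well known (and easy to see by following any maximal path from a vertex and invoking connectedness) to be a cycle. The main obstacle here is not really technical; it is ensuring the line-graph edge-counting identity is applied correctly and recognising that the isomorphism $G\cong\lineg{G}$ gives both $\abs{\V{G}}=\abs{\V{\lineg{G}}}$ and $\abs{\E{G}}=\abs{\E{\lineg{G}}}$, which together with Cauchy--Schwarz collapses the possibilities for the degree sequence to the all-$2$ sequence.
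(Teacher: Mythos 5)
The paper states this theorem only as a citation to Volkmann's textbook and supplies no proof of its own, so there is no in-paper argument to compare against. Your proof is correct and self-contained: the isomorphism forces $\abs{\V{G}}=\abs{\E{G}}$, the line-graph edge count from Remark \autoref{rem4.1} vii) then gives $\sum_{v}\fkt{\deg_G}{v}^2=4\abs{\V{G}}$, and Cauchy--Schwarz together with the handshake identity forces $2$-regularity, whence a connected $2$-regular graph is a cycle; this is essentially the standard textbook argument for this statement.
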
 

\begin{theorem}[Whitney. 1992 \cite{whitney1992congruent}]\label{thm4.12}
If $G$ and $G'$ are graphs with $\lineg{G}\cong\lineg{G'}$ then either $G\cong G'$ or $G\cong K_{1,3}$ and $G'\cong K_3$.
\end{theorem}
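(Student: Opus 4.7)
The plan is to reconstruct $G$ from $\lineg{G}$ via its \emph{Krausz partition}, and to locate the unique obstruction to uniqueness. Direct computation yields $\lineg{K_3}\cong K_3\cong\lineg{K_{1,3}}$, so the exceptional case is genuine. It remains to show that whenever $\lineg{G}\cong\lineg{G'}$ and $\{G,G'\}\neq\{K_3,K_{1,3}\}$, then $G\cong G'$; passing to components and noting that $K_2$-components correspond to isolated line-graph vertices unambiguously, I may assume $G$ and $G'$ connected.

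For each $v\in\V{G}$ the set $S_v=\{e\in\E{G}:v\in e\}$ is a clique in $\lineg{G}$; the family $\mathcal{K}_G=\{S_v:v\in\V{G}\}$ partitions $\E{\lineg{G}}$, and each vertex of $\lineg{G}$ lies in exactly two members of $\mathcal{K}_G$. Conversely, any such \emph{Krausz partition} $\mathcal{K}$ of $\lineg{G}$ determines a graph $G_{\mathcal{K}}$ with vertex set $\mathcal{K}$, in which each vertex of $\lineg{G}$ becomes an edge joining the two cliques containing it, and $\lineg{G_{\mathcal{K}}}\cong\lineg{G}$ by construction. Hence the task reduces to showing that any two Krausz partitions of $\lineg{G}$ produce isomorphic reconstructed graphs whenever $G\not\cong K_3$.

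The technical core is a classification of the maximal cliques of $\lineg{G}$. If $\fkt{\deg_G}{v}\geq 3$, then $S_v$ is maximal, since any edge $f$ of $G$ adjacent in $\lineg{G}$ to every element of $S_v$ must share an endpoint with each of the $\geq 3$ edges at $v$, which forces $v\in f$ and hence $f\in S_v$. Triangle-cliques $T=\{xy,yz,xz\}$ arising from triangles $xyz\subseteq G$ are also always maximal, since any further edge adjacent to all three would need both endpoints in $\{x,y,z\}$. The \emph{distinguishing lemma} I would prove is: for a maximal triangle $T\subseteq\V{\lineg{G}}$, if some external vertex of $\lineg{G}$ is adjacent to exactly one vertex of $T$, then $T=S_v$ for some $v$ with $\fkt{\deg_G}{v}=3$; otherwise every external vertex is adjacent to $0$ or $2$ vertices of $T$, and then $T$ is either a triangle-clique or a star $S_v$ whose three neighbours in $G$ have all their incidences inside $\V{T}\cup\{v\}$.

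The main obstacle is the ambiguous case of the distinguishing lemma. When $T=S_v$ exhibits this ambiguity, connectedness of $G$ forces $\V{G}\subseteq\{v,a,b,c\}$, leaving only the four graphs on these vertices with $\fkt{\deg_G}{v}=3$, namely $K_{1,3}$, $K_{1,3}+e$, $K_{1,3}+2e$, and $K_4$. A finite case-by-case check shows that every additional edge among $a,b,c$ introduces further maximal cliques in $\lineg{G}$ which pin the Krausz partition down up to isomorphism of the reconstructed graph; only in the case $G\cong K_{1,3}$ does the partition admit a genuinely different valid reading, namely as the triangle-clique corresponding to $G\cong K_3$. This recovers exactly the exception $\{G,G'\}=\{K_3,K_{1,3}\}$. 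The delicate step is the distinguishing lemma combined with the closing case analysis; once these are in hand, assembling the uniquely determined Krausz partition reconstructs $G$ from $\lineg{G}$, completing the proof.
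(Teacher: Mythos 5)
The paper does not prove this statement at all; it is quoted as a classical result of Whitney (the ``1992'' in the attribution is a typo for 1932), so there is no in-paper argument to compare against. Your Krausz-partition reconstruction is the standard modern route to Whitney's theorem, and its main ingredients are sound: the computation $\lineg{K_3}\cong K_3\cong\lineg{K_{1,3}}$, the maximality of $S_v$ for $\fkt{\deg_G}{v}\geq 3$ and of triangle-cliques, the observation that any clique of $\lineg{G}$ on four or more vertices must be a star $S_v$, and the odd/even distinguishing criterion for maximal triangles (external vertices meet a triangle-clique in $0$ or $2$ vertices, so a single external vertex adjacent to exactly one vertex of $T$ certifies $T=S_v$) are all correct, as is the reduction of the ambiguous case to $\V{G}\subseteq\set{v,a,b,c}$ and the finite check over $K_{1,3}$, the paw, the diamond and $K_4$.

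There is, however, one genuine gap in the final assembly. A Krausz partition is in general \emph{not} a collection of maximal cliques: already for the paw $K_{1,3}+e$, the partition $\condset{S_v}{v\in\V{G}}$ contains two-element cliques that are properly contained in maximal triangles of $\lineg{G}$. Consequently, classifying the maximal triangles of $\lineg{G}$ does not by itself show that every Krausz partition reconstructs the same graph; you still need an argument that, for each vertex $e=uv$ of $\lineg{G}$, the split of its closed neighborhood into the two partition cliques through $e$ is forced (here one uses that $S_u\setminus\set{e}$ and $S_v\setminus\set{e}$ are disjoint and that any alternative split would create a clique violating the classification, with the exceptional configurations feeding exactly into your closing case analysis). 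Your last sentence, ``assembling the uniquely determined Krausz partition,'' assumes precisely this step without proof. A second, smaller point: the statement as printed omits the hypothesis that $G$ and $G'$ be connected, without which it is false (take $G=K_3\cupdot K_3$ and $G'=K_3\cupdot K_{1,3}$, or add an isolated vertex); your reduction ``I may assume $G$ and $G'$ connected'' therefore proves the correct (connected) theorem but cannot rescue the literal statement. That defect lies in the paper's formulation rather than in your argument, but it deserves an explicit remark.
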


\begin{theorem}[Krausz. 1943 \cite{krausz1943demonstration}]\label{thm4.10}
A graph $G$ is the line graph of a graph $H$ if and only if there exists a decomposition of $G$ into complete and edge disjoint subgraphs such that each vertex of $G$ is contained in at most two of those subgraphs.
\end{theorem}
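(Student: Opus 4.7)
The plan is to prove both directions constructively. For the forward implication, I would suppose $G=\lineg{H}$ and, for each vertex $v\in\V{H}$ of positive degree, let $K_v\subseteq\V{G}$ denote the set of edges of $H$ incident to $v$. Since every pair of edges in $K_v$ shares the endpoint $v$, the set $K_v$ is a clique of $\lineg{H}=G$, and I claim $\set{K_v:v\in\V{H}}$ is the desired decomposition. Every vertex $uv=e\in\V{G}$ lies in exactly the two cliques $K_u$ and $K_v$, giving the ``at most two'' condition. For edge-disjointness, an edge $ee'$ of $\lineg{H}$ corresponds to two edges $e,e'$ of $H$ sharing exactly one endpoint $w$, since sharing two endpoints would force $e=e'$; thus $ee'$ lies in $K_w$ and in no other $K_v$.

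For the converse, suppose $G$ admits such a decomposition $C_1,\ldots,C_m$. The plan is to build $H$ so that its vertices are (roughly) the cliques and its edges are the vertices of $G$. Concretely, introduce one vertex $v_i$ for each $C_i$, and for each $w\in\V{G}$ belonging to only one clique $C_i$ introduce an auxiliary pendant vertex $u_w$. Define a map $\phi\colon\V{G}\to\E{H}$ by $\fkt{\phi}{w}=v_iv_j$ if $w\in C_i\cap C_j$ and $\fkt{\phi}{w}=v_iu_w$ if $w$ lies only in $C_i$. To verify $\lineg{H}\cong G$ it then suffices to show that $w,w'\in\V{G}$ are adjacent in $G$ exactly when $\fkt{\phi}{w}$ and $\fkt{\phi}{w'}$ share an endpoint in $H$: if $ww'\in\E{G}$ then this edge lies in some $C_i$, placing $w,w'\in C_i$, so both images share $v_i$; conversely, if two images share an endpoint $v_i$ then both preimages lie in $C_i$ and are therefore adjacent in $G$.

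The main subtlety I anticipate is ensuring that $H$ is simple and that $\phi$ is a genuine bijection onto $\E{H}$. The edge-disjointness of the decomposition is what makes this work: if distinct $w\neq w'$ both belonged to $C_i\cap C_j$, then the edge $ww'$ would be contained in both $C_i$ and $C_j$, contradicting edge-disjointness; hence no two vertices of $G$ produce the same edge of $H$. The auxiliary vertices $u_w$ are forced upon us precisely to accommodate vertices of $G$ lying in only one clique, which would otherwise have no natural second endpoint in $H$; without this device the construction fails in simple cases such as an isolated vertex or a vertex of $G$ touched by only one clique. All remaining verifications are routine bookkeeping.
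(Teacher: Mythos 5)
The paper states this result only as a cited classical theorem of Krausz and gives no proof of its own, so there is nothing to compare against; your argument is the standard proof and it is correct. Both directions are sound: the forward direction correctly exhibits the cliques $K_v$ of edges incident to a common vertex $v$ of $H$, verifies that each vertex of $\lineg{H}$ lies in exactly two of them, and uses simplicity of $H$ to get edge-disjointness; the converse correctly inverts this by taking one vertex per clique, adding pendant vertices for the vertices of $G$ covered only once, and checking that the resulting map $\phi$ is injective (via edge-disjointness) and adjacency-preserving. The only bookkeeping point worth making explicit is that a vertex of $G$ lying in \emph{no} clique of the decomposition (an isolated vertex, if the decomposition is read as covering only the edges) must first be assigned a singleton clique, or equivalently given two auxiliary endpoints, so that $\phi$ is defined on all of $\V{G}$; you already flag this case, and it is routine.
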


\begin{corollary}\label{cor4.1}
If $G$ is a line graph, it does not contain a $K_{1,3}$.
\end{corollary}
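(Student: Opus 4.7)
The plan is to derive this directly from Krausz's characterisation (Theorem~\ref{thm4.10}) by a short pigeonhole argument on the edge-clique cover of a putative $K_{1,3}$.

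Suppose for contradiction that $G$ is a line graph that contains an induced $K_{1,3}$, with central vertex $v$ and leaves $a,b,c\in\nb{v}$, where $ab,ac,bc\notin\E{G}$. By Theorem~\ref{thm4.10} there is a decomposition $\mathcal{K}=\{K_1,\dots,K_m\}$ of $\E{G}$ into edge-disjoint complete subgraphs such that every vertex of $G$ is contained in at most two members of $\mathcal{K}$. Each of the three edges $va$, $vb$, $vc$ belongs to exactly one member of $\mathcal{K}$, and that member contains $v$.

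Now I would invoke the pigeonhole principle: since $v$ lies in at most two cliques of $\mathcal{K}$, among the three cliques carrying $va$, $vb$, $vc$ respectively, at least two must coincide. Without loss of generality $va$ and $vb$ both lie in the same $K_i\in\mathcal{K}$. But then $a,b\in K_i$, and since $K_i$ induces a complete subgraph, the edge $ab$ is present in $G$. This contradicts the assumption that $\{v,a,b,c\}$ induces a $K_{1,3}$.

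The only subtle point — and the one worth verifying carefully — is the implicit convention that ``contain'' in the statement of the corollary means ``contain as an induced subgraph'' (as fixed in the definition of $\mathcal{F}$-free earlier in the excerpt); without this, the claim would be false, since e.g.\ the line graph of any graph with a vertex of degree~$\geq 3$ contains $K_{1,3}$ as a (non-induced) subgraph. Once the induced reading is in place, the argument above is complete and requires no further case analysis.
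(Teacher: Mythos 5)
Your proof is correct and follows exactly the route the paper intends: the corollary is stated as an immediate consequence of Krausz's characterisation (Theorem~\ref{thm4.10}), and your pigeonhole argument on the at-most-two cliques of the decomposition containing the centre $v$ is the standard way to make that deduction explicit. Your remark that ``contain'' must be read as ``contain as an induced subgraph'' is also consistent with the paper's convention fixed in the definition of $\mathcal{F}$-free.
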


\begin{theorem}[Harary. 1969 \cite{harary6graph}]\label{thm4.11}
A graph is the line graph of a tree if and only if it is a block graph in which each separating vertex is contained in exactly two blocks.
\end{theorem}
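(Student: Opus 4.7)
The plan is to establish both directions by combining Krausz's theorem (Theorem 4.10) with elementary facts about blocks and cut vertices.

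For the forward direction, let $T$ be a tree. For every vertex $v\in\V{T}$ I consider the clique $C_v$ in $\lineg{T}$ spanned by the edges of $T$ incident to $v$. These cliques are pairwise edge-disjoint and cover $\E{\lineg{T}}$, and because $T$ is acyclic any two of them share at most one vertex of $\lineg{T}$—the edge $uv$ if $uv\in\E{T}$, nothing otherwise. A short argument identifies the blocks of $\lineg{T}$ with the non-trivial cliques $C_v$ (those with $\fkt{\deg_T}{v}\ge 2$), so every block is complete and $\lineg{T}$ is a block graph. A vertex of $\lineg{T}$ corresponds to an edge $uv$ of $T$ and lies in the two blocks $C_u,C_v$ iff both $u$ and $v$ are non-leaves—in which case it is separating in $\lineg{T}$—and lies in a single block otherwise; hence every separating vertex of $\lineg{T}$ is contained in exactly two blocks.

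For the backward direction, let $G$ be a block graph in which every separating vertex lies in exactly two blocks, so every non-separating vertex lies in exactly one. The blocks form an edge-disjoint decomposition of $G$ into complete subgraphs with each vertex in at most two members, so by Krausz's theorem $G\cong\lineg{H}$ for some graph $H$. I will build $H$ explicitly: take one vertex $x_B$ per block $B$ of $G$ together with one pendant vertex $y_v$ per non-separating vertex $v$; for each separating vertex $v$ lying in the two blocks $B_1,B_2$ add the edge $x_{B_1}x_{B_2}$, and for each non-separating vertex $v$ lying in the unique block $B$ containing it add the edge $x_By_v$. The edges of $H$ then biject with the vertices of $G$, and two such edges share a vertex of $H$ iff the corresponding vertices of $G$ lie in a common block iff they are adjacent in $G$, giving $\lineg{H}\cong G$.

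It remains to prove that $H$ is a tree, which is the heart of the argument. Connectedness of $H$ follows directly from that of $G$. For acyclicity, suppose $H$ contained a cycle; since pendants $y_v$ have degree $1$, this cycle must traverse only block-vertices $x_{B_1},\dots,x_{B_k}$ joined by separating-vertex-edges $v_1,\dots,v_k$, where $v_i$ is incident to $B_i$ and $B_{i+1}$ with indices taken modulo $k$. But then $B_1,v_1,B_2,v_2,\dots,B_k,v_k,B_1$ would be a closed walk in the block-cut tree of $G$, contradicting the classical fact that the block-cut structure of any connected graph is a tree—this is the main obstacle of the proof. The only remaining delicate point is the exceptional case $G\cong K_3$, in which $G$ has no separating vertex and a single block; here the construction must yield $H\cong K_{1,3}$ rather than $H\cong K_3$, in agreement with Whitney's theorem (Theorem 4.12).
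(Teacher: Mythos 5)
The paper does not prove this statement at all: Theorem \ref{thm4.11} is quoted from Harary's book as a known result, so there is no in-paper argument to compare yours against. Judged on its own, your proof is correct and complete. The forward direction via the Krausz cliques $C_v$ and the identification of the blocks of $\lineg{T}$ with the cliques of non-leaf vertices is the standard argument and works. In the backward direction your explicit construction of $H$ is sound: distinct blocks of $G$ share at most one vertex, so $H$ is simple; the $y$-vertices have degree one, so distinct edges of $H$ meet only in block-vertices, which gives the adjacency correspondence and hence $\lineg{H}\cong G$. The one place you should tighten the wording is the acyclicity step: a closed walk in a tree is not by itself a contradiction (trees admit backtracking closed walks), so you need to observe that your walk $B_1,v_1,B_2,\dots,B_k,v_k,B_1$ is actually a cycle --- the $B_i$ are pairwise distinct because the $x_{B_i}$ form a cycle in $H$, and the $v_i$ are pairwise distinct because distinct edges of $H$ correspond to distinct vertices of $G$; only then does the tree structure of the block--cut graph yield the contradiction. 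Also note that $G\cong K_3$ is not really an exception to your construction: with one block and three non-separating vertices it produces $K_{1,3}$ automatically, so no special handling is needed beyond the remark you already make.
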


Note that this theorem gives us the chordality of line graphs of trees. I.e. with $T$ being a tree and thus $\lineg{T}$ being a block graph, $\lineg{T}$ cannot contain a sunflower which is not suspended. Hence $\lineg{T}^k$ is chordal for all $k\in\N$. Later we will see an alternative proof for this fact.\\
We conclude this brief summary of results on line graphs with another well known fact, which will be an important part of further investigations.

\begin{lemma}\label{lemma4.2}
A graph $G$ contains a, not necessary induced, cycle $C$ of length $\abs{C}\geq 4$ if and only if $\lineg{G}$ contains an induced cycle $C_L$ with $\abs{C_L}=\abs{C}$.  
\end{lemma}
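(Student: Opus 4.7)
The plan is to prove both directions by constructing the cycle explicitly, moving back and forth between edges of $G$ and vertices of $\lineg{G}$.

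For the forward direction, suppose $G$ contains a cycle $C = v_1 v_2 \dots v_n v_1$ with $n \geq 4$. Each edge $e_i = v_iv_{i+1}$ (indices mod $n$) is a vertex of $\lineg{G}$, and since $e_i$ and $e_{i+1}$ share the endpoint $v_{i+1}$, consecutive edges are adjacent in $\lineg{G}$. This produces a cycle $C_L = e_1 e_2 \dots e_n e_1$ of the same length $n$ in $\lineg{G}$. The only thing to check is that $C_L$ is induced: two non-consecutive edges $e_i$ and $e_j$ share a vertex in $G$ only if some $v_k$ is an endpoint of both, but each vertex $v_k$ on $C$ appears exactly twice as an endpoint and only in the consecutive edges $e_{k-1}$ and $e_k$. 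Hence non-consecutive $e_i, e_j$ are nonadjacent in $\lineg{G}$, and $C_L$ is indeed induced.

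For the reverse direction, suppose $C_L = e_1 e_2 \dots e_n e_1$ is an induced cycle in $\lineg{G}$ with $n\geq 4$. For each $i$ I would set $v_i \define e_i \cap e_{i+1}$, which is well defined because consecutive vertices of $C_L$ correspond to edges of $G$ sharing exactly one endpoint (they cannot share two in a simple graph). Then each $e_i$ is of the form $v_{i-1}v_i$ (indices mod $n$), and the sequence $v_1,v_2,\dots,v_n,v_1$ is a closed walk in $G$ using exactly $n$ edges. What remains is to promote this walk to a genuine cycle, i.e., to show the $v_i$ are pairwise distinct.

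The key step, and the only place where the hypothesis $n\geq 4$ and the induced-ness of $C_L$ are used, is distinctness. First, $v_i\neq v_{i+1}$ for every $i$, since otherwise $e_{i+1}$ would be a loop. Second, for $|i-j|\pmod n \geq 2$, a coincidence $v_i=v_j$ would mean that $e_i$ and $e_j$ both contain this common vertex, making them adjacent in $\lineg{G}$; but then they would have to be consecutive on $C_L$, which fails since $n\geq 4$ gives $2\leq |i-j|\pmod n\leq n-2$. This is the main obstacle to watch carefully, because it is precisely where the bound $n \geq 4$ enters (for $n = 3$ an induced triangle in $\lineg{G}$ may correspond to a star $K_{1,3}$ in $G$ by Theorem \ref{thm4.12}, so no cycle is produced). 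Once distinctness is established, $v_1v_2\dots v_nv_1$ is a cycle in $G$ of length exactly $n$, completing the proof.
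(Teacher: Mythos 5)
Your proof is correct and follows essentially the same route as the paper's: consecutive edges on the cycle share an endpoint, and inducedness of $C_L$ (equivalently, the absence of chords in $C$) forces non-consecutive edges to be disjoint. The only difference is that you spell out the distinctness of the vertices $v_i = e_i \cap e_{i+1}$ explicitly, a detail the paper leaves implicit, and your aside about why $n\geq 4$ is needed (the $K_{1,3}$ versus $K_3$ ambiguity) is accurate.
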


\begin{proof}
Let $C_L$ be an induced cycle of length $c=\abs{C_L}\geq 4$ in $\lineg{G}$. Then there exist distinct edges $e_1,\dots,e_c$ in $G$ corresponding to the vertices of $C_L$ in $\lineg{G}$. For all $i\in\set{1,\dots, c}$ the edges $e_i$ and $e_j$, with $j=i\pm1\lb \!\!\!\mod c\rb$, are adjacent, therefore share a common vertex. On the other hand, if $j\neq i\pm1\lb \!\!\!\mod c\rb$, $\distg{\lineg{G}}{e_i}{e_j}\geq 2$ holds, hence $e_i$ and $e_j$ are disjoint. So $e_1,\dots,e_c$ form a cycle of length $c$ in $G$.\\
Now let $C$ be a cycle of length $c=\abs{C}\geq 4$ in $G$. The line graph of $C$ is a cycle $C_L$ itself and its edges correspond to the vertices of $C_L$. Suppose $C_L$ has a chord in $\lineg{G}$, then there exists an edge joining $e_i$ and $e_j$ with $j\neq i\pm1\lb\!\!\!\mod c\rb$ in the line graph, hence $e_i$ and $e_j$ share a common vertex. This contradicts $C$ being a cycle in $G$ and the proof is complete. 
\end{proof} 

\begin{corollary}\label{cor4.2}
The line graph of a graph $G$ is chordal if and only if $\fkt{\operatorname{girth}}{G}\leq 3$.
\end{corollary}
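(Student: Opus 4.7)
The statement follows almost immediately from Lemma~\ref{lemma4.2}, so my plan is essentially to record how the two equivalences chain together and to check the edge cases.

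My plan is to unfold both sides of the biconditional through their definitions. Recall that $G$ is chordal iff it has no induced cycle of length $\geq 4$, and that (in the convention fixed earlier in the thesis) $\fkt{\operatorname{girth}}{G}$ denotes the length of a \emph{longest} cycle in $G$, so $\fkt{\operatorname{girth}}{G}\leq 3$ exactly says that $G$ contains no (not necessarily induced) cycle of length $\geq 4$. Then I would argue: if $\lineg{G}$ is chordal, it has no induced cycle of length $\geq 4$, and by the ``only if'' direction of Lemma~\ref{lemma4.2} this forces $G$ itself to have no cycle of length $\geq 4$, hence $\fkt{\operatorname{girth}}{G}\leq 3$. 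Conversely, if $\fkt{\operatorname{girth}}{G}\leq 3$, then $G$ has no cycle of length $\geq 4$, so by the ``if'' direction of Lemma~\ref{lemma4.2} $\lineg{G}$ has no induced cycle of length $\geq 4$ and is therefore chordal.

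The only step that needs any care is making sure I am applying Lemma~\ref{lemma4.2} in the correct direction on each side. The lemma is stated as an ``if and only if'' between the existence of a (not necessarily induced) cycle in $G$ and the existence of an \emph{induced} cycle of the same length in $\lineg{G}$; this is precisely the form I need, since chordality in $\lineg{G}$ is phrased in terms of induced cycles while the girth in $G$ is phrased in terms of arbitrary cycles. There is no genuine obstacle here: the corollary is a direct reformulation of Lemma~\ref{lemma4.2} together with the definitions of chordality and girth.
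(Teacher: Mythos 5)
Your proposal is correct and is exactly the derivation the paper intends: the corollary is left as an immediate consequence of Lemma~\ref{lemma4.2}, and your chaining of the two directions of that lemma with the definitions of chordality and of girth (noting the thesis's nonstandard convention that $\fkt{\operatorname{girth}}{G}$ is the length of a \emph{longest} cycle) is the right reading. Nothing is missing.
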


\begin{corollary}\label{cor4.3}
The line graph of a graph $G$ does not contain an induced cycle of length $\geq5$ if and only if $\fkt{\operatorname{girth}}{G}\leq4$.
\end{corollary}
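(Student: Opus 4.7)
The plan is to derive this as a direct consequence of Lemma \autoref{lemma4.2}, which already establishes the correspondence between (not necessarily induced) cycles in $G$ and induced cycles in $\lineg{G}$ of the same length whenever that length is at least $4$. Since the statement of Corollary \autoref{cor4.3} only concerns cycles of length $\geq 5$, which is a fortiori covered by the hypothesis $\abs{C}\geq 4$ in Lemma \autoref{lemma4.2}, no additional combinatorial work is required; the proof is essentially a rephrasing.

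First I would recall that by the paper's definition $\fkt{\operatorname{girth}}{G}$ is the length of a longest cycle of $G$, so that $\fkt{\operatorname{girth}}{G}\leq 4$ is equivalent to saying that $G$ contains no cycle of length $\geq 5$. Then I would argue both directions using Lemma \autoref{lemma4.2}. For the ``only if'' direction, suppose for contradiction that $\fkt{\operatorname{girth}}{G}\geq 5$; then $G$ contains a cycle $C$ with $\abs{C}\geq 5\geq 4$, and Lemma \autoref{lemma4.2} produces an induced cycle $C_L$ in $\lineg{G}$ with $\abs{C_L}=\abs{C}\geq 5$, contradicting the assumption on $\lineg{G}$. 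For the ``if'' direction, suppose $\lineg{G}$ contains an induced cycle $C_L$ with $\abs{C_L}\geq 5$; then $\abs{C_L}\geq 4$, and Lemma \autoref{lemma4.2} yields a cycle $C$ in $G$ with $\abs{C}=\abs{C_L}\geq 5$, so $\fkt{\operatorname{girth}}{G}\geq 5$, contradicting $\fkt{\operatorname{girth}}{G}\leq 4$.

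There is no real obstacle here, as Lemma \autoref{lemma4.2} already does all of the structural work (building a cycle in $G$ from an induced cycle in $\lineg{G}$ via the edges corresponding to the cycle's vertices, and conversely noting that a chord in the line-graph cycle would force two non-consecutive edges of $C$ to share an endpoint, contradicting $C$ being a cycle in $G$). The only small point worth flagging is the convention on $\fkt{\operatorname{girth}}{G}$ adopted in the definitions: since the paper defines girth as the length of a \emph{longest} cycle, the equivalence is clean; with the more standard ``shortest cycle'' convention the statement would have to be rephrased, but under the paper's convention the argument above is complete.
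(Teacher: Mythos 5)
Your proof is correct and follows exactly the route the paper intends: Corollary \autoref{cor4.3} is stated as an immediate consequence of Lemma \autoref{lemma4.2} (with no separate proof given), and your two-direction application of that lemma, together with the observation that the paper's (non-standard) definition of $\fkt{\operatorname{girth}}{G}$ as the length of a \emph{longest} cycle makes $\fkt{\operatorname{girth}}{G}\leq 4$ equivalent to the absence of cycles of length $\geq 5$, is precisely the intended argument.
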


\begin{corollary}\label{cor4.4}
If $G$ is a graph with $\fkt{\operatorname{girth}}{G}\leq3$, then $\lineg{G}^k$ is chordal for all $k\in\N$.
\end{corollary}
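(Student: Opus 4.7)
The plan is to derive chordality of every power of $\lineg{G}$ by assembling two tools from Chapter~3. The hypothesis $\fkt{\operatorname{girth}}{G}\leq 3$ together with Corollary~4.2 immediately gives that $\lineg{G}$ itself is chordal, settling the case $k=1$ and simultaneously ensuring that $\lineg{G}$ contains no induced $C_n$ with $n\geq 4$. Moreover, by Corollary~4.1 every line graph is $K_{1,3}$-free.

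With these two facts in hand, $\lineg{G}$ satisfies both hypotheses of Theorem~3.16 (Flotow), and I would invoke that theorem to conclude that $\lineg{G}^2$ is chordal as well. It then remains to propagate chordality from $k\in\{1,2\}$ to arbitrary $k\in\N$, which is precisely what Duchet's Theorem (Theorem~3.12) accomplishes: chordality of $G^k$ implies chordality of $G^{k+2}$. A straightforward induction starting from the chordal $\lineg{G}^1$ and $\lineg{G}^2$ then covers the odd and even exponents separately, exhausting all $k\in\N$.

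The main obstacle, and the reason to route through Theorem~3.16 rather than attack $k=2$ directly, is that a bare-hands argument via the flower characterization of Corollary~3.5 would require classifying every flower of size at least four inside $\lineg{G}$ and verifying that all such flowers are withered. This would be complicated by the variety of ways in which the $W$-clique of such a flower might correspond to a star or to a triangle in $G$, as illustrated earlier by the $S_3$-example on a triangle with three pendant edges. Theorem~3.16 is calibrated to exploit exactly the $K_{1,3}$-freeness intrinsic to line graphs, so the structural work is already done for us, and only the two-step Duchet induction remains.
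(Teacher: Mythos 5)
Your proof is correct and follows essentially the same route as the paper: establish that $\lineg{G}$ is chordal and $K_{1,3}$-free, deduce chordality of $\lineg{G}^2$ from a Chapter~3 result, and finish with Duchet's Theorem. The only (immaterial) difference is that you invoke Theorem~\autoref{thm3.16}, whereas the paper cites Corollary~\autoref{cor3.4}; since $\lineg{G}$ here has no induced cycles of length $\geq 4$ at all, both hypotheses are satisfied and either citation works.
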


\begin{proof}
With $\fkt{\operatorname{girth}}{G}\leq3$ the line graph $\lineg{G}$ does not contain an induced cycle of length $\geq 4$ thus it is chordal, i.e. $\lineg{G}$ cannot contain a non-chordal sunflower of size $\geq4$, and by being a line graph $\lineg{G}$ does not contain a $K_{1,3}$ as well. Hence by Corollary \autoref{cor3.4} $\lineg{G}^2$ is chordal and Duchet's Theorem yields the chordality of $\lineg{G}^k$ for all $k\in\N$. 
\end{proof}

\subsection{Chordality of the Squared Line Graph}

Corollary \autoref{cor4.2} provides the application of Corollary \autoref{cor3.4} to the line graph. However the class of graphs satisfying the conditions of Corollary \autoref{cor4.2} is not very interesting in this context. So the goal of this section will be to broaden the spectrum of graphs with chordal line graph squares.\\
We will start our investigation by looking into structural reasons for the appearance of sunflowers in line graphs.
\begin{definition}[Sunflower Sprout]
A {\em sunflower sprout} of size $n$ is a graph $S=\lb U\cup V,E\rb$, in which the vertices of $V$ induce a cycle of length $n$ and $U=\set{u_1,\dots,u_n}$ is a set of, not necessarily dissimilar, vertices with $V\cap U=\emptyset$ such that the following holds:
\vspace{-1mm}
\begin{enumerate}[i)]

\item $j\neq i\pm1\lb \!\!\!\mod n\rb\Rightarrow u_i\neq u_j$ and
\vspace{-1mm}
\item $v_iu_i\in E$ for all $i\in\set{1,\dots,n}$.
\end{enumerate}
\vspace{-1mm}
The family of all sunflower sprouts of size $n$ is denoted by $\mathcal{S}_n$.\\
The sunflower sprout is called {\em infertile} if there exists the edge $u_iu_j$ or $u_iv_j$ for an $i\in\set{1,\dots,n}$ and $j\neq i\pm1\lb \!\!\!\mod n\rb$.
\end{definition}
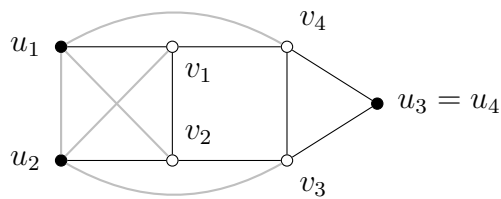
\begin{figure}[H]
\begin{center}
\begin{tikzpicture}

\node (center) [inner sep=1.5pt] {};

\node (label) [inner sep=1.5pt,position=135:1.7cm from center] {};

\node (v1) [inner sep=1.5pt,position=135:0.9cm from center,draw,circle] {};
\node (v2) [inner sep=1.5pt,position=225:0.9cm from center,draw,circle] {};
\node (v3) [inner sep=1.5pt,position=315:0.9cm from center,draw,circle] {};
\node (v4) [inner sep=1.5pt,position=45:0.9cm from center,draw,circle] {};

\node (u1) [inner sep=1.5pt,position=180:1.3cm from v1,draw,circle,fill] {};
\node (u2) [inner sep=1.5pt,position=180:1.3cm from v2,draw,circle,fill] {};
\node (u3) [inner sep=1.5pt,position=0:1.8cm from center,draw,circle,fill] {};

\node (lu1) [position=180:0.04cm from u1] {$u_1$};
\node (lu2) [position=180:0.04cm from u2] {$u_2$};
\node (lu3) [position=0:0.04cm from u3] {$u_3=u_4$};

\node (lv1) [position=315:0.03cm from v1] {$v_1$};
\node (lv2) [position=45:0.03cm from v2] {$v_2$};
\node (lv3) [position=315:0.03cm from v3] {$v_3$};
\node (lv4) [position=45:0.03cm from v4] {$v_4$};

\path
(u1) edge (v1)
(u2) edge (v2)
(u3) edge (v3)
	 edge (v4)
; 

\path 
(v1) edge (v2)
(v2) edge (v3)
(v3) edge (v4)
(v4) edge (v1)
;

\path [color=lightgray,thick]
(u2) edge [bend right] (v3)
	 edge (v1)
(u1) edge (v2)
	 edge [bend left] (v4)
	 edge (u2)
;

\end{tikzpicture}
\vspace{-1mm}
\caption{A sunflower sprout of size $4$ with $u_3=u_4$.}
\label{fig4.1}
\end{center}
\end{figure}
\vspace{-2em}
Note that the sunflower sprout given in \autoref{fig4.1} only represents the basic structure. The four gray edges may exist in any combination, but are not mandatory for the graph to be proper sunflower sprout (i.e. their existence does not make the sprout infertile). Hence there are $16$ different sunflower sprouts in total that have the black edges of the graph above as a basis.
\vspace{-0.5mm}
\newpage
\begin{lemma}\label{lemma4.3}
The line graph $\lineg{G}$ of a graph $G$ contains a not suspended, non-chordal sunflower of size $n$ if and only if $G$ contains a fertile sunflower sprout of size $n$.
\end{lemma}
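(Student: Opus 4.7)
My plan is to prove both directions by reading off the structural data on each side through the edge/vertex correspondence between $G$ and $\lineg{G}$, using Lemma \ref{lemma4.2} as the bridge between cycles in $G$ and induced cycles in $\lineg{G}$, and the elementary fact that two vertices of $\lineg{G}$ are adjacent exactly when the two corresponding edges of $G$ share an endpoint.

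For the ``if'' direction, suppose $G$ contains a fertile sunflower sprout with cycle $v_1,\ldots,v_n$ and pendant labels $u_1,\ldots,u_n$, so that $v_iu_i\in\E{G}$. I would define, in $\lineg{G}$, the vertices $w_i$ corresponding to the cycle edges $v_iv_{i+1 \bmod n}$ and $f_i$ corresponding to the pendant edges $v_iu_i$. Since $V$ induces $C_n$ in $G$, Lemma \ref{lemma4.2} makes the $w_i$'s into a chordless $n$-cycle in $\lineg{G}$. A direct endpoint check shows that each $f_i$ is adjacent in $\lineg{G}$ to exactly $w_{i-1}$ and $w_i$ among the $w_j$'s; the sprout's fertility is precisely what rules out any further adjacency between an $f_i$ and some $w_j$, and between two $f_i$'s indexed by non-consecutive labels. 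After an obvious index shift, $\{w_i\}\cup\{f_i\}$ becomes a general sunflower in which the $w$-part induces a chordless cycle, i.e.\ a non-chordal sunflower. Finally, a suspending vertex $e^{\star}\in\V{\lineg{G}}$ adjacent to two non-consecutive $f_i$'s would correspond to an edge of $G$ sharing endpoints with two ``distant'' pendant edges, which traces back to a forbidden stray edge of fertility type $u_iu_j$ or $u_iv_j$ and so cannot exist.

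For the ``only if'' direction, start with an unsuspended non-chordal sunflower in $\lineg{G}$ consisting of a chordless cycle $w_1,\ldots,w_n$ and pendant vertices $u_1,\ldots,u_n$. Viewing the $w_i$'s as edges of $G$, Lemma \ref{lemma4.2} produces a cycle $v_1v_2\cdots v_n$ in $G$ with $w_i=v_iv_{i+1 \bmod n}$, and distinctness of the $v_i$'s follows from $W$ being induced in $\lineg{G}$. Each pendant $u_i$, seen as an edge of $G$, must share a vertex with both $w_i$ and $w_{i+1}$; but these share only $v_{i+1}$, so $u_i=v_{i+1}x_i$ for a unique $x_i\in\V{G}$. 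I would propose the cycle $v_1,\ldots,v_n$ together with the $x_i$'s (suitably re-indexed) as the candidate sprout. The sprout conditions reduce to the sunflower conditions on the $u_i$'s---for instance, $x_i=x_j$ for non-consecutive $i,j$ would force $u_iu_j\in\E{\lineg{G}}$, contradicting condition (i) of a general sunflower---and fertility is exactly the unsuspended hypothesis: any stray edge $x_ix_j$ or $x_iv_j$ in $G$ would be a vertex of $\lineg{G}$ adjacent to two non-consecutive sunflower-$u$'s, suspending the sunflower.

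The main obstacle I expect is the bookkeeping around consecutive coincidences. The sprout permits $u_i=u_{i+1}$ (cf.\ Figure \ref{fig4.1}), and the general sunflower definition symmetrically permits the edge $u_iu_{i+1}$ in $\lineg{G}$. The proof must verify that these ``consecutive'' possibilities on the two sides match up exactly under the index shift coming from ``$v_{i+1}$ is the vertex shared by $w_i$ and $w_{i+1}$''; in particular, care is needed to ensure that the sunflower conditions, the sprout conditions, and the fertility constraint all translate consistently around the entire $n$-cycle, with the boundary cases (consecutive coincidences and potential extra edges inside $W$) handled uniformly.
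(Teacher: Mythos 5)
Your argument is correct and follows essentially the same route as the paper's proof: translate the sprout/sunflower data through the edge--vertex correspondence, use Lemma \ref{lemma4.2} to pass between the cycle in $G$ and the induced cycle in $\lineg{G}$, and observe that the infertility edges of the sprout correspond exactly to suspending vertices of the sunflower. The one step you leave implicit that the paper makes explicit is that, in the backward direction, the cycle $v_1\cdots v_n$ produced by Lemma \ref{lemma4.2} must itself be chordless in $G$ (as the sprout definition requires); this does not follow from the sunflower adjacency conditions alone, but a chord $v_av_b$ would be an edge adjacent to two non-consecutive pendant edges and hence a suspending vertex, so it is excluded by the same mechanism you already invoke for the stray edges $x_iv_j$.
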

\vspace{-1.5mm}
\begin{proof}
Suppose $G$ contains a fertile sunflower sprout of size $n$. The line graph of such a sprout contains a non-chordal sunflower of the same size. For this sunflower to be suspended there must exist an edge, corresponding to a vertex in the line graph, adjacent to edges $u_iv_i$ and $u_jv_j$ with $j\neq i\pm1\lb \!\!\!\mod n\rb$. With $\lb v_1,\dots,v_n\rb$ being an induced cycle, the edge $v_iv_j$ cannot exist. All other edges, namely $v_iu_j$, $v_ju_i$ and $u_iu_j$, would result in the sunflower sprout being infertile.\\
\\
Now let $S_n$ be an unsuspended, non-chordal sunflower in $\lineg{G}$. The cycle $C_n$ of this sunflower is induced, i.e. it does not contain a chord, and thus, by Lemma \autoref{lemma4.2}, $G$ contains a cycle $C$ of the same length.\\
For every two adjacent edges on $C$ there must exist an additional edge $e_i$, adjacent to both of them, which corresponds to a $u$-vertex $u_i$ of $S_n$ in the line graph. This edge cannot be adjacent to another such edge $e_j$ with $j\neq i\pm1\lb \!\!\!\mod n\rb$, otherwise $u_i$ and $u_j$ would be adjacent in $\lineg{G}$ and $S_n$ would not be a sunflower.\\
It is left to show that $C$ does not contain a chord. Suppose there is a chord $e$ in $C$, then $e$ is adjacent to two edges $e_i$ and $e_j$, $j\neq i\pm 1\lb \!\!\!\mod n\rb$ and therefore has to correspond to a vertex $w$ adjacent to $u_i$ and $u_j$ in the line graph and the sunflower $S_n$ is suspended, resulting in a contradiction. 
\end{proof}
\vspace{-1mm}
For sunflower sprouts of size $4$ there are very limited variants. Although there are numerous possible combinations of additional edges, displayed in gray in \autoref{fig4.2}, all possible sprouts can be reduced to three different basic structures. As seen in the sunflower sprout of \autoref{fig4.1} two consecutive $u$-vertices can be joined into one, forming a triangle with an edge of the induced circle. A sunflower sprout of size $4$ can contain none, one or two of those triangles, thus limiting $\mathcal{S}_4$ to the following $4113$ graphs. 
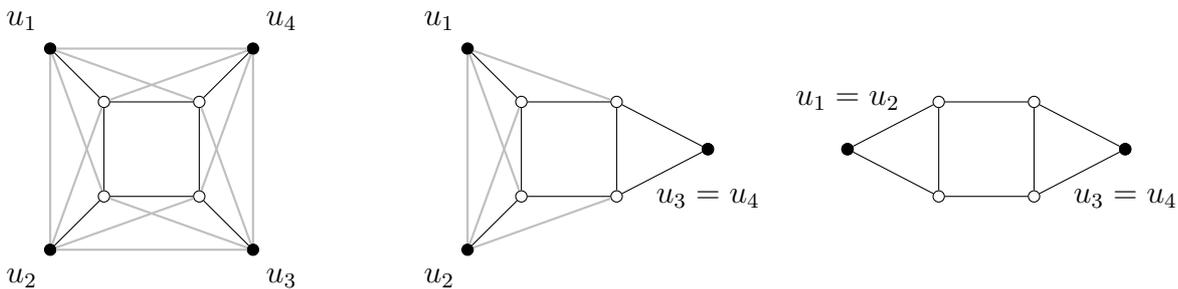
\begin{figure}[H]
\begin{center}
\begin{tikzpicture}

\node (anchor1) [] {};
\node (anchor2) [position=0:5.2cm from anchor1] {};
\node (anchor3) [position=0:5.2cm from anchor2] {};

\node (v1) [inner sep=1.5pt,position=135:0.6cm from anchor1,draw,circle] {};
\node (v2) [inner sep=1.5pt,position=225:0.6cm from anchor1,draw,circle] {};
\node (v3) [inner sep=1.5pt,position=315:0.6cm from anchor1,draw,circle] {};
\node (v4) [inner sep=1.5pt,position=45:0.6cm from anchor1,draw,circle] {};

\path
(v1) edge (v2)
(v2) edge (v3)
(v3) edge (v4)
(v4) edge (v1)
;

\node (u1) [inner sep=1.5pt,position=135:1.6cm from anchor1,draw,circle,fill] {};
\node (u2) [inner sep=1.5pt,position=225:1.6cm from anchor1,draw,circle,fill] {};
\node (u3) [inner sep=1.5pt,position=315:1.6cm from anchor1,draw,circle,fill] {};
\node (u4) [inner sep=1.5pt,position=45:1.6cm from anchor1,draw,circle,fill] {};

\path
(u1) edge (v1)
(u2) edge (v2)
(u3) edge (v3)
(u4) edge (v4)
;

\path[color=lightgray, thick]
(u1) edge (u2)
(u2) edge (u3)
(u3) edge (u4)
(u4) edge (u1)

(u1) edge (v2)
	 edge (v4)
(u2) edge (v3)
	 edge (v1)
(u3) edge (v4)
	 edge (v2)
(u4) edge (v1)
	 edge (v3)
;

\node (lu1) [position=135:0.07 from u1] {$u_1$};
\node (lu2) [position=225:0.07 from u2] {$u_2$};
\node (lu3) [position=315:0.07 from u3] {$u_3$};
\node (lu4) [position=45:0.07 from u4] {$u_4$};


\node (v1) [inner sep=1.5pt,position=135:0.6cm from anchor2,draw,circle] {};
\node (v2) [inner sep=1.5pt,position=225:0.6cm from anchor2,draw,circle] {};
\node (v3) [inner sep=1.5pt,position=315:0.6cm from anchor2,draw,circle] {};
\node (v4) [inner sep=1.5pt,position=45:0.6cm from anchor2,draw,circle] {};

\path
(v1) edge (v2)
(v2) edge (v3)
(v3) edge (v4)
(v4) edge (v1)
;

\node (u1) [inner sep=1.5pt,position=135:1.6cm from anchor2,draw,circle,fill] {};
\node (u2) [inner sep=1.5pt,position=225:1.6cm from anchor2,draw,circle,fill] {};
\node (u3) [inner sep=1.5pt,position=0:1.6cm from anchor2,draw,circle,fill] {};
\node (u4) [inner sep=1.5pt,position=0:1.6cm from anchor2,draw,circle,fill] {};

\path
(u1) edge (v1)
(u2) edge (v2)
(u3) edge (v3)
(u4) edge (v4)
;

\path[color=lightgray, thick]
(u1) edge (u2)

(u1) edge (v2)
	 edge (v4)
(u2) edge (v3)
	 edge (v1)
;

\node (lu1) [position=135:0.07 from u1] {$u_1$};
\node (lu2) [position=225:0.07 from u2] {$u_2$};
\node (lu3) [position=270:0.3 from u3] {$u_3=u_4$};


\node (v1) [inner sep=1.5pt,position=135:0.6cm from anchor3,draw,circle] {};
\node (v2) [inner sep=1.5pt,position=225:0.6cm from anchor3,draw,circle] {};
\node (v3) [inner sep=1.5pt,position=315:0.6cm from anchor3,draw,circle] {};
\node (v4) [inner sep=1.5pt,position=45:0.6cm from anchor3,draw,circle] {};

\path
(v1) edge (v2)
(v2) edge (v3)
(v3) edge (v4)
(v4) edge (v1)
;

\node (u1) [inner sep=1.5pt,position=180:1.6cm from anchor3,draw,circle,fill] {};
\node (u2) [inner sep=1.5pt,position=180:1.6cm from anchor3,draw,circle,fill] {};
\node (u3) [inner sep=1.5pt,position=0:1.6cm from anchor3,draw,circle,fill] {};
\node (u4) [inner sep=1.5pt,position=0:1.6cm from anchor3,draw,circle,fill] {};

\path
(u1) edge (v1)
(u2) edge (v2)
(u3) edge (v3)
(u4) edge (v4)
;

\node (lu1) [position=90:0.3 from u1] {$u_1=u_2$};
\node (lu3) [position=270:0.3 from u3] {$u_3=u_4$};

\end{tikzpicture}
\vspace{-1mm}
\end{center}
\caption{Sunflower sprouts of size $4$.}
\label{fig4.2}
\end{figure}
\vspace{-1mm}
\newpage
An obvious first analogy to Corollary \autoref{cor3.4} is the following, in addition we just have to deal with one particular sunflower sprout, seen in \autoref{fig4.3}.

\begin{figure}[H]
\begin{center}
\begin{tikzpicture}

\node (anchor1) [] {};

\node (v1) [inner sep=1.5pt,position=135:0.6cm from anchor1,draw,circle] {};
\node (v2) [inner sep=1.5pt,position=225:0.6cm from anchor1,draw,circle] {};
\node (v3) [inner sep=1.5pt,position=315:0.6cm from anchor1,draw,circle] {};
\node (v4) [inner sep=1.5pt,position=45:0.6cm from anchor1,draw,circle] {};

\path
(v1) edge (v2)
(v2) edge (v3)
(v3) edge (v4)
(v4) edge (v1)
;

\node (u1) [inner sep=1.5pt,position=135:1.6cm from anchor1,draw,circle,fill] {};
\node (u2) [inner sep=1.5pt,position=225:1.6cm from anchor1,draw,circle,fill] {};
\node (u3) [inner sep=1.5pt,position=315:1.6cm from anchor1,draw,circle,fill] {};
\node (u4) [inner sep=1.5pt,position=45:1.6cm from anchor1,draw,circle,fill] {};

\path
(u1) edge (v1)
(u2) edge (v2)
(u3) edge (v3)
(u4) edge (v4)
;

\node (lu1) [position=135:0.07 from u1] {$u_1$};
\node (lu2) [position=225:0.07 from u2] {$u_2$};
\node (lu3) [position=315:0.07 from u3] {$u_3$};
\node (lu4) [position=45:0.07 from u4] {$u_4$};

\end{tikzpicture}
\end{center}
\caption{The only sunflower sprout of size $4$, $S'_4$, with $\fkt{\operatorname{girth}}{S_4'}\leq 4$.}
\label{fig4.3}
\end{figure}

\begin{corollary}\label{cor4.5}
Let $G$ be a graph with $\fkt{\operatorname{girth}}{G}\leq4$, then $\lineg{G}^2$ is chordal if and only if $G$ does not contain the sunflower sprout $S_4'$ in \autoref{fig4.3}.
\end{corollary}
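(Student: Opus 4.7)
The strategy is to reduce the question to the line graph. Under the absence of induced $K_{1,3}$ and of induced cycles of length at least $5$, the combination of \autoref{cor3.4} and \autoref{lemma3.8} states that $H^2$ is chordal if and only if every non-chordal sunflower of size $4$ in $H$ is suspended. I apply this to $H=\lineg{G}$: by \autoref{cor4.1} the line graph is automatically $K_{1,3}$-free, and since $\operatorname{girth}(G)\leq 4$ means $G$ has no cycle of length at least $5$, \autoref{lemma4.2} (or \autoref{cor4.3}) rules out induced cycles of length at least $5$ in $\lineg{G}$. Thus $\lineg{G}^2$ is chordal if and only if every non-chordal sunflower of size $4$ in $\lineg{G}$ is suspended, which by \autoref{lemma4.3} is the same as saying that $G$ contains no fertile sunflower sprout of size $4$.

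For the forward direction of the corollary I start from an induced copy of $S_4'$ in $G$. Since $S_4'$ is itself a fertile sunflower sprout of size $4$, the reduction above yields an induced $C_4$ in $\lineg{G}^2$; concretely, the four pendant edges $u_1v_1,u_2v_2,u_3v_3,u_4v_4$ of $S_4'$ lie at distance exactly $2$ in $\lineg{G}$ for consecutive indices (via the shared cycle edge $v_iv_{i+1}$) and at distance at least $3$ for opposite indices (no $G$-edge joins $\{u_i,v_i\}$ to $\{u_{i+2},v_{i+2}\}$, because $S_4'$ sits in $G$ as an induced subgraph), and therefore induce a $C_4$ in $\lineg{G}^2$.

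For the converse, assuming $\lineg{G}^2$ is not chordal, the reduction extracts a fertile sunflower sprout $S\subseteq G$ of size $4$. It remains to identify $S$ with $S_4'$. Since $S$ is induced in $G$, every cycle of $S$ is a cycle of $G$, and so $\operatorname{girth}(S)\leq 4$ by hypothesis. A case analysis along \autoref{fig4.2} now suffices: the three basic configurations there are ``all $u_i$ distinct'', ``exactly one identification $u_i=u_{i+1}$'', and ``two identifications $u_1=u_2$, $u_3=u_4$''. In every non-trivial case, and for every additional edge $u_iu_{i\pm 1}$ or $u_iv_{i\pm 1}$ compatible with fertility, the identification or the extra edge provides a shortcut joining two vertices of the cycle $v_1v_2v_3v_4$ which, together with the complementary arc of that cycle, produces a cycle in $S$ of length $5$ or $6$. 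This forces $S=S_4'$ and the corollary follows.

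The main obstacle is this final case analysis, but it reduces to the uniform observation that every admissible enrichment of the skeleton $S_4'$ compresses part of the $v$-cycle and hence lengthens the cycle running through the complementary arc beyond the girth bound.
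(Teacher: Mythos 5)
Your argument is correct and is essentially the proof the paper intends but leaves implicit: reduce to the line graph via Corollary \autoref{cor4.1} and Corollary \autoref{cor4.3}, apply Lemma \autoref{lemma3.8} and Corollary \autoref{cor3.4} together with Lemma \autoref{lemma4.3}, and then observe that $\fkt{\operatorname{girth}}{G}\leq 4$ rules out every fertile sunflower sprout of size $4$ other than $S_4'$, since each identification $u_i=u_{i+1}$ and each optional edge closes a cycle of length $5$ or $6$ through the complementary arc of $v_1v_2v_3v_4$. That final observation is precisely what the caption of \autoref{fig4.3} records, so your proposal matches the paper's route.
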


Out next goal will be a relaxation of the condition on the girth of $G$ in the corollaries above. With this we will be able to give a complete analog to Corollary \autoref{cor3.4} for the squared line graph.

\begin{lemma}\label{lemma4.4}
Let $G$ be a graph that does not contain a $C_n$ with $n\geq 5$. If $\lineg{G}^2$ is not chordal, $G$ contains a fertile sunflower sprout $S'\in\mathcal{S}_4$. 
\end{lemma}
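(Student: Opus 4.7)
The plan is to exploit Corollary 3.4 by applying it to the line graph $\lineg{G}$ itself. Since line graphs contain no induced $K_{1,3}$ (Corollary 4.1), the contrapositive of Corollary 3.4 tells us that the non-chordality of $\lineg{G}^2$ forces one of the following: either ($\alpha$) $\lineg{G}$ contains an induced cycle $C_m$ with $m\geq 5$, or ($\beta$) $\lineg{G}$ contains an unsuspended non-chordal sunflower of size~$4$.

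Case ($\beta$) is the easy one: Lemma 4.3 immediately translates such a sunflower in $\lineg{G}$ back into a fertile sunflower sprout of size~$4$ in $G$, and we are done. The entire substance of the argument lies in case ($\alpha$).

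For case ($\alpha$), I would first invoke Lemma 4.2 to transfer the induced cycle $C_m$ in $\lineg{G}$ to a (not necessarily induced) cycle of length $m\geq 5$ in $G$. The hypothesis that $G$ contains no induced $C_n$ with $n\geq 5$ means this cycle must carry at least one chord; choosing a cycle $C$ of minimum length among cycles of length $\geq 5$ in $G$, minimality forces every chord of $C$ to split it into two sub-cycles of length at most $4$, which constrains $|C|\in\{5,6\}$. A subcase distinction on $|C|=5$ (chord $v_1v_3$, yielding the $4$-cycle $v_1v_3v_4v_5$ with $v_2$ as shared petal at $v_1$ and $v_3$) and $|C|=6$ (chord $v_1v_4$, yielding two glued $4$-cycles) then exhibits an induced $C_4$ in $G$ as the base of a sprout.

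The principal obstacle is to furnish petal vertices at the remaining base positions of this $4$-cycle: the cycle $C$ alone supplies at most two of the four petals needed by the sprout definition, and without further input one can show by example that $\lineg{G}^2$ would already be chordal. The missing petals must therefore come from the genuine non-chordality of $\lineg{G}^2$, not merely the existence of the induced $C_m$ in $\lineg{G}$. Concretely, I would use the flower correspondence of Theorem 3.17 together with the line-graph translation underlying Lemma 4.3 to extract the required additional incidences at $v_4,v_5$ (respectively, at all four base vertices in the $|C|=6$ subcase), and then carefully verify that none of the ensuing edges violates the fertility conditions $u_iu_j\notin\E{G}$ and $u_iv_j\notin\E{G}$ for $j\neq i\pm1\pmod 4$.
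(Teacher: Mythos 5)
Your reduction via Corollary 3.4 applied to $\lineg{G}$ is a legitimate and tidy framing: since line graphs are $K_{1,3}$-free (Corollary 4.1), non-chordality of $\lineg{G}^2$ does force either ($\alpha$) an induced $C_m$, $m\geq 5$, in $\lineg{G}$ or ($\beta$) an unsuspended non-chordal sunflower of size $4$ in $\lineg{G}$, and case ($\beta$) is indeed finished at once by Lemma 4.3. The gap is that case ($\alpha$), which you yourself call the entire substance of the argument, is not actually proved. Two specific problems. First, the minimal-cycle reduction to $\abs{C}\in\set{5,6}$ is correct, but the claim that this ``exhibits an induced $C_4$ in $G$'' does not follow: the sub-$4$-cycles cut off by a chord of $C$ may themselves carry chords (take $G=K_5$, where every cycle of length $\geq 5$ exists but no induced $C_4$ does -- and where, consistently, $\lineg{G}^2$ is complete and hence chordal). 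So you cannot even get the base cycle of the sprout from this argument alone. Second, and more seriously, the induced $C_m$ in $\lineg{G}$ handed to you by case ($\alpha$) is a priori unrelated to the actual witness of non-chordality of $\lineg{G}^2$, namely an induced cycle of length $\geq 4$ in $\lineg{G}^2$ (equivalently, by Theorem 3.17, an unwithered flower in $\lineg{G}$). Your plan to graft the missing petals onto the minimal cycle $C$ ``from the genuine non-chordality of $\lineg{G}^2$'' therefore has no mechanism: there is no reason the flower should sit anywhere near $C$, and no indication of how the fertility conditions would be verified.

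The paper's proof works in the opposite direction and this is not an incidental choice: it starts from the induced cycle $\lb u_1\dots u_c\rb$ in $\lineg{G}^2$, introduces the connecting $w$-vertices realizing the distance-$2$ adjacencies in $\lineg{G}$, transfers the resulting cycle to $G$ via Lemma 4.2, and then classifies exhaustively which chords of that cycle in $G$ are ``legal'', i.e.\ do not induce a chord of the cycle in $\lineg{G}^2$ -- concluding that the forced configuration is a fertile sunflower sprout of size $4$ (mirroring the chord analysis of Lemma 3.8). That chord classification is the combinatorial heart of the lemma, and it is exactly the part your proposal defers. To repair your proof you would have to abandon the minimal long cycle in $G$, start instead from the flower in $\lineg{G}$ guaranteed by Theorem 3.17, and carry out essentially the same case analysis the paper does; at that point the detour through Corollary 3.4 buys you only the disposal of case ($\beta$), which the paper's argument subsumes anyway.
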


\begin{proof}
With $\lineg{G}^2$ not being chordal there exists an induced cycle $C_L^2=\lb u_1\dots u_c\rb$ with $c=\abs{C_L^2}\geq 4$ in $\lineg{G}^2$.\\
As in the proof of Lemma \autoref{lemma3.8}, this proof is divided into subcases as follows.
\begin{enumerate}
\item[] \begin{enumerate}
	\item [Case 1] The set $\set{u_1,\dots,u_c}$ is stable in $\lineg{G}$.
		\begin{enumerate}
		\item[] \begin{enumerate}
				
			\item [Subcase 1.1] The resulting cycle $C_L$ in $\lineg{G}$ is induced.
			\item [Subcase 1.2] $C_L$ contains a chord.
		\end{enumerate}
		\end{enumerate}

	\item [Case 2] The edge $u_1u_2$ exists in $\lineg{G}$.
		\begin{enumerate}
		\item[] \begin{enumerate}
				
			\item [Subcase 2.1] The resulting cycle $C_L$ in $\lineg{G}$ is induced.
			\item [Subcase 2.2] $C_L$ contains a chord.
		\end{enumerate}
		\end{enumerate}
\end{enumerate}
\end{enumerate}

Subcase 1.1: The set $\set{u_1,\dots,u_c}$ is stable in $\lineg{G}$ and the resulting cycle $C_L$ in $\lineg{G}$ is induced.\\
We get $\distg{\lineg{G}}{u_i}{u_j}=2$ for all $j= i\pm1\lb \!\!\!\mod c\rb$ and thus a set $W=\set{w_1^,\dots,w_c}$ exists, which realizes the paths of length $2$ between the $u_i$. Hence the $u$-vertices and the $w$-vertices together form a cycle $C_L$ of length $2\,c$ in $\lineg{G}$.\\
Suppose $C_L$ is an induced cycle, then, by Lemma \autoref{lemma4.2} $G$ contains a cycle $C_G$ of the same length. With $c\geq 4$ the cycle $C_G$ has at least length $8$ and thus must contain at least two chords. The edges of $C_G$ correspond to the vertices of $C_L$ and therefore alternate between $u$- and $w$-edges.

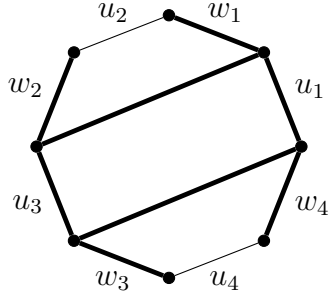
\begin{figure}[H]
	\begin{center}
		\begin{tikzpicture}
		
		\node (center) [inner sep=1.5pt] {};
		
		\node (u1) [draw,circle,fill,inner sep=1.5pt,position=0:1.6cm from center] {};
		\node (u2) [draw,circle,fill,inner sep=1.5pt,position=45:1.6cm from center] {};
		\node (u3) [draw,circle,fill,inner sep=1.5pt,position=90:1.6cm from center] {};
		\node (u4) [draw,circle,fill,inner sep=1.5pt,position=135:1.6cm from center] {};
		\node (u5) [draw,circle,fill,inner sep=1.5pt,position=180:1.6cm from center] {};
		\node (u6) [draw,circle,fill,inner sep=1.5pt,position=225:1.6cm from center] {};
		\node (u7) [draw,circle,fill,inner sep=1.5pt,position=270:1.6cm from center] {};
		\node (u8) [draw,circle,fill,inner sep=1.5pt,position=315:1.6cm from center] {};
		
		\node (l1) [position=22.5:1.57cm from center] {$u_1$};
		\node (l2) [position=67.5:1.57cm from center] {$w_1$};
		\node (l3) [position=112.5:1.57cm from center] {$u_2$};
		\node (l4) [position=157.5:1.57cm from center] {$w_2$};
		\node (l5) [position=202.5:1.57cm from center] {$u_3$};
		\node (l6) [position=247.5:1.57cm from center] {$w_3$};
		\node (l7) [position=292.5:1.57cm from center] {$u_4$};
		\node (l8) [position=337.5:1.57cm from center] {$w_4$};
		
		\path
		(u1) edge [line width=1.8pt] (u2)
		(u2) edge [line width=1.8pt] (u3)
		(u3) edge (u4)
		(u4) edge [line width=1.8pt] (u5)
		(u5) edge [line width=1.8pt] (u6)
		(u6) edge [line width=1.8pt] (u7)
		(u7) edge (u8)
		(u8) edge [line width=1.8pt] (u1)
		;
		
		\path
		(u1) edge [line width=1.8pt] (u6)
		(u2) edge [line width=1.8pt] (u5)
		;
		\end{tikzpicture}
	\end{center}
	\caption{$C_G$ with $c=4$, wo chords and a sunflower sprout.}
	\label{fig4.4}
\end{figure}
If two chords of $C_G$ form a cycle of length $4$ together with two edges of $C_G$, alternating between chords and original edges of the cycle, they form a sunflower sprout of size $4$ as seen in \autoref{fig4.4}. So suppose no two chords form such an alternating $4$-cycle. Thus there exist two chords in $C_G$ sharing a common endpoint which belongs to an edge $u_i$ and one of the other two endpoints belongs to another edge  $u_j$ with $\distg{C_L}{u_i}{u_j}\geq 4$. Hence either one of those chords results in a path of length $2$ from $u_i$ to $u_j$ in $\lineg{G}$ which furthermore results in a chord in $C_L^2$ or every chord skips exactly one $w$-edge, resulting in another induced cycle of length at least $c$. If $c=4$ this cycle is part of a fertile sunflower sprout and for $c\geq 5$ this is a contradiction. Thus Case 1.1 is closed.
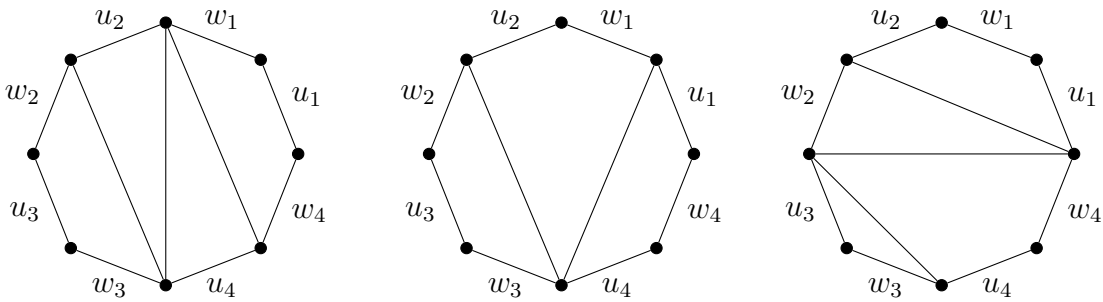
\begin{figure}[H]
\begin{center}
\begin{tikzpicture}
\node (anchor) [] {};
\node (center) [inner sep=1.5pt] {};

\node (u1) [draw,circle,fill,inner sep=1.5pt,position=0:1.6cm from center] {};
\node (u2) [draw,circle,fill,inner sep=1.5pt,position=45:1.6cm from center] {};
\node (u3) [draw,circle,fill,inner sep=1.5pt,position=90:1.6cm from center] {};
\node (u4) [draw,circle,fill,inner sep=1.5pt,position=135:1.6cm from center] {};
\node (u5) [draw,circle,fill,inner sep=1.5pt,position=180:1.6cm from center] {};
\node (u6) [draw,circle,fill,inner sep=1.5pt,position=225:1.6cm from center] {};
\node (u7) [draw,circle,fill,inner sep=1.5pt,position=270:1.6cm from center] {};
\node (u8) [draw,circle,fill,inner sep=1.5pt,position=315:1.6cm from center] {};

\node (l1) [position=22.5:1.57cm from center] {$u_1$};
\node (l2) [position=67.5:1.57cm from center] {$w_1$};
\node (l3) [position=112.5:1.57cm from center] {$u_2$};
\node (l4) [position=157.5:1.57cm from center] {$w_2$};
\node (l5) [position=202.5:1.57cm from center] {$u_3$};
\node (l6) [position=247.5:1.57cm from center] {$w_3$};
\node (l7) [position=292.5:1.57cm from center] {$u_4$};
\node (l8) [position=337.5:1.57cm from center] {$w_4$};

\path
(u1) edge (u2)
(u2) edge (u3)
(u3) edge (u4)
(u4) edge (u5)
(u5) edge (u6)
(u6) edge (u7)
(u7) edge (u8)
(u8) edge (u1)
;

\path
(u8) edge (u3)
(u7) edge (u4)
(u7) edge (u3)
;

\node (center) [inner sep=1.5pt,position=0:5cm from anchor] {};

\node (u1) [draw,circle,fill,inner sep=1.5pt,position=0:1.6cm from center] {};
\node (u2) [draw,circle,fill,inner sep=1.5pt,position=45:1.6cm from center] {};
\node (u3) [draw,circle,fill,inner sep=1.5pt,position=90:1.6cm from center] {};
\node (u4) [draw,circle,fill,inner sep=1.5pt,position=135:1.6cm from center] {};
\node (u5) [draw,circle,fill,inner sep=1.5pt,position=180:1.6cm from center] {};
\node (u6) [draw,circle,fill,inner sep=1.5pt,position=225:1.6cm from center] {};
\node (u7) [draw,circle,fill,inner sep=1.5pt,position=270:1.6cm from center] {};
\node (u8) [draw,circle,fill,inner sep=1.5pt,position=315:1.6cm from center] {};

\node (l1) [position=22.5:1.57cm from center] {$u_1$};
\node (l2) [position=67.5:1.57cm from center] {$w_1$};
\node (l3) [position=112.5:1.57cm from center] {$u_2$};
\node (l4) [position=157.5:1.57cm from center] {$w_2$};
\node (l5) [position=202.5:1.57cm from center] {$u_3$};
\node (l6) [position=247.5:1.57cm from center] {$w_3$};
\node (l7) [position=292.5:1.57cm from center] {$u_4$};
\node (l8) [position=337.5:1.57cm from center] {$w_4$};

\path
(u1) edge (u2)
(u2) edge (u3)
(u3) edge (u4)
(u4) edge (u5)
(u5) edge (u6)
(u6) edge (u7)
(u7) edge (u8)
(u8) edge (u1)
;

\path
(u7) edge (u4)
(u7) edge (u2)
;

\node (center) [inner sep=1.5pt,position=0:10cm from anchor] {};

\node (u1) [draw,circle,fill,inner sep=1.5pt,position=0:1.6cm from center] {};
\node (u2) [draw,circle,fill,inner sep=1.5pt,position=45:1.6cm from center] {};
\node (u3) [draw,circle,fill,inner sep=1.5pt,position=90:1.6cm from center] {};
\node (u4) [draw,circle,fill,inner sep=1.5pt,position=135:1.6cm from center] {};
\node (u5) [draw,circle,fill,inner sep=1.5pt,position=180:1.6cm from center] {};
\node (u6) [draw,circle,fill,inner sep=1.5pt,position=225:1.6cm from center] {};
\node (u7) [draw,circle,fill,inner sep=1.5pt,position=270:1.6cm from center] {};
\node (u8) [draw,circle,fill,inner sep=1.5pt,position=315:1.6cm from center] {};

\node (l1) [position=22.5:1.57cm from center] {$u_1$};
\node (l2) [position=67.5:1.57cm from center] {$w_1$};
\node (l3) [position=112.5:1.57cm from center] {$u_2$};
\node (l4) [position=157.5:1.57cm from center] {$w_2$};
\node (l5) [position=202.5:1.57cm from center] {$u_3$};
\node (l6) [position=247.5:1.57cm from center] {$w_3$};
\node (l7) [position=292.5:1.57cm from center] {$u_4$};
\node (l8) [position=337.5:1.57cm from center] {$w_4$};

\path
(u1) edge (u2)
(u2) edge (u3)
(u3) edge (u4)
(u4) edge (u5)
(u5) edge (u6)
(u6) edge (u7)
(u7) edge (u8)
(u8) edge (u1)
;

\path
(u1) edge (u4)
(u1) edge (u5)
(u5) edge (u7)
;
\end{tikzpicture}
\end{center}
\caption{Three examples for $C_G$ with $c=4$ and chords with a common endpoint.}
\label{fig4.5}
\end{figure}

Subcase 1.2: $C_L$ contains a chord.
So $C_L$ is no induced cycle. A chord in $C_L$ must not be of the form $u_iu_j$ and a chord of the form $u_iw_j$ with $j\neq i\pm1\lb\!\!\!\mod c\rb$ must not exist either for the existence of such a chord would result in a chord in $C_L^2$. Thus the only legal chords are those joining $w$-vertices.\\
Suppose the chord $w_iw_j$ with $i\in\set{1,\dots,c}$ and $j\neq i\pm1\lb \!\!\!\mod c\rb$ exists in $C_L$. Then the corresponding edges in $C_G$ contain a common vertex $v_1$, furthermore there is a common vertex $v_2$ for $w_i$ and $u_i$ and $w_i$ shares another vertex $v_3$ with $u_{i+1\lb \!\!\!\mod c\rb}$. With $C_L$ being a cycle we get $v_1\neq v_2$ and $v_1\neq v_3$. With $U=\set{u_1,\dots,c_u}$ being stable, $u_i$ and $u_{i+1\lb \!\!\!\mod c\rb}$ cannot share a vertex in $G$, hence $w_i$ must contain $3$ vertices, contradicting $G$ being a simple graph. Thus all chords in $C_L$ are of the form $w_iw_{i+1\lb \!\!\!\mod c\rb}$ for some $i\in\set{1,\dots,c}$.\\
Now suppose $C_L$ contains $k\in\set{1,\dots,c-1}$ of all $c$ of those possible chords. Each of those chords shortens the cycle $C_L$ by one, thus resulting in an induced cycle of length $2\,c-k\geq c+1$ in $\lineg{G}$ and with Lemma \autoref{lemma4.2} $G$ contains a cycle $C_G'$ of the same length and with $c+1\geq 5$ $C_G'$ contains a chord.

\begin{figure}[H]
\begin{center}
\begin{tikzpicture}
\node (anchor) [] {};
\node (center) [inner sep=1.5pt] {};

\node (u1) [draw,circle,fill,inner sep=1.5pt,position=90:1.3cm from center] {};
\node (u2) [draw,circle,fill,inner sep=1.5pt,position=162:1.3cm from center] {};
\node (u3) [draw,circle,fill,inner sep=1.5pt,position=234:1.3cm from center] {};
\node (u4) [draw,circle,fill,inner sep=1.5pt,position=306:1.3cm from center] {};
\node (u5) [draw,circle,fill,inner sep=1.5pt,position=16:1.3cm from center] {};

\node (u6) [draw,circle,fill,inner sep=1.5pt,position=90:2.4cm from center] {};
\node (u7) [draw,circle,fill,inner sep=1.5pt,position=16:2.4cm from center] {};
\node (u8) [draw,circle,fill,inner sep=1.5pt,position=162:2.4cm from center] {};

\node (l1) [position=126:1.15cm from center] {$w_1$};
\node (l2) [position=198:1.25cm from center] {$w_1$};
\node (l3) [position=270:1.25cm from center] {$u_2$};
\node (l4) [position=342:1.25cm from center] {$w_2$};
\node (l5) [position=52:1.15cm from center] {$w_3$};
\node (l6) [position=90:0.07cm from u6] {$u_4$};
\node (l7) [position=166:0.1cm from u7] {$u_3$};
\node (l8) [position=12:0.1cm from u8] {$u_1$};

\path
(u1) edge (u2)
(u2) edge (u3)
(u3) edge (u4)
(u4) edge (u5)
(u5) edge (u1)
;

\path
(u1) edge (u6)
(u5) edge (u7)
(u2) edge (u8)
;

\path[dotted]
(u2) edge (u4)
(u3) edge (u5)
;

\end{tikzpicture}
\end{center}
\caption{The sole possibility for $\abs{C_G'}=5$ with the two legal chords.}
\label{fig4.6}
\end{figure}
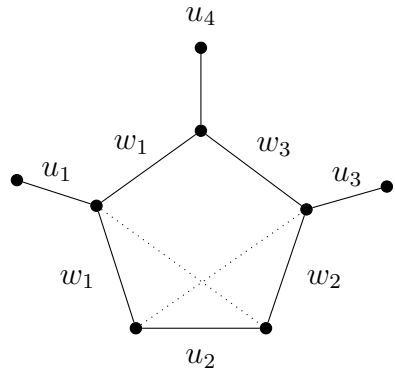 
If $\abs{C_G'}=5$ holds, each legal chord generates a fertile sunflower sprout in $G$ and all possibilities to make such a sunflower sprout infertile are illegal chords.\\
Hence $\abs{C_G'}\geq 6$ must hold and still the only  legal chords are those edges $e$ joining vertices of $C_G'$, such that $e$ is not adjacent to $u_i$ and $u_j$ with $j\neq i\pm1\lb \!\!\!\mod c\rb$.
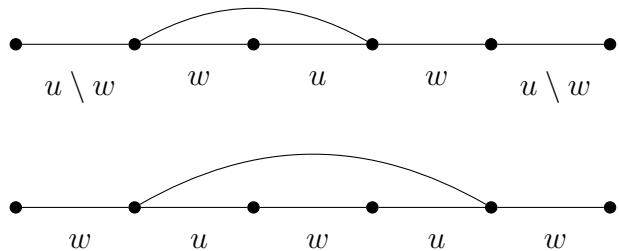
\begin{figure}[H]
\begin{center}
\begin{tikzpicture}
\node (v1) [draw,circle,fill,inner sep=1.5pt] {};
\node (v2) [draw,circle,fill,inner sep=1.5pt,position=0:1.4cm from v1] {};
\node (v3) [draw,circle,fill,inner sep=1.5pt,position=0:1.4cm from v2] {};
\node (v4) [draw,circle,fill,inner sep=1.5pt,position=0:1.4cm from v3] {};
\node (v5) [draw,circle,fill,inner sep=1.5pt,position=0:1.4cm from v4] {};
\node (v6) [draw,circle,fill,inner sep=1.5pt,position=0:1.4cm from v5] {};

\node (e1) [position=0:0.62cm from v1] {};
\node (e2) [position=0:0.62cm from v2] {};
\node (e3) [position=0:0.62cm from v3] {};
\node (e4) [position=0:0.62cm from v4] {};
\node (e5) [position=0:0.62cm from v5] {};

\node (el1) [position=270:0.07 from e1] {$u\setminus w$};
\node (el2) [position=270:0.07 from e2] {$w$};
\node (el3) [position=270:0.07 from e3] {$u$};
\node (el4) [position=270:0.07 from e4] {$w$};
\node (el5) [position=270:0.07 from e5] {$u\setminus w$};

\path
(v1) edge (v2)
(v2) edge (v3)
(v3) edge (v4)
(v4) edge (v5)
(v5) edge (v6)
;

\path
(v2) edge [bend left] (v4)
;

\node (bv1) [draw,circle,fill,inner sep=1.5pt,position=270:2cm from v1] {};
\node (bv2) [draw,circle,fill,inner sep=1.5pt,position=0:1.4cm from bv1] {};
\node (bv3) [draw,circle,fill,inner sep=1.5pt,position=0:1.4cm from bv2] {};
\node (bv4) [draw,circle,fill,inner sep=1.5pt,position=0:1.4cm from bv3] {};
\node (bv5) [draw,circle,fill,inner sep=1.5pt,position=0:1.4cm from bv4] {};
\node (bv6) [draw,circle,fill,inner sep=1.5pt,position=0:1.4cm from bv5] {};

\node (be1) [position=0:0.62cm from bv1] {};
\node (be2) [position=0:0.62cm from bv2] {};
\node (be3) [position=0:0.62cm from bv3] {};
\node (be4) [position=0:0.62cm from bv4] {};
\node (be5) [position=0:0.62cm from bv5] {};

\node (bel1) [position=270:0.07 from be1] {$w$};
\node (bel2) [position=270:0.07 from be2] {$u$};
\node (bel3) [position=270:0.07 from be3] {$w$};
\node (bel4) [position=270:0.07 from be4] {$u$};
\node (bel5) [position=270:0.07 from be5] {$w$};

\path
(bv1) edge (bv2)
(bv2) edge (bv3)
(bv3) edge (bv4)
(bv4) edge (bv5)
(bv5) edge (bv6)
;

\path
(bv2) edge [bend left] (bv5)
;

\end{tikzpicture}
\end{center}
\caption{The two types of legal chords in $C_G'$.}
\label{fig4.7}
\end{figure}

\autoref{fig4.7} displays the two types of legal chords. It follows that such a chord, shortening the cycle by $1$ or $2$, always skips exactly one $w$-edge and at least one $u$-edge. Each induced cycle consisting of edges of the cycle $C_G'$ and its chords use $u$-edges, $w$-edges and chords of the two types. For $c=4$ we already closed the case $k=3$, so the cases $k=2$ and $k=1$ remain. If $k=2$ and there exists a chord of the $u$-$w$-$u$ type this results in a fertile sunflower sprout of size $4$, so suppose only chords of the $u$-$w$ type exist. If just one of those chords exists an induced cycle of length $5$ remains in $G$, so both chords must be there and the result, again, is a fertile sunflower sprout of size $4$. The case $k=1$ can be handled analogue.\\
Hence $c\geq 5$. Now every induced cycle either contains a $w$-edge or a chord that skips these edges, either way there are $c$ such edges and so there is an induced cycle of length $\geq c\geq5$ that cannot contain a chord. This contradicts our assumption and closes the first case.

Subcase 2.1: The edge $u_1u_2$ exists in $\lineg{G}$ and the resulting cycle $C_L$ in $\lineg{G}$ is induced.\\
By Lemma \autoref{lemma3.7} the edges $u_2u_3$ and $u_1u_c$ cannot exist. Again we gain vertices $w_1,\dots,w_q$ realizing paths of length $2$ between $u$-vertices that are not adjacent. Corollary \autoref{cor3.3} yields $q\geq \aufr{\frac{c}{2}}$ and we obtain a cycle $C_L$ in $\lineg{G}$ with $\abs{C_L}\geq c+\aufr{\frac{c}{2}}$.

\begin{figure}[H]
\begin{center}
\begin{tikzpicture}

\node (center) [inner sep=1.5pt] {};
\node (anchor1) [inner sep=1.5pt,position=0:0.8cm from center] {};
\node (anchor2) [inner sep=1.5pt,position=180:0.8cm from center] {};

\node (label) [inner sep=1.5pt,position=135:1.7cm from center] {};

\node (u1) [inner sep=1.5pt,position=90:1.6cm from anchor1,draw,circle,fill] {};
\node (u2) [inner sep=1.5pt,position=90:1.6cm from anchor2,draw,circle,fill] {};
\node (u3) [inner sep=1.5pt,position=180:2.5cm from center,draw,circle,fill] {};
\node (u4) [inner sep=1.5pt,position=0:2.5cm from center,draw,circle,fill] {};

\node (v1) [inner sep=1.5pt,position=0:0.8cm from center,draw,circle] {};
\node (v2) [inner sep=1.5pt,position=180:0.8cm from center,draw,circle] {};

\node (lu1) [position=45:0.07cm from u1] {$u_1$};
\node (lu2) [position=135:0.07cm from u2] {$u_2$};
\node (lu3) [position=150:0.07cm from u3] {$u_3$};
\node (lu4) [position=30:0.07cm from u4] {$u_c$};

\node (lv1) [position=270:0.07cm from v1] {$w_1$};
\node (lv2) [position=270:0.07cm from v2] {$w_2$};

\path
(u1) edge (v1)
(u2) edge (u1)
	 edge (v2)
(u3) edge (v2)
(u4) edge (v1)
;

\end{tikzpicture}
\caption{The edge $u_1u_2$ in $\lineg{G}$.}
\label{fig4.8}
\end{center}
\end{figure}
By Assumption $C_L$ is an induced cycle in $\lineg{G}$, so Lemma \autoref{lemma4.2} yields the existence of a corresponding cycle $C_G$ in $G$ of the same length whose edges correspond to the vertices of $C_L$. With $c\geq 4$ we have $\abs{C_L}=\abs{C_G}\geq 6$ and $C_G$ must contain a chord.\\
There are no two consecutive $w$-edges in $C_G$ (i.e. no two $w$-vertices are adjacent in $C_L$), so for every chord each endpoint is adjacent to a $u$-vertex. Thus for every chord there exists a pair $i,j\in\set{1,\dots,c}$ with $j\neq i$ and $\distg{C_L}{u_i}{u_j}\geq 2$ such that both $u_i$ and $u_j$ are adjacent to the chord in $G$. Hence $\distg{\lineg{G}}{u_i}{u_j}=2$ and thus the legal chords are those satisfying $j=i\pm1\lb \!\!\!\mod c\rb$ for all those pairs, otherwise this would result in a chord in $C_L^2$.\\
Let $c=4$ and $q=2$, then all chords in $C_G$ dividing the cycle into two cycles of length $4$ are illegal, thus $C_G$ has to contain at least $2$ chords. \autoref{fig4.9} displays all illegal and the remaining two legal chords in such a $C_G$, those two edges form an alternating and induced $C_4$ and so we obtain a fertile sunflower sprout of size $4$ in $G$.

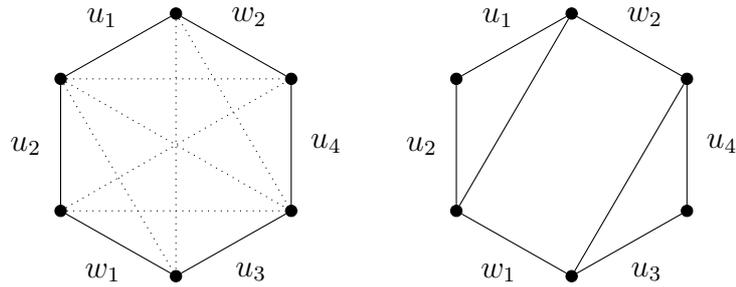
\begin{figure}[H]
\begin{center}
\begin{tikzpicture}
\node (anchor) [] {};
\node (center) [inner sep=1.5pt] {};

\node (u1) [draw,circle,fill,inner sep=1.5pt,position=90:1.6cm from center] {};
\node (u2) [draw,circle,fill,inner sep=1.5pt,position=150:1.6cm from center] {};
\node (u3) [draw,circle,fill,inner sep=1.5pt,position=210:1.6cm from center] {};
\node (u4) [draw,circle,fill,inner sep=1.5pt,position=270:1.6cm from center] {};
\node (u5) [draw,circle,fill,inner sep=1.5pt,position=330:1.6cm from center] {};
\node (u6) [draw,circle,fill,inner sep=1.5pt,position=30:1.6cm from center] {};

\node (l1) [position=120:1.57cm from center] {$u_1$};
\node (l2) [position=180.5:1.57cm from center] {$u_2$};
\node (l3) [position=240.5:1.57cm from center] {$w_1$};
\node (l4) [position=300.5:1.57cm from center] {$u_3$};
\node (l5) [position=0.5:1.57cm from center] {$u_4$};
\node (l6) [position=60.5:1.57cm from center] {$w_2$};

\path
(u1) edge (u2)
(u2) edge (u3)
(u3) edge (u4)
(u4) edge (u5)
(u5) edge (u6)
(u6) edge (u1)
;

\path[dotted]
(u1) edge (u4)
	 edge (u5)
(u2) edge (u4)
	 edge (u5)
	 edge (u6)
(u3) edge (u6)
	 edge (u5)
;

\node (center) [inner sep=1.5pt,position=0:5cm from anchor] {};

\node (u1) [draw,circle,fill,inner sep=1.5pt,position=90:1.6cm from center] {};
\node (u2) [draw,circle,fill,inner sep=1.5pt,position=150:1.6cm from center] {};
\node (u3) [draw,circle,fill,inner sep=1.5pt,position=210:1.6cm from center] {};
\node (u4) [draw,circle,fill,inner sep=1.5pt,position=270:1.6cm from center] {};
\node (u5) [draw,circle,fill,inner sep=1.5pt,position=330:1.6cm from center] {};
\node (u6) [draw,circle,fill,inner sep=1.5pt,position=30:1.6cm from center] {};

\node (l1) [position=120:1.57cm from center] {$u_1$};
\node (l2) [position=180.5:1.57cm from center] {$u_2$};
\node (l3) [position=240.5:1.57cm from center] {$w_1$};
\node (l4) [position=300.5:1.57cm from center] {$u_3$};
\node (l5) [position=0.5:1.57cm from center] {$u_4$};
\node (l6) [position=60.5:1.57cm from center] {$w_2$};

\path
(u1) edge (u2)
(u2) edge (u3)
(u3) edge (u4)
(u4) edge (u5)
(u5) edge (u6)
(u6) edge (u1)
;

\path
(u1) edge (u3)
(u4) edge (u6)
;

\end{tikzpicture}
\end{center}
\caption{The illegal and legal chords in $C_G$ with $\abs{C_G}=6$ and $q=2$.}
\label{fig4.9}
\end{figure}
\vspace{-3mm}
So now suppose either $c\geq 5$ or $q\geq 3$, hence $\abs{C_L}=\abs{C_G}\geq 7$. The conditions on the legality of possible chords obtained above still hold, so there are the two possible types of chords, displayed in \autoref{fig4.7} and such a chord shortens a cycle by $1$ or $2$. In Addition the following chord, shortening a cycle by $1$ may exist.
\begin{figure}[H]
	\begin{center}
		\begin{tikzpicture}
		\node (v1) [draw,circle,fill,inner sep=1.5pt] {};
		\node (v2) [draw,circle,fill,inner sep=1.5pt,position=0:1.4cm from v1] {};
		\node (v3) [draw,circle,fill,inner sep=1.5pt,position=0:1.4cm from v2] {};
		\node (v4) [draw,circle,fill,inner sep=1.5pt,position=0:1.4cm from v3] {};
		\node (v5) [draw,circle,fill,inner sep=1.5pt,position=0:1.4cm from v4] {};
		\node (v6) [draw,circle,fill,inner sep=1.5pt,position=0:1.4cm from v5] {};
		
		\node (e1) [position=0:0.62cm from v1] {};
		\node (e2) [position=0:0.62cm from v2] {};
		\node (e3) [position=0:0.62cm from v3] {};
		\node (e4) [position=0:0.62cm from v4] {};
		\node (e5) [position=0:0.62cm from v5] {};
		
		\node (el1) [position=270:0.07 from e1] {$w$};
		\node (el2) [position=270:0.07 from e2] {$u$};
		\node (el3) [position=270:0.07 from e3] {$u$};
		\node (el4) [position=270:0.07 from e4] {$w$};
		\node (el5) [position=270:0.07 from e5] {$u\setminus w$};
		
		\path
		(v1) edge (v2)
		(v2) edge (v3)
		(v3) edge (v4)
		(v4) edge (v5)
		(v5) edge (v6)
		;
		
		\path
		(v2) edge [bend left] (v4)
		;

		\end{tikzpicture}
	\end{center}
	\caption{The third type of legal chords in $C_G$.}
	\label{fig4.10}
\end{figure}
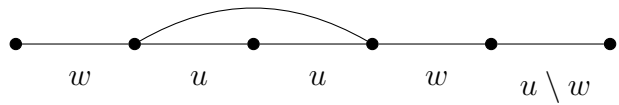
\vspace{-3mm}
As we did before, we start with the case $c=4$ and $q\geq 3$, the case $q=4$ reassembles Case 1.2, so suppose $q=4$. \autoref{fig4.11} shows the resulting cycle $C_G$ and the sole possibility of a chord of the type $u$-$w$-$u$ which results in an induced cycle of length $5$ with no legal chords left to shorten it further.\\
Hence only chords of the $u$-$w$ type are legal and there can exist at most two of them, so at least one $w$-edge and the two consecutive $u$-edges cannot be skipped and again we obtain an induced cycle of length at least $5$. Thus the case $q=3$ cannot occur and we get $c\geq5$.\\
As observed in Case 1.1 no two chords of $C_G$ may form an alternating cycle of length $4$ together with two edges of $C_G$, otherwise a fertile sunflower sprout would be the result. Again each chord, producing a shorter cycle, skips exactly one $w$-edge that was part of the cycle before. Pairs of consecutive $u$-edges may be skipped too and there are exactly $c-q$ such pairs and thus a cycle of length at least $q+\lb c-q\rb= c\geq 5$ exists. A contradiction that closes Case 2.1.
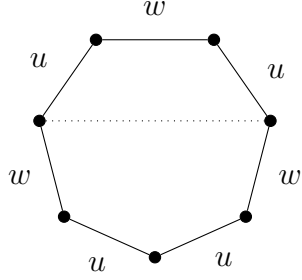
\begin{figure}[H]
\begin{center}
\begin{tikzpicture}

\node (center) [inner sep=1.5pt] {};

\node (label) [inner sep=1.5pt,position=135:1.7cm from center] {};

\node (u1) [inner sep=1.5pt,position=60:1.4cm from center,draw,circle,fill] {};
\node (u2) [inner sep=1.5pt,position=120:1.4cm from center,draw,circle,fill] {};
\node (u3) [inner sep=1.5pt,position=220:1.4cm from center,draw,circle,fill] {};
\node (u4) [inner sep=1.5pt,position=320:1.4cm from center,draw,circle,fill] {};

\node (v1) [inner sep=1.5pt,position=170:1.4cm from center,draw,circle,fill] {};
\node (v2) [inner sep=1.5pt,position=270:1.4cm from center,draw,circle,fill] {};
\node (v3) [inner sep=1.5pt,position=10:1.4cm from center,draw,circle,fill] {};

\node (lu1) [position=90:1.47cm from center] {$w$};
\node (lu2) [position=145:1.47cm from center] {$u$};
\node (lu3) [position=245:1.47cm from center] {$u$};
\node (lu4) [position=345:1.47cm from center] {$w$};

\node (lv1) [position=195:1.47cm from center] {$w$};
\node (lv2) [position=300:1.47cm from center] {$u$};
\node (lv3) [position=30:1.47cm from center] {$u$};

\path
(u1) edge (u2)
(u2) edge (v1)
(v1) edge (u3)
(u3) edge (v2)
(v2) edge (u4)
(u4) edge (v3)
(v3) edge (u1)
;

\path[dotted]
(v1) edge (v3)
;

\end{tikzpicture}
\caption{$C_G$ with $c=4$, $q=3$ and the sole legal $u$-$w$-$u$-type chord.}
\label{fig4.11}
\end{center}
\end{figure}
\vspace{-3mm}

Subcase 2.2: $C_L$ contains a chord.\\
As seen in Subcase 1.2 just chords of the form $w_iw_j$ are allowed in $C_L$, otherwise $C_L^2$ would contain a chord. If $j\neq i\pm 1\lb \!\!\!\mod q\rb$, the two direct neighbors of $w_i$ on $C_L$, $u_i$ and $u_{i+1\lb \!\!\!\mod c\rb}$ are neither adjacent, nor is one of them adjacent to $w_j$, which would be an illegal chord, and so $\set{u_i,u_{i+1\lb \!\!\!\mod c\rb},w_i,w_j}$ induces a $K_{1,3}$ in $\lineg{G}$. Thus reducing the legal chords to edges of the form $w_iw_j$ with $j=i+1\lb \!\!\!\mod q\rb$. The last possible restriction on the legality of chords is an edge $w_iw_j$ with $j=i+1\lb \!\!\!\mod q\rb$ and $\distg{C_L}{w_i}{w_j}=3$, which results in another induced $K_{1,3}$ as seen in \autoref{fig4.12}. Hence a chord in $C_L$ shortens the cycle by exactly $1$ and pose a shortcut skipping exactly one $u$-vertex. 

\begin{figure}[H]
\begin{center}
\begin{tikzpicture}

\node (center) [inner sep=1.5pt] {};
\node (anchor1) [inner sep=1.5pt,position=0:0.8cm from center] {};
\node (anchor2) [inner sep=1.5pt,position=180:0.8cm from center] {};

\node (label) [inner sep=1.5pt,position=135:1.7cm from center] {};

\node (u1) [inner sep=1.5pt,position=90:1.6cm from anchor1,draw,circle,fill] {};
\node (u2) [inner sep=1.5pt,position=90:1.6cm from anchor2,draw,circle,fill] {};
\node (u3) [inner sep=1.5pt,position=180:2.5cm from center,draw,circle,fill] {};
\node (u4) [inner sep=1.5pt,position=0:2.5cm from center,draw,circle,fill] {};

\node (v1) [inner sep=1.5pt,position=0:0.8cm from center,draw,circle] {};
\node (v2) [inner sep=1.5pt,position=180:0.8cm from center,draw,circle] {};

\node (lu1) [position=45:0.07cm from u1] {$u_{k+3\lb\!\!\!\mod c\rb}$};
\node (lu2) [position=135:0.07cm from u2] {$u_{k+2\lb\!\!\!\mod c\rb}$};
\node (lu3) [position=150:0.07cm from u3] {$u_{k+1\lb\!\!\!\mod c\rb}$};
\node (lu4) [position=30:0.07cm from u4] {$u_{u_k}$};

\node (lv1) [position=270:0.07cm from v1] {$w_i$};
\node (lv2) [position=270:0.07cm from v2] {$w_{i+1\lb\!\!\!\mod q\rb}$};

\path
(u1) edge [line width=1.8pt] (v1)
(u2) edge (u1)
	 edge (v2)
(u3) edge (v2)
(u4) edge [line width=1.8pt] (v1)
(v1) edge [line width=1.8pt] (v2)
;

\end{tikzpicture}
\caption{The chord $w_iw_{i+1\lb \!\!\!\mod q\rb}$ and the induced $K_{1,3}$.}
\label{fig4.12}
\end{center}
\end{figure}
\vspace{-3mm}
Thus every chord in $C_L$ joins two $w$-vertices, both adjacent to a common $u$-vertex. Furthermore every pair of adjacent $u$-vertices cannot be skipped by a chord and contribute $3$ edges to every cycle consisting purely of edges of $C_L$ and its chord. In total we get at least $q$ edges, either joining two adjacent $u$-vertices, representing a chord or being one of the two edges that could be skipped by a legal chord. In addition there are $c-q$ pairs of adjacent $u$-vertices and each of them adds another two edges to the length of an induced cycle, even the shortest possible. So any induced cycle $C_L'$ obtained from $C_L$ by shortening with legal chords has a length of at least $\abs{C_L'}\geq q+2\,\lb c-q\rb=2\,c-q\geq c+1$.\\
Thus, by Lemma \autoref{lemma4.2}, a cycle $C_G'$ with $\abs{C_G'}=\abs{C_L'}\geq c+1\geq 5$ exists in $G$ and must contain a chord. The case $c=4$ is similar to Subcase 2.1, so suppose $c\geq5$. Still all legal chords of $C_G'$ are of the form displayed in \autoref{fig4.7} or  in \autoref{fig4.10}. Thus a chord in $C_G'$ is possible if and only if $C_L'$ does contain a consecutive vertex-triple of the form $w$-$u$-$w$, or a tuple of two consecutive $u$-edges. As seen before for every $w$-edges and for every consecutive $u$-tuple there may exist at most one legal chord shortening a cycle, skipping either the tuple or the $w$-edge and thus an induced cycle of at least length $c\geq5$ remains. This closes Case 2 and completes the proof.  
\end{proof}

\begin{lemma}\label{lemma4.5}
Let $G$ be a graph without induced cycles of length $\geq 5$. If $G$ does not contain a fertile sunflower sprout of size $4$, $\lineg{G}^2$ is chordal.
\end{lemma}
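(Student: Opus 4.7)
The plan is to observe that this statement is precisely the contrapositive of Lemma \autoref{lemma4.4}, so no new work is required; the entire structural analysis was carried out when establishing that lemma. I would simply argue by contradiction.

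First, I would assume toward contradiction that $\lineg{G}^2$ is not chordal, while $G$ has no induced cycle of length $\geq 5$ and no fertile sunflower sprout of size $4$. Since the hypothesis of Lemma \autoref{lemma4.4} requires exactly the absence of an induced $C_n$ with $n\geq 5$ in $G$ (using the convention from the section on basic terminology that ``contains $C_n$'' means ``contains $C_n$ as an induced subgraph''), we may apply Lemma \autoref{lemma4.4} directly. It yields a fertile sunflower sprout $S'\in\mathcal{S}_4$ in $G$, contradicting our standing assumption. Hence $\lineg{G}^2$ must be chordal.

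The only point that requires care is making sure the hypotheses of the two lemmas line up. In Lemma \autoref{lemma4.4} the condition is phrased as ``$G$ does not contain a $C_n$ with $n\geq 5$,'' which by the convention used throughout the paper refers to induced subgraphs, matching the hypothesis ``without induced cycles of length $\geq 5$'' here. Thus no additional argument is needed, and there is no genuine obstacle: the heavy case analysis (splitting according to whether the original induced cycle in $\lineg{G}^2$ comes from a stable set of $u$-vertices or contains an edge $u_1u_2$, together with the subcases according to whether the resulting auxiliary cycle $C_L$ in $\lineg{G}$ is induced) has already been completed inside Lemma \autoref{lemma4.4}.

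Together with Lemma \autoref{lemma4.4}, this lemma establishes a characterization: for graphs without induced cycles of length $\geq 5$, the squared line graph is chordal if and only if $G$ contains no fertile sunflower sprout of size $4$. This is the natural line-graph analogue of Corollary \autoref{cor3.4} and sets up the more refined analysis of the next subsection, where one would want to remove the assumption on long induced cycles by combining sprouts with larger structures, in analogy with the flowers of Chapter~3.
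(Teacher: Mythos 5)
Your proposal is correct and matches the paper's treatment exactly: the paper states this lemma without a separate proof precisely because it is the contrapositive of Lemma \autoref{lemma4.4}, whose hypotheses (no induced $C_n$ with $n\geq 5$, under the paper's convention that ``contains'' means ``contains as an induced subgraph'') line up with those here just as you observe. There is no gap; all the substantive work lives in the case analysis of Lemma \autoref{lemma4.4}, which you correctly invoke rather than redo.
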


By applying Theorem \autoref{thm3.20} to the line graph it is easy to see that Lemma \autoref{lemma4.5} gives a complete description of the subclass of graph without induced cycles of length $\geq 5$ with a chordal line graph square. This leads to the following theorem.

\begin{theorem}\label{thm4.13}
Let $G$ be a graph without induced cycles of length $\geq 5$, then $\lineg{G}^2$ is chordal if and only if $G$ does not contain a fertile sunflower sprout of size $4$.
\end{theorem}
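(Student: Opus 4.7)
The plan is to split the biconditional into two implications, each of which follows quickly from results already established, so essentially no new work is needed beyond bookkeeping.

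The sufficient direction---that, under the girth hypothesis, the absence of a fertile sunflower sprout of size $4$ in $G$ forces $\lineg{G}^2$ to be chordal---is precisely the statement of Lemma \ref{lemma4.5} (and equivalently the contrapositive of Lemma \ref{lemma4.4}). I would simply quote it.

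For the necessary direction I would prove the contrapositive: if $G$ contains a fertile sunflower sprout $S' \in \mathcal{S}_4$, then $\lineg{G}^2$ is not chordal. Notice that this direction does not use the hypothesis on induced cycles in $G$ at all. The argument proceeds in two steps. First, Lemma \ref{lemma4.3} converts $S'$ into an unsuspended, non-chordal sunflower $T \in \mathcal{F}_4$ sitting inside $\lineg{G}$. Second, this $T$ is, tautologically, an unwithered flower of size $4$ in $\lineg{G}$: taking $q=n=4$ in the flower definition, the $W$-part of $T$ forms a chordless Hamiltonian cycle, there are no pending vertices, and the notion "unsuspended" is literally the negation of "withered." Applying Theorem \ref{thm3.17} to the graph $\lineg{G}$ now yields an induced $C_4$ in $\lineg{G}^2$, so $\lineg{G}^2$ is not chordal, as required.

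The only substantive step is the definition-matching between an unsuspended non-chordal sunflower in $\mathcal{F}_4$ and an unwithered flower in $\mathfrak{F}_4$; since $n=q=4$ collapses the more general flower bookkeeping (pending vertices, additional $w_iw_{i+1}$ edges of $C$, etc.) to nothing, this is routine. This is consistent with the remark preceding the theorem that, once Lemma \ref{lemma4.5} is in hand, the remaining direction is "easy to see" via Theorem \ref{thm3.20} (or equivalently Theorem \ref{thm3.17}) applied to line graphs.
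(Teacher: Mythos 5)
Your proposal is correct and follows essentially the same route as the paper's own proof: the forward direction is quoted from Lemma \ref{lemma4.5}, and the reverse direction passes through Lemma \ref{lemma4.3} to obtain an unsuspended non-chordal sunflower in $\lineg{G}$, recognizes it as an unwithered flower of size $4$, and invokes the flower characterization (the paper cites Theorem \ref{thm3.20}, you cite the equivalent Theorem \ref{thm3.17}) to conclude non-chordality of $\lineg{G}^2$. The only cosmetic difference is contrapositive versus contradiction.
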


\begin{proof}
If $G$ does not contain a fertile sunflower sprout of size $4$ Lemma \autoref{lemma4.5} yields the chordality of $\lineg{G}^2$.\\
Now suppose $\lineg{G}^2$ is chordal and $G$ contains a fertile sunflower sprout of size $4$. By Lemma \autoref{lemma4.3} $\lineg{G}$ contains an unsuspended, non-chordal sunflower of size $4$, which is a flower of size $4$ that is not withered and therefore a contradiction to Theorem \autoref{thm3.20}.
\end{proof}

While there seems to be no inherent difference between induced cycles and cycles with chords in $G$ for the existence of an induced cycle in the line graph, the above result indicates a stronger relation for the squared line graph. A basic observation on the translation of a chord in $G$ to the line graph gives a nice clue:\\
Let $C$ be a cycle with some chord $e$ and let $a$, $b$, $c$ and $d$ be the edges of $C$ adjacent to $e$. We get $\distg{\lineg{G}}{a}{b}=1$ and $\distg{\lineg{G}}{c}{d}=1$, furthermore suppose $\distg{\lineg{G}}{x}{y}\geq 3$ for $x\in\set{a,b}$ and $y\in\set{c,d}$. \autoref{fig4.13} illustrates this translation.
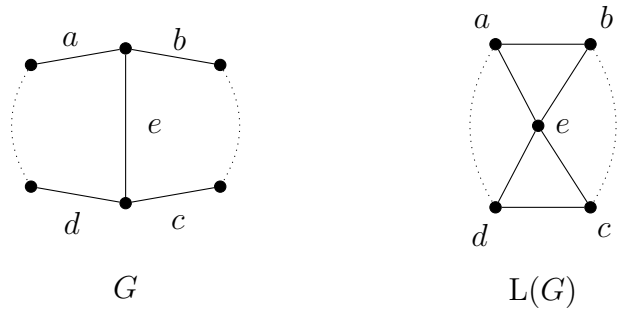
\begin{figure}[H]
	\begin{center}
		\begin{tikzpicture}
		
		\node (anchor1) [] {};
		\node (anchor2) [position=0:5.2cm from anchor1] {};
		
		\node (u1) [draw,circle,fill,inner sep=1.5pt,position=90:0.8cm from anchor1] {};
		\node (u2) [draw,circle,fill,inner sep=1.5pt,position=350:1.1cm from u1] {};
		\node (u3) [draw,circle,fill,inner sep=1.5pt,position=190:1.1cm from u1] {};
		\node (u4) [draw,circle,fill,inner sep=1.5pt,position=270:0.8cm from anchor1] {};
		\node (u5) [draw,circle,fill,inner sep=1.5pt,position=10:1.1cm from u4] {};
		\node (u6) [draw,circle,fill,inner sep=1.5pt,position=170:1.1cm from u4] {};
		
		\path
		(u1) edge (u2)
			 edge (u3)
		(u4) edge (u5)
			 edge (u6)
		;
		
		\path
		(u1) edge (u4)
		;
		
		\node (n) [position=270:1.7cm from anchor1] {$G$};
		
		\node (e) [position=0:0cm from anchor1] {$e$};
		
		\node (a) [position=170:0.4cm from u1] {$a$};
		\node (b) [position=10:0.4cm from u1] {$b$};
		\node (c) [position=340:0.4cm from u4] {$c$};
		\node (d) [position=200:0.4cm from u4] {$d$};
		
		\path
		(u2) edge [bend left,dotted] (u5)
		(u3) edge [bend right, dotted] (u6)
		;
		
		
		\node (u1) [draw,circle,fill,inner sep=1.5pt,position=0:5.2cm from anchor1] {};
		\node (e) [position=0:0cm from u1] {$e$};
		\node (u2) [draw,circle,fill,inner sep=1.5pt,position=60:1cm from anchor2] {};
		\node (b) [position=60:0cm from u2] {$b$};
		\node (u3) [draw,circle,fill,inner sep=1.5pt,position=120:1cm from anchor2] {};
		\node (a) [position=120:0cm from u3] {$a$};
		\node (u4) [draw,circle,fill,inner sep=1.5pt,position=240:1cm from anchor2] {};
		\node (d) [position=240:0cm from u4] {$d$};
		\node (u5) [draw,circle,fill,inner sep=1.5pt,position=300:1cm from anchor2] {};
		\node (c) [position=300:0cm from u5] {$c$};
				
		\path
		(u1) edge (u2)
			 edge (u3)
			 edge (u4)
			 edge (u5)
		(u2) edge (u3)
		(u4) edge (u5)
		;
		
		\node (n) [position=270:1.7cm from anchor2] {$\lineg{G}$};
		
		\path
		(u2) edge [bend left,dotted] (u5)
		(u3) edge [bend right, dotted] (u4)
		;
		
		\end{tikzpicture}
	\end{center}
	\caption{Translation of a chord in $G$ to $\lineg{G}$.}
	\label{fig4.13}
\end{figure}
\vspace{-3mm}
So a chord in a cycle in $G$ poses a path of length $2$ between at least two nonadjacent vertices of the resulting cycle in $\lineg{G}$. This results in a chord in $\lineg{G}^2$.\\
We will prove this observation with the methods used in the proof of Lemma \autoref{lemma4.5}. 

\begin{lemma}\label{lemma4.6}
If $G$ does not contain an induced cycle of length $l\geq f\geq 4$, then $\lineg{G}^2$ does not contain an induced cycle of length $l\geq f$ as well.
\end{lemma}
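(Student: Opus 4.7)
The plan is to prove the contrapositive: if $\lineg{G}^2$ contains an induced cycle $C_L^2 = (u_1\dots u_l)$ of length $l\geq f\geq 4$, then $G$ contains an induced cycle of length at least $l$. The approach closely mirrors the two‑case analysis in the proof of Lemma \autoref{lemma4.4}, but now tracks lengths quantitatively instead of only ruling out ``large'' cases.

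First I would lift $C_L^2$ to an auxiliary cycle $C_L$ in $\lineg{G}$. By Corollary \autoref{cor3.3} applied to $\lineg{G}$, at most $\abr{l/2}$ of the edges $u_iu_{i+1\lb \!\!\!\!\mod l\rb}$ are already edges of $\lineg{G}$; for every other pair, the relation $\distg{\lineg{G}}{u_i}{u_{i+1\lb \!\!\!\!\mod l\rb}}=2$ yields a vertex $w_k\in\V{\lineg{G}}$ realizing a path of length $2$. These $w_k$'s are pairwise distinct and connected to the $u_i$'s only as prescribed, for otherwise a chord would appear in $C_L^2$. Hence $C_L$ has length at least $l+\aufr{l/2}$, and the sole legal chords of $C_L$ are of type $w_iw_j$, further restricted in exactly the same way as in the cases of Lemma \autoref{lemma4.4}: a chord $w_iw_j$ with $j\neq i\pm 1\lb\!\!\!\!\mod q\rb$ would place two non‑adjacent $u$'s within distance $2$ in $\lineg{G}$ and hence chord $C_L^2$, while an edge $u_iw_j$ not already in $C_L$ would do the same.

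Next I would invoke Lemma \autoref{lemma4.2}: the induced cycle obtained from $C_L$ by carrying out every legal chord‑shortening in $\lineg{G}$ corresponds to a cycle $C_G$ in $G$ of the same length. The arithmetic bookkeeping already done in the proof of Lemma \autoref{lemma4.4} shows that each legal $w_iw_j$ chord shortens $C_L$ by exactly $1$, and the number of such legal chords is at most (number of consecutive $u$‑runs), so the resulting cycle has length at least $l+\aufr{l/2}-\abr{l/2}\geq l$. Thus $C_G$ is a cycle in $G$ with $\abs{C_G}\geq l\geq f$.

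It remains to show that $C_G$ contains an induced cycle of length $\geq l$. Any chord of $C_G$ translates to a vertex of $\lineg{G}$ adjacent to two $C_L$‑vertices $a,b$ with $\distlg{G}{a}{b}\geq 3$; the same forbidden‑chord analysis used above confines such chords to one of the two ``type‑$uw$'' or ``type‑$uwu$'' forms depicted in Figure \autoref{fig4.7} and \autoref{fig4.10}. Each of these skips exactly one $w$‑edge or one pair of consecutive $u$‑edges, removing at most $2$ edges per chord, while the cycle $C_L$ retains at least $l$ mandatory ``anchor'' edges (the $u$‑edges together with one representative from every skippable block). A direct counting argument identical to the one concluding Subcase~2.1 of Lemma \autoref{lemma4.4} then shows that after exhausting all legal shortenings, an induced cycle of length at least $l$ still remains in $G$. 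The main obstacle is precisely this final counting: one must ensure that two disjoint chord‑skippings cannot jointly ``eat'' more than the slack $\aufr{l/2}$ of edges introduced in passing from $C_L^2$ to $C_L$, which requires verifying that no two legal chords can share an endpoint on the same $u$‑edge without creating a forbidden configuration (a $K_{1,3}$ or an additional chord in $C_L^2$). Once this is established, the induced cycle in $G$ has length $\geq l\geq f$, completing the proof.
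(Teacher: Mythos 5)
Your proposal follows essentially the same route as the paper's proof: lift the induced cycle of $\lineg{G}^2$ to a cycle in $\lineg{G}$ (the paper packages this step via Theorem \autoref{thm3.17} and the hamiltonicity of flowers from Lemma \autoref{lemma3.14}, where you redo the construction by hand as in Lemma \autoref{lemma4.4}), drop to a cycle $C_G$ in $G$ via Lemma \autoref{lemma4.2}, classify the legal chords into the $u$-$w$ and $uwu$ types, and bound the total shortening. The paper's bookkeeping charges each unit of shortening to a distinct ``leading'' $w$-edge (so the total reduction is at most $\tilde{l}-l$), which is the precise form of the counting you flag as the remaining obstacle; your arithmetic $l+\aufr{l/2}-\abr{l/2}$ should really be stated in terms of the number $q$ of $w$-vertices, but the conclusion $\geq l$ is unaffected.
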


\begin{proof}
Let a graph $G$ without cycles of length $l'\geq f\geq 4$ be given. Suppose $\lineg{G}^2$ contains an induced cycle $C_{L^2}$ of length $l\geq f$ with vertex set $u_1,\dots,u_l$. Theorem \autoref{thm3.17} yields the existence of a flower $F$ of size $l$ in $\lineg{G}$ with $u=\set{u_1,\dots,u_l}$ and the additional set $W=\set{w_1,\dots,w_q}$.\\
This flower yields a cycle $C_L$ of length $l+q$ on the vertices $U\cup W$, by Lemma \autoref{lemma3.14} a flower is hamiltonian. As seen before, chords in $C_L$ may only join vertices of $W$. Since $\lineg{G}$ is a line graph, it must not contain an induced $K_{1,3}$ and thus a legal chord must join two consecutive $w$-vertices with exactly one $u$-vertex in between. Again such a chord shortens a cycle by at most $1$ and there are at most $q$ chords of this type possible. Hence $\lineg{G}$ contains an induced cycle $C_L'$ of length $\tilde{l}\geq l$ and so, by Lemma \autoref{lemma4.2} $G$ contains a cycle $C_G$ of the same length consisting of $u$- and $w$-edges. We denote its vertex set by $v_1,\dots,v_{\tilde{l}}$.\\
Please note, that in $C_G$ at most two $u$-edges are consecutive and every set of consecutive $u$-edges (including the sets of size one) is lead by a $w$-edge if we go through the cycle in the order given by the definition of flowers (i.e. the ordering of the $u$-vertices corresponding to the $u$-edges in $C_G$). If there are $k$ such "leading" $w$-edges, the length of $C_G$ is exactly $\tilde{l}=l+k$.\\
By assumption $C_G$ contains chords, since $G$ has no induced cycles of length $l\geq f$. But most types of chords would correspond to a chord in $C_{L^2}$, respectively to the withering of the flower $F$. The forbidden chords can be classified in three different types as follows:
\begin{enumerate}[i)]
\item chords of the form $v_iv_j$ with $\abs{i-j}\lb \!\!\!\mod \tilde{l}\rb\geq 4$,

\item chords of the form $v_iv_j$ with $\abs{i-j}\lb \!\!\!\mod \tilde{l}\rb=3$ if they close a cycle of length $4$ either containing two $w$-edges or two consecutive $u$-edges, and

\item chords with one endpoint being adjacent to two consecutive $u$-edges.
\end{enumerate}
Hence the possible chords are of the form 
\begin{enumerate}[i)]

\item $v_{i+1}v_{i+3}$, if $v_{i+1}$ and $v_{i+3}$ are either contained in an alternating $u$-$w$ path or the path $v_{i+1}v_{i+2}v_{i+3}$ consists only of $u$-edges, and,

\item $v_{i+1}v_{i+4}$ but only if they close a $4$-cycle containing the sequence $uwu$. 
\end{enumerate}
\begin{figure}[H]
\begin{center}
\begin{tikzpicture}
\node (v1) [draw,circle,fill,inner sep=1.5pt] {};
\node (v2) [draw,circle,fill,inner sep=1.5pt,position=0:1.4cm from v1] {};
\node (v3) [draw,circle,fill,inner sep=1.5pt,position=0:1.4cm from v2] {};
\node (v4) [draw,circle,fill,inner sep=1.5pt,position=0:1.4cm from v3] {};
\node (v5) [draw,circle,fill,inner sep=1.5pt,position=0:1.4cm from v4] {};
\node (v6) [draw,circle,fill,inner sep=1.5pt,position=0:1.4cm from v5] {};

\node (e1) [position=0:0.62cm from v1] {};
\node (e2) [position=0:0.62cm from v2] {};
\node (e3) [position=0:0.62cm from v3] {};
\node (e4) [position=0:0.62cm from v4] {};
\node (e5) [position=0:0.62cm from v5] {};

\node (el1) [position=270:0.07 from e1] {$u\setminus w$};
\node (el2) [position=270:0.07 from e2] {$w$};
\node (el3) [position=270:0.07 from e3] {$u$};
\node (el4) [position=270:0.07 from e4] {$w$};
\node (el5) [position=270:0.07 from e5] {$u\setminus w$};

\path
(v1) edge [{Latex[length=1.5mm,width=2.5mm]}-] (v2)
(v2) edge [{Latex[length=1.5mm,width=2.5mm]}-] (v3)
(v3) edge [{Latex[length=1.5mm,width=2.5mm]}-] (v4)
(v5) edge [{Latex[length=1.5mm,width=2.5mm]}-] (v6)
;

\draw[snake=coil,segment aspect=0,thick,-{Latex[length=1.5mm,width=2.5mm]}] (v5) -- (v4);

\path
(v2) edge [bend left] (v4)
;

\node (bv1) [draw,circle,fill,inner sep=1.5pt,position=270:2cm from v1] {};
\node (bv2) [draw,circle,fill,inner sep=1.5pt,position=0:1.4cm from bv1] {};
\node (bv3) [draw,circle,fill,inner sep=1.5pt,position=0:1.4cm from bv2] {};
\node (bv4) [draw,circle,fill,inner sep=1.5pt,position=0:1.4cm from bv3] {};
\node (bv5) [draw,circle,fill,inner sep=1.5pt,position=0:1.4cm from bv4] {};
\node (bv6) [draw,circle,fill,inner sep=1.5pt,position=0:1.4cm from bv5] {};

\node (be1) [position=0:0.62cm from bv1] {};
\node (be2) [position=0:0.62cm from bv2] {};
\node (be3) [position=0:0.62cm from bv3] {};
\node (be4) [position=0:0.62cm from bv4] {};
\node (be5) [position=0:0.62cm from bv5] {};

\node (bel1) [position=270:0.07 from be1] {$w$};
\node (bel2) [position=270:0.07 from be2] {$u$};
\node (bel3) [position=270:0.07 from be3] {$u$};
\node (bel4) [position=270:0.07 from be4] {$w$};
\node (bel5) [position=270:0.07 from be5] {$u\setminus w$};

\path
(bv1) edge [{Latex[length=1.5mm,width=2.5mm]}-] (bv2)
(bv2) edge [{Latex[length=1.5mm,width=2.5mm]}-] (bv3)
(bv3) edge [{Latex[length=1.5mm,width=2.5mm]}-] (bv4)
(bv5) edge [{Latex[length=1.5mm,width=2.5mm]}-] (bv6)
;

\draw[snake=coil,segment aspect=0,thick,-{Latex[length=1.5mm,width=2.5mm]}] (bv5) -- (bv4);

\path
(bv2) edge [bend left] (bv4)
;

\node (cv1) [draw,circle,fill,inner sep=1.5pt,position=270:2cm from bv1] {};
\node (cv2) [draw,circle,fill,inner sep=1.5pt,position=0:1.4cm from cv1] {};
\node (cv3) [draw,circle,fill,inner sep=1.5pt,position=0:1.4cm from cv2] {};
\node (cv4) [draw,circle,fill,inner sep=1.5pt,position=0:1.4cm from cv3] {};
\node (cv5) [draw,circle,fill,inner sep=1.5pt,position=0:1.4cm from cv4] {};
\node (cv6) [draw,circle,fill,inner sep=1.5pt,position=0:1.4cm from cv5] {};

\node (ce1) [position=0:0.62cm from cv1] {};
\node (ce2) [position=0:0.62cm from cv2] {};
\node (ce3) [position=0:0.62cm from cv3] {};
\node (ce4) [position=0:0.62cm from cv4] {};
\node (ce5) [position=0:0.62cm from cv5] {};

\node (cel1) [position=270:0.07 from ce1] {$w$};
\node (cel2) [position=270:0.07 from ce2] {$u$};
\node (cel3) [position=270:0.07 from ce3] {$w$};
\node (cel4) [position=270:0.07 from ce4] {$u$};
\node (cel5) [position=270:0.07 from ce5] {$w$};

\path
(cv1) edge [{Latex[length=1.5mm,width=2.5mm]}-] (cv2)
(cv2) edge [{Latex[length=1.5mm,width=2.5mm]}-] (cv3)
(cv4) edge [{Latex[length=1.5mm,width=2.5mm]}-] (cv5)
;

\draw[snake=coil,segment aspect=0,thick,-{Latex[length=1.5mm,width=2.5mm]}] (cv4) -- (cv3);
\draw[snake=coil,segment aspect=0,thick,-{Latex[length=1.5mm,width=2.5mm]}] (cv6) -- (cv5);

\path
(cv2) edge [bend left] (cv5)
;

\end{tikzpicture}
\end{center}
\caption{The two types of legal chords in $C_G$.}
\label{fig4.14}
\end{figure}
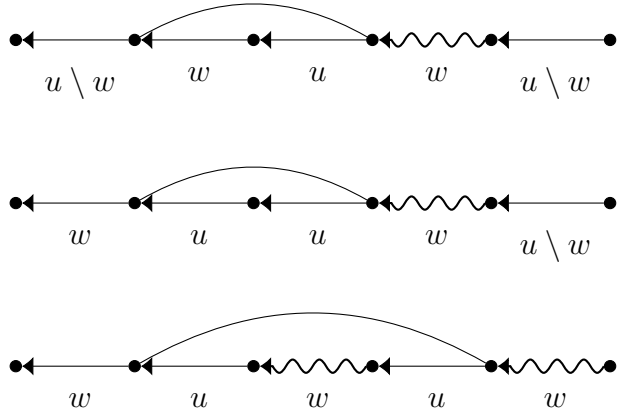
\vspace{-3mm}
\autoref{fig4.14} illustrates those two types of possible chords. Apparently, every chord of type i) reduces the length of $C_G$ by one and is associated with exactly one "leading" $w$-edge. A chord of type ii) reduces the length of $C_G$ by two, but here are two "leading" $w$-edges (zig-zag edges) involved. With this $C_G$ cannot be shortened by more than $k$, since this is the number of "leading" $w$-edges and thus an induced cycle of at least length $l$ remains in $G$, a contradiction.
\end{proof}

Directly we obtain the following theorem as a corollary. This was first proven by Kathie Cameron in 1989 (see \cite{cameron1989induced}) in her investigation of $2$-strong matchings.

\begin{theorem}[Cameron. 1989 \cite{cameron1989induced}]\label{thm4.14}
Let $G$ be chordal, then $\lineg{G}^2$ is chordal.
\end{theorem}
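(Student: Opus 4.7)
The plan is to obtain Theorem 4.14 as an immediate corollary of the preceding Lemma 4.6. Since $G$ is chordal, $G$ contains no induced cycle of length $l \geq 4$. Applying Lemma 4.6 with the threshold $f = 4$, this hypothesis (there is no induced cycle of length $l \geq f = 4$ in $G$) transfers directly: $\lineg{G}^2$ contains no induced cycle of length $l \geq 4$ either. By the definition of chordality, this means $\lineg{G}^2$ is chordal, which is exactly what we want.

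The forward-looking work has in fact already been done in Lemma 4.6, where the flower-and-sprout analysis showed that an induced cycle of length $\geq f$ in $\lineg{G}^2$ forces an induced cycle of length $\geq f$ in $G$ (via a hamiltonian tour of an associated flower in $\lineg{G}$, a pullback to a cycle $C_G$ in $G$ by Lemma 4.2, and a careful classification of the only chords of $C_G$ compatible with $\lineg{G}$ being $K_{1,3}$-free and with the flower being unwithered). The main obstacle was thus entirely absorbed into Lemma 4.6; no new structural argument is needed here.

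Consequently, a complete proof amounts to little more than a one-line invocation: \emph{assume $G$ is chordal; then $G$ is free of induced cycles of length $\geq 4$, so by Lemma 4.6 the same holds for $\lineg{G}^2$, hence $\lineg{G}^2$ is chordal}. I expect no further step to cause difficulty, since the contrapositive formulation of Lemma 4.6 applied at $f=4$ is exactly the desired implication.
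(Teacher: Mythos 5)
Your proposal is correct and is exactly the paper's route: the paper introduces Theorem \autoref{thm4.14} with the words ``Directly we obtain the following theorem as a corollary'' immediately after Lemma \autoref{lemma4.6}, i.e.\ it too reads the theorem off as the $f=4$ instance of that lemma. Nothing further is needed.
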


In a similar fashion the general property of $G$ being an intersection graph is inherited by the squared line graph $\lineg{G}^2$.

\begin{theorem}[Cameron. 2004 \cite{cameron2004induced}]\label{thm5.17}
Let $G$ be the intersection graph of some family $\mathscr{F}$, then $\lineg{G}^2$ is the intersection graph of the family
\begin{align*}
\mathscr{F}'\define\condset{u\cup v}{u,v\in\mathscr{F},u\neq v~\text{and}~u\cap v\neq\emptyset}.
\end{align*} 
\end{theorem}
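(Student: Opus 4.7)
The proof is a direct translation between distances in $\lineg{G}$ and intersections of the union sets. First I would set up the natural identification between vertices of $\lineg{G}^2$ and elements of $\mathscr{F}'$: since $\V{\lineg{G}^2}=\V{\lineg{G}}=\E{G}$, and every edge $xy\in\E{G}$ arises exactly because the sets $u_x,u_y\in\mathscr{F}$ assigned to $x$ and $y$ are distinct and intersect, the assignment $xy\mapsto u_x\cup u_y$ is a bijection (up to indexing) onto $\mathscr{F}'$. So it suffices to show: for distinct edges $e=xy$ and $e'=x'y'$ of $G$,
\begin{align*}
\distlg{G}{e}{e'}\leq 2 \quad\Longleftrightarrow\quad (u_x\cup u_y)\cap(u_{x'}\cup u_{y'})\neq\emptyset.
\end{align*}

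For the forward direction I would split on the distance. If $\distlg{G}{e}{e'}=1$, the edges share a vertex, WLOG $x=x'$, and then $u_x\subseteq (u_x\cup u_y)\cap(u_{x'}\cup u_{y'})$ is nonempty because $u_x$ itself is nonempty (as $u_x$ participates in intersections with neighbours). If $\distlg{G}{e}{e'}=2$, there exists an edge $e''=ab\in\E{G}$ adjacent in $\lineg{G}$ to both $e$ and $e'$, and one checks that $a\neq b$ forces $a\in\{x,y\}$ and $b\in\{x',y'\}$ (otherwise the distance would be $1$). Since $ab\in\E{G}$, by definition of the intersection graph $u_a\cap u_b\neq\emptyset$, and any element of this intersection lies in both $u_x\cup u_y$ and $u_{x'}\cup u_{y'}$.

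For the converse, I would pick any $z\in(u_x\cup u_y)\cap(u_{x'}\cup u_{y'})$, so $z\in u_a$ for some $a\in\{x,y\}$ and $z\in u_b$ for some $b\in\{x',y'\}$. If $a=b$ then $e$ and $e'$ share the endpoint $a$, giving $\distlg{G}{e}{e'}=1$. Otherwise, $u_a\cap u_b$ contains $z$, so $a\neq b$ are adjacent in $G$; the edge $ab$ is therefore a common neighbour of $e$ and $e'$ in $\lineg{G}$, witnessing $\distlg{G}{e}{e'}\leq 2$.

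The argument is almost entirely bookkeeping, so there is no deep obstacle; the only care required is in the distance-$2$ case to rule out $a=b$ (handled by invoking the definition of distance in $\lineg{G}$) and in checking that the identification $xy\mapsto u_x\cup u_y$ behaves properly with respect to the restriction $u\neq v$ in the definition of $\mathscr{F}'$, which holds automatically since $G$ is a simple graph and $u_x\neq u_y$ for an actual edge.
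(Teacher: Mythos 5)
Your proof is correct. Note that the paper itself states this result without proof, citing Cameron (2004), so there is no in-paper argument to compare against; your bookkeeping translation between $\distlg{G}{e}{e'}\leq 2$ and the intersection of the unions is the standard and natural one, and both directions check out. The only microscopic imprecision is in the converse when $a\neq b$: the edge $ab$ could coincide with $e$ or $e'$ itself (rather than being a genuine common neighbour in $\lineg{G}$), but in that degenerate case $e$ and $e'$ already share an endpoint, so $\distlg{G}{e}{e'}=1$ and the conclusion still holds.
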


The proof of Lemma \autoref{lemma4.6} gives us another important clue on the structures in a graph $G$ resulting in an induced cycle in $\lineg{G}^2$. To be more precise, $\lineg{G}^2$ contains an induced cycle of length $\geq 4$ if and only if $\lineg{G}$ contains a flower of the same size which is not withered. As a result in order to control cycles in the squared line graph we must find the graphs whose line graphs become flowers, as we did before with the sunflower sprouts. With that we have to generalize the concept of sunflower sprouts.

\begin{definition}[Sprout]
A {\em sprout} of size $n$ is a graph $S=\lb V,U\cup W\cup E\rb$ with $\abs{U}=n$ and $\abs{W}=q$, $U$, $W$ and $E$ pairwise having an empty intersection and $\aufr{\frac{n}{2}}\leq q\leq n$ satisfying the following conditions:
\begin{enumerate}[i)]

\item There is a cycle $C$ with $\E{C}\supseteq W$ containing the edges of $W$ in the order $w_1,\dots,w_q$.

\item The set $U=\set{u_1,\dots,u_n}$ is sorted by the appearance order of its elements along $C$ with $u_1\cap w_q\neq\emptyset$ and $u_2\cap w_1\neq\emptyset$, in addition $u_i\cap u_j=\emptyset$ for $j\neq i\pm 1\lb \!\!\!\mod n\rb$.

\item If $w_i\cap w_{i+1}\neq\emptyset$, then there is exactly one $u\in U$ with $w_i\cap w_{i+1}\cap u\neq\emptyset$, those edges are called {\em pending}.

\item If $w_i\cap w_{i+1}=\emptyset$, then there either is one $u\in U$ connecting $w_i$ and $w_{i+1}$ in $C$, or there are exactly two edges $t,u\in U$, such that the path $w_ituw_{i+1}$ is part of $C$.

\item The pending $u$-edges are pairwise nonadjacent and all $u$-edges that are not pending are edges of $C$.
\end{enumerate}
The family of all sprouts of size $n$ is denoted by $\mathfrak{S}_n$.\\
If a sprout $S$ contains an edge $e\in E$ connecting two non-consecutive $u$-edges, we say $S$ is {\em infertile}, otherwise $S$ is called {\em fertile}.
\end{definition}

\begin{theorem}\label{thm4.16}
Let $G$ be a graph, then $\lineg{G}^2$ contains an induced cycle of length $n$ if and only of $G$ contains a fertile sprout of size $n$.
\end{theorem}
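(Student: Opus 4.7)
My plan is to apply Theorem~\ref{thm3.17} to the line graph $\lineg{G}$, which reduces the statement to a correspondence between unwithered flowers in $\lineg{G}$ and fertile sprouts in $G$---the natural generalization of Lemma~\ref{lemma4.3} from sunflowers and sunflower sprouts to arbitrary flowers and sprouts. By Theorem~\ref{thm3.17} applied to $\lineg{G}$, the graph $\lineg{G}^2$ contains an induced cycle of length $n$ if and only if $\lineg{G}$ contains a flower of size $n$ that is not withered. It therefore suffices to prove that $\lineg{G}$ contains an unwithered flower of size $n$ if and only if $G$ contains a fertile sprout of size $n$.

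For the ``only if'' direction I would start with a fertile sprout $S$ of size $n$ in $G$. The cycle $C$ of $S$---a cycle of edges whose consecutive members share an endpoint of $G$---translates in $\lineg{G}$ into a cycle $C_F$ of the same length whose vertices are the corresponding edges of $S$. The $w$-edges of $S$ become $w$-vertices of a flower $F$ in the same cyclic order, non-pending $u$-edges on $C$ become non-pending $u$-vertices on $C_F$, and each pending $u$-edge of $S$ (adjacent to a pair $w_i, w_{i+1}$ sharing a vertex in $G$) becomes a pending $u$-vertex of $F$ adjacent in $\lineg{G}$ to exactly those two $w$-vertices. Conditions (ii)--(v) of the sprout translate directly to conditions (ii)--(v) of the flower, and fertility of $S$ ensures that no vertex of $\lineg{G}$ is adjacent to two non-consecutive $u$-vertices of $F$, so $F$ is unwithered.

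For the converse, given an unwithered flower $F$ of size $n$ in $\lineg{G}$, I would take the edges of $G$ corresponding to vertices of $F$ as the candidate $U\cup W$ for a sprout $S$. The cycle $C$ of $F$ gives a cyclic sequence of edges in $G$ in which consecutive members share an endpoint, and the pending $u$-vertices of $F$ become pending $u$-edges attached at the shared endpoint of the corresponding two $w$-edges. The hard part is to verify that this cyclic sequence actually traces a \emph{simple} cycle of $G$---not merely a closed trail with repeated vertices---and that the pending $u$-edges attach consistently at the correct shared endpoints. If three vertices of $C$ shared a common endpoint of $G$, they would form a triangle in $\lineg{G}$ and hence a chord of $C$; invoking the Krausz characterization (Theorem~\ref{thm4.10}) together with flower conditions (iii)--(v), and using that $F$ is unwithered, such coincidences can be ruled out. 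Finally, any edge of $G$ connecting two non-consecutive $u$-edges of $S$ would give a vertex of $\lineg{G}$ adjacent to two non-consecutive $u$-vertices of $F$, contradicting unwitheredness; hence $S$ is fertile.
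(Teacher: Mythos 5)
Your proposal follows essentially the same route as the paper: both reduce the theorem to Theorem \ref{thm3.17} applied to $\lineg{G}$ and then establish the correspondence between fertile sprouts in $G$ and non-withered flowers in $\lineg{G}$ by translating the cycle, the $w$-objects, and the pending $u$-objects back and forth, with fertility matching unwitheredness. The subtlety you flag in the converse (that the flower's cycle need not be induced, so its preimage in $G$ must be checked to be a genuine simple cycle) is exactly the point the paper handles by passing to the induced cycle of $\lineg{G}$ containing as many $w$-vertices as possible and invoking Lemma \ref{lemma4.2}, so your sketch is consistent with the paper's argument.
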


\begin{proof}
In the following, we show that the existence of a fertile sprout is equivalent to the existence of a non-withered flower of the same size in $\lineg{G}$. Then the assertion follows directly from Theorem \autoref{thm3.17}.

Let $S$ be a fertile sprout with edge classes $U$, $W$ and $E$ in $G$, furthermore let $C_G$ be the cycle in $S$ containing all $w$-edges. This cycle may contain chords $w\in E$, which again are of the three types seen in \autoref{fig4.14}. Lemma \autoref{lemma4.2} yields the existence of an induced cycle $C_L'$ in $\lineg{G}$ whose vertices correspond to the edges of $C_G$, which are completely contained in $U\cup W$.\\
Any edge $u\in U$ which is not in $C_G$ is pending and therefore forms a star together with its two adjacent $w$-edges. Hence in $\lineg{G}$ we obtain a triangle. We choose the cycle $C_L'$ as the cycle required in $i)$ of the flower definition and with any pending edge ending up as a $u$-vertex in the line graph which is adjacent to exactly two $w$-vertices that are adjacent themselves conditions $ii)$, $iii)$ and $v)$ are satisfied as well. At last condition $iv)$ of the sprout definition guarantees that a $u$-vertex in $C_L'$ either is adjacent to no other $u$-vertex, or to exactly one. This corresponds to condition $iv)$ of the flower definition which therefore is also satisfied.  So we obtain a flower $F$ with size $\abs{U}$ in $\lineg{G}$. Any vertex in $\lineg{G}$ responsible for $F$ being withered would correspond to an edge $e\in E$, rendering the sprout infertile.\\
\\
So now suppose there is a non-withered flower $F$ in $\lineg{G}$. We define $C_L'$ to be the induced cycle in $F$ containing as much $w$-vertices as possible. If not all $w$-vertices lie on $C_L'$, each cycle (e.g the cycle $C$ containing all $u$- and $w$-vertices except the pending ones) covering all $w$-vertices contains a chord. Such a chord either joins two consecutive $u$-vertices that both are not adjacent to another $u$-vertex or contradicts either the definition of a flower or $\lineg{G}$ being a line graph by generating an induced $K_{1,3}$.\\
Now for the $u$-$u$ chord. If there is another chord joining the skipped $w$-vertex to another, consecutive $w$-vertex, the skipped one can be included in an induced cycle containing all $w$-vertices, if there is no such chord we are in the case displayed in \autoref{fig4.15}.

\begin{figure}[H]
	\begin{center}
		\begin{tikzpicture}
		\node (v1) [draw,circle,fill,inner sep=1.5pt] {};
		\node (v2) [draw,circle,fill,inner sep=1.5pt,position=25:1.2cm from v1] {};
		\node (v3) [draw,circle,fill,inner sep=1.5pt,position=25:1.2cm from v2] {};
		\node (v4) [draw,circle,fill,inner sep=1.5pt,position=335:1.2cm from v3] {};
		\node (v5) [draw,circle,fill,inner sep=1.5pt,position=335:1.2cm from v4] {};
		
		\node (l1) [position=90:0.07 from v1] {$w$};
		\node (l2) [position=90:0.07 from v2] {$u$};
		\node (l3) [position=90:0.07 from v3] {$w'$};
		\node (l4) [position=90:0.07 from v4] {$u$};
		\node (l5) [position=90:0.07 from v5] {$w$};

		\path
		(v1) edge (v2)
		(v2) edge (v3)
		(v3) edge (v4)
		(v4) edge (v5)
		;
		
		\path 
		(v2) edge (v4)
		;
		
		\end{tikzpicture}
	\end{center}
	\caption{The sole legal chord excluding a $w$-vertex from $C_L'$.}
	\label{fig4.15}
\end{figure}
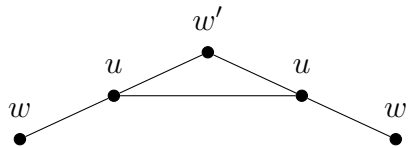
\vspace{-3mm}
In this case the excluded $w$-vertex $w'$ is not necessary and $F'=\lb U\cup W\setminus\set{w'}, E'\rb$ is a flower of the same size. Hence all $w'$-vertices lie on $C_L'$ and Lemma \autoref{lemma4.2} yields the existence of a cycle $C_G$ in $G$ completely consisting of all edges corresponding to the $w$-vertices in $\lineg{G}$ and some $u$-vertices. Hence condition $i)$ of the sprout definition is satisfied.\\
The remaining $u$-vertices cannot be put into the cycle and thus they correspond to the pending edges. Hence we obtain a sprout in $G$ which is fertile since any violation of the other conditions would result in $F$ not being a flower. 
\end{proof}
\begin{corollary}\label{cor4.6}
A graph $G$ without induced cycles of length $k\geq c$ does not contain a fertile sprout of size $k\geq c$.
\end{corollary}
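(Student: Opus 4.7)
The plan is essentially to chain together the two main results that immediately precede this corollary, so the proof should be very short. The key observation is that Theorem \autoref{thm4.16} gives a biconditional translation between fertile sprouts in $G$ and induced cycles in $\lineg{G}^2$, while Lemma \autoref{lemma4.6} transfers the absence of long induced cycles from $G$ to $\lineg{G}^2$. Together these two facts handle the claim directly.

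More precisely, I would argue by contraposition: suppose $G$ contains a fertile sprout of size $k \geq c$. By Theorem \autoref{thm4.16}, $\lineg{G}^2$ then contains an induced cycle of length $k$, and since $k \geq c \geq 4$ (noting that sprouts are only defined for sizes at which the cycle condition makes sense and $c$ must be at least $4$ for Lemma \autoref{lemma4.6} to apply), we may now invoke Lemma \autoref{lemma4.6} with $f = c$. Its contrapositive states that if $\lineg{G}^2$ has an induced cycle of length $\geq c \geq 4$, then so does $G$. Hence $G$ has an induced cycle of length $k \geq c$, contradicting the hypothesis.

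The reason the proof is this compact is that all of the combinatorial work has already been done upstream: Theorem \autoref{thm4.16} did the structural bookkeeping of converting flowers in $\lineg{G}$ into sprouts in $G$, and Lemma \autoref{lemma4.6} did the careful cycle-shortening analysis using the two legal chord types in the cycle $C_G$. There is essentially no hard step here; the only thing one might want to double check is the boundary case $c = 4$, since Lemma \autoref{lemma4.6} is explicitly stated under the assumption $f \geq 4$, which matches the smallest meaningful cycle length anyway. I would simply note this in one short sentence to make the application of the lemma unambiguous, and then conclude.
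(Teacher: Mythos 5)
Your proof is correct and is exactly the argument the paper intends: the corollary is stated without proof immediately after Theorem \autoref{thm4.16}, and it follows by chaining that theorem with (the contrapositive of) Lemma \autoref{lemma4.6}, precisely as you do. Your remark on the boundary case $c\geq 4$ is a sensible addition, since both the sprout/flower machinery and Lemma \autoref{lemma4.6} are only meaningful from length $4$ onward.
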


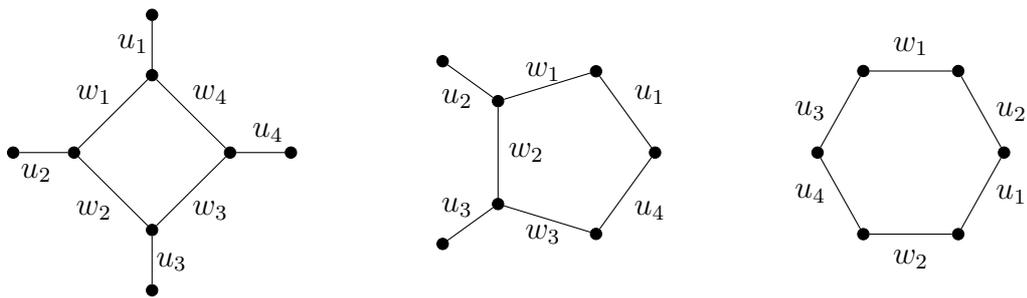
\begin{figure}[!h]
\begin{center}
\begin{tikzpicture}

\node (anchor1) [] {};
\node (anchor2) [position=0:5.2cm from anchor1] {};
\node (anchor3) [position=0:4.2cm from anchor2] {};

\node (u1) [draw,circle,fill,inner sep=1.5pt,position=90:1.6cm from anchor1] {};
\node (u2) [draw,circle,fill,inner sep=1.5pt,position=180:1.6cm from anchor1] {};
\node (u3) [draw,circle,fill,inner sep=1.5pt,position=270:1.6cm from anchor1] {};
\node (u4) [draw,circle,fill,inner sep=1.5pt,position=0:1.6cm from anchor1] {};

\node (w1) [draw,circle,fill,inner sep=1.5pt,position=90:0.8cm from anchor1] {};
\node (w2) [draw,circle,fill,inner sep=1.5pt,position=180:0.8cm from anchor1] {};
\node (w3) [draw,circle,fill,inner sep=1.5pt,position=270:0.8cm from anchor1] {};
\node (w4) [draw,circle,fill,inner sep=1.5pt,position=0:0.8cm from anchor1] {};

\node (lu1) [position=100:1.05 from anchor1] {$u_1$};
\node (lu2) [position=190:1.05 from anchor1] {$u_2$};
\node (lu3) [position=280:1.05 from anchor1] {$u_3$};
\node (lu4) [position=10:1.05 from anchor1] {$u_4$};

\node (lw1) [position=135:0.5cm from anchor1] {$w_1$};
\node (lw2) [position=225:0.5cm from anchor1] {$w_2$};
\node (lw3) [position=315:0.5cm from anchor1] {$w_3$};
\node (lw4) [position=45:0.5cm from anchor1] {$w_4$};

\path
(u1) edge (w1)
(u2) edge (w2)
(u3) edge (w3)
(u4) edge (w4)
;

\path
(w1) edge (w2)
(w2) edge (w3)
(w3) edge (w4)
(w4) edge (w1)
;



\node (v1) [draw,circle,fill,inner sep=1.5pt,position=0:0.9cm from anchor2] {};
\node (v2) [draw,circle,fill,inner sep=1.5pt,position=72:0.9cm from anchor2] {};
\node (v3) [draw,circle,fill,inner sep=1.5pt,position=144:0.9cm from anchor2] {};
\node (v4) [draw,circle,fill,inner sep=1.5pt,position=216:0.9cm from anchor2] {};
\node (v5) [draw,circle,fill,inner sep=1.5pt,position=288:0.9cm from anchor2] {};

\path
(v1) edge (v2)
(v2) edge (v3)
(v3) edge (v4)
(v4) edge (v5)
(v5) edge (v1)
;

\node (cl1) [position=36:0.7cm from anchor2] {$u_1$};
\node (cl2) [position=108:0.7cm from anchor2] {$w_1$};
\node (cl3) [position=180:0.05cm from anchor2] {$w_2$};
\node (cl4) [position=252:0.7cm from anchor2] {$w_3$};
\node (cl5) [position=324:0.7cm from anchor2] {$u_4$};


\node (a1) [draw,circle,fill,inner sep=1.5pt,position=144:1.8cm from anchor2] {};
\node (a2) [draw,circle,fill,inner sep=1.5pt,position=216:1.8cm from anchor2] {};

\path
(v3) edge (a1)
(v4) edge (a2)
;

\node (al1) [position=154:1.1cm from anchor2] {$u_2$};
\node (al2) [position=206:1.1cm from anchor2] {$u_3$};


\node (u1) [draw,circle,fill,inner sep=1.5pt,position=60:1cm from anchor3] {};
\node (u2) [draw,circle,fill,inner sep=1.5pt,position=120:1cm from anchor3] {};
\node (u3) [draw,circle,fill,inner sep=1.5pt,position=240:1cm from anchor3] {};
\node (u4) [draw,circle,fill,inner sep=1.5pt,position=300:1cm from anchor3] {};

\node (w1) [draw,circle,fill,inner sep=1.5pt,position=0:1cm from anchor3] {};
\node (w2) [draw,circle,fill,inner sep=1.5pt,position=180:1cm from anchor3] {};

\node (lu1) [position=337.5:0.9cm from anchor3] {$u_1$};
\node (lu2) [position=22.5:0.9cm from anchor3] {$u_2$};
\node (lu3) [position=157.5:0.9cm from anchor3] {$u_3$};
\node (lu4) [position=202.5:0.9cm from anchor3] {$u_4$};

\node (lw1) [position=90:1cm from anchor3] {$w_1$};
\node (lw2) [position=270:1cm from anchor3] {$w_2$};

\path
(u1) edge (u2)
(u2) edge (w2)
(w1) edge (u1)
(u3) edge (u4)
(u4) edge (w1)
(w2) edge (u3)
;

\end{tikzpicture}
\end{center}
\caption{Examples of sprouts of size $4$.}
\label{fig.16}
\end{figure}
\vspace{-3mm}
We will now have a deeper look into the relation between induced cycles of a certain length and the existence of fertile sprouts of a certain size in the same graph.
\begin{lemma}\label{lemma4.7}
For each $n\geq4$ the only fertile sprouts $S$ of size at least $n$ with a longest induced cycle $C$ of length $n$, $C$ does not contain any $u$-edge and $S\in\mathcal{S}_n$.
\end{lemma}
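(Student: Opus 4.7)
The plan is to show that the assumption ``$S$ fertile, $|U|=s\geq n$, longest induced cycle of length $n$'' forces $s=n$ and pins the sprout to consist of a pure $w$-cycle of length $n$ carrying $n$ pending $u$-edges.

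First I would analyze the fundamental cycle $C_0$ guaranteed by condition $(i)$ of the sprout definition. Let $q=|W|$, let $p$ denote the number of pending $u$-edges (one for each pair with $w_i\cap w_{i+1}\neq\emptyset$), let $s_1$ count the gaps of condition $(iv)$ bridged by a single $u$-edge, and let $t_1$ count the gaps bridged by two $u$-edges. The sprout axioms immediately give $s=p+s_1+2t_1$ and $|C_0|=q+s_1+2t_1$, so $|C_0|=s+(q-p)\geq s$. Chords of $C_0$ can only come from $E$; by fertility no $E$-edge joins two non-consecutive $u$-edges, and by the sprout structure (and the fact that $S$ corresponds to a flower in the line graph, cf.\ Theorem~\ref{thm4.16}) the only legal chords of $C_0$ are precisely the two types isolated in the proof of Lemma~\ref{lemma4.6} (Figures~\ref{fig4.7} and~\ref{fig4.10}). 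A chord of the first type shortens the cycle by $1$ and a chord of the second type shortens it by $2$; each such chord is charged to a distinct ``leading'' $w$-edge (the $w$-edge beginning a maximal sequence of $u$-edges in $C_0$). Since the number of leading $w$-edges equals $s_1+t_1=q-p$, the total possible shortening is at most $q-p$, and any induced cycle obtained from $C_0$ has length at least $s$.

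Combining this lower bound with the hypothesis that the longest induced cycle in $S$ has length $n$ gives $s\leq n$; together with the assumption $s\geq n$ this forces $s=n$. For the structural step, I would argue that if $q>p$, then $|C_0|=n+(q-p)>n$, so $C_0$ itself cannot be induced and must carry at least one chord. Let $C^*$ be a maximally shortened cycle obtained from $C_0$ by inserting chords; $C^*$ is induced and $|C^*|=n$. Removing a single chord from the shortening produces a cycle of length $n+1$ or $n+2$, and it is induced in $S$ because all other edges between its vertices would themselves be legal chords of $C_0$ already accounted for in the shortening. This contradicts the maximality of $n$ as induced cycle length; hence $q=p$, i.e.\ $s_1=t_1=0$, every $u$-edge is pending, and $C_0$ is a pure $w$-cycle of length $n$ that is induced. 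This is exactly the structure of a sunflower sprout of size $n$, so $S\in\mathcal{S}_n$ and $C=C_0$ contains no $u$-edge.

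The main obstacle lies in the last step: rigorously verifying that reinserting a single chord into the maximally shortened induced cycle $C^*$ yields another induced cycle. The delicate issue is that several chords of $C_0$ may share endpoints or interact in complex ways, and one must use the restricted chord types together with fertility to rule out hidden shortcuts between the vertices of the reinserted configuration. A careful case distinction, mirroring the one carried out in the proof of Lemma~\ref{lemma4.4}, should dispatch this issue.
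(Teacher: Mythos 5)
Your derivation of $\abs{U}=n$ is essentially sound, though more roundabout than the paper's: the paper counts directly on the longest induced cycle $C$ itself (if $C$ carried $i\geq 1$ $u$-edges it could host at most $n-i-1$ pending edges, so $S$ would have at most $n-1$ $u$-edges in total), whereas you route through the base cycle and the chord-charging analysis of Lemma \autoref{lemma4.6}. Both arguments produce an induced cycle of length at least $\abs{U}$ and hence force $\abs{U}=n$, so this half is acceptable.

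The structural half of your argument is wrong, however, and the problem is not the technical point you flag at the end but the statement you are driving at. The claim ``$q=p$, every $u$-edge pending, $C_0$ an induced pure $w$-cycle'' is false. Take the sprout of \autoref{fig4.18} with the edge $e=v_3v_4\in E$ present: a base $5$-cycle with three $w$-edges, one adjacent $u$-pair (so $q-p=1$) and two pending edges. The $5$-cycle has the chord $e$, the resulting $4$-cycle $v_1v_2v_3v_4$ is induced (each of its two potential chords would join two non-consecutive $u$-edges and is excluded by fertility), and no other cycles exist; this is a fertile sprout of size $4$ whose longest induced cycle has length $4$, yet two $u$-edges lie on $C_0$. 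It nonetheless satisfies the lemma, because it \emph{is} the sunflower sprout of \autoref{fig4.2} with two consecutive $u$-vertices identified, the triangle edge of the sunflower sprout being an $E$-edge of the sprout --- precisely the point the paper makes immediately after this lemma. Correspondingly, your proposed mechanism cannot be repaired: if you drop one chord from the maximal shortening and reinstate the skipped vertices, the dropped chord still exists in $S$ and joins two non-consecutive vertices of the longer cycle, so that cycle is never induced and no contradiction with the maximality of $n$ arises. The correct route, which is the paper's, is to show first that the longest induced cycle $C$ contains no $u$-edge (the counting above), and then to observe that every $u$-edge is either pending on $C$ or belongs to an adjacent pair whose place on $C$ is taken by an $E$-edge; the endpoints of the $u$-edges lying off $\V{C}$ then serve as the (possibly coinciding, but only at consecutive positions) $u$-vertices of a sunflower sprout of size $n$ on $C$.
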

\vspace{-1.5em}
\begin{proof}
Let $S=\lb V,U\cup W\cup E\rb$ be a fertile sprout of size $n\geq4$ with a longest induced cycle $C$ of length $n$. Now let $C$ contain $i$ $u$-edges with $1\leq i\leq \abr{\frac{n}{2}}$. So $C$ contains at most $n-i\leq n-1$ $w$-edges, hence there are at most $n-i-1$ vertices on $C$ adjacent to exactly two $w$-edges and thus there are at most $n-i-1$ pending edges. With $i$ $u$-edges already contained in $C$ this makes a total of at most $n-i-1+i=n-1$ $u$-edges in $S$, a contradiction to the size of $S$ and thus $C$ does not contain a single $u$-edge.\\
Now, since no $u$-edges are contained in the longest induced cycle of $S$, all $u$-edges must either be pending, or, for each pair of adjacent $u$-edges, there is an edge $e\in E$ which is part of $C$, obviously with $S$ being a sprout we get, for the vertex set $U'=\lb\bigcup_{u\in U}u\rb\setminus \V{C}$ $j\neq i\pm1\lb \!\!\!\mod n\rb\Rightarrow u'_i\neq u'_j$ with $u_i',u_j'\in U'$. In addition, since all $u$-edges are adjacent to $C$ we get $v_iu_i'\in U$, with $\set{v_i}=u_i\setminus\set{u_i'}$. Hence $S$ is a fertile sunflower sprout of size $n$. 
\end{proof}
\vspace{-2mm}
This brings up an important relation between sunflower sprouts and sprouts. While sunflower sprouts are defined via their vertex sets, sprouts are defined via their edges and some edges necessary for a graph to be a sunflower sprout are neither $u$- nor $w$-edges.
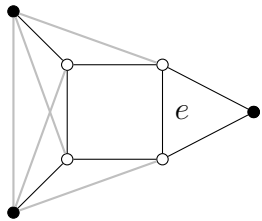
\begin{figure}[H]
\begin{center}
\begin{tikzpicture}

\node (anchor1) [] {};
\node (anchor2) [position=0:5.2cm from anchor1] {};
\node (anchor3) [position=0:5.2cm from anchor2] {};


\node (v1) [inner sep=1.5pt,position=135:0.6cm from anchor2,draw,circle] {};
\node (v2) [inner sep=1.5pt,position=225:0.6cm from anchor2,draw,circle] {};
\node (v3) [inner sep=1.5pt,position=315:0.6cm from anchor2,draw,circle] {};
\node (v4) [inner sep=1.5pt,position=45:0.6cm from anchor2,draw,circle] {};

\path
(v1) edge (v2)
(v2) edge (v3)
(v3) edge (v4)
(v4) edge (v1)
;

\node (u1) [inner sep=1.5pt,position=135:1.6cm from anchor2,draw,circle,fill] {};
\node (u2) [inner sep=1.5pt,position=225:1.6cm from anchor2,draw,circle,fill] {};
\node (u3) [inner sep=1.5pt,position=0:1.6cm from anchor2,draw,circle,fill] {};
\node (u4) [inner sep=1.5pt,position=0:1.6cm from anchor2,draw,circle,fill] {};

\path
(u1) edge (v1)
(u2) edge (v2)
(u3) edge (v3)
(u4) edge (v4)
;

\path[color=lightgray, thick]
(u1) edge (u2)

(u1) edge (v2)
	 edge (v4)
(u2) edge (v3)
	 edge (v1)
;

\node (e) [position=0:5mm from anchor2] {$e$};


\end{tikzpicture}
\end{center}
\caption{The sunflower sprouts of size $4$ and type II.}
\label{fig4.17}
\end{figure}
\vspace{-1em}
While \autoref{fig4.17} shows all possible fertile sunflower sprouts of size $4$ with one identical pair of $u$-vertices, in the sense of sunflower sprouts (note that the gray edges are optional and may exist in any possible combination), the following \autoref{fig4.18} shows all possible fertile sprouts of size $4$ with one pair of adjacent $u$-edges. 
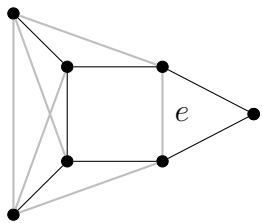
\begin{figure}[H]
\begin{center}
\begin{tikzpicture}

\node (anchor1) [] {};
\node (anchor2) [position=0:5.2cm from anchor1] {};
\node (anchor3) [position=0:5.2cm from anchor2] {};


\node (v1) [inner sep=1.5pt,position=135:0.6cm from anchor2,draw,circle,fill] {};
\node (v2) [inner sep=1.5pt,position=225:0.6cm from anchor2,draw,circle,fill] {};
\node (v3) [inner sep=1.5pt,position=315:0.6cm from anchor2,draw,circle,fill] {};
\node (v4) [inner sep=1.5pt,position=45:0.6cm from anchor2,draw,circle,fill] {};

\path
(v1) edge (v2)
(v2) edge (v3)
(v4) edge (v1)
;

\node (u1) [inner sep=1.5pt,position=135:1.6cm from anchor2,draw,circle,fill] {};
\node (u2) [inner sep=1.5pt,position=225:1.6cm from anchor2,draw,circle,fill] {};
\node (u3) [inner sep=1.5pt,position=0:1.6cm from anchor2,draw,circle,fill] {};
\node (u4) [inner sep=1.5pt,position=0:1.6cm from anchor2,draw,circle,fill] {};

\path
(u1) edge (v1)
(u2) edge (v2)
(u3) edge (v3)
(u4) edge (v4)
;

\path[color=lightgray, thick]
(u1) edge (u2)

(u1) edge (v2)
	 edge (v4)
(u2) edge (v3)
	 edge (v1)
(v3) edge (v4)
;

\node (e) [position=0:5mm from anchor2] {$e$};


\end{tikzpicture}
\end{center}
\caption{The sprouts of size $4$ with one pair of adjacent $u$-edges.}
\label{fig4.18}
\end{figure}
\vspace{-2em}
\newpage
Here now the edge $e$, necessary for the graph to be a sunflower sprout, becomes one of the optional edges and thus making longer induced cycles possible.\\
While the fertile sprouts with the smallest induced cycles are sunflower sprouts, there is another extreme, where a sprout reassembles a single induced cycle, without any additional edges.

\begin{lemma}\label{lemma4.8}
For all cycles $C_n$, $n\geq6$, there is an $n_C\geq 4$ such that $C_n\in\mathfrak{S}_{n_C}$.
\end{lemma}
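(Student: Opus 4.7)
The plan is to realize $C_n$ itself as a sprout with an empty chord set $E=\emptyset$ by partitioning the $n$ cyclic edges of $C_n$ into the two classes $W$ and $U$, and using $C_n$ itself as the cycle $C$ promised by condition $i)$ of the sprout definition. Since with this choice no edge lies outside $C$, no $u$-edge will be pending, so conditions $iii)$ and $v)$ reduce to the demand that no two $w$-edges be consecutive along $C_n$; condition $iv)$ then forces every maximal run of $u$-edges between two consecutive $w$-edges to have length $1$ or $2$.

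Writing $q=|W|$, $n_C=|U|=n-q$, and letting $a$ and $b$ count the runs of length one and two respectively, the linear system $a+b=q$, $a+2b=n-q$ yields $a=3q-n$ and $b=n-2q$. Non-negativity of $a$ and $b$ gives the range $\lceil n/3\rceil\le q\le\lfloor n/2\rfloor$, and a short check shows that this is exactly the range in which the size constraint $\lceil n_C/2\rceil\le q\le n_C$ of the sprout definition is also satisfied. For $n\ge 6$ this interval is non-empty, and choosing $q=\lceil n/3\rceil$ forces $n_C=n-\lceil n/3\rceil\ge 4$.

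With this $q$, I would fix any cyclic arrangement of $a$ short and $b$ long gaps along $C_n$, mark the $w$-edges in that pattern, label them $w_1,\dots,w_q$ and the intervening $u$-edges $u_1,\dots,u_{n_C}$ in their cyclic order, and then walk through conditions $i)$--$v)$: $i)$ is the choice $C=C_n$; $ii)$ is immediate since in a cycle only cyclically adjacent edges share a vertex, and the labelling of $U$ respects cyclic order; $iii)$ is vacuous since no two $w$-edges are adjacent on $C$; $iv)$ is exactly the gap-length condition enforced by construction (gap $1$ gives the one-vertex case, gap $2$ gives the path $w_iutw_{i+1}$); $v)$ holds since no $u$-edge is pending and every $u$-edge lies on $C$. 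Finally, $E=\emptyset$ means no edge connects two non-consecutive $u$-edges, so the sprout is fertile, and $C_n\in\mathfrak{S}_{n_C}$ with $n_C\ge 4$.

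The only step that requires a little care is the arithmetic identifying the admissible range of $q$ and checking the boundary case $n=6$, where $q=2$ and $n_C=4$; here the two $w$-edges sit antipodally on $C_6$ and both gaps have length $2$, and the verification of conditions $i)$--$v)$ is immediate.
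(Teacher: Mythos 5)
Your proposal is correct and takes essentially the same approach as the paper: both realize $C_n$ as a sprout with $E=\emptyset$ by partitioning the cycle edges into non-adjacent $w$-edges separated by runs of one or two $u$-edges, and your admissible range $\lceil n/3\rceil\le q\le\lfloor n/2\rfloor$ is just the paper's decomposition $n=k+l$ with $\lceil k/2\rceil\le l\le k$ reparametrized. Your version merely makes the run-counting arithmetic and the verification of conditions $i)$--$v)$ more explicit.
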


\begin{proof}
Let $n=k+l$ with $k\geq 4$ and $\aufr{\frac{k}{2}}\leq l\leq k$. We chose $n_C=k$, now there are exactly $n_C-l$ pairs of adjacent $u$-edges on $C$, all other $l$ edges are chosen as $w$-edges and no two of those $w$-edges are adjacent, thus $C_n$ is a fertile sprout of size $k=n_C$.\\
Such a decomposition of $n$ exists for all $n\geq 6$ and can be obtained by choosing $k\leq n\leq 2\,k$ with $l=n-k\geq\aufr{\frac{k}{2}}$.
\end{proof}

This yields another important clue on the class of graphs with a chordal line graph square.

\begin{corollary}
If a graph $G$ contains an induced cycle of length at least $6$, $\lineg{G}^2$ is not chordal.
\end{corollary}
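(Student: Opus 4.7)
The plan is to invoke the sprout machinery that has just been built: we want to exhibit an induced cycle $C \subseteq G$ of length $\geq 6$ as a fertile sprout and then apply Theorem \ref{thm4.16} to conclude that $\lineg{G}^2$ contains an induced cycle of length $\geq 4$, hence is not chordal.

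First I would take an induced cycle $C$ of length $n \geq 6$ in $G$. By Lemma \ref{lemma4.8}, the abstract graph $C_n$ admits a decomposition as a fertile sprout of some size $n_C \geq 4$ (choosing $n_C = k$ with $k + l = n$, $k \geq 4$, $\aufr{k/2} \leq l \leq k$, declaring $l$ pairwise nonadjacent edges of $C$ as the $w$-edges and the remaining $n_C - l$ pairs of adjacent edges as pairs of consecutive $u$-edges). I would apply this decomposition to $C$ viewed as a subgraph of $G$.

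Next I would verify that, embedded in $G$, this sprout is still fertile. Since $C$ is an \emph{induced} cycle of $G$, the edge set of $\induz{G}{\V{C}}$ consists exactly of the edges of $C$, i.e.\ of the $u$- and $w$-edges specified by the decomposition. Hence the additional edge set $E$ of the sprout (edges of $G$ joining vertices of the sprout that are neither $u$- nor $w$-edges) is empty, and no edge in $G$ can join two non-consecutive $u$-edges of the sprout. By the definition of infertility, the sprout is therefore fertile in $G$.

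Finally, by Theorem \ref{thm4.16}, the existence of a fertile sprout of size $n_C \geq 4$ in $G$ is equivalent to $\lineg{G}^2$ containing an induced cycle of length $n_C \geq 4$, which immediately shows that $\lineg{G}^2$ is not chordal. The main subtlety is purely bookkeeping, namely making sure that the sprout supplied by Lemma \ref{lemma4.8} (which is a statement about the abstract cycle) remains fertile once embedded in $G$; the inducedness of $C$ in $G$ handles this at once, so no real obstacle arises.
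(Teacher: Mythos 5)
Your proof is correct and follows exactly the route the paper intends: the corollary is stated there as an immediate consequence of Lemma \ref{lemma4.8} (an induced $C_n$ with $n\geq 6$ is a fertile sprout of some size $n_C\geq 4$) combined with Theorem \ref{thm4.16}. Your additional remark that inducedness of $C$ in $G$ forces the extra edge class $E$ to be empty, so the sprout stays fertile inside $G$, is precisely the bookkeeping step the paper leaves implicit.
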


Induced cycles of length $4$ can only result in fertile sunflower sprouts of size $4$ as seen in Lemma \autoref{lemma4.7}. An induced cycle of length at least $6$ results in the squared line graph not to be chordal. So what about induced cycles of length $5$?\\
Again by Lemma \autoref{lemma4.7} one possible fertile sprout containing such an induced cycle is a fertile sunflower sprout of size $5$ and flowers of greater sizes are excluded. So just sprouts of size $4$ that are no sunflower sprouts need to be taken into account.\\
For this we will investigate two lemmas in order to describe those fertile sprouts containing an induced cycle of length $5$.

\begin{lemma}\label{lemma4.9}
Let $S=\lb V,U\cup W\cup E\rb$ be a fertile sprout of size $4$ with a longest induced cycle $C$ of length $5$ and $\E{C}\cap E=\emptyset$, then $S$ has either two or three pending edges. In addition those edges are incident with consecutive vertices of $C$.
\end{lemma}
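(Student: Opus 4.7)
The plan is to parameterize sprouts of size $4$ by classifying how consecutive $w$-edges meet along the defining cycle $C_S$ of the sprout. For each cyclically consecutive pair $(w_i, w_{i+1\lb\!\!\!\mod q\rb})$, one of three situations occurs: the two edges share a vertex (giving one pending $u$-edge at that vertex, by condition (iii) of the sprout definition), or they are joined by exactly one non-pending $u$-edge on $C_S$, or by exactly two non-pending $u$-edges on $C_S$ (condition (iv)). Let $k$, $l$, $m$ count the respective types of pairs, so that $p = k$ is the number of pending edges. Then $k + l + m = q$ and $k + l + 2m = |U| = 4$, which gives $m = 4 - q$, $l = 2q - 4 - k$, and
\begin{equation*}
|C_S| = q + l + 2m = q + 4 - k.
\end{equation*}

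First I would argue that $C = C_S$. Since pending $u$-edges hang off $C_S$ — one endpoint at the shared vertex of two adjacent $w$-edges, the other a leaf not on $C_S$ — the subgraph $(V, U \cup W)$ consists of the cycle $C_S$ together with $k$ pendant edges. Its only cycle is therefore $C_S$ itself. Because $C$ is an induced cycle in $G$ with $\E{C} \cap E = \emptyset$, it lies entirely in $(V, U \cup W)$; hence $C = C_S$ and $|C_S| = 5$.

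Substituting $|C_S| = 5$ into $|C_S| = q + 4 - k$ yields $q - k = 1$. Combining this with $q \in \{2,3,4\}$ and the non-negativity constraints $l \geq 0$, $m \geq 0$, the only admissible solutions are $(q, k) = (3, 2)$ and $(q, k) = (4, 3)$, giving $p = k \in \{2, 3\}$. For the consecutiveness claim, I would inspect each case. In $(q, k) = (3, 2)$ we have $l = 0$, $m = 1$: two of the three consecutive $w$-pairs share a vertex and the third pair is bridged by two non-pending $u$-edges on $C_S$. With only three cyclic positions available, the two shared-vertex positions are necessarily cyclically adjacent; they are therefore the two endpoints of the common middle $w$-edge, and hence consecutive on $C$. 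In $(q, k) = (4, 3)$ we have $l = 1$, $m = 0$: three of the four consecutive $w$-pairs share a vertex and the remaining pair is joined by a single non-pending $u$-edge on $C_S$. The three shared-vertex positions are again cyclically consecutive, yielding three consecutive vertices of $C$ as the attachment points of the pending edges.

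The main obstacle is the step $C = C_S$, which rests on the convention — consistent with the paper's figures and with the analogous use of ``pending'' in the flower definition of Chapter 3 — that a pending $u$-edge has its external endpoint off $C_S$. Without this convention, a pending edge could serve as a chord of $C_S$ and create a shorter induced cycle in $(V, U \cup W)$, potentially admitting sprouts with $p = 1$ and $|C| = 5$. Once the convention is fixed, however, the length formula $|C_S| = q + 4 - k$ together with the parity constraints pins down $(q, k)$ and the consecutiveness conclusion becomes immediate from the cyclic structure.
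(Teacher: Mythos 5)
Your proof is correct and rests on the same core idea as the paper's --- forcing the base cycle of the sprout to coincide with $C$ and hence to have length $5$, which constrains the number of pending edges --- but you package it more systematically. The paper excludes the cases $p\in\set{0,1,4}$ one at a time (showing the base cycle would have length at least $6$ for $p=0,1$, and invoking Lemma \autoref{lemma4.7} to identify the $p=4$ case with a sunflower sprout), and then handles consecutiveness by a separate contradiction argument for two non-consecutive pending edges; your linear system $k+l+m=q$, $k+l+2m=4$, $\abs{C_S}=q+4-k$ together with the nonnegativity of $l$ and $m$ pins down $\lb q,k\rb\in\set{\lb 3,2\rb,\lb 4,3\rb}$ in one stroke and delivers the consecutiveness claim as a byproduct of the cyclic structure rather than as an extra case analysis. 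The one point you flag --- that a pending edge must have its second endpoint off the base cycle, so that $\lb V,U\cup W\rb$ has $C_S$ as its only cycle --- is indeed not stated explicitly in the sprout definition, but it is the reading the paper itself uses (its own proof of this lemma silently assumes that any induced $5$-cycle in a longer base cycle requires a chord from $E$), and it is forced by the flower correspondence in Theorem \autoref{thm4.16}, where a pending $u$-vertex has neighborhood exactly $\set{w_i,w_{i+1}}$ in the line graph; so your resolution of that ambiguity is the right one. Net effect: your argument is tighter than the paper's and avoids the appeal to Lemma \autoref{lemma4.7}.
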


\begin{proof}
With $S$ being a fertile sprout of size $4$ the number $p$ of pending edges is at least $0$ and at most $4$. So we will begin by discussing the cases $p\in\set{0,1,4}$.\\
For $p=0$ all four $u$-edges must be contained in a cycle, containing at least $2$ $w$-edges, which leads to a cycle of length at least $6$, so all possible induced cycles of length $5$ must contain a chord and therefore $\E{C}\cap E\neq\emptyset$. So now suppose $p=1$, now there are at least $3$ $w$-edges together with $3$ $u$-edges again forming a cycle of at least length $6$ and $C$ must consist of at least one chord of this larger cycle. The only fertile sprout of size $4$ with $p=4$ is, according to Lemma \autoref{lemma4.7}, a sunflower sprout and therefore must not contain an induced cycle of length $5$.\\
If there are three pending edges, those must be consecutive, as the sprout is of size $4$. So suppose there are two pending edges and those are not adjacent to a common $w$-edge on $C$, then, for each pending edge, there are exactly two $w$-edges. So we have $4$ $w$-edges in total and $2$-additional $u$-edges, all six contained in $C$ and thus this case cannot appear.
\end{proof}

\begin{figure}[!h]
\begin{center}
\begin{tikzpicture}
\node (anchor1) [] {};
\node (i) [position=270:2.25cm from anchor1] {I};

\node (anchor4) [position=0:5.2cm from anchor1] {};
\node (iv) [position=270:2.25cm from anchor4] {II};



\node (v1) [draw,circle,fill,inner sep=1.5pt,position=0:0.9cm from anchor1] {};
\node (v2) [draw,circle,fill,inner sep=1.5pt,position=72:0.9cm from anchor1] {};
\node (v3) [draw,circle,fill,inner sep=1.5pt,position=144:0.9cm from anchor1] {};
\node (v4) [draw,circle,fill,inner sep=1.5pt,position=216:0.9cm from anchor1] {};
\node (v5) [draw,circle,fill,inner sep=1.5pt,position=288:0.9cm from anchor1] {};

\path
(v1) edge (v2)
(v2) edge (v3)
(v3) edge (v4)
(v4) edge (v5)
(v5) edge (v1)
;

\node (cl1) [position=36:0.7cm from anchor1] {$u_1$};
\node (cl2) [position=108:2mm from anchor1] {$w_1$};
\node (cl3) [position=180:0.05cm from anchor1] {$w_2$};
\node (cl4) [position=252:2mm from anchor1] {$w_3$};
\node (cl5) [position=324:0.7cm from anchor1] {$u_4$};


\node (a1) [draw,circle,fill,inner sep=1.5pt,position=144:1.8cm from anchor1] {};
\node (a2) [draw,circle,fill,inner sep=1.5pt,position=216:1.8cm from anchor1] {};

\path
(v3) edge (a1)
(v4) edge (a2)
;

\node (al1) [position=144:2cm from anchor1] {$u_2$};
\node (al2) [position=216:2cm from anchor1] {$u_3$};


\path[color=lightgray,thick]
(a1) edge [bend left] (v2)
	 edge (v4)
(a2) edge (v3)
	 edge [bend right] (v5)
(a1) edge [bend right] (a2)
;



\node (v1) [draw,circle,fill,inner sep=1.5pt,position=0:0.9cm from anchor4] {};
\node (v2) [draw,circle,fill,inner sep=1.5pt,position=72:0.9cm from anchor4] {};
\node (v3) [draw,circle,fill,inner sep=1.5pt,position=144:0.9cm from anchor4] {};
\node (v4) [draw,circle,fill,inner sep=1.5pt,position=216:0.9cm from anchor4] {};
\node (v5) [draw,circle,fill,inner sep=1.5pt,position=288:0.9cm from anchor4] {};

\path
(v1) edge (v2)
(v2) edge (v3)
(v3) edge (v4)
(v4) edge (v5)
(v5) edge (v1)
;

\node (cl1) [position=36:0.2mm from anchor4] {$w_1$};
\node (cl2) [position=108:0.7cm from anchor4] {$w_2$};
\node (cl3) [position=180:0.7cm from anchor4] {$u_3$};
\node (cl4) [position=252:0.7cm from anchor4] {$w_4$};
\node (cl5) [position=324:0.2mm from anchor4] {$w_5$};


\node (a1) [draw,circle,fill,inner sep=1.5pt,position=0:1.8cm from anchor4] {};
\node (a2) [draw,circle,fill,inner sep=1.5pt,position=72:1.8cm from anchor4] {};
\node (a3) [draw,circle,fill,inner sep=1.5pt,position=288:1.8cm from anchor4] {};

\path
(v1) edge (a1)
(v2) edge (a2)
(v5) edge (a3)
;

\node (al1) [position=0:2cm from anchor4] {$u_1$};
\node (al2) [position=72:2cm from anchor4] {$u_2$};
\node (al2) [position=288:2cm from anchor4] {$u_4$};


\path[color=lightgray,thick]
(a1) edge (v2)
	 edge (v5)
	 edge [bend right] (a2)
	 edge [bend left] (a3)
(a2) edge (v1)
	 edge [bend right] (v3)
(a3) edge (v1)
	 edge [bend left] (v4)
;

\end{tikzpicture}
\end{center}
\caption{The two types of fertile sprouts satisfying the conditions of Lemma \autoref{lemma4.9}.}
\label{fig4.19}
\end{figure}
\vspace{-5mm}

It is rather easy to see that all possible chords in the cycles of these sprouts would result in an induced cycle of length $4$ and with that these sprouts would contain a fertile sunflower sprout of size $4$. Because of this additional chords do not need to be taken into account in this case.\\
Following this idea to find already forbidden sprouts in other sprouts we continue our investigation by increasing the length of the induced cycle in $S$.

\begin{lemma}\label{lemma4.10}
Let $S=\lb V,U\cup W\cup E\rb$ be a fertile sprout of size $4$ with a longest induced cycle $C$ of length $5$ and $\E{C}\cap E\neq\emptyset$, then $S$ has at most $2$ pending edges and contains a cycle $C'$ of length at least $6$ with $\E{C'}\cap E=\emptyset$. 
\end{lemma}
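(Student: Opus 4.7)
The proof proceeds in two parts: first we establish the bound $p \leq 2$ by ruling out $p \geq 3$, and then we construct the cycle $C'$. Recall that for a sprout of size $n=4$, the base cycle $C_S$ from condition $i)$ has length $q + (4-p)$, where $q\in\{2,3,4\}$ is the number of $w$-edges and $p$ is the number of pending $u$-edges. Since the $q-p$ non-pending gaps must accommodate the $4-p$ non-pending $u$-edges (each gap holding $1$ or $2$ of them by condition $iv)$), we have $p \leq 2q-4$.

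For the bound $p \leq 2$, I would argue by contradiction. If $p=4$, then $q=4$ and $S$ is a sunflower sprout in $\mathcal{S}_4$ with $|C_S|=4$. Any $E$-edge must, by fertility, join two consecutive $u$-edges, so all additional edges lie within the union of two neighboring $u$-edges and their shared $w$-gap. A careful enumeration shows that an induced $5$-cycle would require either a chord between non-consecutive $u$-edges (violating fertility) or would extend to an induced cycle of length $\geq 6$ through the external pending vertices, contradicting that the longest induced cycle has length $5$. For $p=3$, the constraint forces $q=4$ and $|C_S|=5$, with three consecutive pending gaps and one non-pending gap containing the single non-pending $u$-edge; the fertility-allowed chords of $C_S$ are restricted to edges between shared vertices of consecutive pending gaps, and any such chord either produces an induced $4$-cycle (making $C_S$ not the longest) or extends to an induced $6$-cycle via external vertices, contradicting the maximality of $5$.

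For the existence of $C'$, observe that if $p\leq 1$, or if $p=2$ and $q=4$, then $|C_S| = q + (4-p) \geq 6$, and since $C_S$ uses only edges from $W$ and non-pending edges of $U$, we may take $C' = C_S$, giving $\E{C'}\cap E = \emptyset$.

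The remaining subcase $p=2$, $q=3$ (with $|C_S|=5$) must be shown to be incompatible with the hypothesis. Here the external endpoints of the two pending edges are leaves in the subgraph $(V, U\cup W)$, so $C_S$ is the only cycle in this subgraph. Given the induced $5$-cycle $C$ in $S$ with an $E$-edge $e$, fertility forces $e$ to join vertices in two consecutive $u$-edges $u_i, u_{i+1}$. These consecutive $u$-edges are linked in $C_S$ by a path of length $\geq 2$ in $U \cup W$ (through their shared vertex, or through an intervening $w$-edge), so replacing each $E$-edge of $C$ by this detour yields a closed walk, which contains a cycle of length $\geq 6$; iterating this replacement, or selecting the outermost induced cycle thus produced, delivers a longer induced cycle, contradicting the maximality of $5$. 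Hence this subcase does not occur, and $C' = C_S$ suffices in all valid configurations.

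The main obstacle is the $p\in\{3,4\}$ enumeration: one must patiently track how the fertility constraint simultaneously restricts chords of $C_S$ and forbids diagonals between non-consecutive $u$-edges, ensuring that every candidate induced $5$-cycle either fails to exist or is part of a strictly longer induced cycle.
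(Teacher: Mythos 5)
Your counting argument ($p\le 2q-4$, hence base-cycle length $q+4-p\ge 6$ except when $p=2$, $q=3$) is sound and is considerably more explicit than the paper's own proof, which disposes of the whole lemma in one sentence by asserting that three pending edges, or a $5$-cycle consisting only of $u$- and $w$-edges, would force the sprout into the types already treated in Lemma \autoref{lemma4.9}. The genuine gap is in your elimination of $p\in\set{3,4}$: you assert that ``a careful enumeration shows'' every candidate induced $5$-cycle either violates fertility or extends to an induced cycle of length at least $6$, but that enumeration cannot be completed, because an induced $5$-cycle can be assembled entirely from $E$-edges that join \emph{consecutive} $u$-edges. Concretely, take the $p=4$ sprout on $\set{v_1,\dots,v_4,a_1,\dots,a_4}$ with $W$ the $4$-cycle $v_1v_2v_3v_4$, pending edges $u_i=v_ia_i$, and $E=\set{a_1a_2,\ a_1v_4}$. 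Both $E$-edges join consecutive $u$-edges, so the sprout is fertile; the cycle $\lb a_1,a_2,v_2,v_3,v_4\rb$ is induced of length $5$ (the potential chords $a_1v_3$, $a_2v_4$, $v_2v_4$ are forbidden by fertility, and $a_1v_2$, $a_2v_3$ were simply not added), while every $6$-cycle of this graph contains the chord $a_1v_4$ or $v_1v_2$. So the unique longest induced cycle has length $5$ and meets $E$, yet $p=4$ and the only cycle avoiding $E$ is the $4$-cycle on $W$. No enumeration will rescue the $p=4$ case; the statement itself fails there. (The same oversight is present in the paper's one-line proof, which never mentions four pending edges; the configuration above is a fertile sunflower sprout of size $4$ and is caught separately in Theorem \autoref{thm4.15}, which is presumably why nothing breaks downstream.)

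A second, smaller problem is your treatment of the residual case $p=2$, $q=3$: replacing an $E$-edge of $C$ by a detour through $U\cup W$ yields a closed walk containing a \emph{cycle} of length at least $6$, but not obviously an \emph{induced} one, so the intended contradiction with ``the longest induced cycle has length $5$'' does not follow as written. To close that case you would need to control the chords of the resulting walk explicitly, in the style of the case analyses of Lemmas \autoref{lemma4.11}--\autoref{lemma4.13}, or else argue (as the paper implicitly does) that a length-$5$ base cycle makes $S$ a sprout of type I, which belongs to the situation of Lemma \autoref{lemma4.9} rather than to the present one.
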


\begin{proof}
Since $S$ is a sprout of size four either three pending edges, or a cycle of length $5$ just consisting of $u$- and $w$-edges would result in a fertile sprout of type II as seen in \autoref{fig4.19}.
\end{proof}

There are three possible lengths of cycles $C$ with $\E{C}\cap E=\emptyset$ and $\abs{C}\geq 6$ in sprouts of size $4$. It holds $6\leq \abs{C}\leq 8$ for such a cycle and since the number of possible chords is limited, the number of different subtypes is limited too. In the following we will give an overview on the possible graphs in form of figures and then show that it is possible to drastically reduce the number of forbidden subgraphs.\\
Type III has a basic $u$-$w$ cycle of length $6$ and, in order to obtain an induced cycle of length $5$, contains at least one chord. Now such a sprout may contain up to two pending edges. We start without any pending edges and again the gray edges may exist as additional edges contained in $E$ in any possible combination. As there are some edges, that are not necessary for the graphs to be a fertile sprout, but must exist in order to create the induced cycle of length $5$, those edges are colored black, but have a dotted line. 
 
\begin{lemma}\label{lemma4.11}
Let $S=\lb V,U\cup W\cup E\rb$ be a fertile sprout of size $4$ with a longest induced cycle $C$ of length $5$, $\E{C}\cap E\neq\emptyset$ and a cycle $C'$ with $\E{C'}\cap E=\emptyset$ as well as $\abs{C'}=6$, then $S$ contains a fertile sprout $S'$ of type I (see \autoref{fig4.19}) or III$_a$ (see \autoref{fig4.20}). 
\end{lemma}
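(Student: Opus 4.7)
The plan is to perform a careful case analysis driven by the structure of $C'$. Since $|U|=4$ and $S$ is a size-$4$ sprout, Lemma \autoref{lemma4.10} gives at most $2$ pending $u$-edges, so at least $2$ of the four $u$-edges of $S$ must lie on $C'$. Writing $k$ for the number of $u$-edges on $C'$ and recalling from the sprout axioms (conditions $ii)$–$iv)$) that no three $u$-edges can appear consecutively on $C'$ and that any two adjacent $u$-edges on $C'$ must be flanked by $w$-edges, the cyclic pattern of $C'$ is forced into just a handful of shapes for $k\in\{2,3,4\}$: namely the sequences $uwuwww$, $uwwuww$ (with their cyclic rotations) for $k=2$, $uwuwuw$ and $uuwuww$ for $k=3$, and $uuwuuw$ for $k=4$. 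The pending edges (at most two by Lemma \autoref{lemma4.10}) are forced to sit on consecutive $w$-edges of $C'$.

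Next I would introduce the chord $e\in E$ and the induced $5$-cycle $C$. Because $C$ has length $5$ and $\E{C}\cap E\neq\emptyset$, $e$ must be a chord of $C'$ that, together with four consecutive edges of $C'$, forms $C$; any other placement of $e$ either creates an induced $4$-cycle (which by Lemma \autoref{lemma4.7} would already embed a sunflower sprout of size $4$, contradicting that $C$ is a \emph{longest} induced cycle of length $5$ without containing a smaller flower forcing the same data) or a cycle of length $\geq 6$, contradicting maximality. Thus $e$ cuts off exactly one edge of $C'$; I would split into two subcases depending on whether the cut-off edge is a $u$-edge or a $w$-edge of $C'$.

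For each cyclic pattern above and each admissible chord placement, I would then isomorphism-check the resulting configuration: if the cut-off edge is a $w$-edge and the chord joins vertices incident to two $w$-edges of $C'$ separated by a single $u$-edge, the edges of $C'$ together with $e$ form exactly the hexagon with one diagonal of type III$_a$, and the (possibly present) pending edges attach on the $w$-part of $C'$ as permitted by type III$_a$. In the remaining configurations — in particular whenever the chord skips a $u$-edge, or when $k=3,4$ — one checks that two of the four $u$-edges become pending in the natural relabelling (using the chord $e$ in place of one $w$-edge of the defining cycle) and the result is precisely a type I sprout of \autoref{fig4.19}. Since every admissible placement falls into one of these two conclusions, $S$ always contains a sub-sprout of type I or type III$_a$.

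The main obstacle is not any single step but the bookkeeping: one must be sure, in each of the roughly six or seven cyclic patterns, that the chord $e$ can be slotted in so as to leave $C$ actually \emph{induced} (no second chord shortcuts $C$) and that the relabelled cycle satisfies all five axioms of a sprout — especially conditions $iii)$ and $iv)$ governing where pending edges may attach and where $u$-edges may appear consecutively. The crucial observation that keeps the case analysis short is that any extra chord in $E$ among the vertices of $C'$ either produces an induced $C_4$ (hence a sunflower sprout of size $4$ by Lemma \autoref{lemma4.7}, and one verifies this sunflower sprout is itself contained in a type~I configuration) or else yields a second possible choice of $5$-cycle, which by the maximality of $|C|$ is still of length $5$ and thus reduces to a case already handled.
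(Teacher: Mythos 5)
Your overall strategy is the same as the paper's: classify the base cycle $C'$ of length $6$ by how many of the four $u$-edges lie on it (equivalently, by the number of pending edges, which the paper uses directly as Cases 1--3), then examine every admissible placement of the chord(s) from $E$ that produce the induced $5$-cycle $C$, and check that each configuration collapses to type I or III$_a$. However, the proposal stops at the plan: the entire content of this lemma \emph{is} the case-by-case verification, and phrases like ``I would then isomorphism-check the resulting configuration'' and ``one checks that two of the four $u$-edges become pending in the natural relabelling'' defer exactly the work that needs to be done. The paper's proof spends a subcase on each pattern (no pending edges; one pending edge with the chord skipping $uu$, skipping $uw$ away from the pending edge, or skipping $uw$ next to it; two pending edges in the two possible arrangements) and in each one explicitly names the new base cycle and the new pending edges that realize a type I sprout, or exhibits the III$_a$.

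Beyond incompleteness, three concrete points in your setup would fail as written. First, the pattern $uwuwuw$ for $k=3$ cannot occur: by conditions $iii)$ and $v)$ of the sprout definition a pending edge must sit at the common vertex of two \emph{adjacent} consecutive $w$-edges, and that pattern has no such pair, so a sprout of size $4$ with only three $u$-edges on $C'$ cannot have this base cycle. Second, your escape hatch for long chords --- ``an induced $4$-cycle, hence a sunflower sprout of size $4$ by Lemma \autoref{lemma4.7}, which is itself contained in a type I configuration'' --- is not available: the conclusion of this lemma (unlike Lemmas \autoref{lemma4.12} and \autoref{lemma4.13}) does not list a fertile sunflower sprout of size $4$ as a permitted outcome, and a size-$4$ sunflower sprout does not in general contain a type I sprout; in the paper's treatment of the length-$6$ base cycle, no subcase ever terminates in a sunflower sprout, so this reduction has to be replaced by a direct argument. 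Third, your description of when III$_a$ arises is off: the defining chord of III$_a$ skips two \emph{adjacent $u$-edges} (it is a $u$-$u$-type chord closing a triangle over the $uu$ pair), not a configuration of two $w$-edges separated by a single $u$-edge, so the dichotomy ``cut-off edge is a $w$-edge versus a $u$-edge'' as you describe it does not sort the outcomes correctly.
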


\begin{proof}
Let $C'$ be the cycle in $S$ consisting purely of $u$- and $w$-edges, in the following we will call $C'$ the {\em base cycle}, and $C$ be the induced cycle of length $5$. As \autoref{fig4.20} suggests we will divide the proof in three cases and, while the first case obviously holds since the sole graph without any pending edges fulfilling all conditions is, in fact, the III$_a$. The cases 2 and 3 will be subdivided into several subcases as follows.
\begin{enumerate}
	\item[] \begin{enumerate}
		\item [Case 1] The fertile sprout $S$ has no pending edges.
		
		\item [Case 2] The fertile sprout $S$ has exactly one pending edge.
		\begin{enumerate}
			\item[] \begin{enumerate}
				
				\item [Subcase 2.1] The required chord $c$ skips two adjacent $u$-edges.
				\item [Subcase 2.2] The chord $c$ skips one $u$- and one $w$-edge but is not adjacent to the pending edge $u'$.
				\item [Subcase 2.3] The chord $c$ skips one $u$- and one $w$-edge and is adjacent to the pending edge $u'$.
			\end{enumerate}
		\end{enumerate}
		
		\item [Case 3] The fertile sprout $S$ has exactly two pending edges.
		\begin{enumerate}
			\item[] \begin{enumerate}
				
				\item [Subcase 3.1] The two pending edges are not adjacent to a common $w$-edge.
				\item [Subcase 3.2] The two pending edges are adjacent to a common $w$-edge and the required chord $c$ in $C$ is adjacent to one of the pending edges.
				\item [Subcase 3.3] The two pending edges are adjacent to a common $w$-edge and the required chord $c$ in $C$ is not adjacent to any of the pending edges.
			\end{enumerate}
		\end{enumerate}
	\end{enumerate}
\end{enumerate}

Case 1: The fertile sprout $S$ has no pending edges.\\
Without any pending edges, in order to realize a fertile sprout of size $4$ with a base cycle of length $6$ there will be exactly two $w$-edges and two pairs of adjacent $u$-edges. Still all possible chords are of the three types seen in \autoref{fig4.14}, so here the necessary chord $c$ which shortens $C'$ by one and produces $C$ skips two adjacent $u$-edges. Since no chord may join two vertices of $C$ the resulting graph is exactly the III$_a$.
\begin{figure}[!h]
	\begin{center}
		\begin{tikzpicture}
		\node (anchor1) [inner sep=1.5pt] {};
		\node (label1) [position=270:1.5cm from anchor1] {III$_a$};
		
		
		\node (u1) [draw,circle,fill,inner sep=1.5pt,position=90:1.1cm from anchor1] {};
		\node (u2) [draw,circle,fill,inner sep=1.5pt,position=150:1.1cm from anchor1] {};
		\node (u3) [draw,circle,fill,inner sep=1.5pt,position=210:1.1cm from anchor1] {};
		\node (u4) [draw,circle,fill,inner sep=1.5pt,position=270:1.1cm from anchor1] {};
		\node (u5) [draw,circle,fill,inner sep=1.5pt,position=330:1.1cm from anchor1] {};
		\node (u6) [draw,circle,fill,inner sep=1.5pt,position=30:1.1cm from anchor1] {};
		
		\node (l1) [position=120:1cm from anchor1] {$u$};
		\node (l2) [position=180.5:1cm from anchor1] {$w$};
		\node (l3) [position=240.5:1cm from anchor1] {$u$};
		\node (l4) [position=300.5:1cm from anchor1] {$u$};
		\node (l5) [position=0.5:1cm from anchor1] {$w$};
		\node (l6) [position=60.5:1cm from anchor1] {$u$};
		
		\path
		(u1) edge (u2)
		(u2) edge (u3)
		(u3) edge (u4)
		(u4) edge (u5)
		(u5) edge (u6)
		(u6) edge (u1)
		;
		
		\path[dotted,thick]
		(u2) edge (u6)
		;
		
		\end{tikzpicture}
	\caption{The sprout III$_a$.}
	\label{fig4.20}		
	\end{center}
\end{figure}
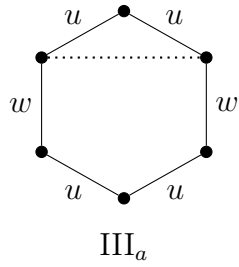
\vspace{-1mm}
Case 2: The fertile sprout $S$ has exactly one pending edge.\\
Here we will handle all three subcases simultaneously. One pending edge requires two adjacent $w$-edges, which leaves $4$ other edges on the base cycle and still $3$ $u$-edges are required, thus we obtain one pair of adjacent $u$-edges and a single one. The chord $c$ producing the shortened cycle $C$ either skips these two $u$-edges or a $u$-$w$-pair. In Subcase 2.1 we have a problem similar to Case 1, there is no legal edge joining the common vertex of the two adjacent $u$-edges with another vertex on the cycle and thus the vertices of $C'$ induce a III$_a$.\\
In the Subcases 2.2 and 2.3 now one additional chord in $C'$ is possible, in Subcase 2.2 this chord is exactly the required one of Subcase 2.3 and vice versa, so both cases are similar. If this additional chord does not exist, the vertices of $C'$ induce the III$_a$, so now suppose both chords exist. Let $C''$ be the cycle obtained by shortening $C'$ via the chord adjacent to the pending edge. So there exists exactly one $u$-edge $u''$ on $C'$, which is not part of $C''$, one of its endpoints in part of $C''$ and the other one is incident with the skipped $w$-edge and the other $w$-edge, $u''$ is adjacent to. By taking $C''$ as a new base cycle and the pending edge together with $u''$ as two new pending edges, we obtain a graph of type I. Hence Case 2 is closed.
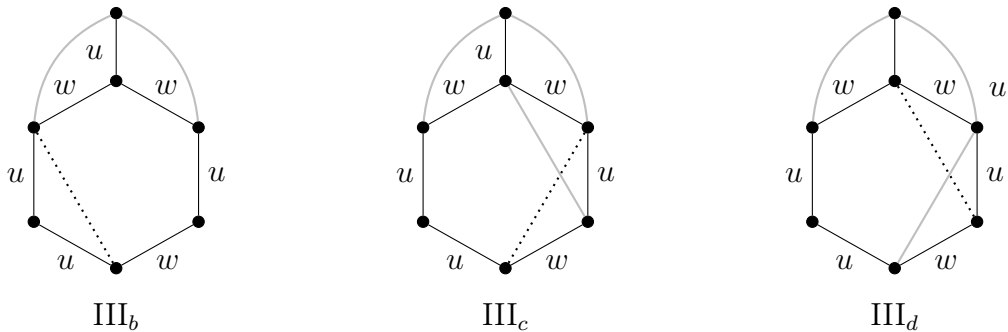
\begin{figure}[!h]
	\begin{center}
		\begin{tikzpicture}
		\node (anchor1) [inner sep=1.5pt] {};
		\node (label1) [position=270:1.5cm from anchor1] {III$_b$};
		\node (anchor2) [inner sep=1.5pt,position=0:5cm from anchor1] {};
		\node (label1) [position=270:1.5cm from anchor2] {III$_c$};
		\node (anchor3) [inner sep=1.5pt,position=0:5cm from anchor2] {};
		\node (label1) [position=270:1.5cm from anchor3] {III$_d$};
		
		
		\node (u1) [draw,circle,fill,inner sep=1.5pt,position=90:1.1cm from anchor1] {};
		\node (u2) [draw,circle,fill,inner sep=1.5pt,position=150:1.1cm from anchor1] {};
		\node (u3) [draw,circle,fill,inner sep=1.5pt,position=210:1.1cm from anchor1] {};
		\node (u4) [draw,circle,fill,inner sep=1.5pt,position=270:1.1cm from anchor1] {};
		\node (u5) [draw,circle,fill,inner sep=1.5pt,position=330:1.1cm from anchor1] {};
		\node (u6) [draw,circle,fill,inner sep=1.5pt,position=30:1.1cm from anchor1] {};
		
		\node (v1) [draw,circle,fill,inner sep=1.5pt,position=90:2cm from anchor1] {};
		
		\node (l1) [position=120:1cm from anchor1] {$w$};
		\node (l2) [position=180.5:1cm from anchor1] {$u$};
		\node (l3) [position=240.5:1cm from anchor1] {$u$};
		\node (l4) [position=300.5:1cm from anchor1] {$w$};
		\node (l5) [position=0.5:1cm from anchor1] {$u$};
		\node (l6) [position=60.5:1cm from anchor1] {$w$};
		\node (l8) [position=100:1.35cm from anchor1] {$u$};

		\path
		(u1) edge (u2)
		(u2) edge (u3)
		(u3) edge (u4)
		(u4) edge (u5)
		(u5) edge (u6)
		(u6) edge (u1)
		(v1) edge (u1)
		;
		
		\path[dotted,thick]
		(u2) edge (u4)
		;
		
		\path[color=lightgray,thick]
		(v1) edge [bend right] (u2)
		edge [bend left] (u6)
		;
		
		
		\node (u1) [draw,circle,fill,inner sep=1.5pt,position=90:1.1cm from anchor2] {};
		\node (u2) [draw,circle,fill,inner sep=1.5pt,position=150:1.1cm from anchor2] {};
		\node (u3) [draw,circle,fill,inner sep=1.5pt,position=210:1.1cm from anchor2] {};
		\node (u4) [draw,circle,fill,inner sep=1.5pt,position=270:1.1cm from anchor2] {};
		\node (u5) [draw,circle,fill,inner sep=1.5pt,position=330:1.1cm from anchor2] {};
		\node (u6) [draw,circle,fill,inner sep=1.5pt,position=30:1.1cm from anchor2] {};
		
		\node (v1) [draw,circle,fill,inner sep=1.5pt,position=90:2cm from anchor2] {};
		
		\node (l1) [position=120:1cm from anchor2] {$w$};
		\node (l2) [position=180.5:1cm from anchor2] {$u$};
		\node (l3) [position=240.5:1cm from anchor2] {$u$};
		\node (l4) [position=300.5:1cm from anchor2] {$w$};
		\node (l5) [position=0.5:1cm from anchor2] {$u$};
		\node (l6) [position=60.5:1cm from anchor2] {$w$};
		\node (l8) [position=100:1.35cm from anchor2] {$u$};

		\path
		(u1) edge (u2)
		(u2) edge (u3)
		(u3) edge (u4)
		(u4) edge (u5)
		(u5) edge (u6)
		(u6) edge (u1)
		(v1) edge (u1)
		;
		
		\path[dotted,thick]
		(u6) edge (u4)
		;
		
		\path[color=lightgray,thick]
		(v1) edge [bend right] (u2)
		edge [bend left] (u6)
		(u1) edge (u5)
		;
		

		\node (u1) [draw,circle,fill,inner sep=1.5pt,position=90:1.1cm from anchor3] {};
		\node (u2) [draw,circle,fill,inner sep=1.5pt,position=150:1.1cm from anchor3] {};
		\node (u3) [draw,circle,fill,inner sep=1.5pt,position=210:1.1cm from anchor3] {};
		\node (u4) [draw,circle,fill,inner sep=1.5pt,position=270:1.1cm from anchor3] {};
		\node (u5) [draw,circle,fill,inner sep=1.5pt,position=330:1.1cm from anchor3] {};
		\node (u6) [draw,circle,fill,inner sep=1.5pt,position=30:1.1cm from anchor3] {};
		
		\node (v1) [draw,circle,fill,inner sep=1.5pt,position=90:2cm from anchor3] {};
		
		\node (l1) [position=120:1cm from anchor3] {$w$};
		\node (l2) [position=180.5:1cm from anchor3] {$u$};
		\node (l3) [position=240.5:1cm from anchor3] {$u$};
		\node (l4) [position=300.5:1cm from anchor3] {$w$};
		\node (l5) [position=0.5:1cm from anchor3] {$u$};
		\node (l6) [position=60.5:1cm from anchor3] {$w$};
		\node (l7) [position=40:1.35cm from anchor3] {$u$};

		\path
		(u1) edge (u2)
		(u2) edge (u3)
		(u3) edge (u4)
		(u4) edge (u5)
		(u5) edge (u6)
		(u6) edge (u1)
		(v1) edge (u1)
		;

		\path[dotted,thick]
		(u1) edge (u5)
		;
		
		\path[color=lightgray,thick]
		(v1) edge [bend right] (u2)
		edge [bend left] (u6)
		(u4) edge (u6)
		;

		\end{tikzpicture}
	\caption{The sprouts of Case 2.}
	\label{fig4.21}
	\end{center}
\end{figure}
\vspace{-1mm}
~\\
Subcase 3.1: The fertile sprout $S$ has exactly two pending edges and the two pending edges are not adjacent to a common $w$-edge.\\
Now $C'$ consists of two pairs of adjacent $w$-edges and two $u$-edges separating them. The chord $c$ producing $C$ has to be adjacent to one of the pending edges and is of the $u$-$w$-type. The sole possible additional edge is the one joining the skipped vertex on $C'$ with the common endpoint of $C$ with the other pending edge, a mirror of $c$. If this edge does not exist, the vertices of $C'$ induce a III$_a$, otherwise take $C$ as a new base cycle and the pending edge adjacent to $c$ together with the skipped $u$-edge as a new pair of pending edges. The induced subgraph of $S$ is a graph of type I and thus Subcase 3.1 is closed.
\begin{figure}[!h]
	\begin{center}
		\begin{tikzpicture}
		
		\node (anchor3) [inner sep=1.5pt] {};
		\node (label1) [position=270:1.5cm from anchor3] {III$_e$};


		\node (u1) [draw,circle,fill,inner sep=1.5pt,position=90:1.1cm from anchor3] {};
		\node (u2) [draw,circle,fill,inner sep=1.5pt,position=150:1.1cm from anchor3] {};
		\node (u3) [draw,circle,fill,inner sep=1.5pt,position=210:1.1cm from anchor3] {};
		\node (u4) [draw,circle,fill,inner sep=1.5pt,position=270:1.1cm from anchor3] {};
		\node (u5) [draw,circle,fill,inner sep=1.5pt,position=330:1.1cm from anchor3] {};
		\node (u6) [draw,circle,fill,inner sep=1.5pt,position=30:1.1cm from anchor3] {};

		\node (v1) [draw,circle,fill,inner sep=1.5pt,position=150:2cm from anchor3] {};
		\node (v2) [draw,circle,fill,inner sep=1.5pt,position=330:2cm from anchor3] {};

		\node (l1) [position=120:1cm from anchor3] {$w$};
		\node (l2) [position=180.5:1cm from anchor3] {$w$};
		\node (l3) [position=240.5:1cm from anchor3] {$u$};
		\node (l4) [position=300.5:1cm from anchor3] {$w$};
		\node (l5) [position=0.5:1cm from anchor3] {$w$};
		\node (l6) [position=60.5:1cm from anchor3] {$u$};
		\node (l7) [position=160:1.35cm from anchor3] {$u$};
		\node (l8) [position=340:1.35cm from anchor3] {$u$};

		\path
		(u1) edge (u2)
		(u2) edge (u3)
		(u3) edge (u4)
		(u4) edge (u5)
		(u5) edge (u6)
		(u6) edge (u1)
		(v1) edge (u2)
		(v2) edge (u5)
		;

		\path[dotted,thick]
		(u2) edge (u6)
		;

		\path[color=lightgray,thick]
		(u1) edge (u5)
		(v1) edge [bend left] (u1)
			 edge [bend right] (u3)
		(v2) edge [bend left] (u4)
			 edge [bend right] (u6)
		;

		\end{tikzpicture}
		\caption{The sprouts of Subcase 3.1.}
		\label{fig4.22}		
	\end{center}
\end{figure}
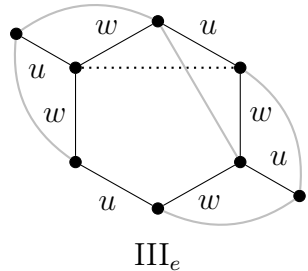
\vspace{-1mm}
Subcase 3.2: The two pending edges are adjacent to a common $w$-edge and the required chord $c$ in $C'$ is adjacent to one of the pending edges.\\
With two pending edges adjacent to a common $w$-edge $C'$ must consist of two nonadjacent $u$-edges and $4$ $w$-edges, so $c$ is a chord of the $u$-$w$-type in $C'$. The vertex $v$ of $C'$ not contained in $C$ must be the endpoint of all other chords in $C'$ and thus there is exactly one chord of the $u$-$w$-type and one of the $uwu$-type possible. If none of the two additional edges exist we again obtain a III$_a$. If any of the chords exist we take $C$ as a new base-cycle together with the two pending edges and obtain a graph of type I.
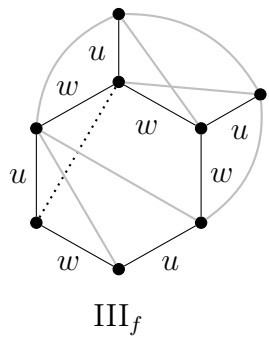
\begin{figure}[H]
	\begin{center}
		\begin{tikzpicture}
		\node (anchor4) [inner sep=1.5pt] {};
		\node (label1) [position=270:1.5cm from anchor4] {III$_f$};
			
		
		\node (u1) [draw,circle,fill,inner sep=1.5pt,position=90:1.1cm from anchor4] {};
		\node (u2) [draw,circle,fill,inner sep=1.5pt,position=150:1.1cm from anchor4] {};
		\node (u3) [draw,circle,fill,inner sep=1.5pt,position=210:1.1cm from anchor4] {};
		\node (u4) [draw,circle,fill,inner sep=1.5pt,position=270:1.1cm from anchor4] {};
		\node (u5) [draw,circle,fill,inner sep=1.5pt,position=330:1.1cm from anchor4] {};
		\node (u6) [draw,circle,fill,inner sep=1.5pt,position=30:1.1cm from anchor4] {};
		
		\node (v2) [draw,circle,fill,inner sep=1.5pt,position=30:2cm from anchor4] {};
		\node (v1) [draw,circle,fill,inner sep=1.5pt,position=90:2cm from anchor4] {};
		
		\node (l1) [position=120:1cm from anchor4] {$w$};
		\node (l2) [position=180.5:1cm from anchor4] {$u$};
		\node (l3) [position=240.5:1cm from anchor4] {$w$};
		\node (l4) [position=300.5:1cm from anchor4] {$u$};
		\node (l5) [position=0.5:1cm from anchor4] {$w$};
		\node (l6) [position=60.5:4mm from anchor4] {$w$};
		\node (l7) [position=20:1.35cm from anchor4] {$u$};
		\node (l8) [position=100:1.35cm from anchor4] {$u$};

		\path
		(u1) edge (u2)
		(u2) edge (u3)
		(u3) edge (u4)
		(u4) edge (u5)
		(u5) edge (u6)
		(u6) edge (u1)
		(v2) edge (u6)
		(v1) edge (u1)
		;
		
		\path[dotted,thick]
		(u1) edge (u3)
		;
		
		\path[color=lightgray,thick]
		(u2) edge (u4)
		edge (u5)
		(v1) edge [bend right] (u2)
		edge (u6)
		(v2) edge (u1)
		edge [bend left] (u5)
		(v1) edge [bend left] (v2)
		;
		\end{tikzpicture}
		\caption{The sprouts of Subcase 3.2.}
		\label{fig4.23}		
	\end{center}
\end{figure}
\vspace{-1mm}
Subcase 3.3: The two pending edges are adjacent to a common $w$-edge and the required chord $c$ in $C'$ is not adjacent to any of the pending edges.\\
This is very similar to Subcase 3.2, the difference lies in the possible additional edges, since $c$ is not adjacent to one of the pending edges there is no chord of the $uwu$-type possible, but now we have two $u$-$w$-type edges that may exist. Without any of those edges we again obtain a III$_a$ as seen before and if at least one of the edges exists $C$ together with the two pending edges induces a graph of type I. This closes Case 3 and completes the proof.
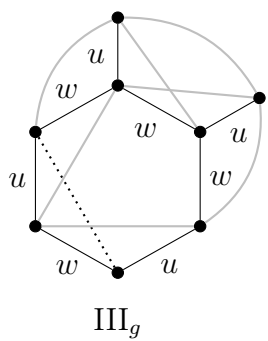
\begin{figure}[!h]
	\begin{center}
		\begin{tikzpicture}
		\node (anchor5) [inner sep=1.5pt] {};
		\node (label1) [position=270:1.5cm from anchor5] {III$_g$};
		
		
		\node (u1) [draw,circle,fill,inner sep=1.5pt,position=90:1.1cm from anchor5] {};
		\node (u2) [draw,circle,fill,inner sep=1.5pt,position=150:1.1cm from anchor5] {};
		\node (u3) [draw,circle,fill,inner sep=1.5pt,position=210:1.1cm from anchor5] {};
		\node (u4) [draw,circle,fill,inner sep=1.5pt,position=270:1.1cm from anchor5] {};
		\node (u5) [draw,circle,fill,inner sep=1.5pt,position=330:1.1cm from anchor5] {};
		\node (u6) [draw,circle,fill,inner sep=1.5pt,position=30:1.1cm from anchor5] {};
		
		\node (v2) [draw,circle,fill,inner sep=1.5pt,position=30:2cm from anchor5] {};
		\node (v1) [draw,circle,fill,inner sep=1.5pt,position=90:2cm from anchor5] {};
		
		\node (l1) [position=120:1cm from anchor5] {$w$};
		\node (l2) [position=180.5:1cm from anchor5] {$u$};
		\node (l3) [position=240.5:1cm from anchor5] {$w$};
		\node (l4) [position=300.5:1cm from anchor5] {$u$};
		\node (l5) [position=0.5:1cm from anchor5] {$w$};
		\node (l6) [position=60.5:4mm from anchor5] {$w$};
		\node (l7) [position=20:1.35cm from anchor5] {$u$};
		\node (l8) [position=100:1.35cm from anchor5] {$u$};

		\path
		(u1) edge (u2)
		(u2) edge (u3)
		(u3) edge (u4)
		(u4) edge (u5)
		(u5) edge (u6)
		(u6) edge (u1)
		(v2) edge (u6)
		(v1) edge (u1)
		;
		
		\path[dotted,thick]
		(u2) edge (u4)
		;
		
		\path[color=lightgray,thick]
		(u3) edge (u1)
		edge (u5)
		(v1) edge [bend right] (u2)
		edge (u6)
		(v2) edge (u1)
		edge [bend left] (u5)
		(v1) edge [bend left] (v2)
		;
		
		\end{tikzpicture}
		\caption{The sprouts of Subcase 3.3.}
		\label{fig4.24}		
	\end{center}
\end{figure}
\vspace{-1cm}
\end{proof}

Now we increase the length of the base cycle by $1$ and investigate the structure of fertile sprouts of size $4$ with a base cycle of length $7$ and a longest induced cycle of length $5$. With this now two pending edges are no longer possible, so all graphs of type IV have either none or one pending edge.\\
Again we can find some of the simpler and smaller sprouts in these graphs, though this time just the sprouts of type I and III$_a$ are not enough. Since in some cases induced cycles of length $4$ may appear, it seems we need to take the sunflower sprouts of size $4$ into account as well.\\
As before the dotted edges in the following figures are necessary chords in order to shorten the cycle and gray ones are additional edges that may exist in any possible combination.

\begin{lemma}\label{lemma4.12}
	Let $S=\lb V,U\cup W\cup E\rb$ be a fertile sprout of size $4$ with a longest induced cycle $C$ of length $5$, $\E{C}\cap E\neq\emptyset$ and a cycle $C'$ with $\E{C'}\cap E=\emptyset$ as well as $\abs{C'}=7$, then $S$ contains a fertile sprout $S'$ of type I (see \autoref{fig4.19}) or III$_a$ (see \autoref{fig4.20}) or a fertile sunflower sprout of size $4$ (see \autoref{fig4.2}). 
\end{lemma}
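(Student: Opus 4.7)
The plan is to mirror the case analysis used in the proof of Lemma \autoref{lemma4.11}, adapted to the fact that we now shorten $C'$ by two (from length $7$ to the induced $5$-cycle $C$) instead of by one. I partition the argument by the number $p\in\set{0,1,2}$ of pending edges (by Lemma \autoref{lemma4.10} these are the only possibilities), and within each case subdivide according to how the required shortening is realized: either by a single type-ii chord in the sense of the proof of Lemma \autoref{lemma4.6} (skipping a $u$-$w$-$u$ or $w$-$u$-$w$ segment and reducing the length by two), or by two type-i chords each reducing the length by one. In both cases the edge count on $C'$ is forced: $p=0$ gives four $u$- and three $w$-edges on $C'$ with no two $w$-edges adjacent; $p=1$ gives three $u$- and four $w$-edges on $C'$ with exactly one pair of adjacent $w$-edges carrying the pending $u$-edge; $p=2$ gives two $u$- and five $w$-edges on $C'$ with two non-adjacent $ww$-junctions carrying the pending edges.

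For $p=0$ I would enumerate the cyclic arrangements of the pattern $u^4 w^3$ up to rotation, keeping only those without adjacent $w$-edges, and examine each admissible pair of type-i chords, resp.\ the single type-ii chord. Whenever two $u$-edges of $C'$ end up opposite each other in an induced quadrilateral of $C$, together with the two $w$-edges connecting them, we obtain a fertile sunflower sprout of size $4$ on those four $u$-vertices. Otherwise, dropping one of the two chords leaves a six-cycle $C''$ on $C'$ consisting purely of $u$- and $w$-edges and the remaining chord, which, by Lemma \autoref{lemma4.11} applied to the induced subsprout with base cycle $C''$, contains a III$_a$ or a type I pattern.

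For $p=1$ I handle three subcases in parallel to Subcases~2.1--2.3 of Lemma \autoref{lemma4.11}: (2.1) the chord configuration skips two adjacent $u$-edges on $C'$; (2.2) the chord configuration skips a $u$-$w$-pair or $w$-$u$-pair not incident with the pending vertex; (2.3) one of the chords is incident with the pending vertex. In (2.1) no shortening can be re-rooted and the six vertices of the resulting hexagon induce a III$_a$. In (2.2)--(2.3) we re-root: take $C$ as a new base cycle and pair the pending $u$-edge with a skipped $u$-edge of $C'$ as a new pair of pending edges, yielding a type I subsprout exactly as in Lemma \autoref{lemma4.11}. If the extra edge needed to justify the re-rooting is absent, one of the two chords must still be present, and the resulting hexagon on $C'$ again induces a III$_a$. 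For $p=2$ the non-adjacency of the pending edges and condition $v)$ force the two $u$-edges on $C'$ to separate the two pending $ww$-junctions, reducing the geometry to a unique cyclic pattern up to rotation. A shortening by two that uses the pending side produces a fertile sunflower sprout of size $4$ from the two pending edges together with the two $w$-edges between them; a shortening that avoids the pending side re-roots to a type I subsprout or, failing the re-rooting edge, exhibits a III$_a$ on six vertices of $C'$.

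The main obstacle will be the combinatorial explosion created by the interaction between the two-step shortening (single type-ii chord vs.\ two type-i chords), the freedom to place optional edges from $E$, and the constraint that $C$ remains induced. The recurring mechanism that keeps the argument finite is the re-rooting step imported from Lemma \autoref{lemma4.11}: whenever the shortening leaves a $u$-edge of $C'$ outside $C$ and a suitable edge of $E$ witnesses the required adjacency, $C$ together with that skipped $u$-edge and a pending edge yields a type I subsprout; whenever this re-rooting cannot be carried out, a single type-i chord always leaves behind a hexagon on $C'$ whose vertices induce a III$_a$, and whenever a direct alternating quadrilateral is produced among two $u$-edges and two $w$-edges we obtain a fertile sunflower sprout of size $4$. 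These three outcomes together exhaust all configurations and complete the proof.
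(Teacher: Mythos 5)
Your overall mechanism (case split on pending edges, re-rooting $C$ as a new base cycle with skipped $u$-edges as pending to get a type I, falling back on a III$_a$ or an alternating quadrilateral giving a sunflower sprout) is the same as the paper's, but the execution has concrete errors coming from transplanting the subcase structure of Lemma \autoref{lemma4.11} without recomputing the edge counts for $\abs{C'}=7$. First, your case $p=2$ cannot occur: two pending edges not adjacent to a common $w$-edge force at least four $w$-edges, and together with the two remaining $u$-edges that already fills six positions of $C'$; your own count of ``five $w$-edges'' violates $q\leq n=4$ from the sprout definition, so this case is vacuous (the paper notes before the lemma that type IV sprouts have at most one pending edge). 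Second, for $p=1$ the base cycle consists of three $u$- and four $w$-edges with \emph{no} two $u$-edges adjacent, so your subcase ``the chord configuration skips two adjacent $u$-edges'' is empty, and the subcases you actually need (both chords incident with the pending edge, one chord incident, chords adjacent or not) are precisely the ones in which a $uwu$-type chord together with a second chord closes an induced $4$-cycle whose four vertices each carry one $u$-edge, i.e.\ a fertile sunflower sprout of size $4$. Your $p=1$ analysis only allows the outcomes III$_a$ and type I, so it is not exhaustive; the sunflower outcome is unavoidable there, not in $p=0$ where the paper's five subcases all terminate in type I or III$_a$.

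Third, the reduction you propose for $p=0$ --- drop one of the two length-reducing chords and apply Lemma \autoref{lemma4.11} to the resulting hexagon --- is not licensed. The $6$-cycle obtained by routing $C'$ through one chord necessarily uses that chord as one of its edges, and chords of $C'$ belong to $E$, so this hexagon fails the hypothesis $\E{C''}\cap E=\emptyset$ of Lemma \autoref{lemma4.11}; nor can you simply relabel the chord as a $u$- or $w$-edge, since those roles are fixed by the sprout structure ($u$-edges correspond to the vertices of the induced cycle in $\lineg{G}^2$). You have to argue directly on $C'$, as the paper does, identifying in each chord configuration which $u$-edges become pending relative to $C$ and which optional $E$-edges are compatible with $C$ remaining induced. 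The skeleton of your plan is right, but as written the $p=1$ case has a genuine hole and the $p=0$ case rests on an inapplicable lemma.
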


\begin{proof}
Let $C'$ be the cycle in $S$ consisting purely of $u$- and $w$-edges, the base cycle, and $C$ be the induced cycle of length $5$. The proof is divided into two cases since there either is no pending edge, or one. These two cases will be divided further into subcases since there are again several possibilities for $C'$ to be shortened in order to obtain a $C$.
\begin{enumerate}
	\item[] \begin{enumerate}
		\item [Case 1] The fertile sprout $S$ has no pending edge.
		\begin{enumerate}
			\item[] \begin{enumerate}
				
				\item [Subcase 1.1] There is just one chord shortening $C'$.
				\item [Subcase 1.2] There are two nonadjacent chords and one of them is of the $u$-$u$-type.
				\item [Subcase 1.3] There are two nonadjacent $u$-$w$-type chords.
				\item [Subcase 1.4] There are two adjacent chords and one of them is of the $u$-$u$-type.
				\item[Subcase 1.5] There are two adjacent $u$-$w$-type chords.
			\end{enumerate}
		\end{enumerate}
		
		\item [Case 2] The fertile sprout $S$ has exactly one pending edge $u'$.
		\begin{enumerate}
			\item[] \begin{enumerate}
				
				\item [Subcase 2.1] Both chords are adjacent to the pending edge $u'$.
				\item [Subcase 2.2] Just one of the two necessary chords is adjacent to $u'$ and the chords are adjacent.
				\item [Subcase 2.3] Just one of the two necessary chords is adjacent to $u'$ and the chords are not adjacent.
				\item [Subcase 2.4] No necessary chord is adjacent to the pending edge, but both chords are adjacent to each other.
				\item [Subcase 2.5] No necessary chord is adjacent to the pending edge nor are those chords adjacent.
				\item [Subcase 2.6] There is just one chord shortening $C'$.
			\end{enumerate}
		\end{enumerate}
	\end{enumerate}
\end{enumerate}
\vspace{-2mm}
Case 1: The fertile sprout $S$ has no pending edge.\\
Since $C'$ has length $7$ and no pending edge, it must consist of $4$ $u$- and $3$ $w$-edges, hence there is exactly one pair of adjacent $u$-edges.\\
Subcase 1.1: There is just one chord shortening $C'$.\\
With $C'$ being a cycle of length $7$ the chord $c$ producing $C$ must be of the $uwu$-type and since there is a pair of adjacent $u$-edges on $C'$, there is just one possible position for this chord. The edge $c$ skips exactly one $w$-edge $w'$ and all other possible chords in $C'$ must contain one endpoint of $w'$. Since no more $uwu$-edges are possible, all such chords shorten $C'$ by $1$, hence if we take $C$ as a new base cycle and the two $u$-edges adjacent to $w'$ as pending edges, we always obtain a fertile sprout of type I.
\begin{figure}[H]
	\begin{center}
		\begin{tikzpicture}
		\node (anchor1) [] {};
		\node (label1) [position=270:1.4cm from anchor1] {IV$_a$};
		
		
		
		\node (v1) [draw,circle,fill,inner sep=1.5pt,position=90:1cm from anchor1] {};
		\node (v2) [draw,circle,fill,inner sep=1.5pt,position=141.42:1cm from anchor1] {};
		\node (v3) [draw,circle,fill,inner sep=1.5pt,position=192.84:1cm from anchor1] {};
		\node (v4) [draw,circle,fill,inner sep=1.5pt,position=244.26:1cm from anchor1] {};
		\node (v5) [draw,circle,fill,inner sep=1.5pt,position=295.68:1cm from anchor1] {};
		\node (v6) [draw,circle,fill,inner sep=1.5pt,position=347.1:1cm from anchor1] {};
		\node (v7) [draw,circle,fill,inner sep=1.5pt,position=38.52:1cm from anchor1] {};
		
		\path
		(v1) edge (v2)
		(v2) edge (v3)
		(v3) edge (v4)
		(v4) edge (v5)
		(v5) edge (v6)
		(v6) edge (v7)
		(v7) edge (v1)
		;
		
		\node (labelv1) [position=115.5:9mm from anchor1] {$u$};
		\node (labelv2) [position=166.9:9mm from anchor1] {$w$};
		\node (labelv3) [position=218.3:9mm from anchor1] {$u$};
		\node (labelv4) [position=269.7:9mm from anchor1] {$w$};
		\node (labelv5) [position=321.2:9mm from anchor1] {$u$};
		\node (labelv6) [position=12.6:9mm from anchor1] {$w$};
		\node (labelv7) [position=64:9mm from anchor1] {$u$};
		

		\path[dotted,thick]
		(v3) edge (v6)
		;
		
		\path[color=lightgray,thick]
		(v4) edge (v2)
		edge (v6)
		(v5) edge (v3)
		edge (v7)
		;
		\end{tikzpicture}
	\end{center}
	\caption{The sprouts of Subcase 1.1.}
	\label{fig4.25}
\end{figure}
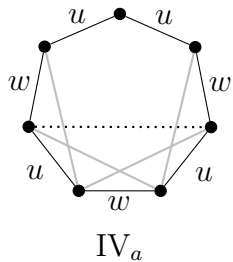
\vspace{-1mm}
Subcase 1.2: There are two nonadjacent chords and one of them is of the $u$-$u$-type, and Subcase 1.4: There are two adjacent chords and one of them is of the $u$-$u$-type.\\
Since the common endpoint $v$ of the two adjacent $u$-edges cannot be the endpoint of any legal chord in $C'$, the vertices of the induced cycle $C$ together with $v$ induce a III$_a$.
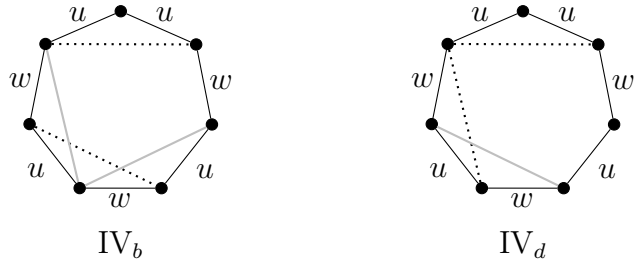
\begin{figure}[!h]
	\begin{center}
		\begin{tikzpicture}
		
		\node (anchor2) [] {};
		\node (label2) [position=270:1.4cm from anchor2] {IV$_b$};
		
		\node (anchor1) [position=0:5cm from anchor2] {};
		\node (label1) [position=270:1.4cm from anchor1] {IV$_d$};

		
		
		\node (v1) [draw,circle,fill,inner sep=1.5pt,position=90:1cm from anchor2] {};
		\node (v2) [draw,circle,fill,inner sep=1.5pt,position=141.42:1cm from anchor2] {};
		\node (v3) [draw,circle,fill,inner sep=1.5pt,position=192.84:1cm from anchor2] {};
		\node (v4) [draw,circle,fill,inner sep=1.5pt,position=244.26:1cm from anchor2] {};
		\node (v5) [draw,circle,fill,inner sep=1.5pt,position=295.68:1cm from anchor2] {};
		\node (v6) [draw,circle,fill,inner sep=1.5pt,position=347.1:1cm from anchor2] {};
		\node (v7) [draw,circle,fill,inner sep=1.5pt,position=38.52:1cm from anchor2] {};
		
		\path
		(v1) edge (v2)
		(v2) edge (v3)
		(v3) edge (v4)
		(v4) edge (v5)
		(v5) edge (v6)
		(v6) edge (v7)
		(v7) edge (v1)
		;
		
		\node (labelv1) [position=115.5:9mm from anchor2] {$u$};
		\node (labelv2) [position=166.9:9mm from anchor2] {$w$};
		\node (labelv3) [position=218.3:9mm from anchor2] {$u$};
		\node (labelv4) [position=269.7:9mm from anchor2] {$w$};
		\node (labelv5) [position=321.2:9mm from anchor2] {$u$};
		\node (labelv6) [position=12.6:9mm from anchor2] {$w$};
		\node (labelv7) [position=64:9mm from anchor2] {$u$};
		

		\path[dotted,thick]
		(v2) edge (v7)
		(v3) edge (v5)
		;
		
		\path[color=lightgray,thick]
		(v4) edge (v2)
		edge (v6)
		;

		
		
		\node (v1) [draw,circle,fill,inner sep=1.5pt,position=90:1cm from anchor1] {};
		\node (v2) [draw,circle,fill,inner sep=1.5pt,position=141.42:1cm from anchor1] {};
		\node (v3) [draw,circle,fill,inner sep=1.5pt,position=192.84:1cm from anchor1] {};
		\node (v4) [draw,circle,fill,inner sep=1.5pt,position=244.26:1cm from anchor1] {};
		\node (v5) [draw,circle,fill,inner sep=1.5pt,position=295.68:1cm from anchor1] {};
		\node (v6) [draw,circle,fill,inner sep=1.5pt,position=347.1:1cm from anchor1] {};
		\node (v7) [draw,circle,fill,inner sep=1.5pt,position=38.52:1cm from anchor1] {};
		
		\path
		(v1) edge (v2)
		(v2) edge (v3)
		(v3) edge (v4)
		(v4) edge (v5)
		(v5) edge (v6)
		(v6) edge (v7)
		(v7) edge (v1)
		;
		
		\node (labelv1) [position=115.5:9mm from anchor1] {$u$};
		\node (labelv2) [position=166.9:9mm from anchor1] {$w$};
		\node (labelv3) [position=218.3:9mm from anchor1] {$u$};
		\node (labelv4) [position=269.7:9mm from anchor1] {$w$};
		\node (labelv5) [position=321.2:9mm from anchor1] {$u$};
		\node (labelv6) [position=12.6:9mm from anchor1] {$w$};
		\node (labelv7) [position=64:9mm from anchor1] {$u$};
		

		\path[dotted,thick]
		(v2) edge (v7)
		edge (v4)
		;
		
		\path[color=lightgray,thick]
		(v3) edge (v5)
		;

		\end{tikzpicture}
	\end{center}
	\caption{The sprouts of Subcase 1.2 and Subcase 1.4.}
	\label{fig4.26}
\end{figure}
\vspace{-1mm}
~\\
Subcase 1.3: There are two nonadjacent chords $u$-$w$-type chords.\\
Now exactly the $w$-edge $w'$ that was skipped in Subcase 1.1 is the only $w$-edge contained in the shortened cycle $C$. All optional chords of $C'$ in this case join endpoints of the two $u$-edges adjacent to $w'$. Therefore, by choosing $C$ as a new base cycle and those two $u$-edges as pending, we again obtain a fertile sprout of type I.
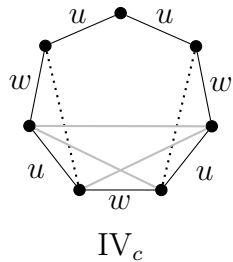
\begin{figure}[!h]
	\begin{center}
		\begin{tikzpicture}
		\node (anchor3) [] {};
		\node (label1) [position=270:1.4cm from anchor3] {IV$_c$};
		
		
		
		\node (v1) [draw,circle,fill,inner sep=1.5pt,position=90:1cm from anchor3] {};
		\node (v2) [draw,circle,fill,inner sep=1.5pt,position=141.42:1cm from anchor3] {};
		\node (v3) [draw,circle,fill,inner sep=1.5pt,position=192.84:1cm from anchor3] {};
		\node (v4) [draw,circle,fill,inner sep=1.5pt,position=244.26:1cm from anchor3] {};
		\node (v5) [draw,circle,fill,inner sep=1.5pt,position=295.68:1cm from anchor3] {};
		\node (v6) [draw,circle,fill,inner sep=1.5pt,position=347.1:1cm from anchor3] {};
		\node (v7) [draw,circle,fill,inner sep=1.5pt,position=38.52:1cm from anchor3] {};
		
		\path
		(v1) edge (v2)
		(v2) edge (v3)
		(v3) edge (v4)
		(v4) edge (v5)
		(v5) edge (v6)
		(v6) edge (v7)
		(v7) edge (v1)
		;
		
		\node (labelv1) [position=115.5:9mm from anchor3] {$u$};
		\node (labelv2) [position=166.9:9mm from anchor3] {$w$};
		\node (labelv3) [position=218.3:9mm from anchor3] {$u$};
		\node (labelv4) [position=269.7:9mm from anchor3] {$w$};
		\node (labelv5) [position=321.2:9mm from anchor3] {$u$};
		\node (labelv6) [position=12.6:9mm from anchor3] {$w$};
		\node (labelv7) [position=64:9mm from anchor3] {$u$};
		

		\path[dotted,thick]
		(v2) edge (v4)
		(v5) edge (v7)
		;
		
		\path[color=lightgray,thick]
		(v3) edge (v5)
		edge (v6)
		(v6) edge (v4)
		;
				
		\end{tikzpicture}
	\end{center}
	\caption{The sprouts of Subcase 1.3.}
	\label{fig4.27}
\end{figure}
\vspace{-1mm}
~\\
~\\
Subcase 1.5: There are two adjacent $u$-$w$-type chords.\\
Now two consecutive $u$-edges, both adjacent to a common $w$-edge, are skipped. There is one optional $uwu$-edge, but by taking $C$ as a new base cycle and the two skipped $u$-edges as pending, this $uwu$-edge joins a neighbor of the endpoint of one of those pending edges on $C$ with the other endpoint of this pending edge. Thus the resulting graph is a fertile sprout of type I and Case 1 is closed.
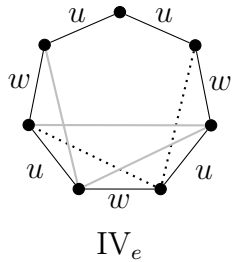
\begin{figure}[!h]
	\begin{center}
		\begin{tikzpicture}
		\node (anchor2) [] {};
		\node (label1) [position=270:1.4cm from anchor2] {IV$_e$};
		
		
		
		\node (v1) [draw,circle,fill,inner sep=1.5pt,position=90:1cm from anchor2] {};
		\node (v2) [draw,circle,fill,inner sep=1.5pt,position=141.42:1cm from anchor2] {};
		\node (v3) [draw,circle,fill,inner sep=1.5pt,position=192.84:1cm from anchor2] {};
		\node (v4) [draw,circle,fill,inner sep=1.5pt,position=244.26:1cm from anchor2] {};
		\node (v5) [draw,circle,fill,inner sep=1.5pt,position=295.68:1cm from anchor2] {};
		\node (v6) [draw,circle,fill,inner sep=1.5pt,position=347.1:1cm from anchor2] {};
		\node (v7) [draw,circle,fill,inner sep=1.5pt,position=38.52:1cm from anchor2] {};
		
		\path
		(v1) edge (v2)
		(v2) edge (v3)
		(v3) edge (v4)
		(v4) edge (v5)
		(v5) edge (v6)
		(v6) edge (v7)
		(v7) edge (v1)
		;
		
		\node (labelv1) [position=115.5:9mm from anchor2] {$u$};
		\node (labelv2) [position=166.9:9mm from anchor2] {$w$};
		\node (labelv3) [position=218.3:9mm from anchor2] {$u$};
		\node (labelv4) [position=269.7:9mm from anchor2] {$w$};
		\node (labelv5) [position=321.2:9mm from anchor2] {$u$};
		\node (labelv6) [position=12.6:9mm from anchor2] {$w$};
		\node (labelv7) [position=64:9mm from anchor2] {$u$};
		

		\path[dotted,thick]
		(v5) edge (v3)
		edge (v7)
		;
		
		\path[color=lightgray,thick]
		(v6) edge (v3)
		edge (v4)
		(v2) edge (v4)
		;

		\end{tikzpicture}
	\end{center}
	\caption{The sprouts of Subcase 1.5.}
	\label{fig4.28}
\end{figure}
\vspace{-1mm}
Case 2: The fertile sprout $S$ has exactly one pending edge $u'$.\\
Now with one pending edge and a base cycle of length $7$, there must be $4$ $w$-edges and thus no two $u$-edges can be adjacent.\\
~\\
Subcase 2.1: Both chords are adjacent to the pending edge $u'$.\\
All chords in $C'$ are either of the $u$-$w$-type or of the $uwu$-type. If two edges, both adjacent to the pending edge, produce the induced cycle $C$ of length $5$, those must be $u$-$w$-type chords. Two vertices, $v_1$ and $v_2$, of $C'$ are not contained in $C$, both of them are endpoints of a $w$-edge adjacent to the pending edge $u'$. Now all optional chords in $C'$ either have $V_1$ or $v_2$ as an endpoint. There is one other $u$-edge, the only one contained in $C$, and all possible chords join one of its endpoints with a $v_i$. For each endpoint there is one $u$-$w$-type chord and one $uwu$-type chord possible.\\
Now, if no optional edge exists, we can choose $C$ as a base cycle and add one of the two skipped vertices to obtain a III$_a$. And, even if some additional edges exist, as long as one of the skipped vertices is not incident with any chord it can be chosen to obtain a III$_a$.\\
So suppose both $u$-$w$-type edges exist and none of the $uwu$-chords, then chose the cycle given by the two skipped $w$-edges, the $u$-edge on $C$ and those two optional edges as a base cycle and add $u'$ together with one of the skipped $u$-edges as pending, thus a type I sprout is obtained.\\
So let at least one $uwu$-type chord exist, say with $v_1$ as an endpoint, and another chord with the endpoint $v_2$. Now an induced cycle of length $4$ exists and does not contain any $u$-edges, but every vertex of this cycle is adjacent to exactly one $u$-edge. Hence the induced cycle of length $4$ together with all four $u$-edges form a fertile sunflower sprout.
\begin{figure}[!h]
	\begin{center}
		\begin{tikzpicture}
		\node (anchor1) [] {};
		\node (label1) [position=270:1.4cm from anchor1] {IV$_f$};
		
		
		
		\node (v1) [draw,circle,fill,inner sep=1.5pt,position=90:1cm from anchor1] {};
		\node (v2) [draw,circle,fill,inner sep=1.5pt,position=141.42:1cm from anchor1] {};
		\node (v3) [draw,circle,fill,inner sep=1.5pt,position=192.84:1cm from anchor1] {};
		\node (v4) [draw,circle,fill,inner sep=1.5pt,position=244.26:1cm from anchor1] {};
		\node (v5) [draw,circle,fill,inner sep=1.5pt,position=295.68:1cm from anchor1] {};
		\node (v6) [draw,circle,fill,inner sep=1.5pt,position=347.1:1cm from anchor1] {};
		\node (v7) [draw,circle,fill,inner sep=1.5pt,position=38.52:1cm from anchor1] {};
		
		\path
		(v1) edge (v2)
		(v2) edge (v3)
		(v3) edge (v4)
		(v4) edge (v5)
		(v5) edge (v6)
		(v6) edge (v7)
		(v7) edge (v1)
		;
		
		\node (labelv1) [position=115.5:9mm from anchor1] {$w$};
		\node (labelv2) [position=166.9:9mm from anchor1] {$u$};
		\node (labelv3) [position=218.3:9mm from anchor1] {$w$};
		\node (labelv4) [position=269.7:9mm from anchor1] {$u$};
		\node (labelv5) [position=321.2:9mm from anchor1] {$w$};
		\node (labelv6) [position=12.6:9mm from anchor1] {$u$};
		\node (labelv7) [position=64:9mm from anchor1] {$w$};
		
		
		\node (a) [draw,circle,fill,inner sep=1.5pt,position=90:1.8cm from anchor1] {};
		\path
		(v1) edge (a)
		;
		\node (alabel) [position=90:2cm from anchor1] {$u$};
		
		\path[dotted,thick]
		(v1) edge (v3)
		edge (v6)
		;
		
		\path[color=lightgray,thick]
		(v4) edge (v2)
		edge (v7)
		(v5) edge (v2)
		edge (v7)
		;
		
		\path[color=lightgray,thick]
		(a) edge [bend right] (v2)
		edge [bend left] (v7)
		;
		\end{tikzpicture}
	\end{center}
	\caption{The sprouts of Subcase 2.1.}
	\label{fig4.29}
\end{figure}
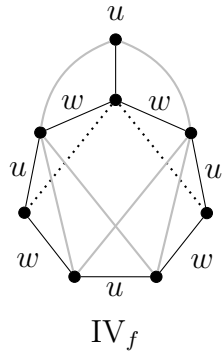
\vspace{-1mm}
~\\
Subcase 2.2: Just one of the two necessary chords is adjacent to $u'$ and the chords are adjacent.\\
Both chords $c$ and $c'$ share a common endpoint $v$ which is incident to a $u$- and a $w$-edge. The two neighbors $v_u$ and $v_w$ of $v$ are possible endpoints for all $uwu$-type chords that may exist. If there is no such edge and $v_uv_w$ does not exist, the endpoint of the $u$-edge incident with $v$, $v_u$, together with the vertices of $C$ induces a III$_a$. If $v_uv_w$ exists but still no $uwu$-type chord, take $C$, $u'$ and $vv_u$ to obtain a type I sprout.\\
Now suppose the $uwu$-type chord with endpoint $v_u$ exists, then again $vv_u$ can be taken as a second pending edge together with $u'$, but this time the base cycle is given by the induced cycle of length $5$ produced by the $uwu$-chord and $C'$. If $v_u$ is not incident to such a chord, but the other $uwu$-type chord exists there is an induced cycle of length $4$ given by this chord, one of the necessary chords $c$ and $c'$ and two $w$-edges of $C$. Each of the four vertices of this cycle is incident with exactly one $u$-edge and thus a sunflower sprout is obtained.
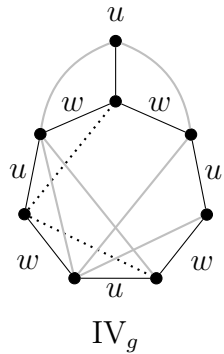
\begin{figure}[!h]
	\begin{center}
		\begin{tikzpicture}
		\node (anchor2) [] {};
		\node (label1) [position=270:1.4cm from anchor2] {IV$_g$};
		
		
		
		\node (v1) [draw,circle,fill,inner sep=1.5pt,position=90:1cm from anchor2] {};
		\node (v2) [draw,circle,fill,inner sep=1.5pt,position=141.42:1cm from anchor2] {};
		\node (v3) [draw,circle,fill,inner sep=1.5pt,position=192.84:1cm from anchor2] {};
		\node (v4) [draw,circle,fill,inner sep=1.5pt,position=244.26:1cm from anchor2] {};
		\node (v5) [draw,circle,fill,inner sep=1.5pt,position=295.68:1cm from anchor2] {};
		\node (v6) [draw,circle,fill,inner sep=1.5pt,position=347.1:1cm from anchor2] {};
		\node (v7) [draw,circle,fill,inner sep=1.5pt,position=38.52:1cm from anchor2] {};
		
		\path
		(v1) edge (v2)
		(v2) edge (v3)
		(v3) edge (v4)
		(v4) edge (v5)
		(v5) edge (v6)
		(v6) edge (v7)
		(v7) edge (v1)
		;
		
		\node (labelv1) [position=115.5:9mm from anchor2] {$w$};
		\node (labelv2) [position=166.9:9mm from anchor2] {$u$};
		\node (labelv3) [position=218.3:9mm from anchor2] {$w$};
		\node (labelv4) [position=269.7:9mm from anchor2] {$u$};
		\node (labelv5) [position=321.2:9mm from anchor2] {$w$};
		\node (labelv6) [position=12.6:9mm from anchor2] {$u$};
		\node (labelv7) [position=64:9mm from anchor2] {$w$};
		
		
		\node (a) [draw,circle,fill,inner sep=1.5pt,position=90:1.8cm from anchor2] {};
		\path
		(v1) edge (a)
		;
		\node (alabel) [position=90:2cm from anchor2] {$u$};
		
		\path[dotted,thick]
		(v3) edge (v1)
		edge (v5)
		;
		
		\path[color=lightgray,thick]
		(v2) edge (v5)
		(v4) edge (v2)
		edge (v6)
		edge (v7)
		;
		
		\path[color=lightgray,thick]
		(a) edge [bend right] (v2)
		edge [bend left] (v7)
		;

		\end{tikzpicture}
	\end{center}
	\caption{The sprouts of Subcase 2.2.}
	\label{fig4.30}
\end{figure}
\vspace{-1mm} 
~\\
Subcase 2.3: Just one of the two necessary chords is adjacent to $u'$ and the chords are not adjacent.\\
In this case all possible endpoint for $uwu$-type chords are part of $C$ and therefore no such chord can exist. Thus in all cases $C$ is a base cycle together with $u'$ and the $u$-edge which is not contained in $C$, but is adjacent to a $w$-edge adjacent to $u'$ as pending edges form a type I sprout.
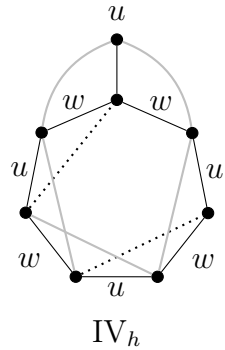
\begin{figure}[!h]
	\begin{center}
		\begin{tikzpicture}
		\node (anchor3) [] {};
		\node (label1) [position=270:1.4cm from anchor3] {IV$_h$};
		
		
		
		\node (v1) [draw,circle,fill,inner sep=1.5pt,position=90:1cm from anchor3] {};
		\node (v2) [draw,circle,fill,inner sep=1.5pt,position=141.42:1cm from anchor3] {};
		\node (v3) [draw,circle,fill,inner sep=1.5pt,position=192.84:1cm from anchor3] {};
		\node (v4) [draw,circle,fill,inner sep=1.5pt,position=244.26:1cm from anchor3] {};
		\node (v5) [draw,circle,fill,inner sep=1.5pt,position=295.68:1cm from anchor3] {};
		\node (v6) [draw,circle,fill,inner sep=1.5pt,position=347.1:1cm from anchor3] {};
		\node (v7) [draw,circle,fill,inner sep=1.5pt,position=38.52:1cm from anchor3] {};
		
		\path
		(v1) edge (v2)
		(v2) edge (v3)
		(v3) edge (v4)
		(v4) edge (v5)
		(v5) edge (v6)
		(v6) edge (v7)
		(v7) edge (v1)
		;
		
		\node (labelv1) [position=115.5:9mm from anchor3] {$w$};
		\node (labelv2) [position=166.9:9mm from anchor3] {$u$};
		\node (labelv3) [position=218.3:9mm from anchor3] {$w$};
		\node (labelv4) [position=269.7:9mm from anchor3] {$u$};
		\node (labelv5) [position=321.2:9mm from anchor3] {$w$};
		\node (labelv6) [position=12.6:9mm from anchor3] {$u$};
		\node (labelv7) [position=64:9mm from anchor3] {$w$};
		
		
		\node (a) [draw,circle,fill,inner sep=1.5pt,position=90:1.8cm from anchor3] {};
		\path
		(v1) edge (a)
		;
		\node (alabel) [position=90:2cm from anchor3] {$u$};
		
		\path[dotted,thick]
		(v3) edge (v1)
		(v4) edge (v6)
		;
		
		\path[color=lightgray,thick]
		(v3) edge (v5)
		(v4) edge (v2)
		(v5) edge (v7)
		;
		
		\path[color=lightgray,thick]
		(a) edge [bend right] (v2)
		edge [bend left] (v7)
		;
		\end{tikzpicture}
	\end{center}
	\caption{The sprouts of Subcase 2.3.}
	\label{fig4.31}
\end{figure}
\vspace{-1mm} 
~\\
~\\
Subcase 2.4: No necessary chord is adjacent to the pending edge, but both chords are adjacent to each other.\\
Now one vertex $v$ of $C'$ exists, that is not contained in $C$ but may be the endpoint of a $uwu$-type chord. By taking $C$ as a new base cycle and the skipped $u$-edge that does not contain $v$ as a second pending edge together with $u'$ a type I sprout is obtained.
\begin{figure}[!h]
	\begin{center}
		\begin{tikzpicture}
		\node (anchor1) [] {};
		\node (label1) [position=270:1.4cm from anchor1] {IV$_i$};
		
		
		
		\node (v1) [draw,circle,fill,inner sep=1.5pt,position=90:1cm from anchor1] {};
		\node (v2) [draw,circle,fill,inner sep=1.5pt,position=141.42:1cm from anchor1] {};
		\node (v3) [draw,circle,fill,inner sep=1.5pt,position=192.84:1cm from anchor1] {};
		\node (v4) [draw,circle,fill,inner sep=1.5pt,position=244.26:1cm from anchor1] {};
		\node (v5) [draw,circle,fill,inner sep=1.5pt,position=295.68:1cm from anchor1] {};
		\node (v6) [draw,circle,fill,inner sep=1.5pt,position=347.1:1cm from anchor1] {};
		\node (v7) [draw,circle,fill,inner sep=1.5pt,position=38.52:1cm from anchor1] {};
		
		\path
		(v1) edge (v2)
		(v2) edge (v3)
		(v3) edge (v4)
		(v4) edge (v5)
		(v5) edge (v6)
		(v6) edge (v7)
		(v7) edge (v1)
		;
		
		\node (labelv1) [position=115.5:9mm from anchor1] {$w$};
		\node (labelv2) [position=166.9:9mm from anchor1] {$u$};
		\node (labelv3) [position=218.3:9mm from anchor1] {$w$};
		\node (labelv4) [position=269.7:9mm from anchor1] {$u$};
		\node (labelv5) [position=321.2:9mm from anchor1] {$w$};
		\node (labelv6) [position=12.6:9mm from anchor1] {$u$};
		\node (labelv7) [position=64:9mm from anchor1] {$w$};
		
		
		\node (a) [draw,circle,fill,inner sep=1.5pt,position=90:1.8cm from anchor1] {};
		\path
		(v1) edge (a)
		;
		\node (alabel) [position=90:2cm from anchor1] {$u$};
		
		\path[dotted,thick]
		(v4) edge (v2)
		edge (v6)
		;
		
		\path[color=lightgray,thick]
		(v3) edge (v1)
		edge (v5)
		(v5) edge (v7)
		edge (v2)
		;
		
		\path[color=lightgray,thick]
		(a) edge [bend right] (v2)
		edge [bend left] (v7)
		;

		\end{tikzpicture}
	\end{center}
	\caption{The sprouts of Subcase 2.4.}
	\label{fig4.32}
\end{figure}
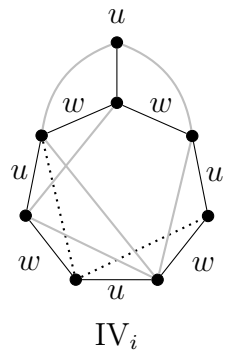
\vspace{-1mm}
~\\
Subcase 2.5: No necessary chord is adjacent to the pending edge nor are those chords adjacent.\\
Like in Subcase 2.3 all vertices that are possible endpoints for $uwu$-type chords are contained in $C$. But as a difference some of the optional $u$-$w$-type chords might represent illegal  chords in a type I sprout. Let $v$ be one of the skipped vertices on $C'$. The $u$-edge containing $v$ can be chosen as a second pending edge to the new base cycle $C$ if and only if the optional edge joining $v$ to the endpoint of the other necessary chords. So suppose both of those edges exist. Now consider the induced cycle $C''$ of length $5$ given by the two adjacent $w$-edges of $u'$, one $u$-edge, one of the two optional edges and the adjacent necessary edge. Take $u'$ and the $u$-edge adjacent to the necessary chord on $C''$ as pending edges and again a type I sprout is obtained.
\begin{figure}[!h]
	\begin{center}
		\begin{tikzpicture}
		\node (anchor3) [] {};
		\node (label1) [position=270:1.4cm from anchor3] {IV$_j$};
		
		
		
		\node (v1) [draw,circle,fill,inner sep=1.5pt,position=90:1cm from anchor3] {};
		\node (v2) [draw,circle,fill,inner sep=1.5pt,position=141.42:1cm from anchor3] {};
		\node (v3) [draw,circle,fill,inner sep=1.5pt,position=192.84:1cm from anchor3] {};
		\node (v4) [draw,circle,fill,inner sep=1.5pt,position=244.26:1cm from anchor3] {};
		\node (v5) [draw,circle,fill,inner sep=1.5pt,position=295.68:1cm from anchor3] {};
		\node (v6) [draw,circle,fill,inner sep=1.5pt,position=347.1:1cm from anchor3] {};
		\node (v7) [draw,circle,fill,inner sep=1.5pt,position=38.52:1cm from anchor3] {};
		
		\path
		(v1) edge (v2)
		(v2) edge (v3)
		(v3) edge (v4)
		(v4) edge (v5)
		(v5) edge (v6)
		(v6) edge (v7)
		(v7) edge (v1)
		;
		
		\node (labelv1) [position=115.5:9mm from anchor3] {$w$};
		\node (labelv2) [position=166.9:9mm from anchor3] {$u$};
		\node (labelv3) [position=218.3:9mm from anchor3] {$w$};
		\node (labelv4) [position=269.7:9mm from anchor3] {$u$};
		\node (labelv5) [position=321.2:9mm from anchor3] {$w$};
		\node (labelv6) [position=12.6:9mm from anchor3] {$u$};
		\node (labelv7) [position=64:9mm from anchor3] {$w$};
		
		
		\node (a) [draw,circle,fill,inner sep=1.5pt,position=90:1.8cm from anchor3] {};
		\path
		(v1) edge (a)
		;
		\node (alabel) [position=90:2cm from anchor3] {$u$};
		
		\path[dotted,thick]
		(v2) edge (v4)
		(v5) edge (v7)
		;
		
		\path[color=lightgray,thick]
		(v1) edge (v3)
		(v4) edge (v6)
		(v6) edge (v1)
		(v3) edge (v5)
		;
		
		\path[color=lightgray,thick]
		(a) edge [bend right] (v2)
		edge [bend left] (v7)
		;
		\end{tikzpicture}
	\end{center}
	\caption{The sprouts of Subcase 2.5.}
	\label{fig4.33}
\end{figure}
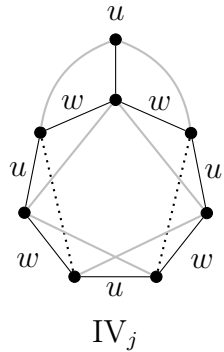
\vspace{-1mm}
\newpage
Subcase 2.6: There is just one chord shortening $C'$.\\
A chord shortening a cycle of length $7$ to a cycle of length $5$ in a sprout must be of the $uwu$-type. As seen before there are just two such chords possible and one endpoint of such a chord is the endpoint $v$ of one of the $w$-edges adjacent to $u'$. Take $C$ as a new base cycle and two pending edges, $u'$ and the $u$-edge incident with $v$, and again a sprout of type I is obtained. This closes Case 2 and completes the proof.
\begin{figure}[!h]
	\begin{center}
		\begin{tikzpicture}
		\node (anchor4) [] {};
		\node (label4) [position=270:1.4cm from anchor4] {IV$_k$};

		
		
		\node (v1) [draw,circle,fill,inner sep=1.5pt,position=90:1cm from anchor4] {};
		\node (v2) [draw,circle,fill,inner sep=1.5pt,position=141.42:1cm from anchor4] {};
		\node (v3) [draw,circle,fill,inner sep=1.5pt,position=192.84:1cm from anchor4] {};
		\node (v4) [draw,circle,fill,inner sep=1.5pt,position=244.26:1cm from anchor4] {};
		\node (v5) [draw,circle,fill,inner sep=1.5pt,position=295.68:1cm from anchor4] {};
		\node (v6) [draw,circle,fill,inner sep=1.5pt,position=347.1:1cm from anchor4] {};
		\node (v7) [draw,circle,fill,inner sep=1.5pt,position=38.52:1cm from anchor4] {};
		
		\path
		(v1) edge (v2)
		(v2) edge (v3)
		(v3) edge (v4)
		(v4) edge (v5)
		(v5) edge (v6)
		(v6) edge (v7)
		(v7) edge (v1)
		;
		
		\node (labelv1) [position=115.5:9mm from anchor4] {$w$};
		\node (labelv2) [position=166.9:9mm from anchor4] {$u$};
		\node (labelv3) [position=218.3:9mm from anchor4] {$w$};
		\node (labelv4) [position=269.7:9mm from anchor4] {$u$};
		\node (labelv5) [position=321.2:9mm from anchor4] {$w$};
		\node (labelv6) [position=12.6:9mm from anchor4] {$u$};
		\node (labelv7) [position=64:9mm from anchor4] {$w$};
		
		
		\node (a) [draw,circle,fill,inner sep=1.5pt,position=90:1.8cm from anchor4] {};
		\path
		(v1) edge (a)
		;
		\node (alabel) [position=90:2cm from anchor4] {$u$};
		
		\path[dotted,thick]
		(v2) edge (v5)
		;
		
		\path[color=lightgray,thick]
		(v1) edge (v3)
		(v4) edge (v6)
		(v4) edge (v7)
		(v3) edge (v5)
		;
		
		\path[color=lightgray,thick]
		(a) edge [bend right] (v2)
		edge [bend left] (v7)
		;
		\end{tikzpicture}
	\end{center}
	\caption{The sprouts of Subcase 2.6.}
	\label{fig4.34}
\end{figure}
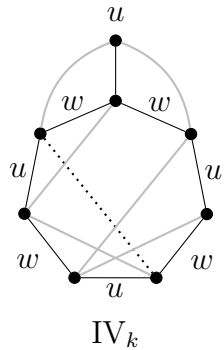
\vspace{-1mm}    
\end{proof}

So at last there is just one case left to be investigated: the base cycles of length $8$. As we did before, we simply enumerate all possibilities that can occur and show how they can be reduced to a subgraph which was previously investigated.

\begin{lemma}\label{lemma4.13}
	Let $S=\lb V,U\cup W\cup E\rb$ be a fertile sprout of size $4$ with a longest induced cycle $C$ of length $5$, $\E{C}\cap E\neq\emptyset$ and a cycle $C'$ with $\E{C'}\cap E=\emptyset$ as well as $\abs{C'}=8$, then $S$ contains a fertile sprout $S'$ of type I (see \autoref{fig4.19}) or III$_a$ (see \autoref{fig4.20}) or a fertile sunflower sprout of size $4$ (see \autoref{fig4.2}). 
\end{lemma}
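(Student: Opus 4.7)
The plan is to mirror the structure of Lemmas \ref{lemma4.11} and \ref{lemma4.12} but now for base cycle length $8$. First I would determine the admissible number $p$ of pending edges: since a base cycle of length $8$ contains $4-p$ of the four $u$-edges and thus $4+p$ of the $w$-edges, and since by condition $v)$ of the sprout definition no two pending edges can be incident with a common $w$-edge unless the $w$-edges involved are all distinct, one checks that only $p\in\{0,1,2\}$ can occur ($p=3$ would force $7$ $w$-edges on $C'$, which is incompatible with the pending-edge structure, and $p=4$ gives a sunflower sprout by Lemma \ref{lemma4.7} whose longest induced cycle has length $4$, not $5$). I will handle the three cases $p=0$, $p=1$, $p=2$ separately.

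Next, in each case, the shortening from $|C'|=8$ to $|C|=5$ costs three units of length, and by the classification of legal chords from Figure \ref{fig4.14} each chord contributes $1$ (types $uw$ or $wuw$) or $2$ (type $uwu$) to the shortening. So the chord patterns to consider are: (a) three length-$1$ chords, (b) one length-$2$ chord plus one length-$1$ chord. I would then subdivide further by the mutual adjacency of the necessary chords and by their adjacency to the pending edge(s), exactly as in Subcases 2.1--2.6 of Lemma \ref{lemma4.12}. In total there are roughly a dozen geometric arrangements on $C'$; this is the main obstacle, since the case enumeration grows noticeably compared to $|C'|=7$.

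For each configuration I would apply the same three reduction tactics that already handled Lemmas \ref{lemma4.11} and \ref{lemma4.12}. First, if the endpoint $v$ of two adjacent $u$-edges (which only occurs for $p=0$) has no legal chord incident with it, then the vertices of $C$ together with $v$ induce a III$_a$. Second, whenever one can identify a $6$-cycle consisting solely of $u$- and $w$-edges together with two non-adjacent pending candidates, one selects that cycle as a new base and obtains a type I sprout via the reduction $C'\mapsto C$, promoting a skipped $u$-edge to a pending edge (this handles the bulk of the subcases, in particular all configurations in which a necessary $uw$-chord and an optional $uw$-chord form an alternating square on one $w$-edge). Third, when two optional $uwu$- or $uw$-chords together with two $w$-edges of $C$ close an induced $C_4$ all of whose four vertices touch distinct $u$-edges, a fertile sunflower sprout of size $4$ is exhibited (as in Subcases 2.1 and 2.2 of Lemma \ref{lemma4.12}).

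The hard part will not be any individual subcase but bookkeeping: one has to verify that every combination of optional edges (those drawn in light gray in the analogues of Figures \ref{fig4.25}--\ref{fig4.34}) either leaves an unused skipped vertex that witnesses a III$_a$, or triggers one of the two other reductions. To keep the argument manageable I would introduce a uniform reduction lemma stating: \emph{if on $C'$ there exists a vertex $v\notin V(C)$ incident with no chord of $E$, then $V(C)\cup\{v\}$ induces a type III$_a$.} With this packaged, each subcase collapses to checking a small incidence pattern among at most four optional chords, and the enumeration then proceeds routinely, closing every configuration and hence the lemma.
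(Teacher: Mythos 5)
Your overall strategy (classify the chords that shorten $C'$ to $C$, then reduce each configuration to a type I sprout, a III$_a$, or a fertile sunflower sprout) is the same as the paper's, and your three reduction tactics are exactly the ones used there. However, there is a concrete error in your setup that the paper avoids and that is in fact the key to why the length-$8$ case is the \emph{easiest} of the three, not the hardest. You correctly compute that a base cycle of length $8$ carries $4-p$ of the $u$-edges and hence $4+p$ $w$-edges, but a sprout of size $n=4$ has $q\leq n=4$ $w$-edges by definition, so your own count forces $p=0$ immediately; your subsequent claim that $p\in\{0,1,2\}$ contradicts it (already $p=1$ would require $q=5>4$). With $p=0$ all four $u$-edges and all four $w$-edges lie on $C'$, and since two $w$-edges can only be adjacent when a pending edge sits between them, $C'$ must alternate strictly between $u$- and $w$-edges. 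This is the first sentence of the paper's proof, and it is what eliminates the $u$-$u$-type (and your ``$wuw$''-type) chords entirely: the only legal chords are $u$-$w$-type (shortening by $1$) and $uwu$-type (shortening by $2$), giving just the two patterns ``three $u$-$w$ chords'' and ``one $uwu$ plus one $u$-$w$ chord,'' each split by adjacency into two subcases --- four subcases in total, versus the eleven of the length-$7$ lemma.

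Because the cases $p=1$ and $p=2$ are vacuous, executing your plan would not prove anything false, but as written the proposal mischaracterizes the configuration space (``roughly a dozen geometric arrangements,'' subdivision ``by adjacency to the pending edge(s)'') and would have you analyzing sprouts that cannot exist while obscuring the forced alternating structure that makes the remaining analysis short. Your packaged reduction lemma (an unused skipped vertex $v$ witnesses a III$_a$) is a reasonable formalization of what the paper does ad hoc, but note it needs the additional check that the edge types around $v$ on the resulting $6$-cycle actually realize the III$_a$ pattern; in the alternating setting this does hold, but it is not automatic from ``$v$ is incident with no chord'' alone. Fix the pending-edge count, record the alternation of $C'$, and the rest of your outline goes through essentially as in the paper.
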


\begin{proof}
With $C'$ being of length $8$, there cannot be any pending edges, nor can adjacent pairs of $u$-edges exist, therefore the edges of $C'$ alternate between $u$- and $w$-edges. Such a cycle may contain $u$-$w$-type chords and $uwu$-type chords, so this time we will divide the proof in two cases, one in which $C$ does not contain any $uwu$-type edge and one in which it does.\\
If $C$ contains no such edge, it must contain exactly three $u$-$w$-type chords, otherwise there is exactly one $uwu$-type and one $u$-$w$-type chord.
\begin{enumerate}
	\item[] \begin{enumerate}
		\item [Case 1] The cycle $C$ does not contain a $uwu$-type chord of $C'$.
		\begin{enumerate}
			\item[] \begin{enumerate}
				
				\item [Subcase 1.1] The chords $c_1$, $c_2$ and $c_3$ form a path of length 2.
				\item [Subcase 1.2] The chord $c_1$ is neither adjacent to $c_2$, nor to $c_3$.
			\end{enumerate}
		\end{enumerate}
		
		\item [Case 2] The cycle $C$ contains a $uwu$-type chord of $C'$.
		\begin{enumerate}
			\item[] \begin{enumerate}
				
				\item [Subcase 2.1] The $uwu$-type chord $c$ is adjacent to the $u$-$w$-type chord $c'$.
				\item [Subcase 2.2] The $uwu$-type chord $c$ and the $u$-$w$-type chord $c'$ are not adjacent.
			\end{enumerate}
		\end{enumerate}
	\end{enumerate}
\end{enumerate}
\vspace{-2mm}
Case 1: The cycle $C$ does not contain a $uwu$-type chord of $C'$.\\
As said before $C$ must consist of exactly three $u$-$w$-type chords and two edges of the base cycle $C$, those three necessary chords will be called $c_1$, $c_2$ and $c_3$.\\
Subcase 1.1: The chords $c_1$. $c_2$ and $c_3$ form a path of length 2.\\
With this $C$ consists of the three necessary chords and an $u$-edge together with one of its adjacent $w$-edges. This $w$-edge is adjacent to another $u$-edge, which is not contained in $C$, this will be the first pending edge. In addition this $u$-edge is, together with another $u$-edge, adjacent to  $c_3$. This second $u$-edge, which also is not part of $C$, will be chosen as the second pending edge and forms a type I sprout together with $C$ as a base cycle. 
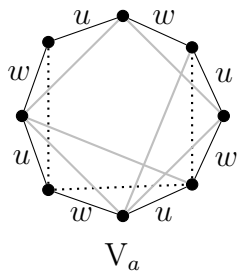
\begin{figure}[!h]
	\begin{center}
		\begin{tikzpicture}
		\node (anchor) [] {};
		\node (label) [position=270:1.4cm from anchor] {V$_a$};


		
		\node (v1) [draw,circle,fill,inner sep=1.5pt,position=90:1.1cm from anchor] {};
		\node (v2) [draw,circle,fill,inner sep=1.5pt,position=135:1.1cm from anchor] {};
		\node (v3) [draw,circle,fill,inner sep=1.5pt,position=180:1.1cm from anchor] {};
		\node (v4) [draw,circle,fill,inner sep=1.5pt,position=225:1.1cm from anchor] {};
		\node (v5) [draw,circle,fill,inner sep=1.5pt,position=270:1.1cm from anchor] {};
		\node (v6) [draw,circle,fill,inner sep=1.5pt,position=315.1:1cm from anchor] {};
		\node (v7) [draw,circle,fill,inner sep=1.5pt,position=0:1.1cm from anchor] {};
		\node (v8) [draw,circle,fill,inner sep=1.5pt,position=45:1cm from anchor] {};
		
		\path
		(v1) edge (v2)
		(v2) edge (v3)
		(v3) edge (v4)
		(v4) edge (v5)
		(v5) edge (v6)
		(v6) edge (v7)
		(v7) edge (v8)
		(v8) edge (v1)
		;
		
		\node (labelv1) [position=112.5:10mm from anchor] {$u$};
		\node (labelv2) [position=157.5:10mm from anchor] {$w$};
		\node (labelv3) [position=202.5:10mm from anchor] {$u$};
		\node (labelv4) [position=247.5:10mm from anchor] {$w$};
		\node (labelv5) [position=292.5:10mm from anchor] {$u$};
		\node (labelv6) [position=337.2:10mm from anchor] {$w$};
		\node (labelv7) [position=22.5:10mm from anchor] {$u$};
		\node (labelv8) [position=67.5:10mm from anchor] {$w$};
		

		\path[dotted,thick]
		(v2) edge (v4)
		(v4) edge (v6)
		(v6) edge (v8)
		;
		
		\path[color=lightgray,thick]
		(v3) edge (v1)
			 edge (v5)
			 edge (v6)
		(v5) edge (v7)
			 edge (v8)
		(v7) edge (v1)
		;

		\end{tikzpicture}
	\end{center}
	\caption{The sprouts of Subcase 1.1.}
	\label{fig4.35}
\end{figure}
\vspace{-1mm}
Subcase 1.2: The chord $c_1$ is neither to $c_2$, nor to $c_3$ adjacent.\\
Again $C$ consists of one $u$- and one $w$-edge, $u'$ and $w'$, but this time both are adjacent to the sole chord $c_1$. Let $v$ be the vertex on $C'$ skipped by $c_1$, then $v$ may be adjacent to all four endpoints of $u'$ and $w'$, two of those edges are part of $C'$ and two are optional ones. If neither of the two edges exist, $v$ together with $C$ forms a III$_a$. So suppose just one of the two exists. This gives us another induced cycle of length $5$, given by the pair of adjacent necessary chords $c_2$ and $c_3$, the optional chord, which is either adjacent to $c_2$ or $c_3$ and a pair of adjacent original edges of $C'$. The resulting graph resembles Subcase 1.1 and so a type I sprout exists.\\
So now consider the case where both of these chords exist. The two chord together with $c_2$ and $c_3$ form an induced cycle of length $4$ which does not contain any $u$-edges, but each of the four vertices is adjacent to exactly one $u$-edge. This gives us a fertile sunflower sprout.
\begin{figure}[!h]
	\begin{center}
		\begin{tikzpicture}
		\node (anchor) [] {};
		\node (label) [position=270:1.4cm from anchor] {V$_a$};

		
		
		\node (v1) [draw,circle,fill,inner sep=1.5pt,position=90:1.1cm from anchor] {};
		\node (v2) [draw,circle,fill,inner sep=1.5pt,position=135:1.1cm from anchor] {};
		\node (v3) [draw,circle,fill,inner sep=1.5pt,position=180:1.1cm from anchor] {};
		\node (v4) [draw,circle,fill,inner sep=1.5pt,position=225:1.1cm from anchor] {};
		\node (v5) [draw,circle,fill,inner sep=1.5pt,position=270:1.1cm from anchor] {};
		\node (v6) [draw,circle,fill,inner sep=1.5pt,position=315.1:1cm from anchor] {};
		\node (v7) [draw,circle,fill,inner sep=1.5pt,position=0:1.1cm from anchor] {};
		\node (v8) [draw,circle,fill,inner sep=1.5pt,position=45:1cm from anchor] {};
		
		\path
		(v1) edge (v2)
		(v2) edge (v3)
		(v3) edge (v4)
		(v4) edge (v5)
		(v5) edge (v6)
		(v6) edge (v7)
		(v7) edge (v8)
		(v8) edge (v1)
		;
		
		\node (labelv1) [position=112.5:10mm from anchor] {$u$};
		\node (labelv2) [position=157.5:10mm from anchor] {$w$};
		\node (labelv3) [position=202.5:10mm from anchor] {$u$};
		\node (labelv4) [position=247.5:10mm from anchor] {$w$};
		\node (labelv5) [position=292.5:10mm from anchor] {$u$};
		\node (labelv6) [position=337.2:10mm from anchor] {$w$};
		\node (labelv7) [position=22.5:10mm from anchor] {$u$};
		\node (labelv8) [position=67.5:10mm from anchor] {$w$};
		

		\path[dotted,thick]
		(v2) edge (v4)
		(v5) edge (v7)
		(v7) edge (v1)
		;
		
		\path[color=lightgray,thick]
		(v3) edge (v1)
		edge (v5)
		edge (v6)
		(v6) edge (v4)
		edge (v8)
		(v5) edge (v8)
		;

		\end{tikzpicture}
	\end{center}
	\caption{The sprouts of Subcase 1.2.}
	\label{fig4.36}
\end{figure}
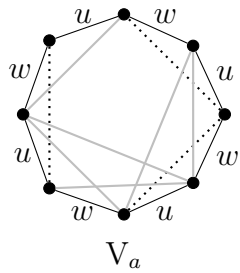
\vspace{-1mm}
~\\
Case 2: The cycle $C$ does contain a $uwu$-type chord of $C'$.\\
Now $C$ consists of three edges of the base cycle $C'$ and just of two chords. Let $c$ be the $uwu$-type chord and $c'$ the $u$-$w$-type one.\\
~\\
Subcase 2.1: The $uwu$-type chord $c$ is adjacent to the $u$-$w$-type chord $c'$.\\
There are two $u$-edges adjacent to $c'$ and all optional edges they can be adjacent to either join two endpoints of those $u$-edges, or join one of its endpoints to a neighbor of the other endpoint of the $u$-edge. So $C$ together with the $u$-edges adjacent to $c'$ forms a type I sprout.
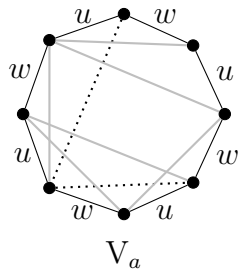
\begin{figure}[!h]
	\begin{center}
		\begin{tikzpicture}
		\node (anchor) [] {};
		\node (label) [position=270:1.4cm from anchor] {V$_a$};

		
		
		\node (v1) [draw,circle,fill,inner sep=1.5pt,position=90:1.1cm from anchor] {};
		\node (v2) [draw,circle,fill,inner sep=1.5pt,position=135:1.1cm from anchor] {};
		\node (v3) [draw,circle,fill,inner sep=1.5pt,position=180:1.1cm from anchor] {};
		\node (v4) [draw,circle,fill,inner sep=1.5pt,position=225:1.1cm from anchor] {};
		\node (v5) [draw,circle,fill,inner sep=1.5pt,position=270:1.1cm from anchor] {};
		\node (v6) [draw,circle,fill,inner sep=1.5pt,position=315.1:1cm from anchor] {};
		\node (v7) [draw,circle,fill,inner sep=1.5pt,position=0:1.1cm from anchor] {};
		\node (v8) [draw,circle,fill,inner sep=1.5pt,position=45:1cm from anchor] {};
		
		\path
		(v1) edge (v2)
		(v2) edge (v3)
		(v3) edge (v4)
		(v4) edge (v5)
		(v5) edge (v6)
		(v6) edge (v7)
		(v7) edge (v8)
		(v8) edge (v1)
		;
		
		\node (labelv1) [position=112.5:10mm from anchor] {$u$};
		\node (labelv2) [position=157.5:10mm from anchor] {$w$};
		\node (labelv3) [position=202.5:10mm from anchor] {$u$};
		\node (labelv4) [position=247.5:10mm from anchor] {$w$};
		\node (labelv5) [position=292.5:10mm from anchor] {$u$};
		\node (labelv6) [position=337.2:10mm from anchor] {$w$};
		\node (labelv7) [position=22.5:10mm from anchor] {$u$};
		\node (labelv8) [position=67.5:10mm from anchor] {$w$};
		

		\path[dotted,thick]
		(v1) edge (v4)
		(v4) edge (v6)
		;
		
		\path[color=lightgray,thick]
		(v2) edge (v4)
		edge (v7)
		edge (v8)
		(v3) edge (v5)
		edge (v6)
		(v5) edge (v7)
		;

		\end{tikzpicture}
	\end{center}
	\caption{The sprouts of Subcase 2.1.}
	\label{fig4.37}
\end{figure}
\vspace{-1mm}
\newpage
Subcase 2.2: The $uwu$-type chord $c$ and the $u$-$w$-type chord $c'$ are not adjacent.\\
This subcase is a little more complicated than the previous ones, so we will need some more work before going into more subcases. Now, with $C$ consisting of a $uwu$-type and a $u$-$w$-type chord, it contains exactly one $u$-edge and two $w$-edges, furthermore with $c$ and $c'$ not being adjacent the three original edges do not form a path. So there is one pair of adjacent original edges, a $u$- and a $w$-one, let $v_{uw}$ be their common vertex, and there is one single $w$-edge adjacent to both $c$, with the endpoint $v$ and $c'$, with the endpoint $v'$. In addition both $c$ and $c'$ skip exactly one $w$-edge, let $x_1$ and $x_2$ be the endpoints of this edge skipped by $c$. Each of these vertices may be joined to exactly one endpoint of the other $w$-edge, skipped by $c'$, let $y_1$ be the one which can be joined to $x_1$ and $y_2$ the other one.\\
Now suppose the edge $y_2v_{uw}$ does not exist, then $C$  forms a type I sprout together with the two $u$-edges incident with $v$ and $v'$.\\
Therefore let $y_2v_{uw}$ exist in the following.\\
In addition suppose that $x_1y_1$ exists, but $x_2y_2$ does not. If $v'x_2$ does not exist too we obtain an induced cycle of length $5$ consisting of three consecutive edges of $C'$, and a $uwu$-type chord as well as a $u$-$w$-type chord and both chords are adjacent. With this we resemble the graph in Subcase 2.1 and are done. So let $v'x_2$ exist. Now $v'x_2$, $c'$ and $x_1y_1$ together with one $w$-edge form an induced cycle of length $4$ and by choosing the $u$-edges as pending we obtain a fertile sunflower sprout.\\
So suppose it is the other way around, $x_2y_2$ exists, but $x_1y_1$ does not. If furthermore the edge $v_{uw}x_1$ does not exist we again have another induced cycle of length $5$ resembling Subcase 2.1. So let $v_{uw}x_1$ exist, then another induced cycle of length $4$ without any $u$-edges is obtained, resulting in the existence of a fertile sunflower sprout.\\
At last let $x_1y_1$ together with $x_2y_2$ exist. Those two chords together with the two $w$-edges adjacent to both chords form another induced cycle of length $4$, another fertile sunflower sprout is obtained and the case is closed.
\begin{figure}[!h]
	\begin{center}
		\begin{tikzpicture}
		\node (anchor) [] {};
		\node (label) [position=270:1.4cm from anchor] {V$_a$};

		
		
		\node (v1) [draw,circle,fill,inner sep=1.5pt,position=90:1.1cm from anchor] {};
		\node (v2) [draw,circle,fill,inner sep=1.5pt,position=135:1.1cm from anchor] {};
		\node (v3) [draw,circle,fill,inner sep=1.5pt,position=180:1.1cm from anchor] {};
		\node (v4) [draw,circle,fill,inner sep=1.5pt,position=225:1.1cm from anchor] {};
		\node (v5) [draw,circle,fill,inner sep=1.5pt,position=270:1.1cm from anchor] {};
		\node (v6) [draw,circle,fill,inner sep=1.5pt,position=315.1:1cm from anchor] {};
		\node (v7) [draw,circle,fill,inner sep=1.5pt,position=0:1.1cm from anchor] {};
		\node (v8) [draw,circle,fill,inner sep=1.5pt,position=45:1cm from anchor] {};
		
		\path
		(v1) edge (v2)
		(v2) edge (v3)
		(v3) edge (v4)
		(v4) edge (v5)
		(v5) edge (v6)
		(v6) edge (v7)
		(v7) edge (v8)
		(v8) edge (v1)
		;
		
		\node (labelv1) [position=112.5:10mm from anchor] {$u$};
		\node (labelv2) [position=157.5:10mm from anchor] {$w$};
		\node (labelv3) [position=202.5:10mm from anchor] {$u$};
		\node (labelv4) [position=247.5:10mm from anchor] {$w$};
		\node (labelv5) [position=292.5:10mm from anchor] {$u$};
		\node (labelv6) [position=337.2:10mm from anchor] {$w$};
		\node (labelv7) [position=22.5:10mm from anchor] {$u$};
		\node (labelv8) [position=67.5:10mm from anchor] {$w$};
		

		\path[dotted,thick]
		(v1) edge (v4)
		(v5) edge (v7)
		;
		
		\path[color=lightgray,thick]
		(v2) edge (v4)
		edge (v7)
		edge (v8)
		(v3) edge (v5)
		edge (v6)
		(v4) edge (v6)
		(v6) edge (v8)
		;

		\end{tikzpicture}
	\end{center}
	\caption{The sprouts of Subcase 2.2.}
	\label{fig4.38}
\end{figure}
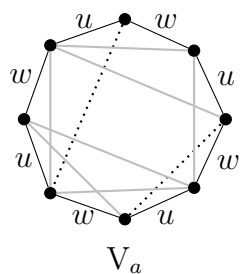
\vspace{-1em}    
\end{proof}
It is very easy to see that a fertile sunflower of size $5$ always contains a fertile sprout of type I as seen in \autoref{fig4.19}. So in summary all graphs that are responsible for the existence of an induced cycle in the squared line graph of a graph without induced cycles of length at least $6$ can be reduced to the $33$ graphs in \autoref{fig4.39}, namely a sprout of type I, or the III$_a$.
\begin{figure}[!h]
	\begin{center}
		\begin{tikzpicture}
		\node (anchor1) [] {};
		\node (i) [position=270:2.25cm from anchor1] {I};
		
		\node (anchor4) [position=0:5.2cm from anchor1] {};
		\node (iv) [position=270:2.25cm from anchor4] {III$_a$};
		
		
		
		\node (v1) [draw,circle,fill,inner sep=1.5pt,position=0:0.9cm from anchor1] {};
		\node (v2) [draw,circle,fill,inner sep=1.5pt,position=72:0.9cm from anchor1] {};
		\node (v3) [draw,circle,fill,inner sep=1.5pt,position=144:0.9cm from anchor1] {};
		\node (v4) [draw,circle,fill,inner sep=1.5pt,position=216:0.9cm from anchor1] {};
		\node (v5) [draw,circle,fill,inner sep=1.5pt,position=288:0.9cm from anchor1] {};
		
		\path
		(v1) edge (v2)
		(v2) edge (v3)
		(v3) edge (v4)
		(v4) edge (v5)
		(v5) edge (v1)
		;
		
		\node (cl1) [position=36:0.7cm from anchor1] {$u_1$};
		\node (cl2) [position=108:2mm from anchor1] {$w_1$};
		\node (cl3) [position=180:0.05cm from anchor1] {$w_2$};
		\node (cl4) [position=252:2mm from anchor1] {$w_3$};
		\node (cl5) [position=324:0.7cm from anchor1] {$u_4$};
		
		
		\node (a1) [draw,circle,fill,inner sep=1.5pt,position=144:1.8cm from anchor1] {};
		\node (a2) [draw,circle,fill,inner sep=1.5pt,position=216:1.8cm from anchor1] {};
		
		\path
		(v3) edge (a1)
		(v4) edge (a2)
		;
		
		\node (al1) [position=144:2cm from anchor1] {$u_2$};
		\node (al2) [position=216:2cm from anchor1] {$u_3$};
		
		
		\path[color=lightgray,thick]
		(a1) edge [bend left] (v2)
		edge (v4)
		(a2) edge (v3)
		edge [bend right] (v5)
		(a1) edge [bend right] (a2)
		;

		
		\node (u1) [draw,circle,fill,inner sep=1.5pt,position=90:1.1cm from anchor4] {};
		\node (u2) [draw,circle,fill,inner sep=1.5pt,position=150:1.1cm from anchor4] {};
		\node (u3) [draw,circle,fill,inner sep=1.5pt,position=210:1.1cm from anchor4] {};
		\node (u4) [draw,circle,fill,inner sep=1.5pt,position=270:1.1cm from anchor4] {};
		\node (u5) [draw,circle,fill,inner sep=1.5pt,position=330:1.1cm from anchor4] {};
		\node (u6) [draw,circle,fill,inner sep=1.5pt,position=30:1.1cm from anchor4] {};
		
		\node (l1) [position=120:1cm from anchor4] {$u$};
		\node (l2) [position=180.5:1cm from anchor4] {$w$};
		\node (l3) [position=240.5:1cm from anchor4] {$u$};
		\node (l4) [position=300.5:1cm from anchor4] {$u$};
		\node (l5) [position=0.5:1cm from anchor4] {$w$};
		\node (l6) [position=60.5:1cm from anchor4] {$u$};
		
		\path
		(u1) edge (u2)
		(u2) edge (u3)
		(u3) edge (u4)
		(u4) edge (u5)
		(u5) edge (u6)
		(u6) edge (u1)
		;
		
		\path[dotted,thick]
		(u2) edge (u6)
		;

		\end{tikzpicture}
	\end{center}
	\caption{The two types of forbidden sprouts.}
	\label{fig4.39}
\end{figure}
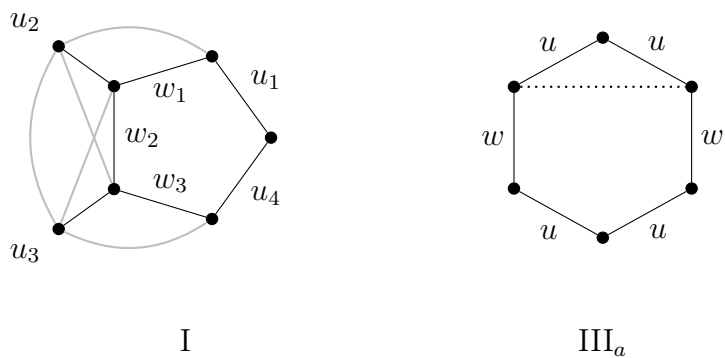
\vspace{-1mm}
~\\
Combining all these results we finally obtain a characterization of those graphs with a chordal line graph square in terms of forbidden subgraphs.
\begin{theorem}\label{thm4.15}
Let $G$ be a graph. Then $\lineg{G}^2$ is chordal if and only if $G$ does not contain a $C_n$ with $n\geq6$, a fertile sunflower sprout of size $4$, III$_a$ or an induced subgraph of type I (see \autoref{fig4.39}). 	
\end{theorem}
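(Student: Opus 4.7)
The proof falls into two implications, both routed through Theorem~\ref{thm4.16}, which ties induced cycles in $\lineg{G}^2$ to fertile sprouts in $G$.

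For the forward implication, I would verify that each listed obstruction embeds into $G$ as a fertile sprout of size at least $4$. An induced $C_n$ with $n\geq 6$ is itself a fertile sprout of some size $n_C\geq 4$ by Lemma~\ref{lemma4.8}. The fertile sunflower sprouts of size $4$, the III$_a$ sprout, and every sprout of type I are by construction fertile sprouts of size $4$. In each case Theorem~\ref{thm4.16} delivers an induced cycle in $\lineg{G}^2$ of length at least $4$, contradicting chordality.

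For the backward implication, assume $\lineg{G}^2$ is not chordal and use Theorem~\ref{thm4.16} to extract a fertile sprout $S$ of $G$ of some size $n\geq 4$. If $G$ already contains an induced cycle of length at least $6$, that cycle is the first listed obstruction and we are done. Otherwise Lemma~\ref{lemma4.6} forbids induced cycles of length at least $6$ in $\lineg{G}^2$ as well, so we may restrict attention to $n\in\{4,5\}$. When $n=5$, the longest induced cycle inside $S$ must have length exactly $5$: it cannot be shorter, since otherwise Lemma~\ref{lemma4.6} applied to $S$ would contradict the induced $C_5$ in $\lineg{G}^2$ furnished by Theorem~\ref{thm4.16}, and it cannot exceed $5$ by the standing assumption on $G$. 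Lemma~\ref{lemma4.7} then identifies $S$ as a fertile sunflower sprout of size $5$, which, as remarked immediately before the theorem, already contains a fertile sprout of type I.

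When $n=4$, let $C$ be a longest induced cycle inside $S$. If $|C|=4$, Lemma~\ref{lemma4.7} makes $S$ a fertile sunflower sprout of size $4$. If $|C|=5$ and $\E{C}\cap E(S)=\emptyset$, Lemma~\ref{lemma4.9} pins the structure to a sprout with two or three pending edges on consecutive cycle vertices and reduces it to a type I sprout. If $|C|=5$ and $\E{C}\cap E(S)\neq\emptyset$, Lemma~\ref{lemma4.10} delivers an auxiliary base cycle $C'$ of length $6$, $7$ or $8$ with $\E{C'}\cap E(S)=\emptyset$, and Lemmas~\ref{lemma4.11}, \ref{lemma4.12} and \ref{lemma4.13} respectively force $S$ to contain a fertile sprout of type I, an III$_a$, or a fertile sunflower sprout of size $4$. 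Collating these subcases yields one of the four listed obstructions inside $G$.

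The main obstacle is the exhaustive case analysis embedded in the $|C|=5$ situation. One has to enumerate all admissible patterns of pending edges and optional chords inside the base cycle $C'$ and, for each, produce an induced subgraph of $S$ isomorphic to type I, III$_a$, or a fertile sunflower sprout of size $4$, typically by relocating the base cycle and reassigning the pending edges; this is where the dense subcase analyses in Lemmas~\ref{lemma4.11}--\ref{lemma4.13} do the real work, and the task reduces to assembling them coherently.
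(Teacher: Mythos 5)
Your proposal is correct and assembles the argument in essentially the same way the paper intends: the paper states Theorem~\ref{thm4.15} as the summary of Theorem~\ref{thm4.16}, Lemmas~\ref{lemma4.6}--\ref{lemma4.13} and the remark that size-$5$ sunflower sprouts contain type~I sprouts, and your proof is precisely that assembly (including the same implicit reduction of the type~II configurations of Figure~\ref{fig4.19} to type~I that the paper performs without comment when passing to Figure~\ref{fig4.39}). No genuinely different route or gap relative to the paper's own reasoning.
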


\subsection{A Look Into Perfection}

The Strong Perfect Graph Theorem (\autoref{thm2.3}) states that a graph is perfect if and only of it neither contains an induced cycle of odd length at least $5$, a hole, nor the induced complement of such a cycle, an anti hole.\\
Hence to find the graphs whose squared line graph are perfect we need to find those subgraphs responsible for induced cycles and induced complements of cycles of odd length. Theorem \autoref{thm4.16} gives us a description of those graphs responsible for the existence of holes: the fertile sprouts of odd size.\\
As we did in Lemma \autoref{lemma4.8}, we will start by giving an upper bound on the length of an induced cycle that may be contained in a graph with a perfect line graph square. A simple lower bound is given by Theorem \autoref{thm4.15} since chordal graphs are perfect.
\begin{lemma}\label{lemma4.14}
For all $n\geq 4$ the cycle $C_j$ with $n+\aufr{\frac{n}{2}}\leq j\leq 2\,n$ is a fertile sprout.	
\end{lemma}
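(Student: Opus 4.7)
The plan is to exhibit the cycle $C_j$ itself as a fertile sprout of size $n$: we take the cycle $C$ required by condition (i) of the sprout definition to be $C_j$, put $E = \emptyset$, and distribute the $u$- and $w$-labels along the edges of $C_j$ so that every sprout axiom is verified by construction.

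First I would set $q := j - n$. The hypothesis $n + \lceil n/2 \rceil \leq j \leq 2n$ immediately gives $\lceil n/2 \rceil \leq q \leq n$, which is exactly the admissible range for $|W|$ in the sprout definition. Next, let
\begin{align*}
c := n - q \geq 0 \quad \text{and} \quad b := 2q - n \geq 0,
\end{align*}
where non-negativity of $b$ uses $q \geq n/2$. These numbers satisfy $b + c = q$ and $b + 2c = n$, so the total number of labels is $q + n = j$, matching the number of edges of $C_j$.

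Then I would walk around $C_j$ and label its edges block by block: insert, in any cyclic order, $b$ blocks of the pattern $(w,u)$ and $c$ blocks of the pattern $(w,u,u)$. This produces $q = b + c$ many $w$-edges $w_1, \ldots, w_q$ and $n = b + 2c$ many $u$-edges $u_1, \ldots, u_n$, both listed in their cyclic order of appearance on $C_j$. Verification of the axioms is now routine: since no two $w$-edges appear consecutively, we have $w_i \cap w_{i+1} = \emptyset$ for every $i$, so condition (iii) holds vacuously; condition (iv) is satisfied because every gap between consecutive $w$-edges contains either exactly one or exactly two $u$-edges forming the required subpath on $C$; all $u$-edges lie on $C$, so no pending $u$-edges exist and condition (v) holds vacuously; and two $u$-edges share a vertex only when they occur consecutively within the same $(w,u,u)$-block, giving condition (ii). Finally, fertility is immediate since $E = \emptyset$ leaves no edge at all that could join two non-consecutive $u$-edges.

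The main obstacle is purely bookkeeping: choosing $q$ as a function of $j$ and $n$ so that the parameters $b$ and $c$ governing the cyclic block pattern are simultaneously non-negative and yield the prescribed edge counts. Once the inequalities $\lceil n/2 \rceil \leq q \leq n$ are checked against the hypothesis, the construction forces itself and there is no further combinatorial difficulty.
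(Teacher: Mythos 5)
Your proof is correct and follows essentially the same route as the paper: both realize $C_j$ as a sprout with $E=\emptyset$ by tiling the cycle with blocks consisting of one $w$-edge followed by one or two $u$-edges. The only difference is presentational — the paper starts from the extreme length $j=n+\aufr{\frac{n}{2}}$ with $\abr{\frac{n}{2}}$ adjacent $u$-pairs and then splits pairs with extra $w$-edges to reach larger $j$, whereas you parametrize the block counts $b=2q-n$ and $c=n-q$ directly; the bookkeeping is equivalent.
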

\begin{proof}
By Lemma \autoref{lemma3.7} there are at most $\abr{\frac{n}{2}}$ pairs of adjacent $u$-edges in a fertile sprout of size $n$.\\
We start with $j=n+\aufr{\frac{n}{2}}$. If there are exactly $\abr{\frac{n}{2}}$ pairs of adjacent $u$-edges, then the remaining $\aufr{\frac{n}{2}}-\abr{\frac{n}{2}}\in\set{0,1}$ $u$-edges cannot be adjacent to any other $u$-edge. Hence we need exactly $\aufr{\frac{n}{2}}$ $w$-edges to complete the sprout. By alternating between $u$-edge pairs, $w$-edges and up to one single $u$-edge we obtain a cycle of length $n+\aufr{\frac{n}{2}}$. This cycle may contain chords, but all those additional edges are optional, so the $C_j$ is a fertile sprout of size $n$.\\
Each of those $\abr{\frac{n}{2}}$ pairs of $u$-edges may be split by an additional $w$-edge, hence with $k\leq\abr{\frac{n}{2}}$ such splits we can produce a cycle of length $n+\aufr{\frac{n}{2}}+k\leq2\,n$ which again is a fertile sprout of size $n$.
\end{proof}
It is easy to check that $n+\aufr{\frac{n}{2}}\leq  2\,\lb n-2\rb+1$ holds for $n\geq 7$ and for $n=5$ the cycles of Lemma \autoref{lemma4.14} are of length $j\in\set{8,9,10}$. With $7+\aufr{\frac{7}{2}}=11$ we obtain the following corollary.
\begin{corollary}\label{cor4.7}
A graph $G$ with an induced cycle of length $l\geq 8$ contains a fertile sprout of odd size.	
\end{corollary}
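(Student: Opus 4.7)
The plan is to reduce the corollary to a purely numerical statement. Lemma \autoref{lemma4.14} asserts that the cycle $C_l$ itself constitutes a fertile sprout of size $n$ whenever $n + \aufr{n/2} \leq l \leq 2n$. Rewriting this constraint as $l/2 \leq n \leq (2l-1)/3$, the corollary follows as soon as I exhibit, for every integer $l \geq 8$, an \emph{odd} integer $n \geq 5$ satisfying these inequalities.

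First I would specialise to odd $n$: writing $\aufr{n/2} = (n+1)/2$, the range of cycle lengths for which $C_l$ becomes a fertile sprout of size $n$ is the integer interval
\begin{align*}
I_n \define \condset{l \in \Z}{\tfrac{3n+1}{2} \leq l \leq 2n}.
\end{align*}
Direct computation gives $I_5 = \set{8,9,10}$, $I_7 = \set{11,12,13,14}$, $I_9 = \set{14,15,16,17,18}$, and so on. In particular the small cases $l \in \set{8,9,10}$ are immediately handled by $n = 5$, while the cases $l \in \set{11,\dots,14}$ are handled by $n = 7$.

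Next I would invoke the inequality $n + \aufr{n/2} \leq 2(n-2)+1$, which holds for all $n \geq 7$ and is noted in the excerpt directly preceding the corollary. For odd $n \geq 7$ this says that the left endpoint $(3n+1)/2$ of $I_n$ is at most one above the right endpoint $2(n-2)$ of $I_{n-2}$, so the consecutive odd intervals $I_{n-2}$ and $I_n$ leave no integer gap between them. A straightforward induction starting at $n = 5$ therefore yields
\begin{align*}
\bigcup_{\substack{n \geq 5 \\ n \text{ odd}}} I_n \;=\; \condset{l \in \Z}{l \geq 8}.
\end{align*}
Consequently, given any induced $C_l \subseteq G$ with $l \geq 8$, I can pick some odd $n \geq 5$ with $l \in I_n$ and apply Lemma \autoref{lemma4.14} to conclude that this very cycle realises a fertile sprout of odd size $n$ inside $G$.

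There is really no obstacle in this argument; the content beyond Lemma \autoref{lemma4.14} and the pre-remark is just bookkeeping. The only point deserving a moment of care is verifying that the induction is indeed seeded correctly at $n = 5$ so that the chain of intervals reaches down to $l = 8$ without a gap, which is confirmed by the explicit computations of $I_5$ and $I_7$ above.
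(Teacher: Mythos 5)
Your proposal is correct and is essentially the paper's own argument: the paper likewise combines Lemma \ref{lemma4.14} with the explicit interval $\set{8,9,10}$ for $n=5$, the value $7+\aufr{\tfrac{7}{2}}=11$, and the inequality $n+\aufr{\tfrac{n}{2}}\leq 2\lb n-2\rb+1$ for $n\geq 7$ to chain the odd-size intervals together. You have merely written out the interval bookkeeping more explicitly than the paper does.
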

While at this point we are not able to give a complete description of the structures producing anti holes in the squared line graph, some basic observations can be made to reduce the length of allowed induced cycles even further.
\begin{lemma}\label{lemma4.15}
If a graph $G$ contains a $C_7$, $\lineg{G}^2$ contains an anti hole of the same size.	
\end{lemma}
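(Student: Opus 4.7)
The plan is to exhibit the required anti-hole directly by using the seven edges of the induced $C_7$ as vertices of $\lineg{G}^2$. Label the cycle $v_0 v_1 \cdots v_6 v_0$ (indices mod $7$) and set $e_i := v_i v_{i+1}$. I claim that the subgraph of $\lineg{G}^2$ induced on $\set{e_0,\dots,e_6}$ is isomorphic to $\overline{C_7}$; this is established by computing $\distlg{G}{e_i}{e_j}$ in each of the three relevant distance classes of index pairs.

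For $|i-j| \equiv \pm 1 \pmod 7$ the edges $e_i$ and $e_j$ share a vertex, so $\distlg{G}{e_i}{e_j}=1$. For $|i-j| \equiv \pm 2 \pmod 7$, say $j=i+2$, the edge $e_{i+1}$ is adjacent in $\lineg{G}$ to both $e_i$ and $e_j$, giving $\distlg{G}{e_i}{e_j} \leq 2$. In either range, $e_i$ and $e_j$ are adjacent in $\lineg{G}^2$, independent of any further structure of $G$.

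The main obstacle is the remaining case $|i-j| \equiv \pm 3 \pmod 7$, where one must prove $\distlg{G}{e_i}{e_j}\geq 3$. The seven vertices of the induced cycle are distinct, so $e_i$ and $e_j$ do not share a vertex and the distance is at least $2$. A length-two path $e_i - f - e_j$ in $\lineg{G}$ would force the single edge $f\in\E{G}$ to have one endpoint in $\set{v_i,v_{i+1}}$ and the other in $\set{v_{i+3},v_{i+4}}$; each of the four such candidate edges is a chord of the cycle on $\set{v_0,\dots,v_6}$. Here the paper's convention that \emph{$G$ contains $H$} always means as an induced subgraph is essential: the induced $C_7$ admits no chord, so no such $f$ exists in $G$ and $\distlg{G}{e_i}{e_j}\geq 3$. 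Hence $e_i$ and $e_j$ are non-adjacent in $\lineg{G}^2$ for these pairs.

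Putting the three cases together, the induced subgraph $H$ of $\lineg{G}^2$ on $\set{e_0,\dots,e_6}$ has non-edges exactly between pairs with $|i-j| \equiv \pm 3 \pmod 7$. The permutation $\sigma\colon i \mapsto 2i \pmod 7$ is a bijection on $\Z/7\Z$ carrying $\set{\pm 3}$ to $\set{\pm 6} = \set{\pm 1}$, so under the relabelling $e_i \mapsto e_{\sigma(i)}$ the non-edges of $H$ become precisely the edges of $C_7$, proving $H\cong\overline{C_7}$ and giving the desired anti-hole. The only delicate ingredient in the whole proof is the distance-three verification, which rests squarely on the inducedness of the $C_7$ in $G$.
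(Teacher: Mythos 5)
Your proof is correct and follows essentially the same route as the paper's: both take the seven edges of the induced $C_7$ as vertices of $\lineg{G}^2$, verify adjacency for index differences $\pm1,\pm2$ and use the chordlessness of the induced cycle to rule out a length-two path for difference $\pm3$, and conclude that the induced subgraph is $\overline{C_7}$. Your explicit relabelling $i\mapsto 2i$ is a slightly more careful finish than the paper's degree count (which only shows the complement is $2$-regular), but the argument is the same in substance.
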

\begin{proof}
With $G$ containing a $C_7$ Lemma \autoref{lemma4.2} yields the existence of an induced cycle $C$ of the same length in $\lineg{G}$. For each pair of nonadjacent vertices $ u,w\in\E{\lineg{G}}$ of $C$, which does not have a common neighbor on $C$, it holds $\distg{\lineg{G}}{u}{w}\geq 3$, since a path of length $2$ between two such vertices would correspond to a chord in the $C_7$, which does not exist.
\begin{figure}[H]
	\begin{center}
		\begin{tikzpicture}
		
		\node (anchor2) [] {};
		\node (label2) [position=270:1.4cm from anchor2] {$\lineg{G}$};
		
		\node (anchor1) [position=0:5cm from anchor2] {};
		\node (label1) [position=270:1.4cm from anchor1] {$\lineg{G}^2$};

		
		
		\node (v1) [draw,circle,fill,inner sep=1.5pt,position=90:1cm from anchor2] {};
		\node (v2) [draw,circle,fill,inner sep=1.5pt,position=141.42:1cm from anchor2] {};
		\node (v3) [draw,circle,fill,inner sep=1.5pt,position=192.84:1cm from anchor2] {};
		\node (v4) [draw,circle,fill,inner sep=1.5pt,position=244.26:1cm from anchor2] {};
		\node (v5) [draw,circle,fill,inner sep=1.5pt,position=295.68:1cm from anchor2] {};
		\node (v6) [draw,circle,fill,inner sep=1.5pt,position=347.1:1cm from anchor2] {};
		\node (v7) [draw,circle,fill,inner sep=1.5pt,position=38.52:1cm from anchor2] {};
		
		\path
		(v1) edge (v3)
		(v3) edge (v5)
		(v5) edge (v7)
		(v7) edge (v2)
		(v2) edge (v4)
		(v4) edge (v6)
		(v6) edge (v1)
		;
		
		\node (labelv1) [position=90:11mm from anchor2] {$1$};
		\node (labelv2) [position=141.42:11mm from anchor2] {$5$};
		\node (labelv3) [position=192.84:11mm from anchor2] {$2$};
		\node (labelv4) [position=244.26:11mm from anchor2] {$6$};
		\node (labelv5) [position=295.68:11mm from anchor2] {$3$};
		\node (labelv6) [position=347.1:11mm from anchor2] {$7$};
		\node (labelv7) [position=38.52:11mm from anchor2] {$4$};
		

		
		
		\node (v1) [draw,circle,fill,inner sep=1.5pt,position=90:1cm from anchor1] {};
		\node (v2) [draw,circle,fill,inner sep=1.5pt,position=141.42:1cm from anchor1] {};
		\node (v3) [draw,circle,fill,inner sep=1.5pt,position=192.84:1cm from anchor1] {};
		\node (v4) [draw,circle,fill,inner sep=1.5pt,position=244.26:1cm from anchor1] {};
		\node (v5) [draw,circle,fill,inner sep=1.5pt,position=295.68:1cm from anchor1] {};
		\node (v6) [draw,circle,fill,inner sep=1.5pt,position=347.1:1cm from anchor1] {};
		\node (v7) [draw,circle,fill,inner sep=1.5pt,position=38.52:1cm from anchor1] {};
		
		\path[color=gray]
		(v1) edge (v3)
		(v3) edge (v5)
		(v5) edge (v7)
		(v7) edge (v2)
		(v2) edge (v4)
		(v4) edge (v6)
		(v6) edge (v1)
		;
		
		\node (labelv1) [position=90:11mm from anchor1] {$1$};
		\node (labelv2) [position=141.42:11mm from anchor1] {$5$};
		\node (labelv3) [position=192.84:11mm from anchor1] {$2$};
		\node (labelv4) [position=244.26:11mm from anchor1] {$6$};
		\node (labelv5) [position=295.68:11mm from anchor1] {$3$};
		\node (labelv6) [position=347.1:11mm from anchor1] {$7$};
		\node (labelv7) [position=38.52:11mm from anchor1] {$4$};
	

		\path
		(v1) edge (v4)
		(v4) edge (v7)
		(v7) edge (v3)
		(v3) edge (v6)
		(v6) edge (v2)
		(v2) edge (v5)
		(v5) edge (v1)
		;
		\end{tikzpicture}
	\end{center}
	\vspace{-4mm}
	\caption{The cycle $C$ in the line graph and the squared line graph.}
	\label{fig4.40}
\end{figure}
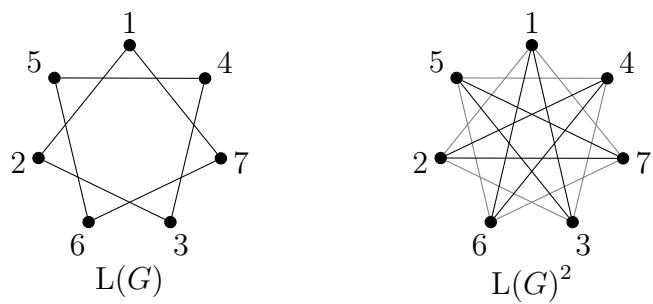
So in $\lineg{G}^2$ each vertex of $C$ is adjacent to exactly $4$ other vertices of $C$ which leaves exactly two vertices of $C$ to whom it is not adjacent. Hence $C^2$ is an anti hole.	
\end{proof}
\begin{lemma}\label{lemma4.16}
Let $G$ be a graph, if $\lineg{G}^2$ is perfect, $G$ does not contain a $C_n$ with $n\geq 7$.
\end{lemma}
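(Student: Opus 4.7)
The plan is to argue by contraposition: assuming that $G$ contains an induced $C_n$ with $n\geq 7$, I will produce either an odd hole or an odd antihole of length at least $5$ inside $\lineg{G}^2$, which by the Strong Perfect Graph Theorem (Theorem \autoref{thm2.3}) rules out perfection. The argument splits naturally at $n=7$ versus $n\geq 8$, because for $n=7$ the ambient cycle itself already gives rise to an antihole via the computation carried out in Lemma \autoref{lemma4.15}, whereas for longer cycles one has to route through the sprout machinery in order to obtain a genuine induced odd cycle in $\lineg{G}^2$.

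For the first case, $n=7$, Lemma \autoref{lemma4.15} directly supplies an induced $\overline{C_7}$ inside $\lineg{G}^2$. Since $7$ is odd and $7\geq 5$, this is precisely an odd antihole of the kind forbidden by the Strong Perfect Graph Theorem, so $\lineg{G}^2$ cannot be perfect.

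For the second case, $n\geq 8$, I would first invoke Corollary \autoref{cor4.7}, which tells us that any graph with an induced cycle of length at least $8$ contains a fertile sprout of odd size, say $k$. Because sprouts are defined only for sizes $n\geq 4$ in this paper, the oddness of $k$ forces $k\geq 5$. Then Theorem \autoref{thm4.16} translates this fertile sprout of size $k$ into an induced cycle of length exactly $k$ inside $\lineg{G}^2$, that is, an odd hole of length at least $5$. Again the Strong Perfect Graph Theorem yields that $\lineg{G}^2$ is not perfect, and taking the contrapositive of both cases completes the argument.

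The proof is essentially a bookkeeping assembly of Lemma \autoref{lemma4.15}, Corollary \autoref{cor4.7}, and Theorem \autoref{thm4.16}, so there is no real technical obstacle. The only delicate point is to verify that the odd sprout produced by Corollary \autoref{cor4.7} really has size at least $5$ rather than $3$; this is immediate from the sprout definition, but worth stating explicitly so that the resulting hole in $\lineg{G}^2$ is of the length required by the Strong Perfect Graph Theorem.
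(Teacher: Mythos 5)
Your proof is correct and is exactly the argument the paper intends (the paper states Lemma \autoref{lemma4.16} without an explicit proof, but it follows immediately from the surrounding results in just the way you assemble them): Lemma \autoref{lemma4.15} handles $n=7$ via the odd antihole, and Corollary \autoref{cor4.7} together with Theorem \autoref{thm4.16} handles $n\geq 8$ via an odd hole of length at least $5$, with the Strong Perfect Graph Theorem closing both cases. Your explicit remark that the odd sprout has size at least $5$ is a worthwhile detail that the paper leaves implicit.
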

Corollary \autoref{cor4.6} states that a graph without induced cycles of length $c\geq 7$ does not contain a fertile sprout of size $k\neq c$, hence such a graph's squared line graph may just contain holes of size $5$. In order to prevent such a hole we need to forbid all fertile sprouts of size $5$ in $G$. Again we can reduce the number of forbidden graphs by having a deeper look into the structure of such sprouts.\\
Since we can exclude the existence of induced cycles of length $c\geq 7$, we just have to take the sunflower sprouts of size $5$ and flowers of size $5$ and a longest induced cycle, or base cycle, of length $6$ into account. \autoref{fig4.41} shows the three possible types of size $5$ sunflower sprouts. As usual optional edges are depicted in gray and may exist in any combination.
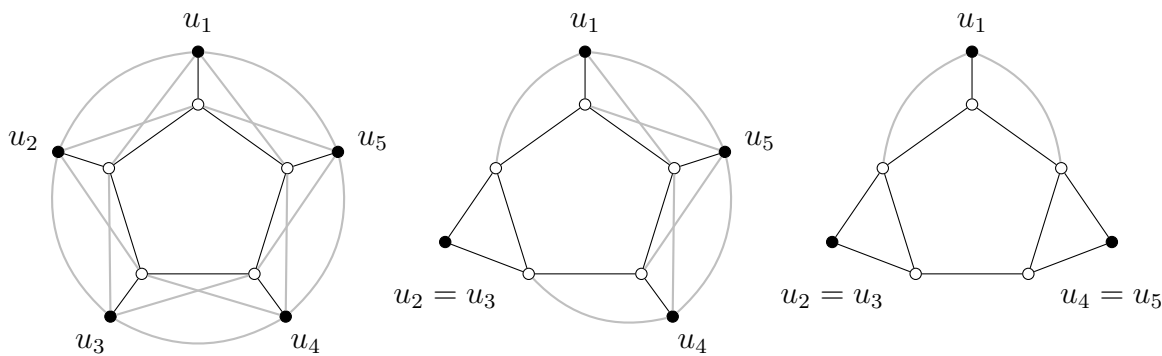
\begin{figure}[!h]
\begin{center}
\begin{tikzpicture}
\node (anchor1) [] {};

\node (anchor2) [position=0:48mm from anchor1] {};

\node (anchor3) [position=0:48mm from anchor2] {};


\node (v1) [position=90:10mm from anchor1,draw,circle,inner sep=1.5pt] {};
\node (v2) [position=162:10mm from anchor1,draw,circle,inner sep=1.5pt] {};
\node (v3) [position=234:10mm from anchor1,draw,circle,inner sep=1.5pt] {};
\node (v4) [position=306:10mm from anchor1,draw,circle,inner sep=1.5pt] {};
\node (v5) [position=18:10mm from anchor1,draw,circle,inner sep=1.5pt] {};

\path
(v1) edge (v2)
(v2) edge (v3)
(v3) edge (v4)
(v4) edge (v5)
(v5) edge (v1)
;


\node (u1) [position=90:17mm from anchor1,draw,circle,inner sep=1.5pt,fill] {};
\node (u2) [position=162:17mm from anchor1,draw,circle,inner sep=1.5pt,fill] {};
\node (u3) [position=234:17mm from anchor1,draw,circle,inner sep=1.5pt,fill] {};
\node (u4) [position=306:17mm from anchor1,draw,circle,inner sep=1.5pt,fill] {};
\node (u5) [position=18:17mm from anchor1,draw,circle,inner sep=1.5pt,fill] {};

\node (ulabel1) [position=90:19mm from anchor1] {$u_1$};
\node (ulabel2) [position=162:19mm from anchor1] {$u_2$};
\node (ulabel3) [position=234:19mm from anchor1] {$u_3$};
\node (ulabel4) [position=306:19mm from anchor1] {$u_4$};
\node (ulabel5) [position=18:19mm from anchor1] {$u_5$};

\path
(v1) edge (u1)
(v2) edge (u2)
(v3) edge (u3)
(v4) edge (u4)
(v5) edge (u5)
;

\path[color=lightgray,thick]
(u1) edge [bend right] (u2)
	 edge (v2)
	 edge (v5)
(u2) edge [bend right] (u3)
	 edge (v3)
	 edge (v1)
(u3) edge [bend right] (u4)
	 edge (v4)
	 edge (v2)
(u4) edge [bend right] (u5)
	 edge (v5)
	 edge (v3)
(u5) edge [bend right] (u1)
	 edge (v1)
	 edge (v4)
;


\node (v1) [position=90:10mm from anchor2,draw,circle,inner sep=1.5pt] {};
\node (v2) [position=162:10mm from anchor2,draw,circle,inner sep=1.5pt] {};
\node (v3) [position=234:10mm from anchor2,draw,circle,inner sep=1.5pt] {};
\node (v4) [position=306:10mm from anchor2,draw,circle,inner sep=1.5pt] {};
\node (v5) [position=18:10mm from anchor2,draw,circle,inner sep=1.5pt] {};

\path
(v1) edge (v2)
(v2) edge (v3)
(v3) edge (v4)
(v4) edge (v5)
(v5) edge (v1)
;


\node (u1) [position=90:17mm from anchor2,draw,circle,inner sep=1.5pt,fill] {};
\node (u2) [position=198:17mm from anchor2,draw,circle,inner sep=1.5pt,fill] {};
\node (u4) [position=306:17mm from anchor2,draw,circle,inner sep=1.5pt,fill] {};
\node (u5) [position=18:17mm from anchor2,draw,circle,inner sep=1.5pt,fill] {};

\node (ulabel1) [position=90:19mm from anchor2] {$u_1$};
\node (ulabel2) [position=270:4mm from u2] {$u_2=u_3$};
\node (ulabel4) [position=306:19mm from anchor2] {$u_4$};
\node (ulabel5) [position=18:19mm from anchor2] {$u_5$};

\path
(v1) edge (u1)
(v2) edge (u2)
(v3) edge (u2)
(v4) edge (u4)
(v5) edge (u5)
;

\path[color=lightgray,thick]
(u1) edge [bend right] (v2)
	 edge (v5)
(u4) edge [bend right] (u5)
	 edge (v5)
	 edge [bend left] (v3)
(u5) edge [bend right] (u1)
	 edge (v1)
	 edge (v4)
;


\node (v1) [position=90:10mm from anchor3,draw,circle,inner sep=1.5pt] {};
\node (v2) [position=162:10mm from anchor3,draw,circle,inner sep=1.5pt] {};
\node (v3) [position=234:10mm from anchor3,draw,circle,inner sep=1.5pt] {};
\node (v4) [position=306:10mm from anchor3,draw,circle,inner sep=1.5pt] {};
\node (v5) [position=18:10mm from anchor3,draw,circle,inner sep=1.5pt] {};

\path
(v1) edge (v2)
(v2) edge (v3)
(v3) edge (v4)
(v4) edge (v5)
(v5) edge (v1)
;


\node (u1) [position=90:17mm from anchor3,draw,circle,inner sep=1.5pt,fill] {};
\node (u2) [position=198:17mm from anchor3,draw,circle,inner sep=1.5pt,fill] {};
\node (u4) [position=342:17mm from anchor3,draw,circle,inner sep=1.5pt,fill] {};

\node (ulabel1) [position=90:19mm from anchor3] {$u_1$};
\node (ulabel2) [position=270:4mm from u2] {$u_2=u_3$};
\node (ulabel4) [position=270:4mm from u4] {$u_4=u_5$};

\path
(v1) edge (u1)
(v2) edge (u2)
(v3) edge (u2)
(v4) edge (u4)
(v5) edge (u4)
;

\path[color=lightgray,thick]
(u1) edge [bend right] (v2)
	 edge [bend left] (v5)
;

\end{tikzpicture}
\end{center}
\vspace{-4mm}
\caption{The sunflower sprouts of size $5$.}
\label{fig4.41}
\end{figure}
\vspace{-1mm}

We will proceed by discussing the case in which the longest induced cycle $C$ of length $6$ is the base cycle, which therefore is of length $6$ too.

\begin{lemma}\label{lemma4.17}
Let $S=\lb V,U\cup W\cup E\rb$ be a fertile sprout of size $5$ with a longest induced cycle $C$ of length $6$ and $\E{C}\cap E=\emptyset$, then $S$ has either two or three pending edges. In addition those edges are incident with consecutive vertices of $C$.	
\end{lemma}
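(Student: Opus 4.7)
The plan is to mirror the case analysis of Lemma~\autoref{lemma4.9}, adapted to the parameters $n = 5$ and $|C| = 6$. Let $p$ denote the number of pending edges of $S$ and $k = 5 - p$ the number of non-pending $u$-edges. By condition $v)$ of the sprout definition every non-pending $u$-edge lies on $C$, and since $\E{C} \supseteq W$ and $\E{C} \cap E = \emptyset$ by hypothesis, the cycle $C$ consists solely of $w$-edges and non-pending $u$-edges; hence $q + k = |C| = 6$ and so $q = 1 + p$, where $q = |W|$.

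Next I would partition $C$ into the $q$ arcs cut off by the successive $w$-edges $w_1, \ldots, w_q$. By conditions $iii)$ and $iv)$, each arc is either empty, in which case the shared vertex of the adjacent $w$-pair carries a single pending $u$, or it contains exactly one or two non-pending $u$-edges. Thus the $p$ empty arcs account for all pending edges, while the remaining $q - p = 1$ arcs together hold the $k = 5 - p$ non-pending $u$-edges. Since a single arc can carry at most two $u$-edges, we obtain $k \le 2$, yielding a lower bound on $p$; on the other side, $q \le n = 5$ forces $p \le 4$, and the extremal case $p = n$ is excluded either directly, since then $|C| = q \le 5$, or by Lemma~\autoref{lemma4.7}, which identifies such a sprout with a sunflower sprout whose longest induced cycle has length $n = 5 \ne 6$. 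Combining these restrictions pins down the two admissible values of $p$ claimed in the statement.

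For the second assertion, because $q - p = 1$ in every admissible case the $p$ empty arcs form a single consecutive block in the cyclic sequence $w_1, \ldots, w_q$. Each pending edge attaches at the shared vertex of a pair $(w_i, w_{i+1})$ inside this block, and two such consecutive pairs are separated on $C$ only by a single $w$-edge, so the attachment vertices appear consecutively along $C$. The main obstacle is really only to carry out the arc decomposition precisely and to confirm that the excluded extremes truly fail; the inequalities involved are otherwise routine.
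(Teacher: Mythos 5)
Your counting scheme is sound and is essentially the paper's own argument in a tidier package: the paper rules out $p\in\set{0,1,2,5}$ and the non-consecutive configurations for the remaining values by tallying, case by case, how many $w$-edges and non-pending $u$-edges would have to sit on the base cycle, which is exactly what your identity $q=p+1$ together with the bound of at most two non-pending $u$-edges per arc (condition $iv)$ of the sprout definition) accomplishes in one stroke. The arc decomposition also gives a cleaner reason for the consecutiveness claim than the paper's enumeration.

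The genuine problem is your closing sentence. Your own inequalities read $k=5-p\leq 2$ and $q=p+1\leq n=5$, i.e.\ $3\leq p\leq 4$: the admissible values are \emph{three and four} pending edges, not the ``two or three'' asserted in the statement, so the claim that your restrictions ``pin down the two admissible values of $p$ claimed in the statement'' is false as written. Note that your arithmetic actually agrees with the paper: its proof of this lemma explicitly eliminates $p=2$ (three $u$-edges plus at least four $w$-edges would force $\abs{C}\geq 7$) and retains only $p\in\set{3,4}$ with consecutive attachment, and the two sprouts $A$ and $B$ of \autoref{fig4.42} have three and four pending edges respectively; the wording ``two or three'' in the statement is evidently carried over from the size-$4$ analogue, Lemma \autoref{lemma4.9}, where the same computation with $n=4$ and $\abs{C}=5$ does give $p\in\set{2,3}$. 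So the mathematics is right, but you must either flag the discrepancy and prove the corrected statement or accept that you are asserting agreement with a claim your own computation refutes; as submitted, the proof of the statement \emph{as stated} fails because $p=2$ is impossible. A smaller point: excluding $p=5$ via Lemma \autoref{lemma4.7} is not quite legitimate, since that lemma concerns sprouts whose longest induced cycle has length equal to the size $n=5\neq 6$; your direct argument ($p=5$ forces $q=6>n$) suffices and is the one to keep.
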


\begin{proof}
With $S$ being a fertile sprout of size $5$ the number of pending edges is at least $0$ and at most $5$. If $S$ had $5$ pending edges, it would have exactly $5$ $w$-edges which would form the largest cycle not containing any edge of $E$, so the number of pending edges in $S$ is at most four.\\
Suppose $S$ has no pending edge, then $C$ consists of $5$ $u$-edges and by Lemma \autoref{lemma3.7} there are at least $\aufr{\frac{5}{2}}=3$ $w$-edges, which must be contained in $C$ as well since $C$ is the base cycle of $S$. This is not possible with $\abs{C}=6$.\\
With one pending edge there remain $4$ $u$-edges together with at least $3$ $w$-edges on $C$, which still is not possible, so suppose there are two pending edges. Now there are at least three $w$-edges necessary for the pending edges and since three $u$-edges may not form a path on $C$ we need at least one additional $w$-edge, thus $C$ must now consist of $3$ $u$-edges and at least $4$ $w$-edges, again exceeding the length of $C$.\\
If there are four pending edges and they are not adjacent to consecutive vertices of $C$, the base cycle must contain at least $6$ $w$-edges and an additional $u$-edge and if there are just three pending edges not being adjacent to consecutive vertices of $C$, $5$ $w$-edges and $2$ additional $u$-edges are required. 	
\end{proof}

This leaves us with just two possible types of fertile sprouts with a base cycle of length $6$ as pictured in the following figure. 

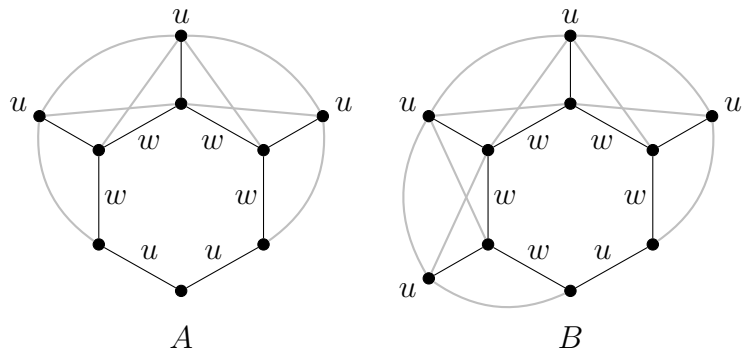
\begin{figure}[!h]
	\begin{center}
		\begin{tikzpicture}
		\node (anchor2) [inner sep=1.5pt] {};
		\node (label1) [position=270:1.5cm from anchor2] {$A$};		
		\node (anchor1) [inner sep=1.5pt,position=0:5cm from anchor2] {};
		\node (label1) [position=270:1.5cm from anchor1] {$B$};
		
		
		\node (u1) [draw,circle,fill,inner sep=1.5pt,position=90:1.1cm from anchor1] {};
		\node (u2) [draw,circle,fill,inner sep=1.5pt,position=150:1.1cm from anchor1] {};
		\node (u3) [draw,circle,fill,inner sep=1.5pt,position=210:1.1cm from anchor1] {};
		\node (u4) [draw,circle,fill,inner sep=1.5pt,position=270:1.1cm from anchor1] {};
		\node (u5) [draw,circle,fill,inner sep=1.5pt,position=330:1.1cm from anchor1] {};
		\node (u6) [draw,circle,fill,inner sep=1.5pt,position=30:1.1cm from anchor1] {};
		
		\node (v1) [draw,circle,fill,inner sep=1.5pt,position=90:2cm from anchor1] {};
		\node (v2) [draw,circle,fill,inner sep=1.5pt,position=150:2cm from anchor1] {};
		\node (v3) [draw,circle,fill,inner sep=1.5pt,position=210:2cm from anchor1] {};
		\node (v4) [draw,circle,fill,inner sep=1.5pt,position=30:2cm from anchor1] {};

		\node (l1) [position=120:5mm from anchor1] {$w$};
		\node (l2) [position=180.5:5mm from anchor1] {$w$};
		\node (l3) [position=240.5:5mm from anchor1] {$w$};
		\node (l4) [position=300.5:5mm from anchor1] {$u$};
		\node (l5) [position=0.5:5mm from anchor1] {$w$};
		\node (l6) [position=60.5:5mm from anchor1] {$w$};
		\node (l8) [position=90:2.1cm from anchor1] {$u$};
		\node (l9) [position=150:2.1cm from anchor1] {$u$};
		\node (l10) [position=210:2.1cm from anchor1] {$u$};
		\node (l11) [position=30:2.1cm from anchor1] {$u$};

		\path
		(u1) edge (u2)
		(u2) edge (u3)
		(u3) edge (u4)
		(u4) edge (u5)
		(u5) edge (u6)
		(u6) edge (u1)
		(v1) edge (u1)
		(v2) edge (u2)
		(v3) edge (u3)
		(v4) edge (u6)
		;

		\path[color=lightgray,thick]
		(v1) edge [bend right] (v2)
			 edge (u2)
			 edge (u6)
		(v2) edge [bend right] (v3)
			 edge (u3)
			 edge (u1)
		(v3) edge [bend right] (u4)
			 edge (u2)
		(v4) edge [bend right] (v1)
			 edge [bend left] (u5)
			 edge (u1)
		;
		
		
		\node (u1) [draw,circle,fill,inner sep=1.5pt,position=90:1.1cm from anchor2] {};
		\node (u2) [draw,circle,fill,inner sep=1.5pt,position=150:1.1cm from anchor2] {};
		\node (u3) [draw,circle,fill,inner sep=1.5pt,position=210:1.1cm from anchor2] {};
		\node (u4) [draw,circle,fill,inner sep=1.5pt,position=270:1.1cm from anchor2] {};
		\node (u5) [draw,circle,fill,inner sep=1.5pt,position=330:1.1cm from anchor2] {};
		\node (u6) [draw,circle,fill,inner sep=1.5pt,position=30:1.1cm from anchor2] {};
		
		\node (v1) [draw,circle,fill,inner sep=1.5pt,position=90:2cm from anchor2] {};
		\node (v2) [draw,circle,fill,inner sep=1.5pt,position=150:2cm from anchor2] {};
		\node (v4) [draw,circle,fill,inner sep=1.5pt,position=30:2cm from anchor2] {};

		\node (l1) [position=120:5mm from anchor2] {$w$};
		\node (l2) [position=180.5:5mm from anchor2] {$w$};
		\node (l3) [position=240.5:5mm from anchor2] {$u$};
		\node (l4) [position=300.5:5mm from anchor2] {$u$};
		\node (l5) [position=0.5:5mm from anchor2] {$w$};
		\node (l6) [position=60.5:5mm from anchor2] {$w$};
		\node (l8) [position=90:2.1cm from anchor2] {$u$};
		\node (l9) [position=150:2.1cm from anchor2] {$u$};
		\node (l11) [position=30:2.1cm from anchor2] {$u$};

		\path
		(u1) edge (u2)
		(u2) edge (u3)
		(u3) edge (u4)
		(u4) edge (u5)
		(u5) edge (u6)
		(u6) edge (u1)
		(v1) edge (u1)
		(v2) edge (u2)
		(v4) edge (u6)
		;

		\path[color=lightgray,thick]
		(v1) edge [bend right] (v2)
		edge (u2)
		edge (u6)
		(v2) edge [bend right] (u3)
		edge (u1)
		(v4) edge [bend right] (v1)
		edge [bend left] (u5)
		edge (u1)
		;

		\end{tikzpicture}
		\caption{The sprouts of size $5$ with an induced base cycle of length $5$.}
		\label{fig4.42}
	\end{center}
\end{figure}
\vspace{-1mm}

To this point it seems to be very similar to the reduction of forbidden subgraphs we did in order to give a better characterization of the graphs with a chordal line graph square. But in the chordal case, in addition to the type I sprouts which correspond to the type $A$ sprouts here, we got the III$_a$. Now with a smallest induced cycle that is a fertile sprout of size $5$ being of length $8$ there is not sprout possible which has an induced cycle of length $6$ and has just one chord, forming a triangle with two skipped edges of the base cycle.\\
This leads to the following lemma. The according proof is extremely similar to the proofs of the Lemmas \autoref{lemma4.11}, \autoref{lemma4.12} and \autoref{lemma4.13} and since this is the case we leave it to the reader.

\begin{lemma}\label{lemma4.18}
Let $S=\lb V,U\cup W\cup E\rb$ be a fertile sprout of size $5$ with a longest induced cycle $C$ of length $6$, $\E{C}\cap E\neq\emptyset$ and a cycle $C'$ with $\E{C'}\cap E=\emptyset$ with $\abs{C'}\in\set{7,8,9,10,11,12,13,14}$, then $S$ contains a fertile sprout $S'$ of type A (see \autoref{fig4.42}) or a fertile sunflower sprout of size $5$ (see \autoref{fig4.41}).	
\end{lemma}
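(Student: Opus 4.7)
The plan is to follow the same template used in the proofs of Lemmas \autoref{lemma4.11}, \autoref{lemma4.12} and \autoref{lemma4.13}, with the case analysis reorganized for the size--$5$ setting. Throughout, the two decisive observations are: (i) by the same analysis that produced \autoref{fig4.14}, every chord of $C'$ that does not contradict the sprout definition, force a longer induced cycle into $C$, or create an induced $K_{1,3}$ is of the $u$-$w$-type (shortening by one) or of the $uwu$-type (shortening by two); (ii) each such chord absorbs exactly one $w$-edge of $C'$, possibly together with an adjacent $u$-pair, into a new cycle.

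First I would trim the case list. A sprout of size $5$ has exactly $5$ $u$-edges and at most $q\leq 5$ $w$-edges, so every cycle in $S$ consisting only of $u$- and $w$-edges has length at most $10$. Hence the cases $\abs{C'}\in\set{11,12,13,14}$ are vacuous, and only $\abs{C'}\in\set{7,8,9,10}$ need to be handled. For each of these four lengths I would enumerate the admissible combinations of the number $p$ of pending edges, single $u$-edges on $C'$ and adjacent $u$-pairs on $C'$ using the counting identities $p+s_1+s_2=q$ and $p+s_1+2s_2=5$, together with $\aufr{5/2}\leq q\leq 5$ and the fact that $\abs{C'}=q+5-p$.

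For each admissible configuration I would then perform a chord subcase analysis, exactly as in Subcases 1.1--1.5 and 2.1--2.6 of \autoref{lemma4.12} and Subcases 1.1--2.2 of \autoref{lemma4.13}. In every subcase the reduction is one of the two following templates:

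\textbf{(a)} Taking $C$ as a new base cycle together with the suitably chosen non-$C$ $u$-edges as two or three pending edges yields a fertile sprout of type $A$ as in \autoref{fig4.42}; this is the direct analogue of the type I reduction used repeatedly before.

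\textbf{(b)} A combination of a necessary chord and an optional chord produces an induced $5$-cycle containing no $u$-edge but with every vertex incident to exactly one $u$-edge of $S$; since $S$ has size $5$, the five $u$-edges attached to that induced $5$-cycle form a fertile sunflower sprout of size $5$ as in \autoref{fig4.41}. This mirrors the sunflower extraction used in Subcases 2.1, 2.2 and 1.2 of \autoref{lemma4.12} and \autoref{lemma4.13}.

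The dichotomy is driven by whether the optional chords among the endpoints of a skipped $w$-edge are present. If at most one is present we fall into template \textbf{(a)}; if both are present they close an induced $5$-cycle that triggers template \textbf{(b)}. The hardest part will not be any single subcase but the sheer bookkeeping for $\abs{C'}\in\set{9,10}$, where several necessary chords of different types can coexist and their optional counterparts can interact. The arguments however remain local: each interaction reduces either to a smaller $C''$ that has already been handled by one of the earlier lengths (yielding \textbf{(a)} by induction on the chord count) or produces the induced $4$- or $5$-cycle needed for \textbf{(b)}. Because all these reductions strictly mirror the ones in \autoref{lemma4.11}--\autoref{lemma4.13}, the proof is routine once the case list has been set up, which is why the thesis defers the details to the reader.
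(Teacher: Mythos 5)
The paper gives no proof of this lemma at all --- it explicitly defers it to the reader as ``extremely similar'' to Lemmas \autoref{lemma4.11}--\autoref{lemma4.13} --- so your plan of re-running that template is exactly what was intended, and your observation that the cases $\abs{C'}\geq 11$ are vacuous (a cycle avoiding $E$ uses at most $\abs{U}+\abs{W}\leq 5+5=10$ edges) is correct and in fact sharpens the statement. The counting identities and the two reduction targets (new base cycle with pending edges $\Rightarrow$ type $A$; an induced cycle free of $u$-edges but meeting all five $u$-edges $\Rightarrow$ sunflower sprout of size $5$) are the right ones.

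Two concrete points would fail or need repair if the sketch were carried out as written. First, your ``decisive observation (i)'' omits the third legal chord type of \autoref{fig4.14}, the $u$-$u$ chord skipping a pair of adjacent $u$-edges; for size $5$ with $q=3$ the base cycle necessarily carries two adjacent $u$-pairs ($s_2=2$), so these chords are unavoidable and are precisely the ones treated in Subcases 1.2 and 1.4 of Lemma \autoref{lemma4.12}. Second, your dichotomy ``at most one optional chord present $\Rightarrow$ template (a)'' silently absorbs the subcases that in the size-$4$ analysis did \emph{not} yield a type I sprout but the third target III$_a$ (namely those with no optional chords at all). The lemma claims no third target is needed at size $5$, and the reason --- given in one line in the paper just before the lemma --- is that the III$_a$-analogue would be a base cycle of length $7$ carrying only two $w$-edges, violating $q\geq\aufr{\frac{5}{2}}=3$, so it cannot be a sprout of size $5$; your sketch needs this argument or an equivalent one. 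Relatedly, in template (a) you must verify that the relabelled structure on $C$ (which by hypothesis contains edges of $E$) satisfies the sprout axioms with exactly three pending edges, not ``two or three'': from $\abs{C}=6$ one gets $q=1+p$ and $q=p+s_1+s_2$, which forces $p\in\set{3,4}$, i.e.\ type $A$ or type $B$ and nothing with $p=2$.
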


This leads to our final result in this chapter, giving a first impression of how a graph with a perfect line graph square might look like.

\begin{theorem}\label{thm5.16}
Let $G$ be a graph and $\lineg{G}^2$ be perfect, then $G$ does not contain a $C_n$ with $n\geq7$, a fertile sunflower sprout of size $5$ or a fertile sprout of type $A$ (see \autoref{fig4.42}).	
\end{theorem}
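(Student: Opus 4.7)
The plan is to combine the Strong Perfect Graph Theorem (\autoref{thm2.3}) with the translation \autoref{thm4.16} between fertile sprouts in $G$ and induced cycles in $\lineg{G}^2$. If $\lineg{G}^2$ is perfect then, by the Strong Perfect Graph Theorem, $\lineg{G}^2$ contains neither an induced odd cycle of length $\geq 5$ nor the induced complement of such a cycle. I would then proceed by contraposition, handling each of the three forbidden substructures separately.

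The exclusion of an induced $C_n$ with $n\geq 7$ is, strictly speaking, already \autoref{lemma4.16}, but I would rehearse its argument briefly: an induced $C_7$ in $G$ produces an induced anti-hole of size $7$ in $\lineg{G}^2$ via \autoref{lemma4.15}, while an induced $C_n$ with $n\geq 8$ yields, by \autoref{cor4.7}, a fertile sprout of odd size at least $5$ in $G$, which through \autoref{thm4.16} translates into an induced odd cycle of length $\geq 5$, i.e.\ an odd hole, in $\lineg{G}^2$. Either conclusion contradicts the perfection of $\lineg{G}^2$, so no induced $C_n$ with $n\geq 7$ can live in $G$.

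For the two remaining structures I would exploit the fact that both a fertile sunflower sprout of size $5$ (see \autoref{fig4.41}) and a fertile sprout of type $A$ (see \autoref{fig4.42}) are, by their very construction, fertile sprouts of size $5$ in the sense of the sprout definition. Applying \autoref{thm4.16} to such a sprout produces an induced $C_5$ in $\lineg{G}^2$, and this $C_5$ is an odd hole of length $5$, once again contradicting the perfection of $\lineg{G}^2$. Thus neither a sunflower sprout of size $5$ nor a type-$A$ sprout may occur in $G$.

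The main obstacle, insofar as there is one, is really only bookkeeping: all the substantive combinatorial work has already been packaged as self-contained statements in the preceding sections, so the proof reduces to invoking the Strong Perfect Graph Theorem together with \autoref{thm4.16}, \autoref{cor4.7} and \autoref{lemma4.15} in the correct order. The one delicate point worth double-checking is that the odd sprout supplied by \autoref{cor4.7} genuinely has size $\geq 5$, so that the resulting hole in $\lineg{G}^2$ is of the odd length $\geq 5$ demanded by \autoref{thm2.3}; this is immediate from the construction underlying \autoref{lemma4.14}, which only produces sprouts of size $n\geq 4$.
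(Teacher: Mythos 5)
Your proposal is correct and follows essentially the same route the paper takes: the theorem is assembled from Lemma \autoref{lemma4.15} and Corollary \autoref{cor4.7} (handling $C_7$ via an anti-hole and $C_n$, $n\geq 8$, via an odd fertile sprout) together with Theorem \autoref{thm4.16} applied to the size-$5$ sprouts, exactly as you describe. Your check that the odd sprout from Corollary \autoref{cor4.7} has size at least $5$, and your separate treatment of $C_7$ (which only yields even holes as a sprout and therefore needs the anti-hole argument), are the right points to be careful about.
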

\vspace{-1em}
Unlike induced cycles, that can be described very well in terms of fertile sprouts and flowers, anti holes seem to have be much more complicated in their structure. As it was done with induced cycles, the first step in finding these graphs would be to find structures that contain anti holes when squared and then trying to translate these structures to the language of line graphs. A simple example of such a structure is given in the following figure.

\begin{figure}[H]
	\begin{center}
		\begin{tikzpicture}
		\node (center) [] {};
		
		\node (u1) [draw,circle,fill,inner sep=1.5pt,position=90:1.2cm from center] {};
		\node (u2) [draw,circle,fill,inner sep=1.5pt,position=162:1.2cm from center] {};
		\node (u3) [draw,circle,fill,inner sep=1.5pt,position=234:1.2cm from center] {};
		\node (u4) [draw,circle,fill,inner sep=1.5pt,position=306:1.2cm from center] {};
		\node (u5) [draw,circle,fill,inner sep=1.5pt,position=16:1.2cm from center] {};
		
		\node (v1) [draw,circle,inner sep=1.5pt,position=54:6mm from center] {};
		\node (v2) [draw,circle,inner sep=1.5pt,position=124:6mm from center] {};
		\node (v3) [draw,circle,inner sep=1.5pt,position=198:6mm from center] {};
		\node (v4) [draw,circle,inner sep=1.5pt,position=270:6mm from center] {};
		\node (v5) [draw,circle,inner sep=1.5pt,position=340:6mm from center] {};
		
		\path [bend left]
		(u1) edge (v1)
		(u2) edge (v2)
		(u3) edge (v3)
		(u4) edge (v4)
		(u5) edge (v5)
		;
		
		\path [bend left]
		(v1) edge (u3)
		(v2) edge (u4)
		(v3) edge (u5)
		(v4) edge (u1)
		(v5) edge (u2)
		;

		\end{tikzpicture}
	\end{center}
	\caption{A graph whose square contains an anti hole of size $5$.}
	\label{fig4.43}
\end{figure}
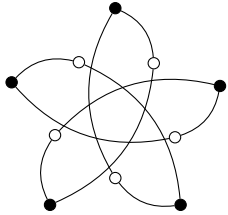 
\vspace{-1.2em}
For anti holes of greater sizes those graphs seem to rapidly increase in complexity. The next figure shows a graph that contains an anti hole of size $7$ which was constructed in the same way as the graph in \autoref{fig4.43} was.\\
For the translation into a line graph such a graph would not be possible, since this construction contains a $K_{1,3}$. In order to prevent such additional subgraphs at least more edges would be necessary, increasing the complexity of such graphs even more. This raises another question: What benefit does such knowledge about the square of a graph hold?
\vspace{-3mm}
\begin{figure}[!h]
	\begin{center}
		\begin{tikzpicture}
		\node (center) [] {};
		
		\node (u1) [draw,circle,fill,inner sep=1.5pt,position=90:22mm from center] {};
		\node (u2) [draw,circle,fill,inner sep=1.5pt,position=141.428:22mm from center] {};
		\node (u3) [draw,circle,fill,inner sep=1.5pt,position=192.856:22mm from center] {};
		\node (u4) [draw,circle,fill,inner sep=1.5pt,position=244.284:22mm from center] {};
		\node (u5) [draw,circle,fill,inner sep=1.5pt,position=295.712:22mm from center] {};
		\node (u6) [draw,circle,fill,inner sep=1.5pt,position=347.14:22mm from center] {};
		\node (u7) [draw,circle,fill,inner sep=1.5pt,position=38.568:22mm from center] {};
				
		\node (v12) [draw,circle,inner sep=1.5pt,position=102.85:26mm from center] {};
		\node (v21) [draw,circle,inner sep=1.5pt,position=128.55:26mm from center] {};
		\node (v22) [draw,circle,inner sep=1.5pt,position=154.35:26mm from center] {};
		\node (v31) [draw,circle,inner sep=1.5pt,position=179.95:26mm from center] {};
		\node (v32) [draw,circle,inner sep=1.5pt,position=205.65:26mm from center] {};
		\node (v41) [draw,circle,inner sep=1.5pt,position=231.35:26mm from center] {};
		\node (v42) [draw,circle,inner sep=1.5pt,position=257.05:26mm from center] {};
		\node (v51) [draw,circle,inner sep=1.5pt,position=282.75:26mm from center] {};
		\node (v52) [draw,circle,inner sep=1.5pt,position=308.45:26mm from center] {};
		\node (v61) [draw,circle,inner sep=1.5pt,position=334.15:26mm from center] {};
		\node (v62) [draw,circle,inner sep=1.5pt,position=359.85:26mm from center] {};
		\node (v71) [draw,circle,inner sep=1.5pt,position=25.55:26mm from center] {};
		\node (v72) [draw,circle,inner sep=1.5pt,position=51.25:26mm from center] {};
		\node (v11) [draw,circle,inner sep=1.5pt,position=76.95:26mm from center] {};
						
		\path 
		(u1) edge (v11)
			 edge (v12)
		(u2) edge (v21)
			 edge (v22)
		(u3) edge (v31)
			 edge (v32)
		(u4) edge (v41)
			 edge (v42)
		(u5) edge (v51)
			 edge (v52)
		(u6) edge (v61)
			 edge (v62)
		(u7) edge (v71)
			 edge (v72)
		;
		
		\path 
		(v12) edge (u3)
		(v22) edge (u4)
		(v32) edge (u5)
		(v42) edge (u6)
		(v52) edge (u7)
		(v62) edge (u1)
		(v72) edge (u2)
		;

		\path 
		(v11) edge (u5)
		(v21) edge (u6)
		(v31) edge (u7)
		(v41) edge (u1)
		(v51) edge (u2)
		(v61) edge (u3)
		(v71) edge (u4)
		;
				
		\end{tikzpicture}
	\end{center}
	\caption{A graph whose square contains an anti hole of size $7$.}
	\label{fig4.44}
\end{figure}
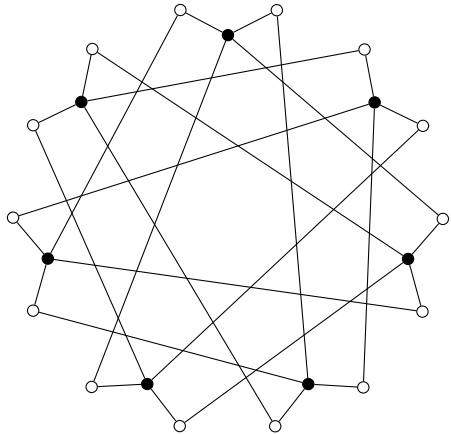 

\chapter{Computing Strong Colorings}


This chapter will focus on the application of the theory of chordal graph powers even beyond coloring problems. We will start by giving a short introduction to complexity theory and give some additional coloring problems that share similarities with strong colorings, in addition we will introduce some other well known combinatorial problems that are in general hard to solve.\\
Then we take it one step further and introduce parameterized complexity alongside two very important parameters, no only in algorithmic chromatic graph theory: Treewidth and Vertex Cover. Some results on the parameterized complexity of various coloring problems are presented together with some algorithmic approaches.\\
Since the main aspect of this thesis is the investigation of chordal graph powers we will give a short introduction in the algorithmic aspects of chordality, which will later be extended to chordal powers of graphs.\\
At last we will combine the concepts introduced in the first section of this chapter with our knowledge on chordal powers of graphs and introduce the power of chordality as a new and seemingly powerful parameter which allows us to easily compute upper bounds on both the treewidth and the vertex cover number of graphs. We will give polynomial algorithms for computing the power of chordality and some related parameters, which then will be used for parameterized algorithms for hard problems.

\section{Parameterized Complexity}
What is it that makes a problem {\em hard}? Why is a problem harder than another? These are the questions of the classical complexity theory.\\
This section will give a quick introduction to this broad field including an overview on those problems that are considered hard in terms of this theory and of greater relevance for the study of strong colorings.\\
We then will take one step further and introduce one possible way to approach hard problems: Parameterized Complexity.  In addition we will give an introduction to treewidth and vertex cover, two graph parameters that have proven to be extremely useful when it comes to parameterized algorithms.\\
With a few exceptions this first section will forgo the concept of chordality since this is the main topic of the second section of this chapter.\\
Since this first section is an introduction we will rely on the book "Optimization Theory" by Hubertus Jongen, Klaus Meer and Eberhard Triesch (see \cite{jongen2004optimization}) and the book on exact algorithms for hard graph problems by Frank Gurski, Irene Rothe, Jörg Rothe and Egon Wanke (see \cite{gurski2010exakte}). In addition we will use the survey "Parameterized Complexity" which was organized by Downey, Fellows, Niedermeier and Rossmanith (see \cite{downey2012parameterized}) and not to forget the work "On the Computational Complexity of Algorithms" by Hartmanis and Stearns (see \cite{hartmanis1965computational}) from the year 1965, which first gave important formal definitions concerning the running time and computational complexity of algorithms and was based on the celebrated work of Alan Turing (see \cite{turing1936computable}).

\subsection{Introduction to Complexity Theory}

An important measure of the quality of an algorithm is the so called running time it takes to compute the desired output. This time is measured in the number of elementary computational steps an algorithm takes, elementary steps are for example allocations, arithmetic operations and comparisons.\\
Besides the running time of an algorithm usually the space plays an important rule in complexity theory. However we restrict ourselves to running time consideration only.\\
We begin by giving a, somewhat informal, definition of algorithms and then proceed to the running time, or time complexity of an algorithm. Usually this is done by considering {\em Turing Machines}, but since this is just a short introduction, it will do without.

\begin{definition}[Decision Problem]
A {\em decision problem} $\Pi$ is, given an instance of the instance set $\mathcal{I}$, the question if the set of solutions is empty or not.
\end{definition}

\begin{definition}[Algorithm]
A ({\em deterministic}) {\em algorithm} is a finitely describable, step wise procedure	to solve a problem. The steps must be unambiguously described and executable. An algorithm {\em terminates} if an output is computed after a finite number of steps. 
\end{definition}

\begin{definition}[Time Complexity]
Let $\mathcal{I}_n$ be the set of all inputs of size $n\in\N$ and $\fkt{A_T}{I}$ the number of elementary steps of algorithm $A$ for an input $I\in \mathcal{I}_n$. The {\em worst case time complexity}	of algorithm $A$ for an input of size $n$ is $\fkt{T_A^{WC}}{n}=\sup\condset{\fkt{A_T}{I}}{I\in \mathcal{I}_n}$.\\
If $\fkt{T_A^{WC}}{n}\in\Ord{\fkt{f}{n}}$ for some function $f$, we say that $A$ has a {\em running time} of $\Ord{\fkt{f}{n}}$ and $f$ is a {\em complexity function}.
\end{definition}

This leads to a natural classification of algorithms in terms of their running time.

\begin{definition}[Deterministic Time Classes]\label{def5.1}
For some (computable) complexity function $t$ the class $\DTIME{t}$ of the in time $t$ deterministically solvable problems given through
\begin{align*}
\DTIME{t}=\condset{L}{\text{there is an algorithm}~A~\text{for}~L~\text{with}~\fkt{T_A^{WC}}{n}\leq\fkt{t}{n}~\text{for all}~n\in\N}.
\end{align*}  	
\end{definition}

With this we can give a formal definition of the class $\mathcal{P}$ of all problems that have a polynomial running time. Those problems are said to be solvable efficiently. As it is possible to give a class of polynomial solvable problems, it is possible to give a larger class of problems solvable in exponential time $\mathcal{EXP}$.

\begin{definition}[$\mathcal{P}$]
\begin{align*}
\mathcal{P}=\bigcup_{k\in\N_0}\DTIME{n^k}
\end{align*}	
\end{definition}

\begin{definition}[$\mathcal{EXP}$]
	\begin{align*}
	\mathcal{EXP}=\bigcup_{k\in\N_0}\DTIME{2^{n^k}}
	\end{align*}	
\end{definition}

\begin{theorem}[Hierarchy Theorem, see \cite{gurski2010exakte}]\label{thm5.1}
Let $t_1$ and $t_2$ be (computable) complexity functions with $t_2\notin\Ord{t_1}$ and $\fkt{t_1}{n}\geq n$ for all $n\in N$, then
\begin{align*}
\DTIME{t_2\fkt{\log}{t_2}}\subsetneq\DTIME{t_1}.
\end{align*}	
\end{theorem}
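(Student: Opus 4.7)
The plan is to prove this classical Hartmanis--Stearns style hierarchy result by diagonalization. First I would fix a reasonable Gödel numbering $M_1, M_2, \dots$ of deterministic (multi-tape) Turing machines together with a universal simulator $U$ which, on input $\langle i, x\rangle$, simulates $k$ steps of $M_i$ on $x$ in at most $c\,k\log k$ steps for some absolute constant $c$; this is the standard Hennie--Stearns universal simulation lemma, and it is precisely the reason that a factor $\log t_2$ appears in the statement.

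Next I would construct the diagonal language. Define a machine $D$ which, on input $w = \langle i, x\rangle$, first computes $\fkt{t_2}{|w|}$ using the (computable) time-constructible function $t_2$, then runs $U$ to simulate $M_i$ on $w$ for at most $\fkt{t_2}{|w|}\cdot\fkt{\log}{\fkt{t_2}{|w|}}$ steps, and finally outputs the opposite of $M_i$'s verdict (rejecting if $M_i$ accepted within the budget, accepting otherwise). Let $L\define\fkt{\mathcal{L}}{D}$. A direct running-time calculation, together with the growth assumption relating $t_1$ and $t_2\log t_2$, shows that $D$ runs within the $t_1$-bound on every input, hence $L\in\DTIME{t_1}$.

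The heart of the argument is then to show $L\notin\DTIME{t_2\log t_2}$. Suppose, toward a contradiction, that some machine $M_k$ decides $L$ in time $\fkt{t_2}{n}\fkt{\log}{\fkt{t_2}{n}}$. By a padding argument (concretely, adding unused states produces infinitely many indices $j$ with $M_j$ equivalent to $M_k$) one may choose $j$ so large that, on inputs $w=\langle j,x\rangle$ of sufficient length, the constant overhead of the universal simulator is absorbed and the simulation of $M_k$ on $w$ by $D$ completes strictly within the allotted $\fkt{t_2}{|w|}\fkt{\log}{\fkt{t_2}{|w|}}$ steps. Evaluating at such a $w$ yields a contradiction: $D$ accepts $w$ precisely when $M_j=M_k$ rejects $w$, yet by assumption $M_k$ correctly decides $L$ and so must agree with $D$ on $w$.

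The main obstacle will be the bookkeeping around the logarithmic overhead of $U$ and the timing: one needs that $t_2$ is time-constructible so that $D$ can shut off its simulation exactly at the prescribed step count, and one needs the asymptotic gap between $t_1$ and $t_2\log t_2$ guaranteed by $t_2\notin\Ord{t_1}$ to be strong enough to swallow both the Hennie--Stearns $\Ord{\log}$ factor of the simulator and the small overhead of evaluating $t_2$ itself. Once these technicalities are handled, the two-sided argument above yields the strict inclusion.
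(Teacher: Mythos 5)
The paper offers no proof of this theorem at all --- it is imported verbatim from \cite{gurski2010exakte} --- so your diagonalization has to stand on its own, and it does not. The decisive gap is the claim that $D$ ``runs within the $t_1$-bound on every input'': your machine spends roughly $\fkt{t_2}{|w|}\cdot\fkt{\log}{\fkt{t_2}{|w|}}$ steps on the simulation alone, and the hypothesis $t_2\notin\Ord{t_1}$ is a \emph{lower} bound on $t_2$ relative to $t_1$ (it says $t_2$ exceeds $c\,t_1$ infinitely often for every $c$), so it can never be invoked to bound $D$'s running time by $t_1$. No amount of bookkeeping repairs this, because the statement as printed is false: with $\fkt{t_1}{n}=n$ and $\fkt{t_2}{n}=n^2$ the hypotheses hold, yet $\DTIME{n^2\fkt{\log}{n^2}}$ properly contains $\DTIME{n}$ rather than the other way around. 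The inclusion in the theorem is reversed; what you should be proving is $\DTIME{t_1}\subsetneq\DTIME{t_2\fkt{\log}{t_2}}$, and a correct write-up has to notice and say this.

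Even after reorienting the statement, your simulation budget is off by exactly the logarithmic factor you try to wave away. To place the diagonal language in $\DTIME{t_2\fkt{\log}{t_2}}$, the machine $D$ must simulate only $\fkt{t_2}{|w|}$ steps \emph{of $M_i$'s own computation} (which costs $D$ about $t_2\fkt{\log}{t_2}$ steps via Hennie--Stearns), and the diagonalization then defeats machines running in time $\Ord{t_1}$, using $t_2\notin\Ord{t_1}$ to produce infinitely many padded inputs on which $t_2$ exceeds $c\,t_1$ and the simulation therefore completes. In your version, $D$ attempts to fully simulate a machine that itself runs for $t_2\fkt{\log}{t_2}$ steps inside a budget of $t_2\fkt{\log}{t_2}$ steps of $D$'s own time; the universal simulator's overhead is a multiplicative $\log$ factor, not ``the constant overhead'' that your padding argument absorbs, so the simulation never finishes and the contradiction never materializes. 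Your observation that ``computable'' must be strengthened to time-constructible for $D$ to clock itself is correct and worth keeping.
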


For graphs it is of high interest whether a certain parameter can be computed in polynomial time or not. We will now give a short overview of those problems which so far have appeared in this thesis and are known to be in $\mathcal{P}$.
\begin{itemize}
	\item \textsc{Maximum Degree}: Given a graph $G$ and a positive integer $k\in\N$, check 
	whether $\fkt{\Delta}{G}\leq k$.
	
	\item \textsc{Distance}: Given a graph $G$, a distance function $c\colon 
	\V{G}\rightarrow\R_{\geq 0}$ and two distinct vertices $v,w\in\V{G}$ together with a fixed 
	number $r\in\R_{\geq 0}$, check whether $\distg{G}{v}{w}\leq r$.
	
	\item \textsc{Shortest Path}: Given a graph $G$, a distance function $c\colon 
	\V{G}\rightarrow\R_{\geq 0}$ and two distinct vertices $v,w\in\V{G}$ together with a 
	$v$-$w$-path $P$ in $G$, check whether $P$ is a shortest $u$-$w$-path. 
	
	\item \textsc{Matching} and \textsc{Stable Set on Line Graphs}: Given a graph $G$ and a 
	positive integer $k\in\N$, check whether $G$ contains a matching of size $k$ (resp. check 
	whether $\lineg{G}$ contains a stable set of size $k$).
	
	\item \textsc{2-Coloring}: Given a graph $G$, check whether $\X{G}\leq 2$.
\end{itemize}

As we did with deterministic time classes it is also possible to define non-deterministic classes. Non-determinism is a theoretical tool which destroys the typical features of what we called an algorithm so far.\\
Suppose we have a guess about a correct solution to a given problem, we can at least {\em verify} in polynomial time whether this guess is in fact a solution. Of course, this only gives a satisfying answer to the initial problem if we were lucky and guessed correctly.\\
It is this guessing procedure which builds the core of non-deterministic algorithms. Now, at some point the former unambiguously described steps may differ. At any time in which a deterministic algorithms demands a certain step, a non-deterministic one may chose between different steps and the outcome of this decision is not necessary the same each time the non-deterministic algorithm reaches such a point.\\
The concept of running time remains the same, with the exception that a basic operation now too may be a non-deterministic decision.

\begin{definition}[Non-Deterministic Time Classes]\label{def5.2}
	For some (computable) complexity function $t$ the class $\NTIME{t}$ of the in time $t$ non-deterministically solvable problems given through
	\begin{align*}
	\NTIME{t}=\condset{L}{\text{there is a n.d.\ algorithm}~A~\text{for}~L~\text{with}~\fkt{T_A^{WC}}{n}\leq\fkt{t}{n}~\text{for all}~n\in\N}.
	\end{align*}  	
\end{definition}

\begin{definition}[$\mathcal{NP}$]
	\begin{align*}
	\mathcal{NP}=\bigcup_{k\in\N_0}\NTIME{n^k}
	\end{align*}	
\end{definition}

\begin{lemma}[see \cite{jongen2004optimization}]\label{lemma5.1}
The class $\mathcal{P}$ is a subclass of $\mathcal{NP}$. Any problem in $\mathcal{NP}$ is decidable in {\em exponential time}, that is by an algorithm with running time in $\Ord{2^{c\,p\lb n\rb}}$ for a fixed constant $c$ and a fixed polynomial $p$.	
\end{lemma}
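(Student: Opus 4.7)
The plan is to handle the two assertions separately, each via a direct simulation argument.

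For the inclusion $\mathcal{P}\subseteq\mathcal{NP}$, I would observe that any deterministic algorithm is trivially a non-deterministic one in which the set of available choices at every step happens to be a singleton. Under this reinterpretation the worst-case running time is unchanged, so $\DTIME{\fkt{t}{n}}\subseteq\NTIME{\fkt{t}{n}}$ for every complexity function $t$. Taking unions over $t\in\set{n^k\,|\,k\in\N_0}$ as in the definitions of $\mathcal{P}$ and $\mathcal{NP}$ immediately yields the first claim. This step is essentially bookkeeping and I do not expect any difficulty.

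For the exponential upper bound, I would fix a problem $L\in\mathcal{NP}$ together with a non-deterministic algorithm $A$ deciding $L$ with $\fkt{T_A^{WC}}{n}\leq\fkt{p}{n}$ for some polynomial $p$. The key idea is to simulate $A$ deterministically by systematically traversing the tree of all possible computations. The underlying model permits only a bounded number $b$ of non-deterministic branchings per elementary step, so along a computation of length at most $\fkt{p}{n}$ at most $b^{\fkt{p}{n}}$ distinct branches arise. A deterministic algorithm $A'$ can enumerate all of these branches (for example by depth-first search over the computation tree) and accept the input as soon as at least one of them accepts.

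Counting the cost, each individual branch is verified in $\fkt{p}{n}$ steps, giving a total running time of order $\fkt{p}{n}\cdot b^{\fkt{p}{n}}$. Since $b^{\fkt{p}{n}}=2^{\lb\log_2 b\rb\fkt{p}{n}}$ and the polynomial prefactor can be absorbed by a slightly larger constant in the exponent, the overall bound fits into $\Ord{2^{c\,\fkt{p}{n}}}$ for a suitable constant $c$, proving the second assertion. The main technical subtlety is the claim that the branching factor $b$ is bounded by a model-independent constant; I would justify this by appealing to the Turing machine formalisation underlying Definitions \ref{def5.1} and \ref{def5.2}, where the transition relation has a fixed finite image and therefore a uniformly bounded out-degree in the computation tree.
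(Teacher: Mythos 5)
The paper does not prove this lemma at all; it is stated as a cited textbook fact from \cite{jongen2004optimization}, so there is no in-paper argument to compare against. Your proof is the standard and correct one: the inclusion $\mathcal{P}\subseteq\mathcal{NP}$ by viewing deterministic computation as the degenerate case of non-deterministic computation, and the exponential bound by exhaustive deterministic traversal of the computation tree, with the branching factor bounded via the finite transition relation of the underlying machine model and the polynomial prefactor absorbed into the constant $c$ in the exponent. The only point worth flagging is that the paper's informal, machine-free description of non-determinism does not by itself guarantee a bounded branching factor, so your explicit appeal to the Turing-machine formalisation is not optional decoration but the step that actually makes the count $b^{\fkt{p}{n}}$ legitimate; having made that appeal, the argument is complete.
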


The crucial question in relation to the two polynomial time-type classes $\mathcal{P}$ and $\mathcal{NP}$ is whether $\mathcal{P}\subsetneq\mathcal{NP}$ or $\mathcal{P}=\mathcal{NP}$. While the answer to this question remains open to date, there are even problems for which it is not known whether they are members of $\mathcal{NP}$ or not.

There is a rather vast class of problems that are known to be contained in $\mathcal{NP}$, but no polynomial time algorithms are known. Those problems all share certain characteristics which makes it possible to sort of translate one problem into another. 

\begin{definition}[Polynomial time reducibility]
Let $\Pi_1$ and $\Pi_2$ be two decision problems with instance sets $\mathcal{I}_1$ and $\mathcal{I}_2$. $\Pi_1$ is {\em polynomial time reducable} to $\Pi_2$ if there exists a function $f\colon\mathcal{I}_1\rightarrow\mathcal{I}_2$ such that
\begin{enumerate}[i)]
	
	\item $f$ is computable in polynomial time and
	
	\item $\fkt{f}{I}$ is a solution (or {\em yes-instance}) of $\Pi_2$ if and only if $I$ is a yes-instance of $\Pi_1$.

\end{enumerate}
\end{definition}

\begin{lemma}[see \cite{jongen2004optimization}]
If $\Pi_1$ is polynomial time reducable to $\Pi_2$ and $\Pi_2\in\mathcal{NP}$, then $\Pi_1\in\mathcal{NP}$. The same holds for $\mathcal{P}$ instead of $\mathcal{NP}$.	
\end{lemma}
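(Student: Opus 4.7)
The plan is to construct, for each hypothesis on $\Pi_2$, an algorithm for $\Pi_1$ of the required type by composing the reduction $f$ with the known algorithm for $\Pi_2$. Let $f\colon\mathcal{I}_1\rightarrow\mathcal{I}_2$ be the polynomial-time reduction, and let $p$ be a polynomial bounding its running time. First I would record the key size estimate: since $f$ is computable in time $\fkt{p}{n}$ on inputs of size $n$, the output $\fkt{f}{I}$ has size at most $\fkt{p}{n}$ as well, because in $\fkt{p}{n}$ steps at most $\fkt{p}{n}$ output symbols can be written. This seemingly trivial observation is actually the main point that makes the argument work, since it allows bounds on the running time of the second algorithm (which are given in terms of the size of its input) to be translated back into bounds in terms of $n$.

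Next, for the $\mathcal{NP}$-statement, assume $A_2$ is a non-deterministic algorithm for $\Pi_2$ with worst-case running time bounded by a polynomial $q$. The algorithm $A_1$ for $\Pi_1$ proceeds deterministically by computing $\fkt{f}{I}$ and then calls $A_2$ on $\fkt{f}{I}$ and returns its answer. By the defining property of a reduction, $I$ is a yes-instance of $\Pi_1$ if and only if $\fkt{f}{I}$ is a yes-instance of $\Pi_2$, so $A_1$ is correct. The running time is bounded by $\fkt{p}{n}+\fkt{q}{\fkt{p}{n}}$, which is again a polynomial in $n$; hence $\Pi_1\in\mathcal{NP}$.

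For the $\mathcal{P}$-version the construction is literally identical: if $A_2$ is a deterministic polynomial-time algorithm for $\Pi_2$, then $A_1$ as above is a deterministic algorithm for $\Pi_1$, and the same composition-of-polynomials estimate yields membership in $\mathcal{P}$. No further case distinction is needed because determinism of $A_1$ requires both $f$ and $A_2$ to be deterministic, which is exactly what is assumed in this case.

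The argument contains no real obstacle; the only subtle point is the size bound on $\fkt{f}{I}$ mentioned above, which I would state explicitly so that the estimate $\fkt{q}{\fkt{p}{n}}$ is justified, and the closure of the class of polynomials under composition, which guarantees that the final bound is again polynomial.
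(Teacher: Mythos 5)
Your proof is correct. The paper itself gives no proof of this lemma — it is stated with only a citation to \cite{jongen2004optimization} — and your argument is the standard composition argument one would find there: the reduction is computed in time $p(n)$, the observation that the output therefore has size at most $p(n)$ converts the bound $q(|f(I)|)$ on the second algorithm into $q(p(n))$, and closure of polynomials under composition finishes both the $\mathcal{NP}$ and the $\mathcal{P}$ case. You correctly isolate the one genuinely load-bearing step (the size bound on $f(I)$), and the resulting algorithm for $\Pi_1$ is non-deterministic exactly when $A_2$ is, which is all that membership in $\mathcal{NP}$ requires.
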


\begin{definition}[$\mathcal{NP}$-completeness]
A decision problem $\Pi$ belonging to $\mathcal{NP}$ is called {\em $\npcomp$} if, for other decision problems $\Pi'\in\mathcal{NP}$, $\Pi'$ is polynomial time reducable to $\Pi$.\\
If $\Pi$ is not known to be a member of $\mathcal{NP}$ but still satisfies the second property above it is called {\em $\nphard$}. 
\end{definition}

\begin{theorem}[see \cite{jongen2004optimization}]\label{thm5.2}
	Let $\Pi_1\in\mathcal{NP}$ and $\Pi_2$ be $\npcomp$. If $\Pi_2$ is polynomial time reducable to $\Pi_1$, then $\Pi_1$ is $\npcomp$.
\end{theorem}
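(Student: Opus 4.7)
The plan is to verify the two defining properties of $\npcomp$ for $\Pi_1$. By assumption $\Pi_1 \in \mathcal{NP}$, so only the second property — that every problem $\Pi' \in \mathcal{NP}$ reduces to $\Pi_1$ in polynomial time — needs work. The natural strategy is to chain reductions: for an arbitrary $\Pi' \in \mathcal{NP}$, use the $\npcomp$ness of $\Pi_2$ to get a reduction $f_1 \colon \mathcal{I}' \to \mathcal{I}_2$, then compose with the assumed reduction $f_2 \colon \mathcal{I}_2 \to \mathcal{I}_1$, obtaining $f_2 \circ f_1 \colon \mathcal{I}' \to \mathcal{I}_1$.

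Two verifications have to be carried out for the composed map. First, $I$ is a yes-instance of $\Pi'$ iff $\fkt{f_1}{I}$ is a yes-instance of $\Pi_2$ iff $\fkt{f_2}{\fkt{f_1}{I}}$ is a yes-instance of $\Pi_1$; this is immediate from the definition of polynomial time reducibility applied twice. Second, and slightly more delicate, the composed map must be computable in polynomial time. Here I would argue that since $f_1$ runs in time bounded by some polynomial $p_1$, the size of $\fkt{f_1}{I}$ is at most $p_1(|I|)$ (an algorithm cannot produce more output than its running time permits); therefore $f_2$ on input $\fkt{f_1}{I}$ runs in time at most $p_2(p_1(|I|))$, and the total time is bounded by $p_1(|I|) + p_2(p_1(|I|))$, which is still polynomial in $|I|$ since polynomials are closed under composition and addition.

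The main (only) obstacle is making explicit this observation about output size and the closure of polynomials under composition; everything else is definition-chasing. Once that is in place, the conclusion that $\Pi'$ is polynomial time reducible to $\Pi_1$ follows, and since $\Pi'$ was an arbitrary element of $\mathcal{NP}$ we obtain the NP-completeness of $\Pi_1$. The proof is short — essentially transitivity of polynomial time reducibility plus the hypothesis $\Pi_1 \in \mathcal{NP}$ — and can be written in a few lines.
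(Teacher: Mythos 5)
Your proof is correct and is the standard transitivity-of-reductions argument; the paper itself gives no proof of Theorem \autoref{thm5.2}, deferring to the cited reference, and your argument (including the key observation that $\abs{\fkt{f_1}{I}}\leq\fkt{p_1}{\abs{I}}$ so the composed reduction runs in time polynomial in $\abs{I}$) is exactly what that omitted proof would contain.
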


Theorem \autoref{thm5.2} gives an important tool to decide whether a given problem is $\npcomp$. In the following we will introduce a selected list of problems that are known to be $\npcomp$ and that have occurred in this thesis. For those problems we rely on the work of Garey and Johnson (see \cite{garey1979computers}).

\begin{itemize}
	
	\item \textsc{Clique}: Given a graph $G$ and a positive integer $k\in\N$, check whether 
	$\fkt{\omega}{G}\leq k$.
	
	\item \textsc{Clique Cover}: Given a graph $G$ and a positive integer $k\in\N$, check whether 
	$\Xq{G}\leq k$.
	
	\item \textsc{Coloring}: Given a graph $G$ and a positive integer $k\in\N$, check whether 
	$\X{G}\leq k$.
	
	\item \textsc{$3$-Coloring}: Given a graph $G$, check whether $\X{G}\leq 3$.

	\item \textsc{Edge Coloring}: Given a graph $G$ and a positive integer $k\in \N$, check whether 
	$\fkt{\chi'}{G}\leq k$.
	
	\item \textsc{Stable Set}: Given a graph $G$ and a positive integer $k\in\N$, check whether 
	$\fkt{\alpha}{G}\leq k$.
	
\end{itemize} 

In terms of strong colorings there are some additional problems that have to be mentioned. Those problems pose, as seen before, the generalization in terms of a distance condition of other problem seen above. The interesting fact is that even some problems known to be in $\mathcal{P}$ become $\npcomp$ if generalized to a distance of $2$.

\begin{itemize}
	
	\item \textsc{$2$-Strong Anti-Matching} (Mahdian. 2002 
	\cite{Mahdian2002complexitystrongedge}): Given a graph $G$ and a positive integer $k\in\N$, 
	check whether $\kam{2}{G}\leq k$.
	
	\item \textsc{$2$-Strong Coloring} (Lloyd, Ramanathan. 1992 \cite{lloyd1992complexity}): Given 
	a graph $G$ and a positive integer $k\in\N$, check whether $\stronk{2}{G}\leq k$.
	
	\item \textsc{$2$-Strong Edge Coloring} (Mahdian. 2002 
	\cite{Mahdian2002complexitystrongedge}): Given a graph $G$ and a positive integer $k\in\N$, 
	check whether $\stronki{2}{G}\leq k$.
	
	\item \textsc{$2$-Strong Matching} (Ko, Shepherd. 1994 \cite{ko1994adding}): Given a graph 
	$G$ and a positive integer $k\in\N$, check whether $\kmat{2}{G}\leq k$.
	
\end{itemize}

This might come somewhat as a surprise since both \textsc{Matching} and \textsc{$1$-Strong Anti 
Matching}, which is equivalent to \textsc{Maximum Degree}, are in $\mathcal{P}$. Later we will 
see even more exciting results regarding the complexity of strong colorings, but first we will 
dedicate ourselves to an important follow up question: If we do not know whether it is possible to 
solve an $\npcomp$ problem efficiently, how can we approach such problems with algorithms and 
avoid exponential running times?\\
There are several possible approaches for this, for most of those problems it is possible just to solve them to a certain degree and give an approximate solution, which we will also do later in this chapter, another approach could be to accept errors in other aspects and design so called random algorithms, that will give a solution with a certain probability. A third possibility is the so called parameterized complexity.

In the classical analysis of complexity for algorithms we investigate the running time. In the parameterized complexity theory, parameters are considered separately from the size of the input, the analysis becomes somewhat multidimensional. For this we will now extend the term decision problem to a parameterized variant.

\begin{definition}[Parameterized Problem and Parameter]
A {\em parameterized problem} is a pair $\lb \Pi, \kappa\rb$, where $\Pi$ is a decision problem with instance set $\mathcal{I}$ and
\begin{align*}
\kappa\colon\mathcal{I}\rightarrow\N,
\end{align*}
the so called {\em parameter}, is a polynomial time computable function.	
\end{definition}

In order to distinguish a non-parameterized decision problem from its parameterized variants we will add the prefix $p$ to the problems name. In many cases the parameter will be more than just a given number and may even depend on the given instance, in such cases we will add additional prefixes the the original problem name.

\begin{example}
If a problem is just parameterized by a certain number, it usually fixes the desired graph parameter to a certain value as in the following problem.
\begin{center}
	\begin{tabular}{ll}
		\toprule
		\multicolumn{2}{c}{$p$-\textsc{Stable Set}}\\
		\midrule
		Input: & A graph $G$ and a number $k\in\N$.\\
		Parameter: & $k$.\\
		Question: & Does $\fkt{\alpha}{G}\leq k$ hold?\\
		\bottomrule
	\end{tabular}
\end{center}
If we want to control the problem even more, other graph parameters may be added as long as they satisfy	the condition of being polynomial time computable.
\begin{center}
	\begin{tabular}{ll}
		\toprule
		\multicolumn{2}{c}{$p$-$deg$-\textsc{Stable Set}}\\
		\midrule
		Input: & A graph $G$ and a number $k\in\N$.\\
		Parameter: & $k+\fkt{\Delta}{G}$.\\
		Question: & Does $\fkt{\alpha}{G}\leq k$ hold?\\
		\bottomrule
	\end{tabular}
\end{center}
\end{example}

It seems that $\nphard$ problems always have an inevitable exponential part in the running time of their algorithms. If it is possible to extract this part and trap it inside a function purely dependent on the parameter, it is, in a certain sense, possible to mitigate it and to obtain a running time that is polynomial in the input size while excluding the parts dependent on the parameter. Problems for whom this is possible are called {\em fixed parameter tractable} and are collected in the complexity class $\FPT$.

\begin{definition}[$\FPT$-Algorithm, Fixed Parameter Tractable, $\FPT$]
Let $\lb \Pi,\kappa\rb$ be a parameterized problem, $\mathcal{I}$ the instance set of $\Pi$ and $\kappa\colon\mathcal{I}\rightarrow\N$ a parameter.
\begin{enumerate}[i)]
	\item An algorithm is called {\em $\FPT$-algorithm} with parameter $\kappa$ if there is a computable function $f\colon\N\rightarrow\N$ and a polynomial $p$ such that for all instances $I\in\mathcal{I}$, the running time of $A$ with input $I$ of size $\abs{I}$ is bounded by
	\begin{align*}
	\fkt{f}{\fkt{\kappa}{I}}\cdot\fkt{p}{\abs{I}}.
	\end{align*}
	
	\item A parameterized problem $\lb \Pi,\kappa\rb$ is called {\em fixed parameter tractable} if there is a $\FPT$-algorithm with parameter $\kappa$	that solves the decision problem $\Pi$.
	
	\item $\FPT$ is the set of all parameterized problems that are fixed parameter tractable.
\end{enumerate}	
\end{definition}

\begin{lemma}[see \cite{gurski2010exakte}]\label{lemma5.2}
Let $\lb \Pi,\kappa\rb$ be a parameterized problem. The following statements are equivalent:
\begin{enumerate}[i)]
	\item $\lb \Pi,\kappa\rb\in\FPT$
	
	\item There is a computable function $f\colon\N\rightarrow\N$, a polynomial $p$ and an algorithm that solves $\Pi$ for an arbitrary instance $I$ in time 
	\begin{align*}
	\fkt{f}{\fkt{\kappa}{I}}+\fkt{p}{\abs{I}}.
	\end{align*}
	
	\item There are computable functions $f,g\colon\N\rightarrow\N$, a polynomial $p$ and an algorithm that solves $\Pi$ for an arbitrary instance in time
	\begin{align*}
	\fkt{g}{\fkt{\kappa}{I}}+\fkt{f}{\fkt{\kappa}{I}}\cdot\fkt{p}{\abs{I}+\fkt{\kappa}{I}}.
	\end{align*}
\end{enumerate}	
\end{lemma}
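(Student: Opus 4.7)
The plan is to establish the chain $(ii)\Rightarrow(i)\Rightarrow(iii)\Rightarrow(ii)$, using the fact that $f$, $g$ and $p$ can be replaced by pointwise larger, still computable/polynomial functions without weakening any of the three statements. Throughout we may assume $f,g\geq 1$ and that $p$ has only nonnegative coefficients, so that $p$ is monotone nondecreasing on $\N$.

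\textbf{Step 1: $(ii)\Rightarrow(i)$.}
Suppose $\Pi$ is solved in time $\fkt{f}{\fkt{\kappa}{I}}+\fkt{p}{\abs{I}}$. Using $a+b\leq 2\max\set{a,b}\leq 2ab$ for $a,b\geq 1$, this is bounded by $2\fkt{f}{\fkt{\kappa}{I}}\cdot\fkt{p}{\abs{I}}$. Defining $\tilde f\define 2f$ yields the running time $\fkt{\tilde f}{\fkt{\kappa}{I}}\cdot\fkt{p}{\abs{I}}$ required by the definition of $\FPT$.

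\textbf{Step 2: $(i)\Rightarrow(iii)$.}
This direction is essentially trivial. If the running time is $\fkt{f}{\fkt{\kappa}{I}}\cdot\fkt{p}{\abs{I}}$ and we set $g\equiv 0$, then using the monotonicity of $p$ we get $\fkt{f}{\fkt{\kappa}{I}}\cdot\fkt{p}{\abs{I}}\leq\fkt{g}{\fkt{\kappa}{I}}+\fkt{f}{\fkt{\kappa}{I}}\cdot\fkt{p}{\abs{I}+\fkt{\kappa}{I}}$, which is of the form demanded in $(iii)$.

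\textbf{Step 3: $(iii)\Rightarrow(ii)$, the main step.}
This is the hard part, because we have to split the product $\fkt{f}{\fkt{\kappa}{I}}\cdot\fkt{p}{\abs{I}+\fkt{\kappa}{I}}$ into a sum of a pure parameter part and a pure instance-size part. The key idea is a case distinction on whether the parameter or the input size dominates. Let $p$ have degree $d$, so $\fkt{p}{n+m}\leq c\cdot\lb n^d+m^d\rb$ for some constant $c$ and all $n,m\in\N$. Then
\begin{align*}
\fkt{f}{\fkt{\kappa}{I}}\cdot\fkt{p}{\abs{I}+\fkt{\kappa}{I}}\leq c\cdot\fkt{f}{\fkt{\kappa}{I}}\cdot\abs{I}^d+c\cdot\fkt{f}{\fkt{\kappa}{I}}\cdot\fkt{\kappa}{I}^d.
\end{align*}
The second summand is a computable function of $\fkt{\kappa}{I}$ alone and can be absorbed into $g$. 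For the first summand we distinguish two cases. If $\fkt{f}{\fkt{\kappa}{I}}\leq \abs{I}^d$, then $c\cdot\fkt{f}{\fkt{\kappa}{I}}\cdot\abs{I}^d\leq c\cdot\abs{I}^{2d}$, a polynomial in $\abs{I}$. If instead $\fkt{f}{\fkt{\kappa}{I}}>\abs{I}^d$, then $c\cdot\fkt{f}{\fkt{\kappa}{I}}\cdot\abs{I}^d\leq c\cdot\fkt{f}{\fkt{\kappa}{I}}^2$, a computable function of $\fkt{\kappa}{I}$. Using $ab\leq a^2+b^2$ uniformly, we therefore obtain
\begin{align*}
\fkt{f}{\fkt{\kappa}{I}}\cdot\fkt{p}{\abs{I}+\fkt{\kappa}{I}}\leq c\cdot\fkt{f}{\fkt{\kappa}{I}}^2+c\cdot\abs{I}^{2d}+c\cdot\fkt{f}{\fkt{\kappa}{I}}\cdot\fkt{\kappa}{I}^d.
\end{align*}
Setting $\fkt{\tilde f}{k}\define \fkt{g}{k}+c\cdot\fkt{f}{k}^2+c\cdot\fkt{f}{k}\cdot k^d$ and $\fkt{\tilde p}{n}\define c\cdot n^{2d}$ yields a total running time of $\fkt{\tilde f}{\fkt{\kappa}{I}}+\fkt{\tilde p}{\abs{I}}$, where $\tilde f$ is computable and $\tilde p$ is a polynomial. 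This establishes $(ii)$ and thereby closes the cycle.

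The only genuinely nontrivial ingredient is the case distinction in Step~3; the rest is bookkeeping with monotone functions and the elementary inequality $a+b\leq 2ab$.
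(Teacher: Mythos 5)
The paper does not actually prove this lemma --- it is stated with a citation to \cite{gurski2010exakte} and no proof is given --- so there is nothing to compare against; on its own merits your argument is correct and is the standard one. The cycle $(ii)\Rightarrow(i)\Rightarrow(iii)\Rightarrow(ii)$ is well chosen, the reductions $a+b\leq 2ab$ for $a,b\geq 1$ and $ab\leq a^2+b^2$ do exactly the work needed, and the only genuinely nontrivial point --- splitting $\fkt{f}{\fkt{\kappa}{I}}\cdot\fkt{p}{\abs{I}+\fkt{\kappa}{I}}$ into a parameter part and an instance-size part via the case distinction $\fkt{f}{\fkt{\kappa}{I}}\lessgtr\abs{I}^d$ --- is handled correctly. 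Two cosmetic remarks: if the paper's $\N$ excludes $0$, take $g\equiv 1$ rather than $g\equiv 0$ in Step~2; and it is worth saying explicitly that in each implication the algorithm itself is unchanged and only the running-time analysis is re-bounded, so computability of $\tilde f$ and polynomiality of $\tilde p$ are all that must be checked.
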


Obviously parameterization can be very powerful, but still there are some problems that cannot be parameterized if $\mathcal{P}\neq\mathcal{NP}$ holds, which is a fair assumption most researchers can agree on. There is another interpretation of the working of parameters which poses a useful tool to see whether a parameterization is possible or not.

\begin{definition}[$k$th Slice]
Let $\Pi, \kappa$ be a parameterized problem and $k\in\N$. The {$k$th Slice} of $\lb \Pi,\kappa\rb$ is the, not parameterized or classical, decision problem $\Pi$ with the reduced instance set $\mathcal{I}_k=\condset{I\in\mathcal{I}}{\fkt{\kappa}{I}=k}$.
\end{definition}   

\begin{theorem}[see \cite{gurski2010exakte}]\label{thm5.3}
Let $\lb \Pi,\kappa\rb$ be a parameterized problem and $k\in\N$. If $\lb \Pi,\kappa\rb\in\FPT$, then the $k$th slice of $\lb \Pi,\kappa\rb$ is solvable in polynomial time.	
\end{theorem}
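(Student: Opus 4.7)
The plan is to unwind the definitions. Since $(\Pi, \kappa) \in \FPT$, by definition there exists an $\FPT$-algorithm $A$ solving $\Pi$ together with a computable function $f\colon \N \rightarrow \N$ and a polynomial $p$ such that for every instance $I \in \mathcal{I}$, the running time of $A$ on input $I$ is bounded by $f(\kappa(I)) \cdot p(|I|)$.

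I would then consider the restriction of $A$ to the instance set $\mathcal{I}_k = \{I \in \mathcal{I} : \kappa(I) = k\}$ of the $k$th slice. For every $I \in \mathcal{I}_k$, by definition $\kappa(I) = k$, and hence the bound on the running time of $A$ on input $I$ becomes
\begin{align*}
f(\kappa(I)) \cdot p(|I|) = f(k) \cdot p(|I|).
\end{align*}
Since $k \in \N$ is fixed and $f$ is computable (in particular, $f(k)$ is a well-defined natural number), the value $c := f(k)$ is a constant independent of $I$. Therefore $A$ decides $\Pi$ on every instance $I \in \mathcal{I}_k$ within time $c \cdot p(|I|)$, which is a polynomial in $|I|$. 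Consequently, $A$ witnesses that the $k$th slice of $(\Pi, \kappa)$ lies in $\mathcal{P}$.

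Since the argument is essentially a substitution into the $\FPT$ running-time bound, no genuine obstacle arises; the only subtlety worth mentioning explicitly is that $f$ being computable guarantees $f(k)$ is finite and thus a legitimate constant for fixed $k$, so that the dependence on the parameter truly disappears from the asymptotic analysis once the parameter is fixed. No step requires invoking the equivalent characterizations of Lemma \autoref{lemma5.2}; the direct form of the $\FPT$ definition suffices.
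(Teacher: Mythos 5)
Your proof is correct and follows essentially the same route as the paper's: substitute the fixed parameter value $k$ into the $\FPT$ running-time bound, observe that $\fkt{f}{k}$ degenerates into a constant, and conclude that the algorithm runs in polynomial time on the $k$th slice. The only cosmetic difference is that you write the polynomial factor as $\fkt{p}{\abs{I}}$ while the paper writes $\abs{I}^{\mathcal{O}\lb 1\rb}$, which is the same thing.
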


\begin{proof}
With $\lb \Pi,\kappa\rb$ there exists a $\FPT$-algorithm $A$ solving $\Pi$. Hence there is a computable function $f\colon\N\rightarrow\N$ and a polynomial $p$, such that the running time of $A$ with input $I$ is bounded by $\fkt{f}{\fkt{\kappa}{I}}\cdot\abs{I}^{\mathcal{O}\lb 1\rb}$. With $\fkt{\kappa}{I}=k$ for all $I\in\mathcal{I}_k$, which is the instance set of the $k$th slice, $\fkt{f}{\fkt{\kappa}{I}}=\fkt{f}{k}$ degenerates into a constant. Therefore $\fkt{f}{k}\cdot\abs{I}^{\mathcal{O}\lb 1\rb}\in\Ord{\abs{I}^{\mathcal{O}\lb 1\rb}}$ and thus the running time of $A$ is polynomial in the input size.	
\end{proof}

With this we will have a first look on coloring problems. The parameterized coloring problem is defined as follows.

\begin{center}
	\begin{tabular}{ll}
		\toprule
		\multicolumn{2}{c}{$p$-\textsc{Coloring}}\\
		\midrule
		Input: & A graph $G$ and a number $k\in\N$.\\
		Parameter: & $k$.\\
		Question: & Does $\X{G}\leq k$ hold?\\
		\bottomrule
	\end{tabular}
\end{center}

\begin{corollary}[see \cite{gurski2010exakte}]\label{cor5.1}
If $\mathcal{P}\neq\mathcal{NP}$, then $p\text{-\textsc{Coloring}}\notin\FPT$.	
\end{corollary}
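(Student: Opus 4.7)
The plan is to derive the result by contraposition, leveraging Theorem \autoref{thm5.3} together with the $\npcomp$ness of the classical \textsc{$3$-Coloring} problem listed in the earlier overview. So I would assume for contradiction that $p\text{-\textsc{Coloring}}\in\FPT$ and show that this forces $\mathcal{P}=\mathcal{NP}$.

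First I would identify the slices of $p\text{-\textsc{Coloring}}$. By definition of the parameter $\kappa$, the instance set of the $k$-th slice consists of all pairs $\lb G,k\rb$ with the number of colors fixed to $k$. Hence the $3$-rd slice is, up to trivial reformulation, precisely the classical \textsc{$3$-Coloring} problem: given a graph $G$, decide whether $\X{G}\leq 3$.

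Next, under the assumption $p\text{-\textsc{Coloring}}\in\FPT$, Theorem \autoref{thm5.3} yields that every slice, in particular the $3$-rd slice, is decidable in polynomial time. Thus there would exist a polynomial-time algorithm for \textsc{$3$-Coloring}. Since \textsc{$3$-Coloring} is listed in the preceding section as an $\npcomp$ problem, any polynomial-time algorithm for it would, via Theorem \autoref{thm5.2} and the standard closure of $\mathcal{P}$ under polynomial reductions, place every problem in $\mathcal{NP}$ into $\mathcal{P}$, so $\mathcal{P}=\mathcal{NP}$. This contradicts the hypothesis $\mathcal{P}\neq\mathcal{NP}$, completing the argument.

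The proof is essentially routine once the slicing observation is made; there is no real obstacle here. The only thing worth being a little careful about is the trivial observation that the $3$-rd slice of $p\text{-\textsc{Coloring}}$ really is the decision problem \textsc{$3$-Coloring} and not some artificially restricted variant, so that the known $\npcomp$ness result applies without modification. Everything else is a direct invocation of Theorem \autoref{thm5.3}.
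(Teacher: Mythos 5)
Your proposal is correct and follows exactly the paper's own argument: assume $p\text{-\textsc{Coloring}}\in\FPT$, apply Theorem \autoref{thm5.3} to conclude that the third slice (which is \textsc{$3$-Coloring}) lies in $\mathcal{P}$, and then use the $\npcomp$ness of \textsc{$3$-Coloring} together with Theorem \autoref{thm5.2} to force $\mathcal{P}=\mathcal{NP}$. Your extra remark that the third slice really is the classical \textsc{$3$-Coloring} problem is a sensible point of care, but the route is the same.
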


\begin{proof}
Suppose $p$-\textsc{Coloring} is in $\FPT$, then by Theorem \autoref{thm5.3} the third slice of 
$p$-\textsc{Coloring} is solvable in polynomial time. But as stated above the decision problem 
$3$-\textsc{Coloring} is $\npcomp$ and Theorem \autoref{thm5.2} implies 
$\mathcal{P}=\mathcal{NP}$, a contradiction.
\end{proof}

Sadly this tool is not enough to handle all problems that are not, or better probably not, contained 
in $\FPT$. The above mentioned problem $p$-\textsc{Stable Set} is such a problem, but all of it's 
slices are in $\mathcal{P}$. For the dual problem $p$-\textsc{Clique} the same holds.

\begin{center}
	\begin{tabular}{ll}
		\toprule
		\multicolumn{2}{c}{$p$-\textsc{Clique}}\\
		\midrule
		Input: & A graph $G$ and a number $k\in\N$.\\
		Parameter: & $k$.\\
		Question: & Does $\fkt{\omega}{G}\leq k$ hold?\\
		\bottomrule
	\end{tabular}
\end{center}

By taking a more complex parameter this can be solved.

\begin{theorem}[see \cite{gurski2010exakte}]\label{thm5.4}
$p$-$deg$-\textsc{Stable Set} $\in\FPT$.	
\end{theorem}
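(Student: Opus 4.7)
The plan is to exhibit an $\FPT$-algorithm with parameter $k+\fkt{\Delta}{G}$ that either immediately certifies a large stable set or reduces the instance to one whose total size is bounded by a function of the parameter, after which brute force is polynomial in the input size. The algorithm rests on the well-known greedy bound $\fkt{\chi}{G}\leq\fkt{\Delta}{G}+1$ from Theorem \autoref{thm2.1}, which we can turn into a large-stable-set certificate via the pigeonhole principle.

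First, I would compute (in polynomial time) a proper vertex coloring of $G$ using at most $\fkt{\Delta}{G}+1$ colors by a standard greedy procedure: process the vertices in any order and assign each one the smallest color not used by its at most $\fkt{\Delta}{G}$ already-colored neighbors. The color classes partition $\V{G}$ into $\fkt{\Delta}{G}+1$ stable sets, and by pigeonhole the largest of them has size at least $\abs{\V{G}}/\lb\fkt{\Delta}{G}+1\rb$.

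Next, I branch on the size of $G$. If $\abs{\V{G}}\geq k\,\lb\fkt{\Delta}{G}+1\rb$, then some color class has cardinality at least $k$, so we can output YES together with an explicit witness stable set, and we are done in polynomial time. Otherwise $\abs{\V{G}}<k\,\lb\fkt{\Delta}{G}+1\rb$, so the number of vertices is bounded by a function of the parameter $\kappa=k+\fkt{\Delta}{G}$. On this small residual instance I would simply enumerate all subsets of $\V{G}$ of size exactly $k$ (or, more crudely, all $2^{\abs{\V{G}}}$ subsets) and test each one for independence; this costs at most $2^{k\,\lb\fkt{\Delta}{G}+1\rb}\cdot\fkt{p}{\abs{\V{G}}}$ time for a polynomial $p$, which is of the form $\fkt{f}{\kappa}\cdot\fkt{p}{\abs{I}}$, i.e.\ an $\FPT$-running time in the sense of Lemma \autoref{lemma5.2}.

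The main obstacle is essentially conceptual rather than technical: one must choose the right structural fact about graphs with bounded maximum degree to force either an immediate YES-answer or a bounded kernel. Once the observation that greedy coloring plus pigeonhole yields a stable set of size $\lceil n/(\fkt{\Delta}{G}+1)\rceil$ is in place, the rest is a straightforward case split, and both branches execute in the required $\fkt{f}{k+\fkt{\Delta}{G}}\cdot\abs{I}^{\fkt{\mathcal{O}}{1}}$ budget, establishing $p\text{-}deg\text{-}\textsc{Stable Set}\in\FPT$.
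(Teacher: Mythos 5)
Your argument is correct and is the standard proof of this fact: a greedy proper coloring uses at most $\fkt{\Delta}{G}+1$ colors, so by pigeonhole either some color class is a stable set of size at least $k$ (when $\abs{\V{G}}\geq k\lb\fkt{\Delta}{G}+1\rb$) or the whole instance has fewer than $k\lb\fkt{\Delta}{G}+1\rb$ vertices, i.e.\ a kernel bounded in the parameter $k+\fkt{\Delta}{G}$, on which exhaustive search fits the $\fkt{f}{\kappa}\cdot\fkt{p}{\abs{I}}$ budget of Lemma \autoref{lemma5.2}. The thesis itself states Theorem \autoref{thm5.4} without proof, citing \cite{gurski2010exakte}, and the argument there is essentially yours (equivalently, one can grow a maximal stable set greedily, which also has size at least $\abs{\V{G}}/\lb\fkt{\Delta}{G}+1\rb$). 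One small point of care: as printed, the paper's $p$-$deg$-\textsc{Stable Set} asks whether $\fkt{\alpha}{G}\leq k$, so in your first branch the correct output is NO rather than YES; since $\FPT$ is closed under complementation this is immaterial to the conclusion, but the answer should be stated consistently with whichever orientation of the question is adopted.
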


Sometimes the parameter cannot be this strictly separated from the polynomial in the input size. This leads to a (probably) larger superclass of $\FPT$.

\begin{definition}[$\XP$-Algorithm, $\XP$]
\begin{enumerate}[i)]~\\
\vspace{-6mm}

\item Let $\lb \Pi,\kappa\rb$ be a parameterized problem and $\mathcal{I}$ the instance set of $\Pi$. An algorithm $A$ is called {\em $\XP$-algorithm} with the parameterization $\kappa\colon\mathcal{I}\rightarrow\N$, if there are computable functions $f,g\colon\N\rightarrow\N$ such that for each instance $I\in\mathcal{I}$ the running time of $A$ with input $I$ is bounded by
\begin{align*}
\fkt{f}{\fkt{\kappa}{I}}\cdot\abs{I}^{g\lb \kappa\lb I\rb\rb}.
\end{align*} 

\item The set of all parameterized problems solvable by a $\XP$-algorithm is collected in the complexity class $\XP$.
\end{enumerate}
\end{definition}

Obviously we obtain $\FPT\subseteq\XP$. There are several complexity classes in between $\FPT$ and $\XP$ that pose a better distinction between problems in $\FPT$ and the seemingly harder problems in $\XP$, the so called $\induz{W}{q}$ classes. For this we need some additional tools. The first is a parameterized version of the reduction which is used to show the $\mathcal{NP}$ completeness of problems.

\begin{definition}[Parameterized Reduction]
A parameterized problem $\lb \Pi_1,\kappa_1\rb$ with instance set $\mathcal{I}_1$ is {\em parameterized reducable} to another parameterized problem $\lb \Pi_2,\kappa_2\rb$ with instance set $\mathcal{I}$, if there is a function $r\colon\mathcal{I}_1\rightarrow\mathcal{I}_2$ satisfying the following conditions:
\begin{enumerate}[i)]

\item $I$ is a {\em yes}-instance of $\Pi_1$ if and only if $\fkt{r}{I}$ is a {\em yes}-instance of $\i_2$ for all $I\in\mathcal{I}_1$,

\item $\fkt{r}{I}$ is for a computable function $f$ and a polynomial $p$ computable in time
\begin{align*}
\fkt{f}{\fkt{\kappa_1}{I}}\cdot\fkt{p}{\abs{I}},
\end{align*}

\item and it holds $\fkt{\kappa_2}{\fkt{r}{I}}\leq\fkt{g}{\fkt{\kappa_1}{I}}$ for all $I\in\mathcal{I}_1$ and a computable function $g$.

\end{enumerate}
\end{definition}

In order to define a $W$-class we need one basic problem which other problems can be reduced to. In the classical complexity these basic problems are so called satisfiability problems on boolean formulas. For a class $\induz{W}{q}$ such a problem was first given by Downey and Fellows (see \cite{downey1995fixed}) in 1995.

\begin{definition}[Boolean Circuit]
A {\em boolean circuit} is a directed, cycle free graph $D=\lb V,A\rb$, whose vertices are labeled 
with a constant value (\textbf{true} or \textbf{false}), one of the input variables $x_1,\dots,x_n$, or 
a boolean operation $\wedge$, $\vee$ or $\neg$.  vertices are called $p$-{\em gates}, where $p$ 
is their label. Each $x_i$-gate is called an {\em input gate}. Input, \textbf{true}- and 
\textbf{false}-gate 
have no, $\neg$-gates exactly one and $\wedge$- and $\vee$-gates each exactly two incoming 
arcs. Each gate may have an arbitrary number of outgoing arcs. Gates without outgoing arcs are 
called {\em output gates}. The set of input gates is denoted by $\operatorname{In}$ and the set of 
output gates by $\operatorname{Out}$.\\
Each instantiation $T$ of the variables $x_1,\dots,x_n$ gives a logical value 
$\fkt{f}{g}\in\set{\text{\textbf{true, false}}}$ for each gate $g$ which is defined as follows:
\begin{enumerate}[i)]

\item $\fkt{f}{g}$ is \textbf{true}, \textbf{false} or $\fkt{T}{x_i}$, if $g$ is a \textbf{true}-gate, 
\textbf{false}-gate or a $x_i$-gate.

\item If $g$ is a $\neg$ gate and $g'$ is the predecessor of $g$, then $\fkt{f}{g}=$\textbf{true} if 
$\fkt{f}{g'}$=\textbf{false} and $\fkt{f}{g}=$\textbf{false} if $\fkt{f}{g'}$=\textbf{true}.

\item Analogue to the $\neg$-gates, both the $\wedge$- and the $\vee$ gates imitate their corresponding boolean functions and $f$ evaluates the logical values of their two predecessors.

\end{enumerate}
\end{definition}

We then expand the concept of boolean circuits by three additional parameters. Those also pose a slight generalization and lead to a new parameterized decision problem.

\begin{definition}[Large Gate, Weft, Depth]
~\\
\vspace{-6mm}
\begin{enumerate}[i)]

\item A $\wedge$- or a a $\vee$-gate with an arbitrary number of incoming arcs is called a {\em large gate}.

\item The {\em weft} of a boolean circuit is the maximum number of large gates on a direct path from an input to an output gate.

\item The {\em depth} of a boolean circuit is the length of a longest directed path from an input to an output gate.

\end{enumerate}
\end{definition}

\begin{center}
	\begin{tabular}{lp{8cm}}
		\toprule
		\multicolumn{2}{c}{$p$-\textsc{Weighted Sat $\lb t,d\rb$}}\\
		\midrule
		Input: & A boolean circuit $C$ with weft $t$ and depth $d$ along with a weighting function $w\colon \operatorname{Out}\rightarrow\N$ and a number $k\in\N$.\\
		Parameter: & $k$.\\
		Question: & Is there a satisfying instantiation of $C$ with weight $k$?\\
		\bottomrule
	\end{tabular}
\end{center}

\begin{definition}[$W$-Hierarchy]
The {\em $W$-hierarchy} consists of the complexity classes $\induz{W}{t}$, $t\geq 1$. A 
parameterized problem $\lb \Pi,\kappa\rb$ belongs to the class $\induz{W}{t}$, if it can be 
parameterized reduced to $p$-\textsc{Weighted Sat $\lb t,d\rb$} for $d\in\N$.
\end{definition}

We obtain the following chain of inclusions for the parameterized complexity classes.
\begin{align*}
\FPT\subseteq\induz{W}{1}\subseteq\induz{W}{2}\subseteq\dots\subseteq\XP
\end{align*}

\begin{definition}[$W$-Hard, $W$-Complete]
~\\\vspace{-6mm}
\begin{enumerate}

\item A parameterized problem $\lb \Pi,\kappa\rb$ is {\em $\induz{W}{t}$-hard} if all problems in $\induz{W}{t}$ can be parameterized reduced to $\lb \Pi,\kappa\rb$.

\item A parameterized problem $\lb \Pi,\kappa\rb$ is {\em $\induz{W}{t}$-complete}, if it is $\induz{W}{t}$-hard and contained in $\induz{W}{t}$.

\end{enumerate}
\end{definition}

\subsection{Treewidth and Vertex Cover}

We will now begin by investigating two structural graph parameters that can be used for the parameterization of $\npcomp$ problems. The first and arguably the most popular structural parameter for graph related problems is the so called treewidth which was invented by Robertson and Seymour in 1986 (see \cite{robertson1986graph}) as part of their work on graph minors.\\
This concept is strongly related to chordal graphs in general, which will be very useful in the next section. For now let us start with a question that is somewhat similar to a question we asked in Chapter 3: How many additional edges does it take for a graph to become chordal?

\begin{definition}[Triangulation]
A {\em triangulation} of a graph $G=\lb V,E\rb$ is a graph $H=\lb V,F\rb$ that contains $G$ as a, usually not induced, subgraph and is chordal.\\
$H$ is a {\em minimal triangulation} of $G$ if there does not exist a triangulation $H'$ of $G$ with $H'$ being a proper subgraph of $H$.\\
$H$ is called a {\em minimum triangulation} of $G$ if for all other triangulations $H'=\lb V,F'\rb$ of $G$ $\abs{F\setminus E}\leq\abs{F'\setminus E}$ holds. The number of edges added to $G$ by a minimum triangulation is called the {\em fill-in number} of $G$ and denoted by $\fkt{\operatorname{F}}{G}$.\\
The decision problem \textsc{Minimum Fill-In} is the question whether for a given graph $G$ and a 
positive integer $k\in\N$ it holds $\fkt{\operatorname{F}}{G}\leq k$. 
\end{definition}

\begin{theorem}[Yannakakis. 1981 \cite{yannakakis1981computing}]\label{thm5.5}
The \textsc{Minimum Fill-In} problem is $\mathcal{NP}$-complete.
\end{theorem}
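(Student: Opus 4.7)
The plan is to prove membership in $\mathcal{NP}$ and $\mathcal{NP}$-hardness separately.

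For membership in $\mathcal{NP}$, I would use as certificate a set $F\subseteq\binom{V}{2}\setminus E$ with $\abs{F}\leq k$. Verification consists of checking that $H\define\lb V,E\cup F\rb$ is chordal, which can be done in time $\Ord{\abs{V}+\abs{E\cup F}}$ by computing a candidate perfect elimination ordering of $H$ via lexicographic breadth-first search (or maximum cardinality search) and then testing, again in linear time, that each vertex's later neighbourhood induces a clique. Since such an ordering exists if and only if $H$ is chordal, this gives a polynomial-time verifier.

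For $\mathcal{NP}$-hardness, the plan is a polynomial-time reduction from a suitable $\mathcal{NP}$-complete source problem $\Pi$; Yannakakis's original reduction starts from a variant of the \textsc{Optimal Linear Arrangement} problem. From an instance $I$ of $\Pi$ one constructs in polynomial time a graph $G=\fkt{G}{I}$ and a bound $k=\fkt{k}{I}$ so that $\fkt{\operatorname{F}}{G}\leq k$ if and only if $I$ is a yes-instance. The construction must satisfy two matching bounds. For completeness, starting from any solution of $I$ one exhibits an explicit triangulation of $G$ that fills in at most $k$ edges, essentially the edges dictated by the chosen arrangement. For soundness, every triangulation of $G$ with at most $k$ new edges must be shown to encode a valid solution of $I$; here one exploits the characterization from Theorem~\ref{thm3.18} that in a chordal graph every minimal separator induces a clique, which forces any cheap fill-in to respect the combinatorial structure built into the gadgets.

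The main obstacle is clearly the gadget design in the hardness reduction. Chordality is a global property dictated by the absence of induced cycles of length at least $4$, so the gadgets must be engineered in such a way that the only fill-ins cheap enough to stay below the budget $k$ are precisely those that simulate a valid choice in $\Pi$; any ``cheating'' triangulation, one that does not correspond to a solution of $I$, must be forced to exceed $k$. Tightly matching the upper bound produced by a source solution with the lower bound coming from the clique-separator structure of chordal supergraphs is the technical heart of the argument, and is what makes the reduction succeed.
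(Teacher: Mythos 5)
The paper offers no proof of this theorem at all: it is quoted as an external result of Yannakakis, so there is nothing in-paper to compare your argument against. Judged on its own merits, your membership half is correct and complete -- the certificate $F$, the linear-time chordality test via \textbf{LexBFS} or \textbf{MCS} followed by \textbf{PERFECT}, and the soundness of that verifier are exactly the machinery of Theorems \autoref{thm5.18}, \autoref{thm5.19} and \autoref{thm5.20}, and they do yield a polynomial-time verifier.

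The hardness half, however, contains a genuine gap rather than a proof. You correctly name the source problem (a variant of \textsc{Optimal Linear Arrangement}) and correctly list the two obligations of a reduction, but the construction of $\fkt{G}{I}$ and $\fkt{k}{I}$ is never given, and you yourself identify the gadget design as the main obstacle. That obstacle \emph{is} the theorem: without an explicit construction together with the two matching counting arguments, nothing has been reduced to anything, and the sentence ``one constructs in polynomial time a graph $G$ and a bound $k$ so that $\fkt{\operatorname{F}}{G}\leq k$ if and only if $I$ is a yes-instance'' is a restatement of the goal, not a step toward it. In Yannakakis's actual argument the reduction passes through an intermediate problem (completing a bipartite graph to a \emph{chain graph} with at most $k$ added edges), and the correspondence between linear arrangements, chain completions, and cheap triangulations requires several pages of delicate counting; the appeal to Theorem \autoref{thm3.18} on clique separators cannot substitute for that work. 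As written, your text is an accurate description of what a proof would have to accomplish, not a proof.
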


\begin{definition}[Tree Decomposition, Treewidth]
A {\em tree decomposition} of a graph $G=\lb V,E\rb$ is a pair $\lb \condset{X_i}{i\in I},T=\lb I,F\rb\rb$ with $\condset{X_i}{i\in I}$	a collection of subsets of $V$, called {\em bags}, and $T=\lb I,F\rb$ a tree, such that the following conditions are satisfied.
\begin{enumerate}[i)]
	\item For all $v\in V$, there exists an $i\in I$ with $v\in X_i$.
	
	\item For all $vw\in E$, there exists an $i\in I$ with $v,w\in X_i$.
	
	\item For all $v\in V$, the set $I_v=\condset{i\in I}{v\in X_i}$ forms a connected subgraph of $T$.
	
\end{enumerate}
The {\em width} of tree decomposition $\lb \condset{X_i}{i\in I},T=\lb I,F\rb\rb$ equals $\max_{i\in I}\abs{X_i}-1$, The {\em treewidth} of a graph $G$, $\tw{G}$, is the minimum width of a tree decomposition of $G$.\\
The decision problem \textsc{Treewidth} is the question whether for a given graph $G$ and a 
positive integer $k\in\N$ it holds $\tw{G}\leq k$.
\end{definition}

\begin{theorem}[Arnborg, Corneil, Proskurowski. 1987 \cite{arnborg1987complexity}]
The \textsc{Treewidth} problem is $\mathcal{NP}$-complete.
\end{theorem}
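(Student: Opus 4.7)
The plan is to prove $\mathcal{NP}$-completeness in the standard two-step fashion: first showing membership in $\mathcal{NP}$, then exhibiting a polynomial-time reduction from an already-known $\mathcal{NP}$-complete problem.

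For membership in $\mathcal{NP}$ I would use the tree decomposition itself as a certificate. A standard reduction argument shows that every graph $G$ on $n$ vertices admits an optimal tree decomposition with at most $n$ bags (bags that are subsets of one another can be merged), so the certificate has size polynomial in $|V(G)| + |E(G)|$. A polynomial-time verifier then needs only to confirm the three defining conditions of a tree decomposition together with the width bound $\max_i |X_i| \leq k+1$: vertex-cover and edge-cover by the bags are simple traversals, while the ``subtree'' condition can be checked either by verifying connectedness of each $I_v$ in $T$ or by sweeping $T$ once and checking that the running bag contents never reintroduce a previously removed vertex.

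For $\mathcal{NP}$-hardness my strategy is to exploit the classical characterization $\tw{G} = \min_H \fkt{\omega}{H} - 1$, where $H$ ranges over all triangulations of $G$; equivalently, $\tw{G}\leq k$ if and only if $G$ has a chordal supergraph with maximum clique at most $k+1$. This identity makes the question a \emph{constrained} chordal completion problem, and suggests a reduction from a graph problem whose feasible solutions correspond to bounded-width layouts. Following Arnborg, Corneil, and Proskurowski, I would reduce from a layout-style $\mathcal{NP}$-complete problem (such as the vertex separation number, pathwidth, or an optimal linear arrangement variant). The construction attaches gadgets — typically large cliques and carefully chosen ``forcing'' subgraphs — to the source instance $G'$ so that any tree decomposition of the host graph $G$ of width at most $f(k)$ must route the gadgets along the tree in a way that decodes back into a feasible layout of $G'$ of cost at most $k$, and conversely every feasible layout of $G'$ yields such a tree decomposition.

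The hard part will be the gadget design. The gadgets must simultaneously (i) force a prescribed combinatorial structure (e.g.\ an ordering or separation) on any low-width tree decomposition of $G$, (ii) contribute a controllable, additively bounded amount to the width so that the yes/no cost thresholds translate faithfully, and (iii) resist ``shortcut'' tree decompositions that exploit the flexibility of the underlying tree $T$ to bypass the intended structure. A supporting lemma typically needed is that every triangulation of a gadget contains a clique of a prescribed size (a lower bound on $\fkt{\omega}{H}$), so that the width contribution of each gadget is tight. Once the gadgets are in place, the reduction runs in polynomial time, and the equivalence of yes-instances follows by translating in both directions: layouts of $G'$ to tree decompositions of $G$, and conversely decoding a width-$k$ tree decomposition of $G$ back into a layout of $G'$ of cost at most $k$.
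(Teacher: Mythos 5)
The paper does not prove this statement at all: it is imported as a known result of Arnborg, Corneil and Proskurowski \cite{arnborg1987complexity} and used as a black box, so there is no in-paper argument to compare yours against. Judged on its own terms, your proposal is a correct and complete argument only for the easy half. The $\mathcal{NP}$-membership part is fine: an optimal tree decomposition can always be normalized to have at most $\abs{\V{G}}$ bags (merge a bag into a neighbor that contains it), and the three defining conditions plus the width bound are verifiable in polynomial time, so the decomposition is a valid polynomial-size certificate.

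The hardness half, however, has a genuine gap: you name the strategy (reduce from a layout-type problem, design clique gadgets that force an ordering) but never specify the source problem, the construction, or the forcing lemma, and that is where the entire content of the theorem lives. "Attach gadgets so that any low-width tree decomposition decodes into a feasible layout" is a description of what a reduction must accomplish, not a reduction. In particular, the step your sketch silently needs and does not supply is the reason a \emph{tree} decomposition can be forced to behave like a \emph{linear} layout at all: in the actual Arnborg--Corneil--Proskurowski argument the constructed graph is cobipartite (a union of two cliques plus some cross edges, built from an instance of the $\npcomp$ problem of cutting a graph into two equal-sized halves with few crossing edges), and by Lemma \autoref{lemma5.3} each clique must be contained in a single bag of any tree decomposition, which collapses the relevant part of the tree to a path and lets one read off a linear arrangement whose cut value matches the width. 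Without identifying that mechanism (or an equivalent one), condition (iii) of your own wish list --- ruling out ``shortcut'' tree decompositions that exploit the branching of $T$ --- is exactly the point at which the argument could fail, so the proposal as written does not establish $\mathcal{NP}$-hardness.
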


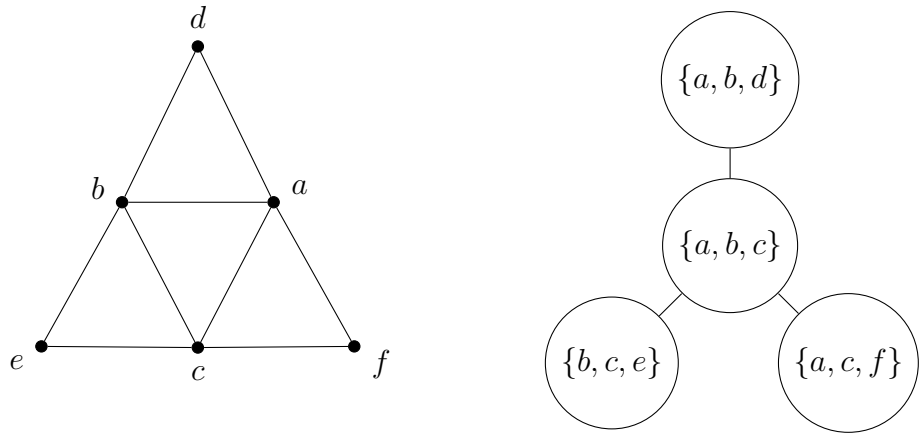
\begin{figure}
\begin{center}
	\begin{tikzpicture}
	
	\node (acenter) [inner sep=1.5pt] {};
	
	\node (a1) [inner sep=1.5pt,position=30:1cm from acenter,draw,circle,fill] {};
	\node (a2) [inner sep=1.5pt,position=150:1cm from acenter,draw,circle,fill] {};
	\node (a3) [inner sep=1.5pt,position=270:1.21cm from acenter,draw,circle,fill] {};
	\node (a4) [inner sep=1.5pt,position=90:2.5cm from acenter,draw,circle,fill] {};
	\node (a5) [inner sep=1.5pt,position=213:2.3cm from acenter,draw,circle,fill] {};
	\node (a6) [inner sep=1.5pt,position=327:2.3cm from acenter,draw,circle,fill] {};
	
	\node (l1) [position=30:0.02cm from a1] {$a$};
	\node (l2) [position=150:0.02cm from a2] {$b$};
	\node (l3) [position=270:0.02cm from a3] {$c$};
	\node (l4) [position=90:0.02cm from a4] {$d$};
	\node (l5) [position=213:0.02cm from a5] {$e$};
	\node (l6) [position=327:0.02cm from a6] {$f$};
	
	\path
	(a1) edge (a2)
	(a2) edge (a3)
	(a3) edge (a1)
	(a4) edge (a1)
	edge (a2)
	(a5) edge (a2)
	edge (a3)
	(a6) edge (a1)
	edge (a3);
	
	\node(center) [draw,circle,right of=acenter,node distance=7cm] {$\set{a,b,c}$};
	\node(above) [draw,circle, above of = center,node distance=2.2cm] {$\set{a,b,d}$};
	\node(belowl) [draw,circle,below left of=center,node distance=2.2cm] {$\set{b,c,e}$};
	\node(belowr) [draw,circle,below right of=center,node distance=2.2cm] {$\set{a,c,f}$};

	\path
	(center) edge (above)
	edge (belowl)
	edge (belowr);
	
	\end{tikzpicture}
\end{center}
\caption{A graph and a tree decomposition of width $2$.}
\label{fig5.1}
\end{figure}

The connection between a triangulation and the tree decomposition, and therefore the relation between the tree decomposition and chordal graphs manifests in the following result by Hans Bodlaender.

\begin{theorem}[Bodlaender. 1998 \cite{bodlaender1998partial}]\label{thm5.6}
Let $G$ be a graph, $k\in\N$ and $H$ a minimum triangulation of $G$, then $\tw{G}\leq k$ if and only if $\fkt{\omega}{H}\leq k+1$.	
\end{theorem}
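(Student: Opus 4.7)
The plan is to prove both directions through the clique tree characterization of chordal graphs (Theorem \autoref{thm3.19}), transferring back and forth between tree decompositions of $G$ and triangulations of $G$.

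For the direction $\omega(H)\leq k+1\Rightarrow \tw{G}\leq k$, I would first apply Theorem \autoref{thm3.19} to the chordal graph $H$: its maximal cliques can be organized as the nodes of a tree $T=(K,L)$ such that, for every $v\in V(H)$, the set of cliques containing $v$ induces a subtree of $T$. Taking the bags $X_Q=Q$ indexed by the maximal cliques $Q\in K$, this data is immediately a tree decomposition of $H$: conditions $i)$ and $iii)$ of the definition come directly from the clique-tree property, and condition $ii)$ holds because every edge of the chordal graph $H$ lies in some maximal clique. Its width is $\max_Q|Q|-1=\omega(H)-1\leq k$. Since $G\subseteq H$ and every edge of $G$ is an edge of $H$, the same pair $(T,\{X_Q\})$ is a tree decomposition of $G$, so $\tw{G}\leq k$.

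For the converse direction $\tw{G}\leq k\Rightarrow \omega(H)\leq k+1$, I would start with a tree decomposition $(T,\{X_i\}_{i\in I})$ of $G$ of width at most $k$ and define the supergraph $H^{\star}=(V,E^{\star})$ by adding, for every bag $X_i$, all edges between vertices coexisting in $X_i$. The goal is then twofold: (a) show $H^{\star}$ is chordal, and (b) show $\omega(H^{\star})\leq k+1$. For (a) I would verify that $(T,\{X_i\})$ remains a tree decomposition of $H^{\star}$ (new edges sit inside bags by construction) in which every bag now induces a clique, and then observe that in this situation $H^{\star}$ is the intersection graph of the subtrees $I_v=\{i\in I: v\in X_i\}$ of $T$, which is a chordal graph by the converse of Buneman's theorem. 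For (b) I would invoke the Helly property of subtrees of a tree: any family of pairwise intersecting subtrees of a tree shares a common node. Applied to the subtrees $\{I_v\}_{v\in C}$ attached to a clique $C$ of $H^{\star}$, pairwise adjacency forces pairwise intersection of the $I_v$'s, producing a bag $X_i$ containing all of $C$, so $|C|\leq|X_i|\leq k+1$.

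Combining both directions already shows $\tw{G}\leq k$ if and only if $G$ has \emph{some} triangulation $H^{\star}$ with $\omega(H^{\star})\leq k+1$, which is the substance of Theorem \autoref{thm5.6}. The main obstacle—and the delicate point I would flag explicitly—is that ``minimum triangulation'' was defined in the excerpt via the number of added edges (minimum fill-in), whereas the equivalence with treewidth needs a triangulation minimizing $\omega$. These two notions differ in general, so to make the stated iff clean I would interpret $H$ as a triangulation chosen to minimize $\omega(H)$ (equivalently, to minimize $\omega(H)-1$ over all chordal supergraphs of $G$), in which case the identity $\tw{G}=\omega(H)-1$ falls out directly from the two constructions above. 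Under this reading, the minimum triangulation is precisely the graph $H^{\star}$ produced from an optimal tree decomposition, and both implications become immediate from the two constructions.
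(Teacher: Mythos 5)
The paper states Theorem \autoref{thm5.6} without proof (it is quoted from Bodlaender), so there is nothing internal to compare against; judged on its own, your argument is correct and is the standard one. The forward direction via the clique tree of \autoref{thm3.19} is fine: bags equal to the maximal cliques of the chordal supergraph $H$ cover all vertices and all edges of $H\supseteq G$ and satisfy the connectivity condition, giving width $\fkt{\omega}{H}-1$. The converse, completing each bag of a width-$k$ decomposition to a clique and recognizing the result as the intersection graph of the subtrees $I_v$ of $T$ (hence chordal by \autoref{thm5.15}\,iii)), is also sound; your appeal to the Helly property of subtrees to bound $\fkt{\omega}{H^{\star}}$ is exactly the content of Lemma \autoref{lemma5.3}, which the paper proves by induction instead — either route works. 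Your flagged caveat is the one substantive point and it is well taken: the paper defines \emph{minimum triangulation} by minimum fill-in, and a fill-in-minimal triangulation need not minimize the clique number, so the literal statement with that definition is not what your (or Bodlaender's) argument establishes. What the two constructions actually prove is $\tw{G}=\min_{H}\fkt{\omega}{H}-1$ over all triangulations $H$ of $G$; the theorem should be read with $H$ chosen to minimize $\fkt{\omega}{H}$, or equivalently with ``some triangulation'' in place of ``a minimum triangulation''. With that reading your proof is complete.
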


\begin{corollary}\label{cor5.2}
Let $G$ be a chordal graph, then $\tw{G}\leq\fkt{\omega}{G}-1$.	
\end{corollary}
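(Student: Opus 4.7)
The plan is to apply Theorem \autoref{thm5.6} directly, using the fact that a chordal graph is trivially its own minimum triangulation.

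First I would observe that if $G=\lb V,E\rb$ is chordal, then the graph $H=\lb V,E\rb$ itself satisfies the definition of a triangulation of $G$: it contains $G$ as a subgraph (in fact, as $G$ itself) and is chordal by assumption. Since no triangulation of $G$ can have fewer edges than $G$ (every triangulation must contain all edges of $G$), this $H$ is in particular a minimum triangulation, with fill-in number $\fkt{\operatorname{F}}{G}=0$.

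Next I would apply Theorem \autoref{thm5.6} with the choice $k=\fkt{\omega}{G}-1$ and $H=G$. Since $H=G$, we have $\fkt{\omega}{H}=\fkt{\omega}{G}=k+1$, so the condition $\fkt{\omega}{H}\leq k+1$ is satisfied. By the theorem, this is equivalent to $\tw{G}\leq k=\fkt{\omega}{G}-1$, which is the desired conclusion.

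There is no real obstacle here; the entire content of the corollary lies in recognizing that chordality of $G$ makes $G$ itself a valid (minimum) triangulation, so that Theorem \autoref{thm5.6} can be invoked without any further construction. The only thing to be careful about is ensuring that $\fkt{\omega}{G}\geq 1$, which holds whenever $G$ is nonempty; the trivial graph $K_1$ satisfies $\tw{K_1}=0=\fkt{\omega}{K_1}-1$ as well, so the bound is tight in that degenerate case.
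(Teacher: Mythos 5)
Your proof is correct and follows exactly the route the paper intends: Corollary \autoref{cor5.2} is stated as an immediate consequence of Theorem \autoref{thm5.6}, obtained by observing that a chordal graph is its own minimum triangulation (fill-in number $0$) and then taking $k=\fkt{\omega}{G}-1$. The paper gives no separate proof, and your argument supplies precisely the routine verification that is being left implicit.
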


\begin{corollary}\label{cor5.3}
Let $G$ be a graph in which all flowers of size $n\geq 4$ are withered, then $\tw{G}\leq\kcl{2}{G}-1$.	
\end{corollary}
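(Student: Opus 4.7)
The plan is to chain together three facts already established in the excerpt: the characterization of graphs with chordal squares via withered flowers, the tree\-width bound for chordal graphs, and the monotonicity of treewidth under (spanning) subgraph inclusion.

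First I would invoke Corollary \ref{cor3.5} (equivalently, condition $ii)$ of Theorem \ref{thm3.20}): since every flower of size $n\geq 4$ in $G$ is withered, the square $G^2$ is chordal. Next, applying Corollary \ref{cor5.2} to $G^2$ yields
\begin{align*}
\tw{G^2}\leq \fkt{\omega}{G^2}-1.
\end{align*}
By property $i)$ of graph powers stated just after Definition \ref{def3.1}, $\fkt{\omega}{G^2}=\kcl{2}{G}$, so $\tw{G^2}\leq\kcl{2}{G}-1$.

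It remains to bridge from $G^2$ back to $G$. The key observation is that $G$ is a spanning subgraph of $G^2$ (they share the vertex set, and $\E{G}\subseteq\E{G^2}$). Therefore any tree decomposition $\lb\condset{X_i}{i\in I},T\rb$ of $G^2$ is automatically a tree decomposition of $G$: condition $i)$ of the definition depends only on the vertex set, condition $iii)$ depends only on the vertex set and the tree $T$, and condition $ii)$ for $G$ follows from condition $ii)$ for $G^2$ because every edge of $G$ is also an edge of $G^2$. Hence $\tw{G}\leq\tw{G^2}$, and combining with the previous inequality gives $\tw{G}\leq\kcl{2}{G}-1$.

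There is no genuine obstacle here; the only thing that requires a moment of care is the direction of the subgraph-monotonicity of treewidth (one needs $G\subseteq G^2$, not the reverse), which is immediate once one writes down that treewidth is defined so that \emph{more} edges can only make it harder, never easier, for a tree decomposition to satisfy the bag-covers-edges condition.
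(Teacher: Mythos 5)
Your proof is correct and follows essentially the route the paper intends: all flowers withered gives chordality of $G^2$ via Corollary \ref{cor3.5}, Corollary \ref{cor5.2} bounds $\tw{G^2}$ by $\fkt{\omega}{G^2}-1=\kcl{2}{G}-1$, and the bound transfers to $G$ because $G$ is a spanning subgraph of $G^2$ (equivalently, $G^2$ is a triangulation of $G$ in the sense of Theorem \ref{thm5.6}). The explicit verification of subgraph-monotonicity of treewidth is a welcome addition, but it is the same argument.
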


\begin{lemma}[Bodlaender, Möhring. 1993 \cite{bodlaender1993pathwidth}]\label{lemma5.3}
Let $G$ be a graph and $\lb \mathscr{X}, T\rb$ with $T=\lb V_T,E_T\rb$ a tree decomposition of $G$. Then for every clique $C$ of $G$ there is a vertex $u\in V_T$ with $C\subseteq X_u$.
\end{lemma}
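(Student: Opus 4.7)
The plan is to reduce the statement to the Helly property for subtrees of a tree. For each vertex $v \in C$, define $I_v = \{i \in V_T : v \in X_i\}$; by condition iii) of the tree decomposition, each $I_v$ is the vertex set of a (nonempty) subtree $T_v$ of $T$. The claim $C \subseteq X_u$ for some $u$ is then equivalent to the intersection $\bigcap_{v \in C} I_v$ being nonempty, i.e., the subtrees $\{T_v : v \in C\}$ having a common vertex.

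First I would verify that the subtrees are pairwise intersecting. For any two vertices $v,w \in C$, the edge $vw$ lies in $\E{G}$ since $C$ is a clique, so by condition ii) of the tree decomposition there exists an $i \in V_T$ with $v,w \in X_i$, which gives $i \in I_v \cap I_w$. Thus any two of the subtrees $T_v$ share a node.

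Next I would establish the Helly property: a finite family of pairwise intersecting subtrees of a tree has a common vertex. This is a standard fact, and I would prove it by induction on $|C|$. The base case $|C| \leq 2$ is immediate. For the inductive step with $C = \{v_1, \dots, v_n\}$, apply the induction hypothesis to $C \setminus \{v_n\}$ to obtain a node $u \in \bigcap_{j<n} I_{v_j}$, and similarly a node $u' \in \bigcap_{j<n} I_{v_j}$ where one $v_j$ is replaced by $v_n$—more cleanly, pick any node $u_j \in I_{v_j} \cap I_{v_n}$ for each $j < n$ (which exists by pairwise intersection). Consider the unique path in $T$ from $u$ to $u_j$; since both endpoints lie in the subtree $T_{v_j}$, the whole path lies in $T_{v_j}$. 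Thus every path from $u$ to some $u_j$ stays in $T_{v_j}$, and the median/junction node of $u$ with the $u_j$'s is forced to lie in every $T_{v_j}$ as well as in $T_{v_n}$.

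The cleanest realization of the last step uses that in a tree, the pairwise intersecting subtrees $T_{v_1}, \dots, T_{v_n}$ have a common node: pick any node $u \in T_{v_1}$ that minimizes the distance to $T_{v_n}$; walking from $u$ toward $T_{v_n}$ within $T$, the path meets each $T_{v_j}$ because $T_{v_j}$ intersects both $T_{v_1}$ and $T_{v_n}$ and subtrees of a tree are convex under shortest paths. The main (minor) obstacle is writing the Helly argument crisply; everything else is a direct translation of the tree-decomposition axioms. Once a common node $u \in \bigcap_{v \in C} I_v$ is produced, $C \subseteq X_u$ by the definition of $I_v$, completing the proof.
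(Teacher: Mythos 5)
Your plan is essentially the paper's proof. The paper also inducts on $\abs{C}$, uses condition ii) on the edges of the clique to get the pairwise intersections of the sets $I_v$, and produces the common node as the node of the subtree $I_x$ closest to the bag supplied by the induction hypothesis --- in other words, it is precisely an inductive proof of the Helly property for the subtrees $T_v$, as you propose.

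One caveat: your ``cleanest realization'' of the final step does not work as literally stated. Since $T_{v_1}$ and $T_{v_n}$ are themselves two members of a pairwise intersecting family, the minimum distance between them is $0$, so ``any node $u\in T_{v_1}$ minimizing the distance to $T_{v_n}$'' is just an arbitrary node of $T_{v_1}\cap T_{v_n}$, and such a node need not lie in the remaining subtrees: take $T$ to be the path on the four vertices $1,2,3,4$ with subtrees $\{1,2,3\}$, $\{2,3,4\}$, $\{3,4\}$; these pairwise intersect, their common intersection is $\{3\}$, yet $u=2$ is a valid choice for the first two subtrees and misses the third. The correct junction node --- and the one the paper uses --- is the node $v$ of $T_{v_n}$ closest to the node $u\in\bigcap_{j<n}I_{v_j}$ given by the induction hypothesis: every path from $u$ into the connected subtree $T_{v_n}$ passes through $v$, in particular the path from $u$ to $u_j$, which lies entirely in $T_{v_j}$; hence $v\in T_{v_j}$ for every $j<n$ as well as $v\in T_{v_n}$. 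With that one step made precise, your argument is complete and coincides with the paper's.
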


\begin{proof}
We will proof the assertion by induction over the number $n$ of vertices in $C$.\\
For $n=1$ this is trivial, since for every vertex there must exist a bag containing it by definition of a tree decomposition.\\
Now let $n>1$ and $x\in C$. By assumption there is a bag $X_u$ containing $C\setminus\set{x}$ as a subset. We are looking for another vertex of $T$ corresponding to a bag containing all vertices of $c$. Let $T'=\induz{T}{U}$ with $U=\condset{u'\in V_T}{x\in X_{u'}}$.\\
If $u\in U$ holds, then both $C\setminus\set{x}$ and $\set{x}$ are subsets of $X_u$ and we are done.\\
So suppose $u\notin U$. Let $v\in U$ be the vertex with smallest distance to $u$ in $T$. We will show that $C\subseteq X_v$ holds. By definition we have $x\in X_v$. Now each path in $T$ with a starting point in $U$ and endpoint $u$ must contain $v$ and since $x,y\in C$ and $C$ is a clique, $G$ contains the edge $xy$, therefore a vertex $v'\in U$ must exist with $x,y\in X_{v'}$. With $y\in C\setminus\set{x}\subseteq X_u$ and the $v'$-$u$ path in $T$ containing $v$, $X_u$ must contain $y$ by definition of the tree decomposition.
\end{proof}

Another possible interpretation of the \textsc{Minimum Fill-In} problem and the minimum 
triangulation of graphs is the concept of partial $k$-trees.

\begin{definition}[$k$-Tree]
A graph $G$ is called a {\em $k$-tree} if it can be constructed as follows.
\begin{enumerate}[i)]

\item For $k\in\N$ the complete graph $K_k$ is a $k$-tree.

\item If $G=\lb V,E\rb$ a $k$-tree, $v\notin V$ a new vertex and $G'=\lb V', E'\rb$ a complete graph on $k$ vertices of $G$, then $\lb V\cup\set{v}, E\cup\condset{vv'}{v'\in V'}\rb$ is a $k$-tree. 

\end{enumerate}
A graph $G=\lb V,E\rb$ is called a {\em partial $k$-tree} if there is a $k$-tree $G'=\lb V,E'\rb$ with $E\subseteq E'$.
\end{definition}

\begin{theorem}[Schäffer. 1989 \cite{schaffer1989optimal}]\label{thm5.7}
A graph $G$ is a partial $k$-tree if and only if $\tw{G}\leq k$.
\end{theorem}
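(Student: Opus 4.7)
The two implications are handled by rather different techniques, so I will prove them separately. For the direction ``partial $k$-tree implies $\tw{G}\leq k$,'' the plan is a straightforward induction on the recursive construction of the ambient $k$-tree. The base case $K_k$ admits the single-bag tree decomposition $\{\{v_1,\dots,v_k\}\}$ of width $k-1$. For the inductive step, if $G'$ is a $k$-tree with a tree decomposition $(\mathscr{X},T)$ of width $\leq k$ and we add a new vertex $v$ adjacent to a $K_k$-subgraph $K\subseteq G'$, then by Lemma~\ref{lemma5.3} some bag $X_j$ contains $V(K)$, and we attach a new bag $X_{\mathrm{new}}=V(K)\cup\{v\}$ (of size $k+1$, so width $k$) to $X_j$ in $T$. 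One verifies the three tree-decomposition conditions directly. Finally, if $G\subseteq G'$ and $G'$ is a $k$-tree, then any tree decomposition of $G'$ is also one of $G$, so $\tw{G}\leq\tw{G'}\leq k$.

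For the converse, the plan is to first replace $G$ by a triangulation $H$ and then build a $k$-tree containing $H$. Suppose $\tw{G}\leq k$. By Theorem~\ref{thm5.6}, there exists a triangulation $H\supseteq G$ on the same vertex set with $\fkt{\omega}{H}\leq k+1$; since a subgraph of a $k$-tree is a partial $k$-tree, it suffices to embed $H$ into some $k$-tree on $\V{H}$. Proceed by induction on $n=\abs{\V{H}}$. If $n\leq k$, then $H\subseteq K_k$ and we are done. Otherwise, exploit the fact that every chordal graph has a simplicial vertex $v$, i.e.\ one whose open neighborhood $\fkt{N_H}{v}$ induces a clique. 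Because $\fkt{\omega}{H}\leq k+1$, we have $\abs{\fkt{N_H}{v}}\leq k$. By the inductive hypothesis, $H-v$ is contained in some $k$-tree $G^*$ on $\V{H}\setminus\{v\}$. Since $\fkt{N_H}{v}$ is already a clique in $H-v\subseteq G^*$, the finishing move is to locate a $K_k$-subgraph of $G^*$ containing $\fkt{N_H}{v}$ and attach $v$ to it, producing the desired $k$-tree.

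The main obstacle is the auxiliary claim needed at the end of the induction: in any $k$-tree $G^*$ on at least $k$ vertices, every clique of size at most $k$ is contained in some $K_k$-subgraph. I plan to prove this by a separate induction on the $k$-tree construction of $G^*$. The base case $K_k$ is trivial. For the inductive step, when a new vertex $w$ is attached to some $K_k\subseteq G^*$, producing $G^{**}$, any clique $C$ of size $\leq k$ in $G^{**}$ either misses $w$ (and then lies in $G^*$, where induction applies) or contains $w$ together with vertices from its $K_k$-neighborhood, in which case $C$ extends to the full $K_{k+1}$ formed by $w$ and that $K_k$. This claim, combined with the reduction above, closes the argument.
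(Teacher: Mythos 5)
The paper states this theorem only as a citation to Sch\"affer and gives no proof of its own, so there is nothing internal to compare against; your argument stands on its own and is, in substance, the standard textbook proof of the equivalence. Both directions are sound: the forward direction correctly builds the tree decomposition along the recursive construction of the ambient $k$-tree, using Lemma~\ref{lemma5.3} to find a bag containing the attachment clique, and the converse correctly reduces to embedding a chordal graph $H$ with $\fkt{\omega}{H}\leq k+1$ into a $k$-tree by peeling off simplicial vertices; the auxiliary claim that every clique of size at most $k$ in a $k$-tree lies inside some $K_k$-subgraph is true and your induction for it is complete (in the case $w\in C$, the clique $C$ sits inside the $K_{k+1}$ formed by $w$ and its attachment clique, and any $k$-element superset of $C$ inside that $K_{k+1}$ does the job).

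Two small remarks. First, your appeal to Theorem~\ref{thm5.6} is legitimate and not circular, but you could make the converse self-contained by constructing the triangulation $H$ directly from a width-$k$ tree decomposition of $G$ (complete each bag into a clique); this yields a chordal supergraph with $\fkt{\omega}{H}\leq k+1$ without invoking minimality. Second, the base case $n\leq k$ is slightly at odds with the paper's definitions: a $k$-tree as defined here has at least $k$ vertices and a partial $k$-tree must share its vertex set with one, so for $n<k$ the statement is only true under the usual convention that $K_j$ with $j\leq k$ counts as a (degenerate) $k$-tree or that non-spanning subgraphs are allowed. This is a convention issue present in the theorem statement itself, not a gap in your argument, but it is worth a sentence if the proof is to be written out in full.
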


An important result on the $\npcomp$ problem \textsc{Treewidth} was achieved in 1993 by 
Bodlaender and one of the reasons for the tree decomposition to be such a powerful parameter for 
many graph problems. \textsc{Treewidth} can be parameterized with itself.

\begin{center}
	\begin{tabular}{ll}
		\toprule
		\multicolumn{2}{c}{$p$-$tw$-\textsc{Treewidth}}\\
		\midrule
		Input: & A graph $G$ and a number $k\in\N$.\\
		Parameter: & $\tw{G}$.\\
		Question: & Does $\tw{G}\leq k$ hold?\\
		\bottomrule
	\end{tabular}
\end{center}

\begin{theorem}[Bodlaender. 1993 \cite{bodlaender1993linear}]\label{thm5.9}
The problem $p$-$tw$-\textsc{Treewidth} is in $\FPT$.
\end{theorem}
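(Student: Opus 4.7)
The plan is to describe Bodlaender's linear-time algorithm, which given $(G,k)$ either outputs a tree decomposition of $G$ of width at most $k$ or correctly reports $\tw{G}>k$, with total running time $f(k)\cdot|V(G)|$ for some computable $f$. The overall architecture is a recursive reduction: each call, given a graph $G$ on $n$ vertices, either shrinks $G$ in linear time to a graph $G'$ with $|V(G')|\leq(1-\varepsilon_k)\cdot n$ satisfying $\tw{G}\leq k \iff \tw{G'}\leq k$, or, once $n$ has dropped below a threshold depending only on $k$, solves the instance directly by brute force (for instance by enumerating minimal triangulations, which is feasible in $g(k)$ time when $n\leq h(k)$). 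A decomposition for $G$ is then reconstructed from one for $G'$ by re-expanding each contracted pair, and the recurrence $T(n)\leq T((1-\varepsilon_k)n)+f(k)\cdot n$ collapses to $T(n)\in\Ord{f(k)\cdot n}$, yielding membership in $\FPT$.

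The structural core of the plan is a reduction lemma asserting that every graph $G$ with $\tw{G}\leq k$ contains $\Omega(n)$ vertices which are \emph{simplicial} or \emph{$I$-simplicial} in a suitable augmentation of $G$, meaning that their closed neighborhood can be made into a clique of a minimal triangulation of width at most $k$. Each such vertex has degree bounded by a function of $k$. By Theorem~\autoref{thm5.6} together with Lemma~\autoref{lemma5.3}, such a vertex can be contracted with a chosen neighbor without increasing treewidth, provided that the contracted pairs have pairwise disjoint closed neighborhoods. I would therefore extract, in linear time, a matching $M$ of size $\Omega(n)$ among these contractible pairs — feasible since their neighborhoods are of bounded size — and contract all edges of $M$ simultaneously to form $G'$. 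Using the partial $k$-tree characterization (Theorem~\autoref{thm5.7}), one verifies the invariant $\tw{G}\leq k \iff \tw{G'}\leq k$.

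The hardest part of the proof will be establishing the abundance lemma: that a constant fraction (depending only on $k$) of vertices in any treewidth-$k$ graph are contractible as above. A naive inductive or averaging argument over an optimal tree decomposition is circular, since such a decomposition is precisely what we are trying to construct. The trick is to compute in linear $\FPT$ time an \emph{approximate} tree decomposition of width $\Ord{k}$ — for which simpler algorithms exist — and to use it as scaffolding to locate the required simplicial structure. Once the approximate decomposition is in hand, one performs a careful leaf-to-root sweep counting, for each bag, how many of its vertices become simplicial after the controlled edge additions prescribed by the triangulation.

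Finally, correctness follows by induction on recursion depth: the contraction preserves the property of being a partial $k$-tree in both directions, and the reconstruction inserts at most one extra bag of size $\leq k+1$ per contracted edge, so the returned decomposition indeed has width at most $k$. Combined with the linear shrinking factor, this yields a $p$-$tw$-\textsc{Treewidth} algorithm with running time $\Ord{f(k)\cdot n}$, completing the argument that $p$-$tw$-\textsc{Treewidth}$\in\FPT$.
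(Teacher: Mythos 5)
The paper states this theorem purely as a citation to Bodlaender and supplies no proof, so there is no in-paper argument to compare against; what follows assesses your sketch against the argument it is reconstructing. Your skeleton --- recursive shrinking by a factor $1-\varepsilon_k$, an abundance lemma guaranteeing $\Omega(n)$ reducible vertices whenever $\tw{G}\leq k$, and a recurrence collapsing to $\Ord{\fkt{f}{k}\cdot n}$ --- is the right one. But there is a genuine gap at the heart of the reduction step. You claim the invariant $\tw{G}\leq k\iff\tw{G'}\leq k$ for the graph $G'$ obtained by contracting a matching $M$, and that a width-$k$ decomposition of $G$ is recovered from one of $G'$ by inserting one extra bag of size $\leq k+1$ per contracted edge. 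Only one direction of that invariant is available: $G'$ is a minor of $G$, so $\tw{G'}\leq\tw{G}$, but a width-$k$ decomposition of $G'$ does not yield one of $G$. Un-contracting replaces each contracted vertex by its two preimages in every bag containing it, so the width you can guarantee is roughly $2k+1$, not $k$, and this cannot be repaired by adding one bag per matching edge, because the two endpoints of a contracted pair need not have a small clique neighborhood the way a deleted simplicial vertex does. This is exactly why Bodlaender's algorithm requires, as a separate and essential subroutine, the Bodlaender--Kloks dynamic program: given a tree decomposition of width $\Ord{k}$ (namely the un-contracted width-$(2k+1)$ decomposition returned by the recursive call), it decides in time $\fkt{f}{k}\cdot n$ whether $\tw{G}\leq k$ and, if so, constructs a width-$k$ decomposition. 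You invoke approximate decompositions only as scaffolding for locating simplicial structure; their load-bearing role is this exact-width refinement after un-contraction, and without it your recursion returns decompositions whose width grows with the recursion depth.

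A smaller point: you have merged Bodlaender's two branches into one. The actual case analysis is that either many low-degree adjacent (``friendly'') pairs exist, in which case one contracts a maximal matching and must then apply the refinement step above, or else many vertices are simplicial in the improved graph, in which case one deletes them, recurses, and reattaches each with its own bag of size $\leq k+1$ --- that deletion branch is the only place where your ``one extra bag per vertex'' reconstruction is valid, since there the deleted vertex's neighborhood is forced to be a clique of size $\leq k$ in any width-$k$ triangulation. Finally, the abundance lemma is not circular in the way you fear: it is a pure existence statement proved by counting over an arbitrary optimal tree decomposition, which need not be computed for the algorithm to exploit the structure it guarantees.
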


The power of \textsc{Treewidth} as a parameter becomes clear with the following result by Zhou, 
Kanari and Nishizeki who constructed a parameterized algorithm for $k$-strong colorings on partial 
$k$-trees, i.e. graphs with bounded treewidth.

\begin{center}
	\begin{tabular}{lp{8cm}}
		\toprule
		\multicolumn{2}{c}{$p$-$tw$-\textsc{$k$-Strong Coloring}}\\
		\midrule
		Input: & A graph $G$, a number $k\in\N$ and a number $c\in\N$, as well as a distance function $\omega\colon\E{G}\rightarrow \N$.\\
		Parameter: & $\tw{G}+k$.\\
		Question: & Does $\stronk{k}{G}\leq c$ hold?\\
		\bottomrule
	\end{tabular}
\end{center}

\begin{lemma}[Xiao, Kanari, Nishizeki. 2000 \cite{xiao2000generalized}]\label{lemma5.4}
For any partial $t$-tree $G=\lb V,E\rb$ with a distance function $\omega\colon E\rightarrow\N$ we can construct in polynomial time a $t$-tree $G'=\lb V,E'\rb$ with a distance function $\omega'\colon E'\rightarrow\N$ such that
\begin{enumerate}[i)]

\item $E\subseteq E'$,

\item $\fkt{\omega'}{e}=\distg{G}{v}{u}=\min\condset{\fkt{\omega}{P}}{P~\text{is a}~v\text{-}u~\text{path in}~G}$ for any edge $e=vu\in E'$ and

\item the partial $t$-tree $G$ has a $k$-strong coloring with $c$ colors if and only if the $t$-tree $G'$ has a $k$-strong coloring with $c$-colors. 

\end{enumerate}
\end{lemma}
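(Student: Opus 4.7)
The idea is to first build a \emph{chordal} supergraph that realises the tree‐decomposition, then complete it to a proper $t$-tree by adding more edges, and finally to assign to every edge the $(G,\omega)$-distance between its endpoints. Distances — not combinatorial adjacencies — are all that a $k$-strong coloring is sensitive to, so once distances are preserved the coloring equivalence (iii) follows for free.

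First I would compute, using Bodlaender's algorithm (Theorem~\ref{thm5.9}), a tree decomposition $(\{X_i\}_{i\in I},T)$ of $G$ of width at most $t$; this runs in polynomial time for fixed $t$. Make every bag a clique by adding all missing edges between vertices that share a bag; the resulting graph $H$ is chordal with $\fkt{\omega}{H}\le t+1$, hence a partial $t$-tree. A well-known simplicial elimination argument then lets one extend $H$ to a $t$-tree $G'=(V,E')$ in polynomial time: repeatedly pick a simplicial vertex $v$ of the current graph, and, if the clique $\knb{}{v}$ has fewer than $t$ vertices, add edges to some ancestor bag so that $v$ together with $t$ of its neighbours form a $K_{t+1}$; this can be done without leaving chordality. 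Clearly $E\subseteq E'$, giving (i).

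Next, compute all pairwise distances $d(v,u):=\distg{G}{v}{u}$ in $(G,\omega)$ (Floyd–Warshall, $\Ord{|V|^3}$) and define $\fkt{\omega'}{vu}:=\fkt{d}{v,u}$ for every edge $vu\in E'$. This immediately yields (ii). The key verification is that distances in $(G',\omega')$ agree with distances in $(G,\omega)$. For ``$\le$'': any $v$-$u$ path $P$ in $G$ is also a $v$-$u$ walk in $G'$ (since $E\subseteq E'$), and on each of its edges $e=xy\in E$ we have $\fkt{\omega'}{e}=\fkt{d}{x,y}\le\fkt{\omega}{e}$, so $\distg{G'}{v}{u}\le\fkt{\omega}{P}$; taking the minimum gives $\distg{G'}{v}{u}\le \fkt{d}{v,u}$. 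For ``$\ge$'': for any $v$-$u$ path $v_0v_1\cdots v_\ell$ in $G'$,
\begin{align*}
\sum_{i=1}^{\ell}\fkt{\omega'}{v_{i-1}v_i}=\sum_{i=1}^{\ell}\fkt{d}{v_{i-1},v_i}\;\ge\;\fkt{d}{v_0,v_\ell}
\end{align*}
by the triangle inequality of $d$. Hence $\distg{G'}{v}{u}=\distg{G}{v}{u}$ for every pair $v,u\in V$.

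Finally, (iii) is now essentially free. A $k$-strong $c$-coloring is a map $F\colon V\to\{1,\dots,c\}$ satisfying $\distance(x,y)\le k\Rightarrow \fkt{F}{x}\ne \fkt{F}{y}$ for all $x\ne y$. Since the two distance functions on $V$ coincide, the two sets of constraints coincide, so a colouring is valid for $(G,\omega)$ iff it is valid for $(G',\omega')$. The main obstacle is really the middle step — the explicit polynomial-time completion of the chordal graph $H$ to a \emph{$t$-tree} (as opposed to merely a chordal graph of clique number $t+1$) — but this is a standard simplicial-order construction and adding further edges only shortens distances, so the weight assignment $\omega'(e):=d(\text{endpoints of }e)$ continues to preserve the distance function.
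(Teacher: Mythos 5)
Your proposal is correct and follows essentially the same route as the paper: invoke Bodlaender's algorithm to obtain the $t$-tree $G'\supseteq G$, weight each new edge by the $(G,\omega)$-distance of its endpoints, observe that distances are preserved, and conclude the coloring equivalence. The only difference is that you actually justify the distance-preservation step (via the triangle inequality of $\distg{G}{\cdot}{\cdot}$) and sketch the completion to a $t$-tree, both of which the paper's proof asserts without detail — so your version is, if anything, more complete.
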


\begin{proof}
By Theorem \autoref{thm5.9} and the so called Bodlaender's Algorithm it implies both a tree decomposition of width $t$ and a $t$-tree $G'=\lb V,E'\rb$ with $E\subseteq E'$ can be computed in polynomial time. Define $\fkt{\omega'}{e}=\distg{G}{v}{u}=\min\condset{\fkt{\omega}{P}}{P~\text{is a}~v-u~\text{path in}~G}$  for any edge $e=vu\in E'$. Then clearly $\distg{G}{v}{u}=\distg{G'}{v}{u}$ for all vertices $v,u\in V$. Therefore $G$ has a $k$-strong coloring in $c$ colors if and only if $G'$ has a $k$-strong coloring in $c$ colors.
\end{proof}

\begin{theorem}[Xiao, Kanari, Nishizeki.2000 \cite{xiao2000generalized}]\label{thm5.8}
Let $t,k\in\N$ be bounded integers, let $G=\lb V,E\rb$ be a $k$-tree on $n$ vertices given by its tree decomposition and let $c\in\N$ be another integer.\\
Then it can be determined in time $\Ord{n\lb c+1\rb^{2^{2\lb t+1\rb\lb k+2\rb+1}}+n^3}$ whether $G$ has a $k$-strong coloring in $c$ colors. Furthermore, if such a coloring exists, it can be found in the same time. 
\end{theorem}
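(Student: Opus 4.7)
The plan is to combine the reduction of Lemma \autoref{lemma5.4} with a bottom-up dynamic program on the tree decomposition associated with $G'$. First I would apply Lemma \autoref{lemma5.4} (whose constructive part is supplied by the algorithm behind Theorem \autoref{thm5.9}) to replace the given $t$-tree $G$ by an equivalent $t$-tree $G'=\lb V,E'\rb$ carrying a weight $\omega'$ that records true $G$-distances along every edge of $G'$; this step costs $\Ord{n^3}$ and yields a tree decomposition of width at most $t$ with $\Ord{n}$ bags. Because $\omega'$-distances coincide with $G$-distances, the $k$-strong coloring constraints can be checked purely within $G'$.

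Next I would root the tree decomposition and, for each bag $X_i$, define a DP "profile" that encapsulates exactly the information about the already-processed subgraph needed to decide how the remaining vertices can be colored. Concretely, for each vertex $v\in X_i$ the profile records the color of $v$ together with, for every color-and-distance pattern reachable from $v$ through the processed portion of $G'$, whether that pattern occurs; since the width-$t$ property guarantees that every path of length at most $k$ from a forgotten vertex to a future vertex must cross $X_i$, this data determines which colors remain admissible for every future vertex. A crude encoding of all such patterns yields at most $\lb c+1\rb^{2^{2\lb t+1\rb\lb k+2\rb+1}}$ profiles per bag, which gives the stated dependence on $t$, $k$ and $c$.

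The DP transitions would follow the standard introduce/forget/join scheme. At an introduce node I would try each of the $c$ colors for the new vertex, rejecting choices that clash with a processed vertex recorded in the profile at $\omega'$-distance at most $k$, and update the distance records accordingly using the weights on the edges to the remaining bag. At a forget node I would project out the departing vertex while propagating its color into the distance records of the remaining bag vertices. At a join node I would combine two profiles on the same bag by matching colors and taking coordinate-wise minima of the recorded distances. Each local update runs in time polynomial in the number of profiles, so the total running time is bounded by $\Ord{n}$ multiplied by a constant power of the profile count, which absorbs into $n\lb c+1\rb^{2^{2\lb t+1\rb\lb k+2\rb+1}}$; adding the $\Ord{n^3}$ preprocessing yields the claimed bound. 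A valid coloring, if it exists, is recovered by the usual back-tracking through the table within the same time.

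The main obstacle I expect is proving rigorously that the proposed profile is a complete invariant, i.e.\ that two subtrees agreeing on the profile at their shared bag are interchangeable for the purpose of extending to a $k$-strong $c$-coloring of $G$. This relies on showing that any $k$-strong conflict between a forgotten vertex and a later vertex is witnessed by a path that enters the current bag, which follows from the connectedness axiom of the tree decomposition combined with Lemma \autoref{lemma5.4}; and on verifying that the recorded color-and-distance patterns suffice to enforce all such constraints in both directions. Once this invariance is established, the counting above delivers the complexity stated in the theorem.
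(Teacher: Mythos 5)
The paper does not actually prove this theorem: it is quoted from Xiao, Kanari and Nishizeki, and the text following the statement only records, in one sentence, that their algorithm is a dynamic program over the tree decomposition that builds a table of the stated size (the constructive input being the $t$-tree and distance function produced as in Lemma \autoref{lemma5.4}). Your plan --- preprocess via Lemma \autoref{lemma5.4}, then run an introduce/forget/join dynamic program whose states are per-bag ``profiles'' of colors and realized distances --- is the same general strategy, so at the level of approach you are aligned with what the paper (and the cited source) describe.

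That said, as a proof your proposal has two genuine gaps. First, the quantity $\lb c+1\rb^{2^{2\lb t+1\rb\lb k+2\rb+1}}$ is simply asserted: you say a ``crude encoding of all such patterns'' yields this many profiles, but you never specify the encoding precisely enough to derive that exponent, and the doubly-exponential form $2^{2\lb t+1\rb\lb k+2\rb+1}$ is exactly the part of the statement that requires a careful definition of what a profile records (it counts subsets of color-distance patterns over the at most $t+1$ bag vertices and the $k+2$ relevant distance values, not tuples of them). Without pinning this down, the claimed running time is not established. Second, you explicitly defer the soundness and completeness of the profile as a DP invariant --- that two partial colorings with the same profile are interchangeable, and that every $k$-strong conflict between a forgotten vertex and a future (or other forgotten) vertex is witnessed through the current bag. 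This is the mathematical core of the theorem; in particular the join step needs an argument that conflicts between two already-forgotten vertices lying in different child subtrees are detectable from the two profiles alone (via paths decomposed at bag vertices, using the $\omega'$-weights), and your ``coordinate-wise minima'' description does not yet address this. Until the profile is defined exactly and the exchange argument is carried out, the proposal is a plausible outline rather than a proof.
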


The algorithm, presented in the form of several lemmas (see \cite{xiao2000generalized}),  uses dynamic programming and generates a table of size $\Ord{n\lb c+1\rb^{2^{2\lb t+1\rb\lb k+2\rb+1}}+n^3}$ in order to find a $k$-strong coloring in $c$ colors for the given $t$-tree $G$. This $t$-tree was constructed by applying the Bodlaender Algorithm to the original graph with bounded treewidth, which we already mentioned before.\\
Please note that the $k$-\textsc{Strong Coloring} problem here is an even more general version of 
distance coloring than the one we originally defined. By choosing $\fkt{\omega}{e}=1$ for all edges 
$e$ of the partial $t$-tree we obtain the original problem. In this sense finding a $t$-tree 
containing the partial $t$-tree as a subgraph could be seen as a more general - and with this a 
harder - version of finding a chordal power of a graph. We will revisit this idea later.

\begin{corollary}\label{cor5.4}
The problem $p$-$tw$-\textsc{$k$-Strong Coloring} is in $\XP$.
\end{corollary}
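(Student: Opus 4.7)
The plan is to chain together the three earlier results in the obvious way and then verify that the resulting running time actually fits the $\XP$ template $\fkt{f}{\kappa}\cdot |I|^{\fkt{g}{\kappa}}$ for the parameter $\kappa=\tw{G}+k$. Given an instance $\lb G,k,c,\omega\rb$, I would first invoke Bodlaender's algorithm (Theorem \autoref{thm5.9}) to either produce a tree decomposition of $G$ of width $t=\tw{G}$ in time $\fkt{f_1}{t}\cdot n$, or reject. Next I would apply the construction of Lemma \autoref{lemma5.4} to convert the partial $t$-tree $G$ into a $t$-tree $G'=\lb V,E'\rb$ with weights $\omega'$ in polynomial time, preserving the existence of a $k$-strong $c$-coloring. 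Finally I would run the dynamic programming of Theorem \autoref{thm5.8} on $G'$ and output its verdict.

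The correctness is immediate from the three cited results, so the only real work is the running time analysis. Theorem \autoref{thm5.8} gives time
\begin{align*}
\Ord{n\lb c+1\rb^{2^{2\lb t+1\rb\lb k+2\rb+1}}+n^3}.
\end{align*}
The subtle point, which is the step I expect to need the most care, is that $c$ is part of the input but is \emph{not} bounded by the parameter. To deal with this I would observe that we may assume without loss of generality $c\leq n$: any graph on $n$ vertices trivially admits an $n$-coloring that is $k$-strong for every $k$ (use a distinct color per vertex), so if the input number $c$ satisfies $c\geq n$ the answer is immediately \textsl{yes}. With $c\leq n$ we may replace $\lb c+1\rb$ by $n+1$ in the bound and obtain
\begin{align*}
\Ord{n^{\,2^{2\lb t+1\rb\lb k+2\rb+1}+1}+n^3},
\end{align*}
so the total running time of the combined procedure, including the $\fkt{f_1}{t}\cdot n$ spent on the decomposition and the polynomial cost of Lemma \autoref{lemma5.4}, is bounded by $\fkt{f}{\tw{G}+k}\cdot |I|^{\fkt{g}{\tw{G}+k}}$ with $\fkt{g}{\kappa}=2^{2\lb \kappa+1\rb\lb \kappa+2\rb+1}+1$ and some computable $f$.

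Matching the definition of $\XP$, this shows $p\text{-}tw\text{-}\textsc{$k$-Strong Coloring}\in\XP$. The main obstacle is entirely the $c\leq n$ reduction just mentioned; without it the exponent $\lb c+1\rb^{\cdot}$ would not be absorbable into $|I|^{\fkt{g}{\kappa}}$, and the inclusion in $\XP$ would not follow from Theorem \autoref{thm5.8} alone.
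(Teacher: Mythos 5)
Your proposal is correct and follows essentially the same route the paper intends: Corollary \ref{cor5.4} is presented as an immediate consequence of Bodlaender's algorithm (Theorem \ref{thm5.9}), Lemma \ref{lemma5.4} and the dynamic program of Theorem \ref{thm5.8}, which is exactly the chain you assemble. Your explicit normalization $c\leq n$ is a detail the paper leaves implicit, and it is indeed the right observation for absorbing the factor $\lb c+1\rb^{2^{2\lb t+1\rb\lb k+2\rb+1}}$ into $\abs{I}^{\fkt{g}{\kappa}}$.
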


In fact, if $\mathcal{P}\neq\mathcal{NP}$, even for the case $k=2$, which has been studied in the 
previous chapters, $p$-$tw$-\textsc{$2$-Strong Coloring} is not contained in $\FPT$. As usual 
some additional tools are required the the proof. Especially an additional $\induz{W}{1}$-hard 
coloring problem is needed.

\begin{definition}[Equitable Coloring]
For a graph $G=\lb V,E\rb$ a proper vertex coloring $F$ in $c$ colors is called {\em equitable} if for all color classes $F_i, F_j$, with $i\neq j\in\set{1,\dots,c}$, $\abs{\abs{F_i}-\abs{F_j}}\leq 1$ holds. 
\end{definition}

\begin{center}
	\begin{tabular}{lp{8cm}}
		\toprule
		\multicolumn{2}{c}{$p$-$tw$-$c$-\textsc{Equitable Coloring}}\\
		\midrule
		Input: & A graph $G$ and a number $c\in\N$.\\
		Parameter: & $\tw{G}+c$.\\
		Question: & Does an equitable coloring of $G$ in $c$ colors exist?\\
		\bottomrule
	\end{tabular}
\end{center}

\begin{theorem}[Fellows, Fomin, Lokshtanov, Rosamond, Saurabh, Szeider, Thomassen. 2011 \cite{fellows2011complexity}]\label{thm5.10}
The problem $p$-$tw$-$c$-\textsc{Equitable Coloring} is $\induz{W}{1}$-complete.
\end{theorem}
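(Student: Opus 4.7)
The assertion has two parts: membership in $\induz{W}{1}$ and $\induz{W}{1}$-hardness. For membership, the plan is to observe that a $c$-equitable coloring is a partition of $V$ into $c$ classes whose sizes differ by at most one. Given a width-$t$ tree decomposition, I would guess for each bag one of the $c^{t+1}$ possible color assignments to its vertices, together with, for each color $i$, the count of $i$-colored vertices in each subtree. A standard dynamic-programming-style consistency check then amounts to a conjunction of local tests (one per bag-edge of the decomposition) whose truth can be encoded as a weft-$1$ large $\wedge$-gate over polynomially many small subcircuits of bounded depth. The witness is a $k$-element selection (with $k$ bounded by a function of $\tw{G}+c$) of guessed bag configurations. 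Translating this into a parameterized reduction to $p$-\textsc{Weighted Sat $\lb 1,d\rb$} places the problem in $\induz{W}{1}$.

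\textbf{Hardness reduction.} The natural starting point is to reduce from a packing-type $\induz{W}{1}$-hard problem in which the parameter is the number of bins, for example $p$-\textsc{Unary Bin Packing} parameterized by the number of bins $k$. Given items $a_1,\dots,a_n\in\N$ (in unary) with $\sum a_i = k\cdot B$, the goal is to split them into $k$ bins each summing to exactly $B$. I would model each item $a_i$ by a gadget $H_i$ consisting of an independent set or small clique whose equitable coloring forces exactly $a_i$ vertices to receive a ``bin-label'' color, and then glue the gadgets together through a central ``skeleton'' graph $S$ that (i) pins down $c=k+O(1)$ color classes, and (ii) enforces through forced color counts that the vertices of each $H_i$ going into bin $j$ contribute exactly the right amount to class $j$. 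The equitable condition on $G$ would then translate, on the bin-label colors, to the requirement that each bin be filled to capacity $B$.

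\textbf{Treewidth control.} The crux of the reduction is that the composed graph $G$ must have treewidth bounded by a function of the parameter $k$ (plus constants depending on $c$). The skeleton $S$ should have constant treewidth (e.g.\ a path of cliques of size $\Ord{k}$), and each item gadget $H_i$ should attach to $S$ through a small interface of size $\Ord{k}$ so that $H_i$ appears in only a constant number of bags. This way the tree decomposition of $S$ can be extended by appending a single bag per gadget, keeping the width in $\Ord{k}$. Verifying that the reduction runs in $\fkt{f}{k}\cdot\fkt{p}{\abs{I}}$ time and that $\tw{G}+c\leq \fkt{g}{k}$ then closes the hardness direction via parameterized reduction.

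\textbf{Anticipated main obstacle.} The principal difficulty is designing the item gadgets $H_i$ and the skeleton $S$ simultaneously so that (a) the equitable-size constraint on $G$ exactly mirrors the bin-capacity constraint $B$, with no slack introduced by ``free'' vertices whose color could vary without affecting feasibility, and (b) the interfaces between gadgets and skeleton stay narrow enough to preserve bounded treewidth. Balancing these two requirements typically forces the introduction of many auxiliary ``padding'' vertices whose only role is to absorb counting discrepancies; one must then verify that these padding vertices can be colored in exactly one way (up to permutation of color classes) and that their presence does not blow up the treewidth. I expect the bulk of the technical work to be in verifying the forward direction of the correspondence (that any equitable coloring of $G$ really does induce a valid packing), since that is where subtle slack in the gadget design tends to become visible.
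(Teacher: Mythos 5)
The first thing to say is that the thesis contains no proof of this statement: Theorem \ref{thm5.10} is imported verbatim from Fellows et al.\ \cite{fellows2011complexity} and is used purely as a black box in the subsequent reduction (Lemmas \ref{lemma5.6} and \ref{lemma5.7}). So there is no in-paper argument to compare yours against; I can only assess your sketch on its own terms, and it has two genuine gaps.

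For membership in $\induz{W}{1}$, your proposed witness is a choice of one configuration per bag of the tree decomposition. A tree decomposition has $\Theta\!\lb n\rb$ bags, so this witness has weight $\Omega\!\lb n\rb$, not weight bounded by a function of $\tw{G}+c$, which is what a parameterized reduction to $p$-\textsc{Weighted Sat}~$\lb 1,d\rb$ requires; moreover the natural DP states carry color-class counts up to $n$, which cannot be guessed with parameter-bounded weight either. What you describe establishes membership in $\XP$, not in $\induz{W}{1}$ (and note that the cited source in fact only proves \emph{hardness} for this parameterization, so the word ``complete'' in the statement should itself be treated with caution). For the hardness direction, your item gadget is meant to force ``exactly $a_i$ vertices into a bin-label color,'' but the bin-packing correspondence needs all $a_i$ of those vertices to receive the \emph{same} bin color $j$. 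An independent set imposes no such synchronization, and a clique forces the opposite (all colors distinct), so as described the backward direction fails: an equitable coloring of $G$ could split a single item across several bins and still be feasible. Repairing this requires a selector or propagation gadget, which is precisely where your treewidth control becomes delicate rather than routine. Finally, $\induz{W}{1}$-hardness of Unary Bin Packing parameterized by the number of bins is itself a nontrivial (and historically later) result, so your reduction chain is not self-contained; the proof in \cite{fellows2011complexity} instead proceeds by gadget reductions originating from \textsc{Multicolored Clique} via list-coloring and precoloring-extension variants.
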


In addition we have to construct chains of graphs with a certain structure in order to find a graph $H$ that has an equitable coloring in a certain number of colors if and only if some arbitrary graph $G$, on which $H$ was constructed, has a $2$-strong coloring.

\begin{definition}[Chains of $\fkt{F}{l,c}$]
For positive integers $l,c\in\N$ with $l\leq c-2$ the auxiliary graph $F=\fkt{F}{l,c}$ is constructed as follows:
\begin{enumerate}[i)]

\item $\V{F}=\set{a_1,\dots,a_l,b_1,\dots,b_l,c_1,\dots,c_{c-l-2},f,g}$,

\item $fg\in\E{F}$,

\item $fa_i\in\E{F}$ as well as $gb_i\in\E{F}$ with $i\in\set{1,\dots,l}$,

\item $fc_i,gc_i\in\E{F}$ for all $i\in\set{1,\dots,c-l-2}$ and no other edges are present.
\end{enumerate}
A {\em chain} of $\fkt{F}{l,c}$ is formed by merging every $b_i$ with the corresponding $a_i$ of the consequent copy of $\fkt{F}{l,c}$.
\end{definition}

By induction over the length of a chain $\fkt{F}{l,c}$ the following is obtained.

\begin{lemma}[Fiala, Golovach, Kratochv{\'\i}l. 2011 \cite{fiala2011parameterized}]\label{lemma5.5}
For any $2$-strong coloring of a chain of graphs $\fkt{F}{l,c}$ on $c$ colors, the colors used on $a_1,\dots,a_l$ are distinct. These colors are identical with the set of colors used on the $b_1,\dots,b_l$ of the last graph $\fkt{F}{l,c}$ in the chain. Also any coloring of $a_1,\dots,a_l$ with colors from the set $\set{1,\dots,c}$ can be extended to a $2$-strong coloring in $c$ colors of the whole chain. 
\end{lemma}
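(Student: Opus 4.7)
The plan is to analyze the local coloring constraints around the two central vertices $f$ and $g$ of a single copy $F(l,c)$, use these to pin down the set of colors that may appear on the $a_i$'s and $b_i$'s, and then propagate the conclusion along the chain by induction.

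First I would compute neighborhoods and short distances in $F(l,c)$. The vertex $f$ is adjacent to $g,a_1,\dots,a_l,c_1,\dots,c_{c-l-2}$, so $\deg(f)=c-1$; symmetrically $\deg(g)=c-1$. All these $c-1$ neighbors of $f$ are pairwise at distance at most $2$ (being joint neighbors of $f$), and $f$ itself lies within distance $2$ of each of them, so the closed neighborhood $N[f]$ is a $2$-strong clique of size $c$. In any $2$-strong coloring with $c$ colors, the vertices of $N[f]$ therefore must receive pairwise distinct colors, i.e.\ exactly the $c$ colors $\{1,\dots,c\}$ appear on $\{f,g,a_1,\dots,a_l,c_1,\dots,c_{c-l-2}\}$. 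The analogous argument applied at $g$ shows that $\{1,\dots,c\}$ also appear on $\{f,g,b_1,\dots,b_l,c_1,\dots,c_{c-l-2}\}$.

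From this I would conclude the single-copy statement: since $f,g,c_1,\dots,c_{c-l-2}$ are common to both sets and use $c-l$ distinct colors, the remaining $l$ colors must be exactly those appearing on $a_1,\dots,a_l$ and also exactly those appearing on $b_1,\dots,b_l$. Hence the $a_i$'s carry $l$ pairwise distinct colors and
\[
\{F(a_1),\dots,F(a_l)\}=\{1,\dots,c\}\setminus\{F(f),F(g),F(c_1),\dots,F(c_{c-l-2})\}=\{F(b_1),\dots,F(b_l)\}.
\]
For a chain, the identification of $b_i$'s of one copy with the $a_i$'s of the next copy lets me propagate this equality of color sets by induction on the number of copies, giving the first two assertions.

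For the extension part, I would build the $2$-strong coloring copy by copy. Given any injection $F\colon\{a_1,\dots,a_l\}\to\{1,\dots,c\}$, let $T=\{1,\dots,c\}\setminus F(\{a_1,\dots,a_l\})$, which has size $c-l$. Assign distinct colors from $T$ to $f$, $g$, and $c_1,\dots,c_{c-l-2}$ (any bijection works; there are $c-l$ such colors for $c-l$ vertices). Then the $l$ colors left for the $b_i$'s are exactly $F(\{a_1,\dots,a_l\})$, and any bijection from $\{b_1,\dots,b_l\}$ onto this set yields a proper $2$-strong coloring of the copy — one only needs to check pairwise that each $b_i$ is at distance $\ge 3$ from each $a_j$ (through $f$ and $g$), so no further constraint links the two sides. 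Repeating with the chosen colors of $b_1,\dots,b_l$ playing the role of $a$-colors in the next copy extends the coloring to the whole chain.

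The only mildly subtle step is the verification that $a_i$ and $b_j$ lie at distance $\ge 3$ in a single copy (so their colors are genuinely unconstrained beyond the set-equality forced by $N[f]$ and $N[g]$), and that the identification in the chain does not inadvertently create shorter paths between the $a$- and $b$-sides of one copy or between $c$-vertices of consecutive copies; both follow from inspecting the adjacencies, since $a_i$ (resp.\ $b_j$) has $f$ (resp.\ $g$) as its unique neighbor, and the $c^{(t)}_i$'s of different copies are separated by the merged $a$/$b$-vertices plus at least one of $f,g$. This is the main place where some care is needed, but no deeper argument is required.
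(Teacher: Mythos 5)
Your overall strategy is sound and in fact supplies a proof where the paper offers none (the paper merely asserts the lemma is "obtained by induction over the length of a chain" and cites the original source). The analysis of the closed neighborhoods of $f$ and $g$ as $2$-strong cliques of size $c$, the resulting set equality between the colors on $a_1,\dots,a_l$ and on $b_1,\dots,b_l$, and the propagation of that equality along the chain are all correct.

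There is, however, one concrete gap in the extension argument. Let $m_i$ denote the vertex obtained by merging $b_i$ of copy $t$ with $a_i$ of copy $t+1$. Then $g^{(t)}$ and $f^{(t+1)}$ are both adjacent to $m_i$, hence at distance $2$ from each other, so they must receive different colors. In your construction both of these vertices draw their colors from the \emph{same} residual set $T=\set{1,\dots,c}\setminus F\!\lb\set{a_1,\dots,a_l}\rb$, because the $a$-colors of copy $t+1$ equal the $b$-colors of copy $t$, which equal the $a$-colors of copy $t$; so the set $T$ is identical for consecutive copies. Consequently "any bijection works" is not quite true: an unlucky choice of the bijection for copy $t+1$ can place the color of $g^{(t)}$ on $f^{(t+1)}$. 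This is the only cross-seam pair of non-merged vertices at distance at most $2$ (every other such pair is at distance at least $3$, as one checks from the adjacencies, and all pairs involving a merged vertex are already covered by your single-copy analysis of the two copies it belongs to). The repair is immediate: since $\abs{T}=c-l\geq 2$, one can always choose the bijection for copy $t+1$ so that $f^{(t+1)}$ avoids the color of $g^{(t)}$. With that one adjustment your proof is complete.
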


\begin{definition}[The Auxiliary Graph $H$]
Let $G=\lb V,E\rb$ be a graph on $n$ vertices and $m$ edges and let $r\in\N$ be given with $r$ divides $n$. Then choose $l=\frac{n}{r}$ and $c=n+m+1$.\\
Let $\lb \mathscr{X},T\rb$ be a tree decomposition of $G$ with maximum degree $3$ and $x_1$ corresponding to $X_1$, with $\abs{X_1}=1$, being a leaf. In addition for any two adjacent vertices $x_i$ and $x_j$ of $T$ let $\abs{X_i\setminus X_j}+\abs{X_j\setminus X_i}\leq 1$.\\
Choose a walk $P=1\dots s$ in $T$ that visits every vertex in $T$ at least once and at most three times and let $e_1,\dots,e_m$ be the edges of $G$ sorted by their appearance in the bags $X_1,\dots,X_s$ along $P$.\\
Now we take $r$ disjoint chains of graphs $\fkt{F}{l,c}$ of length $m$. The vertices of these chains are denoted as follows:
\begin{enumerate}[i)]

\item The set $\set{a_1,\dots a_l}$ of the first $F$ in the $i$st chain is denoted by $A_i$.

\item The set $\set{b_1,\dots,b_l}$ of the $j$st copy of $F$ in the $i$st chain is denoted by $B_{i,j}$ and analogue the symbols $C_{i,j}$, $f_{i,j}$ and $g_{i,j}$.

\end{enumerate}
We proceed by adding a copy of $\fkt{F}{n,c}$ and rename its vertices $a_1,\dots,a_n$ to $v_1,\dots,v_n$. The vertices $b_1,\dots,b_n$ are consecutively merged with the vertices in $A_1\cup\dots\cup A_r$. The remaining vertices are $f_0,g_0$ and the vertices in $C_0$.\\
For each edge $e_j=u_pu_q$ of $G$ we add another vertex $w_j$ which then is joined to the vertices $v_p$ and $v_q$ and to $l-1$ vertices of each set $B_{i,j}$ with $i\in\set{1,\dots,r}$, hence $w_j$ is of degree $2+\lb l-1\rb\,r$. The obtained graph $H=\fkt{H}{G}$ is our auxiliary graph.
\end{definition}

\begin{figure}[H]
\begin{center}
	\begin{tikzpicture}
	
	\node (g0) [draw, circle, fill, inner sep=1.5pt] {};
	\node (lg0) [position=270:2mm from g0] {$g_0$};
	
	\node (C0anchor) [position=180:3.5mm from g0] {};
	\node (c01) [draw, circle, fill, inner sep=1.5pt,position=90:5mm from C0anchor] {};
	\node (c02) [draw, circle, fill, inner sep=1.5pt,position=90:0.5mm from C0anchor] {};
	\node (c03) [draw, circle, fill, inner sep=1.5pt,position=270:0.5mm from C0anchor] {};
	\node (c04) [draw, circle, fill, inner sep=1.5pt,position=270:5mm from C0anchor] {};
	\node (lC0) [position=90:1mm from c01] {$C_0$};
	
	\path
	(g0) edge (c01)
		 edge (c02)
		 edge (c03)
		 edge (c04)
	;
	
	\node (f0) [draw,circle,fill,inner sep=1.5pt, position=180:10mm from g0] {};
	\node (lf0) [position=90:2mm from f0] {$f_0$};
	
	\path
	(f0) edge (c01)
		 edge (c02)
		 edge (c03)
		 edge (c04)
	;
	
	\node (Vanchor) [position=180:8mm from f0,inner sep=0pt] {};
	\node (v1) [draw, circle, fill, inner sep=1.5pt,position=90:1.2mm from Vanchor] {};
	\node (v2) [draw, circle, fill, inner sep=1.5pt,position=90:4.5mm from Vanchor] {};
	\node (v3) [draw, circle, fill, inner sep=1.5pt,position=270:1.2mm from Vanchor] {};
	\node (v4) [draw, circle, fill, inner sep=1.5pt,position=270:4.5mm from Vanchor] {};
	\node (v5) [draw, circle, fill, inner sep=1.5pt,position=270:8mm from Vanchor] {};
	\node (v6) [draw, circle, fill, inner sep=1.5pt,position=270:11.5mm from Vanchor] {};
	\node (lv2) [position=180:1mm from v2] {$v_1$};	
	\node (lv3) [position=180:1mm from v3] {$v_p$};	
	\node (lv4) [position=180:1mm from v4] {$v_q$};	
	\node (lv6) [position=180:1mm from v6] {$v_n$};
	
	\path
	(f0) edge (v1)
		 edge (v2)
		 edge (v3)
		 edge (v4)
		 edge (v5)
		 edge (v6)
	;
	
	\node (oanchor1) [draw,circle,fill,inner sep=1.5pt,position=50:10mm from g0] {};	
	\node (oanchor2) [draw,circle,fill,inner sep=1.5pt,position=0:3mm from oanchor1] {};	
	\node (oanchor3) [draw,circle,fill,inner sep=1.5pt,position=0:7mm from oanchor2] {};	
	\node (oanchor4) [draw,circle,fill,inner sep=1.5pt,position=0:3mm from oanchor3] {};	
	\node (oanchor5) [draw,circle,fill,inner sep=1.5pt,position=0:3mm from oanchor4] {};	
	\node (oanchor6) [draw,circle,fill,inner sep=1.5pt,position=0:7mm from oanchor5] {};	
	\node (oanchor7) [draw,circle,fill,inner sep=1.5pt,position=0:3mm from oanchor6] {};	
	\node (oanchor8) [draw,circle,fill,inner sep=1.5pt,position=0:3mm from oanchor7] {};	
	\node (oanchor9) [draw,circle,fill,inner sep=1.5pt,position=0:7mm from oanchor8] {};	
	\node (oanchor10) [draw,circle,fill,inner sep=1.5pt,position=0:3mm from oanchor9] {};	
	\node (oanchor11) [draw,circle,fill,inner sep=1.5pt,position=0:3mm from oanchor10] {};	
	\node (oanchor12) [draw,circle,fill,inner sep=1.5pt,position=0:7mm from oanchor11] {};	
	\node (oanchor13) [draw,circle,fill,inner sep=1.5pt,position=0:3mm from oanchor12] {};	
							
	\path
	(g0) edge (oanchor1)
	(oanchor1) edge (oanchor2)
	(oanchor2) edge (oanchor3)
	(oanchor3) edge (oanchor4)
	(oanchor4) edge (oanchor5)
	(oanchor5) edge (oanchor6)
	(oanchor6) edge (oanchor7)
	(oanchor7) edge (oanchor8)
	(oanchor8) edge (oanchor9)
	(oanchor9) edge (oanchor10)
	(oanchor10) edge (oanchor11)
	(oanchor11) edge (oanchor12)
	(oanchor12) edge (oanchor13)
	;
	
	\node (o11) [draw,circle,fill,inner sep=1.5pt,position=90:2mm from oanchor1] {};
	\node (o12) [draw,circle,fill,inner sep=1.5pt,position=270:2mm from oanchor1] {};
		
	\path
	(g0) edge (o11)
		 edge (o12)
	(oanchor2) edge (o11)
			   edge (o12)
	;
	
	\node (o41) [draw,circle,fill,inner sep=1.5pt,position=90:2mm from oanchor4] {};
	\node (o42) [draw,circle,fill,inner sep=1.5pt,position=270:2mm from oanchor4] {};
		
	\path
	(oanchor3) edge (o41)
		 	   edge (o42)
	(oanchor5) edge (o41)
			   edge (o42)
	;
	
	\node (o71) [draw,circle,fill,inner sep=1.5pt,position=90:2mm from oanchor7] {};
	\node (o72) [draw,circle,fill,inner sep=1.5pt,position=270:2mm from oanchor7] {};
		
	\path
	(oanchor6) edge (o71)
		 	   edge (o72)
	(oanchor8) edge (o71)
			   edge (o72)
	;	
	
	\node (o101) [draw,circle,fill,inner sep=1.5pt,position=90:2mm from oanchor10] {};
	\node (o102) [draw,circle,fill,inner sep=1.5pt,position=270:2mm from oanchor10] {};
		
	\path
	(oanchor9) edge (o101)
		 	   edge (o102)
	(oanchor11) edge (o101)
			    edge (o102)
	;

	\node (o131) [draw,circle,fill,inner sep=1.5pt,position=90:2mm from oanchor13] {};
	\node (o132) [draw,circle,fill,inner sep=1.5pt,position=270:2mm from oanchor13] {};
		
	\path
	(oanchor12) edge (o131)
		 	   edge (o132)
	;		
	
	\node (omid10) [position=0:2.25mm from oanchor2] {};
	\node (omid11) [draw,circle,fill,inner sep=1.5pt,position=90:0.8mm from omid10] {}; 
	\node (omid12) [draw,circle,fill,inner sep=1.5pt,position=90:2mm from omid11] {}; 
	\node (omid13) [draw,circle,fill,inner sep=1.5pt,position=270:0.8mm from omid10] {}; 
	\node (omid14) [draw,circle,fill,inner sep=1.5pt,position=270:2mm from omid13] {};
	
	\path
	(oanchor2) edge (omid11)
			   edge (omid12)
			   edge (omid13)
			   edge (omid14)
	(oanchor3) edge (omid11)
			   edge (omid12)
			   edge (omid13)
			   edge (omid14)
	; 
	
	\node (omid20) [position=0:2.25mm from oanchor5] {};
	\node (omid21) [draw,circle,fill,inner sep=1.5pt,position=90:0.8mm from omid20] {}; 
	\node (omid22) [draw,circle,fill,inner sep=1.5pt,position=90:2mm from omid21] {}; 
	\node (omid23) [draw,circle,fill,inner sep=1.5pt,position=270:0.8mm from omid20] {}; 
	\node (omid24) [draw,circle,fill,inner sep=1.5pt,position=270:2mm from omid23] {};
	
	\path
	(oanchor5) edge (omid21)
			   edge (omid22)
			   edge (omid23)
			   edge (omid24)
	(oanchor6) edge (omid21)
			   edge (omid22)
			   edge (omid23)
			   edge (omid24)
	;	
	
	\node (omid30) [position=0:2.25mm from oanchor8] {};
	\node (omid31) [draw,circle,fill,inner sep=1.5pt,position=90:0.8mm from omid30] {}; 
	\node (omid32) [draw,circle,fill,inner sep=1.5pt,position=90:2mm from omid31] {}; 
	\node (omid33) [draw,circle,fill,inner sep=1.5pt,position=270:0.8mm from omid30] {}; 
	\node (omid34) [draw,circle,fill,inner sep=1.5pt,position=270:2mm from omid33] {};
	
	\path
	(oanchor8) edge (omid31)
			   edge (omid32)
			   edge (omid33)
			   edge (omid34)
	(oanchor9) edge (omid31)
			   edge (omid32)
			   edge (omid33)
			   edge (omid34)
	;	
	
	\node (omid40) [position=0:2.25mm from oanchor11] {};
	\node (omid41) [draw,circle,fill,inner sep=1.5pt,position=90:0.8mm from omid40] {}; 
	\node (omid42) [draw,circle,fill,inner sep=1.5pt,position=90:2mm from omid41] {}; 
	\node (omid43) [draw,circle,fill,inner sep=1.5pt,position=270:0.8mm from omid40] {}; 
	\node (omid44) [draw,circle,fill,inner sep=1.5pt,position=270:2mm from omid43] {};
	
	\path
	(oanchor11) edge (omid41)
			    edge (omid42)
			    edge (omid43)
			    edge (omid44)
	(oanchor12) edge (omid41)
			    edge (omid42)
			    edge (omid43)
			    edge (omid44)
	;

	\node (uanchor1) [draw,circle,fill,inner sep=1.5pt,position=310:10mm from g0] {};	
	\node (uanchor2) [draw,circle,fill,inner sep=1.5pt,position=0:3mm from uanchor1] {};	
	\node (uanchor3) [draw,circle,fill,inner sep=1.5pt,position=0:7mm from uanchor2] {};	
	\node (uanchor4) [draw,circle,fill,inner sep=1.5pt,position=0:3mm from uanchor3] {};	
	\node (uanchor5) [draw,circle,fill,inner sep=1.5pt,position=0:3mm from uanchor4] {};	
	\node (uanchor6) [draw,circle,fill,inner sep=1.5pt,position=0:7mm from uanchor5] {};	
	\node (uanchor7) [draw,circle,fill,inner sep=1.5pt,position=0:3mm from uanchor6] {};	
	\node (uanchor8) [draw,circle,fill,inner sep=1.5pt,position=0:3mm from uanchor7] {};	
	\node (uanchor9) [draw,circle,fill,inner sep=1.5pt,position=0:7mm from uanchor8] {};	
	\node (uanchor10) [draw,circle,fill,inner sep=1.5pt,position=0:3mm from uanchor9] {};	
	\node (uanchor11) [draw,circle,fill,inner sep=1.5pt,position=0:3mm from uanchor10] {};	
	\node (uanchor12) [draw,circle,fill,inner sep=1.5pt,position=0:7mm from uanchor11] {};	
	\node (uanchor13) [draw,circle,fill,inner sep=1.5pt,position=0:3mm from uanchor12] {};	
							
	\path
	(g0) edge (uanchor1)
	(uanchor1) edge (uanchor2)
	(uanchor2) edge (uanchor3)
	(uanchor3) edge (uanchor4)
	(uanchor4) edge (uanchor5)
	(uanchor5) edge (uanchor6)
	(uanchor6) edge (uanchor7)
	(uanchor7) edge (uanchor8)
	(uanchor8) edge (uanchor9)
	(uanchor9) edge (uanchor10)
	(uanchor10) edge (uanchor11)
	(uanchor11) edge (uanchor12)
	(uanchor12) edge (uanchor13)
	;
	
	\node (u11) [draw,circle,fill,inner sep=1.5pt,position=90:2mm from uanchor1] {};
	\node (u12) [draw,circle,fill,inner sep=1.5pt,position=270:2mm from uanchor1] {};
		
	\path
	(g0) edge (u11)
		 edge (u12)
	(uanchor2) edge (u11)
			   edge (u12)
	;
	
	\node (u41) [draw,circle,fill,inner sep=1.5pt,position=90:2mm from uanchor4] {};
	\node (u42) [draw,circle,fill,inner sep=1.5pt,position=270:2mm from uanchor4] {};
		
	\path
	(uanchor3) edge (u41)
		 	   edge (u42)
	(uanchor5) edge (u41)
			   edge (u42)
	;
	
	\node (u71) [draw,circle,fill,inner sep=1.5pt,position=90:2mm from uanchor7] {};
	\node (u72) [draw,circle,fill,inner sep=1.5pt,position=270:2mm from uanchor7] {};
		
	\path
	(uanchor6) edge (u71)
		 	   edge (u72)
	(uanchor8) edge (u71)
			   edge (u72)
	;	
	
	\node (u101) [draw,circle,fill,inner sep=1.5pt,position=90:2mm from uanchor10] {};
	\node (u102) [draw,circle,fill,inner sep=1.5pt,position=270:2mm from uanchor10] {};
		
	\path
	(uanchor9) edge (u101)
		 	   edge (u102)
	(uanchor11) edge (u101)
			    edge (u102)
	;

	\node (u131) [draw,circle,fill,inner sep=1.5pt,position=90:2mm from uanchor13] {};
	\node (u132) [draw,circle,fill,inner sep=1.5pt,position=270:2mm from uanchor13] {};
		
	\path
	(uanchor12) edge (u131)
		 	   edge (u132)
	;		
	
	\node (umid10) [position=0:2.25mm from uanchor2] {};
	\node (umid11) [draw,circle,fill,inner sep=1.5pt,position=90:0.8mm from umid10] {}; 
	\node (umid12) [draw,circle,fill,inner sep=1.5pt,position=90:2mm from umid11] {}; 
	\node (umid13) [draw,circle,fill,inner sep=1.5pt,position=270:0.8mm from umid10] {}; 
	\node (umid14) [draw,circle,fill,inner sep=1.5pt,position=270:2mm from umid13] {};
	
	\path
	(uanchor2) edge (umid11)
			   edge (umid12)
			   edge (umid13)
			   edge (umid14)
	(uanchor3) edge (umid11)
			   edge (umid12)
			   edge (umid13)
			   edge (umid14)
	; 
	
	\node (umid20) [position=0:2.25mm from uanchor5] {};
	\node (umid21) [draw,circle,fill,inner sep=1.5pt,position=90:0.8mm from umid20] {}; 
	\node (umid22) [draw,circle,fill,inner sep=1.5pt,position=90:2mm from umid21] {}; 
	\node (umid23) [draw,circle,fill,inner sep=1.5pt,position=270:0.8mm from umid20] {}; 
	\node (umid24) [draw,circle,fill,inner sep=1.5pt,position=270:2mm from umid23] {};
	
	\path
	(uanchor5) edge (umid21)
			   edge (umid22)
			   edge (umid23)
			   edge (umid24)
	(uanchor6) edge (umid21)
			   edge (umid22)
			   edge (umid23)
			   edge (umid24)
	;	
	
	\node (umid30) [position=0:2.25mm from uanchor8] {};
	\node (umid31) [draw,circle,fill,inner sep=1.5pt,position=90:0.8mm from umid30] {}; 
	\node (umid32) [draw,circle,fill,inner sep=1.5pt,position=90:2mm from umid31] {}; 
	\node (umid33) [draw,circle,fill,inner sep=1.5pt,position=270:0.8mm from umid30] {}; 
	\node (umid34) [draw,circle,fill,inner sep=1.5pt,position=270:2mm from umid33] {};
	
	\path
	(uanchor8) edge (umid31)
			   edge (umid32)
			   edge (umid33)
			   edge (umid34)
	(uanchor9) edge (umid31)
			   edge (umid32)
			   edge (umid33)
			   edge (umid34)
	;	
	
	\node (umid40) [position=0:2.25mm from uanchor11] {};
	\node (umid41) [draw,circle,fill,inner sep=1.5pt,position=90:0.8mm from umid40] {}; 
	\node (umid42) [draw,circle,fill,inner sep=1.5pt,position=90:2mm from umid41] {}; 
	\node (umid43) [draw,circle,fill,inner sep=1.5pt,position=270:0.8mm from umid40] {}; 
	\node (umid44) [draw,circle,fill,inner sep=1.5pt,position=270:2mm from umid43] {};
	
	\path
	(uanchor11) edge (umid41)
			    edge (umid42)
			    edge (umid43)
			    edge (umid44)
	(uanchor12) edge (umid41)
			    edge (umid42)
			    edge (umid43)
			    edge (umid44)
	;
	
	\node (w) [draw,circle,fill,inner sep=1.5pt,position=282:17mm from uanchor7] {};
	\node (lw) [position=270:2mm from w] {$w_j$};	
	
	\path
	(w) edge [bend left] (v3)
		edge [bend left] (v4)
		edge [bend right] (oanchor7)
		edge [bend left] (o72)
		edge [bend right] (uanchor7)
		edge (u72)
	;
	
	\node (lB1j) [position=90:1mm from o71] {$B_{1,j}$};
	\node (lBrj) [position=235:3mm from u72] {$B_{r,j}$};	
	
	\end{tikzpicture}
\end{center}
\caption{Example of the auxiliary graph $H$.}
\label{fig5.2}
\end{figure}
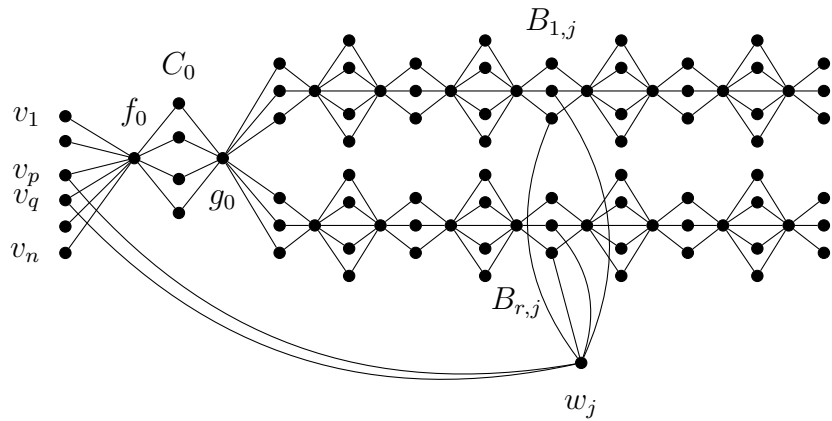

\begin{lemma}[Fiala, Golovach, Kratochv{\'\i}l. 2011 \cite{fiala2011parameterized}]\label{lemma5.6}
Let $G$ be a graph, $r\in\N$ and $H=\fkt{H}{G}$ a corresponding auxiliary graph, then $G$ has an equitable coloring in $r$ colors if and only if $H$ has a $2$-strong coloring in $c=n+m+1$ colors.
\end{lemma}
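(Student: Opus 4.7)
The plan is to apply Lemma~\ref{lemma5.5} in two nested ways so as to transfer information between a $2$-strong $c$-coloring of $H$ and an equitable $r$-coloring of $G$. From the single copy of $\fkt{F}{n,c}$ planted inside $H$, Lemma~\ref{lemma5.5} forces any $2$-strong $c$-coloring $\Phi$ of $H$ to color the set $\set{v_1,\dots,v_n}$ with $n$ pairwise distinct colors, and the same set of $n$ colors reappears on $A_1\cup\dots\cup A_r$. Applied separately to each of the $r$ chains of $\fkt{F}{l,c}$, Lemma~\ref{lemma5.5} further guarantees that the $l$ colors visible on $A_i$ form a set $S_i$ that is repeated, up to permutation, on every $B_{i,j}$ along the $i$-th chain. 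Thus $\set{S_1,\dots,S_r}$ is a partition of the $n$ colors used on $\set{v_1,\dots,v_n}$ into blocks of size $l=n/r$, and this partition will be the bridge between the two coloring worlds.

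For the ``only if'' direction I would start from an equitable $r$-coloring of $G$ with classes $V_1,\dots,V_r$ of size $l$ each, fix an arbitrary bijection between $V_i$ and a block $S_i\subseteq\set{1,\dots,n}$ of size $l$, and color $v_p$ by the image of $u_p$. The extension part of Lemma~\ref{lemma5.5} then propagates the $l$ colors of $S_i$ along chain $i$ and the $n$ colors through the copy of $\fkt{F}{n,c}$, while the ``gate'' vertices $f_0,g_0,f_{i,j},g_{i,j}$ together with the padding sets $C_0,C_{i,j}$ can be colored with the $c-n=m+1$ reserve colors $\set{n+1,\dots,c}$. Since for every edge $e_j=u_pu_q$ the vertices $u_p$ and $u_q$ belong to two distinct classes, a careful choice of which vertex of $B_{i,j}$ is the one not adjacent to $w_j$ leaves enough room among the reserve colors to assign a legal color to each $w_j$.

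For the ``if'' direction, given any $2$-strong $c$-coloring $\Phi$ of $H$ and the partition $\set{S_i}$ extracted above, define $\fkt{c_G}{u_p}=i$ whenever $\fkt{\Phi}{v_p}\in S_i$. Since $\abs{S_i}=l$, each color class of $c_G$ has exactly $l=n/r$ vertices, so $c_G$ is equitable by construction. The crux is to show that $c_G$ is also a proper coloring of $G$, i.e.\ that no edge $e_j=u_pu_q$ can have $\fkt{\Phi}{v_p}$ and $\fkt{\Phi}{v_q}$ in the same block $S_i$. Here the construction of $H$ is used in full: $w_j$ is adjacent to $v_p$, $v_q$ and to $l-1$ specific vertices of each $B_{i,j}$, and additionally sees the gates $g_{i,j}$ and $f_{i,j+1}$ at distance two. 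If both $\fkt{\Phi}{v_p}$ and $\fkt{\Phi}{v_q}$ lay in $S_i$, a counting of the colors already forbidden in the closed $2$-neighborhood of $w_j$ exhausts the $c=n+m+1$ available colors, contradicting the existence of $\Phi$.

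I expect the main obstacle to be the tight counting argument in the last step of the ``if'' direction: the walk $P$ through the tree decomposition and the choice of the $l-1$ vertices of $B_{i,j}$ incident to $w_j$ are engineered so that in the feasible case (endpoints of $e_j$ in different classes) exactly one color remains free for $w_j$, whereas in the infeasible case the color budget is exceeded by one. Formalizing this numerology, and verifying that the propagation provided by Lemma~\ref{lemma5.5} composes consistently along the walk through the bags of the tree decomposition of $G$, will be the main bookkeeping challenge of the proof.
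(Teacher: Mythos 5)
Your overall strategy coincides with the paper's: both directions hinge on Lemma \autoref{lemma5.5}, the forward direction is the same explicit construction (distinct colors $\alpha_{i,1},\dots,\alpha_{i,l}$ on each class, reserve colors on the $w_j$'s and the gates, and the color of $v_p$ placed on the unique non-neighbor of $w_j$ in $B_{i,j}$), and the backward direction extracts the same partition $\set{S_1,\dots,S_r}$ and defines $\fkt{c_G}{u_p}=i$ when $\fkt{\Phi}{v_p}\in S_i$. Up to that point there is nothing to object to.

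The gap is in your final properness argument. You claim that if $e_j=u_pu_q$ has both endpoints in class $i$, then the colors forbidden in the closed $2$-neighborhood of $w_j$ ``exhaust the $c=n+m+1$ available colors,'' so that $w_j$ cannot be colored. That is not the right mechanism and the count does not go through: the vertices pairwise at distance at most $2$ around $w_j$ number only about $2+r\lb l-1\rb=n-r+2$ plus a few gates, which is far below $c$, so no global color budget is exceeded and $w_j$ always has a color available. The contradiction the paper uses is local to the block $S_i$ and does not involve $w_j$'s own color at all: by Lemma \autoref{lemma5.5} the set $B_{i,j}$ carries \emph{all} $l$ colors of $S_i$, and $w_j$ is adjacent to $l-1$ of its vertices, so those neighbors already use $l-1$ distinct colors of $S_i$; since $\fkt{\Phi}{v_p}$ and $\fkt{\Phi}{v_q}$ are two further distinct colors of $S_i$ and $\abs{S_i}=l$, at least one of $v_p,v_q$ shares its color with a neighbor of $w_j$ in $B_{i,j}$. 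Those two vertices are both adjacent to $w_j$, hence at distance $2$, contradicting that $\Phi$ is a $2$-strong coloring. Your appeal to the gates $g_{i,j},f_{i,j+1}$ and to the walk $P$ through the tree decomposition is a red herring here (the walk matters only for the treewidth bound of Lemma \autoref{lemma5.7}), and the ``exactly one color remains free for $w_j$'' numerology in your last paragraph has no counterpart in the actual argument. Replace the global counting by this pigeonhole inside $S_i$ and the proof is complete.
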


\begin{proof}
Suppose $G$ has an equitable coloring in $r$ colors. For each color $i$ used on $G$ we will introduce the corresponding colors $\alpha_{i,1},\dots,\alpha_{i,l}$ in $H$. We further use $m+2$ additional colors $\beta_1,\dots,\beta_m$ and $\gamma_1,\gamma_2$. The $2$-strong coloring in $c$ colors of $H$ is obtained as follows:\\
\begin{enumerate}[i)]

\item The vertices of the set $\condset{v_p}{u_p~\text{has color}~i}$ are colored with different $\alpha_{i,1},\dots,\alpha_{i,l}$.

\item For $i\in\set{1,\dots, r}$, vertices of $A_i$ are colored with $\alpha_{i,1},\dots,\alpha_{i,l}$.

\item Each vertex $w_i$ is colored with $\beta_j$.

\item If $w_j$ is adjacent to $v_p$ where $v_p$ has been colored by some $\alpha_{i,y}$, we use the same color $\alpha_{i,y}$ on the only non-neighbor of $w_j$ in $B_{i,j}$.

\item The colors of all other vertices are obtained by Lemma \autoref{lemma5.5}, in particular:
\begin{itemize}
\item colors of the set $B_{i,j}$ are completed arbitrarily to $\alpha_{i,1},\dots,\alpha_{i,l}$,

\item the vertices of $C_{i,j}$ are colored by $\alpha_{k,y}$ with $k\neq i$ and by $\beta_x$, $x=1,\dots,m$, and

\item all vertices $f_{i,j}$ together with $f_0$ are colored with $\gamma_1$ and analogue the vertices $g_{i,j}$ along with $g_0$ are colored $\gamma_2$. 
\end{itemize} 
\end{enumerate}
If a vertex $w_j$ is adjacent to vertices $v_p$ and $v_q$, it means that the corresponding vertices $u_p$ and $u_q$ in $G$ belong to a common edge $e_j$ and therefore are adjacent and the colors $\alpha_{i,j}$ of $v_p$ and $v_q$ differ in the subscript $i$. Hence it suffices that $w_j$ has one non-neighbor in each set $B_{i,j}$ for the coloring to be well defined. With this it becomes easy to check that our $2$-strong coloring of $H$ is proper.

Now assume $H$ to have a $2$-strong coloring in $c$. As we have seen in Lemma \autoref{lemma5.5} the vertices of the sets $A_i$ must be colored with different colors and the same colors are used for the vertices $v_1,\dots,v_n$. The colors used on the vertices of $A_i$ still are denoted by $\alpha_{i,1},\dots,\alpha_{i,l}$.\\
Now we construct an equitable coloring of $G$ in $r$ colors as follows by coloring the vertex $u_p\in\V{G}$ with color $i$, if the corresponding $v_p$ in $H$ is colored by some $\alpha_{i,y}$.\\
With each $\alpha_{i,j}$ appearing exactly once for every pair $i\in\set{1,\dots,r}$ and $j\in\set{1,\dots,l}$, each color class of $G$, given by the previously constructed coloring, consists of exactly $l$ vertices. Assume this coloring is not proper, thus some edge $e_j=u_pu_q\in\E{G}$ exists with $u_p$ and $u_q$ both being colored with the color $i$. Hence the vertices $v_p$ and $v_q$ are colored with some colors $\alpha_{i,x}$ and $\alpha_{i,y}$. By Lemma \autoref{lemma5.5} the vertices of $B_{i,j}$ are colored with $\alpha_{i,1},\dots,\alpha_{i,l}$ and by the definition of $H$, the vertex $w_j$ has $j-1$ neighbors in $B_{i,j}$. Hence at least one of them has the same color as $v_p$ or $v_q$ and the $2$-strong coloring of $H$ was not proper, a contradiction.    
\end{proof}

\begin{lemma}[Fiala, Golovach, Kratochv{\'\i}l. 2011 \cite{fiala2011parameterized}]\label{lemma5.7}
Let $G$ be a graph, the treewidth of the auxiliary graph $\fkt{H}{G}$ is at most $\lb 2\,r+2\rb\tw{G}+3\,r+2$.
\end{lemma}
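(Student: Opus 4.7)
The plan is to construct a tree decomposition of $H$ whose underlying tree is a path built from the walk $P=\pi(1),\dots,\pi(s)$ on $T$, augmented with three-vertex leaf bags for the many degree-two vertices of the $C_{i,j}$'s and of $C_0$. I would create one main spine bag $Y_k$ per walk position, and arrange the chains so that moving from $Y_k$ to $Y_{k+1}$ triggers the transition $F_{i,J-1}\to F_{i,J}$ in every chain exactly when the new edge $e_j$ is first processed at position $k+1$; otherwise the chain state is inherited unchanged.

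Into the spine bag $Y_k$ I would place (i) the copies $\{v_p:u_p\in X_{\pi(k)}\}$, at most $\tw{G}+1$ vertices; (ii) the two global hubs $f_0,g_0$; (iii) for each of the $r$ chains the three ``frame'' vertices $f_{i,J}, g_{i,J}$ and one carry-vertex linking $B_{i,J-1}$ to $B_{i,J}$; (iv) for every pair $(u_p,i)$ with $u_p\in X_{\pi(k)}$ two dedicated chain-slot vertices in the active $B_{i,J}$ representing the non-neighbor and a generic neighbor of the $w_j$'s incident to $u_p$; and, if edge $e_j$ is first processed at position $k$, the vertex $w_j$ itself. The degree-two vertices of each $C_{i,j}$ (and of $C_0$) are then covered by three-vertex leaf bags of the form $\{c,f_{i,j},g_{i,j}\}$ hung off $Y_k$, which do not affect the width. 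Verifying the two tree-decomposition axioms is local: each chain edge lies in two consecutive spine bags that share its interface, each $F(n,c)$-edge lies in a spine bag containing $f_0,g_0$, and each $w_j$-edge lies in the spine bag introducing $w_j$. A careful count of the above contributions then bounds the spine bag size by at most $(2r+2)\tw{G}+3r+3$, giving $\tw{H}\le(2r+2)\tw{G}+3r+2$.

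The hard part will be item (iv). Because $w_j$ has $l-1$ chain-neighbors in each of the $r$ sets $B_{i,j}$, the spine bag that introduces $w_j$ must a priori contain all $r(l-1)$ of those neighbors, which naively already exceeds the target bound. The key observation I expect to exploit is that $P$ visits each node of $T$ at most three times and that adjacent bags of $(\mathscr{X},T)$ differ in exactly one vertex, so every $u_p\in V(G)$ is ``active'' along a contiguous sub-walk of $P$, and the edges of $G$ incident with $u_p$ are first processed within this sub-walk. During this sub-walk I can reserve, per chain $i$, two slots in the successively occurring $B_{i,J}$'s that get permanently associated with $u_p$; the $l-1$ chain-neighbors of $w_j$ in $B_{i,j}$ then distribute themselves across the slots owned by the two endpoints $u_p,u_q$ in chain $i$, so that at the moment $w_j$ is introduced all of its chain-neighbors are already present in $Y_k$. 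Showing that this slot-assignment schedule is globally consistent, so that the subset of spine bags containing each vertex of $H$ forms a connected subpath, will be the delicate technical core of the proof.
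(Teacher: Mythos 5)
Your overall architecture (a spine of bags following the walk $P$ through the tree decomposition of $G$, with small leaf bags for low-degree vertices) is the same as the paper's, but item (iv) — the part you yourself flag as the delicate core — does not work as sketched, and this is a genuine gap rather than a technicality. In a tree decomposition every edge of $H$ must have \emph{both} of its actual endpoints in a common bag. The vertex $w_j$ has $l-1$ neighbours in each of the $r$ sets $B_{i,j}$, i.e.\ $r\lb l-1\rb\approx n-r$ chain-neighbours in total, and two ``dedicated chain-slot vertices'' per pair $\lb u_p,i\rb$ cannot represent $l-1$ distinct vertices: for every $z\in B_{i,j}$ not literally placed in a bag together with $w_j$, the edge $w_jz$ is covered by no bag, so the object you construct is not a tree decomposition of $H$. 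Putting all $r\lb l-1\rb$ neighbours into the spine bag is not an option either, since that alone exceeds the target bound, and letting $w_j$ persist through many spine bags to meet its neighbours a few at a time wrecks both the width count and the connectivity condition for the other vertices being introduced in the meantime.

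The resolution used by Fiala, Golovach and Kratochv\'{\i}l is to notice that every $z\in B_{i,j}$ (and likewise every vertex of $C_{i,j}$, $C_0$ and $A_i$) has degree at most $3$ in $H$: its neighbours are $g_{i,j}$, the vertex $f_{i,j+1}$ of the next copy (because $z$ is merged with an $a$-vertex there), and possibly $w_j$ — and all of these are already present in the spine bag at the step where the edge $e_j$ is processed. One therefore hangs a private leaf bag $\induz{N_H}{z}$ of size at most $4$ off that spine node; this covers every edge incident with $z$, and no $B$- or $C$-vertex ever enters a spine bag. Your three-vertex leaf bags do exactly this for the degree-two $C$-vertices but must be extended to the degree-three $B$-vertices. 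With that change the spine bag contains only the $v_p$ with $u_p\in X_{\pi\lb k\rb}$, the hubs $f_0,g_0$, the $w$'s first processed there, and the relevant $f_{i,p},g_{i,p}$, giving at most $\lb t+1\rb+2+t\lb 2\,r+1\rb+3\,r=\lb 2\,r+2\rb t+3\,r+3$ vertices per bag and hence $\tw{H}\leq\lb 2\,r+2\rb\tw{G}+3\,r+2$ as claimed.
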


\begin{proof}
Let $\lb \condset{X_i}{i\in\V{T}},T\rb$ be the tree decomposition of $G$ of width $t$ used to construct $H=fkt{H}{G}$ and $P=1\dots s$ the walk in $T$. We construct a tree decomposition of $H$ from the tree decomposition of $G$.\\
For the same tree $T$, we introduce sets $Y_i$ by first putting $v_j$ to $Y_i$ if and only if $u_j\in X_i$ for $j=1,\dots,n$, and then adding $f_0$ and $g_0$ to each $Y_i$. Then we alter the tree by adding some leaves and obtain the decomposition of $H$ by adding further vertices to the bags.\\
The changes are performed inductively by following the walk $P$. Since $\abs{X_1}=1$, this bag contains no edge of $G$. Let $Y_1\define Y_1\cup\set{f_{1,1},\dots,f_{r,1}}$. For each vertex $z\in C_0\cup\bigcup^r_{i=1}A_i$, a new leaf vertex $d_z$ adjacent to $1$ is added in $T$. We define the corresponding bag as $Y_{d_z}\define\induz{N_H}{z}$ and obtain $\abs{Y_{d_z}}=3$.\\
For the induction we will need some addition variable $h$, initialized by $h\define 1$.\\
Now suppose we have already made modifications of the tree decomposition for a subwalk $1\dots i_{j-1}$ of $P$. Since the edges of $G$ are sorted in the order in which they occur in the bags $X_1,\dots,X_s$, there are some $x$ and $y$ such that $E_j=\set{e_x,\dots e_y}$ is the set of edges that first occur in the bag $X_j$.\\
Now, if $E_j=\emptyset$ we set $Y_{i_j}\define Y_{i_j}\cup\set{f_{1,h},\dots,f_{r,h}}$, if $h\leq m$, otherwise this set is assumed to be empty, and consider the next node of the walk. If $E_j\neq\emptyset$, then we set 
\begin{align*}
Y_{i_j}\define Y_{i_j}\cup\set{w_x,\dots,w_y}\cup\bigcup^y_{p=x}\lb \set{f_{1,p},\dots,f_{r,p}}\cup\set{g_{1,p},\dots,g_{r,p}}\rb\cup\set{f_{1,y+1},\dots,f_{r,y+1}},
\end{align*}
where the last set is empty for $y=m$. Since $\abs{X_{i_j}\setminus X_{i_{j-1}}}\leq1$, $E_j$ contains at most $t$ edges and we added at most $t\,\lb 2\,r+1\rb+r$ vertices. For each vertex $z\in\cup^r_{q=1}\cup^y_{p=x}\lb B_{q,p}\cup C_{q,p}\rb$, a leaf vertex $d_z$ is added to the tree and joined to $i_j$, and gets associated with the bag $Y_{d_z}\define\induz{N_H}{z}$ with $\abs{Y_{d_z}}\leq4$. Finally we set $h\define y+1$.\\
Now all there is left to do is to check that we constructed a valid tree decomposition of $H$. The walk $P$ visits any vertex at most three times. When it visits a vertex $i$ the first time, we add at most $t\cdot\lb 2\,r+1\rb+r$ vertices to the initial set $Y_i$, and when the walk goes to $i$ the second or third time then at most $r$ vertices are added. Therefore each by $Y_i$ contains at most $t+1+2+t\cdot\lb 2\,r+1\rb+r+r+r$ vertices, thus $\tw{H}\leq \lb 2\,r+2\rb\cdot t+3\,r+2$.
\end{proof}

Now, by combining Theorem \autoref{thm5.10} with the lemmas \autoref{lemma5.6} and \autoref{lemma5.7} we obtain the following theorem.

\begin{theorem}[Fiala, Golovach, Kratochv{\'\i}l. 2011 \cite{fiala2011parameterized}]\label{thm5.11}
The problem $p$-$tw$-$k$-\textsc{Strong Coloring} is $\induz{W}{1}$-hard.		
\end{theorem}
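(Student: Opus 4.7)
The plan is to combine the three ingredients assembled immediately before the theorem into a single parameterized reduction, thereby transferring $\induz{W}{1}$-hardness from equitable coloring. First I would set up the reduction: given an instance $(G,r)$ of $c$-\textsc{Equitable Coloring}, compute in polynomial time a tree decomposition of $G$ (using Bodlaender's Algorithm, Theorem \autoref{thm5.9}), massage it to have maximum degree $3$ and bags differing by at most one element along edges, and then construct the auxiliary graph $H=\fkt{H}{G}$ together with the target color bound $c\define n+m+1$, as prescribed in the definition of the auxiliary graph. The output of the reduction is the triple $(H,2,c)$, seen as an instance of $p$-$tw$-$k$-\textsc{Strong Coloring} with $k=2$.

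Second, I would verify the two required properties of a parameterized reduction. For correctness, Lemma \autoref{lemma5.6} states precisely that $G$ admits an equitable coloring in $r$ colors if and only if $H$ admits a $2$-strong coloring in $c$ colors, so yes-instances correspond under the map. For the parameter bound, Lemma \autoref{lemma5.7} gives $\tw{H}\leq \lb 2\,r+2\rb\tw{G}+3\,r+2$, which together with $k=2$ yields $\tw{H}+k\leq \fkt{g}{\tw{G}+r}$ for a computable function $g$ (here even a polynomial); hence the parameter of the output is bounded by a computable function of the parameter of the input, as required.

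Third, I would confirm that the reduction itself is computable in $\FPT$-time with respect to the input parameter. Bodlaender's algorithm runs in $\FPT$-time in $\tw{G}$, and the construction of $H$ (chains of $\fkt{F}{l,c}$, the extra copy of $\fkt{F}{n,c}$, the edge-gadget vertices $w_j$ and their incidences) takes time polynomial in the size of $G$ and linear in $r$, so the overall reduction runs in time $\fkt{f}{\tw{G}+r}\cdot\fkt{p}{|G|}$ for some computable $f$ and polynomial $p$. This confirms the map $(G,r)\mapsto (H,2,c)$ is a legitimate parameterized reduction.

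Finally, since $p$-$tw$-$c$-\textsc{Equitable Coloring} is $\induz{W}{1}$-complete by Theorem \autoref{thm5.10}, the existence of such a reduction into $p$-$tw$-$k$-\textsc{Strong Coloring} (witnessed already for $k=2$) shows that the latter is $\induz{W}{1}$-hard. The main obstacle is not really technical—the hard work is absorbed in the gadget construction and the two supporting lemmas—but conceptual: one must be sure that the triple $(\tw{H},k,c)$ really is bounded by a function of $(\tw{G},r)$, and in particular that $c=n+m+1$ is allowed to grow with $|G|$ (it is, since it is part of the \emph{instance} and not the parameter of the target problem). Once this subtlety is checked, the theorem follows.
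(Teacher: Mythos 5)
Your proposal is correct and follows exactly the route the paper takes: the paper derives the theorem by combining Theorem \autoref{thm5.10} with Lemmas \autoref{lemma5.6} and \autoref{lemma5.7}, which is precisely the parameterized reduction you spell out (including the observation that $c=n+m+1$ lives in the instance rather than the parameter). Your write-up just makes explicit the FPT-computability and parameter-bound checks that the paper leaves implicit.
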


It seems that despite treewidth being known as a very powerful parameter for graph problems, a tree decomposition of bounded width alone is not enough to find a strong coloring in polynomial time. So we will close this section by introducing a parameter which poses as an upper bound on the treewidth by inducing a natural tree decomposition with some additional properties the might come in handy whilst searching for a proper strong coloring. 

\begin{definition}[Vertex Cover]
Let $G$ be a graph and $W\subseteq\V{G}$. The set $W$ is called a {\em vertex cover} of $G$ if every edge of $G$ has at least one endpoint in $W$. The {\em vertex cover number} of a graph $G$ is the minimum size of a vertex cover	and is denoted by $\vc{G}$.
\end{definition}

An obvious relation between a vertex cover and stable sets can be stated as well. A set $I\subset\V{G}$ of vertices of a graph $G$ is a maximum stable set if and only if $\V{G}\setminus I$ is a minimum vertex cover. For any vertex cover $W\subseteq\V{G}$ the remaining vertices $\V{G}\setminus W$ always are a stable set. 

\begin{theorem}[Karp. 1972 \cite{karp1972reducibility}]\label{thm5.12}
The minimum vertex cover problem is $\mathcal{NP}$-complete. 	
\end{theorem}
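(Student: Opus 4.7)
The plan is to establish both membership in $\mathcal{NP}$ and $\mathcal{NP}$-hardness of the decision version of \textsc{Vertex Cover}, i.e.\ the problem of deciding, given a graph $G$ and a positive integer $k\in\N$, whether $\vc{G}\leq k$. For membership in $\mathcal{NP}$, I would exhibit a straightforward non-deterministic algorithm: guess a subset $W\subseteq\V{G}$ of size at most $k$ and then verify in polynomial time that for every edge $e=xy\in\E{G}$ at least one of $x$ or $y$ lies in $W$. Both the guess (writing down $W$) and the check (iterating over all edges) are polynomial in $\abs{\V{G}}+\abs{\E{G}}$, so \textsc{Vertex Cover} lies in $\mathcal{NP}$.

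For $\mathcal{NP}$-hardness, the natural approach is a polynomial time reduction from \textsc{Stable Set}, which has already been stated as $\npcomp$ in the excerpt, by exploiting the elementary duality between stable sets and vertex covers. The key observation is the following: a subset $I\subseteq\V{G}$ is a stable set if and only if $\V{G}\setminus I$ is a vertex cover, since an edge $xy\in\E{G}$ fails to be covered by $\V{G}\setminus I$ precisely when both endpoints lie in $I$, which means $I$ is not stable. Consequently $\fkt{\alpha}{G}+\vc{G}=\abs{\V{G}}$.

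The reduction then maps an instance $\lb G,k\rb$ of \textsc{Stable Set} to the instance $\lb G,\abs{\V{G}}-k\rb$ of \textsc{Vertex Cover}. This transformation is clearly computable in polynomial (indeed linear) time, and by the duality above one has $\fkt{\alpha}{G}\geq k$ if and only if $\vc{G}\leq\abs{\V{G}}-k$. Hence the given instance of \textsc{Stable Set} is a yes-instance if and only if the produced instance of \textsc{Vertex Cover} is, establishing polynomial time reducibility. Combined with the fact that \textsc{Vertex Cover} is in $\mathcal{NP}$, Theorem \autoref{thm5.2} then yields $\npcomp$-ness.

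Strictly speaking, there is no main obstacle here: both directions of the duality between stable sets and vertex covers are immediate from the definitions, and no gadget construction is required. The only point that deserves explicit attention is to verify that the reduction preserves the direction of the size constraint, i.e.\ that ``large'' stable sets correspond to ``small'' vertex covers, so that a bound of the form $\fkt{\alpha}{G}\geq k$ translates correctly into $\vc{G}\leq\abs{\V{G}}-k$. Once this bookkeeping is done, the result follows directly from Theorem \autoref{thm5.2} and the previously cited $\npcomp$-ness of \textsc{Stable Set}.
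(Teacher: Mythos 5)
The paper does not prove this statement at all: Theorem \autoref{thm5.12} is quoted as a known result of Karp with a citation, so there is no in-paper argument to compare against. Your proof is correct and is essentially the classical one. Membership in $\mathcal{NP}$ by guess-and-verify is fine, the complementation duality ($I$ is stable if and only if $\V{G}\setminus I$ is a vertex cover, hence $\fkt{\alpha}{G}+\vc{G}=\abs{\V{G}}$) is exactly the right key fact, and the map $\lb G,k\rb\mapsto\lb G,\abs{\V{G}}-k\rb$ together with \autoref{thm5.2} closes the argument. The only friction is with the paper's own (sloppy) phrasing of \textsc{Stable Set} as the question whether $\fkt{\alpha}{G}\leq k$; the $\npcomp$ formulation you implicitly use, and the one Karp's reduction needs, is the existence of a stable set of size at least $k$. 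Since that is the standard reading and the paper's inequality direction is clearly an oversight repeated across its problem list, this does not affect the validity of your reduction.
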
 

\begin{center}
	\begin{tabular}{lp{9cm}}
		\toprule
		\multicolumn{2}{c}{$p$-$\theta$-$2$-\textsc{Strong Coloring}}\\
		\midrule
		Input: & A graph $G$ and a number $c\in\N$.\\
		Parameter: & $\vc{G}+c$.\\
		Question: & Does a $2$-strong coloring of $G$ in $c$ colors exist?\\
		\bottomrule
	\end{tabular}
\end{center}

\begin{theorem}[Fiala, Golovach, Kratochv{\'\i}l. 2011 \cite{fiala2011parameterized}]\label{thm5.14}
The $p$-$\theta$-$2$-\textsc{Strong Coloring} problem is in $\FPT$.	
\end{theorem}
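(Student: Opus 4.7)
The plan is to first compute a minimum vertex cover $W$ of $G$, with $\theta := \vc{G}$, in time $\Ord{1.2738^\theta + \theta\cdot\abs{\V{G}}}$ via a classical $\FPT$-algorithm; then exhaustively enumerate all $c^\theta$ colorings $f\colon W\to\set{1,\dots,c}$. For each such $f$ we check in polynomial time that $f$ restricted to $W$ is already $2$-strong, i.e.\ that $\distg{G}{u}{u'}\leq 2$ implies $\fkt{f}{u}\neq\fkt{f}{u'}$ for every $u,u'\in W$ (the distance being measured in all of $G$, so paths through $I:=\V{G}\setminus W$ count). If this passes, we try to extend $f$ to a proper $2$-strong $c$-coloring of the whole graph; the instance is a yes-instance precisely when some $W$-coloring admits such an extension.

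The key structural observation for the extension is that $I$ is a stable set, so every $v\in I$ has $\nb{v}\subseteq W$. Group the vertices of $I$ by their neighborhood in $W$: to each $v\in I$ we associate the \emph{type} $T_v:=\nb{v}\subseteq W$, and there are at most $2^\theta$ distinct types. Any two vertices of the same non-empty type lie at distance $2$ in $G$ (they share every element of their common type as a common neighbor), and two vertices of types $T,T'$ with $T\cap T'\neq\emptyset$ likewise lie at distance $2$ in $G$. Hence within $I$ the $2$-strong constraints depend only on the types: the $n_T$ vertices of type $T$ need pairwise distinct colors, and the colors used on types $T,T'$ must be disjoint whenever $T\cap T'\neq\emptyset$. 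Moreover, the set $F(T)\subseteq\set{1,\dots,c}$ of colors forbidden for any vertex of type $T$ by the fixed coloring of $W$ --- namely $\fkt{f}{T}$ together with the $f$-colors of all $W$-vertices adjacent to some element of $T$ --- is computable in polynomial time; vertices of the trivial type $T=\emptyset$ are isolated and can all receive the colour~$1$.

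The extension problem thus reduces to the following: for each non-empty type $T$ appearing in $I$, choose a subset $S_T\subseteq\set{1,\dots,c}\setminus F(T)$ with $\abs{S_T}=n_T$, such that $T\cap T'\neq\emptyset$ implies $S_T\cap S_{T'}=\emptyset$. Since at most $2^\theta$ types occur and each $S_T$ is one of at most $2^c$ subsets of $\set{1,\dots,c}$, brute-force enumeration over all tuples $\lb S_T\rb_T$ takes time at most $2^{c\cdot 2^\theta}$, and each candidate is verified in polynomial time. Altogether the running time is bounded by a function of $\theta+c$ times a polynomial in $\abs{\V{G}}$, proving membership in $\FPT$. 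The main obstacle is precisely this extension step: there are potentially linearly many vertices in $I$, so enumerating colorings of $I$ directly is hopeless, and it is the type-based compression --- collapsing the many vertices of $I$ onto at most $2^\theta$ set-valued variables over a colour universe of size $c$ --- that keeps the whole procedure in $\FPT$ rather than merely in $\XP$.
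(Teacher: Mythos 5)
The paper states this theorem with a citation only and offers no proof of its own, so there is nothing internal to compare against; judged on its merits, your argument is correct and self-contained. Both load-bearing observations check out: since $I=\V{G}\setminus W$ is stable, every walk of length at most $2$ leaving a vertex $v\in I$ must immediately enter $\nb{v}\subseteq W$, so two vertices of $I$ are within distance $2$ exactly when their neighborhood types intersect, and a vertex of type $T$ is within distance $2$ of $w\in W$ exactly when $w\in T$ or $w$ has a neighbor in $T$; consequently the extension problem is equivalent, in both directions, to choosing for each of the at most $2^{\vc{G}}$ occurring types a color set $S_T$ of size $n_T$ avoiding $\fkt{F}{T}$, disjoint across intersecting types. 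The enumeration of all $\lb 2^c\rb^{2^{\theta}}$ tuples is bounded by a computable function of the parameter $\vc{G}+c$, and the outer loop over the $c^{\theta}$ colorings of $W$ multiplies this by another such function, so membership in $\FPT$ follows. This is essentially the same neighborhood-type compression used in the cited source of Fiala, Golovach and Kratochv\'{\i}l; your brute force over set tuples is cruder than the bookkeeping they employ to finish, but it costs only a worse parameter dependence and is perfectly adequate for an $\FPT$ classification. One cosmetic remark: the disjointness condition ``$T\cap T'\neq\emptyset$ implies $S_T\cap S_{T'}=\emptyset$'' should be read for distinct types only, the within-type requirement being the separate condition $\abs{S_T}=n_T$ with a bijective assignment; your text already treats these two cases separately, so this is a matter of phrasing rather than a gap.
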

\vspace{-4mm}

\section{Chordality}

What is responsible for the ordinary coloring problem, along with many other problems, to be easily solvable on chordal graphs?\\
In this section we finally revisit the rich structure that is a chordal graph and give a broad overview on the algorithmic use of this structure and its limits. Especially treewidth becomes a very nice and easy to handle parameter due to the strong connection between treewidth, $k$-trees and chordality of graphs. This fact will provide some additional structural properties which we can use to obtain upper bounds as well as actual parameterized algorithms for hard to compute parameters on non chordal graphs with chordal powers. For this reason we will introduce the power of chordality as the smallest integer for which the corresponding power of a given graph becomes chordal.\\
We start this section by revisiting several characterizations of chordal graphs that have already been introduced in Chapter 3 and will form the base of this section.
\begin{theorem}[see \cite{fulkerson1965incidence} and \cite{buneman1974characterisation}]\label{thm5.15}
Let $G$ be a graph, the following properties are equivalent:
\begin{enumerate}[i)]

\item $G$ is chordal.

\item Every minimal separator of $G$ induces a clique.

\item $G$ is the intersection graph of a set of subtrees $\set{T_1,\dots,T_k}$ of a tree $T$.

\item There exists a tree $T=\lb K,L\rb$ where the vertex set $K$ corresponds to the set of all maximal cliques in $G$ and the set $K_v\define\condset{Q\in K}{v\in Q}$ induces a subtree of $T$ for all $v\in\V{G}$.

\end{enumerate}
\end{theorem}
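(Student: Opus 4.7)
The plan is to establish the cycle of implications (i) $\Leftrightarrow$ (ii) together with (i) $\Rightarrow$ (iv) $\Rightarrow$ (iii) $\Rightarrow$ (i), which then yields the equivalence of all four statements.

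For (i) $\Rightarrow$ (ii): assume $G$ chordal and let $S$ be a minimal $a,b$-separator with components $A \ni a$ and $B \ni b$ of $G - S$. For any $x,y \in S$, minimality forces each of them to have a neighbor in both $A$ and $B$, so I can pick shortest $x,y$-paths $P_A$ through $A$ and $P_B$ through $B$; their union is a cycle of length $\geq 4$. No chord can lie inside either shortest path, and no chord can cross the interiors of $A$ and $B$ (since $S$ separates them), so chordality forces the chord $xy$ to exist. Conversely, if $G$ contains an induced cycle $v_1 \ldots v_k$ with $k \geq 4$, then any minimal $v_1,v_3$-separator $S$ must contain $v_2$ (to block $v_1 v_2 v_3$) and some $v_i$ with $i \in \{4,\ldots,k\}$ (to block the other arc); these two vertices are non-adjacent by inducedness, violating (ii).

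For (i) $\Rightarrow$ (iv) I would proceed by induction on $|V(G)|$, using the classical fact that every chordal graph on at least two vertices admits a simplicial vertex $v$ (a vertex whose neighborhood is a clique), producible for instance by lexicographic BFS. Apply the induction hypothesis to $G - v$ to obtain a clique tree $T'$. Since $N(v)$ is a clique in $G - v$, some maximal clique $Q$ of $G - v$ contains $N(v)$; either $N[v]$ is already maximal in $G$, in which case attach a new node $N[v]$ adjacent to the node $Q$ in $T'$, or $Q = N(v)$ and one simply replaces the node $Q$ by $N[v] = Q \cup \{v\}$. A short verification shows that every $K_w$ remains a subtree under this update. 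The step (iv) $\Rightarrow$ (iii) is immediate: the family $\{K_v : v \in V(G)\}$ consists of subtrees of the clique tree $T$, and $vw \in E(G)$ iff $v$ and $w$ share a maximal clique iff $K_v \cap K_w \neq \emptyset$, so $G$ is precisely the intersection graph of this family.

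For (iii) $\Rightarrow$ (i), suppose for contradiction that $G$ is represented by subtrees $\{T_v\}$ of a host tree $T$ and contains an induced cycle $v_1 \ldots v_k$ with $k \geq 4$. Since $v_1$ and $v_3$ are non-adjacent, $T_{v_1} \cap T_{v_3} = \emptyset$, so some edge $e$ of $T$ separates them into components $T'$ and $T''$, with $T_{v_1} \subseteq T'$ and $T_{v_3} \subseteq T''$. The subtree $T_{v_2}$ intersects both, so by connectedness it contains $e$. On the other hand, each $v_i$ with $i \in \{4,\ldots,k\}$ is non-adjacent to $v_2$, so $T_{v_i}$ does not contain $e$ and lies entirely in $T'$ or $T''$. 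Starting from $T_{v_3} \subseteq T''$ and chaining along the consecutive intersections $T_{v_3} \cap T_{v_4}, \ldots, T_{v_{k-1}} \cap T_{v_k}$, one sees inductively that each $T_{v_i}$ remains in $T''$; but $T_{v_k} \cap T_{v_1} \neq \emptyset$ with $T_{v_1} \subseteq T'$ forces a crossing of $e$, a contradiction.

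The principal obstacle will be (i) $\Rightarrow$ (iv): both the simplicial-vertex lemma (which one may either prove via a perfect elimination ordering or take as a known prerequisite) and the inductive attachment step require delicate handling. In particular, after inserting the node $N[v]$ or adjusting $Q$, one must verify that no $K_w$ becomes disconnected in the new tree; this depends on whether $w \in N(v)$ and on whether the maximal clique containing $N[v]$ changes upon removal of $v$, and handling all cases cleanly is the fiddly part of the argument.
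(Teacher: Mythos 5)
The paper does not actually prove Theorem \ref{thm5.15}: it is stated as a known result with citations to Fulkerson--Gross and Buneman (and the separator and clique-tree characterizations already appear, also unproved, as Theorems \ref{thm3.18} and \ref{thm3.19}). So there is no in-paper argument to compare against; judged on its own, your proof is correct and follows the classical route: Dirac's two-shortest-paths argument for (i) $\Leftrightarrow$ (ii), induction on a simplicial vertex to build the clique tree for (i) $\Rightarrow$ (iv), the observation that $vw\in E(G)$ iff $K_v\cap K_w\neq\emptyset$ for (iv) $\Rightarrow$ (iii), and the Helly-type edge-separation argument on the host tree for (iii) $\Rightarrow$ (i). All four implications check out, including the chaining argument that keeps $T_{v_3},\dots,T_{v_k}$ on one side of the separating edge $e$, and the use of the simplicial-vertex lemma is not circular since you derive (i) $\Rightarrow$ (ii) first.

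One small imprecision in (i) $\Rightarrow$ (iv): the dichotomy should not be ``$N[v]$ is already maximal in $G$'' versus ``$Q=N(v)$'', since $N[v]$ is \emph{always} the unique maximal clique of $G$ containing the simplicial vertex $v$. The correct case split is on whether $N(v)\subsetneq Q$ (then $N[v]$ is a genuinely new maximal clique and you attach it as a leaf at $Q$) or $N(v)=Q$ (then you relabel the node $Q$ as $N[v]$). The constructions you describe in the two branches are exactly these, and the subtree-connectivity check you flag as fiddly does go through in both cases (a vertex $w\neq v$ gains the new node only if $w\in N(v)\subseteq Q$, and $Q$ is adjacent to the new node), so this is a phrasing slip rather than a gap.
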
  
\vspace{-5mm}
\subsection{An Algorithmic Approach on Chordal Graphs}

Now there is one essential alternative characterization of chordal graphs which yields their enormous algorithmic approachability. For this we need to find vertices with a certain property called simpliciality which was introduced by 
Dirac in 1961 (see \cite{dirac1961rigid}).
\begin{definition}[Simplicial Vertex]
Let $G$ be a graph and $v\in\V{G}$ a vertex, $v$ is called {\em simplicial} if its neighborhood $\nb{v}$ induces a complete graph.
\end{definition}
Simplicial vertices can be seen as a generalized version of the leafs of a tree and indeed such vertices can be found in chordal graphs.
\begin{theorem}[Dirac, Lekkeikerker and Boland. 1961,1962 \cite{dirac1961rigid} and \cite{lekkeikerker1962representation}]\label{thm5.16}
Every chordal graph $G$ contains a simplicial vertex. If $G$ is not complete, it contains two nonadjacent simplicial vertices. 
\end{theorem}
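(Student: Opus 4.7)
The plan is to prove the stronger second statement by induction on $|\V{G}|$, and then observe that the first statement follows (if $G$ is complete every vertex is trivially simplicial, and otherwise the second statement produces a simplicial vertex). The characterization of Theorem \autoref{thm5.15} (ii) — every minimal separator of a chordal graph induces a clique — will be the essential tool.

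For the inductive step, assume $G$ is chordal but not complete. Pick two nonadjacent vertices $a,b\in\V{G}$ and let $S$ be a minimal $a,b$-separator. By Theorem \autoref{thm5.15} (ii), $\induz{G}{S}$ is a clique. Let $A$ and $B$ be the vertex sets of the connected components of $G-S$ containing $a$ and $b$ respectively, and set $G_A\define\induz{G}{A\cup S}$ and $G_B\define\induz{G}{B\cup S}$. Both are induced subgraphs of a chordal graph and therefore chordal. The key structural observation is that any vertex $v\in A$ satisfies $\fkt{N_G}{v}\subseteq A\cup S=\V{G_A}$, because $S$ separates $A$ from $B$; hence $\fkt{N_G}{v}=\fkt{N_{G_A}}{v}$, and a vertex of $A$ that is simplicial in $G_A$ is automatically simplicial in $G$. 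The analogous statement holds for $B$ and $G_B$.

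Now I apply the induction hypothesis to $G_A$. If $G_A$ is complete, then in particular $a$ is simplicial in $G_A$, hence simplicial in $G$ by the observation above. If $G_A$ is not complete, the hypothesis yields two nonadjacent simplicial vertices of $G_A$; since $S$ is a clique, at most one of these lies in $S$, so at least one, call it $a'$, lies in $A$, and $a'$ is simplicial in $G$. Either way, we obtain a simplicial vertex $a^{\star}\in A$ of $G$. Repeating the argument for $G_B$ yields a simplicial vertex $b^{\star}\in B$ of $G$. Because $a^{\star}\in A$ and $b^{\star}\in B$ sit in different components of $G-S$, they are nonadjacent in $G$, completing the induction.

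The main obstacle is verifying that the simplicial vertex produced in the smaller piece $G_A$ actually lies in the component $A$ rather than in the separator $S$; this is precisely where the cliqueness of $S$ (guaranteed by Theorem \autoref{thm5.15}) is used, since two nonadjacent simplicial vertices cannot both sit inside a clique. The base case $|\V{G}|\leq 2$ is immediate: either $G$ is complete or it consists of two isolated vertices, both of which are simplicial and nonadjacent.
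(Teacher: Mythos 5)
Your proof is correct: this is the classical Dirac argument, inducting on the order of $G$ and splitting along a minimal $a,b$-separator $S$, whose cliqueness guarantees that the two nonadjacent simplicial vertices obtained in each piece $G_A$, $G_B$ cannot both be absorbed into $S$. The paper cites this theorem as a known result and gives no proof of its own, so there is nothing to compare against; your write-up correctly identifies and handles the one delicate point (locating the simplicial vertex inside the component $A$ rather than in $S$), and the base case and the reduction of the first assertion to the second are fine.
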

\vspace{-2mm}
Since every induced subgraph of a chordal graph $G$ is chordal again, it is easy to see that for any simplicial vertex $v\V{G}$ the graph $G-v$ again is chordal and therefore contains at least one simplicial vertex. With this we are able to deconstruct the whole graph $G$ by consecutive removals of simplicial vertices in the remaining graphs. The removal order of these vertices gives a so called perfect elimination scheme of the chordal graph $G$.
\begin{definition}[Perfect Elimination Scheme (PES)]
Let $G=\lb V,E\rb$ be a graph and $\sigma=\left[ v_1,\dots,v_n\right]$ an ordering of the vertices of $G$.\\
The ordering $\sigma$ is called a {\em perfect elimination scheme} (PES) if the vertex $v_i$ is simplicial in the graph $\induz{G}{\set{v_i,\dots,v_n}}$ for all $i=1,\dots,n$.\\
So the set $X_i\define\condset{v_j\in\nb{v_i}}{j>i}$ induces a clique in $G$.
\end{definition}
\vspace{-2mm}
\begin{theorem}[Fulkerson, Gross. 1965 \cite{fulkerson1965incidence}]\label{thm5.17}
A graph $G$ is chordal if and only if it holds a perfect elimination scheme. Such a PES may start with an arbitrary simplicial vertex.
\end{theorem}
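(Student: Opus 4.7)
My plan is to prove both implications by straightforward arguments, with induction on $|V(G)|$ handling the forward direction and a minimum-index argument handling the converse.

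For the forward direction, I would proceed by induction on $n=|V(G)|$. The base case $n=1$ is trivial. For the inductive step, suppose $G$ is chordal and let $v_1 \in V(G)$ be an arbitrary simplicial vertex (existence is guaranteed by Theorem~\ref{thm5.16}; if no specific simplicial vertex is prescribed, any one will do). Since chordality is clearly inherited by induced subgraphs (any induced cycle of $G - v_1$ of length $\geq 4$ would also be induced in $G$), the graph $G - v_1$ is chordal on $n-1$ vertices, so by induction it admits a PES $[v_2, \ldots, v_n]$. Prepending $v_1$ yields a PES of $G$ starting with the prescribed simplicial vertex, since $v_1$ is simplicial in $G = G[\{v_1,\ldots,v_n\}]$ by choice, and each $v_i$ with $i\geq 2$ is simplicial in $G[\{v_i,\ldots,v_n\}] = (G-v_1)[\{v_i,\ldots,v_n\}]$ by the inductive PES.

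For the converse, suppose $\sigma = [v_1, \ldots, v_n]$ is a PES of $G$ and assume for contradiction that $G$ contains a chordless cycle $C = (u_1, u_2, \ldots, u_k)$ of length $k \geq 4$. Let $u_j$ be the vertex of $C$ with the smallest $\sigma$-index, and let $u_{j-1}$ and $u_{j+1}$ (indices modulo $k$) be its two neighbors along $C$. By minimality, both $u_{j-1}$ and $u_{j+1}$ appear later than $u_j$ in $\sigma$, so they lie in $X_j = \{v_\ell \in N(v_j) : \ell > j\}$. Since $\sigma$ is a PES, $X_j$ induces a clique in $G$, hence $u_{j-1}u_{j+1} \in E(G)$. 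But as $k \geq 4$, these two vertices are nonadjacent on $C$, so $u_{j-1}u_{j+1}$ is a chord of $C$, contradicting that $C$ was chordless.

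There is no serious obstacle here; the only subtlety is the freedom to start the PES with any prescribed simplicial vertex, which is handled transparently by the inductive construction above (one simply picks the desired simplicial vertex as $v_1$ rather than invoking Theorem~\ref{thm5.16} to produce an arbitrary one). The key observations are the closure of chordality under vertex deletion and the minimum-index trick in the converse, both of which are standard.
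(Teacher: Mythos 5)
Your proof is correct. Note that the paper itself gives no proof of this statement; it is cited as a classical result of Fulkerson and Gross, so there is no in-paper argument to compare against. Your argument is the standard one and is complete: the forward direction by induction on the number of vertices, using the existence of a simplicial vertex in a chordal graph (the Dirac/Lekkerkerker--Boland theorem stated just before) together with the fact that chordality is inherited by induced subgraphs, and the converse by taking the vertex of a putative chordless cycle with minimum index in the ordering and observing that its two cycle-neighbours lie in its higher-indexed neighbourhood, which induces a clique and hence forces a chord. Your handling of the clause about starting the PES at an arbitrary prescribed simplicial vertex, by simply selecting that vertex as $v_1$ in the inductive construction, is also correct.
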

\vspace{-2mm}
Now consider a graph $G$ with some chordal power $G^k$. According to Theorem \autoref{thm5.17} $G^k$ holds a PES $\sigma$, which obviously is an ordering of the vertices of $G$ as well. But as the vertex $v_i$ is simplicial in the graph $\induz{\lb G^k\rb}{\set{v_i,\dots,v_n}}$ and thus $X_i\define\condset{v_j\in\knb{k}{v_i}}{j>i}$ induces a clique in $G^k$. But $X_i$ is not necessarily equal to $\fkt{N_{\lb\induz{G}{\set{v_i,\dots,v_n}}\rb^k}}{v_i}$ as the following example shows.

\begin{example}
Consider the cycle on five vertices $C_5$ and its square, which is chordal. By Theorem \autoref{thm5.17} $C_5^2$ holds a PES $\sigma$, which actually is an arbitrary ordering of the vertices its vertices, since $C_5^2$ is complete.
\begin{figure}[H]
\begin{center}
\begin{tikzpicture}
\node (anchor1) [] {};
\node (i) [position=270:2.25cm from anchor1] {$C_5$};

\node (anchor4) [position=0:5.2cm from anchor1] {};
\node (iv) [position=270:2.25cm from anchor4] {$C_5^2$};



\node (v1) [draw,circle,fill,inner sep=1.5pt,position=0:0.9cm from anchor1] {};
\node (v2) [draw,circle,fill,inner sep=1.5pt,position=72:0.9cm from anchor1] {};
\node (v3) [draw,circle,fill,inner sep=1.5pt,position=144:0.9cm from anchor1] {};
\node (v4) [draw,circle,fill,inner sep=1.5pt,position=216:0.9cm from anchor1] {};
\node (v5) [draw,circle,fill,inner sep=1.5pt,position=288:0.9cm from anchor1] {};

\path
(v1) edge (v2)
(v2) edge (v3)
(v3) edge (v4)
(v4) edge (v5)
(v5) edge (v1)
;

\node (cl1) [position=0:11mm from anchor1] {$v_1$};
\node (cl2) [position=72:11mm from anchor1] {$v_2$};
\node (cl3) [position=144:11mm from anchor1] {$v_3$};
\node (cl4) [position=216:11mm from anchor1] {$v_4$};
\node (cl5) [position=288:11mm from anchor1] {$v_5$};



\node (v1) [draw,circle,fill,inner sep=1.5pt,position=0:0.9cm from anchor4] {};
\node (v2) [draw,circle,fill,inner sep=1.5pt,position=72:0.9cm from anchor4] {};
\node (v3) [draw,circle,fill,inner sep=1.5pt,position=144:0.9cm from anchor4] {};
\node (v4) [draw,circle,fill,inner sep=1.5pt,position=216:0.9cm from anchor4] {};
\node (v5) [draw,circle,fill,inner sep=1.5pt,position=288:0.9cm from anchor4] {};

\path
(v1) edge (v2)
(v2) edge (v3)
(v3) edge (v4)
(v4) edge (v5)
(v5) edge (v1)
;

\path
(v1) edge (v3)
(v2) edge (v4)
(v3) edge (v5)
(v4) edge (v1)
(v5) edge (v2)
;

\node (cl1) [position=0:11mm from anchor4] {$v_1$};
\node (cl2) [position=72:11mm from anchor4] {$v_2$};
\node (cl3) [position=144:11mm from anchor4] {$v_3$};
\node (cl4) [position=216:11mm from anchor4] {$v_4$};
\node (cl5) [position=288:11mm from anchor4] {$v_5$};

\end{tikzpicture}
\end{center}
\caption{The $C_5$ and its chordal square with a simplicial ordering $\sigma=\left[ v_1,v_2,v_3,v_4,v_5\right]$.}
\label{fig5.3}
\end{figure}
\vspace{-8mm}

Now in the graph induced by the vertices of $C_5$ excluding $v_1$, the $2$-strong neighborhood of $v_2$ differs from its neighborhood in the same induced subgraph in $C_5^2$. This is because $v_1$ was responsible for the only path of length two between $v_2$ and $v_5$, with $v_1$ gone this path does not exist anymore and the square of the remaining graph changes.

\begin{figure}[H]
\begin{center}
\begin{tikzpicture}
\node (anchor1) [] {};
\node (i) [position=270:2.25cm from anchor1] {$\induz{C_5^2}{\set{v_2,\dots,v_5}}$};

\node (anchor4) [position=0:5.2cm from anchor1] {};
\node (iv) [position=270:2.25cm from anchor4] {$\lb \induz{C_5}{\set{v_1,\dots,v_5}}\rb^2$};



\node (v2) [draw,circle,fill,inner sep=1.5pt,position=72:0.9cm from anchor1] {};
\node (v3) [draw,circle,fill,inner sep=1.5pt,position=144:0.9cm from anchor1] {};
\node (v4) [draw,circle,fill,inner sep=1.5pt,position=216:0.9cm from anchor1] {};
\node (v5) [draw,circle,fill,inner sep=1.5pt,position=288:0.9cm from anchor1] {};

\path
(v2) edge (v3)
(v3) edge (v4)
(v4) edge (v5)
;

\node (cl2) [position=72:11mm from anchor1] {$v_2$};
\node (cl3) [position=144:11mm from anchor1] {$v_3$};
\node (cl4) [position=216:11mm from anchor1] {$v_4$};
\node (cl5) [position=288:11mm from anchor1] {$v_5$};

\path
(v2) edge (v4)
(v3) edge (v5)
(v5) edge (v2)
;



\node (v2) [draw,circle,fill,inner sep=1.5pt,position=72:0.9cm from anchor4] {};
\node (v3) [draw,circle,fill,inner sep=1.5pt,position=144:0.9cm from anchor4] {};
\node (v4) [draw,circle,fill,inner sep=1.5pt,position=216:0.9cm from anchor4] {};
\node (v5) [draw,circle,fill,inner sep=1.5pt,position=288:0.9cm from anchor4] {};

\path
(v2) edge (v3)
(v3) edge (v4)
(v4) edge (v5)
;

\path
(v2) edge (v4)
(v3) edge (v5)
;

\node (cl2) [position=72:11mm from anchor4] {$v_2$};
\node (cl3) [position=144:11mm from anchor4] {$v_3$};
\node (cl4) [position=216:11mm from anchor4] {$v_4$};
\node (cl5) [position=288:11mm from anchor4] {$v_5$};

\end{tikzpicture}
\end{center}
\caption{The graphs $\induz{C_5^2}{\set{v_2,\dots,v_5}}$ and $\lb \induz{C_5}{\set{v_1,\dots,v_5}}\rb^2$.}.
\label{fig5.4}
\end{figure}
\vspace{-8mm}
\end{example}

This problem makes it somewhat difficult to define a generalized perfect elimination scheme without considering induced subgraphs of the chordal power of a graph. How critical this problem is will become clear in the next example which illustrates even the chordality itself does not have to be inherited by its induced subgraphs, which is due to the problem that sunflowers themselves do not give a characterization of graphs with chordal squares in terms of forbidden induced subgraphs.

\begin{example}
Now consider the withered chordal sunflower $G$ of size $4$. For a chordal sunflower of size $4$ the existence of just one additional vertex connecting two of the $u$-vertices suffices for the square to be chordal. But since in $G$ is chordal, the set $\set{v_4,\dots,v_9}$, which is the neighborhood of $v_1$ in $G^2$, induces a clique and therefore the additional vertex $v_1$ is correctly picked as a simplicial vertex in $G^2$. With that $G-v_1$ is a nonwithered sunflower and $\lb G-v_1\rb^2$ is not chordal anymore.
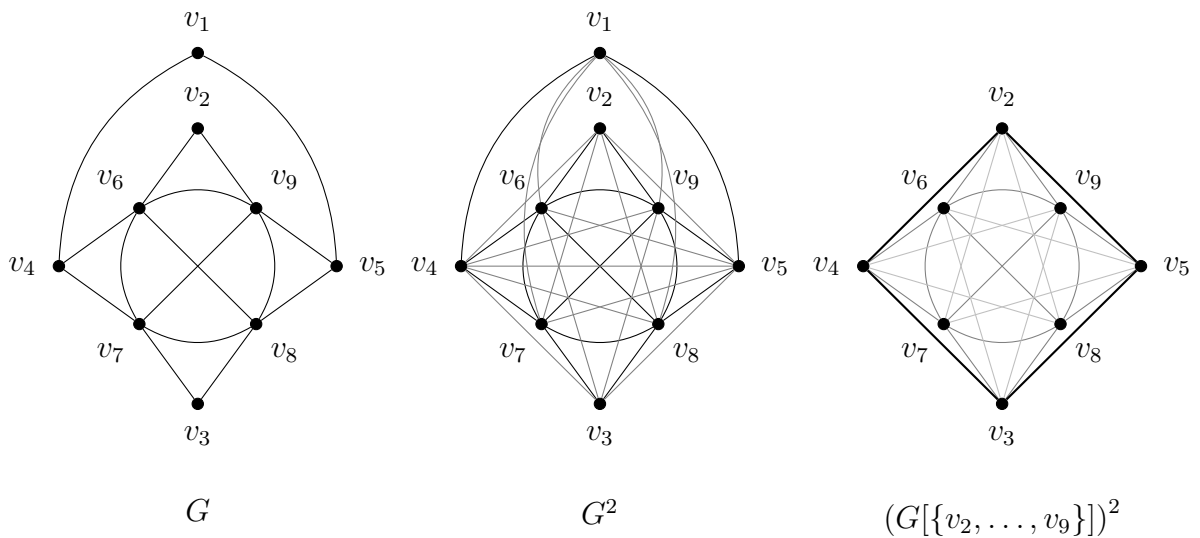
\begin{figure}[H]
	\begin{center}
		\begin{tikzpicture}
		
		\node (anchor1) [] {};
		\node (anchor2) [position=0:50mm from anchor1] {};
		\node (anchor3) [position=0:50mm from anchor2] {};
		
		\node (u1) [draw,circle,fill,inner sep=1.5pt,position=90:1.6cm from anchor1] {};
		\node (u2) [draw,circle,fill,inner sep=1.5pt,position=180:1.6cm from anchor1] {};
		\node (u3) [draw,circle,fill,inner sep=1.5pt,position=270:1.6cm from anchor1] {};
		\node (u4) [draw,circle,fill,inner sep=1.5pt,position=0:1.6cm from anchor1] {};
		
		\node (w1) [draw,circle,fill,inner sep=1.5pt,position=135:0.8cm from anchor1] {};
		\node (w2) [draw,circle,fill,inner sep=1.5pt,position=225:0.8cm from anchor1] {};
		\node (w3) [draw,circle,fill,inner sep=1.5pt,position=315:0.8cm from anchor1] {};
		\node (w4) [draw,circle,fill,inner sep=1.5pt,position=45:0.8cm from anchor1] {};
		
		\node (v) [draw,circle,fill,inner sep=1.5pt,position=90:2.6cm from anchor1] {};
		
		\node (lu1) [position=90:0.07 from u1] {$v_2$};
		\node (lu2) [position=180:0.07 from u2] {$v_4$};
		\node (lu3) [position=270:0.07 from u3] {$v_3$};
		\node (lu4) [position=0:0.07 from u4] {$v_5$};
		
		\node (lw1) [position=135:0.07cm from w1] {$v_6$};
		\node (lw2) [position=225:0.07cm from w2] {$v_7$};
		\node (lw3) [position=315:0.07cm from w3] {$v_8$};
		\node (lw4) [position=45:0.07cm from w4] {$v_9$};

		\node (lv) [position=90:0.07cm from v] {$v_1$};
		
		\node (n) [position=270:2.8cm from anchor1] {$G$};
		
		\path
		(u1) edge (w1)
		edge (w4)
		(u2) edge (w1)
		edge (w2)
		(u3) edge (w2)
		edge (w3)
		(u4) edge (w3)
		edge (w4)
		;
		
		\path [bend right]
		(w1) edge (w2)
		(w2) edge (w3)
		(w3) edge (w4)
		(w4) edge (w1)
		;
		
		\path
		(w2) edge (w4)
		(w1) edge (w3)
		;
		
		\path
		(v) edge [bend right] (u2)
			edge [bend left] (u4)
		;
		
		
		\node (u1) [draw,circle,fill,inner sep=1.5pt,position=90:1.6cm from anchor2] {};
		\node (u2) [draw,circle,fill,inner sep=1.5pt,position=180:1.6cm from anchor2] {};
		\node (u3) [draw,circle,fill,inner sep=1.5pt,position=270:1.6cm from anchor2] {};
		\node (u4) [draw,circle,fill,inner sep=1.5pt,position=0:1.6cm from anchor2] {};
		
		\node (w1) [draw,circle,fill,inner sep=1.5pt,position=135:0.8cm from anchor2] {};
		\node (w2) [draw,circle,fill,inner sep=1.5pt,position=225:0.8cm from anchor2] {};
		\node (w3) [draw,circle,fill,inner sep=1.5pt,position=315:0.8cm from anchor2] {};
		\node (w4) [draw,circle,fill,inner sep=1.5pt,position=45:0.8cm from anchor2] {};
		
		\node (v) [draw,circle,fill,inner sep=1.5pt,position=90:2.6cm from anchor2] {};
		
		\node (lu1) [position=90:0.07 from u1] {$v_2$};
		\node (lu2) [position=180:0.07 from u2] {$v_4$};
		\node (lu3) [position=270:0.07 from u3] {$v_3$};
		\node (lu4) [position=0:0.07 from u4] {$v_5$};
		
		\node (lw1) [position=135:0.07cm from w1] {$v_6$};
		\node (lw2) [position=225:0.07cm from w2] {$v_7$};
		\node (lw3) [position=315:0.07cm from w3] {$v_8$};
		\node (lw4) [position=45:0.07cm from w4] {$v_9$};
		
		\node (lv) [position=90:0.07cm from v] {$v_1$};
		
		\node (n) [position=270:2.8cm from anchor2] {$G^2$};
		
		\path
		(u1) edge (w1)
		edge (w4)
		(u2) edge (w1)
		edge (w2)
		(u3) edge (w2)
		edge (w3)
		(u4) edge (w3)
		edge (w4)
		;
		
		\path [bend right]
		(w1) edge (w2)
		(w2) edge (w3)
		(w3) edge (w4)
		(w4) edge (w1)
		;
		
		\path
		(w2) edge (w4)
		(w1) edge (w3)
		;
		
		\path
		(v) edge [bend right] (u2)
		edge [bend left] (u4)
		;
		
		\path[color=gray]
		(v) edge [bend right] (w1)
			edge [bend right] (w2)
			edge [bend left] (w3)
			edge [bend left] (w4)
		;
		
		\path[color=gray]
		(u1) edge (u2)
			 edge (u4)
			 edge (w2)
			 edge (w3)
		;
		
		\path[color=gray]
		(u2) edge (u4)
			 edge (w3)
			 edge (w4)
		;
		
		\path[color=gray]
		(u3) edge (u2)
			 edge (u4)
			 edge (w1)
			 edge (w4)
		;
		
		\path[color=gray]
		(u4) edge (w1)
			 edge (w2)
		;

		
		\node (u1) [draw,circle,fill,inner sep=1.5pt,position=90:1.6cm from anchor3] {};
		\node (u2) [draw,circle,fill,inner sep=1.5pt,position=180:1.6cm from anchor3] {};
		\node (u3) [draw,circle,fill,inner sep=1.5pt,position=270:1.6cm from anchor3] {};
		\node (u4) [draw,circle,fill,inner sep=1.5pt,position=0:1.6cm from anchor3] {};
		
		\node (w1) [draw,circle,fill,inner sep=1.5pt,position=135:0.8cm from anchor3] {};
		\node (w2) [draw,circle,fill,inner sep=1.5pt,position=225:0.8cm from anchor3] {};
		\node (w3) [draw,circle,fill,inner sep=1.5pt,position=315:0.8cm from anchor3] {};
		\node (w4) [draw,circle,fill,inner sep=1.5pt,position=45:0.8cm from anchor3] {};
		
		\node (lu1) [position=90:0.07 from u1] {$v_2$};
		\node (lu2) [position=180:0.07 from u2] {$v_4$};
		\node (lu3) [position=270:0.07 from u3] {$v_3$};
		\node (lu4) [position=0:0.07 from u4] {$v_5$};
		
		\node (lw1) [position=135:0.07cm from w1] {$v_6$};
		\node (lw2) [position=225:0.07cm from w2] {$v_7$};
		\node (lw3) [position=315:0.07cm from w3] {$v_8$};
		\node (lw4) [position=45:0.07cm from w4] {$v_9$};
		
		\node (n) [position=270:2.8cm from anchor3] {$\lb\induz{G}{\set{v_2,\dots,v_9}}\rb^2$};
		
		\path[color=gray]
		(u1) edge (w1)
		edge (w4)
		(u2) edge (w1)
		edge (w2)
		(u3) edge (w2)
		edge (w3)
		(u4) edge (w3)
		edge (w4)
		;
		
		\path [bend right,color=gray]
		(w1) edge (w2)
		(w2) edge (w3)
		(w3) edge (w4)
		(w4) edge (w1)
		;
		
		\path[color=gray]
		(w2) edge (w4)
		(w1) edge (w3)
		;
		
		\path[color=lightgray]
		(u1) edge[thick,color=black] (u2)
		 edge[thick,color=black] (u4)
		edge (w2)
		edge (w3)
		;
		
		\path[color=lightgray]
		(u2) edge (w3)
		edge (w4)
		;
		
		\path[color=lightgray]
		(u3) edge[thick,color=black] (u2)
		edge[thick,color=black] (u4)
		edge (w1)
		edge (w4)
		;
		
		\path[color=lightgray]
		(u4) edge (w1)
		edge (w2)
		;
		\end{tikzpicture}
	\end{center}
	\caption{A graph $G$ with a PES of its chordal square and an induced subgraph square.}
	\label{fig5.5}
\end{figure}
\end{example}

This suggests a differentiation between two different types of PES for chordal powers of a graph. Those for which the $k$th power of the induced subgraph of the original graph is chordal in each step and those, the weaker ones, for which this is not the case.\\
In this thesis we only suggest the existence of those two different qualities of chordal powers of a graph and will not go any further than stating a definition. For us the weaker version will do.
\begin{definition}[$k$-strong Perfect Elimination Scheme ($k$-PES) (weak version)]
	Let $G=\lb V,E\rb$ be a graph, $k\in\N$ and $\sigma=\left[ v_1,\dots,v_n\right]$ an ordering of the vertices of $G$.\\
	The ordering $\sigma$ is called a {\em $k$-strong perfect elimination scheme} ($k$-PES) if the vertex $v_i$ is simplicial in the graph $\induz{G^k}{\set{v_i,\dots,v_n}}$ for all $i=1,\dots,n$.\\
	So the set $X_i^k\define\condset{v_j\in\knb{k}{v_i}}{j>i}$ induces a clique in $G^k$.
\end{definition}
\begin{definition}[True $k$-strong Perfect Elimination Scheme (True $k$-PES) (strong version)]
	Let $G=\lb V,E\rb$ be a graph, $k\in\N$ and $\sigma=\left[ v_1,\dots,v_n\right]$ an ordering of the vertices of $G$.\\
	The ordering $\sigma$ is called a {\em true $k$-strong perfect elimination scheme} (True $k$-PES) if the vertex $v_i$ is simplicial in the graph $\lb\induz{G}{\set{v_i,\dots,v_n}}\rb^k$ for all $i=1,\dots,n$.\\
	So the set $X_i^k\define\condset{v_j\in\knb{k}{v_i}}{j>i}$ induces a clique in $G^k$.
\end{definition}
The difference in these two definitions is very subtle, but it truly is a difference in the structure of those graphs and might also pose a difference in the algorithmic application of such a $k$-strong PES.

Now we have the existence of a PES as another characterization for chordal graphs. So how to find such a PES and how to verify it? This will be the the next step, leading to the actual computation of graph powers and $k$-strong PES, which is the first step to the application of this concept on the coloring problem.\\
Rose, Tarjan and Lueker showed in 1976 (see \cite{rose1976algorithmic}) that a PES can be found efficiently using lexicographic breadth-first search.

\begin{algorithm}[H]
\caption{Lexicographic Breadth-First Search (\textbf{LexBFS})}\label{alg5.1}
\begin{algorithmic}[1]
\Require Graph $G=\lb V,E\rb$
\Ensure Perfect Elimination Scheme $\sigma$ if $G$ is chordal
\State Assign label $\emptyset$ to each vertex in $V$
\For{$i=n$ \textbf{to} $1$ \textbf{step} $-1$}
\State Pick an unnumbered vertex $v$ with largest label
\State $\fkt{\sigma}{i}=v$
\For{each unnumbered vertex $w\in\nb{v}$}
\State Add $i$ to $\fkt{label}{w}$
\EndFor
\EndFor
\State\Return $\sigma$
\end{algorithmic}
\end{algorithm}
\begin{theorem}[Rose, Tarjan, Lueker. 1976 \cite{rose1976algorithmic}]\label{thm5.18}
The algorithm \textbf{LexBFS} returns a PES $\sigma$ if and only if the input graph $G$ is chordal. 
\textbf{LexBFS} has a running time of $\Ord{\abs{V}+\abs{E}}$.
\end{theorem}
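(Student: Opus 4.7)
The plan splits the theorem into three pieces: the easy ($\Leftarrow$) direction, the substantive ($\Rightarrow$) direction, and the running-time analysis.

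For the direction ``$\sigma$ a PES $\Rightarrow$ $G$ chordal'' I would simply invoke Theorem~\autoref{thm5.17}: the existence of \emph{any} perfect elimination scheme characterises chordality, so in particular an ordering returned by \textbf{LexBFS} that happens to be a PES forces $G$ to be chordal. Nothing to do here.

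The main work is proving that when $G$ is chordal, any ordering $\sigma$ returned by \textbf{LexBFS} is a PES. My plan is to first isolate the combinatorial core of \textbf{LexBFS} as a $4$-point condition: for every triple $i<j<k$ with $v_iv_k\in\E{G}$ and $v_jv_k\notin\E{G}$, there exists $\ell>k$ with $v_\ell v_j\in\E{G}$ and $v_\ell v_i\notin\E{G}$. This is proved by inspecting step $j$ of the loop. At that moment both $v_i$ and $v_j$ are still unnumbered; the index $k$ lies in the label of $v_i$ (because $k$ was appended during step $k$ to every unnumbered neighbour of $v_k$, and $v_iv_k\in\E{G}$) but not in the label of $v_j$ (because $v_jv_k\notin\E{G}$). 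Since \textbf{LexBFS} picked $v_j$ over $v_i$, the label of $v_j$ must be lexicographically at least that of $v_i$; comparing the sorted-decreasing label sequences, the first position of disagreement must favour $v_j$ with a value $\ell>k$, giving exactly the desired vertex $v_\ell$. With this 4-point condition established, I would suppose for contradiction that $\sigma$ is not a PES, take the largest index $i$ such that $v_i$ is not simplicial in $\induz{G}{\set{v_i,\ldots,v_n}}$, and choose $j,k>i$, WLOG $j<k$, with $v_iv_j,v_iv_k\in\E{G}$ and $v_jv_k\notin\E{G}$. Applying the lemma to $(i,j,k)$ produces $\ell>k$ with $v_\ell v_j\in\E{G}$ and $v_\ell v_i\notin\E{G}$. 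If $v_\ell v_k\in\E{G}$, then $v_i v_k v_\ell v_j v_i$ is an induced $4$-cycle in $G$ (the non-edges $v_iv_\ell$ and $v_jv_k$ are known), contradicting chordality. Otherwise $v_\ell v_k\notin\E{G}$ and the triple $(j,k,\ell)$ satisfies the hypotheses of the 4-point condition again, producing a new vertex with larger index; iterating, each new vertex either closes an induced cycle (giving the contradiction) or extends a chordless path whose internal non-edges are forced by the construction. Since the indices strictly increase and $|V|$ is finite, the process terminates in a chordless cycle of length $\geq 4$. The key obstacle here is the bookkeeping in the iterative step: at each stage one must verify that all newly needed non-edges (to guarantee the eventual cycle is induced) are indeed forced by previous applications of the lemma; an alternative, cleaner route is induction on $|V|$, proving only that the \emph{last} vertex $\sigma(1)$ picked is simplicial in $G$ (a similar 4-point argument) and observing that the restriction of $\sigma$ to $G-\sigma(1)$ is itself a \textbf{LexBFS}-ordering of the still-chordal subgraph.

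For the running time I would describe the standard partition-refinement implementation: maintain the unnumbered vertices in an ordered list of disjoint classes grouped by identical current label, with lexicographically larger labels appearing earlier. Picking the next vertex is $\Ord{1}$ (take any element of the first class). When the new index $i$ is appended to the labels of the unnumbered neighbours of the picked vertex, each affected class $C$ is split into $C\cap\fkt{N}{v_i}$ (which inherits the refined larger label and is spliced in just before the remainder of $C$) and $C\setminus\fkt{N}{v_i}$. With doubly-linked lists and pointers from each vertex to its class, each split costs time proportional to the number of neighbours moved, so each edge contributes $\Ord{1}$ work in total across the execution, yielding the overall $\Ord{\abs{V}+\abs{E}}$ bound.
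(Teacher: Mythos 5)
The paper does not prove this theorem at all -- it is stated as a cited classical result of Rose, Tarjan and Lueker -- so there is no in-paper argument to compare against; your proposal has to stand on its own. Most of it does: the direction ``$\sigma$ is a PES $\Rightarrow G$ chordal'' via Theorem~\autoref{thm5.17} is correct, your proof of the $4$-point property of \textbf{LexBFS} orderings (the lexicographic comparison forcing an index $\ell>k$ in $\fkt{label}{v_j}\setminus\fkt{label}{v_i}$) is complete and correct, and the partition-refinement implementation is exactly the right way to justify the $\Ord{\abs{V}+\abs{E}}$ bound, which the pseudocode alone does not deliver.

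The genuine gap is in the iterative step of the main direction. Your claimed invariant -- that each application of the $4$-point property ``either closes an induced cycle or extends a chordless path whose internal non-edges are forced by the construction'' -- is false as stated. Concretely: starting from $v_iv_j,v_iv_k\in\E{G}$, $v_jv_k\notin\E{G}$, the first application gives $v_{\ell_1}$ adjacent to $v_j$ but not $v_i$, and the second (applied to $(j,k,\ell_1)$) gives $v_{\ell_2}$ adjacent to $v_k$ but not $v_j$. The construction says nothing about the pair $v_iv_{\ell_2}$. If $v_iv_{\ell_2}\in\E{G}$ and $v_{\ell_1}v_{\ell_2}\notin\E{G}$, the five vertices contain no induced cycle of length $\geq 4$ among the known edges, and the ``path'' $v_{\ell_2}v_kv_iv_jv_{\ell_1}$ has the chord $v_iv_{\ell_2}$ -- so you have neither closed a cycle nor extended a chordless path, and the termination argument (``indices strictly increase, so we finish'') no longer applies because you must restart from a new non-simpliciality witness. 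The classical proofs close this by a careful extremal choice of the counterexample (a well-ordering on the witnessing triples) rather than open-ended iteration, and this is precisely the content you would need to supply. Your proposed escape route -- induction on $\abs{V}$ after observing that $\sigma$ restricted to $G-\fkt{\sigma}{1}$ is a \textbf{LexBFS} ordering of $G-\fkt{\sigma}{1}$ (a correct and nice observation, since the last-numbered vertex never contributes to any label) -- does not actually avoid the problem: proving that $\fkt{\sigma}{1}$ is simplicial in $G$ requires exactly the same iteration, so the difficulty is relocated, not removed.
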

Eight years later Tarjan and Yannakakis proposed a second algorithm with equal running time for the computation of a PES (see \cite{tarjan1984simple}). The so called maximum cardinality search algorithm lets go of sets as labels and now just counts the amount of already numbered vertices in the neighborhood of all vertices, thus reducing the required space of the algorithm.  
\begin{algorithm}[H]
\caption{Maximum Cardinality Search (\textbf{MCS})}\label{alg5.2}
\begin{algorithmic}[1]
\Require Graph $G=\lb V,E\rb$
\Ensure Perfect Elimination Scheme $\sigma$ if $G$ is chordal
\State Assign label $0$ to each vertex of $V$
\For{$i=n$ \textbf{to} $1$ \textbf{step} $-1$}
\State Pick an unnumbered vertex $v$ with highest label
\State $\fkt{\sigma}{i}=v$
\For{each unnumbered vertex $w\in\nb{v}$}
\State add $1$ to $\fkt{label}{w}$
\EndFor
\EndFor
\State \Return $\sigma$
\end{algorithmic}
\end{algorithm}
\begin{theorem}[Tarjan, Yannakakis. 1984 \cite{tarjan1984simple}]\label{thm5.19}
The algorithm \textbf{MCS} returns a PES $\sigma$ if and only if the input graph $G$ is chordal. 
\textbf{MCS} has a running time of $\Ord{\abs{V}+\abs{E}}$.
\end{theorem}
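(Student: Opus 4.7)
The statement bundles two claims: (a) $\sigma$ is a PES $\iff$ $G$ is chordal, and (b) the running time is $\Ord{\abs{V}+\abs{E}}$. I would treat them separately.

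For (a), the ``only if'' direction is free: if \textbf{MCS} returns a PES then $G$ admits a PES, so Theorem~\autoref{thm5.17} gives chordality. The real content is the converse. Assume $G$ chordal and suppose for contradiction that $\sigma=\left[v_1,\dots,v_n\right]$ is not a PES. Then the set of ``bad triples'' $(v_i,v_j,v_k)$ with $i<j<k$, $v_iv_j,v_iv_k\in E$ and $v_jv_k\notin E$ is nonempty; pick one with $i$ \emph{maximum}. My plan is to analyse the iteration in which \textbf{MCS} picked $v_j$: at that moment $v_i$ and $v_j$ are both still unnumbered, $v_k$ is already numbered, and the choice of $v_j$ forces $\operatorname{label}(v_j)\ge\operatorname{label}(v_i)$. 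Since $v_k$ contributes to $\operatorname{label}(v_i)$ but not to $\operatorname{label}(v_j)$, there must exist some $v_\ell$ with $\ell>j$, $v_\ell v_j\in E$ and $v_\ell v_i\notin E$.

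Now I would case-split on whether $v_kv_\ell\in E$. If yes, then the four vertices $v_i,v_j,v_\ell,v_k$ form a $4$-cycle whose only possible chords $v_iv_\ell$ and $v_jv_k$ are both absent, giving an induced $C_4$ in $G$ and contradicting chordality. If no, I would combine the path $v_k\text{--}v_i\text{--}v_j\text{--}v_\ell$ with a shortest $v_k$-$v_\ell$ path in $G-\{v_i,v_j\}$ (which exists because any minimal $v_k,v_\ell$-separator would, by Theorem~\autoref{thm5.15}(ii), be a clique containing the non-adjacent pair $v_kv_\ell$ if it contained $\{v_i,v_j\}$ only) to build an induced cycle of length $\geq 4$, again contradicting chordality. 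This second branch is the technical heart: I expect the main obstacle to be organising the induction/maximality so that whenever the direct $C_4$ argument fails, the alternative path forces a new bad triple with a strictly larger base index, contradicting the maximal choice of $i$.

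For (b), the running-time analysis is a data-structure argument. I would maintain an array of buckets $B_0,B_1,\dots,B_{n-1}$ realised as doubly linked lists, where $B_d$ holds the currently unnumbered vertices whose label equals $d$, together with a pointer $d^\star$ to the index of the largest non-empty bucket. Each main iteration pops any vertex from $B_{d^\star}$ in $\Ord{1}$; after this, for every unnumbered neighbour $w$ of the popped vertex I remove $w$ from its current bucket and re-insert it into the next one, again in $\Ord{1}$ per neighbour, and I update $d^\star$ by incrementing it or scanning downward. A standard amortised argument shows the cumulative scanning cost of $d^\star$ is $\Ord{\abs{V}}$, each vertex is popped exactly once, and each edge $vw$ triggers at most one bucket move (when the earlier-numbered endpoint gets numbered). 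Summing gives $\Ord{\abs{V}+\abs{E}}$.
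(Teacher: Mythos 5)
The paper itself does not prove this theorem; it is imported from Tarjan and Yannakakis, so there is no in-paper argument to compare yours against and the proposal has to stand on its own. Two of your three pieces do: the ``only if'' direction is indeed immediate from Theorem~\autoref{thm5.17}, and the running-time analysis is the standard bucket-queue argument (each edge $vw$ triggers exactly one label increment, namely when its earlier-numbered endpoint is processed while the other is still unnumbered, so the total bucket-move and pointer-scanning cost is $\Ord{\abs{V}+\abs{E}}$). Your setup for the converse is also sound up to the production of $v_\ell$: at the step numbering $v_j$ the vertex $v_i$ is unnumbered and $v_k$ is numbered, the tie-breaking forces $\operatorname{label}(v_j)\ge\operatorname{label}(v_i)$, and since $v_k$ is counted for $v_i$ but not for $v_j$ there is a numbered $v_\ell$ with $v_\ell v_j\in\E{G}$, $v_\ell v_i\notin\E{G}$; the subcase $v_kv_\ell\in\E{G}$ correctly yields an induced $C_4$.

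The gap is the subcase $v_kv_\ell\notin\E{G}$, and it is a real one. First, a $v_k$-$v_\ell$ path in $G-\set{v_i,v_j}$ need not exist, and your parenthetical does not rule this out: a minimal $v_k,v_\ell$-separator contained in $\set{v_i,v_j}$ must be a clique by Theorem~\autoref{thm5.15}, but $\set{v_i,v_j}$, $\set{v_i}$ and $\set{v_j}$ all \emph{are} cliques (you have $v_iv_j\in\E{G}$), so chordality poses no obstruction; moreover a separator never contains the two vertices it separates, so it cannot ``contain the non-adjacent pair $v_kv_\ell$''. Second, even when such a path exists, gluing a shortest $v_k$-$v_\ell$ path in $G-\set{v_i,v_j}$ onto $v_kv_iv_jv_\ell$ does not produce an induced cycle, because internal vertices of that path may be adjacent to $v_i$, $v_j$, $v_k$ or $v_\ell$; an induced $P_4$ is perfectly legal in a chordal graph, so this configuration alone is no contradiction. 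Finally, the proposed repair via the extremal choice of $i$ is unsubstantiated: a new bad triple based at some $v_a$ with $a>i$ requires \emph{two} mutually non-adjacent neighbours of $v_a$ with indices above $a$, and the configuration $v_k$--$v_i$--$v_j$--$v_\ell$ exhibits only one such neighbour for each candidate base, so no new bad triple falls out and the maximality of $i$ is never used. Closing this branch requires an additional mechanism (in the original argument, the label comparison is iterated to generate further vertices of ever higher index until a chordless cycle of length at least four closes up); as written, the converse direction is not established.
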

Both algorithms, \textbf{LexBFS} and \textbf{MCS} return an ordering $\sigma$ of the vertices of 
the input graph $G$. This ordering $\sigma$ is a perfect elimination scheme if and only if $G$ is 
chordal, but neither \textbf{LexBFS}, nor \textbf{MCS} actually check for their vertices to be 
simplicial in their corresponding induced subgraph. So neither of our two algorithms actually checks 
if $G$ is chordal. To do this, the simplest way would be to check the previously computed ordering 
$\sigma$ for being an actual PES of $G$. For this purpose another algorithm was introduced by 
the authors of the two algorithms above.
\begin{algorithm}[H]
\caption{\textbf{PERFECT}}\label{alg5.3}
\begin{algorithmic}[1]
\Require Graph $G=\lb V,E\rb$, ordering $\sigma$
\Ensure \textbf{true} if $\sigma$ is a PES, \textbf{false} otherwise
\For{all vertices $v\in V$}
\State $\fkt{A}{v}=\emptyset$
\EndFor
\For{$i=1$ \textbf{to} $n$ \textbf{step} $1$}
\State $v\define\fkt{\sigma}{i}$
\State $X\define\condset{x\in\nb{v}}{\fkt{\sigma^{-1}}{v}<\fkt{\sigma^{-1}}{x}}$
\If{$X=\emptyset$}
\State Go to 12.
\EndIf
\State $u\define \fkt{\sigma}{\min\condset{\fkt{\sigma^{-1}}{x}}{x\in X}}$
\State Concatenate $X-\set{u}$ to $\fkt{A}{u}$
\If{$\fkt{A}{v}\setminus\nb{v}\neq\emptyset$}
\algstore{myalg}
\end{algorithmic}
\end{algorithm}
\begin{algorithm}[H]
\addtocounter{algorithm}{-1}
\caption{\textbf{PERFECT} (ctd.)}
\begin{algorithmic}[1]
\algrestore{myalg}
\State\Return \textbf{false}
\EndIf
\EndFor
\State \Return \textbf{true}
\end{algorithmic}
\end{algorithm}
\begin{theorem}[Tarjan, Yannakakis. 1984 \cite{tarjan1984simple}]\label{thm5.20}
The algorithm \textbf{PERFECT} recognizes $\sigma$ correctly as a PES, if it is one, in time 
$\Ord{\abs{V}+\abs{E}}$.
\end{theorem}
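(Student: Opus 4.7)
The plan is to split the proof into correctness and running time.

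For correctness, I would argue both directions. In the forward direction (if $\sigma$ is a PES, the algorithm returns \textbf{true}), I plan to track how a vertex $w$ can enter $A(v)$: this happens precisely when, during the processing of some earlier vertex $v'$, we have $v=u$ as the smallest-indexed later neighbor of $v'$ and $w\in X_{v'}\setminus\set{u}$. In particular $v,w\in X_{v'}$, and since $\sigma$ is a PES the set $X_{v'}$ is a clique in $G$, which forces $w\in\nb{v}$. Hence $A(v)\setminus\nb{v}=\emptyset$ at every check and the algorithm never returns \textbf{false}.

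For the converse direction I plan to show that if every check succeeds then $\sigma$ is a PES, by reverse induction along $\sigma$ that $X_v$ is a clique. The last vertex is trivial. For the inductive step fix $v$ with smallest-indexed later neighbor $u$: during the processing of $v$ the set $X_v\setminus\set{u}$ was concatenated into $A(u)$, so the successful check at $u$ yields $X_v\setminus\set{u}\subseteq\nb{u}$. Since $u$ is earliest in $X_v$, every vertex in $X_v\setminus\set{u}$ is later than $u$ and adjacent to $u$, so $X_v\setminus\set{u}\subseteq X_u$. The induction hypothesis makes $X_u$ a clique, which forces $X_v\setminus\set{u}$ to be one; together with the $u$-adjacencies this makes $X_v$ a clique as required. (The key structural idea is that to certify a clique $X_{v'}$ it suffices to certify that its earliest member is universal to the rest, because the rest inherits the clique property from $X_u$.)

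The main obstacle is the linear-time bound, since a naive realisation of $A(v)\setminus\nb{v}$ could cost $\Ord{\abs{A(v)}\cdot\abs{\nb{v}}}$. I plan to implement the check by first marking every vertex in $\nb{v}$ via a single scan of $v$'s adjacency list, then scanning $A(v)$ to detect any unmarked member, and finally unmarking; each check then costs $\Ord{\abs{\nb{v}}+\abs{A(v)}}$. Summing over the outer loop yields a running time in $\Ord{\abs{V}+\sum_v\abs{\nb{v}}+\sum_v\abs{A(v)}}$. The first sum equals $2\abs{E}$ by the handshake lemma, and for the second sum each processed $v$ appends at most $\abs{X_v}\leq\abs{\nb{v}}$ vertices into some $A(u)$, so $\sum_u\abs{A(u)}\leq 2\abs{E}$ as well. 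Combined with the $\Ord{\abs{V}}$ initialisation, the overall cost is $\Ord{\abs{V}+\abs{E}}$.
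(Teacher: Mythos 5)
Your proof is correct. The paper does not actually prove Theorem \ref{thm5.20} --- it is stated as a cited result of Tarjan and Yannakakis --- and your argument is precisely the standard one from that reference: the forward direction via membership of both $u$ and $w$ in the clique $X_{v'}$, the converse via reverse induction along $\sigma$ using $X_v\setminus\set{u}\subseteq X_u$ together with the deferred adjacency check at $u$, and the linear-time bound via the mark--scan--unmark implementation with both $\sum_v\abs{\nb{v}}$ and $\sum_v\abs{A(v)}$ bounded by $2\abs{E}$.
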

Note that \textbf{PERFECT} can easily be extended to insert additional edges into the input graph 
$G$ in order to obtain a chordal supergraph of $G$, which can be used to give an easy to compute 
approximation for the so called {\em minimum fill-in} problem. This problem asks for the minimum 
number of edges that have to be added to a given graph $G$ in order to make this graph chordal. 
The treewidth of a graph modified in such a manner acts as an upper bound of the treewidth of 
$G$ itself. 
\begin{theorem}[Fulkerson, Gross. 1965 \cite{fulkerson1965incidence}]\label{thm5.21}
A chordal graph on $n$ vertices contains at most $n$ maximal cliques, where equality holds if and only if the graph does not contain any edges.
\end{theorem}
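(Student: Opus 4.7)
The plan is to exploit the perfect elimination scheme characterization of chordal graphs (Theorem~\autoref{thm5.17}). Fix an arbitrary PES $\sigma = [v_1,\dots,v_n]$ of $G$, and for each $i$ define
\begin{align*}
C_i \define \set{v_i} \cup \condset{v_j \in \fkt{N}{v_i}}{j>i}.
\end{align*}
Because $v_i$ is simplicial in $\induz{G}{\set{v_i,\dots,v_n}}$, the set $C_i$ is a clique in $G$.

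The key step is to show that \emph{every} maximal clique of $G$ equals $C_i$ for some $i$. For a maximal clique $C$, let $v_i$ be the vertex of $C$ of smallest index under $\sigma$. Then every other vertex of $C$ lies in $\fkt{N}{v_i}$ and has index larger than $i$, so $C \subseteq C_i$. Since $C_i$ is itself a clique and $C$ is maximal, $C = C_i$. This injection from maximal cliques into $\set{C_1,\dots,C_n}$ immediately yields the bound of $n$ maximal cliques.

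For the equality case, one direction is trivial: if $G$ has no edges, each singleton $\set{v_i}$ is a maximal clique, giving exactly $n$. For the converse, assume $G$ has $n$ maximal cliques. Then the $C_i$ must be pairwise distinct and each must be a maximal clique. I would then proceed by reverse induction on $i$: the set $C_n = \set{v_n}$ is maximal, so $v_n$ is isolated in $G$; assuming $v_{i+1},\dots,v_n$ are all isolated, the set $C_i = \set{v_i}$ (since none of $v_{i+1},\dots,v_n$ is a neighbor of $v_i$) and its maximality forces $v_i$ to be isolated as well. Hence $G$ has no edges.

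I do not expect a genuine obstacle here; the only subtlety is being careful that the map ``maximal clique $\mapsto$ smallest-index vertex'' is well-defined and that the resulting $C_i$ is literally the clique itself rather than merely a superset, which is handled by the maximality of $C$. An alternative (induction on $n$, peeling off a simplicial vertex $v$ and splitting into the cases $\abs{\fkt{N}{v}}=0$ versus $\abs{\fkt{N}{v}}\geq 1$) works equally well and avoids invoking a PES on the whole graph, but the PES approach above is shorter and more in line with the algorithmic spirit of this chapter.
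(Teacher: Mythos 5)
Your proof is correct and follows essentially the same route as the paper: fix a PES, map each maximal clique to its smallest-index vertex $v_i$, and observe that maximality forces the clique to equal $\set{v_i}\cup\condset{v_j\in\fkt{N}{v_i}}{j>i}$, giving the injection into the vertex set. The only cosmetic difference is in the equality case, where you run a reverse induction showing every vertex must be isolated, while the paper instead notes that if some edge exists the PES can be chosen so that $v_n$ has a neighbor and hence is never the smallest-index vertex of a maximal clique; both arguments are sound.
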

\begin{proof}
Let $G$ be a chordal graph and $\sigma$ a PES of $G$. For all vertices $v$ we define the higher order neighborhood of $v$ by
\begin{align*}
X_v\define \condset{x\in\nb{v}}{\fkt{\sigma^{-1}}{v}<\fkt{\sigma^{-1}}{x}}.
\end{align*}
Now let $C$ be a maximal clique and $v=\operatorname{argmin}_{x\in C}\fkt{\sigma^{-1}}{x}$ the vertex with the smallest index in $C$. Hence $C\subseteq X_v\cup\set{v}$.\\
With $X_v\cup\set{v}$ being a clique and $C$ maximal we get $C=X_v\cup\set{v}$, hence for each vertex there is at most one maximal clique $X_v\cup\set{v}$ and if $G$ does not contain any edge $X_v\cup\set{v}=\set{v}$ holds and the isolated vertices are the maximal cliques.\\
Otherwise w.l.o.g. $\set{v_n}$ does not form a maximal clique. We can chose $v_n$ arbitrarily and so we can chose it with $\nb{v_n}\neq\emptyset$. Then $v_n$ never is the vertex with smallest index in a maximal clique.
\end{proof}
So all maximal cliques of a chordal graph $G=\lb V,E\rb$ are contained in $\condset{X_v}{v\in V}$, which can be found by simply iterating over the vertices of $G$ in the order of the corresponding PES $\sigma$. This way it is easy to find a maximum clique in $G$.\\
What about a stable set, which would correspond to an induced matching in a chordal line graph square?\\
To find $\fkt{\alpha}{G}$ we define the following sequence of vertices:
\begin{enumerate}[i)]

\item $y_1=\fkt{\sigma}{1}$ and

\item $y_i=\fkt{\sigma}{\min\condset{j\leq n}{\fkt{\sigma}{j}\notin\bigcup_{k=1}^{i-1}X_{y_k}}}$

\end{enumerate}
until there is no vertex left. Hence in the end we find some $t\leq n$ with
\begin{align*}
\bigcup_{k=1}^{t}\lb\set{y_k}\cup X_{y_k}\rb=V.
\end{align*}
\begin{theorem}[Gavril. 1972 \cite{gavril1972algorithms}]\label{thm5.22}
The set $\set{y_1,\dots,y_t}$ is a maximum stable set.
\end{theorem}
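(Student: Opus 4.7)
The plan is to combine two independent observations: first, that $Y := \{y_1,\dots,y_t\}$ is a stable set; second, that the associated sets $C_k := \{y_k\} \cup X_{y_k}$ furnish a clique cover of $G$ of size at most $t$. Since for any graph the inequality $\alpha(G) \le \overline{\chi}(G)$ holds trivially (each color class of a clique cover meets every stable set in at most one vertex), sandwiching $t$ between the two will force both $\alpha(G) = t$ and $Y$ to be a maximum stable set.

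First I would verify that $Y$ is stable. Suppose for contradiction that $y_i y_j \in E$ for some $i < j$. By construction the $y_k$ appear in strictly increasing $\sigma$-order, so $\sigma^{-1}(y_i) < \sigma^{-1}(y_j)$, and hence $y_j \in X_{y_i}$ by the definition of $X_v$. But the selection rule defining $y_j$ explicitly excludes vertices lying in $\bigcup_{k<j} X_{y_k}$, contradicting $y_j \in X_{y_i}$.

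Next I would show that each $C_k$ is a clique and that the $C_k$ together cover $V$. Because $\sigma$ is a PES, the vertex $y_k$ is simplicial in the subgraph $\induz{G}{\{\sigma(j) : j \ge \sigma^{-1}(y_k)\}}$, so its set of later neighbors $X_{y_k}$ induces a complete subgraph; together with $y_k$ itself we obtain a clique $C_k$. The covering identity $\bigcup_{k=1}^t C_k = V$ is built into the stopping rule of the greedy procedure (we stop precisely when no vertex remains uncovered). Turning this covering into an honest partition is routine: replace $C_k$ by $C_k \setminus \bigcup_{j<k} C_j$, which is still a clique (as a subset of $C_k$) and still contains $y_k$ (since $y_k$ is distinct from every earlier $y_j$ and is absent from every $X_{y_j}$, $j<k$). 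Thus $\overline{\chi}(G) \le t$.

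Combining the two steps yields $t = |Y| \le \alpha(G) \le \overline{\chi}(G) \le t$, so equality holds throughout and $Y$ is a maximum stable set of $G$. The most delicate point, and the one I expect to need the cleanest write-up, is the step that turns the covering $\{C_k\}$ into a disjoint clique partition of the same cardinality — in particular verifying that no $C_k$ becomes empty after removing previously covered vertices, which is exactly guaranteed by the $\sigma$-monotonicity of the greedy choice that produced the $y_k$.
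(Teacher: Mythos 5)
Your proposal is correct and follows essentially the same route as the paper: stability of $\set{y_1,\dots,y_t}$ via the observation that an edge $y_iy_j$ would force $y_j\in X_{y_i}$, and maximality because the cliques $\set{y_k}\cup X_{y_k}$ cover $V$ so no stable set can exceed $t$ (the paper phrases this as a direct pigeonhole on a hypothetical larger stable set rather than through $\fkt{\alpha}{G}\leq\Xq{G}$, and indeed remarks afterwards that the procedure yields an optimal clique cover). Your extra step of refining the cover into a disjoint partition is harmless but unnecessary, since the bound $\fkt{\alpha}{G}\leq t$ already follows from a cover by $t$ cliques.
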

\begin{proof}
With $y_iy_j\in E$ for $i<j\leq t$ $y_j\in X_{y_i}$ would hold, hence $\set{y_1,\dots,y_t}$ is a stable set and we obtain $\fkt{\alpha}{G}\geq t$.\\
Now suppose $t<\fkt{\alpha}{G}$. With $\bigcup_{k=1}^{t}\lb\set{y_k}\cup X_{y_k}\rb$ covering the entire vertex set of $G$, in an optimal stable set of $G$ two vertices $v,w$ must exist with $v,w\in X_{y_i}\cup\set{y_i}$ for some $i\leq t$. But $X_{y_i}\cup\set{y_i}$ is a clique and therefore $vw\in E$, contradicting $v$ and $w$ contained together in a stable set.
\end{proof}
Note that with this procedure we also found an optimal clique cover of $G$ using exactly $\fkt{\alpha}{G}$ cliques.\\
This finally leads us to an optimal and efficient coloring algorithm for chordal graphs, which was introduced by Gavril in 1972 (see \cite{gavril1972algorithms}) and can be used to compute the maximal cliques of $G$ as well.
\begin{algorithm}[H]
\caption{\textbf{COLORS}}\label{alg5.6}
\begin{algorithmic}[1]
\Require A graph $G=\lb V,E\rb$ and a PES $\sigma$
\Ensure The chromatic number $\fkt{\chi}{G}$
\State $\chi\define 1$
\For{all vertices $v\in V$}
\State $\fkt{S}{v}\define 0$
\EndFor
\For{$i=1$ \textbf{to} $n-1$ \textbf{step} $1$}
\State $v\define\fkt{\sigma}{i}$
\If{$\nb{v}=\emptyset$}
\State Go to 5.
\EndIf
\State$X\define\condset{x\in\nb{v}}{\fkt{\sigma^{-1}}{v}<\fkt{\sigma^{-1}}{x}}$
\If{$X=\emptyset$}
\State Go to 5.
\EndIf
\State $u\define\fkt{\sigma}{\max\condset{\fkt{\sigma^{-1}}{x}}{x\in X}}$
\State $\fkt{S}{u}\define\max\set{\fkt{S}{u},\abs{X}-1}$
\If{$\fkt{S}{v}<\abs{X}$}
\algstore{myalg}
\end{algorithmic}
\end{algorithm}
\begin{algorithm}[H]
\addtocounter{algorithm}{-1}
\caption{\textbf{COLORS} (ctd.)}
\begin{algorithmic}[1]
\algrestore{myalg}
\State $\chi\define\max\set{\chi,\abs{X}+1}$
\EndIf
\EndFor
\State\Return $\chi$
\end{algorithmic}
\end{algorithm}
\begin{theorem}[Gavril. 1972 \cite{gavril1972algorithms}]\label{thm5.23}
The Algorithm \textbf{COLORS} computes an optimal coloring of a chordal graph in time 
$\Ord{\abs{V}}$.
\end{theorem}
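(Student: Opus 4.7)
The plan is to reduce the correctness of \textbf{COLORS} to the already established structural properties of chordal graphs and their perfect elimination schemes. Specifically, since chordal graphs are perfect by Theorem \ref{thm2.4}, we have $\fkt{\chi}{G}=\fkt{\omega}{G}$, so it suffices to argue that the value $\chi$ returned by the algorithm equals $\fkt{\omega}{G}$, and then to exhibit an actual coloring using that many colors. For the latter, I would extract it by a second sweep that greedily assigns colors to the vertices in the reverse order of $\sigma$: when vertex $\fkt{\sigma}{i}=v$ is processed, its already-colored neighbors are exactly the vertices in $X=\condset{x\in\nb{v}}{\fkt{\sigma^{-1}}{v}<\fkt{\sigma^{-1}}{x}}$, which induce a clique because $\sigma$ is a PES, and therefore use pairwise distinct colors; any of at most $\fkt{\omega}{G}$ colors not appearing on $X$ can be assigned to $v$.

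The heart of the argument is that $\fkt{\omega}{G}=\max_{v\in\V{G}}\lb\abs{X_v}+1\rb$. This follows from the proof of Theorem \ref{thm5.21}: every maximal clique $C$ is of the form $X_v\cup\set{v}$, where $v$ is the $\sigma$-smallest vertex of $C$. The outer \textbf{for} loop of \textbf{COLORS} inspects exactly these candidate cliques $X\cup\set{v}$, one for each $v$, so the maximum value of $\abs{X}+1$ encountered is $\fkt{\omega}{G}$. The bookkeeping variable $\fkt{S}{u}$ flags the candidate $X_v\cup\set{v}$ as non-maximal precisely when it is a proper subset of some earlier candidate $X_u\cup\set{u}$ with $\fkt{\sigma^{-1}}{u}<\fkt{\sigma^{-1}}{v}$: after processing $v$, we record $\abs{X}-1$ on the earliest higher-indexed neighbor $u$, which is exactly the size of $X_v\setminus\set{u}$, i.e.\ the number of further common neighbors that $X_u\cup\set{u}$ would have to cover in order to subsume $X_v\cup\set{v}$. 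A small case analysis shows that the guard $\fkt{S}{v}<\abs{X}$ excludes precisely the non-maximal candidates, while leaving at least one realization of every maximum clique counted. Hence the returned $\chi$ equals $\fkt{\omega}{G}=\fkt{\chi}{G}$.

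The hard part will be the $\fkt{S}{v}$ analysis, since one must verify that the single-step update $\fkt{S}{u}\define\max\set{\fkt{S}{u},\abs{X}-1}$ correctly aggregates over all earlier candidates containing $v$; I would do this by an induction on $i$, showing as loop invariant that after iteration $i$ the value $\fkt{S}{v}$ equals $\max\condset{\abs{X_w}-1}{w=\fkt{\sigma}{j},\, j\leq i,\, v\in X_w}$, and then observing that $X_v\cup\set{v}$ is non-maximal iff some such $w$ satisfies $\abs{X_w}-1\geq\abs{X_v}$, which is exactly the negation of the guard.

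For the running time, the initialization of $\fkt{S}$ is $\Ord{\abs{V}}$, and each iteration of the outer loop performs work proportional to $1+\abs{\fkt{N}{v}}$ for computing $X$, determining its $\sigma$-maximal element, and updating $\fkt{S}{u}$ and $\chi$; summing gives $\Ord{\abs{V}+\abs{E}}$. The coloring extraction sweep has the same cost, so the total is $\Ord{\abs{V}+\abs{E}}$ once the PES $\sigma$ is available (which is the sense in which the statement $\Ord{\abs{V}}$ of the theorem should be understood, i.e.\ linear in the size of the input).
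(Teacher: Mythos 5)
The paper itself gives no proof of this statement---it is quoted from Gavril's article---so your attempt has to stand on its own. Your overall strategy is the right one: chordal graphs are perfect, every maximal clique of $G$ has the form $X_v\cup\set{v}$ by the proof of Theorem \ref{thm5.21}, hence $\fkt{\omega}{G}=\max_v\lb\abs{X_v}+1\rb$, and a greedy colouring in reverse PES order realises this bound; your correction of the stated running time to $\Ord{\abs{V}+\abs{E}}$ is also fair. The genuine gap sits exactly where you flag the ``hard part'', the analysis of $\fkt{S}{\cdot}$. Your proposed loop invariant, that after iteration $i$ one has $\fkt{S}{v}=\max\condset{\abs{X_w}-1}{w=\fkt{\sigma}{j},\ j\leq i,\ v\in X_w}$, is false: each iteration updates $\fkt{S}{u}$ for only \emph{one} representative $u$ of $X_w$, not for every element of $X_w$, so $\fkt{S}{v}$ only records those $w$ whose chosen representative happens to be $v$. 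The companion claim that $X_v\cup\set{v}$ is non-maximal if and only if some earlier $w$ with $v\in X_w$ has $\abs{X_w}-1\geq\abs{X_v}$ also fails in the ``if'' direction, since $X_w$ may contain vertices lying strictly between $w$ and $v$ in $\sigma$, and then the cardinality inequality does not force $X_v\cup\set{v}\subseteq X_w\cup\set{w}$. Worse, the pseudocode picks $u$ as the $\sigma$-\emph{maximal} element of $X$ (unlike \textbf{PERFECT}, which picks the minimal one), and with that choice the guard really does misclassify maximal cliques: for the graph on $\set{a,b,c,d}$ with edges $ab,ac,bc,bd,cd$ and PES $\left[a,d,b,c\right]$, the candidate $X_b\cup\set{b}=\set{b,c}$ is not maximal, yet $\fkt{S}{b}$ is never updated and the guard accepts it.

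Fortunately none of this machinery is needed for the theorem. The guard can never reject a candidate attaining $\max_v\lb\abs{X_v}+1\rb$: if $\fkt{S}{v}\geq\abs{X_v}$, then some $w$ satisfies $\abs{X_w}-1\geq\abs{X_v}$, so $X_w\cup\set{w}$ is a clique of size at least $\abs{X_v}+2$, which is impossible when $\abs{X_v}+1=\fkt{\omega}{G}$. Hence the value returned is always exactly $\max_v\lb\abs{X_v}+1\rb=\fkt{\omega}{G}=\X{G}$, regardless of how the $\fkt{S}{\cdot}$ bookkeeping behaves, and your greedy reverse sweep then supplies an actual colouring in that many colours. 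Replacing your invariant-based argument by this one-line observation closes the gap.
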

Now, as we are able to compute maximum cliques, stable sets and optimal colorings on chordal graphs, we need to apply those techniques to graph powers. For this we first need an efficient way to compute the $k$th power of a given graph.\\
First we introduce an algorithm to compute the $k$-strong neighborhood of a given vertex. This can also be done by using the established distance algorithms like Dijkstra's Algorithm. 
\begin{algorithm}[H]
\caption{Breadth Neighbor Search (\textbf{BNS})}\label{alg5.7}
\begin{algorithmic}[1]
\Require A connected graph $G=\lb V,E\rb$, a vertex $v\in V$ and an integer $k$ 
\Ensure The $k$-strong neighborhood $\knb{k}{v}$ of $v$
\If{$k\geq\abs{V}-1$}
\State\Return $V$
\EndIf
\State $N\define\nb{v}$
\State $Q'\define \emptyset$
\State $Q\define N$
\State $V\define V\setminus\lb N\cup\set{v}\rb$
\For{$i=1$ \textbf{to} $k-1$ \textbf{step} $1$}
\For{$u\in Q$}
\State $N\define N\cup\nb{u}$
\State $Q'\define\nb{u}\cap V$
\State $V\define V\setminus\nb{u}$
\EndFor
\State $Q\define Q'$
\State $Q'\define\emptyset$
\EndFor
\State\Return $N$
\end{algorithmic}
\end{algorithm}
\begin{lemma}
The algorithm \textbf{BNS} computes the $k$-strong neighborhood of a given vertex in time 
$\Ord{kn}\subseteq\Ord{n^2}$.
\end{lemma}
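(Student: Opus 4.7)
The plan is to verify two things about \textbf{BNS}: that the set $N$ returned equals $\knb{k}{v}$, and that the total work is bounded by $\Ord{kn}$. Both follow from the observation that the algorithm is just a truncated breadth-first search implemented with $Q$ as the current frontier and $V$ as the set of still-undiscovered vertices.

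For correctness I would argue by induction on the loop index $i$ that, at the start of iteration $i$ (with $i=1$ being the initial state before the \textbf{for}-loop), the set $N$ equals $\condset{w\in\V{G}}{1\leq\distg{G}{v}{w}\leq i}$ and the set $Q$ equals the ``frontier'' $\condset{w\in\V{G}}{\distg{G}{v}{w}=i}$. The base case follows from the initialization $N\define Q\define\nb{v}$. For the inductive step, one observes that iteration $i$ adds to $N$ exactly the set $\bigcup_{u\in Q}\fkt{N}{u}$, and writes $Q'$ as the subset of those neighbors that had not yet been encountered (because $V$ has been shrunk to remove everything already in $N\cup\set{v}$). Any vertex $w$ at distance exactly $i+1$ from $v$ has some neighbor $u$ at distance exactly $i$, so $u\in Q$ and $w$ is discovered; conversely nothing at distance $\leq i$ is added, since such vertices are no longer in $V$. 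After the $(k-1)$st iteration, $N$ therefore contains precisely the vertices at distance between $1$ and $k$ from $v$, which is $\knb{k}{v}$. The case $k\geq\abs{V}-1$ is handled up front using the fact that $G$ is connected, hence has diameter at most $\abs{V}-1$.

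For the running time, the key invariant is that the sets $N$, $Q$ and $V\setminus(N\cup\set{v})$ are always pairwise disjoint and their union is constant, so the bookkeeping guarantees that each vertex enters $Q$ at most once over the whole run. Implemented with a characteristic-vector representation of $V$ so that $\fkt{N}{u}\cap V$ and the updates $V\define V\setminus\fkt{N}{u}$ cost $\Ord{n}$ per vertex $u$ pulled from $Q$, one iteration of the outer loop costs $\Ord{\abs{Q}\cdot n}$, and summing over the $k-1$ iterations we obtain $\Ord{n\sum_i\abs{Q_i}}\subseteq\Ord{n\cdot n}=\Ord{n^2}$. The more refined bound $\Ord{kn}$ comes from charging each outer iteration $\Ord{n}$ work for maintaining the global sets $N$ and $V$ independently of how many vertices are in $Q_i$, which is all that is needed once we use bit-vector operations; combined with the fact that at most $k-1$ iterations are executed before the condition $k\geq\abs{V}-1$ would trigger the early exit, we get the stated $\Ord{kn}$ bound, and since $k\leq\abs{V}-1<n$ is the only non-trivial case, this lies in $\Ord{n^2}$.

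The only mildly subtle point, and the step I expect will need the most care, is the frontier-maintenance argument: one has to check that the update $Q'\define\fkt{N}{u}\cap V$ inside the inner loop correctly accumulates \emph{all} newly discovered vertices from \emph{all} $u\in Q$, because $V$ is modified on the fly. Since the line $V\define V\setminus\fkt{N}{u}$ only removes vertices that have just been added to $N$, no vertex at distance $i+1$ is lost; a vertex $w$ at distance $i+1$ has some first-appearing $u\in Q$ that discovers it, and it will be included in the corresponding $Q'$ before being removed from $V$. The rest of the argument is routine bookkeeping.
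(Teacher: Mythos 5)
Your proof is correct and follows essentially the same route as the paper's: a truncated-BFS argument for correctness (every vertex within distance $k$ is discovered via a neighbour lying in the current frontier, and nothing at larger distance is ever added) combined with the observation that each vertex enters $Q$ at most once, so the outer loop runs $k-1$ times and the inner loop at most $n$ times. Your explicit layer-by-layer induction invariant and your attention to the cost of the set operations are somewhat more careful than the paper's brief two-containment argument, but the underlying idea is identical.
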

\begin{proof}
Let $G=\lb V,E\rb$ be a graph with $\abs{V}=n$. Each vertex appears at most once in the set $Q$, the loop in 8. takes exactly $k$ steps and the inner loop in 9. is finished in $\abs{Q}\leq n-1$ steps. So in total the two loops take time $\Ord{kn}$, with $k\leq n-2$ we obtain a total running time of $\Ord{n^2}$.\\
Now for the correctness of \textbf{BNS}. No vertex $u$ in $N$ can yield $\distg{G}{u}{v}\geq k+1$, 
since every vertex in $N$ is either a direct neighbor of $v$, or adjacent to another vertex $u'\in N$ 
with $\distg{G}{v}{u'}=i\leq k-1$ that has been considered by the algorithm.\\
So suppose there is some vertex $u\in V\setminus{N\cup\set{v}}$ with $\distg{G}{v}{u}\leq k$. then a path of length at most $k$ from $v$ to $u$ must exist and the neighbor of $u$ on this path has a distance of at most $k-1$ to $v$. Hence it must have appeared in the set $Q$ once, with that $u$ itself must be contained in $N$. 
\end{proof}
Please note that for the $2$-strong neighborhood of a vertex \textbf{BNS} just takes 
$\Ord{2n}=\Ord{n}$ time.\\
Now in order to compute the $k$-th power of a graph we simply compute the $k$-strong neighborhood for each vertex, which can be realized in time $\Ord{n^3}$, or $\Ord{n^2}$ if we are just looking for the square of a graph. Then we simply iterate over all vertices and their $k$-strong neighborhood, joining each vertex in this neighborhood to their currently considered $k$-strong neighbor. Again this takes $\Ord{n^2}$ time. Hence $G^k$ can be computed in time $\Ord{n^3}$ and for $k$ constant, e.g. $2$, we obtain the power in $\Ord{n^2}$.\\
Thus we obtain the following corollary.
\begin{corollary}\label{cor5.5}
For a graph $G$ and an integer $k\in\N$, the chordality of $G^k$ can be decided in time $\Ord{n^3}$.
\end{corollary}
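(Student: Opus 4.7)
The plan is to reduce the problem to the chordality check for the explicitly constructed graph $G^k$, which by the algorithms already established in this section can be recognized efficiently. The overall procedure is a three-stage pipeline whose costs add up to $\Ord{n^3}$.

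First I would invoke \textbf{BNS} as described in Algorithm~\ref{alg5.7}: for every vertex $v\in \V{G}$ run \textbf{BNS} with parameters $v$ and $k$ to obtain $\knb{k}{v}$. One such call costs $\Ord{kn}\subseteq\Ord{n^2}$, and iterating over all $n$ vertices yields the adjacency information of $G^k$ in time $\Ord{n^3}$. From the collected neighborhoods we materialize the edge set $\E{G^k}=\condset{vw}{w\in\knb{k}{v}}$; since $\abs{\E{G^k}}\leq\binom{n}{2}\in\Ord{n^2}$, this construction step is absorbed into the $\Ord{n^3}$ bound.

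Next I would apply either \textbf{LexBFS} (Algorithm~\ref{alg5.1}) or \textbf{MCS} (Algorithm~\ref{alg5.2}) to $G^k$ in order to produce a candidate ordering $\sigma$ of $\V{G^k}=\V{G}$. By Theorem~\ref{thm5.18} (respectively Theorem~\ref{thm5.19}) this costs $\Ord{\abs{\V{G^k}}+\abs{\E{G^k}}}=\Ord{n+n^2}=\Ord{n^2}$. Finally I would feed $G^k$ together with $\sigma$ into \textbf{PERFECT} (Algorithm~\ref{alg5.3}); by Theorem~\ref{thm5.20} this verifies in time $\Ord{n^2}$ whether $\sigma$ is a perfect elimination scheme of $G^k$. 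By Theorem~\ref{thm5.17}, $G^k$ is chordal if and only if it admits a PES, and both \textbf{LexBFS} and \textbf{MCS} are guaranteed to deliver such an ordering whenever one exists; hence \textbf{PERFECT} outputs \textbf{true} on $(G^k,\sigma)$ exactly when $G^k$ is chordal.

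The three stages contribute $\Ord{n^3}$, $\Ord{n^2}$, and $\Ord{n^2}$ respectively, for a total running time of $\Ord{n^3}$. There is no genuine obstacle; the only point warranting care is that the $\Ord{n^3}$ bound for constructing $G^k$ is dominated by the neighborhood computations rather than by the edge listing, and that the subsequent chordality recognition on $G^k$ runs in $\Ord{\abs{\V{G^k}}+\abs{\E{G^k}}}$, which is quadratic in $n$ and therefore fits inside the stated bound.
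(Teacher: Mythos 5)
Your proposal is correct and follows essentially the same route as the paper: it computes $G^k$ by running \textbf{BNS} from every vertex in $\Ord{n^3}$ total time and then applies the linear-time recognition pipeline (\textbf{LexBFS}/\textbf{MCS} followed by \textbf{PERFECT}), which costs $\Ord{\abs{\V{G^k}}+\abs{\E{G^k}}}=\Ord{n^2}$ on the dense power graph. The paper leaves the recognition step implicit, so your explicit accounting of that stage is if anything slightly more complete.
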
 
To take our investigation of the algorithmic properties of chordal graphs one step further, we will now have a deeper look into the relation of chordal graphs and tree decompositions, on which the next section will heavily rely. The existence of a clique related tree decomposition for chordal graphs is given by the characterization {\em iv)} of Theorem \autoref{thm5.15}, since the tree $T$ poses a tree decomposition of width $\fkt{\omega}{G}-1$ for a chordal graph $G$. Such trees are also known as so called clique trees and have been thoroughly studied in the past.\\
In order to find such a tree, we define the very helpful structure: the clique graph of a chordal graph, introduced in 1995 by Galinier, Habib and Paul (see \cite{GalinierHabibPaul1995cliquegraph}).
\begin{definition}[Clique Graph]
Let $G=\lb V,E\rb$ be a chordal graph. The {\em clique graph} of $G$, denoted by $\fkt{C}{G}=\lb V_c,E_c,\mu\rb$, with $\mu\colon E_c\rightarrow\N$, is defined as follows:
\begin{enumerate}[i)]

\item The vertex set $V_c$ is the set of maximal cliques of $G$.

\item The edge $C_iC_j$ belongs to $E_c$ if and only if $C_i\cap C_j$ is a minimal $a,b$-separator of $G$ for all $a\in C_i\setminus C_j$ and $b\in C_j\setminus C_i$.

\item The edges $C_iC_j\in E_c$ are weighted by the cardinality of the corresponding minimal separator $S_{ij}$: $\fkt{\mu}{C_iC_j}=\abs{S_{ij}}$.
\end{enumerate}
\end{definition}

\begin{definition}[Clique Tree]
Let $G=\lb V,E\rb$ be a chordal graph. A {\em clique tree} of $G$ is a tree $T_C=\lb C,F\rb$, such that $C$ is the set of all maximal cliques of $G$ and for each vertex $x\in V$ the set of maximal cliques containing $x$ induces a subtree of $T_C$.
\end{definition}

It is easy to see, that such a clique tree of a chordal graph $G$ yields an optimal tree decomposition of $G$, as we stated before. So if we are able to compute a clique tree, we also find a tree decomposition with minimal width. For this we need another basic concept of graph theory: the spanning tree.

\begin{definition}[Spanning Tree]
Let $G=\lb V,E\rb$ be a graph and $T=\lb V,F\rb$ a tree on the same vertex set. The tree $T$ is called a {\em spanning tree} of $G$ if $F\subseteq E$.\\
If some weight function $\mu\colon E\rightarrow\N$ is given together with $G$, a {\em maximum spanning tree} of $G$ is a spanning tree of $G$ with maximum weight of its edges. 
\end{definition}

\begin{lemma}[Rose. 1970 \cite{rose1970triangulated}]\label{lemma5.8}
Let $G=\lb V,E\rb$ be a chordal graph, each vertex $v\in V$ either is simplicial or belongs to a minimal separator.
\end{lemma}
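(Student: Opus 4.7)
The plan is to use the dichotomy between simplicial and non-simplicial vertices directly, without invoking chordality in any deep way beyond the existence of separators between non-adjacent vertices. Note that the hypothesis of chordality is actually not needed for the existence part of the argument; it only enters via the language of minimal separators used in Theorem \ref{thm5.15}, which is where such separators become cliques.

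First I would split on whether $v$ is simplicial. If $v$ is simplicial, there is nothing to prove. Otherwise, $\fkt{N}{v}$ does not induce a complete graph, so I can pick two vertices $a, b \in \fkt{N}{v}$ with $ab \notin \E{G}$. The key observation is then that every $a,b$-separator of $G$ must contain $v$: for if $S$ is an $a,b$-separator with $v \notin S$, then $a$ and $b$ lie in the same component of $G - S$ via the length-two path $a\,v\,b$, contradicting the definition of an $a,b$-separator.

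Next I would invoke the existence of a minimal $a,b$-separator. Since $a$ and $b$ are non-adjacent, the set $\V{G} \setminus \set{a,b}$ is an $a,b$-separator (it leaves $a$ and $b$ as isolated singletons in disjoint components), so an $a,b$-separator exists, and among all $a,b$-separators we may choose one, call it $S$, that is minimal under inclusion. By construction $S$ is a minimal separator of $G$ in the sense of Theorem \ref{thm5.15}, and by the previous paragraph $v \in S$. This establishes the claim.

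The only potential obstacle is a terminological one: confirming that ``minimal separator'' as used in the statement of the lemma is indeed the notion of a minimal $a,b$-separator (rather than, say, a separator minimal under inclusion among all separators of $G$). This matches the usage in the characterization via Fulkerson and Gross recalled in Theorem \ref{thm5.15}, so the argument goes through verbatim. No use of chordality is required beyond this conventional framing; in particular, the proof does not rely on the existence of a perfect elimination scheme, which makes this lemma a convenient structural starting point for subsequent results.
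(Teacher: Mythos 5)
Your argument is correct. The paper itself does not prove this lemma; it is stated as a citation to Rose (1970) and then used in the proof of Theorem \ref{thm5.24}, so there is no in-paper proof to compare against. Your proof is the standard one: for a non-simplicial $v$ pick non-adjacent $a,b\in\fkt{N}{v}$, observe that every $a,b$-separator must contain $v$ because of the path $a\,v\,b$, and note that $\V{G}\setminus\set{a,b}$ is an $a,b$-separator so an inclusion-minimal one exists. Your terminological caveat is also resolved in your favour by the paper itself: the definition of the clique graph preceding Theorem \ref{thm5.24} speaks explicitly of ``minimal $a,b$-separators,'' so that is the intended reading, and your observation that chordality plays no role in the argument is accurate --- the statement holds for arbitrary graphs under this convention.
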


\begin{theorem}[Galinier, Habib, Paul. 1995 \cite{GalinierHabibPaul1995cliquegraph}]\label{thm5.24}
Let $G=\lb V,E\rb$ be a chordal graph and $\fkt{C}{G}$ its clique graph. Let $T=\lb V_c,F\rb$ be a spanning tree of $\fkt{C}{G}$. Then $T$ is a clique tree of $G$ if and only if $T$ is a maximum spanning tree of $\fkt{C}{G}$.
\end{theorem}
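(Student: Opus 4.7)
The plan rests on a double-counting identity that expresses the total weight of \emph{any} spanning tree of $\fkt{C}{G}$ in terms of how the tree interacts with the family $\condset{K_x}{x\in V}$, where $K_x$ denotes the set of maximal cliques containing the vertex $x\in V$. Concretely, for a spanning tree $T=\lb V_c,F\rb$ of $\fkt{C}{G}$ I compute
\begin{align*}
\sum_{C_iC_j\in F}\mu\lb C_iC_j\rb=\sum_{C_iC_j\in F}\abs{C_i\cap C_j}=\sum_{x\in V}\abs{\condset{C_iC_j\in F}{x\in C_i\cap C_j}},
\end{align*}
simply by swapping the order of summation. For each fixed $x$, the set $\condset{C_iC_j\in F}{x\in C_i\cap C_j}$ consists of exactly those edges of $T$ whose two endpoints both lie in $K_x$. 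Since $T$ is a tree, the subgraph it induces on the $\abs{K_x}$ vertices of $K_x$ contains at most $\abs{K_x}-1$ edges, with equality if and only if $\induz{T}{K_x}$ is a subtree of $T$.

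From this, summing over $x$ and using $\sum_{x\in V}\abs{K_x}=\sum_{C\in V_c}\abs{C}$ (again by swapping summation), I obtain the uniform upper bound
\begin{align*}
\mu\lb T\rb\leq\sum_{x\in V}\lb\abs{K_x}-1\rb=\sum_{C\in V_c}\abs{C}-\abs{V},
\end{align*}
with equality if and only if $\induz{T}{K_x}$ is a subtree for every $x\in V$, which is precisely the defining property of a clique tree. Thus every clique tree of $G$ that happens to be a spanning tree of $\fkt{C}{G}$ attains the same (maximal) weight, and conversely any spanning tree of $\fkt{C}{G}$ attaining this weight is a clique tree.

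The remaining, and I expect most delicate, step is to verify that every clique tree of $G$ actually is a spanning subgraph of $\fkt{C}{G}$, i.e.\ that for any edge $C_iC_j$ of a clique tree, $C_i\cap C_j$ is a minimal $a,b$-separator of $G$ for every $a\in C_i\setminus C_j$ and $b\in C_j\setminus C_i$. For this I would argue as follows: removing $C_iC_j$ from the clique tree splits it into two components $\mathcal{A}$ and $\mathcal{B}$ (containing $C_i$ and $C_j$ respectively), and the induced-subtree property forces every vertex $v\in V\setminus\lb C_i\cap C_j\rb$ to lie in the cliques of exactly one side. Any $a$-$b$ path in $G$ then has to pass through some clique containing both a vertex of $\mathcal{A}\setminus\lb C_i\cap C_j\rb$ and of $\mathcal{B}\setminus\lb C_i\cap C_j\rb$, which is impossible without visiting $C_i\cap C_j$; minimality follows by a further standard argument using Lemma~\ref{lemma5.8} together with connectedness of the two sides.

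Putting everything together: Theorem~\ref{thm5.15}~iv) guarantees the existence of a clique tree; by the paragraph above it is a spanning tree of $\fkt{C}{G}$ and it achieves the upper bound $\sum_C\abs{C}-\abs{V}$, so this bound is the value of every maximum spanning tree of $\fkt{C}{G}$. The equality characterization from the double-counting step then yields: $T$ is a maximum spanning tree of $\fkt{C}{G}$ if and only if $T$ is a clique tree of $G$, which is exactly the statement of the theorem.
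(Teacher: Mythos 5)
Your proof is correct, and it takes a genuinely different route from the paper. The paper argues by a local exchange: it shows that a non-maximum spanning tree must contain, on the tree path between the endpoints of some omitted clique-graph edge $C_iC_j$, a separator strictly contained in $S_{ij}$, and conversely that a violation of the induced-subtree property forces such a strictly smaller separator on a tree path — an argument that leans on the laminar behaviour of minimal separators along paths in $\fkt{C}{G}$ and on Lemma~\ref{lemma5.8}. You instead prove the global identity $\mu\lb T\rb=\sum_{x\in V}\abs{\E{\induz{T}{K_x}}}\leq\sum_{x\in V}\lb\abs{K_x}-1\rb$, valid for \emph{every} tree on the set of maximal cliques, with equality precisely when each $K_x$ induces a subtree; this is the Bernstein--Goodman-style argument and it buys you more: all clique trees have the same weight $\sum_{C\in V_c}\abs{C}-\abs{V}$, and the statement is seen to depend only on the weights $\abs{C_i\cap C_j}$, so it holds verbatim for the full clique intersection graph. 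The price is exactly the step you flag as delicate: one must check that some (equivalently, every) clique tree is a spanning subgraph of $\fkt{C}{G}$, otherwise the maximum over spanning trees of $\fkt{C}{G}$ could a priori fall short of the bound. Your separation argument for that step is sound, but note that the minimality of $C_i\cap C_j$ as an $a,b$-separator is immediate and needs neither Lemma~\ref{lemma5.8} nor connectedness of the two sides: for any $s\in C_i\cap C_j$ the path $a,s,b$ uses the edges $as\in\E{\induz{G}{C_i}}$ and $sb\in\E{\induz{G}{C_j}}$ and meets $C_i\cap C_j$ only in $s$, so no proper subset of $C_i\cap C_j$ separates $a$ from $b$.
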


\begin{proof}
Since $T$ and $\fkt{C}{G}$ have the same vertex set, it suffices to show that the set of maximal cliques containing a vertex $v\in V$ induces a subtree of $T$ if and only if $T$ is a maximum spanning tree of $\fkt{C}{G}$.\\
\\
So assume that $T$ is not a maximum spanning tree of $\fkt{C}{G}$. Then there exists a pair of maximal cliques, $C_i$ and $C_j$, adjacent in $\fkt{C}{G}$ and not in $T$ such that $S_{ij}$ strictly contains a minimal separator of the unique path $P_{ij}$ between $C_i$ and $C_j$ in $T$. Let $S_{kl}$ be such a minimal separator of $P_{ij}$.\\
Let $v\in S_{ij}\setminus S_{kl}$. So $v$ is not contained in at least one of the two maximal cliques $C_k$ and $C_l$. Hence, the set of maximal cliques containing the vertex $v$ does not induce a subtree of $T$ and so $T$ is not a clique tree of $G$.\\
\\
Now suppose $T$ to be a maximum spanning tree of $\fkt{C}{G}$. Let $C_i$ and $C_j$ be two non-adjacent maximal cliques of $T$ containing the vertex $v\in V$ and assume that $v$ is not contained in any maximal cliques along the unique path $P_{ij}$ between $C_i$ and $C_j$ in $T$.\\
Consider the subgraph $G'\subseteq G$ induced by the maximal cliques of $P_{ij}$. Obviously $G'$ is again a chordal graph.\\
Lemma \autoref{lemma5.8} yields that any vertex of $G'$ either is simplicial or belongs to a minimal separator.\\
Simplicial vertices belong to exactly one maximal clique. Therefore there exists a minimal separator $S$ in $G'$ with $v\in S$, since $v$ is contained in the two cliques $C_i$ and $C_j$ and in no other. In addition, due to this fact, $S=S_i\cap S_j$ must hold. Hence the edge $c_ic_j\in E_c$ induces a cycle together with the maximal cliques of $P_{ij}$.\\
Now let $x\in C_i\setminus S$ and $y\in C_j\setminus S$ be two vertices, then $S$ is a minimal $x,y$-separator. If no minimal separator on $P_{ij}$ is strictly included in $S$, we can easily extract a path in $G'$ between $x$ and $y$ which is not cut by $S$, since no such separator can contain $v$, a contradiction.\\
So at least one separator $S'$ on $P_{ij}$ must yield $S'\subsetneq S$ and so $T$ is not a maximal spanning tree.
\end{proof}

Actually the clique graph $\fkt{C}{G}$, which is a subgraph of the clique intersection graph of the chordal graph $G$ is optimal, in some sense, in its property of clique trees as maximum spanning trees.

\begin{theorem}[Galinier, Habib, Paul. 1995 \cite{GalinierHabibPaul1995cliquegraph}]\label{thm5.25}
Let $\fkt{C}{G}$ be the clique graph of the chordal graph $G$. Then each edge of $\fkt{C}{G}$ belongs to at least one clique tree.	
\end{theorem}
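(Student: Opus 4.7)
The plan is to prove the theorem by constructing, for each edge $C_iC_j \in E_c$, an explicit clique tree of $G$ containing $C_iC_j$; Theorem 5.24 then immediately upgrades this clique tree to a maximum spanning tree of $\fkt{C}{G}$, which is exactly what is required. The construction proceeds by splitting $G$ along the minimal separator $S = C_i \cap C_j$ and reassembling chordal sub-clique-trees so that $C_iC_j$ appears as a glueing edge.

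More concretely, I will first let $V_1,\dots,V_t$ be the connected components of $G - S$. Since $C_iC_j \in E_c$, the set $S$ separates $C_i\setminus C_j$ from $C_j\setminus C_i$, so these cliques lie in distinct components, say $V_{k_i}$ and $V_{k_j}$. Each induced subgraph $G_k := \induz{G}{V_k\cup S}$ is chordal and contains $S$ as a clique, and a brief argument shows that every maximal clique $K$ of $G$ is a maximal clique of exactly one $G_k$ (the one determined by the component containing $K \setminus S$). By Theorem 5.15 each $G_k$ has a clique tree $T_k$; moreover, since $S$ is a clique of $G_k$, $G_k$ contains at least one maximal clique housing $S$. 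I would choose $C_i$ as this bridge clique for $G_{k_i}$, $C_j$ for $G_{k_j}$, and any max clique $Q_k \supseteq S$ for the remaining pieces. Then I form $T$ as $T_1 \cup\dots\cup T_t$ together with the single edge $C_iC_j$ and $t-2$ further edges among $\{C_i,C_j,Q_k\}_k$ chosen so that the result is a tree. The subtree property is then checked per vertex: for $v \notin S$, $v$ lies in a single $V_k$ and its cliques form a subtree inside $T_k$; for $v \in S$, the $T_k$-subtrees of $v$-cliques all contain their bridge clique, so together with the gluing edges they form a subtree of $T$.

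What remains — and this is the main obstacle — is to verify that every edge of $T$ actually lies in $E_c$, not merely in some clique intersection graph of a piece. The glueing edges are fine: each such edge has intersection exactly $S$, which is a minimal separator of $G$ since $C_iC_j\in E_c$. For an edge $KK'$ internal to some $T_k$, the argument is the following general fact that I would establish as a lemma: whenever $T'$ is a clique tree of a chordal graph $H$, every edge $KK'$ of $T'$ satisfies that $K \cap K'$ is a minimal $a,b$-separator of $H$ for every $a\in K\setminus K'$ and $b\in K'\setminus K$. The separator part follows by a standard path-tracing argument — any $a,b$-path in $H$ can be covered by a sequence of max cliques $K_0,\dots,K_{s-1}$ whose corresponding vertices in $T'$ must eventually cross the edge $KK'$, forcing a path vertex into $K\cap K'$ via the subtree property. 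Minimality follows by exhibiting, for each $v \in K\cap K'$, an $a,b$-path through $v$ that avoids $(K\cap K')\setminus\{v\}$, using the connectedness of the $v$-subtree on either side of the edge $KK'$. Applying this lemma to $G$ and to each $T_k$-edge (and combining with the separator property already noted for the glueing edges) shows $T\subseteq\fkt{C}{G}$; combined with the subtree property from the previous paragraph, $T$ is both a spanning tree of $\fkt{C}{G}$ and a clique tree of $G$, so Theorem 5.24 finishes the proof with $C_iC_j \in T$.
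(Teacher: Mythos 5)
The paper states this theorem without proof, so your argument has to stand on its own; its overall strategy (split $G$ along $S=C_i\cap C_j$ and glue clique trees of the pieces across the edge $C_iC_j$) is reasonable, but decomposing into \emph{all} components of $G-S$ creates a genuine gap. You check that every maximal clique of $G$ is a maximal clique of exactly one $G_k$, but for $T$ to be a clique tree you also need the converse: every node of every $T_k$ must be a maximal clique of $G$. This fails whenever some component $V_k$ contains no vertex adjacent to all of $S$, because then $S$ itself is a maximal clique of $G_k=\induz{G}{V_k\cup S}$, hence a node of $T_k$, while $S\subsetneq C_i$ is not maximal in $G$. Concretely, let $\V{G}=\set{a,b,c,s_1,s_2}$ with edges $as_1,as_2,bs_1,bs_2,s_1s_2,cs_1$, and take the clique-graph edge between $C_i=\set{a,s_1,s_2}$ and $C_j=\set{b,s_1,s_2}$, so $S=\set{s_1,s_2}$. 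The component $\set{c}$ gives the piece with maximal cliques $\set{c,s_1}$ and $\set{s_1,s_2}$, your bridge clique $Q_k\supseteq S$ is forced to equal $S$, and the glued $T$ has four nodes although $G$ has only three maximal cliques. As written, $T$ is not a clique tree.

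The construction is salvageable by splitting into exactly two pieces: $G_1=\induz{G}{V_{k_j}\cup S}$, where $V_{k_j}$ is the component containing $C_j\setminus S$, and $G_2=G-V_{k_j}$. Since $C_j\supsetneq S$ lies in $G_1$ and $C_i\supsetneq S$ lies in $G_2$, the set $S$ is maximal in neither piece; one checks as you did that the maximal cliques of $G_1$ and $G_2$ are precisely those of $G$, partitioned according to whether they meet $V_{k_j}$, and your subtree argument then goes through with $C_j$ and $C_i$ as the two bridge cliques joined by the single edge $C_iC_j$. Two further remarks. Your last paragraph is not needed: the paper defines a clique tree purely by its node set and the subtree property, so once $T$ is a clique tree containing $C_iC_j$ the theorem is proved, and the fact that its edges lie in $E_c$ is a consequence of Theorem \autoref{thm5.24} rather than something to verify; moreover, as phrased it silently identifies minimal separators of $G_k$ with minimal separators of $G$, which requires an extra short-cutting argument through the clique $S$. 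Finally, a shorter route exists: starting from any clique tree (a maximum spanning tree of $\fkt{C}{G}$), every edge on the tree path from $C_i$ to $C_j$ carries a separator containing $S$, one of them can be shown to carry exactly $S$, and swapping it for $C_iC_j$ preserves the total weight and hence yields a clique tree through $C_iC_j$.
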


In the proof of this theorem it becomes clear, that an edge contained in the clique intersection graph of $G$ but not in $\fkt{C}{G}$ never belongs to a clique tree. Since every edge of a clique tree corresponds to a minimal separator, clique graphs are the minimal structures containing all clique trees.\\
In the following we will use the structure of clique graphs to proof that our already known 
algorithms \textbf{MCS} and \textbf{BFS} can be seen as special cases of a simple algorithm 
computing a maximum spanning tree on $\fkt{C}{G}$ and therefore themselves are algorithms that 
compute a clique tree of a chordal graph $G$. This again is due to the enormous algorithmic 
potential of PESs.

\begin{lemma}[Galinier, Habib, Paul. 1995 \cite{GalinierHabibPaul1995cliquegraph}]\label{lemma5.9}
Let $G=\lb V,E\rb$ be a chordal graph. In an execution of \textbf{MCS} or \textbf{BFS} on $G$, 
maximal cliques are visited consecutively.	
\end{lemma}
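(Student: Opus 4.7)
The plan is to interpret the claim as follows: for every maximal clique $C$ of $G$, the vertices of $C$ occupy a contiguous block of positions in the visitation order $\sigma(n), \sigma(n-1), \ldots, \sigma(1)$ produced by \textbf{MCS} (respectively \textbf{LexBFS}). The main tool will be the characterization established in the proof of \autoref{thm5.21}: every maximal clique has the form $C = X_v \cup \{v\}$ for a unique vertex $v \in C$, namely the one of smallest $\sigma$-index, equivalently the vertex of $C$ visited last by the algorithm.

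First I would fix a maximal clique $C = X_v \cup \{v\}$ with $v = \sigma(k)$. By definition every vertex of $X_v$ has larger $\sigma$-index than $v$, hence is already numbered when the algorithm reaches iteration $k$. Let $u$ be the latest-picked vertex of $X_v$, say at iteration $k' > k$; I must show $k' = k+1$ and, iterating, that the entire block $X_v$ lies in positions $k+1,\ldots,k+|X_v|$. At iteration $k$ the label of $v$ is exactly $|X_v|$ (for \textbf{MCS}) or the set $\{\sigma^{-1}(x):x\in X_v\}$ (for \textbf{LexBFS}). Suppose for contradiction that some iteration $j$ strictly between $k$ and $k'$ picked a vertex $w \notin X_v$, so $w \notin C$. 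Because the algorithm chooses a vertex of maximum label and $v$'s label has already reached (or lexicographically dominates) its final value at iteration $j$, the set $X_w$ of $w$'s already-numbered neighbors satisfies $|X_w| \geq |X_v|$, and by the PES property (\autoref{thm5.17}) $X_w \cup \{w\}$ is a clique.

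The decisive step is turning this competing clique into a contradiction. For \textbf{LexBFS} this is clean: the lexicographic comparison of the labels forces the ordered set $\sigma^{-1}(X_w)$ to dominate $\sigma^{-1}(X_v)$, and unwinding the comparison step by step yields $X_v \subseteq X_w$; but then $X_v \cup \{w\}$ is a clique strictly containing $C \setminus \{v\}$ with $w \notin C$, contradicting the maximality of $C$. For \textbf{MCS} the labels are only cardinalities, so $X_v \subseteq X_w$ cannot be read off directly, and this is where I expect the main obstacle. My intended route is to use the clique-graph interpretation of \autoref{thm5.24} together with the minimal-separator characterization in \autoref{thm5.15}: the transition from one maximal clique to another during the run corresponds to crossing an edge of a clique tree whose weight equals the size of the associated minimal separator, and the greedy rule of maximum-cardinality selection implements the same greedy step as Kruskal/Prim on $\mathcal{C}(G)$, so once a contiguous block of vertices inside one maximal clique has been initiated, picking a vertex $w$ of a different clique $C'$ would require $|C\cap C'|$ to exceed the current overlap $|X_v|$, which contradicts $v\notin C'$ and the fact that $|X_v|$ is the size of the overlap of $C$ with the already-built subtree.

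Finally, applying this argument to each maximal clique in turn establishes that the visitation order partitions into contiguous blocks, one per maximal clique, which is the statement of the lemma. The hard part is genuinely step three in the \textbf{MCS} case, because the cardinality label is a strictly weaker invariant than the \textbf{LexBFS} label: the cleanest path forward is probably to prove the \textbf{LexBFS} case first by the direct exchange argument above, and then derive the \textbf{MCS} case either by observing that the \textbf{MCS} tie-breaking can be refined to produce a \textbf{LexBFS}-compatible order without altering the block structure, or by reformulating both algorithms as greedy maximum-spanning-tree constructions on $\mathcal{C}(G)$ as sketched.
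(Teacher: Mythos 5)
Your proof is aimed at a statement that is strictly stronger than what the lemma asserts, and that stronger statement is false. ``Visited consecutively'' does not mean that the vertices of each maximal clique occupy a contiguous block of positions in the visitation order. What the paper proves is a statement about the \emph{trace} of labels: $v_{i-1}$ begins a new maximal clique if and only if $\operatorname{label}(v_{i-1})\leq\operatorname{label}(v_i)$, so the visitation order decomposes into maximal runs of strictly increasing labels, each run being the ``new'' part of one maximal clique $X_v\cup\set{v,\dots}$; the vertices of $X_v$ itself may have been numbered much earlier and need not be adjacent to the run in the ordering. Your contiguity claim already fails on the path $a\text{--}b\text{--}c$: since all initial labels are equal, \textbf{MCS} may legitimately number $b$ first, then $a$, then $c$, and the maximal clique $\set{b,c}$ then sits in positions $1$ and $3$ of the visitation order. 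No correct argument can establish the statement as you have interpreted it.

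The failure surfaces concretely in your exchange step. From $X_v\subseteq X_w$ you conclude that $X_v\cup\set{w}$ is a clique ``strictly containing $C\setminus\set{v}$ with $w\notin C$, contradicting the maximality of $C$''; but a clique containing $C\setminus\set{v}$ together with one extra vertex does not contradict the maximality of $C=X_v\cup\set{v}$, because it does not contain $v$. (In the example above, $\set{a,b}$ contains $\set{b}=C\setminus\set{c}$ and $a\notin\set{b,c}$, yet $\set{b,c}$ is maximal.) The fallback route for \textbf{MCS} via maximum spanning trees of the clique graph is also unavailable: within the paper's development, \autoref{thm5.24} concerns which spanning trees are clique trees, while the fact that \textbf{MCS} and \textbf{BFS} realize such trees (\autoref{lemma5.11}) is \emph{deduced from} the present lemma, so invoking it here would be circular. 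The paper's actual argument is entirely local: using that $v_i$ is simplicial in $\induz{G}{\set{v_i,\dots,v_n}}$ and hence lies in a unique maximal clique there, it shows the label strictly increases from $v_i$ to $v_{i-1}$ exactly when the algorithm stays inside that clique, and fails to increase exactly when a new maximal clique is started. That equivalence, applied along the whole ordering, is the intended content of ``visited consecutively.''
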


\begin{proof}
Let $\sigma$ be the PES computed by \textbf{MCS} and let $N=\set{v_i,\dots,v_n}$ be the set of 
numbered vertices at some step. Then $v_i$ is simplicial in $\induz{G}{\set{v_i,\dots,v_n}}$, and so 
$v_i$ belongs to a unique maximal clique as we have seen before. Let us prove the $v_{i-1}$ 
belongs to a new maximal clique if and only if  $\fkt{label}{v_{i-1}}\leq\fkt{label}{v_i}$, where $label$ 
is the labeling function used in \textbf{MCS}.\\
\\
First assume that $v_{i-1}$ and $v_i$ belong to the same maximal clique. Since $x_i$ is simplicial in $\induz{G}{\set{v_i,\dots,v_n}}$, all the vertices in $\fkt{L}{v_i}=\condset{v_j\in V}{v_j\in\nb{v_i}~\text{and}~j>i}$ belong to this maximal clique. Hence $\fkt{label}{v_{i-1}}=\fkt{label}{v_i}+1$.\\
\\
Now suppose $v_{i-1}$ to belong to a new maximal clique. Since $v_i$ was a vertex with the highest label over all unnumbered vertices when it was chosen, $\fkt{label}{v_i}\geq\abs{\fkt{L}{v_{i-1}}\setminus\set{v_i}}$. But there exists at least one vertex of the maximal clique containing $v_i$ in $\induz{G}{\set{v_i,\dots,v_n}}$ which does not increase the label of $v_{i-1}$, otherwise $x_{i-1}$ does not belong to a new maximal clique. Hence $\fkt{label}{v_{i-1}}\leq\fkt{label}{v_i}$.\\
\\
If we define the trace of $\sigma$ as the sequence of label levels of the vertices when the $y$ are 
numbered, each maximal clique is represented by an increasing sequence. We can conclude that 
the maximal cliques are visited consecutively. A similar argument holds for \textbf{BFS}. 	
\end{proof}
\begin{algorithm}[H]
	\caption{Spanning Clique Graph Tree (\textbf{SCGT}) (see 
	\cite{GalinierHabibPaul1995cliquegraph})}\label{alg5.8}
	\begin{algorithmic}[1]
		\Require A clique graph $\fkt{C}{G}$ 
		\Ensure A maximum spanning tree of $\fkt{C}{G}$
		\State Choose a maximal clique $C_1$
		\For{$i=2$ \textbf{to} n \textbf{step} $1$}
		\State Choose a maximally (w.r.t. inclusion) labeled edge adjacent to $C_1,\dots,C_{i-1}$ to connect the new clique $C_i$
		\EndFor
		\State\Return The constructed tree
	\end{algorithmic}
\end{algorithm}
\begin{lemma}[Galinier, Habib, Paul. 1995 \cite{GalinierHabibPaul1995cliquegraph}]\label{lemma5.10}
Let $G$ be a chordal graph. The algorithm \textbf{SCGT} computes a maximum spanning tree of 
the clique graph $\fkt{C}{G}$. 	
\end{lemma}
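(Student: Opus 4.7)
The strategy is to recast \textbf{SCGT} as a version of Prim's algorithm for maximum spanning trees on $\fkt{C}{G}$, then invoke Theorem \autoref{thm5.24} to conclude. Writing $T_i$ for the subtree built on $\{C_1,\dots,C_i\}$ after step $i$, it suffices to establish the following Prim-style invariant: at step $i$, the edge chosen by \textbf{SCGT} has maximum weight (i.e.\ cardinality of its separator label) among all edges of $\fkt{C}{G}$ with exactly one endpoint in $\V{T_{i-1}}$.

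First I would handle the translation between the two orderings on edges. If two edges $e = C_aC_b$ and $e' = C_cC_d$ of $\fkt{C}{G}$ both cross the cut $(\V{T_{i-1}}, V_c \setminus \V{T_{i-1}})$ and satisfy $S_e \subseteq S_{e'}$, then obviously $\fkt{\mu}{e} \leq \fkt{\mu}{e'}$. So the real content is to show that among the maximum-weight edges on the cut, one whose label is inclusion-maximal is \emph{also} a cardinality-maximum. Equivalently: if an edge $e$ chosen by \textbf{SCGT} fails to be maximum-weight on its cut, then its separator $S_e$ is strictly contained in the separator of some other cut edge, contradicting inclusion-maximality.

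The core lemma to establish this comparison is structural: \emph{if $e$ and $e'$ are two edges of $\fkt{C}{G}$ crossing the same cut induced by a connected subtree of a clique tree, then there exists an edge $e''$ on that cut with $S_{e''} \supseteq S_e \cup S_{e'}$ in the separator-lattice, and in particular $\fkt{\mu}{e''} \geq \max(\fkt{\mu}{e},\fkt{\mu}{e'})$.} This is the step I expect to be the main obstacle. The plan for it is to use characterization \emph{iii)} of Theorem \autoref{thm5.15} (chordal graphs are intersection graphs of subtrees of a tree) together with the Helly-type property of subtrees of a tree: the family $\condset{K_v}{v \in S_e \cup S_{e'}}$ of subtrees indexed by the elements of the two separators must have a common structural meeting point on the cut, which provides the desired larger separator $S_{e''}$. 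The same statement also follows from the minimality characterization \emph{ii)} of Theorem \autoref{thm5.15}: minimal separators of a chordal graph behave like a nested system along any path of the clique graph, so when two cut edges have incomparable separator labels, the larger of the two contains the smaller under inclusion.

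With the structural lemma in hand, the proof finishes in three short steps. First, by induction on $i$ one shows that $T_{i-1}$ is the restriction of some maximum spanning tree $T^\star$ of $\fkt{C}{G}$ to $\{C_1,\dots,C_{i-1}\}$: at the inductive step, if the edge $e$ chosen by \textbf{SCGT} does not lie in $T^\star$, then adding $e$ creates a cycle containing another cut edge $e'$ with $\fkt{\mu}{e'} \leq \fkt{\mu}{e}$ (by maximality of weight on the cut, which follows from the structural lemma), and swapping $e'$ for $e$ yields a spanning tree of weight at least that of $T^\star$, hence a maximum spanning tree containing $T_i$. Second, the output of \textbf{SCGT} is thus a maximum spanning tree of $\fkt{C}{G}$. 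Third, by Theorem \autoref{thm5.24} this is a clique tree of $G$, which is what was to be shown. $\qed$
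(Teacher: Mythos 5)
There is a genuine gap: the Prim-style invariant on which you build everything --- that the edge chosen by \textbf{SCGT} has maximum \emph{weight} among all edges crossing the cut --- is false, and so is the structural lemma you propose in order to derive it. \textbf{SCGT} selects an \emph{inclusion}-maximal label, and on a cut an inclusion-maximal label need not have maximum cardinality. Concretely, take the chordal graph with maximal cliques $C_1=\set{a,b,c}$, $C_2=\set{a,b,d}$, $C_3=\set{c,e}$. Starting from $C_1$, the cut edges of $\fkt{C}{G}$ are $C_1C_2$ with label $\set{a,b}$ and $C_1C_3$ with label $\set{c}$; these are incomparable, so both are inclusion-maximal and \textbf{SCGT} may pick the weight-$1$ edge while a weight-$2$ edge crosses the same cut. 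Moreover no cut edge has label containing $\set{a,b}\cup\set{c}$, so the edge $e''$ with $S_{e''}\supseteq S_e\cup S_{e'}$ that your structural lemma promises does not exist. The Helly property of subtrees cannot rescue this: it produces a common \emph{clique} containing both separators (here $C_1$ itself), not a cut edge of $\fkt{C}{G}$ whose separator label contains their union. Note also that your fallback sentence --- ``when two cut edges have incomparable separator labels, the larger of the two contains the smaller under inclusion'' --- is self-contradictory.

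The lemma is nevertheless true, and your closing exchange argument survives once the cardinality comparison is replaced by an inclusion argument along a clique-tree path. By induction assume $T_{i-1}$ extends to a clique tree $T^\star$ (which is a maximum spanning tree of $\fkt{C}{G}$ by Theorem \ref{thm5.24}), and let $e=C_aC_b$ be the edge chosen at step $i$, with $C_a\in\V{T_{i-1}}$. If $e\notin T^\star$, consider the $T^\star$-path from $C_a$ to $C_b$: by the induced-subtree property (Theorem \ref{thm5.15}, iv), every vertex of $S_e=C_a\cap C_b$ lies in every clique on that path, hence $S_e\subseteq S_f$ for every path edge $f$, in particular for the path edge $f$ that crosses the cut. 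Since $f$ is itself a cut edge of $\fkt{C}{G}$ and $S_e$ was chosen inclusion-maximal among cut labels, $S_e=S_f$, so $\fkt{\mu}{e}=\fkt{\mu}{f}$ and $T^\star-f+e$ is again a maximum spanning tree containing $T_i$. This is where the correctness actually comes from: inclusion-maximality buys label \emph{equality} with a crossing edge of some clique tree, not weight dominance over the whole cut.
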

\begin{lemma}[Galinier, Habib, Paul. 1995 \cite{GalinierHabibPaul1995cliquegraph}]\label{lemma5.11}
\textbf{MCS} and \textbf{BFS} compute maximum spanning trees of $\fkt{C}{G}$.	
\end{lemma}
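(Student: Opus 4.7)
The plan is to show that both algorithms, viewed through the lens of Lemma 5.9, implicitly execute the greedy procedure \textbf{SCGT} on $\fkt{C}{G}$, so that the tree they induce on the set of maximal cliques is a maximum spanning tree. I will carry it out for \textbf{MCS}; the argument for \textbf{LexBFS} is analogous because Lemma 5.9 covers both cases.

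First I would use Lemma 5.9 to decompose the PES $\sigma$ produced by \textbf{MCS} into consecutive blocks $B_1, B_2, \ldots, B_k$, where $B_i$ is the set of vertices that get numbered while \textbf{MCS} is filling a single maximal clique $C_{\pi(i)}$ in the order of first visit. Each $B_i$ is a nonempty subset of $C_{\pi(i)}$, and the vertex $v_i \in B_i$ chosen first by \textbf{MCS} (i.e.\ with largest $\sigma$-index in the block) is the one whose label decided the transition from $C_{\pi(i-1)}$ to $C_{\pi(i)}$. Crucially, at the moment $v_i$ is selected, its already-numbered neighbors are exactly the vertices of $C_{\pi(i)}$ that have been used earlier, i.e.\ $C_{\pi(i)} \cap (B_1 \cup \cdots \cup B_{i-1})$, so $\fkt{label}{v_i}$ equals the cardinality of this intersection.

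Next I would identify this intersection with a separator appearing as an edge weight in $\fkt{C}{G}$. The vertices numbered before $B_i$ lie in $C_{\pi(1)} \cup \cdots \cup C_{\pi(i-1)}$, but by the simpliciality of $v_i$ in the residual graph (at the time all of $B_i, B_{i+1}, \ldots, B_k$ are unnumbered) the set of already-numbered neighbors of $v_i$ is contained in a single earlier clique $C_{\pi(j)}$; otherwise two non-adjacent numbered vertices would both be in $\nb{v_i}$, contradicting Lemma 5.8 and the PES property. Pick such a $j=p(i)$; then $C_{\pi(i)} \cap C_{\pi(p(i))} \supseteq \nb{v_i} \cap (B_1 \cup \cdots \cup B_{i-1})$, and using that the PES orders vertices by membership in cliques, equality holds and this set is precisely the minimal separator $S_{\pi(i)\pi(p(i))}$. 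Hence the edge $C_{\pi(i)}C_{\pi(p(i))}$ is an edge of $\fkt{C}{G}$ with weight $\fkt{\mu}{C_{\pi(i)}C_{\pi(p(i))}} = \fkt{label}{v_i}$.

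The main obstacle — and the step requiring the most care — is to verify the greedy maximality: that among all edges of $\fkt{C}{G}$ joining $C_{\pi(i)}$ to some $C_{\pi(j)}$ with $j < i$, the weight $\fkt{label}{v_i}$ is maximal. Here I would argue by contradiction. Suppose some other previously visited clique $C_{\pi(j')}$ satisfies $|C_{\pi(i)} \cap C_{\pi(j')}| > \fkt{label}{v_i}$. Pick any $w \in C_{\pi(i)} \cap C_{\pi(j')}$ that is not yet numbered when $v_i$ is chosen. Then every vertex of $C_{\pi(i)} \cap C_{\pi(j')}$ already numbered lies in $\nb{w}$, so $\fkt{label}{w} \geq |C_{\pi(i)} \cap C_{\pi(j')}| > \fkt{label}{v_i}$, contradicting the maximum-label rule of \textbf{MCS} at the moment $v_i$ was picked. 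Thus the tree $T$ whose edges are $\{C_{\pi(i)}C_{\pi(p(i))} : 2 \leq i \leq k\}$ is exactly the output of \textbf{SCGT} applied to $\fkt{C}{G}$ with $C_1 := C_{\pi(1)}$, which by Lemma 5.10 is a maximum spanning tree of $\fkt{C}{G}$. Combining with Theorem 5.24 this tree is moreover a clique tree of $G$, concluding the proof.
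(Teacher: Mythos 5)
Your overall route is the same as the paper's: exhibit \textbf{MCS} (and \textbf{LexBFS}) as an instance of \textbf{SCGT} and then invoke Lemma \ref{lemma5.10}; the paper's own proof is just a terser version of your first two paragraphs. However, the maximality step in your last paragraph contains a broken inference. From ``every \emph{already numbered} vertex of $C_{\pi(i)}\cap C_{\pi(j')}$ lies in $\nb{w}$'' you may only conclude that $\fkt{label}{w}$ is at least the number of \emph{numbered} vertices of that intersection, not at least $\abs{C_{\pi(i)}\cap C_{\pi(j')}}$; indeed, if an unnumbered $w$ in the intersection exists then by definition not all of the intersection is numbered, so the inequality you write down does not follow from your premise.

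The repair is easy and uses a fact you never state but implicitly need (also for the identification $\fkt{\mu}{C_{\pi(i)}C_{\pi(p(i))}}=\fkt{label}{v_i}$): once a block $B_{j'}$ is finished, \emph{every} vertex of $C_{\pi(j')}$ is numbered, since that clique is exactly the union of the block with the already-numbered neighborhood of its first vertex. Hence for any previously visited clique $C_{\pi(j')}$ the whole set $C_{\pi(i)}\cap C_{\pi(j')}$ is numbered before $v_i$ is chosen, all of it lies in $\nb{v_i}$ because it sits in $C_{\pi(i)}$ together with $v_i$ (and cannot contain $v_i$ itself), and therefore $\fkt{label}{v_i}\geq\abs{C_{\pi(i)}\cap C_{\pi(j')}}$ directly --- no auxiliary vertex $w$ is needed. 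With that substitution your argument is sound; note also that the containment of the numbered neighbors of $v_i$ in a single earlier clique follows from the simpliciality of $v_i$ in $\induz{G^{\phantom{1}}\!\!}{\set{v_i,\dots,v_n}}$ (the PES property) rather than from Lemma \ref{lemma5.8}, which is the statement about separators and is not what is being used there.
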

\begin{proof}
It suffices to show that \textbf{MCS} and \textbf{BFS} are special cases of \textbf{SCGT}.\\
Let $C_1\dots,C_{k-1}$ be the visited cliques and $v_{i-1},\dots,v_n$ be the numbered vertices. 
When $v_i$ is numbered, a new clique $C_k$ is visited. Since $v_i$ is simplicial in 
$\induz{G}{\set{v_i,\dots,v_n}}$, its neighborhood is complete and strictly included in a clique $C_j$, 
$1\geq j\geq k-1$. Therefore the label of all edges connecting $C_k$ to $C_1,\dots,C_{k-1}$ must 
be included in the label of $\lb C_i,C_j\rb$ in the clique graph $\fkt{C}{G}$. In order to connect 
$C_k$ the maximum edge is chosen. This is what \textbf{SCGT} does, since a maximum edge has 
necessarily a maximal label. Similarily for \textbf{BFS} the lexicographic search ensures also a 
maximal labeled edge to be chosen at each step.
\end{proof}
So with the help of the recognition algorithms for chordal graphs we are easily able to compute an 
optimal tree decomposition of a given chordal graph $G$. Galinier, Habib and Paul also introduced 
a version of \textbf{MCS} that is slightly modified to compute a clique tree of the input graph.\\
With this algorithm and the corresponding proof of its correctness we will conclude this section and go back to chordal graph powers in the next one.
\begin{lemma}[Galinier, Habib, Paul. 1995 \cite{GalinierHabibPaul1995cliquegraph}]\label{lemma5.12}
Let $G$ be a chordal graph. Let $T$ be a tree whose nodes are the maximal cliques of $G$ and such that an edge between the maximal cliques $C_j$ and $C_k$, with $k<j$, exists if and only if the last vertex $v_j$ with $C_J\subseteq\nb{v_j}\cup\set{v_j}$ is contained in $C_k$.	
\end{lemma}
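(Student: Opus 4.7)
The statement implicitly asserts that the graph $T$ so defined is a clique tree of $G$. My plan is to verify (a) that $T$ is indeed a tree on the maximal cliques, and (b) that $T$ satisfies the running intersection property, i.e.\ for every $v\in V$ the set $\mathcal{K}_v=\condset{C\in V_c}{v\in C}$ induces a connected subgraph of $T$; these two together are the definition of a clique tree. I would run \textbf{MCS} on $G$ and, following Lemma~\autoref{lemma5.9}, label the maximal cliques $C_1,C_2,\dots,C_m$ in the order in which they are first encountered (which is well-defined because they are visited consecutively). Observe that for a maximal clique $C$, only vertices of $C$ itself can satisfy $C\subseteq\nbclosed{v}$: any external $v$ with this property would make $C\cup\set{v}$ a clique, contradicting maximality of $C$. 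Hence the ``last vertex $v_j$'' associated with $C_j$ is simply the highest-indexed vertex of $C_j$ in the PES, which is the vertex that triggers the transition from $C_{j-1}$ to $C_j$ during \textbf{MCS}.

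First I would establish that $T$ is a tree. For $j\ge 2$, the vertex $v_j$ belongs to $C_j$ but the clique $C_j$ is new at the step $v_j$ is numbered, so $v_j$ also belongs to at least one previously visited clique (the one containing $v_j$'s neighbors with higher index, which form a clique by the PES property). I would show that among the earlier cliques $C_k$ with $v_j\in C_k$, exactly one is ``the relevant parent'' in the sense of the construction, either because the statement should be read as picking the unique $C_k$ with smallest $k$, or by showing all such $C_k$ are forced to coincide with the clique $X_{v_j}\cup\set{v_j}\setminus\set{v_j}$'s maximal extension — this is the subtle point. Thus each $C_j$ contributes exactly one edge, yielding $m-1$ edges on $m$ vertices; combined with connectivity (every $C_j$ is joined to something earlier), this gives a tree.

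Next I would prove the running intersection property. Fix $v\in V$ and let $C_{j_1},\dots,C_{j_r}$ (with $j_1<\dots<j_r$) be the maximal cliques containing $v$. By Lemma~\autoref{lemma5.9} these cliques are visited over a contiguous block of \textbf{MCS} steps; more importantly, for each $i\ge 2$, when the clique $C_{j_i}$ becomes new the vertex $v_{j_i}$ triggering it has $v$ among its higher-indexed neighbors (since $v$ lies in the earlier clique $C_{j_{i-1}}$ which preceded $C_{j_i}$ in the visiting order), so $v\in X_{v_{j_i}}\cup\set{v_{j_i}}\subseteq C_{j_i}\cap C_k$ for the parent clique $C_k$. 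Pushing this through, the parent of $C_{j_i}$ in $T$ must itself contain $v$, so it lies in $\mathcal{K}_v$. Iterating, $C_{j_2},\dots,C_{j_r}$ are each linked through $T$ to a predecessor in $\mathcal{K}_v$, so $\mathcal{K}_v$ is connected in $T$.

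The main obstacle, as indicated above, is the uniqueness of the ``parent'' edge: one must argue that the prescription ``$v_j\in C_k$ for $k<j$'' picks out a single well-defined $C_k$ for each $j$, and that this $C_k$ is moreover the correct parent to preserve running intersection. I would resolve this by combining the PES property of \textbf{MCS} (so that $X_{v_j}$ is a clique and hence contained in some unique maximal clique $C_k$) with Lemma~\autoref{lemma5.9} (so that $C_k$ is indeed one of the already-visited cliques), thereby pinning down the parent canonically and reducing the lemma to a direct consequence of Theorem~\autoref{thm5.24} and Lemma~\autoref{lemma5.11}.
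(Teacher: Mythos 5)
First, a point of comparison: the thesis itself offers no proof of Lemma \autoref{lemma5.12}. The statement is quoted from Galinier, Habib and Paul and is followed immediately by the algorithm \textbf{MCCT}; moreover, as printed it is truncated (it ends with the condition defining the edges of $T$ and never states a conclusion). So your reconstruction of the intended claim --- that $T$ is a clique tree of $G$ --- is the right reading, and there is no in-paper argument to measure your proposal against.

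On the merits, your outline names the correct two obligations (that $T$ is a tree, and that $T$ has the running intersection property), and you correctly isolate the real difficulty: the prescription ``$v_j$ is contained in $C_k$'' does not single out one earlier clique. But the proposal leaves that difficulty unresolved, and one of your two suggested repairs is false. As you note, the vertices $v$ with $C_j\subseteq\nbclosed{v}$ are exactly the vertices of $C_j$, so $v_j$ is just one distinguished vertex of $C_j$; such a vertex can lie in several earlier maximal cliques, and these need not coincide. Take $G=K_{1,3}$ with centre $c$: the maximal cliques are the three edges, the centre $c$ lies in all of them, and under the literal set-membership reading the construction produces either an isolated node or a triangle on the three cliques --- in no case a tree. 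Hence ``all such $C_k$ are forced to coincide'' cannot be salvaged, and ``take the smallest such $k$'' is a guess you do not justify. The reading that makes the lemma true, and that \textbf{MCCT} actually implements via its array $\fkt{C}{\cdot}$, is that every vertex $w$ is \emph{assigned} to exactly one maximal clique (the first one, in the visiting order of \textbf{MCS}, that contains $w$, so that $\fkt{C}{w}=\min\condset{k}{w\in C_k}$), and $C_k$ is the clique to which the distinguished vertex is assigned. With that reading each $C_j$, $j\geq 2$, receives exactly one parent, your edge count gives a tree, and your plan of invoking Theorem \autoref{thm5.24} and Lemma \autoref{lemma5.11} becomes viable. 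Even then, your running-intersection step still rests on one substantive fact you only gesture at: that the parent of $C_j$ contains all of $C_j\cap\lb C_1\cup\dots\cup C_{j-1}\rb$, equivalently that the already-numbered part $X_{v}$ of the creating vertex $v$ of $C_j$ is a clique lying inside the parent bag. Without that containment the induction ``the parent of $C_{j_i}$ also contains $v$'' does not close.
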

\begin{algorithm}[H]
	\caption{Maximum Cardinality Search and Clique Tree (\textbf{MCCT}) (see 
	\cite{GalinierHabibPaul1995cliquegraph})}\label{alg5.9}
	\begin{algorithmic}[1]
	\Require A graph $G=\lb V,E\rb$
	\Ensure If the input graph is chordal: a PES $\sigma$ and an associated clique tree $T=\lb I,F\rb$ where $I$ is the set of maximal cliques
	\For{$v\in V$}
	\State $\fkt{X}{v}\define\emptyset$
	\State $\fkt{last}{v}\define\emptyset$
	\State $\fkt{C}{v}\define0$
	\EndFor
	\State $previousmark\define-1$
	\State $j\define0$
	\For{$i=n$ \textbf{to} $1$ \textbf{step} $-1$}
	\State Choose a vertex $v$ not yet numbered such that $\abs{\fkt{X}{v}}$ is maximal
	\If{$\abs{\fkt{X}{v}}\leq previousmark$}
	\State $j=j+1$
	\State Create the maximal clique $C_j=\fkt{X}{v}\cup\set{v}$
	\State Join $C_j$ and $C_{C\lb last\lb v\rb\rb}$ in $T$
	\Else
	\State $C_j\define C_j\cup\set{v}$
	\EndIf
	\For{$w\in\nb{v}$, $w$ not yet numbered}
	\State $\fkt{L}{w}\define\fkt{X}{w}\cup\set{v}$
	\State $\fkt{last}{w}\define v$
	\EndFor
	\State $previousmark\define\abs{\fkt{X}{v}}$
	\State $v$ is numbered by $i$
	\State $\fkt{C}{v}\define j$
	\EndFor
	\State\Return The PES $\sigma$, the set $I=\set{C_0,\dots,C_k}$ and the edges created between the $C_j$
	\end{algorithmic}
\end{algorithm}
\begin{theorem}[Galinier, Habib, Paul. 1995 \cite{GalinierHabibPaul1995cliquegraph}]\label{thm5.26}
The algorithm \textbf{MCCT} computes a PES and its associated clique tree in time 
$\Ord{\abs{V}+\abs{E}}$ if and only if the input graph $G=\lb V,E\rb$ is chordal. 	
\end{theorem}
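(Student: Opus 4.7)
The plan is to reduce the statement to three ingredients already established in the excerpt: the correctness of \textbf{MCS} as a chordality recognition algorithm (Theorem \ref{thm5.19}), the consecutive-clique property of \textbf{MCS} (Lemma \ref{lemma5.9}), and the clique-tree characterization of Lemma \ref{lemma5.12}. Since \textbf{MCCT} differs from \textbf{MCS} only in additional bookkeeping (the counters $previousmark$, $last$ and $C$) and the emission of edges of $T$, the vertex-numbering part of \textbf{MCCT} is word-for-word \textbf{MCS}. Hence the ordering $\sigma$ returned is a PES if and only if $G$ is chordal; in particular, for non-chordal inputs one may detect failure by running \textbf{PERFECT} on $\sigma$ afterwards, matching the stated if-and-only-if.

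Assuming $G$ is chordal, I would first verify that the collection $\set{C_0,\dots,C_k}$ produced by \textbf{MCCT} is exactly the set of maximal cliques of $G$. Lemma \ref{lemma5.9} shows that along $\sigma$ the vertices belonging to a common maximal clique are numbered consecutively and, moreover, that the transition from one clique to the next is marked by a non-increase of the label. The test $\abs{\fkt{X}{v}}\leq previousmark$ is precisely this transition detector, so each time it fires a fresh maximal clique $C_j=\fkt{X}{v}\cup\set{v}$ is opened, and while it does not fire the currently open clique is augmented by the new simplicial vertex. Consequently the emitted vertex sets coincide with the maximal cliques of $G$.

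Next I would show that the edges added to $T$ realise the clique-tree characterisation of Lemma \ref{lemma5.12}. When a new clique $C_j$ is created with initiator $v$, we have $C_j\subseteq\nbclosed{v}$, and by construction of $C_j$ this is the unique maximal clique which is ``last'' for $v$ in the sense of that lemma. The variable $\fkt{last}{v}$ stores the most recently numbered neighbour $u$ of $v$ at the moment $v$ is picked, and since such $u$ is numbered strictly before $v$ in $\sigma$ and belongs to $\fkt{X}{v}\subset C_j$, the maximal clique $C_{\fkt{C}{\fkt{last}{v}}}$ containing $u$ at that moment contains all of $\fkt{X}{v}$; this is exactly the unique clique indexed by a smaller $k$ to which Lemma \ref{lemma5.12} attaches $C_j$. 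Hence the edge set produced by \textbf{MCCT} is (up to the arbitrary root) the clique tree associated with $\sigma$.

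Finally, the complexity bound. \textbf{MCS} runs in $\Ord{\abs{V}+\abs{E}}$ using the standard bucket-priority implementation of Tarjan and Yannakakis; all additional operations inside \textbf{MCCT} (updating $\fkt{last}{w}$, $\fkt{X}{w}$ and $\fkt{C}{v}$, comparing against $previousmark$, emitting a single clique or a single tree edge per outer iteration) cost $\Ord{1}$ amortised per inspected vertex–edge pair, so the overall running time stays in $\Ord{\abs{V}+\abs{E}}$. The main obstacle I expect is the bookkeeping argument that $\fkt{last}{v}$ at the moment $v$ is numbered indeed points into the right clique $C_k$ of Lemma \ref{lemma5.12}; this requires checking that between the numbering of $\fkt{last}{v}$ and the numbering of $v$ no maximal clique strictly containing $\fkt{X}{v}\setminus\set{\fkt{last}{v}}$ is opened, which follows from Lemma \ref{lemma5.9} combined with the fact that $v$ is simplicial in the induced subgraph on the yet-unnumbered vertices.
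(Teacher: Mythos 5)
Your proposal is correct and follows essentially the same route as the paper: \textbf{MCCT} is treated as \textbf{MCS} plus constant-overhead bookkeeping, the label non-increase test detects clique boundaries via Lemma~\ref{lemma5.9}, the emitted tree edges are justified by the clique-tree characterization, and the running time is inherited from \textbf{MCS}. The only cosmetic differences are that you invoke Lemma~\ref{lemma5.12} directly where the paper routes through Lemma~\ref{lemma5.11} (maximum spanning trees of the clique graph), and the paper makes your amortized-cost claim explicit by bounding the total size of all emitted bags by $n+m$.
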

\begin{proof}
If \textbf{MCCT} computes a PES and a clique tree, then the input graph $G$ is trivially chordal. 
\textbf{MCCT} is an extended version of \textbf{MCS}, hence if $G$ is chordal, the computed 
elimination scheme is perfect. By Lemma \autoref{lemma5.11}, the step loop at 8. builds a clique 
tree of $G$ is chordal.\\
Now we investigate the size of a clique tree- First of all, when a vertex is labeled by \textbf{MCS}, 
this label corresponds to an edge. Hence the size of all label sets is $\Ord{m}$, the number of 
edges in $G$. Since there are at most $n=\abs{V}$ increasing sequences, $T$ contains at most 
$n$ vertices (see Theorem \autoref{thm5.21}) and so $\Ord{n}$ edges.\\
Let us examine the size of the set vertices, or bags, in $T$. Since the vertices of minimal 
separators in chordal graphs belong to several maximal cliques, the size of the vertex sets is bigger 
than $n$. Let $\fkt{X}{v}$ be the set of labels of $v$, where $v$ is the first vertex of an increasing 
sequence $L_j$. The bag associated to $v$ by \textbf{MCCT} is $\fkt{X}{v}\cup L_j$. But 
$\fkt{X}{v}$ corresponds to the edges between vertices of $\fkt{X}{v}$ and $v$. Since every 
duplicated vertex, each vertex appearing in multiple bags, belongs to the label set of the first vertex 
of an increasing sequence, each of them can be associated an edge. Therefore the sum of the 
cardinality of the bags of $T$ is smaller than $n+m$, and the space complexity is $\Ord{n+m}$.\\
All the operations of the loops at 8. and 17., as well as 10. can be done in constant time. Hence this 
algorithm has the same time complexity as \textbf{MCS}: $\Ord{n+m}$. 
\end{proof}

\subsection{Chordal Powers and Complexity}
As we have already seen in previous chapters, for any graph $G$ there is at least one integer $k\leq\abs{\V{G}}$ for which $G^k$ becomes a chordal graph. So for any $K\in\N$ there is a class of graphs for which the $k$-strong coloring problem can be solved in polynomial time.\\
One could conjecture that these classes form a strictly increasing chain in terms of subsets, but this seems unlikely since the powers of chordal graphs themselves are not necessarily chordal anymore. In fact for some powers of chordal graphs the coloring problem becomes $\mathcal{NP}$-hard again.\\
Another observation we made was that certain parameters of a chordal power of a graph can be used as upper bounds on the parameters of the original graph itself, which might even yield some nice algorithmic results. So the following holds for any graph $G$ and its power $G^k$ and any lower power $1\leq i<k$:
\begin{align*}
\tw{G^i}\leq\tw{G^k}.
\end{align*}
If $G^k$ is chordal we are able to actually compute a tree decomposition of $G^k$ in polynomial 
time using the algorithm \textbf{MCCT}, which also gives us the maximal cliques of the chordal 
power $G^k$. For cliques again we obtained a similar bound:
\begin{align*}
\fkt{\omega}{G^i}\leq\fkt{\omega}{G^k}.
\end{align*}
And of course, since we assume $G^k$ to be chordal we gain the equality $\tw{G^k}=\fkt{\omega}{G^k}$. In terms of parameterized complexity this means if we have an upper bound on the size of a maximum clique in a certain chordal power $G^k$ of a graph $G$, every problem that can be parameterized by the maximum size of a clique or the treewidth of $G$ can be solved on $G$ in polynomial time.\\
In order to obtain better control over the appearance of chordal powers we will limit ourselves to the smallest integer for which a graph becomes chordal.
\vspace{-2mm}
\begin{definition}[Power of Chordality]
Let $G$ be a graph. The smallest integer $k\in N$ for which $G^{k}$ becomes chordal is called the {\em power of chordality} of $G$ and we write $k=\fkt{k_0}{G}$.	
\end{definition}
\vspace{-2mm}
For any graph a trivial upper bound on the power of chordality is given by the length of a longest shortest path in $G$, the so called diameter, since no two vertices of $G$, that are part of the same component, can be further apart from each other, hence $G$ to the power of its diameter becomes a graph whose components are complete graphs.
\begin{definition}[Diameter]
Let $G$ be a connected graph. The {\em diameter} of $G$ is the length of a longest shortest path, or the greatest possible distance between two vertices of $G$.
\vspace{-1mm}
\begin{align*}
\fkt{\operatorname{dm}}{G}=\max_{x,y\in\V{G}}\distg{G}{x}{y}.
\end{align*}
\vspace{-1mm}
If $G$ is not connected, the diameter of $G$ is the maximum over all diameters of the components of $G$.
\end{definition}
\vspace{-2mm}
\begin{lemma}\label{lemma5.13}
Let $G$ be a graph, then $\fkt{k_0}{G}\leq\fkt{\operatorname{dm}}{G}$.
\end{lemma}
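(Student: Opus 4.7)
The plan is to show directly that $G$ raised to the power of its diameter is a disjoint union of complete graphs, which is trivially chordal, and hence the minimum power making $G$ chordal cannot exceed the diameter.

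First I would unpack the definition of diameter: for any two vertices $u,v$ lying in the same connected component of $G$, we have $\distg{G}{u}{v}\leq\fkt{\operatorname{dm}}{G}$. Setting $k=\fkt{\operatorname{dm}}{G}$, Definition \ref{def3.1} then immediately yields that $uv\in\E{G^k}$ for every such pair. Consequently, each connected component of $G^k$ induces a complete graph, and $G^k$ itself is a disjoint union of complete graphs (across the components of $G$, since taking powers does not create edges between distinct components).

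Next I would observe that any disjoint union of complete graphs is chordal: it contains no induced cycle whatsoever, and in particular no induced cycle of length $\geq 4$. Therefore $G^{\fkt{\operatorname{dm}}{G}}$ is chordal, which by minimality in the definition of the power of chordality gives $\fkt{k_0}{G}\leq\fkt{\operatorname{dm}}{G}$.

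There is essentially no obstacle here; the only mild subtlety is the disconnected case, but it is already baked into the definition of diameter as the maximum over components, so the argument above goes through unchanged. The proof amounts to chaining together the definitions of diameter, graph power, and chordality.
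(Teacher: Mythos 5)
Your proof is correct and follows exactly the argument the paper gives (in the prose immediately preceding the lemma rather than a formal proof environment): $G$ raised to its diameter has complete components, hence is chordal, so the minimality of $\fkt{k_0}{G}$ gives the bound. Your explicit handling of the disconnected case is a small but welcome addition of rigor over the paper's informal remark.
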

With Duchet's Theorem (Theorem \autoref{thm3.12}) we can conclude the chordality of $G^i$ with $i-\fkt{k_0}{G}\equiv 0\lb\!\!\!\mod 2\rb$ and $i\leq\fkt{\operatorname{dm}}{G}$, or, since they consist exclusively of complete components, for $i\geq \fkt{\operatorname{dm}}{G}$.\\
\\
Now, before we start to investigate the actual algorithmic power of the power of chordality, how hard is it to find $\fkt{k_0}{G}$ for a given graph $G$?\\
This question is easy to answer. We can formulate the following simple algorithm.
\begin{algorithm}[H]
	\caption{Power of Chordality (\textbf{POC})}\label{alg5.10}
	\begin{algorithmic}[1]
		\Require A graph $G=\lb V,E\rb$
		\Ensure The power of chordality $\fkt{k_0}{G}$ and the chordal power $G^{k_0\lb G\rb}$
		\For{$k=1$ \textbf{to} $n-1$ \textbf{step} $1$}
		\State Compute $G^k$
		\If{$G^k$ is chordal}
		\State\Return the power of chordality $k$ and $G^k$
		\EndIf
		\EndFor
	\end{algorithmic}
\end{algorithm}
\begin{theorem}\label{thm5.27}
The algorithm computes the power of chordality $\fkt{k_0}{G}$ of the input graph $G$ in time $\Ord{n^4}$.
\end{theorem}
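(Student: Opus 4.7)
The plan is to verify two things: that Algorithm \textbf{POC} terminates with the correct output, and that its running time is $\Ord{n^4}$. For correctness I would first invoke Lemma \autoref{lemma5.13} to guarantee that the loop cannot fail silently: since $\fkt{k_0}{G}\leq\fkt{\operatorname{dm}}{G}\leq n-1$, there is always some $k\in\set{1,\dots,n-1}$ for which $G^k$ is chordal, so the algorithm must hit a \textbf{return} statement. Because the loop iterates $k$ in increasing order and exits at the first $k$ for which chordality is detected, the returned value is, by the definition of the power of chordality, exactly $\fkt{k_0}{G}$; the simultaneously returned graph $G^k$ is then $G^{k_0\lb G\rb}$.

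For the running time I would bound the cost of a single iteration and then multiply by the number of iterations. Computing $G^k$ reduces, as discussed after Algorithm \autoref{alg5.7}, to invoking \textbf{BNS} once per vertex to obtain the $k$-strong neighborhoods and adding the corresponding edges; this costs $\Ord{n^2}$ per vertex and hence $\Ord{n^3}$ in total. Once $G^k$ is available, its chordality can be decided by running \textbf{MCS} (Algorithm \autoref{alg5.2}) to produce a candidate ordering and then \textbf{PERFECT} (Algorithm \autoref{alg5.3}) to verify that it is a PES; by Theorems \autoref{thm5.19} and \autoref{thm5.20} this takes time $\Ord{\abs{V}+\abs{\E{G^k}}}\subseteq\Ord{n^2}$, since $G^k$ has at most $\binom{n}{2}$ edges. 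Hence a single iteration costs $\Ord{n^3}+\Ord{n^2}=\Ord{n^3}$.

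Combining these, the loop performs at most $n-1$ iterations (either terminating earlier or reaching $k=n-1$, which by Lemma \autoref{lemma5.13} is already chordal in the worst case), each costing $\Ord{n^3}$, for a total running time of $\Ord{n^4}$. The main obstacle, which is really only a bookkeeping issue, is making sure that the successive recomputations of $G^k$ are not overlooked in the analysis: one could alternatively reuse $G^{k-1}$ and extend it by one BFS layer per vertex, which would not improve the asymptotic bound but confirms that no hidden factor sneaks in. With this observation, the claim $\fkt{T_{\textbf{POC}}}{n}\in\Ord{n^4}$ follows immediately from the per-iteration bound and the iteration count.
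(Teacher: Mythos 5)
Your proposal is correct and follows essentially the same route as the paper: bound each iteration by $\Ord{n^3}$ (computing $G^k$ and testing chordality, which the paper packages as Corollary \autoref{cor5.5}) and multiply by the $\Ord{n}$ iterations guaranteed to suffice since $\fkt{k_0}{G}\leq\fkt{\operatorname{dm}}{G}\leq n-1$. Your version merely unpacks the chordality test into its \textbf{BNS}/\textbf{MCS}/\textbf{PERFECT} components and spells out the (easy) correctness of returning the first chordal power, both of which the paper leaves implicit.
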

\begin{proof}
Corollary \autoref{cor5.5} yields that the recognition of a chordal power can be realized in time $\Ord{n^3}$ where $n$ is the number of vertices of $G$, so it would suffice to just iterate over all powers of $G$ until the first chordal power appears. Since $\fkt{\operatorname{dm}}{G}\leq\abs{\V{G}}-1$ this will terminate after $\Ord{n}$ trials.
\end{proof}

\subsection{Cliques in Powers of Graphs}

Some basic observations regarding the growth of certain parameters, such as the chromatic number, can be made for increasing powers, which could be very helpful in order to give a good approximation of the according parameter on non-chordal powers.\\
So are the clique number and the chromatic number monotonous increasing in the number of edges of a graph. For increasing graph powers the following relation holds: $\E{G^k}\subseteq\E{G^{k+1}}$ and with this we obtain the following:
\begin{align*}
&\fkt{\chi}{G^k}\leq\fkt{\chi}{G^{k+1}}\leq\fkt{\chi}{G^{k+2}}\\
\text{and}~&\fkt{\omega}{G^k}\leq\fkt{\omega}{G^{k+1}}\leq\fkt{\omega}{G^{k+2}}.\\
\end{align*}
For $G$ connected and $G^{k+1}$ not complete the above inequalities are strict. In a graph $G$ there must exist a pair of vertices $x,y\in\V{G}$ for any distance $1\leq i\leq\fkt{\operatorname{dm}}{G}$ with $\distg{G}{x}{y}=i$.\\
These observations are equivalent to the basic properties for the $k$-strong clique number and chromatic number mentioned in Chapter 2.\\
This raises the question if there can be a $k<\fkt{\operatorname{dm}}{G}$ with $G^k$ exclusively consisting of complete components.
\begin{theorem}\label{thm5.28}
Let $G$ be a graph. For any $k<\fkt{\operatorname{dm}}{G}$ there is a component of $G^k$ which is not complete.	
\end{theorem}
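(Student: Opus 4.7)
The plan is to exhibit, for each $k<\operatorname{dm}(G)$, an explicit component of $G^k$ containing two non-adjacent vertices, which is enough to rule out completeness. Since the argument hinges on identifying the witnesses, I would begin by unpacking the definition of the diameter in the disconnected case: there must be a component $C$ of $G$ with $\operatorname{dm}(C)=\operatorname{dm}(G)=d$, and within $C$ one can pick vertices $x,y\in V(C)$ realizing $\operatorname{dist}_G(x,y)=d$.

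Next, I would observe the (essentially trivial) fact that taking a power preserves components: $V(C)$ is the vertex set of a component of $G^k$ as well. This follows because $E(G)\subseteq E(G^k)$, so any two vertices connected in $G$ remain connected in $G^k$, and conversely any $G^k$-edge $uv$ corresponds to a $G$-path of length at most $k$, so no new components are created between previously separated vertices. Let $C_k$ denote the component of $G^k$ containing $V(C)$.

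The key step is then to note that $xy\notin E(G^k)$. By definition of the power, $xy\in E(G^k)$ would force $\operatorname{dist}_G(x,y)\leq k$, but $k<d=\operatorname{dist}_G(x,y)$. Hence $x$ and $y$ are two non-adjacent vertices inside the same component $C_k$ of $G^k$, so $C_k$ is not complete.

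I do not expect any real obstacle here; the only point that requires mild care is the statement that components are preserved under taking powers, and the unwrapping of the diameter definition for disconnected graphs so that the witnesses $x,y$ genuinely lie in a common component. Everything else is a direct comparison between $\operatorname{dist}_G(x,y)$ and the threshold $k$ in the definition of $G^k$.
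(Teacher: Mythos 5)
Your proof is correct and follows essentially the same route as the paper: reduce to the component realizing the diameter, pick two vertices $x,y$ with $\distg{G}{x}{y}=\fkt{\operatorname{dm}}{G}>k$, and observe that they cannot be adjacent in $G^k$. Your explicit remark that powers preserve components is a slightly more careful treatment of the disconnected case than the paper's one-line reduction, but the core argument is identical.
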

\begin{proof}
It suffices to assume $G$ to be connected, since in any non-connected graph there is at least one component realizing its diameter. So let $G$ be a connected graph with diameter $\fkt{\operatorname{dm}}{G}=k$. Let $x,y\in\V{G}$ be two vertices of $G$ realizing the diameter, so $\distg{G}{x}{y}=k$, at least one pair of such vertices must exist, otherwise $k$ would not be the diameter.\\
Now, for any $k'<k$ the vertices $x$ and $y$ cannot be adjacent in $G^{k'}$ and thus the graph $G^{k'}$ is complete if and only if $\fkt{\operatorname{dm}}{G}\leq k'$.
\end{proof}
By revisiting Chapter $3$ we can give a characterization of graphs with a complete square similar to the characterization of graphs with chordal squares.
\begin{theorem}\label{thm5.29}
Let $G$ be a connected graph, the following properties are equivalent:
\begin{enumerate}[i)]
	\item $G^2$ is complete
	
	\item $\fkt{\operatorname{dm}}{G}\leq 2$
	
	\item For any induced path of length $3$, $P_3\subseteq G$, with endpoints $x,y\in\V{G}$ there exists an additional vertex $v\in\V{G}\setminus\V{P_3}$ with $x,y\in\nb{v}$.
\end{enumerate}	
\end{theorem}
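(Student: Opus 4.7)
The plan is to prove the chain of implications $i) \Rightarrow ii) \Rightarrow iii) \Rightarrow ii) \Rightarrow i)$, with most of the content concentrated in the equivalence of $ii)$ and $iii)$, since $i) \Leftrightarrow ii)$ is essentially a direct translation via Definition \autoref{def3.1}. For the latter: by definition, $xy \in \E{G^2}$ iff $\distg{G}{x}{y} \leq 2$, so $G^2$ being complete on the connected graph $G$ is the same as saying every pair of vertices has distance at most $2$, which is exactly $\fkt{\operatorname{dm}}{G} \leq 2$.

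For $ii) \Rightarrow iii)$, I would take an arbitrary induced path of length $3$ in $G$, say on vertices $x,a,b,y$ with $x,y$ the endpoints. Since $\fkt{\operatorname{dm}}{G}\leq 2$ we have $\distg{G}{x}{y}\leq 2$. Because the path is induced, $xy\notin\E{G}$, so $\distg{G}{x}{y}=2$. Hence $x$ and $y$ share a common neighbor $v$. It remains to argue $v\notin\V{P_3}$, which follows because the only internal vertices are $a$ and $b$, and $a$ is nonadjacent to $y$ while $b$ is nonadjacent to $x$ (otherwise the path would not be induced).

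For the converse direction $iii) \Rightarrow ii)$, I would argue by contradiction and assume there exist vertices $u,w\in\V{G}$ with $\distg{G}{u}{w}\geq 3$. Choose a shortest $u$-$w$-path $P$ and consider its first four vertices $u_0,u_1,u_2,u_3$. The key observation is that along a shortest path no chords can occur, since any such chord would produce a shorter $u_0$-$u_3$-subpath; hence $u_0u_1u_2u_3$ is an induced path of length $3$. By $iii)$, there is a vertex $v\notin\set{u_0,u_1,u_2,u_3}$ adjacent to both $u_0$ and $u_3$, giving a $u_0$-$u_3$-walk of length $2$, so $\distg{G}{u_0}{u_3}\leq 2$. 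This contradicts $\distg{G}{u_0}{u_3}=3$ forced by $P$ being a shortest path. Hence no such $u,w$ exist and $\fkt{\operatorname{dm}}{G}\leq 2$.

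I do not expect a serious obstacle here; the only minor subtlety is the bookkeeping in $ii) \Rightarrow iii)$ that the common neighbor $v$ genuinely lies outside $\V{P_3}$, which requires explicitly ruling out the two internal vertices by using that the path is induced. Everything else is immediate from the definitions of graph power, diameter, and induced path.
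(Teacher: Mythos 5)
Your proof is correct and follows essentially the same route as the paper: the equivalence of $i)$ and $ii)$ is read off from the definition of $G^2$ (the paper instead cites Theorem \autoref{thm5.28}, but the content is the same), and the equivalence of $ii)$ and $iii)$ is argued exactly as in the paper, including the observation that an initial segment of a shortest path is induced and the check that the common neighbor lies outside the path.
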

\begin{proof}
With Theorem \autoref{thm5.28} the equivalence of $i)$ and $ii)$ is already proven. So we now will show the equivalence of $ii)$ and $iii)$.\\
\\
First assume $\fkt{\operatorname{dm}}{G}\leq 2$. Then, if $G$ contains an induced path of length $3$, $P_3$, with endpoints $x,y\in P_3$ we get $\distg{G}{x}{y}\leq 2$. Since $P_3$ is induced $x$ and $y$ cannot be adjacent and so $\distg{G}{x}{y}=2$. Hence a vertex $v\in\V{G}$ must exist with $x,y\in\nb{v}$ and again, since $P_3$ is induced, $v$ cannot be contained in $\V{P_3}$.\\
\\
So suppose such a vertex $v$ exists for every induced path of length $3$ and $\fkt{\operatorname{dm}}{G}=k\geq 3$. Then there must exist a pair of vertices $x,y\in\V{G}$ with $\distg{G}{x}{y}=3$ and thus two vertices $u,w$ must exist that form a path of length $3$ together with $x$ and $y$ which is induced. By assumption $x$ and $y$ must be adjacent to a common vertex $v$ and therefore $\distg{G}{x}{y}\leq 2$, a contradiction.
\end{proof}
For induced cycles of length $\geq6$ property $iii)$ is equivalent to the condition of a withered flower and so we obtain the following corollary by applying similar methods.
\begin{corollary}\label{cor5.6}
Let $G$ be graph. If $G$ contains an induced cycle of length $n\geq6$, $G^2$ is not complete.	
\end{corollary}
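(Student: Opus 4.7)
The plan is to prove the contrapositive via the local characterisation of completeness given by Theorem \autoref{thm5.29}. Assuming $G^2$ is complete, property $iii)$ of that theorem holds, namely every induced path of length three in $G$ with endpoints $x, y$ admits a vertex $v$ outside the path that is adjacent to both $x$ and $y$. It then suffices to show that this local condition is incompatible with the presence of an induced cycle of length at least six.

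Suppose for contradiction that $C = v_1 v_2 \cdots v_n$ with $n \geq 6$ is an induced cycle of $G$. I would first verify that $v_1 v_2 v_3 v_4$ is an induced $P_3$: the assumption $n \geq 6$ ensures that none of $v_1 v_3$, $v_2 v_4$, $v_1 v_4$ is a $C$-edge, and the inducedness of $C$ then rules them out as $G$-edges. Property $iii)$ hands me a vertex $w \notin \{v_1, v_2, v_3, v_4\}$ with $w v_1, w v_4 \in E(G)$. Next, $w$ cannot sit on $C$: the $C$-neighbours of $v_1$ are $v_2$ and $v_n$ and those of $v_4$ are $v_3$ and $v_5$, four pairwise distinct vertices since $n \geq 6$, so any common $C$-neighbour of $v_1$ and $v_4$ would contribute a chord. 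Hence $w$ is an external vertex and, in the terminology of Chapter 3, it exhibits $C$ viewed as a flower of size $n$ as being \emph{withered} between the non-consecutive $u$-vertices $v_1$ and $v_4$.

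The decisive and, I expect, most delicate step is to upgrade this single withered-flower configuration into an outright contradiction. The strategy mirrors the proofs of Lemmas \autoref{lemma3.4} and \autoref{lemma3.6}: apply property $iii)$ to every consecutive four-tuple $v_i v_{i+1} v_{i+2} v_{i+3}$ along $C$, obtaining external witnesses $w_1, \ldots, w_n$, and then analyse two consecutive witnesses $w_i$ and $w_{i+1}$. If $w_i = w_{i+1}$, I would invoke $iii)$ on an induced $P_3$ passing through $w_i$ in order to force a further witness onto $C$ and thereby construct a chord of $C$. If $w_i \neq w_{i+1}$, the alternating configuration $v_i w_i v_{i+3} w_{i+1} v_{i+4}$ furnishes a new induced $P_3$ whose $iii)$-witness must, by the same chord argument, again be external; iterating around $C$ should then yield a parity obstruction driven by $n$. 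Closing this case analysis uniformly, so that a universal external witness cannot persistently evade the chord argument, is where the technical burden is concentrated and is the step most likely to require a careful and exhaustive case-split.
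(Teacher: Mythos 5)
Your opening moves are sound: the reduction to the contrapositive via Theorem \autoref{thm5.29}, the verification that $v_1v_2v_3v_4$ is an induced path of length three, and the observation that its witness $w$ must lie off the cycle are all correct. But the gap you flag at the end is not a technical burden that a more exhaustive case-split will discharge: the ``universal external witness'' you are trying to rule out genuinely exists, and the statement as printed is false. Take $G$ to be the cycle $C_6=v_1\dots v_6$ together with one additional vertex $u$ adjacent to all six $v_i$. The set $\set{v_1,\dots,v_6}$ still induces a $C_6$ in $G$, yet $\fkt{\operatorname{dm}}{G}=2$ because any two non-adjacent vertices have $u$ as a common neighbour, so $G^2\cong K_7$ is complete. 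In this graph every induced path of length three lies entirely on the cycle and $u$ witnesses property $iii)$ for each of them, while no induced path of length three passes through $u$ (a dominating vertex forces a chord on any such path); hence the step in which you ``invoke $iii)$ on an induced $P_3$ passing through $w_i$'' has nothing to apply to, and the hoped-for parity obstruction never materialises.

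For what it is worth, the paper supplies no proof either: the corollary is justified only by the remark that property $iii)$ ``is equivalent to the condition of a withered flower,'' and that remark in fact points the other way --- a vertex adjacent to two non-consecutive cycle vertices is precisely what withers the flower and makes $G^2$ well behaved (cf.\ Corollary \autoref{cor3.5}); it does not obstruct completeness. A statement in the spirit of the surrounding results would be that $G^2$ is not complete (indeed not chordal) when $G$ contains an induced cycle of length at least six that is \emph{not} withered. Your argument up to and including the construction of the external witness $w$ is essentially a correct proof that such a withering vertex is forced for every non-consecutive pair on the cycle whenever $G^2$ is complete, and that is the most one can conclude.
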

Now we return to the power of chordality. Let $G$ be a graph with $\fkt{k_0}{G}=k$, then, by Duchet's Theorem, $G^{k+2}$ is chordal too. Therefore the parameters $\fkt{\chi}{G^k}$ and $\fkt{\chi}{G^{k+2}}$ can be computed in polynomial time. What about $\fkt{\chi}{G^{k+1}}$?\\
With $\fkt{\chi}{G^k}$ and $\fkt{\chi}{G^{k+2}}$ we already found a solid lower bound and an upper bound as well as a tree decomposition of $G^{k+2}$ which poses as a, not necessarily optimal, tree decomposition of $G^{k+1}$. Is it possible to use all this information to compute the chromatic number of $G^{k+1}$, or at least find a decent approximation of it?\\
For graphs with $\fkt{k_0}{G}=1$, i.e. chordal graphs, Agnarsson, Greenlaw and Halld{\'o}rsson found a somewhat disappointing answer to the second question.
\begin{theorem}[ Agnarsson, Greenlaw, Halld{\'o}rsson.2000 \cite{agnarsson2000powers}]\label{thm5.30}
Coloring even powers of chordal graphs is $\mathcal{NP}$-hard to approximate within $n^{\frac{1}{2}-\epsilon}$ for any $\epsilon>0$.	
\end{theorem}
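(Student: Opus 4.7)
The plan is a gap-preserving reduction from the $\mathcal{NP}$-hardness of approximating the chromatic number on arbitrary graphs. By a theorem of Zuckerman, refining earlier work of Feige and Kilian, $\fkt{\chi}{G}$ is $\mathcal{NP}$-hard to approximate within $n^{1-\delta}$ for every $\delta>0$ on inputs of order $n=\abs{\V{G}}$. I would exhibit a polynomial-time map $G\mapsto\lb H,2k\rb$ such that $H$ is a chordal graph on $N$ vertices with $N\leq n^{2+o\lb 1\rb}$, the exponent $2k$ is a fixed even constant, and $\fkt{\chi}{H^{2k}}=\fkt{\chi}{G}+c$ with $c$ a constant depending only on the gadget. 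Then an $N^{1/2-\epsilon}$-approximation algorithm for $\fkt{\chi}{H^{2k}}$ would, after stripping the additive overhead $c$, yield an approximation of $\fkt{\chi}{G}$ within a factor polynomial in $n$ with exponent strictly below $1-\delta$ for appropriately chosen $\epsilon$ and $\delta$, contradicting the invoked hardness of general coloring.

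For concreteness, one can aim for $2k=2$. Given $G$, I would build $H$ around a chordal skeleton -- for instance, a split graph or, more symmetrically, the intersection graph of a family of subtrees of a host tree in the spirit of Theorem \ref{thm5.15} -- and introduce, for each vertex $v\in\V{G}$, a representative $r_v$ together with an auxiliary gadget. The edges of $G$ are encoded by forcing $r_u$ and $r_v$ to share exactly one common neighbour in $H$ whenever $uv\in\E{G}$, so that $r_ur_v$ becomes an edge of $H^2$ precisely when $uv\in\E{G}$. Chordality of $H$ is guaranteed by routing the shared neighbours through a tree-like arrangement of cliques, avoiding unwithered sunflowers in the sense of Theorem \ref{thm3.14}. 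The graph $H^2$ then contains $G$ as an induced subgraph on the representatives, with the remaining vertices of $H$ forming a controlled overhead contributing $c$ to the chromatic number; padding $G$ with trivial cliques, if necessary, ensures that $\fkt{\chi}{G}$ dominates $c$ on hard instances, so approximation ratios transfer directly.

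The main obstacle is the tension between chordality of $H$ and non-triviality of $H^{2k}$: trees, the simplest chordal graphs, have chordal powers at every level (Corollary \ref{cor3.1}), so $H$ cannot be too tree-like. The construction must therefore inject enough branching at the level of maximal cliques to produce unwithered flowers (in the sense of Theorem \ref{thm3.17}) in $H^{2k-1}$, while keeping $H$ itself free of unwithered sunflowers. The most delicate step will be the bidirectional decoding: verifying that any proper coloring of $H^{2k}$ restricts, up to the overhead, to a proper coloring of $G$, and conversely that an optimal coloring of $G$ extends to an $\fkt{\chi}{G}+c$ coloring of $H^{2k}$ without conflict on the overhead vertices. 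Once this is in place, the final calculation is a routine balance between the blow-up exponent $a$ with $N=n^a$ and the loss factors $\epsilon,\delta$, showing that $a\leq 2+o\lb 1\rb$ suffices to turn the $n^{1-\delta}$ hardness for $G$ into the promised $N^{1/2-\epsilon}$ hardness for $H^{2k}$.
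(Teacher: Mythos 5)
First, note that the paper does not prove this statement at all: it is imported verbatim from Agnarsson, Greenlaw and Halld\'orsson, so there is no internal proof to compare yours against. Judged on its own terms, your reduction has a genuine gap, and it sits exactly where you flag the ``most delicate step'': the claim that $\fkt{\chi}{H^{2k}}=\fkt{\chi}{G}+c$ with $c$ a \emph{constant} depending only on the gadget. In any encoding where representatives $r_u,r_v$ acquire a common neighbour $w_{uv}$ precisely when $uv\in\E{G}$, the connectors incident to a fixed vertex $v$ are pairwise at distance two through $r_v$, so already in $H^2$ they form a clique of size $\fkt{\deg_G}{v}$ that is moreover joined to $r_v$ and its whole neighbourhood; the overhead is therefore at least $\fkt{\Delta}{G}$, not $O(1)$. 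Chordality of $H$ makes this much worse. For a triangle $uvx$ of $G$ the six-cycle $r_uw_{uv}r_vw_{vx}r_xw_{xu}$ can only be chorded inside the connector set (a chord meeting a representative creates a forbidden distance-two pair between representatives or between a representative and a foreign connector), and iterating this argument over the cycles of $G$ forces the connectors of any $2$-connected part of $G$ into large cliques of $H$ --- in the limiting realization ($H$ a split graph with the connector set a clique) one gets $H^2=G*K_m$ with $m=\abs{\E{G}}$, hence $\fkt{\chi}{H^2}=\fkt{\chi}{G}+m$. On the Feige--Kilian/Zuckerman gap instances the soundness case has $\fkt{\chi}{G}\leq n^{\delta}$ while $m$ may be $\Theta(n^2)$, so the ratio $(\fkt{\chi}{G_{\mathrm{yes}}}+m)/(\fkt{\chi}{G_{\mathrm{no}}}+m)$ collapses to $1$ and no inapproximability is transferred. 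Your proposed repair --- padding $G$ with trivial cliques so that $\fkt{\chi}{G}$ dominates $c$ --- cannot work either: a disjoint clique raises the chromatic number only to a maximum, which shrinks the gap between yes- and no-instances instead of outpacing an additive overhead.

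The missing idea is therefore a construction in which either the overhead is provably bounded by something the gap survives (which the chordality constraint, via exactly the sunflower analysis of Theorem \autoref{thm3.13} that you invoke, appears to forbid for a distance-two edge encoding), or the dependence is multiplicative rather than additive, say $\fkt{\chi}{H^{2k}}=\Theta\lb n\cdot\fkt{\chi}{G}\rb$ with $N=\Theta\lb n^{2}\rb$ vertices; the exponent $\tfrac{1}{2}$ in the statement is precisely what such a quadratic blow-up combined with a multiplicative (or carefully dominated) overhead yields, and is a strong hint that the cited proof does not take the additive route you describe. Your outer shell --- invoking $n^{1-\delta}$-hardness of \textsc{Coloring}, a polynomial-time map with $N\leq n^{2+o(1)}$, and the closing exponent arithmetic --- is fine; the unproven crux is the gadget, and as specified it fails.
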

So in general it is $\mathcal{NP}$-hard to approximate the chromatic number of $G^{k_0\lb G\rb+1}$ with a factor better than $\sqrt{n}$ where $n$ is the number of vertices of $G$. In addition, Theorem \autoref{thm5.11} showed us that treewidth is not exactly the best parameter for computing the chromatic number. So we will restrict ourselves to the computation of the maximal cliques of a non-chordal power of $G$ in order to find a better lower bound on its chromatic number using the following relation.
\begin{lemma}\label{lemma5.14}
Let $G$ be a connected graph and $k=\fkt{k_0}{G}+2j$ with $j\in\N$ and $k\leq\fkt{\operatorname{dm}}{G}$ and $G^{k-1}$ not chordal. The following inequalities hold.
\begin{align*}
\fkt{\omega}{G^{k-2}}=\fkt{\chi}{G^{k-2}}<\fkt{\omega}{G^{k-1}}\leq\fkt{\chi}{G^{k-1}}\leq\fkt{\omega}{G^k}=\fkt{\chi}{G^k}
\end{align*}	
\end{lemma}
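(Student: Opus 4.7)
The plan is to break the chain into five separate relations and handle each with its own ingredient. First I would handle the two chordality-based equalities. Since $G^{\fkt{k_0}{G}}$ is chordal by the very definition of the power of chordality, iterating Duchet's Theorem (Theorem~\ref{thm3.12}) gives chordality of $G^{\fkt{k_0}{G}+2i}$ for every $i\geq 0$. With $j\geq 1$ this yields chordality of both $G^{k-2}=G^{\fkt{k_0}{G}+2(j-1)}$ and $G^{k}=G^{\fkt{k_0}{G}+2j}$; Theorem~\ref{thm2.4} (every chordal graph is perfect) then delivers $\fkt{\omega}{G^{k-2}}=\fkt{\chi}{G^{k-2}}$ and $\fkt{\omega}{G^{k}}=\fkt{\chi}{G^{k}}$.

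Next, the two non-strict inequalities come almost for free. The bound $\fkt{\omega}{G^{k-1}}\leq\fkt{\chi}{G^{k-1}}$ is the universal inequality between clique number and chromatic number. For $\fkt{\chi}{G^{k-1}}\leq\fkt{\chi}{G^{k}}$ I simply note that $\E{G^{k-1}}\subseteq\E{G^{k}}$, so any proper coloring of $G^{k}$ is also a proper coloring of the spanning subgraph $G^{k-1}$.

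The real content is the strict inequality $\fkt{\omega}{G^{k-2}}<\fkt{\omega}{G^{k-1}}$; somewhat surprisingly, the hypothesis ``$G^{k-1}$ not chordal'' turns out not to be needed here. Let $C$ be a maximum $(k-2)$-strong clique in $G$, so $\abs{C}=\fkt{\omega}{G^{k-2}}$. Since $k-2<k\leq\fkt{\operatorname{dm}}{G}$, Theorem~\ref{thm5.28} forces $G^{k-2}$ to fail to be complete, hence $C\subsetneq\V{G}$. Connectivity of $G$ then guarantees an edge $uu'\in\E{G}$ with $u\in C$ and $u'\in\V{G}\setminus C$. For every $w\in C$, the triangle inequality gives
\[
\distg{G}{u'}{w}\leq\distg{G}{u'}{u}+\distg{G}{u}{w}\leq 1+(k-2)=k-1,
\]
so $C\cup\set{u'}$ is a $(k-1)$-strong clique. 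Therefore $\fkt{\omega}{G^{k-1}}\geq\abs{C}+1>\fkt{\omega}{G^{k-2}}$.

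I do not anticipate any serious obstacle: the proof is entirely linear, stringing together Duchet's Theorem, the standard $\omega\leq\chi$ inequality together with edge-monotonicity of $\chi$, and one triangle-inequality extension of a maximum $(k-2)$-strong clique that exploits connectivity of $G$ via $k\leq\fkt{\operatorname{dm}}{G}$. The non-chordality assumption on $G^{k-1}$ appears only to make the middle of the chain genuinely non-trivial (ruling out a collapse to a single equality $\fkt{\omega}{G^{k-1}}=\fkt{\chi}{G^{k-1}}$) rather than to drive any of the individual inequalities.
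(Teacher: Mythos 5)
Your proof is correct and follows essentially the route the paper intends: the lemma is stated without a formal proof, but the surrounding discussion supplies exactly your ingredients --- Duchet's Theorem (Theorem \ref{thm3.12}) plus perfection of chordal graphs (Theorem \ref{thm2.4}) for the two equalities, edge-monotonicity of $\omega$ and $\chi$ for the non-strict inequalities, and strictness of $\fkt{\omega}{G^{k-2}}<\fkt{\omega}{G^{k-1}}$ from connectivity together with the non-completeness guaranteed by Theorem \ref{thm5.28}. Your explicit triangle-inequality construction of the enlarged $(k-1)$-strong clique, and your observation that the non-chordality of $G^{k-1}$ is not actually used for any individual inequality, are both sound.
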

Note that with $G$ being connected the inequality between $\fkt{\omega}{G^{k-2}}$ and $\fkt{\omega}{G^{k-1}}$ is strict, so the parameter at least increases by $1$ and thus the clique number of $G^{k-1}$ actually is an improved lower bound. So how to compute it?\\
For chordal graphs we can find a PES such that all maximal cliques of the graph are visited consecutively. And, if the chordal graph we are investigating is a power $G^k$ of the graph $G$ we are interested in, a maximal clique of $G$ is contained in at least one maximal clique of $G^k$.\\
Let $\sigma=\left[ v_1,\dots,v_n\right]$ be a PES of $G^k$, then $v_1$ is contained in exactly one maximal $k$-strong clique, namely $X_1\define\knb{k}{v_1}\cup\set{v_1}$ and so, in order to find all maximal cliques of $G$ that contain $v_1$ we just have to search in $X_1$. The same holds for any $v_i$ and the corresponding graph $\induz{G^k}{\set{v_1,\dots,v_n}}$. For any $v_i$ there is exactly one maximal clique of $G^k$ that has to be searched for maximal cliques in $G$ containing $v_i$, in order to find, together with the former found cliques for $v_1,\dots,v_{i-1}$, all maximal cliques in $G$ containing $v_i$.\\
This not only gives us a $\FPT$ algorithm for the maximum clique problem with parameter $\fkt{\omega}{G^k}=\kcl{k}{G}$, but also a nice upper bound on the number of maximal cliques of non chordal powers of $G$. For this we will need another famous result: Sperner's Theorem.
\begin{theorem}[Sperner. 1928 \cite{sperner1928satz}]\label{thm5.31}
Let $N$ be a set with $\abs{N}=n$ and $X$ a family of subsets of $N$ such that
\begin{align*}
U,V\in X\Rightarrow U\subsetneq V~\text{and}~V\subsetneq U,
\end{align*}
then
\begin{align*}
\abs{X}\leq\binom{n}{\abr{\frac{n}{2}}}.
\end{align*}	
\end{theorem}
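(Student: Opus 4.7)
The plan is to prove Sperner's Theorem via Lubell's chain-counting argument (which yields the stronger LYM inequality as a byproduct). The key observation is that the antichain condition on $X$ prevents two distinct elements of $X$ from lying on any common maximal chain in the Boolean lattice $2^N$, so one can bound $|X|$ by comparing the total number of maximal chains to the number of chains ``blocked'' by the elements of $X$.

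First I would define a \emph{maximal chain} in $2^N$ as a sequence $\emptyset = A_0 \subsetneq A_1 \subsetneq \cdots \subsetneq A_n = N$ with $|A_i| = i$, and observe that such a chain corresponds bijectively to a permutation of $N$ (namely, the order in which the elements of $N$ are added). Hence there are exactly $n!$ maximal chains. Next, for any fixed $U \subseteq N$ with $|U| = k$, I would count the maximal chains passing through $U$: such a chain is determined by a linear order on $U$ together with a linear order on $N \setminus U$, giving $k!\,(n-k)!$ chains through $U$.

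The main step is then the following. Since $X$ is an antichain, if $U, V \in X$ with $U \neq V$ lay on a common maximal chain, one would be a proper subset of the other, contradicting the hypothesis. Therefore the sets of maximal chains through distinct elements of $X$ are disjoint, which yields
\begin{equation*}
\sum_{U \in X} |U|!\,(n-|U|)! \;\leq\; n!.
\end{equation*}
Dividing by $n!$ rewrites this as the LYM inequality
\begin{equation*}
\sum_{U \in X} \frac{1}{\binom{n}{|U|}} \;\leq\; 1.
\end{equation*}

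To finish, I would use the unimodality of the binomial coefficients, namely $\binom{n}{k} \leq \binom{n}{\lfloor n/2 \rfloor}$ for every $k \in \{0,1,\dots,n\}$, so each summand satisfies $\frac{1}{\binom{n}{|U|}} \geq \frac{1}{\binom{n}{\lfloor n/2 \rfloor}}$. Summing gives
\begin{equation*}
\frac{|X|}{\binom{n}{\lfloor n/2 \rfloor}} \;\leq\; \sum_{U \in X} \frac{1}{\binom{n}{|U|}} \;\leq\; 1,
\end{equation*}
which rearranges to $|X| \leq \binom{n}{\lfloor n/2 \rfloor}$. The only subtle point is justifying the disjointness of the chain-sets, but this is immediate from the antichain hypothesis, so I do not expect a serious obstacle; the whole proof is essentially a clean double count.
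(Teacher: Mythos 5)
The paper does not prove this statement at all: it is quoted as a classical result and only the citation to Sperner's 1928 paper is given, so there is nothing to compare your argument against. Your proof is the standard Lubell chain-counting argument and it is correct and complete: the count of $n!$ maximal chains, the count of $k!\,(n-k)!$ chains through a fixed $k$-set, the disjointness of these chain-sets forced by the antichain hypothesis, the resulting LYM inequality $\sum_{U\in X}\binom{n}{|U|}^{-1}\leq 1$, and the unimodality step $\binom{n}{k}\leq\binom{n}{\lfloor n/2\rfloor}$ are all sound. One small remark: the hypothesis as printed in the paper, $U,V\in X\Rightarrow U\subsetneq V$ and $V\subsetneq U$, is evidently a typo for the negated relations (no element of $X$ properly contains another); you have read it in the intended way, as the antichain condition, which is what your disjointness step actually uses.
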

\begin{lemma}
Let $G$ be a graph with $\abs{\V{G}}=n$, $k\in\N$ such that $G^k$ is chordal, $\kcl{k}{G}=\omega_k$ and let $k'<k$, then $G^{k'}$ has at most $n\binom{\omega_k}{\abr{\frac{\omega_k}{2}}}$ maximal cliques.
\end{lemma}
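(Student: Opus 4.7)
The plan is to combine two ingredients: the bound on the number of maximal cliques in a chordal graph (Theorem \autoref{thm5.21}) and Sperner's Theorem (Theorem \autoref{thm5.31}). The key structural observation is that the edge sets satisfy $\E{G^{k'}}\subseteq\E{G^k}$ whenever $k'<k$, so every clique of $G^{k'}$ is also a clique of $G^k$. In particular, every maximal clique $M$ of $G^{k'}$ is contained in some maximal clique of $G^k$ (simply extend $M$ greedily inside $G^k$).

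First, I would invoke Theorem \autoref{thm5.21} to conclude that the chordal graph $G^k$ has at most $n$ maximal cliques; call them $C_1,\dots,C_s$ with $s\leq n$ and $\abs{C_i}\leq \omega_k$ for every $i$. By the containment observation above, the collection of maximal cliques of $G^{k'}$ is the union over $i\in\set{1,\dots,s}$ of the families
\begin{align*}
\mathcal{M}_i\define\condset{M\subseteq C_i}{M~\text{is a maximal clique of}~G^{k'}~\text{contained in}~C_i}.
\end{align*}
It therefore suffices to bound $\abs{\mathcal{M}_i}$ for each $i$ separately and then sum.

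The second step is to observe that $\mathcal{M}_i$ is an antichain in the inclusion order on subsets of $C_i$: if $M,M'\in\mathcal{M}_i$ satisfied $M\subsetneq M'$, then $M$ would not be maximal as a clique of $G^{k'}$, since $M'$ is itself a clique of $G^{k'}$ (being a subset of a clique). Applying Sperner's Theorem to the $\abs{C_i}$-element ground set $C_i$ yields
\begin{align*}
\abs{\mathcal{M}_i}\leq\binom{\abs{C_i}}{\abr{\abs{C_i}/2}}\leq\binom{\omega_k}{\abr{\omega_k/2}},
\end{align*}
where the second inequality uses that $\binom{m}{\abr{m/2}}$ is monotonically non-decreasing in $m$ together with $\abs{C_i}\leq\omega_k$.

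Summing over the at most $n$ maximal cliques of $G^k$ gives the desired bound $n\binom{\omega_k}{\abr{\omega_k/2}}$. No step here is particularly delicate; the only point that deserves a careful sentence is the claim that $\mathcal{M}_i$ is an antichain, which is the hinge that lets Sperner's Theorem enter. If one wanted to sharpen the argument, one could note that the same $M\in\mathcal{M}$ might lie inside several $C_i$ simultaneously, so the sum actually overcounts; this is harmless for an upper bound but indicates that the estimate is not tight in general.
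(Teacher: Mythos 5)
Your proof is correct and follows essentially the same route as the paper's: both cover the maximal cliques of $G^{k'}$ by at most $n$ cliques of $G^k$ of size at most $\omega_k$ and then apply Sperner's Theorem to the resulting antichains. The only cosmetic difference is that the paper indexes these covering cliques by the sets $X_i$ arising from a perfect elimination scheme of $G^k$ (which partitions the maximal cliques of $G^{k'}$ according to their earliest vertex in the scheme), whereas you use the maximal cliques of $G^k$ directly together with Theorem \autoref{thm5.21}; both yield the same bound.
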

\begin{proof}
Let $\sigma=\left[ v_1,\dots,v_n\right]$ be a PES of $G^k$, then for any $i\in\set{1,\dots,n}$ all maximal cliques containing $v_i$, but none of the vertices $v_1,\dots,v_{i-1}$ of $G^{k'}$, are contained in the set $X_i\define\lb\knb{k}{v_i}\cap\set{v_{i+1},\dots,v_n}\rb\cup\set{v_i}$, which forms a clique in $G^k$.\\
So in order to find all those maximal cliques of $G^{k'}$ we have to check at most $2^{\abs{X_i}}\leq 2^{\omega_k}$ subsets of $X_i$. Since no maximal clique can be contained in another one Sperner's Theorem yields the existence of at most $\binom{\omega_k}{\abr{\frac{\omega_k}{2}}}$ such maximal cliques in $X_i$ and since we have to search $X_i$ for all $i\in\set{1,\dots,n}$ we obtain a maximum of $n\binom{\omega_k}{\abr{\frac{\omega_k}{2}}}$ such cliques.
\end{proof}
The proposed procedure induces a parameterized algorithm for the maximum $k'$-strong clique problem with parameter $\kcl{k}{G}$, where $G^k$ is a chordal power of $G$ and $k'\leq k$. Such an algorithm with $G$ and $k'$ as the input would have a running time of
\begin{align*}
\Ord{n^4+n\,2^{\omega_k}},
\end{align*}
where the $n^4$ corresponds to the time it takes to compute $G^{k'}$, $G^{k}$ and the verification of the chordality of $G^k$.

Duchet's Theorem implies that there are chordal graphs $G$ with some $k\in\N$ and $G^k$ not being chordal.\\
Is there any relation between the structure of maximal Cliques of some non chordal power of a chordal graph and it's ordinary cliques?\\
To further investigate this question we start by some very general properties of stronger cliques in graphs.

\begin{lemma}\label{lemma3.10}
Let $G$ be a graph and $C\subseteq\V{G}$ a maximal clique in $G$ and $X_1,\dots X_q$ being all maximal cliques in $G$ with $C\neq X_i$ and $C\cap X_i\neq\emptyset$ for all $i\in\set{1,\dots,q}$. Then $C\cup X_i$ is a $2$-strong clique in $G$ and $X\define C\cup \bigcup_{i=1}^qX_i$ a $3$-strong clique.\\
In addition $\induz{G}{C\cup X_i}^2$ and $\induz{G}{X}^3$ are complete.
\end{lemma}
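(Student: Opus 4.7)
The plan is to prove all four assertions simultaneously by producing, for every pair of vertices in the set in question, an explicit short witness path that stays inside the relevant induced subgraph. This immediately yields both the distance bound in $G$ (giving the strong clique property) and the distance bound in the induced subgraph (giving completeness of the appropriate power). The key structural observation is that $C$ serves as a central backbone: any two distinct cliques $X_i$, $X_j$ from the list can be bridged by picking one representative of each in their intersection with $C$, and those two representatives are already adjacent since $C$ is a clique.

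For the statement about $C\cup X_i$, I would fix $u,v\in C\cup X_i$ and distinguish two cases. If both vertices lie in $C$, or both lie in $X_i$, then $uv\in\E{G}$ by maximality (hence cliqueness) of $C$ and $X_i$, so $\distg{\induz{G}{C\cup X_i}}{u}{v}=1$. Otherwise, WLOG $u\in C\setminus X_i$ and $v\in X_i\setminus C$. Using the hypothesis $C\cap X_i\neq\emptyset$, I pick some $w\in C\cap X_i$. Then $uw$ is an edge of $\induz{G}{C}$ and $wv$ is an edge of $\induz{G}{X_i}$, and both $w$ and the edges lie in $\induz{G}{C\cup X_i}$, yielding $\distg{\induz{G}{C\cup X_i}}{u}{v}\leq 2$. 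This proves $C\cup X_i$ is a $2$-strong clique in $G$ and also, because the witnessing path stays in the induced subgraph, that $\induz{G}{C\cup X_i}^2$ is complete.

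For the statement about $X=C\cup\bigcup_{i=1}^q X_i$, I again case-split on where $u,v\in X$ lie. If $\set{u,v}\subseteq C\cup X_i$ for some $i$, the first part already gives $\distg{\induz{G}{X}}{u}{v}\leq 2\leq 3$. The remaining case is $u\in X_i\setminus C$ and $v\in X_j\setminus C$ with $i\neq j$. I choose $w_i\in C\cap X_i$ and $w_j\in C\cap X_j$ (both non-empty by assumption). The edges $uw_i$ (inside $X_i$), $w_iw_j$ (inside $C$, since $C$ is a clique), and $w_jv$ (inside $X_j$) together form a path of length at most $3$ wholly contained in $\induz{G}{X}$. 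Hence $\distg{\induz{G}{X}}{u}{v}\leq 3$, which simultaneously shows that $X$ is a $3$-strong clique in $G$ and that $\induz{G}{X}^3$ is complete.

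There is no real obstacle here; the lemma is essentially a book-keeping statement, and the only thing to be careful about is not to accidentally route the witness path through a vertex outside the induced subgraph. The case analysis above handles this automatically because every intermediate vertex used ($w$, $w_i$, $w_j$) is chosen from $C$, which is contained in both $C\cup X_i$ and $X$.
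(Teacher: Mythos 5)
Your proof is correct and follows essentially the same route as the paper: choose representatives in $C\cap X_i$ (and $C\cap X_j$) and use the fact that $C$ is a clique to bridge them, noting that all intermediate vertices lie in $C$ and hence in the relevant induced subgraph. The only difference is that you make the case analysis fully explicit, which the paper leaves implicit.
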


\begin{proof}
Let $i\in\set{1,\dots,q}$, $v\in C$ and $x\in X_i$. With $C\cap X_i\neq\emptyset$ there exists some $w\in C\cap X_i$ with either $v=w$ or $vw\in\E{G}$, hence $\distg{G}{v}{w}\leq 1$ and so for arbitrary pairs of vertices $c\in C$ and $x\in X$ it holds $\distg{G}{c}{x}\leq 2$. Therefore $C\cup X_i$ is a $2$-strong clique. For each of those pairs $c$ and $x$ at least one path of length $\leq 2$ uses a vertex in $C\cap X_i$ thus $\distg{\induz{G}{C\cup X_i}}{c}{x}\leq 2$ and $\induz{G}{C\cup X_i}^2$ is  complete.\\
Adding some additional $X_j$ with $i\neq j$, we gain another vertex $u\in C\cap X_j$ we get $uw\in\E{G}$ with $w$ being chosen as above. Thus $\distg{G}{x}{y}\leq 3$ holds for all $x\in X_i$ and $y\in X_j$ and so $\distg{G}{a}{b}\leq 3$ for all pairs of vertices $a,b\in X=C\cup\bigcup_{i=1}^qX_i$, i.e. for all such pairs a path of at most length $3$ is completely contained in $\induz{G}{X}$ hence $X$ is a $3$-strong clique in $G$ and $\induz{G}{X}^3$ is complete.
\end{proof} 

\begin{remark}\label{cor3.7}
Let $G$ be a graph and $\mathscr{C}$ an intersecting family of maximal cliques in $G$, then $X\define\bigcup_{C\in\mathscr{C}}C$ is a $2$-strong clique in $G$.
\end{remark}

For $2$-strong cliques in chordal graphs Corollary \autoref{cor3.7} becomes an exact description. But first we need some helpful results.

\begin{lemma}\label{lemma3.11}
Let $G$ be a chordal graph and $C$ and $W$ two disjoint cliques in $G$ such that $C\cup W$ is a $2$-strong clique in $G$. Then $C$ or $W$ is not maximal.
\end{lemma}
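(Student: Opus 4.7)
The plan is to argue by contradiction using the clique-tree characterisation of chordal graphs (property iv of Theorem \ref{thm5.15}). Assume both $C$ and $W$ are maximal. Fix a clique tree $T$ of $G$ and, for every vertex $x\in\V{G}$, let $T_x$ denote the (connected) subtree of $T$ spanned by those maximal cliques of $G$ that contain $x$. Let $C=K_0,K_1,\ldots,K_r=W$ be the unique $C$-$W$ path in $T$.

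First I would establish that $r\geq 2$. The value $r=0$ would force $C=W$, contradicting $C\cap W=\emptyset$. The value $r=1$ would make $C\cap W$ a minimal separator of $G$; but $C\cap W=\emptyset$, so $C$ and $W$ would lie in different connected components of $G$, contradicting $\distg{G}{c}{w}\leq 2$ for $c\in C$, $w\in W$ given by the $2$-strong-clique hypothesis. Next I would use the maximality of $C$ and $W$ to deduce $C\cap K_1\subsetneq C$ and $K_{r-1}\cap W\subsetneq W$: otherwise $K_1$ (resp.\ $K_{r-1}$) would strictly contain the maximal clique $C$ (resp.\ $W$). Pick witnesses $c\in C\setminus K_1$ and $w\in W\setminus K_{r-1}$.

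Removing the two edges $K_0K_1$ and $K_{r-1}K_r$ from $T$ splits it into (at most) three pieces; let $T_C$ be the piece containing $K_0$ and $T_W$ the piece containing $K_r$. Since $r\geq 2$ the subtrees $T_C$ and $T_W$ are disjoint. Because $T_c$ is a connected subtree of $T$ that contains $K_0$ but not $K_1$, the whole of $T_c$ lies in $T_C$; analogously $T_w\subseteq T_W$. The strategy is then to use the condition $\distg{G}{c}{w}\leq 2$ to manufacture a maximal clique that would simultaneously belong to $T_C$ and $T_W$.

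If $cw\in\E{G}$, the edge $cw$ is contained in some maximal clique $K^*$, which lies in $T_c\cap T_w\subseteq T_C\cap T_W=\emptyset$, a direct contradiction. Otherwise $\distg{G}{c}{w}=2$ and there is a common neighbour $v\in \nb{c}\cap\nb{w}$; choosing maximal cliques $K'$ and $K''$ containing the edges $cv$ and $vw$ yields $K'\in T_c\subseteq T_C$ and $K''\in T_w\subseteq T_W$. Because $T_v$ is a connected subtree of $T$ that contains both $K'$ and $K''$, it must contain the whole $K'$-$K''$ path in $T$, which by construction runs through the full segment $K_0,K_1,\ldots,K_r$. Hence $v\in K_0\cap K_r=C\cap W=\emptyset$, again a contradiction. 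The main technical obstacle is really the book-keeping with the various subtrees: once the subtree inclusions $T_c\subseteq T_C$ and $T_w\subseteq T_W$ are rigorously justified from the defining subtree property of clique trees, the $2$-strong-clique hypothesis forces the desired collision between the $C$-side and the $W$-side of $T$.
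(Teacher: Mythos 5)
Your proof is correct, but it takes a genuinely different route from the paper's. The paper argues entirely at the level of the graph: it first reduces to the case in which every $c\in C$ has a neighbour in $W$ (by showing, via chords forced in $4$- and $5$-cycles, that the midpoints $u_{cw}$ of the $c$-$w$ paths form a clique that can take over the role of $C$), and then proves by induction on $i$ that every $i$-element subset of $C$ lies in the neighbourhood of a single $w\in W$; taking $i=\abs{C}$ produces an explicit vertex $w$ with $C\subseteq\nb{w}$, so $C\cup\set{w}$ is a strictly larger clique. You instead pass to the clique tree of Theorem \ref{thm3.19}, view $C$ and $W$ as nodes, and show that $\distg{G}{c}{w}\leq 2$ forces either a maximal clique containing the edge $cw$ or the subtree $T_v$ of a common neighbour $v$ to straddle the two disjoint pieces of the tree on either side of the $C$-$W$ path, which is impossible. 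Both arguments are sound. The paper's is more elementary and self-contained (it uses only the definition of chordality), it yields the extra information that the extending vertex can be chosen inside $W$ -- which is what is actually invoked later in the proof of Theorem \ref{thm3.21} -- and its induction-on-subsets technique is reused almost verbatim for the $k$-strong generalization in Lemma \ref{lemma3.13}. Your argument is shorter and more conceptual once the clique-tree machinery is granted; the two points you should make explicit are the separator property of a clique-tree edge (needed to dispose of $r=1$, where plain separation rather than minimality suffices -- and in fact that case can be absorbed into your main argument, since for $r=1$ any $c\in C$ and $w\in W$ already serve as witnesses) and the derivation of the inclusions $T_c\subseteq T_C$ and $T_w\subseteq T_W$ from the connectedness of $T_c$ and $T_w$ together with $c\notin K_1$ and $w\notin K_{r-1}$.
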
 

\begin{proof}
We begin by showing that it is sufficient to reduce the proof to the case that for all $c\in C$ there exists a $w\in W$ with $w\in\nb{c}$.\\
Suppose there is some $c\in C$ with $\nb{c}\cap W=\emptyset$, then there is a $u_{cw}\in\V{G}$ with $cu_{cw}$ and $u_{cw}w$ being edges in $G$ for every $w\in W$. Furthermore suppose the set $U$, containing all these $u_{cw}$ does not induce a clique, i.e. $u_{cw_1}\neq u_{cw_2}$ are not adjacent. There are two cases to be investigated.\\
Case 1: $w_1=w_2$\\
Then $w_1u_{cw_1}cu_{cw_2}$ is a circle of length $4$ and has to contain a chord. The vertex $c$ cannot be adjacent to $w_1$ and thus $u_{cw_1}u_{cw_2}$ is left as the only possible chord..\\
Case 2: $w_1\neq w_2$\\
Now $w_2w_1u_{cw_1}cu_{cw_2}$ is a circle of length $5$ with only two possible chords, namely $w_2u_{cw_1}$ and $w_1u_{cw_2}$. The existence of one or both of this chords would result in a circle of length $4$ similar to the first case and therefore $u_{cw_1}$ and $u_{cw_2}$ must be adjacent.\\
Thus $U$ is a clique with $\distg{G}{u}{w}\leq 2$ for all $u\in U$ and $w\in W$ and therefore by Lemma \autoref{lemma3.10} $U\cup W$ is a $2$-strong clique and every $w\in W$ is adjacent to a $u\in U$.\\
Therefore it suffices to show that the case in which every $c\in C$ is adjacent to a $w\in W$ results in $C$ not being maximal.\\
Claim: For all $i\in\set{1,\dots,\abs{C}}$ for every subset $C_i\subseteq C$ with $\abs{C_i}=i$ exists a $w\in W$ with $C_i\subseteq\nb{w}$. The case $i=1$ obviously complies. So consider $C_i\subseteq C$ with $\abs{C_i}=i$, for $C_{i-1}\subseteq C_i$ with $\abs{C_{i-1}}=i-1$ and $C_i\setminus C_{i-1}=\set{c}$. By induction some $w_{i-1}\in W$ exists with $C_{i-1}\subseteq\nb{w_{i-1}}$, furthermore there exists a $w_i\in W$ with $w_i\in\nb{c}$.\\
If $w_{i-1}=w_i$ we are already done, so suppose $c$ is not adjacent to $w_{i-1}$. Let $C_{i-1}=\set{c_1,\dots,c_{i-1}}$, for all $1\leq j\leq i-1$ the edges $c_jw_{i-1}$, $w_{i-1}w_i$, $w_ic$ and $cc_j$ exist and furthermore the edge $cw_{i-1}$ does not. Thus the only possible chord in those circles is the edge $c_jw_i$. Hence $C_i\subseteq\nb{w_i}$ and we are done.   
\end{proof}

\begin{lemma}\label{lemma3.12}
Let $G$ be a chordal graph and $X\subseteq\V{G}$ a maximal $2$-strong clique in $G$, then $\induz{G}{X}^2$ is complete.
\end{lemma}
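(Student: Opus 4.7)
The plan is to argue by contradiction: suppose $\induz{G}{X}^2$ is not complete, so there exist $x,y\in X$ with $\distg{\induz{G}{X}}{x}{y}\geq 3$, and derive a forbidden induced cycle in $G$, contradicting chordality. Since $X$ is a $2$-strong clique of $G$ we have $\distg{G}{x}{y}\leq 2$; the assumption then forces $xy\notin E$ and the existence of a common neighbor $z\in\V{G}\setminus X$ of $x$ and $y$. The maximality of $X$ now enters: because $X\cup\{z\}$ cannot be a $2$-strong clique, there must exist some $w\in X$ with $\distg{G}{w}{z}\geq 3$; in particular $w\notin\{x,y\}$.

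From $\distg{G}{w}{z}\geq 3$ I would deduce, first, that $w$ is non-adjacent to $x$ and to $y$ (any edge $wx$ or $wy$ together with $xz$ or $yz$ would drop the distance to at most $2$). Since $X$ is $2$-strong, $w$ then has a common neighbor $u$ with $x$ and a common neighbor $v$ with $y$, and the same distance consideration forces $uz,vz\notin E$. This sets up the candidate $6$-cycle $u\text{-}x\text{-}z\text{-}y\text{-}v\text{-}w\text{-}u$ in $G$, and the crux of the argument is to rule out each of its possible chords.

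The main obstacle is this chord-enumeration, done entirely by short chordality arguments. The pairs $uz$, $vz$, $xw$, $yw$, $zw$, $xy$ are already non-edges. An edge $uy$ would make $u\text{-}x\text{-}z\text{-}y\text{-}u$ an induced $C_4$ (its only possible chords $xy$ and $uz$ are absent), contradicting chordality; symmetrically $xv\notin E$. The only remaining candidate chord of the $6$-cycle is $uv$. If $uv\notin E$ the $6$-cycle itself is induced, already a contradiction; if $uv\in E$ then $u\text{-}x\text{-}z\text{-}y\text{-}v\text{-}u$ is a $5$-cycle all of whose non-cycle pairs $uz,uy,xy,xv,zv$ have been excluded above, so it is an induced $C_5$. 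Either way chordality is violated, so no such $x,y$ exist and $\induz{G}{X}^2$ is complete. (The edge case $u=v$ collapses the $5$-cycle to an induced $C_4$ via the same check and gives the contradiction directly.)
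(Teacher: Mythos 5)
Your proof is correct and follows essentially the same route as the paper's: both take nonadjacent $x,y\in X$ with $\distg{\induz{G}{X}}{x}{y}\geq 3$, pass to their common neighbor $z\notin X$, and combine the maximality of $X$ with chordality of short cycles through $z$ to reach a contradiction. The only difference is organizational: you invoke maximality first to extract a single witness $w\in X$ with $\distg{G}{w}{z}\geq 3$ and then exhibit one induced $C_4$, $C_5$ or $C_6$, whereas the paper runs the equivalent chord-chasing as a case analysis over all $z\in X$, showing each lies within distance $2$ of the common neighbor before contradicting maximality at the end.
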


\begin{proof}
Suppose $\induz{G}{X}^2$ is not complete, then two nonadjacent vertices $x,y\in X$ exist with $\distg{\induz{G}{X}}{x}{y}\geq 3$ and $\distg{G}{x}{y}\leq 2$. Hence a vertex $v\in\V{G}\setminus X$ exists such that $xvy$ is a path of length $2$ in $G$. Let $z\in X\setminus\set{x,y}$ so $\distg{G}{z}{x}\leq 2$ and $\distg{G}{z}{y}\leq 2$. The following three cases can occur and are partially divided into subcases.
\begin{enumerate}
\item[] \begin{enumerate}
		\item[Case 1] $z\in\nb{x}\cap\nb{y}$
		
		\item[Case 2] $z\in\nb{x}$ and $yz\notin\E{G}$ or $z\in\nb{y}$ and $xz\notin\E{G}$
				
		\item[Case 3] $xz, yz\notin\E{G}$

		\end{enumerate}
\end{enumerate}
Case 1: $z\in\nb{x}\cap\nb{y}$\\
This case can not occur because $xzy$ would be a path of length $2$ between $x$ and $y$ in $X$.\\

Case 2: $z\in\nb{x}$ and $yz\notin\E{G}$ or $z\in\nb{y}$ and $xz\notin\E{G}$\\
W.l.o.g. we consider $z\in\nb{x}$ and $yz\notin\E{G}$, then a $u\in\V{G}\setminus\set{x,y}$ exists such that $yuz$ is a path of length $2$. Three subcases have to be considered as follows.\\
Subcase 2.1: $u=v$\\
Then $z$ is adjacent to $u$ and we are done.\\
Subcase 2.2: $u\neq v$ and $u\notin X$\\
Now the edges $xy$ and $yz$ must not exist, $xu$ does not exist as well, otherwise we would be in Subcase 2.1 again. Two possible chords remain for the resulting cycle of length $5$ are $vu$ and $vz$, both must exist and again we get $\distg{G}{v}{z}\leq 2$.\\
Subcase 2.3: $u\in X$\\
Again we obtain a $5$-cycle with the necessary chords $vu$ and $vz$ and case 2 is closed.\\

Case 3: $xz, yz\notin\E{G}$\\
Now $u\in\nb{x}\cap\nb{z}$ and $w\in\nb{y}\cap\nb{z}$ need to exist with $u\neq w$. For $u$ and $w$ it holds $\distg{G}{v}{u}\leq 2$ and $\distg{G}{v}{w}\leq 2$. Again three subcases are to be considered.\\
Subcase 3.1: w.l.o.g. $u=v$\\
As we have seen in Subcase 2.1 $z$ is now adjacent to $v$ and we are done.\\
Subcase 3.2: $u$ or $w$ or $u$ and $w$ are not in $X$.\\
The edges $yu$ and $xw$ would result in Subcase 3.1 and therefore do not exist here, furthermore $xz$ and $yz$ cannot exist either, otherwise we would be in Case 2. Hence the edges $vu$, $vw$ and $vz$ exist and $z$ is adjacent to $v$.\\
Subcase 3.3: $u,w\in X$\\
Again all chords are forbidden except for $vu$, $vw$ and $vz$.\\

Hence for all vertices $x\in X$ we obtain $\distg{G}{x}{v}\leq 2$ and therefore $X\cup\set{v}$ is a $2$-strong clique, contradicting the maximality of $X$. Thus $\induz{G}{X}^2$ is complete. 
\end{proof}

\begin{theorem}\label{thm3.21}
Let $G$ be a chordal graph and $X\subseteq\V{G}$. Then $X$ is a maximal $2$-strong clique in $G$ if and only if the family of maximal cliques $C_X$ in $\induz{G}{X}$ is a maximal intersecting family of cliques in $G$.
\end{theorem}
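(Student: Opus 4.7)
The plan is to prove both implications by invoking Lemma~\autoref{lemma3.11}, Lemma~\autoref{lemma3.12}, and the following auxiliary sublemma which I intend to prove en route: \emph{if $H$ is chordal, $C$ is a maximal clique of $H$, and $v\in\V{H}\setminus C$ satisfies $\distg{H}{v}{c}\le 2$ for every $c\in C$, then $v$ is adjacent in $H$ to some vertex of $C$.} I would prove this by contradiction: assuming $v$ misses all of $C$, picking a common neighbour $z_c$ for every $c\in C$, and inspecting the $5$-cycle $z_c{-}c{-}c'{-}z_{c'}{-}v{-}z_c$ together with the follow-up $4$-cycle $z_c{-}c{-}c'{-}z_{c'}{-}z_c$ forces—by successive chord-insertion in $H$—either a vertex adjacent to every element of $C$ (contradicting maximality of $C$) or an induced $C_4$ (contradicting chordality of $H$).

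\textbf{Forward direction.} Assume $X$ is maximal $2$-strong; by Lemma~\autoref{lemma3.12} the square $\induz{G}{X}^2$ is complete, so the sublemma applies inside $\induz{G}{X}$ to every maximal clique. I would first show every $C\in C_X$ is maximal in $G$: an extension $C\cup\set{v}$ with $v\in\V{G}\setminus X$ would, via the sublemma, give every $x\in X\setminus C$ a neighbour $c\in C$, hence the path $v{-}c{-}x$ of length $\le 2$, making $X\cup\set{v}$ a $2$-strong clique and contradicting maximality of $X$. Pairwise intersection of $C_X$ then follows by Lemma~\autoref{lemma3.11} applied to any two disjoint members inside the $2$-strong clique $X$, which would force one of them to be non-maximal in $G$. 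Finally, the family is maximal: if a maximal clique $M$ of $G$ outside $C_X$ met every $C\in C_X$, then picking $v\in M\setminus X$ and, for any $x\in X$, a $C_x\in C_X$ with $x\in C_x$ together with some $c\in M\cap C_x$ would again yield the length-$2$ path $v{-}c{-}x$—the same contradiction.

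\textbf{Reverse direction.} Assume $C_X$ is maximally intersecting in $G$, so each $C\in C_X$ is maximal in $G$. That $X$ is $2$-strong is immediate: for $x,y\in X$, take $C_x,C_y\in C_X$ with $x\in C_x$, $y\in C_y$, and any $z\in C_x\cap C_y$ to obtain the path $x{-}z{-}y$. For maximality, suppose $v\notin X$ with $X\cup\set{v}$ still $2$-strong. I would take a maximal clique $M$ of $\induz{G}{X\cup\set{v}}$ containing $v$ and argue that $M\setminus\set{v}$ is already a maximal clique of $\induz{G}{X}$ by the same $4$-cycle mechanism: any $w\in X\setminus M$ adjacent to all of $M\setminus\set{v}$ in $\induz{G}{X}$ is either adjacent to $v$ (contradicting maximality of $M$ in $\induz{G}{X\cup\set{v}}$) or non-adjacent to $v$, in which case the $2$-strong property of $X\cup\set{v}$ supplies a common neighbour $u$ of $v$ and $w$ in $G$; maximality of $M$ inside the induced subgraph then furnishes an $m\in M\setminus\set{v}$ with $u\not\sim m$, and the induced $4$-cycle $u{-}w{-}m{-}v{-}u$ contradicts chordality of $G$. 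Hence $C^\ast:=M\setminus\set{v}\in C_X$, so—being pairwise-intersected by every $C\in C_X$—the clique $M$ itself meets every $C\in C_X$. Extending $M$ to a maximal clique $M^\ast$ of $G$ produces a maximal clique containing $v\notin X$ and meeting every $C\in C_X$, so $C_X\cup\set{M^\ast}$ is a strictly larger intersecting family—contradicting maximality of $C_X$.

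\textbf{Main obstacle.} The technical heart of the argument is the sublemma: while its $\abs{C}\le 2$ case drops out of a single $5$-cycle, the general case requires iterating chord-insertions across many pairs of vertices of $C$ and keeping careful book-keeping to guarantee that chordality cannot avoid producing either a vertex adjacent to all of $C$ or an induced $C_4$. A secondary—but still delicate—obstacle is the $4$-cycle argument in the reverse direction: because $M$ is maximal only in the induced subgraph $\induz{G}{X\cup\set{v}}$ rather than in $G$, the existence of the vertex $m\in M\setminus\set{v}$ with $u\not\sim m$ has to be extracted from the induced-subgraph maximality, and the case in which the common neighbour $u$ lies outside $X\cup\set{v}$ has to be handled separately so that it still produces the forbidden induced $C_4$ in the ambient chordal graph $G$.
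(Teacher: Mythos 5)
Your overall skeleton coincides with the paper's: both directions rest on Lemma \autoref{lemma3.11} and Lemma \autoref{lemma3.12}, and the auxiliary sublemma you propose (a vertex at distance at most $2$ from every vertex of a maximal clique of a chordal graph must have a neighbour in that clique) is exactly the engine hidden inside the ``Claim'' in the paper's proof of Lemma \autoref{lemma3.11}; the chord-chasing induction you sketch for it does close, so that is not where the difficulty lies. Your forward direction is correct and in fact more complete than the paper's: the verification that each member of $C_X$ is maximal in $G$ (which the paper silently assumes when it treats $C_X$ as a family of cliques of $G$) and the explicit length-$2$ paths $vcx$ in place of the appeal to Lemma \autoref{lemma3.10} are both fine.

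The genuine gap is in the reverse-direction maximality argument, specifically in the claim that $M\setminus\set{v}$ is a maximal clique of $\induz{G}{X}$. Suppose $w\in X\setminus M$ is adjacent to all of $M\setminus\set{v}$ but not to $v$. Then \emph{every} vertex of $M\setminus\set{v}$ is itself a common neighbour of $v$ and $w$, so the common neighbour $u$ you extract from the $2$-strong property of $X\cup\set{v}$ may lie in $M\setminus\set{v}$; in that case $u$ is adjacent to every other vertex of $M\setminus\set{v}$, no $m$ with $um\notin\E{G}$ exists, and the induced $C_4$ never materialises. The same collapse occurs when $u\notin X\cup\set{v}$ but dominates $M\setminus\set{v}$, since maximality of $M$ in the induced subgraph constrains only vertices of $X\cup\set{v}$; you flag this second case but not the first, and neither is resolved, so the intermediate claim remains unproven (and it is not clear it holds for an arbitrary choice of $M$). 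What is actually needed at this point is a single maximal clique of $G$ through $v$ meeting every member of $C_X$; the paper needs the same object and is equally terse about producing it, extracting from Lemma \autoref{lemma3.11} only that $v$ has a neighbour in each $C'\in C_X$ separately, which does not by itself yield one clique meeting all of them. Your route is therefore not wrong in spirit, but the step meant to close it fails as written and needs a replacement, for instance a chordality argument in the style of the induction inside Lemma \autoref{lemma3.11} that merges the neighbours of $v$ in the various members of $C_X$ into one clique.
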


\begin{proof}
Suppose there exist $C_1, C_2\in C_X$ with $C_1\cap C_2=\emptyset$. By Lemma \autoref{lemma3.12} $\induz{G}{X}^2$ is complete and thus $C_1\cup C_2$ is a $2$-strong clique in $\induz{G}{X}$. Lemma \autoref{lemma3.11} gives the existence of a vertex $v\in X$ that would extend either $C_1$ or $C_2$ to an even bigger clique in $\induz{G}{X}$, contradicting their maximality. Hence all pairs of cliques in $C_X$ have a non-empty intersection.\\
Now suppose $C_X$ is not maximal, then there is a maximal Clique $C$ in $G$ with $C\setminus X\neq\emptyset$ and $C'\cap C\neq\emptyset$ for all $C'\in C_X$. Thus $C\cup C'$ is a $2$-strong clique by Lemma \autoref{lemma3.10} and there exists some $c\in C\setminus C$ with $\distg{G}{c}{x}\leq2$ for all $x\in X$ and therefore $X$ is not maximal.\\
Now for the other direction we have $\bigcup_{C\in C_X}C=X$ and with $C_1\cap C_2\neq\emptyset$ we get $\distg{G}{c_1}{c_2}\leq 2$ for all $c_1\in C_1$ and $c_2\in C_2$. Hence $X$ is a $2$-strong clique. Suppose $X$ is not maximal, then there exists some $c\in\V{G}\setminus X$ with $\distg{G}{c}{x}\leq 2$ for all $x\in X$ and thus $\set{c}\cup C'$ is a $2$-strong clique for all $C'\in C_X$. Lemma \autoref{lemma3.11} gives us, regarding the maximality of $C'$, some clique $C$ that contains $c$ and that can be chosen maximal such that $C\cap C'\neq\emptyset$, contradicting the maximality of $C_X$.
\end{proof}

As a last result in this chapter we will extend Lemma \autoref{lemma3.11} to additive steps in powers of graphs and thus giving some more structure to general powers of chordal graphs.

\begin{lemma}\label{lemma3.13}
Let $G$ be a graph and $k\in\N$ such that $G^k$ is chordal, furthermore let $C$ and $Q$ be maximal $k$-strong cliques in $G$ with $C\cup Q$ is a $\lb k+1\rb$-strong clique in $G$, then for all $c\in C$ there is a vertex $q\in Q$ adjacent to $c$.
\end{lemma}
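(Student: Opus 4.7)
Plan:

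The plan is to imitate the proof of Lemma \ref{lemma3.11} inside the chordal graph $G^k$, relying on the fact that the maximal $k$-strong cliques of $G$ are exactly the maximal cliques of $G^k$.

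Proceeding by contradiction, I would assume some $c\in C$ has no $G$-neighbour in $Q$. Since $C\cup Q$ is $(k+1)$-strong, for every $q\in Q$ we have $2\le \distg{G}{c}{q}\le k+1$; letting $u_q$ denote the $G$-neighbour of $q$ on a shortest $c$--$q$-path produces $u_qq\in \E{G}$ together with $cu_q\in \E{G^k}$. Set $U:=\set{u_q : q\in Q}$. The first substantive step is to show that $U$ is a clique in $G^k$, by a chord argument analogous to the one in the proof of Lemma \ref{lemma3.11}, but now carried out in $G^k$. If $u_{q_1},u_{q_2}\in U$ were non-adjacent in $G^k$, I would, depending on whether $q_1=q_2$ or not, examine the cycle $c-u_{q_1}-q_1-u_{q_2}-c$ of length $4$ or $c-u_{q_1}-q_1-q_2-u_{q_2}-c$ of length $5$ inside $G^k$. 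The would-be chords $cq_i$ are absent from $G^k$ precisely when $\distg{G}{c}{q_i}=k+1$, while if either is present the picture shortens; chordality of $G^k$ then ultimately forces $u_{q_1}u_{q_2}\in \E{G^k}$.

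With $U$ a clique in $G^k$ (equivalently, a $k$-strong clique in $G$), I would extend $U$ to a maximal $k$-strong clique $U^\star$ of $G$. The set $U^\star\cup Q$ is $2$-strong in $G^k$ via the two-step paths $u_q-q-q'$, so applying Lemma \ref{lemma3.11} inside the chordal graph $G^k$ to the pair $(U^\star,Q)$ forces $U^\star\cap Q\ne\emptyset$. Iterating the inductive claim from Lemma \ref{lemma3.11}'s proof in this $G^k$-setting, I would eventually obtain either a direct $G$-neighbour of $c$ inside $Q$, contradicting the standing assumption, or a vertex $v\notin Q$ extending $Q$ to a strictly larger $k$-strong clique, contradicting the maximality of $Q$.

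The hard part will be the chord-case analysis in $G^k$. Unlike Lemma \ref{lemma3.11}, which works in $G$ itself, chords such as $u_{q_1}q_2$ or $q_1u_{q_2}$ here correspond to $G$-distances of up to $k+1$, so they need not exist in $G^k$, and the cases in which they do and do not appear have to be handled uniformly. A secondary subtlety is that $u_q$ may itself lie in $Q$; whenever this happens, a careful adjustment in the definition of $U$ (choosing shortest paths that avoid $Q$ whenever possible) should let us either discard the degenerate configuration or directly exploit it to produce the desired $q\in Q$ with $cq\in \E{G}$.
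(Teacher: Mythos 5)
Your instinct to transplant the Lemma \ref{lemma3.11} argument into the chordal graph $G^k$ is exactly the paper's, but your choice of the auxiliary set $U$ introduces a gap that the rest of the plan cannot absorb. You take $u_q$ to be the neighbour of $q$ on a shortest $c$--$q$-path, so the elements of $U$ are only known to lie within $G$-distance $k$ of $c$, hence pairwise within distance $2k$; they need not form a clique of $G^k$. Your proposed chord argument does not repair this: in the cycle $c,u_{q_1},q_1,q_2,u_{q_2}$ of $G^k$, whenever $\distg{G}{c}{q_1}\leq k$ and $\distg{G}{c}{q_2}\leq k$ the chords $cq_1$ and $cq_2$ are present and already triangulate the cycle (as do, say, $u_{q_1}q_2$ together with $cq_2$), so chordality of $G^k$ gives no information whatsoever about $u_{q_1}u_{q_2}$. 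Since the standing assumption is only that $c$ has no $G$-neighbour in $Q$ --- not that $\distg{G}{c}{q}=k+1$ for every $q\in Q$ --- these configurations genuinely occur, and without $U$ being a clique neither the black-box appeal to Lemma \ref{lemma3.11} for the pair $(U^\star,Q)$ nor the transplanted induction is available. A second, independent problem is the one you flag yourself: $u_q$ is a neighbour of $q$, so for $\distg{G}{c}{q}\geq 3$ it may lie in $Q$, in which case even a vertex $u\in U$ with $Q\subseteq\knb{k}{u}$ produces no contradiction with the maximality of $Q$; "choosing shortest paths that avoid $Q$ whenever possible" is not shown to be always possible.

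The paper makes the opposite choice of endpoint and both problems vanish. It takes $U\subseteq\nb{c}$: for each $q\in Q$ the neighbour of $c$ on a shortest $c$--$q$-path lies in $\knb{k}{q}$ because $2\leq\distg{G}{c}{q}\leq k+1$. Any two such vertices share the neighbour $c$ and are therefore adjacent in $G^k$, so no chordality is needed to make $U$ a clique, and no element of $U$ can lie in $Q$ precisely because $c$ was assumed to have no neighbour there. The induction from Lemma \ref{lemma3.11} is then run directly on the $4$-cycles $\lb q_iq_ju_{i-1}u_i\rb$ of $G^k$, whose only admissible chord is $q_ju_i$, and the terminal vertex $u$ satisfies $Q\subseteq\knb{k}{u}$ with $u\notin Q$, contradicting the maximality of $Q$. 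If you swap which endpoint of the shortest path you record, your plan collapses onto this proof.
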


\begin{proof}
If there is a vertex $c\in C$ with $\distg{G}{c}{q}\geq 2$ for all $q$ there is a set $U\subseteq\nb{c}$ such that for every $q\in Q$ exists a $u\in U\cap\knb{k}{q}$.\\
Similarly to the proof of Lemma \autoref{lemma3.11} we claim that for every $Q_i\subseteq Q$ with $1\leq i=\abs{Q_i}\leq\abs{Q}$ a $u\in U$ exists with $Q_i\subseteq\knb{k}{u}$. Obviously the case $i=1$ holds, so consider $Q_i\subseteq Q$ with $\abs{Q_i}=i$. Furthermore consider $Q_{i-1}\subseteq Q_i$ with $\abs{Q_{i-1}}=i-1$ and $Q_i\setminus Q_{i-1}=\set{q_i}$. By assumption there is a $u_{i-1}\in U$ with $Q_{i-1}\subseteq\knb{k}{u_{i-1}}$ and there is a $u_i\in U$ with $q_i\in\knb{k}{u_i}$.\\
If $u_i=u_{i-1}$ there is nothing to do, so suppose $u_i\neq u_{i-1}$ and therefore $\distg{G}{q_i}{u_{i-1}}\geq k+1$. For all $1\leq j\leq i-1$ we have $\distg{G}{q_j}{u_{i-1}}\leq k$, $\distg{G}{u_{i-1}}{u_i}\leq k$, $\distg{G}{u_i}{q_i}\leq k$ and $\distg{G}{q_i}{q_j}\leq k$, hence $\lb q_iq_ju_{i-1}u_i\rb$ is a cycle of length $4$ in $G^k$. The chord $q_iu_{i-1}$ does not exist by assumption and so we get $\distg{G}{q_j}{u_i}\leq k$. Hence $Q_i\subseteq\knb{k}{u_i}$.\\
Now we have a $u\in U$ with $u\notin Q$ and therefore $\distg{G}{u}{q}\leq k$ for all $q\in Q$ contradicting the maximality of $Q$.
\end{proof}

With these observation on the structure of cliques in both chordal and non chordal powers of graphs, implying some ideas of their algorithmic use, especially for finding maximum cliques in general graphs, we close this last section of the thesis and move on to some further thoughts on this tpoic in order to conclude this work.

\chapter{Conclusion}


In this thesis we have investigated one of many possibilities to generalize the ordinary coloring 
problem. Other, but related problems like list-coloring, $\fkt\textbf{mathcal{L}}{p,q}$-labeling or the 
even more general channel assignment problem were not even taken into account. Other 
problems, like acyclic colorings or star colorings just have occurred in small side notes. This alone 
shows the sheer variety of possible problems that may occur in this field.\\
And for any single one of those the general structural problems that are responsible for their immense complexity are unclear. In many cases we can find very basic structures that behave similar to cliques for the ordinary vertex coloring problem. Remember our $k$-strong cliques or $k$-strong anti-matchings. Especially the cases of acyclic colorings and star coloring showed us that there even are possible deeper connections between, at first, different approaches on the concept of colorings. So especially extremal question like the one for a conceptual analog to perfection seem to be universally applicable.\\
The basic problems for such questions seem to be the translation of the terminology of one problem to another. But finding such universal similarities is essential for the complete understanding of the true nature of this wide family of combinatorial problems.\\
For the ordinary coloring problem a conjecture was formulated by Hadwiger which is said to be the deepest and most profound structural question regarding graph coloring, the so called Hadwiger Conjecture, which states that any graph $G$ with chromatic number $\fkt{\chi}{k}$ possess a complete minor on $k$ vertices.\\
This said, is it possible to find analog and maybe even more general conjectures - or even theorems - for at least some of our generalized coloring problems?\\
Another important question is the one of the approach. Many researchers, as we did ourselves in this thesis, restrict themselves to smaller graph classes with richer structures that therefore yield more promising result. But to obtain a more general understanding of what coloring, and its countless derivatives, truly is, it is the authors belief that we need to broaden our horizon and, as said before, look for more general structural similarities.\\
\\
With this we slightly change our perspective and take a look on graph powers. The concept of graph powers has been studied mainly for its application on distance coloring, or, in the broad majority, for its application on the coloring of graph squares. Some, very few, attempts have already been made to find some use of this concept in other structural graph problems like Hamiltonian graphs. Without a doubt many other connections of graph powers to maybe even surprising problems are yet to be discovered. And this is not even limited to simple graphs. In the literature some connections to hypergraph theory have already been drawn and especially the question for chordal or even perfect graph powers yields strong connections to the theory of hypergraphs that are generalizations of the concept of bipartite graphs like hyper trees or normal hypergraphs.\\
In this context the question for perfect graph powers, and perfect line graph powers has to be asked again. In this thesis we already made some small steps into this topic by finding some structural hints on perfect graph squares. As we have seen the underlying structure of perfection in graph powers quickly becomes extremely complex but it is the author's belief that an investigation into this topic yields strong results, not only for structural but also for algorithmic graph theory.\\
Which brings us to the last chapter of this thesis, the algorithmic application. We introduced the power of chordality as an easy to compute parameter for graphs that seems to yield strong algorithmic potential. It not only holds an approximation of the minimum fill-in problem, treewidth and $k$-strong colorings but also brings the possibility to find orderings of the vertices of a graph with some structural meaning. Especially the point of orderings raised some additional questions as we encountered some further differentiation between chordal powers for the first time. It appears that chordal graph powers, or graphs with chordal powers, can be classified as either weak or strong in terms of the translation of a $k$-strong PES to the original graph $G$.\\
Especially with the study of perfect graph powers, such concepts might even prove to be helpful tools in the search for feasible inequalities that almost always occurs if one choses the approach in terms of linear integer programming on optimization problems on graphs. Especially sparse networks and related design problems might profit from the structure hidden in chordal or perfect powers and the knowledge they hold on their base graphs.\\
We have seen that with the clique number of the power of chordality we found another promising parameter, at least for those problems that already can be parameterized by treewidth. But our toolbox is much bigger. The power of chordality and higher chordal powers that necessarily exist due to Duchet's Theorem yield upper and lower bounds on $k$-strong clique numbers, colorings and maybe even other structural properties of a graph and, as mentioned before, provide vertex orderings. By combining all these information $\omega_{k_0}$ might even turn out to be a fitting parameter for the coloring problem, as we have seen vertex cover already is.\\
At last we would like to mention even more advanced concepts of graph decomposition that are somewhat similar to the tree decomposition. Some of them have come up just recently and often have proven to be suitable for problems that would resist any approach in terms of a tree decomposition. Such parameters are the connected treewidth, the treedepth, the cliquewidth or the branchwidth.\\
As one can see in this conclusion the field of structural chromatic graph theory and its adjacent fields still holds a great amount of unanswered and intriguing questions that just wait for someone to pick them up and once more start to discuss them.\\
Graph theory, maybe because of its strong relation to computer science, is one of the fastest growing fields of modern mathematics and the more it broadens the more fascinating it becomes and the more beautiful flowers of mathematics can be found in it.

\newpage
\bibliographystyle{amsalpha}
\bibliography{bibliography}

\newpage
\includepdf[pages=-]{Eidesstatt.pdf} 

\end{document}